\newcommand{\PrintMathFonts}{%
  \count255=0
  \loop\ifnum\count255<16
    (\the\count255:~\fontname\textfont\count255)
    \advance\count255 by 1
 \repeat}
\DeclareMathOperator{\conju}{\mathsf{conj}}
\newcommand{\Aut}{\operatorname{Aut}}
\def\BbK{\mathbb{K}}
\def\G{\mathbb{G}}
\def\CP{\mathbb{CP}}
\def\cG{\mathcal{G}}
\def\cL{\mathcal{L}}
\def\acts{\quad\rotatebox[origin=c]{-90}{$\circlearrowright$}\quad}
\def\rar{\ar[r]}
\def\dar{\ar[d]}
\def\drar{\ar[dr]}
\def\hookar{\ar@{^{(}->}}
\def\spec{\mathsf{Spec}}
\def\dbdg{D^b_{\mathsf{dg}}}
\def\val{\mathrm{val}}
\def\ctens{\widehat{\otimes}}
\def\nov{r}
\def\defa{\mathsf{a}}
\def\cV{\mathcal{V}}
\def\snc{\mathsf{snc}}
\def\sh{\mathsf{sh}}
\def\co{\mathsf{co}}
\def\Rpuncomp{\wt{\underline{R}}}
\def\Runcomp{\wt{R}}
\def\Rpunc{\underline{R}}
\def\fmuncomp{\wt{\fm}}
\def\fuk{\EuF}
\def\bc{\mathsf{bc}}
\def\dm{d}
\def\dmm{e}
\def\dmmm{f}
\def\bff{\mathbf{f}}
\def\lmod{\text{-mod}}
\def\bimod{\text{-mod-}}
\def\im{\mathrm{im}}
\def\id{\mathrm{id}}
\def\ind{\mathrm{ind}}
\newsavebox{\lbox}
\newlength{\lwidth}
\newlength{\lboxwidth}
\newcommand{\laurent}[1]{%
\sbox{\lbox}{\ensuremath{#1}}%
\settowidth{\lwidth}{\usebox{\lbox}}%
\sbox{\lbox}{\ensuremath{\left(\usebox{\lbox}\right)}}%
\settowidth{\lboxwidth}{\usebox{\lbox}}%
\addtolength{\lboxwidth}{-\lwidth}%
\left(\hspace{-0.3\lboxwidth}%
\usebox{\lbox}%
\hspace{-0.3\lboxwidth}\right)%
}
\def\Q{\mathbb{Q}}
\def\R{\mathbb{R}}
\def\C{\mathbb{C}}
\def\N{\mathbb{N}}
\def\Z{\mathbb{Z}}
\def\T{\mathbb{T}}
\def\BbK{\mathbb{K}}
\def\Ts{\mathsf{T}}
\newcommand{\iii}{\mathbf{i}}
\newcommand{\frakg}{\mathfrak{g}}
\newcommand{\fm}{\mathfrak{m}}
\newcommand{\ff}{\mathfrak{f}}
\newcommand{\power}[1]{[\![ #1 ]\!]}
\newcommand{\laurents}[1]{(\!( #1 )\!)}
\newcommand{\wt}[1]{\tilde{#1}}
\def\cM{\mathcal{M}}
\def\G{\mathbb{G}}
\def\mca{\mathfrak{a}}
\def\mcb{\mathfrak{b}}
\def\mkah{\cM_{K\ddot{a}h}}
\def\mcpx{\cM_{cpx}}
\def\and{\, \& \,}
\def\r{\mathrm{r}}
\def\l{\ell}
\def\cM{\mathcal{M}}
\def\jmu{i}
\def\AAmp{Amp_{\text{nice}}}
\def\Nef{\mathsf{N}}
\newcounter{mainthm}
\newtheorem{main}[mainthm]{Theorem}
\newtheorem{thm}{Theorem}[section]
\newtheorem{lem}[thm]{Lemma}
\newtheorem{prop}[thm]{Proposition}
\newtheorem{cor}[thm]{Corollary}
\theoremstyle{definition}
\newtheorem{defn}[thm]{Definition}
\theoremstyle{remark}
\newtheorem{rmk}[thm]{Remark}
\newtheorem{example}[thm]{Example}
\newcommand\Hom{\operatorname{\mathsf{Hom}}}
\renewcommand\det{\operatorname{\mathsf{det}}}
\DeclareMathOperator{\proj}{\mathsf{Proj}}
\DeclareMathOperator{\HH}{\mathsf{HH}}
\newcommand{\EuA}{\EuScript{A}}
\newcommand{\EuB}{\EuScript{B}}
\newcommand{\EuC}{\EuScript{C}}
\newcommand{\EuD}{\EuScript{D}}
\newcommand{\EuF}{\EuScript{F}}
\newcommand{\EuG}{\EuScript{G}}
\newcommand{\EuH}{\EuScript{H}}
\newcommand{\EuJ}{\EuScript{J}}
\newcommand{\EuL}{\EuScript{L}}
\newcommand{\EuM}{\EuScript{M}}
\newcommand{\EuN}{\EuScript{N}}
\newcommand{\EuO}{\EuScript{O}}
\newcommand{\EuR}{\EuScript{R}}
\newcommand{\EuV}{\EuScript{V}}
\newcommand{\EuY}{\EuScript{Y}}
\newtheorem{ass}{Assumption}[section]
\newtheorem{deflem}[thm]{Definition--Lemma}
\numberwithin{equation}{section}
\newtheorem{sit}{Situation}
\newtheorem{corstar}[thm]{Corollary}
\newtheorem{propstar}[thm]{Proposition}
\newtheorem{lemstar}[thm]{Lemma}
\newtheorem{thmstar}[thm]{Theorem}
\begin{document}

\title{Versality of the relative Fukaya category}

\author{Nick Sheridan}

\address{Nick Sheridan, School of Mathematics, University of Edinburgh, Edinburgh EH9 3FD, UK}

\begin{abstract}
\textsc{Abstract:} Seidel introduced the notion of a Fukaya category `relative to an ample divisor', explained that it is a deformation of the Fukaya category of the affine variety that is the complement of the divisor, and showed how the relevant deformation theory is controlled by the symplectic cohomology of the complement. 
We elaborate on Seidel's definition of the relative Fukaya category, and give a criterion under which the deformation is versal.
\end{abstract}

\maketitle

\tableofcontents

\section{Introduction}
\label{sec:int}

\subsection{Mirror symmetry philosophy}
\label{subsec:msphil}

We start with some generalities about mirror symmetry, to explain the context for our main results. 
We will be imprecise and sweep technicalities under the rug, with the aim of conveying the important ideas. 
Some of the statements we make in this \S \ref{subsec:msphil} may even be literally wrong, but the statements from \S \ref{subsec:notation} onwards are precise.

Mirror symmetry predicts the existence of `mirror pairs' $(X,Y)$ of Calabi--Yau K\"{a}hler manifolds.
It predicts that certain symplectic invariants (the `$A$-model') of $X$ are equivalent to certain algebraic invariants (the `$B$-model') of $Y$, and vice versa. 
The symplectic invariants of $X$ depend only on the K\"{a}hler form $\omega$, and one defines the `K\"{a}hler moduli space' $\mkah(X)$ to be the moduli space of complexified K\"ahler structures $\omega + iB$ on $X$. 
It should be a complex manifold, and its tangent space should be isomorphic to $H^{1,1}(X;\C)$. 
The algebraic invariants of $Y$ depend only on the complex structure $J$, and one defines the `complex moduli space' $\mcpx(Y)$ to be the moduli space of complex structures on $Y$. 
It should be a complex manifold, and its tangent space should be isomorphic to $H^1(TY)$ (the Bogomolov--Tian--Todorov theorem). 

One can also consider `extended' versions of these moduli spaces:
\[ \mkah(X) \subset \mkah^{symp}(X) \subset \mkah^{ext}(X)\]
with tangent spaces $H^{1,1}(X;\C) \subset H^2(X;\C) \subset H^\bullet(X;\C)$; and
\[ \mcpx(Y) \subset \mcpx^{nc}(Y) \subset \mcpx^{ext}(Y)\]
with tangent spaces $H^1(TY) \subset HH^2(Y) \subset HH^\bullet(Y)$ (where $HH^\bullet(Y) \simeq H^\bullet(\wedge\!^\bullet TY)$). 
$\mkah^{symp}(X)$ corresponds to the moduli space of symplectic (but not necessarily K\"ahler) forms, and $\mkah^{ext}(X)$ to the space of `extended symplectic classes' (or `bulk deformation classes'). 
$\mcpx^{nc}(Y)$ corresponds to the space of non-commutative deformations of $Y$, and $\mcpx^{ext}(Y)$ to the space of `extended deformations' \cite{Barannikov1998}.

Mirror symmetry, then, predicts that there is an isomorphism 
$\psi : \mkah^{ext}(X) \to \mcpx^{ext}(Y)$, called the \emph{mirror map}, such that the symplectic invariants of $(X, \omega_p)$ are equivalent to the algebraic invariants of $(Y, J_{\psi(p)})$. 
The mirror map should take $\mkah \subset \mkah^{symp} \subset \mkah^{ext}$ to $\mcpx \subset \mcpx^{nc} \subset \mcpx^{ext}$.
In this paper we will be concerned with the spaces $\mkah$ and $\mcpx$, rather than the extended spaces.

The symplectic invariant of $(X,\omega_p)$ that we will consider is the \emph{Fukaya category}, denoted $\fuk(X,\omega_p)$. 
For the purposes of this \S \ref{subsec:msphil} we will think of it as a $\C$-linear category.
Its deformation theory should be controlled by a dgla, namely the \emph{Hochschild cochain complex} $CC^\bullet(\fuk(X,\omega_p))$. 
Since the Fukaya category depends on the extended symplectic class $\omega_p$, there should be a map from a formal neighbourhood of $p \in \mkah^{ext}(X)$ (which is isomorphic to a formal neighbourhood of $0 \in H^\bullet(X;\C)$) to the space of formal deformations of the category $\fuk(X,\omega_p)$. 

Indeed, this map is realized by an $L_\infty$ homomorphism 
\begin{equation} \EuC\EuO: H^\bullet(X) \dashrightarrow CC^\bullet(\fuk(X,\omega_p)),\end{equation}
where $H^\bullet(X)$ is equipped with the trivial dgla structure (see, e.g., \cite{FO3}). 
This $L_\infty$ homomorphism induces a map between the corresponding moduli spaces of solutions to the Maurer--Cartan equation for the dgla (see \cite{Manetti1999}), which gives a map
\begin{equation}
\label{eqn:bulkformmap}
\mbox{(form. nbhd. of $0 \in H^\bullet(X;\C)$)} \to \mbox{(form. def. of $\fuk(X,\omega_p)$)}.
\end{equation}
The leading term of this $L_\infty$ homomorphism is called the \emph{closed--open map}: in nice situations (see e.g. \cite{Ganatra2015}) it is a quasi-isomorphism, which implies that the map \eqref{eqn:bulkformmap} is in fact an isomorphism. 
This is what we mean by \emph{versality}: all formal deformations of $\fuk(X,\omega_p)$ come from geometry, i.e., by deforming the extended symplectic class $\omega_p$. 

If we restrict our attention to $\mkah^{symp}(X)$, and to $\Z$-graded deformations of the Fukaya category, then \eqref{eqn:bulkformmap} restricts to a map
\begin{equation}
\label{eqn:formmap}
\mbox{(form. nbhd. of $0 \in H^2(X;\C)$)} \to \mbox{(graded form. def. of $\fuk(X,\omega_p)$)}
\end{equation} 
which is an isomorphism if the closed--open map is a quasi-isomorphism. 
However there is not such an obvious way of singling out the deformations of the Fukaya category that correspond to $\mkah(X)$.

\begin{rmk}
The relationship between Hochschild cohomology and deformation theory of categories is subtler than we are implying. There may, for example, exist deformations which `kill' all objects in the category. 
We are ignoring such issues for the purposes of \S \ref{subsec:msphil}. 
\end{rmk}

The complex invariant of $(Y,J_q)$ that we will consider is a DG enhancement of the bounded derived category of coherent sheaves, denoted $\dbdg Coh(Y,J_q)$. Since the derived category depends on the extended complex structure $J_q$, there should be a map from a formal neighbourhood of $q \in \mcpx^{ext}(Y)$ (which is isomorphic to a formal neighbourhood of $0 \in HH^\bullet(Y)$) to the space of formal deformations of the category $\dbdg Coh(Y,J_q)$. 

Indeed, there is an $L_\infty$ quasi-isomorphism 
\begin{equation} H^\bullet(\wedge\!^\bullet TY) \simeq CC^\bullet(\dbdg Coh(Y)),\end{equation}
where the dgla structure on $H^\bullet(\wedge\!^\bullet TY)$ is trivial (see \cite{Kontsevich2003,Barannikov1998}). 
It induces an isomorphism between the corresponding moduli spaces of solutions to the Maurer--Cartan equation for the dgla:
\begin{equation} \mbox{(form. nbhd. of $0 \in HH^\bullet(Y)$)} \simeq \mbox{(form. def. of $\dbdg Coh(Y)$).}\end{equation}
As in the symplectic case, restricting to $\mcpx^{nc}(Y)$ and to $\Z$-graded deformations, we obtain an isomorphism
\begin{equation} \mbox{(form. nbhd. of $0 \in HH^2(Y)$)} \simeq \mbox{(graded form. def. of $\dbdg Coh(Y)$),}\end{equation}
but it is not so obvious how to single out the deformations corresponding to $\mcpx(Y)$.

Now suppose that $\fuk(X,\omega_p) \simeq \dbdg Coh(Y,J_q)$ for some $p \in \mkah^{symp}(X)$, $q \in \mcpx^{nc}(Y)$: it follows that the spaces of graded formal deformations of the two categories match up, and hence that there is a formal diffeomorphism (i.e., a mirror map)
\begin{equation} \psi: \mbox{(form. nbhd. of $p \in \mkah^{symp}(X)$)} \to \mbox{(form. nbhd. of $q \in \mcpx^{nc}(Y)$}\end{equation}
such that 
\begin{equation} \fuk(X,\omega_s) \simeq \dbdg Coh(Y,J_{\psi(s)}).\end{equation}
One expects that the mirror map should match up the subspaces $\mkah(X)$ and $\mcpx(Y)$. 
Thus:
\begin{quote}
In `nice' (i.e., versal) situations, homological mirror symmetry at a point in (extended) K\"{a}hler/complex moduli space should imply homological mirror symmetry for all nearby points. 
\end{quote}

This paper is about an approach to proving homological mirror symmetry that was initiated by Seidel \cite{Seidel2002,Seidel:HMSquartic}.
It has much in common with the above argument, but rather than starting at smooth points $p \in \mkah(X)$, $q \in \mcpx(Y)$, one starts at points in the boundary of a natural compactification of the respective moduli spaces: $p \in \partial \overline{\cM}_{K\ddot{a}h}(X)$ is a `large-volume limit point', and $q \in \partial \overline{\cM}_{cpx}(Y)$ is a `large complex structure limit point' (or equivalently a point with `maximally unipotent monodromy'). 

More precisely, Seidel introduces an auxiliary piece of data: a simple normal crossings ($\snc$) divisor $D \subset X$. 
We will call the data of a compact complex manifold $X$ equipped with a $\snc$ divisor $D$ a \emph{$\snc$ pair}. 
We introduce the notion of a \emph{relative K\"ahler form} on a $\snc$ pair, which is a K\"ahler form on $X$ equipped with a K\"ahler potential on $X \setminus D$, having a prescribed form near $D$ (see \S \ref{subsec:logpair}). 
A relative K\"ahler form $\omega$ has a well-defined relative K\"ahler class $[\omega] \in H^2(X,X \setminus D;\C)$.
We define $\cM_{K\ddot{a}h}(X,D)$, the `moduli space of complexified relative K\"ahler forms'. 
The tangent space to this moduli space at a smooth point should be $H^2(X,X \setminus D;\C)$. 
There is a natural map $\cM_{K\ddot{a}h}(X,D) \to \cM_{K\ddot{a}h}(X)$ which forgets the K\"ahler potential, and whose differential is the map $H^2(X,X \setminus D;\C) \to H^{1,1}(X;\C)$ (note that the image of the map $H^2(X,X \setminus D;\C) \to H^2(X;\C)$ is spanned by classes Poincar\'e dual to the components of $D$, which lie in $H^{1,1}(X;\C)$).

Seidel's idea \cite{Seidel2002} is that one can construct a natural compactification $\overline{\cM}_{K\ddot{a}h}(X,D)$ by including relative K\"ahler forms which are infinite currents along some components of $D$. 
The large-volume limit point $p \in \partial \overline{\cM}_{K\ddot{a}h}(X,D)$ is the deepest point in the natural stratification of this compactification: namely, the point where the relative K\"ahler form is infinite along all of $D$. 
This means that the divisor $D$ bas been stretched until it is `infinitely far away', and in fact the Fukaya category that one should associate to this point $p$ is the \emph{affine Fukaya category}, $\fuk(X \setminus D)$. 
Its objects are compact exact Lagrangian branes $L \subset X \setminus D$, and its $A_\infty$ structure maps count pseudoholomorphic discs in $X \setminus D$. 

To interpolate between the affine Fukaya category $\fuk(X \setminus D)$ corresponding to the large-volume limit point $p \in \partial \overline{\cM}_{K\ddot{a}h}(X,D)$ and the Fukaya categories $\fuk(X,\omega_q)$ for points $q \in \mkah(X,D)$ close to $p$, Seidel introduces the \emph{relative Fukaya category} $\fuk(X,D)$, which lives over $\overline{\cM}_{K\ddot{a}h}(X,D)$. 
We elaborate on Seidel's definition (see \S \ref{subsec:relfukint} for details). 

To be more precise, $\fuk(X,D)$ will be a category defined over some $\C$-algebra $R$, and we can think of $\spec (R)$ as a formal neighbourhood of $p \in \overline{\cM}_{K\ddot{a}h}(X,D)$.
Thus one can think of the relative Fukaya category as a family of $\C$-linear categories, living over a formal neighbourhood of the deepest stratum of the boundary of $\overline{\cM}_{K\ddot{a}h}(X,D)$.

To prove homological mirror symmetry for $X$ and $Y$, then, one starts by choosing an appropriate $D \subset X$ (corresponding to $p \in \partial \overline{\cM}_{K\ddot{a}h}(X,D)$) and maximal degeneration of $Y$ to some singular variety $Y_0$ (corresponding to $q \in \partial \overline{\cM}_{cpx}(Y)$). 
One then proves homological mirror symmetry over these boundary points $p$ and $q$, by identifying $\fuk(X \setminus D)$ with $\dbdg Coh(Y_0)$ (more precisely, with the subcategory generated by vector bundles).
One then hopes that a general versality result, of the kind sketched above, implies that homological mirror symmetry holds over a formal neighbourhood of $p$ in $\overline{\cM}_{K\ddot{a}h}(X,D)$, up to some formal mirror map.
This paper is about establishing conditions when such a versality result holds.

Versality at a large-volume limit point in $\overline{\cM}_{K\ddot{a}h}(X,D)$ is more complicated than at a smooth point in $\mkah(X,D)$. 
The deformation theory of $\fuk(X \setminus D)$ is controlled by $CC^\bullet(\fuk(X \setminus D))$. 
As before, there is a map from a formal neighbourhood of $p \in \overline{\cM}_{K\ddot{a}h}(X,D)$ to the space of formal deformations of $\fuk(X \setminus D)$; i.e., into the moduli space of formal solutions to the Maurer--Cartan equation in $CC^\bullet(\fuk(X \setminus D))$. 
This map is realized by an $L_\infty$ homomorphism
\begin{equation}
\label{eqn:colinf}
 SC^\bullet(X \setminus D) \dashrightarrow CC^\bullet(\fuk(X \setminus D)),
\end{equation}
where the left-hand side is the (cochain complex underlying the) \emph{symplectic cohomology} of $X \setminus D$ (see, e.g., \cite{Seidel:biased,Seidel2002}), equipped with its $L_\infty$ structure (see, e.g., \cite{Fabert2013,Fabert2019}). 
The leading term in this map, once again, is the closed--open map $\EuC\EuO$, and again, in nice situations, this map is a quasi-isomorphism. 
However, in contrast with the previous case, in general the $L_\infty$ structure on symplectic cohomology is neither formal nor abelian.

Symplectic cohomology has generators corresponding to Reeb orbits linking the components of $D$, as well as generators from the interior of $X \setminus D$. 
The basic meridian loop around a component $D_p$ of $D$ corresponds to a deformation direction in $\mkah(X,D)$: namely, by deforming $[\omega]$ in the direction of $PD(D_p)$ (where $PD$ denotes Poincar\'e duality). 
However, in general the space of deformations of $\fuk(X \setminus D)$ will be larger than $\overline{\cM}_{K\ddot{a}h}(X,D)$: for example, one can deform the category by turning on a non-exact symplectic form on $X \setminus D$ (which may not extend to a form on $X$). 
If we are going to prove a versality result, we will have to rule out these extra deformations by restricting the deformations we consider.

To do this, we first restrict ourselves to \emph{graded} deformations over $\overline{\cM}_{K\ddot{a}h}(X,D)$.
We also assume that $(X,D)$ is either \emph{Calabi--Yau}, \emph{positive} or \emph{semi-positive}.
This means roughly that there exists an effective anticanonical divisor supported on $D$ which is respectively torsion, ample or nef. 
Grading and positivity rule out the possibility of extra deformations altogether (with a minor exception that appears when the minimal Chern number of $X$ is $1$: see Remark \ref{rmk:chern1}). 
So our versality result is particularly simple in the positive case.

Calabi--Yau-ness implies that the only extra deformations come from $H^2(X \setminus D)$, which correspond to deformations of the symplectic form on $X \setminus D$.
Ruling out deformations in these directions is the most important step in proving a versality result in this case, and we do not know of a general way to do it.
One ad-hoc way is to exploit the symmetry of $X$ (as was done by Seidel in \cite{Seidel:HMSquartic}). 
We generalize this argument, to allow symmetries of $X$ which are anti-symplectic as well as symplectic.

This slight generalization has surprisingly broad applications. 
For example, it allows us to prove a versality result in the case of generalized Greene--Plesser mirrors. 
This is the crucial step in the proof of homological mirror symmetry in this case in \cite{SS}. 
It should similarly be possible to prove a versality result for Batyrev mirror Calabi--Yau hypersurfaces in toric varieties (at least when the hypersurfaces have dimension $\ge 3$), and therefore dramatically reduce the amount of work needed to prove homological mirror symmetry in that case also.

We remark that the semi-positive case can also be treated using our methods, combining aspects of the positive and Calabi--Yau cases.

In the rest of this introduction, we give more precise statements of our main results.

\subsection{Standing notation and conventions}
\label{subsec:notation}

Let $\Bbbk$ be a field of characteristic zero. 
Let $\Lambda$ be the universal Novikov field over $\Bbbk$:
\begin{equation} \Lambda := \left\{ \sum_{j=0}^\infty c_j \cdot q^{\lambda_j}: c_j \in \Bbbk, \lambda_j \in \R, \lim_{j \to \infty} \lambda_j = +\infty\right\}.\end{equation}

All of our algebraic objects will be $\G$-graded, where $\G = \{\Z \to G \to \Z/2\}$ is a \emph{grading datum} (see \cite{Sheridan:CY}): i.e., $G$ is an abelian group, and $\Z \to G \to \Z/2$ are homomorphisms of abelian groups whose composition is surjective. 
$\G$-graded objects (vector spaces, algebras\dots) are $G$-graded objects: when we say that the degree of the $A_\infty$ structure map $\mu^s$ is $2-s$, it really means that the degree is the image of $2-s$ in $G$. 
Koszul signs in all formulae come from the map $G \to \Z/2$.

In particular, for any symplectic manifold $M$, we have the grading datum $\G(M)$ defined in \cite{Sheridan:CY}: $G = H_1(\cG M)$, where $\cG M$ denotes the total space of the Lagrangian Grassmannian of $M$. 
The map $\Z \to G$ is induced by the inclusion of a fibre of the Lagrangian Grassmannian: $\Z \simeq H_1(\cG_x M) \to H_1(\cG M)$. 
The map $G \to \Z/2$ is given by the first Stiefel-Whitney class of the tautological bundle over $\cG M$. 

\subsection{The relative Fukaya category}
\label{subsec:relfukint}

Let $(X,D)$ be a $\snc$ pair, and $\omega$ a relative K\"ahler form on $(X,D)$. 
We will call this data $(X,D,\omega)$ a \emph{relative K\"{a}hler manifold}. 
The first version of the Fukaya category we consider is the \emph{affine Fukaya category} $\fuk(X \setminus D)$: it is the Fukaya category of the convex exact symplectic manifold $X \setminus D$ (it does not depend on the choice of relative K\"{a}hler form $\omega$ up to quasi-equivalence, hence the absence of $\omega$ from the notation). 
Its objects are compact exact Lagrangian branes $L \subset X \setminus D$.
It is $\G(X \setminus D)$-graded and $\Bbbk$-linear.

Next, we consider a version of the \emph{relative Fukaya category} $\fuk(X,D)$, following \cite{Seidel2002}. 
We also expect that this category does not depend on the relative K\"{a}hler form $\omega$, and on certain other auxiliary choices, although this is less straightforward than for the affine Fukaya category. 
For the purposes of this introduction we will write $\fuk(X,D)$, but in the rest of the paper we will emphasize the extra choices that the relative Fukaya category depends on where appropriate.
 
$\fuk(X,D)$ is $\G(X \setminus D)$-graded and $R$-linear, where $R$ is a $\Bbbk$-algebra that we will discuss. 
The objects of the relative Fukaya category are the same as those of the affine Fukaya category. 
The generators of the morphism spaces are also the same: they are intersection points between Lagrangians. 
The $A_\infty$ structure maps $\mu^*$ should be a sum of terms corresponding to pseudoholomorphic discs $u: \mathbb{D} \to X$, with boundary on the Lagrangian branes, weighted by certain coefficients in $R$.

The coefficient ring $R$ should be some Novikov-type ring, defined so that the weighted sums of discs make sense. 
We will take $R$ to be the completion of some subring $\Runcomp \subset \Bbbk[H_2(X,X \setminus D)]$: a pseudoholomorphic disc $u$ is then weighted by $\nov^{[u]} \in R$, where $[u] \in H_2(X,X \setminus D)$ is simply the homology class of the disc $u$, and the completion includes all infinite sums of discs that may arise.
Note that if the smooth irreducible components of the divisor $D$ are $\{D_p\}_{p \in P}$, then we have a natural isomorphism $H_2(X,X \setminus D) \simeq \Z^P$. 
Thus we can identify $\Bbbk[H_2(X,X \setminus D)] \simeq \Bbbk[\nov_1^{\pm 1},\nov_2^{\pm 1},\ldots]$, and $\nov^{[u]}$ gets identified with $\nov_1^{u \cdot D_1} \cdot \nov_2^{u \cdot D_2}\cdot \ldots$.

The subring $\Runcomp \subset \Bbbk[H_2(X,X \setminus D)]$ should be the span of certain monomials, including all monomials $\nov^{[u]}$ such that $[u]$ may arise as the homology class of a pseudoholomorphic disc. 
One might at first try taking $\Runcomp$ to be the span of all monomials $\nov^{[u]}$ such that $[u] \cdot D_p \ge 0$ for all $p \in P$, since any smooth pseudoholomorphic disc $u$ will have non-negative intersection number with each complex divisor $D_p$. 
This would give rise to a coefficient ring $R = \Bbbk\power{\nov_1,\nov_2,\ldots}$.

However, this approach has a fatal flaw. 
The moduli spaces of pseudoholomorphic discs defining the structure maps of the Fukaya category may include some \emph{nodal} discs (which would be counted `virtually'): these may have components which are holomorphic spheres contained inside the divisors $D_p$, and having negative intersection number with $D_p$. 
Thus, it may be possible for the nodal disc to have negative overall intersection number with some $D_p$.
These nodal discs would then have to be counted with a weight $\nov^{[u]} \notin \Runcomp$, so our definition would not make sense.
  
In order to resolve this problem, one must either rule out the appearance of holomorphic sphere components inside the components $D_p$, or one must arrange that any sphere inside $D_p$ has non-negative intersection number with it. 
The only practical way to achieve either of these is to impose some positivity assumption on the normal bundle $ND_p$ to each component $D_p$ of $D$. 
In the first case, such positivity rules out bubbling of holomorphic spheres inside $D_p$ from one-dimensional moduli spaces, for index reasons. 
In the second, if $u$ is a holomorphic sphere inside $D_p$, then $u \cdot D_p = c_1(ND_p)(u) \ge 0$, so one does not have the problem of nodal discs whose intersection number with $D_p$ is negative.

This was the approach taken in \cite{Sheridan:CY}, in the case where $X$ was a Fermat hypersurface in projective space and $D$ was the union of coordinate hyperplane sections (each of which is clearly ample, hence has positive normal bundle).
However, there are many interesting cases of  mirror symmetry which come with a natural choice of divisor $D \subset X$, and in the vast majority of these cases $D$ has many non-ample components $D_p$. 
One would like to define a version of the relative Fukaya category that works in these cases too. 

We have
\[ H^2(X,X \setminus D;\R) \simeq \R\left\langle PD(D_p) \right\rangle,\]
where `$PD$' denotes Poincar\'e duality. 
We will identify $H^2(X,X \setminus D;\R)$ with the group of $\R$-divisors supported on $D$, by identifying the class $\sum_p \ell_p \cdot PD(D_p)$ with the divisor $\sum_p \ell_p \cdot D_p$. 

\begin{defn}
\label{defn:nefampxd}
We define $Nef(X,D) \subset H^2(X,X \setminus D;\R)$ to be the closed cone of effective nef divisors supported on $D$. 
We define $Amp(X,D) \subset H^2(X,X \setminus D;\R)$ to be the open cone of ample divisors $\sum_p \ell_p \cdot D_p$ with $\ell_p > 0$ for all $p$.
\end{defn}

\begin{rmk}
Any relative K\"ahler form $\omega$ has a cohomology class $[\omega] \in H^2(X,X \setminus D;\R)$. 
A cohomology class can be realized by a relative K\"ahler form if and only if it lies in $Amp(X,D)$ (see Lemma \ref{lem:ampkahl}). 
Since we assume that $(X,D)$ admits a relative K\"ahler form, it follows that $Amp(X,D)$ is non-empty. 
It follows that $Amp(X,D)$ is the interior of $Nef(X,D)$, and that the closure of $Amp(X,D)$ is $Nef(X,D)$, by a theorem of Kleiman \cite{Kleiman1966}, see \cite[Theorem 1.4.23]{Lazarsfeld2004}. 
\end{rmk}

The relative Fukaya category should be thought of as living over the `space of all relative K\"ahler forms $\omega$'. 
In practice, it will be convenient for us to consider a `smaller' object: the relative Fukaya category living over `the space of all relative K\"ahler forms $\omega$ whose cohomology class lies in the interior of a fixed convex sub-cone $\Nef \subset Nef(X,D)$'. 
The reason for this will be explained in Remark \ref{rmk:needsys}.

\begin{defn}
\label{defn:NEXD}
Let $\Nef \subset Nef(X,D)$ be a convex sub-cone with non-empty interior. 
We define $NE(\Nef)_\R \subset H_2(X,X \setminus D;\R)$ to be the dual cone to $\Nef$, and $NE(\Nef) := NE(\Nef)_\R \cap H_2(X,X \setminus D)$.
When $\Nef = Nef(X,D)$ is as big as possible, we define $NE(X,D) := NE(Nef(X,D))$.
\end{defn}

Any $E$ in the interior of $Nef(X,D)$ corresponds to an effective ample divisor, and hence can be perturbed to a nearby smooth one by Bertini's theorem. 
It follows that $u \cdot E \ge 0$ for any holomorphic disc $u$ with boundary in $X \setminus D$ (including the nodal ones). 
Therefore $[u] \in NE(X,D) \subset NE(\Nef)$ for all holomorphic discs $u$.

\begin{example}
\label{eg:Dsmooth}
Suppose that $D \subset X$ is a smooth ample divisor. 
In this case $H_2(X,X \setminus D) \simeq \Z$, and $NE(X,D) \simeq \Z_{\ge 0}$. 
\end{example}

Since we assume that $\Nef$ has non-empty interior, $NE(\Nef)$ is a strongly convex cone (i.e., does not contain a line). 
It follows that
\begin{equation} \Runcomp(\Nef) := \Bbbk[NE(\Nef)]\end{equation}
has a unique toric maximal ideal $\fmuncomp \subset \Runcomp(\Nef)$. 
We define $R(\Nef)$ to be the $\fmuncomp$-adic completion of $\Runcomp(\Nef)$, and $\fm \subset R(\Nef)$ the corresponding maximal ideal. 
We abbreviate $R(\Nef)$ by $R$ where confusion is unlikely.
Thus, we arrive at a definition of the relative Fukaya category $\fuk(X,D,\Nef)$: it is $\G(X \setminus D)$-graded, $R(\Nef)$-linear, and pseudoholomorphic discs $u$ are weighted by $\nov^{[u]} \in R(\Nef)$.
We define $\fuk(X,D) := \fuk(X,D,Nef(X,D))$, which is linear over $R(X,D) := R(Nef(X,D))$.

It is important to remark that we do not give a general construction of the relative Fukaya category in this paper. 
However, in \S \ref{subsec:relfuk} we state some of its desired properties as Assumptions \ref{ass:relfuk1}, \ref{ass:relfukco} and \ref{ass:relfukambco}, and sketch justifications for them; our main results are proved under these assumptions. 

The reason we take this approach is that the general construction of the Fukaya category requires a more sophisticated technical framework than the classical pseudoholomorphic curve theory we use in this paper, and there are a number of different such technical frameworks available. 
Assumptions \ref{ass:relfuk1}--\ref{ass:relfukambco} should hold independently of the technical framework. 
Thus, anyone who wants to apply our results in their own work need only implement the sketched justifications of our assumptions within their preferred technical framework.

\begin{rmk}
\label{rmk:assint}
A construction of the relative Fukaya category satisfying Assumptions \ref{ass:relfuk1}--\ref{ass:relfukambco} will be implemented in the work-in-preparation \cite{Perutz2015a}, using Cieliebak--Mohnke's `stabilizing divisors' framework \cite{Cieliebak2007}.
\end{rmk}

\begin{defn}
\label{defn:mkah}
We define the affine $\Bbbk$-scheme
\[ \overline{\cM}_{K\ddot{a}h}(X,D,\Nef) := \spec(R(\Nef)).\]
Thus $\fuk(X,D,\Nef)$ is a category `defined over $\overline{\cM}_{K\ddot{a}h}(X,D,\Nef)$'.
Similarly, $\fuk(X,D)$ is a category defined over $\overline{\cM}_{K\ddot{a}h}(X,D)$. 
It is clear that $R(X,D) \subset R(\Nef)$, so we have a morphism $\overline{\cM}_{K\ddot{a}h}(X,D,\Nef) \to \overline{\cM}_{K\ddot{a}h}(X,D)$.
\end{defn}

\begin{rmk}
Definition \ref{defn:mkah} contradicts what we said in \S \ref{subsec:msphil}, which was that $\spec(R)$ would only be a formal neighbourhood of a large-volume limit point of the compactified relative K\"ahler moduli space. 
So perhaps we should use a different notation for $\spec(R)$.
However, since we have no use in this paper for the rest of the relative K\"ahler moduli space, and $\spec(R)$ will appear repeatedly, we prefer to reserve the simpler notation for it. 
We hope no confusion will result.
\end{rmk}

\begin{defn}
We define $\Runcomp_0 \subset \Runcomp$ to be the degree-$0$ component of $\Runcomp$, and similarly $R_0 \subset R$ to be its completion.
\end{defn}

\begin{defn}
\label{defn:Rcl}
We define $\Runcomp_{cl} \subset \Runcomp$ to be the direct sum of graded pieces whose grading lies in the image of $\Z \to H_1(\cG(X \setminus D))$; and we define $R_{cl} \subset R$ to be its completion. 
If $c_1(TX)|_{X \setminus D}$ is torsion, then the map $\Z \to H_1(\cG(X \setminus D))$ is injective (see Lemma \ref{lem:c1tors}), and $R_{cl}$ retains a natural $\Z$-grading.
\end{defn}

\begin{example}
If $D$ is smooth, then $\Runcomp_{cl} \simeq \Bbbk[q]$ (compare Example \ref{eg:Dsmooth}). 
If $|q| = 0$ then $R_{cl} \simeq \Bbbk\power{q}$. 
If $|q| \neq 0$ then $R_{cl} \simeq \Bbbk[q]$, because we take the completion in the graded sense (i.e., we take the completion in each graded piece independently).
\end{example}

\begin{rmk}
The long exact sequence in homology for the pair $(X,X \setminus D)$ shows that we can identify $\Runcomp_{cl}$ with $\Bbbk[H_2(X)/H_2(X \setminus D)]$ (the $\Z$-grading from Definition \ref{defn:Rcl} is given by $2c_1$).
Thus $R_{cl}$ is a natural coefficient ring for defining the closed Gromov--Witten invariants. 
Of course this motivates the notation $NE(X,D)$, since in classical Gromov--Witten theory one takes the coefficient ring to be the completion of the group ring of $NE(X)$, the cone of effective curve classes. 
Indeed one could call $NE(X,D)$ the `cone of effective relative curve classes'.
We remark that in the same way that we find it convenient to replace $NE(X,D)$ by $NE(\Nef)$, in Gromov--Witten theory it is also sometimes useful to replace $NE(X)$ by a larger convex cone which is finitely-generated, see e.g. \cite{Gross2015}.
\end{rmk}

It turns out to be very helpful to us if the coefficient ring $R(\Nef)$ is `non-negatively graded' in a certain sense.

\begin{defn}\label{defn:Npos}
We say that $\Nef$ is:
\begin{itemize}
\item \emph{positive} if $c_1(TX) \in H^2(X;\R)$ lies in the image of the interior of $\Nef$
(it follows that the anticanonical bundle of $X$ is ample, i.e., that $X$ is Fano);
\item \emph{Calabi--Yau} if $c_1(TX)$ is torsion, i.e., vanishes in $H^2(X;\R)$ (it follows that some tensor power of the canonical bundle is trivial);
\item \emph{semi-positive} if $c_1(TX) \in H^2(X;\R)$ lies in the image of $\Nef$
(it follows that the anticanonical bundle of $X$ is nef: some authors say that $X$ is `semi-Fano' in this case).
\end{itemize}
We say that $(X,D)$ is positive/Calabi--Yau/semi-positive if $Nef(X,D)$ is. 
Note that Calabi--Yau $\implies$ semi-positive.
\end{defn}

\begin{example}
\label{eg:smoothpos}
Suppose that $D \subset X$ is smooth and ample. 
Then $(X,D)$ is positive/Calabi--Yau/semi-positive if and only if we have $c_1(TX) = \kappa \cdot PD(D)$ in $H^2(X;\Q)$, where $\kappa \in \Q$ is positive/zero/non-negative.
\end{example}

\begin{rmk}
If, in the setting of Example \ref{eg:smoothpos}, $\kappa - 1$ is negative/zero/positive then we have $K_X + D > 0$/$K_X+D=0$/$K_X+D<0$, which implies that the affine variety $X \setminus D$ is of log general type/log Calabi--Yau/log Fano. 
This condition will come up in Proposition \ref{prop:defgenint}.
\end{rmk}

\begin{rmk}
\label{rmk:gradpositivity}
If $\Nef$ is positive, Calabi--Yau or semi-positive, then it is clear that $c_1(TX)$ lies in 
\[ \im\left(H^2(X,X \setminus D;\Q) \to H^2(X;\Q)\right) = \ker\left(H^2(X;\Q) \to H^2(X \setminus D;\Q)\right) \]
by the long exact sequence in cohomology for the pair $(X,X \setminus D)$, and hence that $c_1(TX)|_{X \setminus D}$ is torsion. 
It follows that we have a $\Z$-grading on $R_{cl}$, as remarked in Definition \ref{defn:Rcl}. 
It is simple to check that, if $\Nef$ is positive/Calabi--Yau/semi-positive, then the maximal ideal $\fm_{cl} \subset R(\Nef)_{cl}$ is concentrated in positive/zero/non-negative degrees.
\end{rmk}

Now we should also be able to associate to the symplectic manifold $(X,\omega)$ the \emph{absolute Fukaya category} $\fuk(X,\omega)$ (see \cite{FO3}). 
Its objects are arbitrary Lagrangian branes in $X$, and it is $\Lambda$-linear where $\Lambda$ is the universal Novikov field. 
It is independent of the choice of K\"ahler potential for the relative K\"ahler form $\omega$.

Let us explain how the absolute Fukaya category should be related to the relative Fukaya category. 
Suppose that $[\omega] \in H^2(X,X \setminus D;\R)$ is the cohomology class of a relative K\"ahler form, and it lies in the interior of $\Nef \subset Nef(X,D)$. 
Then there is a well-defined adically continuous $\Bbbk$-algebra homomorphism
\begin{align*}
p(\omega)^*: R(\Nef) & \to  \Lambda, \quad \text{defined by}\\
p(\omega)^*(r^u) &= q^{\omega(u)}.
\end{align*}
More explicitly, if $[\omega] = \sum_p c_p \cdot PD(D_p)$, then this map sends $\nov_p \mapsto q^{c_p}$. 
Note that the map will not converge unless $[\omega]$ lies in the interior of $\Nef$.

In other words, there is a $\Lambda$-point $p(\omega)$ of $\overline{\cM}_{K\ddot{a}h}(X,D,\Nef)$. 
We can take the fibre of $\fuk(X,D,\Nef)$ over $p(\omega)$, which is a $\Lambda$-linear $A_\infty$ category which we denote by $\fuk(X,D,\Nef)_{p(\omega)}$. 
Explicitly,
\[ \fuk(X,D,\Nef)_{p(\omega)} := \fuk(X,D,\Nef) \otimes_R \Lambda,\]
where $\Lambda$ is regarded as an $R$-algebra via $p(\omega)^*$: so we simply use the homomorphism $p(\omega)^*$ to change coefficients from $R$ to $\Lambda$ in our $A_\infty$ category.

There should be an embedding of this fibre of the relative Fukaya category into the absolute Fukaya category:
\begin{equation} 
\label{eqn:relabsembed}
\fuk(X,D,\Nef)_{p(\omega)} \hookrightarrow \fuk(X,\omega).\end{equation}

\begin{rmk}
\label{rmk:assint2}
As already remarked, we do not give a general construction of the relative Fukaya category in this paper, so we do not prove the existence of the embedding \eqref{eqn:relabsembed}. 
Rather, we state it as Assumption \ref{ass:relabs} (more precisely, we state a slightly more elaborate version taking bounding cochains into account), and again sketch its justification. 
We remark that the relative and absolute Fukaya categories can be constructed using classical pseudoholomorphic curve theory, and shown to satisfy Assumptions \ref{ass:relfuk1}--\ref{ass:relabs}, in the following situations:
\begin{itemize}
\item $(X,\omega)$ is positively monotone (i.e., $c_1(TX) = \kappa \cdot [\omega]$ in $H^2(X;\R)$ for some $\kappa > 0$). This is implemented in \cite{Sheridan:Fano}, up to minor differences in conventions.
\item $(X,\omega)$ is Calabi--Yau (i.e., $c_1(TX)$ is torsion), and we restrict our attention to \emph{strictly unobstructed} Lagrangians: i.e., pairs $(L,J_L)$ where $L$ is a Lagrangian brane and $J_L$ is an almost-complex structure such that $L$ bounds no $J_L$-holomorphic discs and intersects no $J_L$-holomorphic spheres. 
The constructions of \cite{Sheridan:Fano} go through in this case, with minor modifications to rule out unstable sphere and disc bubbling as in \cite{Seidel:HMSquartic,Seidel:Flux}).
\end{itemize}
We remark that the condition of strict unobstructedness is generic in $J_L$ for $\dim_\C(X) \le 2$, so every Lagrangian brane $L$ can be upgraded to a strictly unobstructed Lagrangian $(L,J_L)$ in that case (compare \cite{Seidel:HMSquartic}). 
In higher dimensions it may not be practical to verify that a given $L$ can be equipped with such a $J_L$. 
\end{rmk}

\begin{rmk}
In general, one needs virtual perturbation theory to define the absolute Fukaya category (see \cite{FO3}).
\end{rmk}

In fact, the relative Fukaya category $\fuk(X,D,\Nef)$ should contain information about the absolute Fukaya category $\fuk(X,\omega')$ for all relative K\"{a}hler forms $\omega'$ whose cohomology class lies in the interior of $\Nef$ (and in particular, $\fuk(X,D)$ contains information about the absolute Fukaya category of $X$ equipped with any relative K\"ahler form). 
This may seem a little strange at first, since submanifolds which are Lagrangian for the symplectic form $\omega$ need not be Lagrangian for $\omega'$: however, linear interpolation between the two relative K\"{a}hler forms induces a deformation of the corresponding Liouville domains, so exact Lagrangians can be deformed with the symplectic form to remain Lagrangian.

\subsection{Versality} 
\label{subsec:versint}

The relative Fukaya category $\fuk(X,D,\Nef)$ is a \emph{deformation} of the affine Fukaya category $\fuk(X \setminus D)$ over $R$. 
The main aim of this paper is to study this deformation problem, and establish general criteria under which it is \emph{versal}.
Let us explain what we mean by versality purely in algebraic terms.

Let $R$ be a $\Bbbk$-algebra arising from the following construction: we are given a convex cone $\Nef \subset \R^P$ with non-empty interior, for some finite set $P$. 
We let $\Runcomp := \Bbbk[\Nef^\vee \cap \Z^P]$ (if $\Nef$ is rational polyhedral, this will be the $\Bbbk$-algebra of functions on the corresponding affine toric variety), and we equip it with a $\G$-grading. 
This algebra has a unique toric maximal idea $\fmuncomp \subset \Runcomp$, corresponding to the vertex of the cone $\Nef^\vee$, and we define $R$ to be the $\G$-graded $\fmuncomp$-adic completion of $\Runcomp$. 
Of course, the coefficient ring $R$ of the relative Fukaya category, as sketched in the previous section, is an example of such an algebra.

Our arguments require certain technical hypotheses on $R$. 
If $R$ satisfies these hypotheses, we say that $R$ is `nice' (Definition \ref{defn:Rnice}).
We will explain presently how to ensure that the coefficient ring of the relative Fukaya category is nice.

Suppose that $\EuA$ is a $\G$-graded $A_\infty$ category over $\Bbbk$, with $A_\infty$ structure maps $\mu^*_0$. 
A $\G$-graded deformation of $\EuA$ over $R$ is a $\G$-graded $R$-linear (possibly curved) $A_\infty$ category $\EuA_R = (\EuA \ctens R, \mu^*_R)$, with $\mu^*_R \equiv \mu^*_0 \text{ (mod $\fm$)}$. 
The hat over the tensor product denotes the $\fm$-adic completion. 
Note that all morphism spaces in $\EuA_R$ are free $R$-modules. 
We denote the set of such deformations by $\mathfrak{A}_R(\EuA)$. 
 
To such a deformation $\EuA_R$, there are associated \emph{first-order deformation classes} $\defa_p \in \HH^\bullet(\EuA)$. 
Namely, if we expand the $A_\infty$ structure maps as
\begin{equation}
\label{eqn:mustarr} 
\mu^*_R = \sum_{u \in \Nef^\vee \cap \Z^P} \nov^u \cdot \mu^*_u,
\end{equation} 
then the $A_\infty$ relations show that the Hochschild cochains $\mu^*_{u_p}$ are closed, where $u_p$ are the generators of $\Z^P$ (this requires $R$ to be nice). 
We define $\defa_p := [\mu^*_{u_p}]$: its grading is such that $\nov_p \defa_p \in \HH^2\left(\EuA,\EuA \otimes \fmuncomp\right)$, where $\nov_p := \nov^{u_p}$.
 
We consider $\Aut(R)$, the group of $\G$-graded $\Bbbk$-algebra automorphisms of $R$ that respect the $\fm$-adic filtration. 
Note that $\Aut(R)$ acts on the set of deformations of $\EuA$ over $R$: one applies $\psi^* \in \Aut(R)$ to each of the monomials $\nov^u \in R$ appearing in the expansion \eqref{eqn:mustarr}.

Our versality results will only apply to a certain subset $\mathfrak{A}'_R(\EuA) \subset \mathfrak{A}_R(\EuA)$, consisting of those deformations whose first-order deformation classes $\nov_p \defa_p$ span $\HH^2(\EuA,\EuA \otimes \fmuncomp/\fmuncomp^2)$.

\begin{defn}
\label{defn:vers1int}
We say that $\EuA_R$ is an $R$-\emph{complete} deformation of $\EuA$ if, for any $\EuB_R \in \mathfrak{A}'_R(\EuA)$, there exist
\begin{itemize}
\item An automorphism $\psi^* \in \Aut(R)$;
\item A (possibly curved) $A_\infty$ functor
\begin{equation} F^*: \EuB_R \dashrightarrow \psi^* \EuA_R,\end{equation}
\end{itemize}
such that $F^* \equiv \id \mbox{ (mod $\fm$)}$. 

If the automorphism $\psi^*$ is uniquely determined for each $\EuB_R$, then we say that $\EuA_R$ is \emph{$R$-universal}; if the map $\psi^*:\fm/\fm^2 \to \fm/\fm^2$ is uniquely determined for each $\EuB_R$, we say that $\EuA_R$ is \emph{$R$-versal}.
\end{defn}

\begin{rmk}
An $R$-versal deformation $\EuA_R$ need not be versal in the usual sense: i.e., there may exist deformations of $\EuA$ over another ring $S$ which are not the pullback of $\EuA_R$ under any map $R \to S$. 
\end{rmk}

\begin{rmk}
If $\Nef \subset Nef(X,D)$ is positive in the sense of Definition \ref{defn:Npos}, then each generator $r_p \in R(\Nef)$ is the unique monomial in its graded piece of $R(\Nef)$. 
As a consequence, any graded automorphism $\psi^* \in \Aut(R(\Nef))$ necessarily sends each $r_p$ to a non-zero multiple of itself; thus $\psi^*$ is uniquely determined by $\psi^*: \fm/\fm^2 \to \fm/\fm^2$.
In particular the `mirror maps' appearing in our versality theorems in these cases are determined by first-order data.
\end{rmk}

The categories (and functors) that we consider may be \emph{curved}, i.e., the $A_\infty$ structure map $\mu^0_L \in hom^2_{\EuA_R}(L,L)$ may not vanish, for certain objects $L$. 
However, we always assume that the curvature $\mu^0_L$ is of order $\fm$ (for the relative Fukaya category, this is ensured by exactness of the Lagrangians in $X \setminus D$).
There is a standard way of producing a non-curved $A_\infty$ category $\EuA_R^\bc$ from the curved $A_\infty$ category $\EuA_R$: the objects of $\EuA_R^\bc$ are pairs $(L,\mca)$, where $L$ is an object of $\EuA_R$ and $\mca$ is a \emph{bounding cochain} for $L$ (see \cite{FO3}). 
Bounding cochains are required to be of order $\fm$.

The existence of a curved $A_\infty$ homomorphism $F^*$ satisfying $F^* \equiv \id \mbox{ (mod $\fm$)}$ implies the existence of a non-curved $A_\infty$ isomorphism $ \EuB_R^{\bc} \dashrightarrow \psi^*\EuA_R^{\bc}$ (see \S \ref{subsec:bc}).
In particular, $R$-versality tells us that $\EuA_R^\bc$ is uniquely determined by $\EuA$ together with the first-order deformation classes, up to a formal change of variables in the coefficient ring $R$.

Now we explain how to apply this deformation theory to the relative Fukaya category. 
First, we must ensure that the coefficient ring is nice.  
We give a set of technical hypotheses on the cone $\Nef$ (see Definition \ref{defn:xdenice}), and prove that they imply that $R(\Nef)$ is nice.
We show that a relative K\"ahler class $\omega$  admits a compatible system of divisors which is nice if and only its cohomology class lies in a certain open convex sub-cone $\AAmp(X,D) \subset Amp(X,D)$ (Lemma \ref{lem:niceAAmp}).
This means that our results can only give information about the absolute Fukaya category of $X$ equipped with a relative K\"ahler form whose cohomology class lies in $\AAmp(X,D)$. 

\begin{rmk}
\label{rmk:needsys}
Typically, $Nef(X,D)$ is not nice. 
This is the main reason that we need to introduce the cone $\Nef$ into the story.
\end{rmk}

An important source of examples is the following. 
Let $Y$ be a toric variety, and $D' \subset Y$ the toric boundary divisor. 
Let $X \subset Y$ be a smooth subvariety, and $D:= D' \cap X$ the (transverse) intersection of $X$ with $D'$. 
We say that a relative K\"ahler form on $(X,D)$ is \emph{ambient} if it is restricted from a relative K\"ahler form on the ambient toric variety. 

In general, we only know how to use our versality results to get information about the absolute Fukaya of $X$ equipped with ambient K\"ahler forms.
If each component of $D'$ intersects $X$ either in an empty or a connected divisor, we prove that all ambient relative K\"ahler classes lie in $\AAmp(X,D)$ (Corollary \ref{cor:toricEabc}). 
We don't know how to show that any non-ambient classes lie in $\AAmp(X,D)$, so we don't know how to apply our versality results to them.
 
\begin{rmk}
If there's a component of $D'$ which intersects $X$ in a disconnected divisor, then we don't know anything about $\AAmp(X,D)$: for all we know it may even be empty. 
However all is not lost: we define the \emph{ambient relative Fukaya category} $\fuk_{amb}(X,D,\Nef)$, a modified version which only contains information about the absolute Fukaya category of $X$ equipped with ambient relative K\"ahler forms. 
It is defined over a quotient $R_{amb}(\Nef)$ of $R(\Nef)$, which corresponds to a sub-scheme $\overline{\cM}_{amb\text{-}K\ddot{a}h}(X,D,\Nef) \subset \overline{\cM}_{K\ddot{a}h}(X,D,\Nef)$, the subspace of relative K\"ahler forms whose cohomology class lies in the image of the restriction map $H^2(Y,Y \setminus D') \to H^2(X,X \setminus D)$ (note that this map is surjective if and only if each component of $D'$ intersects $X$ in an empty or a connected divisor). 
We define a condition on $\Nef$ called \emph{amb-niceness} (Definition \ref{defn:ambnice}) which ensures that $R_{amb}(\Nef)$ is nice, and prove that the cohomology class of any ambient relative K\"ahler form lies in the interior of an amb-nice cone $\Nef$ (Lemma \ref{lem:ambniceAAmp}).
We prove versality results for the ambient relative Fukaya category parallel to those we prove for the ordinary relative Fukaya category (see Theorem \ref{thm:eqversalityamb} and Corollary \ref{cor:geomhyp2amb}). 
We will not mention these variations on our results further in this introduction. 
\end{rmk}

\begin{rmk}
The hypothesis of `niceness' of $\Nef$ is significantly stronger than it needs to be for $R(\Nef)$ to be nice. 
That is because it serves other important purposes in this paper: it is used to help verify the hypothesis of the versality criterion by ruling out deformation directions of the relative Fukaya category (see, e.g., Lemma \ref{lem:defgen}), and also to rule out certain types of sphere bubbling in moduli spaces (see, e.g., Lemma \ref{lem:coalg}). 
It would have been possible to introduce a series of related hypotheses on $\Nef$, adapted to each of these intended purposes, but it would have been unwieldy and resulted in no extra generality in the case of subvarieties of toric varieties.
\end{rmk}

\begin{rmk}
It is also true that $R(X,D) = R(\Nef)$ is nice if $D$ is smooth, although $\Nef$ will never be nice in this case according to our definition. 
We will treat the `$D$ smooth' case separately throughout.
\end{rmk}

We are now ready to state our first main versality theorem. 
Let $\EuA_R \subset \fuk(X,D,\Nef)$  be a full subcategory of the relative Fukaya category, and $\EuA \subset \fuk(X \setminus D)$ the corresponding full subcategory of the affine Fukaya category.
The versality criterion has to do with $\HH^2\left(\EuA,\EuA \otimes \fmuncomp\right)$, which is where the deformation classes $\nov_p \defa_p$ of $\EuA_R$ live. 

\begin{main}[= Theorem \ref{thm:versality}]
\label{thm:1}
Suppose that:
\begin{itemize}
\item $\Nef$  is nice, or that $D$ is smooth and $(X,D)$ semi-positive;
\item The classes $\nov_p \defa_p$ span $\HH^2\left(\EuA,\EuA \otimes \fmuncomp\right)$ as $\Runcomp_0$-module.
\end{itemize}
Then $\EuA_R$ is an $R$-complete deformation of $\EuA$. 

If furthermore $\defa_p \neq 0$ for all $p$, then $\EuA_R$ is $R$-versal.
\end{main}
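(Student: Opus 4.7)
My plan is to proceed by induction on the $\fmuncomp$-adic filtration of $R$, building the automorphism $\psi^*$ and the curved $A_\infty$ functor $F^*$ order by order. The niceness hypothesis on $\Nef$ (or the alternative that $D$ is smooth and $(X,D)$ is semi-positive) is used to ensure that $R$ is nice, so that the first-order coefficients $\mu^*_{u_p}$ of the structure maps of $\EuA_R$ are genuine Hochschild cocycles representing the classes $\defa_p$, and that $\HH^2(\EuA, \EuA \otimes \fmuncomp)$ carries its expected $\Runcomp_0$-module structure. The hypothesis that the classes $\nov_p \defa_p$ span this cohomology as an $\Runcomp_0$-module will fuel each step of the induction.

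For the inductive step, suppose we have constructed $\psi^*_k \in \Aut(R/\fm^{k+1})$ and a curved $A_\infty$ functor $F^*_k : \EuB_R/\fm^{k+1} \to \psi^*_k\EuA_R/\fm^{k+1}$ reducing to the identity mod $\fm$. The failure of $F^*_k$ to extend further should be measured by a Hochschild $2$-cocycle $\beta$ in $CC^2(\EuA, \EuA \otimes \fmuncomp^{k+1}/\fmuncomp^{k+2})$. Modifying the order-$(k{+}1)$ part of $F^*$ alters $\beta$ by an arbitrary Hochschild coboundary, while modifying the order-$(k{+}1)$ part of $\psi^*$ shifts $\beta$ by an element in the image of $\Runcomp_0 \cdot \fmuncomp^{k}/\fmuncomp^{k+1}$ acting on the $\defa_p$. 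The spanning hypothesis then ensures that these two forms of freedom together cover the cohomology group in which $[\beta]$ lives, so we may make $[\beta]=0$ via a $\psi^*$-adjustment and then kill $\beta$ itself via an $F^*$-adjustment. Passing to the inverse limit produces the desired $\psi^*$ and $F^*$, proving completeness.

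For versality under the extra hypothesis that $\defa_p \neq 0$ for all $p$, I would argue directly from first-order data: both sides of a potential equivalence $\EuB_R \simeq \psi^* \EuA_R$ have well-defined first-order deformation classes in $\HH^2(\EuA, \EuA \otimes \fmuncomp / \fmuncomp^2)$, which must agree. The class on the right equals $\sum_p \psi^*(\nov_p)\cdot \defa_p$ mod $\fm^2$. Since each $\defa_p$ is nonzero and the $\nov_p$ generate $\fm/\fm^2$ freely as a graded vector space, comparing coefficients in each graded summand determines $\psi^*|_{\fm/\fm^2}$ uniquely from $\EuB_R$.

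The main obstacle I anticipate is the bookkeeping in the inductive step: identifying precisely how a shift in $\psi^*$ at order $k{+}1$ translates into a shift of the obstruction class $[\beta]$ at weight $\fmuncomp^{k+1}/\fmuncomp^{k+2}$, and verifying that the image of this shift really is the $\Runcomp_0$-submodule generated by the $\defa_p$ in the correct graded piece (here the combinatorial setup of $\Nef$ matters, because the gradings in $R$ need to match the gradings in $\HH^\bullet$ appropriately). A secondary technical issue is handling the curvature of the $A_\infty$ structures and of $F^*$ consistently through the induction, which should be manageable using the bounding-cochain formalism sketched just after Definition \ref{defn:vers1int}.
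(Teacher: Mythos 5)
Your proposal is correct and follows essentially the same route as the paper: the paper reduces Theorem A to a general order-by-order obstruction argument (Lemma \ref{lem:vers}) for Maurer--Cartan elements in the Hochschild d$\G$la, where the obstruction at order $k$ is written as $\sum_p \varphi_p^k \cdot \nov_p \defa_p$ via the spanning hypothesis and absorbed partly into a correction of $\psi^*$ and partly into a gauge transformation $\gamma$ (from which the curved functor $F^*$ is then produced by the flow equation of \S\ref{subsec:defainf}), exactly matching your two adjustment mechanisms. The one detail you elide is that one must arrange $\psi_p \not\equiv 0 \pmod{\fm}$ so that $\psi^*$ is genuinely an automorphism (Definition \ref{defn:Rauts} requires the $\psi_p$ to be units); the paper secures this using the restriction to $\EuB_R \in \mathfrak{A}'_R(\EuA)$ together with Lemma \ref{lem:nicerpdif}, the same two facts that underlie your uniqueness argument for $\psi^*|_{\fm/\fm^2}$.
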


Theorem \ref{thm:1} is proved using a standard order-by-order deformation theory argument for the $\fm$-adic filtration on $R$ (compare \cite[Lemma 3.5]{Seidel:HMSquartic}). 
The basic idea is that $\HH^2(\EuA,\EuA \otimes \fmuncomp)$ is the obstruction space for the deformation problem, and since it is spanned by the first-order deformation classes, any higher-order obstructions can be deformed away by modifying the automorphism $\psi^*$.

Of course, if Theorem \ref{thm:1} is to be at all useful, its hypotheses should be satisfied reasonably often. 
We now give a geometric criterion for the hypotheses of our versality theorem to be satisfied, which uses the notion of semi-positivity and also the \emph{symplectic cohomology} of the complement of $D$, $SH^\bullet(X \setminus D;\Bbbk)$ (see, e.g., \cite{Seidel:biased,Viterbo1999}).

\begin{rmk}
\label{rmk:caution}
The cautious reader should be advised that the proofs of the geometric criteria we are about to state are not presented in the same level of detail as the rest of the proofs in the paper. 
They are some of the most important motivations and ideas in this paper: they tell us when we should hope to be able to apply Theorem \ref{thm:1}, and motivate a generalization (Theorem \ref{thm:2}) which is of much wider applicability. 
For our immediately intended applications however (see, e.g., \cite{SS}), it is easier to verify the hypotheses of Theorems \ref{thm:1} and \ref{thm:2} directly, so the geometric criteria are unnecessary; furthermore, detailed proofs would involve lengthy technicalities (in particular: modifications of certain results from the works-in-preparation \cite{Borman2015,Borman2016}; modifications of certain results from \cite{Ganatra2016a}, which uses a somewhat different technical setup from the present paper; and a treatment of the Morse--Bott approach to symplectic cohomology in the present context). 
Therefore we have decided simply to sketch the proofs. 
The results whose proofs are only sketched are confined to \S \ref{sec:SH} and the corresponding results in this \S \ref{sec:int}.
\end{rmk}

The symplectic cohomology $SH^\bullet(X \setminus D;\Bbbk)$  is a $\G$-graded $\Bbbk$-algebra (the $\G$-grading  is explained in \S \ref{subsec:SH}). 
It is the cohomology of a cochain complex $SC^\bullet(X \setminus D;\Bbbk)$. 
There is a subcomplex $C^\bullet(X \setminus D;\Bbbk) \subset SC^\bullet(X \setminus D;\Bbbk)$ whose cohomology is $H^\bullet(X \setminus D;\Bbbk)$, and we denote the quotient complex by $SC^\bullet_+(X \setminus D;\Bbbk)$. 
This results in a long exact sequence
\[ \ldots \to  H^\bullet(X \setminus D;\Bbbk) \xrightarrow{i} SH^\bullet(X \setminus D;\Bbbk) \to SH^\bullet_+(X \setminus D;\Bbbk) \to \ldots\]
(see \cite{Viterbo1999} and \cite{BEE}). 
The map $i$ is a graded $\Bbbk$-algebra homomorphism.
We will also consider the \emph{closed--open string map}
\begin{equation} \EuC\EuO: SH^\bullet(X \setminus D;\Bbbk) \to \HH^\bullet(\fuk(X \setminus D)).\end{equation}
 (see \cite{Seidel2002,Ganatra2013} for the definition: it counts pseudoholomorphic discs with an interior puncture, asymptotic to an orbit which is a generator of symplectic cohomology). 

Under certain assumptions on $(X,D)$ (including when it admits a nice cone $\Nef$, or when $D$ is smooth and $(X,D)$ is semi-positive), one can define certain classes $\mathsf{d}_p \in SH^\bullet(X \setminus D;\Bbbk)$ corresponding to basic orbits going around the components $D_p$ of $D$ (compare \cite{Ganatra2016a}). 
Their degree is such that $\nov_p \mathsf{d}_p \in SH^2(X \setminus D;\fm)$, and furthermore the first-order deformation classes of the relative Fukaya category are equal to $\EuC\EuO(\mathsf{d}_p)$.

\begin{propstar}[= Lemma \ref{lem:defgen}]
\label{prop:defgenint} 
Suppose that either:
\begin{itemize}
\item $\Nef$ is nice and semi-positive; or
\item $D$ is smooth, $(X,D)$ is semi-positive, and the constant $\kappa \in \Q$ from Example \ref{eg:smoothpos} satisfies $0 \le \kappa < 1$.
\end{itemize}
Then the images of the classes $\nov_p \mathsf{d}_p$ are a $\Runcomp_0$-basis for $SH^2_+\left(X \setminus D; \fmuncomp\right)$.
\end{propstar}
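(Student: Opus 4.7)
The plan is to compute $SH^\bullet_+(X \setminus D;\fmuncomp)$ via a Morse--Bott model for the Reeb flow on the boundary of a Liouville neighbourhood of $X \setminus D$, in which the closed orbits organize into families indexed by a winding class $u \in NE(\Nef) \setminus \{0\}$, each parametrized by the open stratum $D^\circ_{\mathrm{supp}(u)} \subset \bigcap_{p \in \mathrm{supp}(u)} D_p$. The two hypotheses in the statement play complementary roles: niceness of $\Nef$ (or smoothness of $D$) ensures that the Morse--Bott picture is well defined, by ruling out the sphere bubbling that would otherwise obstruct the construction (compare Lemma \ref{lem:coalg}), while semi-positivity is what makes the grading count cut off at the basic orbits.

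First I would set up the action-filtration spectral sequence converging to $SH^\bullet_+(X \setminus D;\fmuncomp)$. Its $E_1$-page decomposes as
\[ E_1 \;=\; \bigoplus_{u \in NE(\Nef)\cap \fmuncomp} H^*\bigl(D^\circ_{\mathrm{supp}(u)}; \Bbbk\bigr)[\sigma(u)] \otimes \Bbbk \cdot \nov^u, \]
where $[\sigma(u)]$ is the Conley--Zehnder shift of the $u$-fold iterated orbit. By construction, $\mathsf{d}_p$ is a normalization of the $H^0$-generator of the basic summand at $u = u_p$, and $\sigma(u_p)$ is arranged so that $\nov_p \mathsf{d}_p$ sits in degree $2$. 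Setting this up in the snc setting is parallel to the calculations of \cite{Ganatra2016a} together with the works-in-preparation cited in Remark \ref{rmk:caution}.

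Second I would use semi-positivity to show that, apart from the basic contributions $\nov_p \mathsf{d}_p$, no other class in $E_1$ can reach total degree $2$ after acting by $\Runcomp_0$. The grading of a class in $H^j(D^\circ_{\mathrm{supp}(u)}) \otimes \nov^u$ is $j + \sigma(u) + |\nov^u|$; semi-positivity gives $|\nov^u| = 2\, c_1(TX) \cdot u \ge 0$, and the Conley--Zehnder index count for iterated orbits around an snc stratum forces $\sigma(u) \ge 2$ with equality precisely at basic classes $u = u_p$ with $j = 0$. The inequality is strict once $\sum_p k_p \ge 2$ or $\mathrm{supp}(u)$ has more than one element. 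In the smooth-$D$ case this reduces to a disc-bundle computation, and the condition $0 \le \kappa < 1$ precisely forces the higher iterates $\mathsf{d}^k$ ($k \ge 2$) out of degree $2$.

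Third I would rule out differentials: any spectral-sequence differential into or out of a class-$u_p$ degree-$2$ generator would pair it with a class of degree $\ne 2$ lying in $\fmuncomp$, which the same grading count forbids. Hence the $E_1$-description survives to $E_\infty$ and yields the claimed basis $\{\nov_p \mathsf{d}_p\}_{p \in P}$, with $\Runcomp_0$-linear independence automatic from the $NE(\Nef)$-grading on $E_\infty$. The main obstacle throughout is the analytic setup of the Morse--Bott model in the snc setting and the verification of the Conley--Zehnder index formula with respect to the grading datum $\G(X \setminus D)$; these are expected to be conceptually routine but technically substantial, and this is consistent with the sketch-level presentation promised in Remark \ref{rmk:caution}.
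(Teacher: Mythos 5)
Your overall strategy (a filtered/Morse--Bott model for $SH^\bullet_+$, a degree count driven by semi-positivity, and a final check that no spectral-sequence differential touches the degree-$2$ generators) is the same as the paper's, and the differential analysis at the end matches. But there is a genuine gap in the middle step, and it is exactly the point where the hypothesis that $\Nef$ is \emph{nice} actually does its work. The statement concerns $SH^2_+(X\setminus D;\fmuncomp)$ as an $\Runcomp_0$-module, so a degree-$2$ generator has the form $\nov^v\cdot\gamma$ where the Novikov exponent $v\in NE(\Nef)\setminus\{0\}$ is \emph{a priori unrelated} to the winding class $u\in\mathsf{pos}_1$ of the orbit $\gamma$. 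Your $E_1$-page ties the coefficient to the winding class ($\cdots\otimes\Bbbk\cdot\nov^u$), which silently replaces the $\fmuncomp$-coefficient module by a much smaller sub-object and makes the problem disappear. Relatedly, your degree formula $|\nov^u|=2c_1(TX)\cdot u$ is only valid for monomials in $\Rpuncomp_{cl}$ (Definition \ref{defn:Rcl}, Remark \ref{rmk:usualnov}); for a general $u\in\mathsf{pos}_1$ the degree $f(u)$ lives in $H_1(\cG(X\setminus D))$ and is not an integer multiple of $c_1$.

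The paper's argument handles this as follows: for $\nov^v\cdot\gamma$ of integer degree $i$ one gets $\deg(\nov^{v-u})+2u\cdot D+j=i$, which forces $\nov^{v-u}\in\Rpuncomp_{cl}$; then Lemma \ref{lem:typea} --- which is precisely where niceness of $\Nef$ enters --- upgrades this to $v-u\in NE(\Nef)_{cl}$, so that semi-positivity (Remark \ref{rmk:gradpositivity}) gives $2c_1(v-u)\ge 0$ and the count $2=2c_1(v-u)+2u\cdot D+j$ pins down $u=u_p$, $j=0$, $\nov^{v-u_p}\in\Runcomp_0$. Your proposal instead spends niceness on ruling out sphere bubbling in the Morse--Bott setup, which is not where it is needed here. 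A second, smaller gap: ``$\Runcomp_0$-linear independence automatic from the grading'' hides both the need for $\mathsf{d}_p\neq 0$ (which your differential analysis does address) and the fact that distinct $\nov_p\mathsf{d}_p$ could interfere if $\deg(\nov_p)=\deg(\nov_q)$; this is excluded by Lemma \ref{lem:nicerpdif}, again using niceness. In the smooth-$D$ case your reduction is fine in spirit, but the same coefficient-versus-winding issue must be resolved there by the explicit inequality $2=2\kappa v+(2-2\kappa)u+j\ge 2$ with $v\ge 1$ the (independent) Novikov exponent.
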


This allows us to give a geometric criterion for when the hypothesis of Theorem \ref{thm:1} is satisfied. 
The idea is to assume that $\EuC\EuO$ is surjective, so that when the hypotheses of Proposition \ref{prop:defgenint} are satisfied, $\HH^2\left(\EuA,\EuA \otimes \fmuncomp\right)$ is spanned by the image of the deformation classes together with classes in the image of $H^2(X \setminus D;\fmuncomp)$. 
The latter classes correspond to unwanted `non-geometric' directions in which $\EuA_R$ could be deforming at higher order, over which we have no control: so if we are going to apply Theorem \ref{thm:1}, we must assume that they do not exist.

\begin{corstar}[= Corollary \ref{cor:geomhyp}]
\label{cor:geomhypint}
Consider the situation of Theorem \ref{thm:1}. 
Suppose furthermore that:
\begin{itemize}
\item Either $\Nef$ is nice and  semi-positive, or $D$ is smooth and $(X,D)$ is semi-positive with $0 \le \kappa < 1$;
\item $H^2\left(X \setminus D;\fmuncomp\right) =0$;
\item The map
\begin{equation}
\label{eqn:cohopesurji}
\EuC \EuO \otimes \fmuncomp :SH^2\left(X \setminus D;\fmuncomp\right) \to \HH^2\left(\EuA,\EuA \otimes \fmuncomp \right)\end{equation}
is surjective (e.g., $\EuC\EuO$ is surjective).
\end{itemize}
Then $\EuA_R$ is an $R$-complete deformation of $\EuA$. 

If furthermore \eqref{eqn:cohopesurji} is an isomorphism (e.g., if $\EuC\EuO$ is an isomorphism), then $\EuA_R$ is $R$-versal.
\end{corstar}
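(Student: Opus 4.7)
The plan is to deduce the corollary directly from Theorem \ref{thm:1}. The only nontrivial hypothesis to check is that the first-order deformation classes $\nov_p \defa_p$ span $\HH^2(\EuA,\EuA \otimes \fmuncomp)$ as an $\Runcomp_0$-module, and, for the second assertion, that $\defa_p \neq 0$ for every $p$. Both will follow by a short diagram chase which transports Proposition \ref{prop:defgenint} forward along Viterbo's long exact sequence and then along the closed--open map.

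First I would unpack Proposition \ref{prop:defgenint}, which applies under either of the listed semi-positivity hypotheses and gives that the images of $\nov_p \mathsf{d}_p$ form an $\Runcomp_0$-basis of $SH^2_+(X \setminus D;\fmuncomp)$. Tensoring the Viterbo long exact sequence
\[ \ldots \to H^2(X \setminus D;\fmuncomp) \to SH^2(X \setminus D;\fmuncomp) \to SH^2_+(X \setminus D;\fmuncomp) \to H^3(X\setminus D;\fmuncomp) \to \ldots \]
and invoking the vanishing hypothesis $H^2(X\setminus D;\fmuncomp)=0$ makes the map $SH^2 \to SH^2_+$ an injection on $\fmuncomp$-coefficients. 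Given any $\alpha \in SH^2(X\setminus D;\fmuncomp)$, its image in $SH^2_+$ is of the form $\sum_p c_p \cdot \nov_p \mathsf{d}_p$ with $c_p \in \Runcomp_0$, and then $\alpha - \sum_p c_p \nov_p \mathsf{d}_p$ dies in $SH^2_+$ and hence in $SH^2$ itself. Thus the $\nov_p \mathsf{d}_p$ already span $SH^2(X \setminus D;\fmuncomp)$ over $\Runcomp_0$.

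Next I would push this statement into Hochschild cohomology via the $\Runcomp_0$-linear map $\EuC\EuO \otimes \fmuncomp$, which by construction sends $\nov_p \mathsf{d}_p \mapsto \nov_p \defa_p$. Its assumed surjectivity therefore identifies $\HH^2(\EuA,\EuA \otimes \fmuncomp)$ with the $\Runcomp_0$-span of the classes $\nov_p \defa_p$; this is exactly the spanning hypothesis of Theorem \ref{thm:1}, so $\EuA_R$ is $R$-complete. For the versality clause, if $\EuC\EuO \otimes \fmuncomp$ is an isomorphism then it carries the $\Runcomp_0$-basis $\{\nov_p \mathsf{d}_p\}$ to an $\Runcomp_0$-basis $\{\nov_p \defa_p\}$, which in particular forces $\defa_p \neq 0$ for each $p$ and triggers the versality clause of Theorem \ref{thm:1}.

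The only delicate point, such as it is, is checking that both Viterbo's LES and the closed--open map remain $\Runcomp_0$-linear after tensoring with $\fmuncomp$; but this is automatic, since $\Runcomp_0$ acts purely on the coefficient module and commutes with the geometric operations defining these maps. All of the substantive analytic content has been compressed into Proposition \ref{prop:defgenint}, so there is no further geometric obstacle at this stage.
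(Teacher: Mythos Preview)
Your proof is correct and follows essentially the same route as the paper: use Proposition~\ref{prop:defgenint} together with $H^2(X\setminus D;\fmuncomp)=0$ and the long exact sequence to see that the $\nov_p\mathsf{d}_p$ span $SH^2(X\setminus D;\fmuncomp)$, push forward along the surjective map $\EuC\EuO\otimes\fmuncomp$ (using $\EuC\EuO(\mathsf{d}_p)=\defa_p$), and then invoke Theorem~\ref{thm:1}. The only cosmetic difference is that the paper routes the identification $\EuC\EuO(\mathsf{d}_p)=\defa_p$ through the commutative diagram involving $PSS_{log}$ and the classes $\co(z_p)$, whereas you cite it as ``by construction''.
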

 
We now consider the hypothesis $H^2\left(X \setminus D;\fmuncomp\right) =0$. 
The following easy Lemma gives two situations where this can be achieved:

\begin{lem}[= Lemma \ref{lem:nodeg2}]
\label{lem:nodeg2int}
Suppose that one of the following holds:
\begin{itemize}
\item $\Nef$ is Calabi--Yau, and $H^2(X\setminus D;\Q) = 0$;
\item $\Nef$ is positive, and $X$ has minimal Chern number $\ge 2$.
\end{itemize}
Then $H^2\left(X \setminus D;\fmuncomp\right) = 0$.
\end{lem}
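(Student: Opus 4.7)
The plan is to decompose the $G$-graded module
\[
H^2\!\left(X \setminus D; \fmuncomp\right) = \bigoplus_{a + b = 2 \text{ in } G} H^a(X \setminus D; \Bbbk) \otimes \fmuncomp^{[b]},
\]
where the superscripts denote $G$-graded pieces, and argue that every summand vanishes. Ordinary topological cohomology is $\Z$-graded, so $H^a(X \setminus D; \Bbbk)$ vanishes unless $a$ lies in the image of $\Z \to G$. In both cases of the lemma, $c_1(TX)|_{X \setminus D}$ is torsion: it is torsion on $X$ itself in the Calabi--Yau case, and in the positive case $c_1(TX)$ lies in the image of $H^2(X,X \setminus D;\R) \to H^2(X;\R)$, so restricts to zero on $X \setminus D$ by the long exact sequence. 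Lemma~\ref{lem:c1tors} then supplies an injection $\Z \hookrightarrow G$; consequently $a + b = 2$ forces $b \in \Z$, and by definition of $\Runcomp_{cl}$ we have $\fmuncomp^{[b]} = (\fmuncomp \cap \Runcomp_{cl})^{[b]}$ for such $b$.

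For Case~1 (Calabi--Yau with $H^2(X \setminus D;\Q) = 0$), Remark~\ref{rmk:gradpositivity} asserts that $\fmuncomp \cap \Runcomp_{cl}$ is concentrated in degree zero, so only the summand with $a = 2$ and $b = 0$ can survive. That summand equals $H^2(X \setminus D;\Bbbk) \otimes (\fmuncomp \cap \Runcomp_{cl})$, and vanishes because $H^2(X \setminus D;\Q) = 0$ implies $H^2(X \setminus D;\Bbbk) = 0$ by base change of coefficients.

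For Case~2 (positive with minimal Chern number $\geq 2$), I appeal to the identification $\Runcomp_{cl} \simeq \Bbbk[H_2(X)/H_2(X \setminus D)]$ with $\Z$-grading given by $2c_1$, noted in the remark following Definition~\ref{defn:Rcl}. Positivity of $\Nef$ places $c_1(TX)$ in the interior of the image of $\Nef$, so $c_1$ pairs strictly positively with every nonzero class in $NE(\Nef)$; combined with minimal Chern number $\geq 2$, this forces $2c_1 \cdot A \geq 4$ for every nonzero effective $A$. Hence $(\fmuncomp \cap \Runcomp_{cl})^{[b]} = 0$ for $b \leq 2$, and every summand in the decomposition is zero. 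The whole argument is really just bookkeeping in the $\G$-grading; the only point that needs care is the injectivity of $\Z \hookrightarrow G$, which is exactly what Lemma~\ref{lem:c1tors} provides.
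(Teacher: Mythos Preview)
Your proof is correct and follows essentially the same approach as the paper's: decompose a generator as $\nov^v \cdot \alpha$ with $\alpha \in H^a(X\setminus D;\Bbbk)$, observe that $\nov^v$ must lie in $\fmuncomp_{cl}$ since $\alpha$ has integer degree, and then use that $\fmuncomp_{cl}$ is concentrated in degree $0$ (Calabi--Yau) or degree $\ge 4$ (positive with minimal Chern number $\ge 2$). The paper's proof is a terse two-line version of exactly this; your write-up is more explicit about the grading reduction via Lemma~\ref{lem:c1tors}, which the paper leaves implicit. One tiny wording point: in the positive case you want $c_1(TX)$ in the \emph{image of the interior} of $\Nef$ (so that a lift $\tilde c_1 \in \Nef^\circ$ pairs strictly positively with nonzero $u \in NE(\Nef)$), not the interior of the image.
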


\begin{rmk}
\label{rmk:chern1}
If $\Nef$ is positive and $X$ has minimal Chern number $1$, then $H^2(X \setminus D;\fmuncomp)$ may contain classes $\nov^v \cdot e$ where $v$ is the class of an effective curve with Chern number $1$, and $e$ is the identity. 
For example, such classes exist for Fano-index-$1$ hypersurfaces in projective space, which were considered in \cite{Sheridan:Fano}. 
There it was discovered that the resulting extra deformation directions $\nov^v \cdot e$ corresponded to the appearance of curvature terms $\mu^0$, which count holomorphic discs passing through $D$ with boundary on a single Lagrangian. 
So in this case the ambiguity introduced by the extra deformation directions coming from Chern-number-$1$ spheres was rather slight, it essentially amounted to determining which eigenvalue of quantum cup product with $c_1$ our Lagrangians lay in. 
This is likely to be a general phenomenon (because the map $\EuC\EuO$ is unital).
\end{rmk}

\begin{rmk}
\label{rmk:gentype}
The case that $\Nef$ is not semi-positive is also interesting (it is central to the very interesting area of homological mirror symmetry for varieties of general type \cite{Kapustin2009}).
One may still have a hope that the classes $\nov_p \mathsf{d}_p$ span $SH^2_+\left(X \setminus D; \fmuncomp\right)$, but it is much less likely. 
That is because $\fmuncomp_{cl}$ will include classes in \emph{negative} degree: so although the indices of orbits tend to increase as they wrap more times around the divisor, this can be compensated by the negative-degree classes in $\fmuncomp_{cl}$ to bring the degree back down to $2$, so we expect there to be a large number of non-geometric `directions' for $\EuA$ to deform in. 
One can still hope to prove versality results in the presence of such non-geometric deformation directions, but they will be of a different nature to those we prove in this paper (see, e.g., \cite{Seidel:g2}).  
We will not consider generalizations in this direction in this paper.
\end{rmk}

In the case when $\Nef$ is Calabi--Yau and $H^2(X \setminus D;\Q) \neq 0$, we are stuck.
Of course, this includes many interesting cases, so we would like to improve our results to cover more of them.

\begin{rmk}
One might hope to prove a weaker result: namely that there exists a bulk deformation of the relative Fukaya category, so that the bulk-deformed $\EuA_R$ gets matched up with $\EuB_R$ (compare \cite{Seidel2016}). 
Not only would this approach introduce additional complications, it would not work in general: observe that bulk deformation classes live in $H^2(X)$, whereas our unwanted deformation directions live in $H^2(X \setminus D)$: so unless the restriction map $H^2(X) \to H^2(X \setminus D)$ is surjective, there will remain possible `directions' for the Fukaya category to deform in, which cannot be killed even by the introduction of a bulk class. 
On the mirror side, $X \setminus D$ corresponds to the singular central fibre $Y_0$ of a maximally unipotent degeneration. In general $Y_0$ may admit deformations which are not smoothings: for example, by `re-gluing' along the divisors where the components of $Y_0$ meet (compare \cite[Theorem 5.10]{Friedman1983} in the K3 case). 
It is natural to expect that these deformations correspond to the `extra' deformations of $\fuk(X \setminus D)$ coming from classes in $H^2(X \setminus D)$. 
We thank Helge Ruddat for this observation.
\end{rmk}

\subsection{Versality in the presence of a group action}
\label{subsec:versgpint}

In \cite{Seidel:HMSquartic}, Seidel studied the deformations of $\fuk(X \setminus D)$ in the case that $X$ is the Fermat quartic hypersurface in $\CP^3$, and $D$ is the union of coordinate hyperplanes. 
In this case $X$ is Calabi--Yau and $H^2(X \setminus D;\Q)$ is non-zero, so we have some unwanted directions in our deformation problem. 
Seidel circumvented this issue by exploiting symmetry: there is an obvious action of $\Gamma := \Z/4$ on $(X,D)$ by cyclically permuting the homogeneous coordinates of $\CP^3$, and it respects the restriction of the Fubini--Study symplectic form and acts on the arrangement of vanishing cycles in $X \setminus D$ that Seidel considers. 
In particular, the endomorphism algebra of this arrangement of vanishing cycles in the relative Fukaya category admits an action of $\Gamma$. 
It follows that the relative Fukaya category can only deform in the direction of $H^2(X \setminus D)^\Gamma$: but a simple computation shows that $H^2(X \setminus D)^\Gamma \simeq 0$. 
Thus our unwanted directions are ruled out, by symmetry (a similar strategy was subsequently used in \cite{Sheridan:CY}).

This is a convenient way to eliminate the unwanted deformation directions in $H^2(X \setminus D)$, but the existence of a sufficiently large symmetry group is the exception rather than the norm.  
We generalize Seidel's technique, by considering symmetry groups that act on $(X,D)$ by a combination of K\"ahler isometries and anti-isometries. 
Whereas a K\"ahler isometry induces an autoequivalence of the relative Fukaya category, a K\"ahler anti-isometry induces a \emph{duality} of the Fukaya category \cite{Castano2010}.

\begin{defn}
\label{defn:sgngrp}
A \emph{signed group} $(\Gamma,\sigma)$ consists of a group $\Gamma$, together with a homomorphism $\sigma: \Gamma \to \Z/2$. 
A homomorphism of signed groups is a homomorphism of groups, respecting the map to $\Z/2$.
\end{defn}

\begin{defn}
An \emph{action} of $(\Gamma,\sigma)$ on an $A_\infty$ algebra $\EuA$ is an action of $\Gamma$ on the underlying vector space of $\EuA$, which is a strict $A_\infty$ isomorphism 
\begin{equation} \gamma: \EuA \to \begin{cases}
					\EuA & \mbox{ if $\sigma(\gamma) = 0$} \\
					\EuA^{op} & \mbox{ if $\sigma(\gamma) = 1$,}
				\end{cases} \end{equation}
where $\EuA^{op}$ denotes the opposite $A_\infty$ algebra (see \S \ref{subsec:ainfversgp} and \S \ref{subsec:sgp} for further details). 
\end{defn}

Given an action of $(\Gamma,\sigma)$ on $\EuA$ and $R$, we say that $\EuA_R$ is a $(\Gamma,\sigma)$-equivariant deformation of $\EuA$ over $R$ if the induced action of $\Gamma$ on the underlying $R$-module of $\EuA_R$ continues to define an action of $(\Gamma,\sigma)$ on $\EuA_R$.  
We denote the set of such equivariant deformations by $\mathfrak{A}_R(\EuA)^\Gamma \subset \mathfrak{A}_R(\EuA)$. 
We define $\mathfrak{A}'_R(\EuA)^\Gamma \subset \mathfrak{A}_R(\EuA)^\Gamma$ to be the subset of equivariant deformations such that $\HH^2(\EuA,\EuA \otimes \fmuncomp/\fmuncomp^2)^\Gamma$ is spanned by the first-order deformation classes $\nov_p \defa_p$. 

\begin{defn}
\label{defn:eqversainf}
We say that $\EuA_R$ is an $R$-\emph{complete} equivariant deformation of $\EuA$ if, for any $\EuB_R \in \mathfrak{A}'_R(\EuA)^\Gamma$, there exist $\psi^*$ and $F^*$ as in Definition \ref{defn:vers1int}. 
We also define the notions of $R$-universal and $R$-versal equivariant deformations as there.
\end{defn}

\begin{defn}
An action of $(\Gamma,\sigma)$ on a relative K\"{a}hler manifold $(X,D,\omega)$ is an action of $\Gamma$ on $X$ by diffeomorphisms, such that
\begin{itemize}
\item $\gamma$ acts holomorphically if $\sigma(\gamma) = 0$, and anti-holomorphically if $\sigma(\gamma) = 1$;
\item $\gamma$ preserves the K\"{a}hler potential.
\end{itemize}
Observe that if $\Gamma$ is finite and acts on $(X,D)$ by biholomorphisms and anti-biholomorphisms, respecting the cohomology class of the K\"ahler form, one can average to obtain a relative K\"ahler form and potential which are preserved by $\Gamma$.
\end{defn}

It turns out that we will also need to equip $(X,D,\omega)$ with a morphism of grading data $p: \G \to \Z/4$. 
An action of $(\Gamma,\sigma)$ on $(X,D,\omega)$, respecting the morphism $p$ and the cone $\Nef$, induces an action of $\Gamma$ on the coefficient ring $R(\Nef)$ (see \S \ref{subsec:verseq}). 

\begin{lem}[= Lemma \ref{lem:signgrpactrel}]
\label{lem:intsigngrpactrel}
Suppose that $(\Gamma,\sigma)$ acts on $(X,D,\omega)$, respecting the morphism of grading data $p: \G \to \Z/4$ and the cone $\Nef$.
Suppose that $\EuA_R \subset \fuk(X,D,\Nef)$ is a full subcategory, and $\EuA \subset \fuk(X \setminus D)$ is the corresponding full subcategory. 
Suppose that $\Gamma$ acts freely on the set of unanchored branes underlying $Ob(\EuC)$. 
Then $(\Gamma,\sigma)$ acts on $\EuA$ up to shifts, and $\EuA_R$ is a $(\Gamma,\sigma)$-equivariant deformation of $\EuA$ over $R$.
\end{lem}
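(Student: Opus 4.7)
The plan is to define, for each $\gamma \in \Gamma$, an explicit strict $A_\infty$ isomorphism $\EuA_R \to \EuA_R$ if $\sigma(\gamma)=0$ or $\EuA_R \to \EuA_R^{op}$ if $\sigma(\gamma)=1$, and then check that the resulting collection satisfies the composition law required of a $(\Gamma,\sigma)$-action, is compatible with the induced $\Gamma$-action on $R(\Nef)$, and reduces mod $\fm$ to the corresponding action on $\EuA$.

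First I would address objects. Since $\gamma$ either preserves or negates $\omega$ and preserves $D$, pushforward by $\gamma$ takes each underlying Lagrangian in $X \setminus D$ to a Lagrangian. The freeness hypothesis on the $\Gamma$-action on unanchored branes means that for each brane $L$, the orbit $\Gamma \cdot L$ consists of branes lying over distinct Lagrangian submanifolds, so one may choose how the anchoring data (grading lift, spin structure, etc.) transforms by selecting a preferred representative of each orbit and propagating by the group law. The morphism of grading data $p: \G \to \Z/4$ encodes the mod-$4$ action of $\Gamma$ on the Lagrangian Grassmannian $\cG(X \setminus D)$, which need not be the identity (anti-holomorphic elements act nontrivially on Maslov data); this is the source of the qualifier ``up to shifts'' in the statement: the action on objects may alter the $G$-grading by an element of $\ker(G \to \Z/4)$.

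Next I would address structure maps. If $\sigma(\gamma)=0$, postcomposition with $\gamma$ gives a bijection between the moduli space of $J$-holomorphic discs with boundary on $(L_0,\ldots,L_k)$ and the moduli space of $(\gamma_*J)$-holomorphic discs with boundary on $(\gamma(L_0),\ldots,\gamma(L_k))$, preserving cyclic order; transporting perturbation data via $\gamma$, this yields a strict $A_\infty$ isomorphism $\EuA_R \to \EuA_R$. If $\sigma(\gamma)=1$, then $\gamma \circ u$ is $(\gamma_*J)$-anti-holomorphic, and precomposing with the complex conjugation $z \mapsto \bar z$ of the domain disc produces a $(\gamma_*J)$-holomorphic disc; however $z \mapsto \bar z$ reverses the orientation of the boundary circle, so the cyclic order of boundary Lagrangians is reversed to $(\gamma(L_k),\ldots,\gamma(L_0))$, which is exactly the bijection of moduli spaces needed to define a strict $A_\infty$ isomorphism $\EuA_R \to \EuA_R^{op}$. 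Regarding coefficients, one has $[\gamma \circ u] = \gamma_*[u]$ in both cases; since the hypothesis that $\Gamma$ preserves $\Nef$ implies that $\gamma_*$ preserves the dual cone $NE(\Nef)_\R$, it induces a $\G$-graded $\Bbbk$-algebra automorphism of $R(\Nef)$ preserving $\fm$, and this automorphism matches the transformation of weights $r^{[u]} \mapsto r^{\gamma_*[u]}$. Reducing mod $\fm$ recovers the corresponding action on $\EuA$ (where all discs are forced into $X \setminus D$).

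The main technical obstacle is the careful bookkeeping of Koszul signs and grading lifts in the anti-holomorphic case: one must verify that reversing the cyclic order of boundary markings on pseudoholomorphic discs produces precisely the sign conventions and degree conventions implicit in $\EuA_R^{op}$ and in the notion of an action of a signed group, and that the interaction of the anti-holomorphic action with the $\cG(X \setminus D)$-grading (through $p$) is consistent with the definitions in \S \ref{subsec:ainfversgp} and \S \ref{subsec:sgp}. Everything else is a straightforward transport of structure by $\gamma$, together with the observation that reduction mod $\fm$ is $\Gamma$-equivariant because $\fm$ is the unique toric maximal ideal of $R(\Nef)$ and hence preserved by every graded automorphism.
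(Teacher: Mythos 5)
Your geometric skeleton is the right one — post-compose with $\gamma$, and for anti-holomorphic $\gamma$ also conjugate the domain, which reverses the cyclic order of boundary punctures and lands in the opposite category; this is exactly how the paper's isomorphism $c^M_{\l\r}:\fuk_{\l\r}(M)\to\fuk_{\l\r}(\overline{M})^{op}$ (Lemma \ref{lem:oppfuk}) is built. But the induced action on the coefficient ring, which is part of what the lemma asserts, is wrong as you state it. You claim the weights transform by $\nov^{[u]}\mapsto\nov^{\gamma_*[u]}$. For odd $\gamma$ the composite $\gamma\circ u$ is anti-holomorphic, and it is $\gamma\circ u\circ\conju$ that is holomorphic; conjugating the domain reverses its orientation, so the relevant class is $-\gamma_*[u]$, not $\gamma_*[u]$. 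Indeed $\gamma_*$ does \emph{not} preserve $NE(\Nef)$ when $\sigma(\gamma)=1$ — only $(-1)^{\sigma(\gamma)}\gamma_*$ does — so your formula does not even define an endomorphism of $R(\Nef)$. On top of this sign, the correct action \eqref{eqn:gamR} carries the extra involution $i_p(\nov^u)=(-1)^{p(\deg(\nov^u))/2}\nov^u$ for odd $\gamma$. This is not optional decoration: in the relative category the index identity \eqref{eqn:mugradings} is replaced by \eqref{eqn:mugradingsrel}, which is off by $\deg(\nov^u)$, and reconciling the glued orientation operators via $s_\r^2$ versus $s_\l^2$ produces exactly the sign $i_p$ (this is the content of Lemma \ref{lem:oprelfuk}). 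Without $i_p$ the claimed strict isomorphism onto $\EuA_R^{op}$ fails by signs in each $H_2(X,X\setminus D)$-weight. Relatedly, you misread the role of $p:\G\to\Z/4$: it is not "the mod-$4$ action of $\Gamma$ on $\cG(X\setminus D)$" but the datum needed to define $i_p$ and to identify the two versions $\fuk_\r$ and $\fuk_\l$ of the category (conjugation of the domain exchanges the right- and left-shift conventions, cf.\ \eqref{eqn:opporssign}, which is why the paper must work in $\fuk_{\l\r}$); the hypothesis on $\Gamma$ is compatibility \eqref{eqn:pgamma}.

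Two further points. First, your explanation of "up to shifts" is not the paper's: the ambiguity is not that the action on objects changes gradings by $\ker(G\to\Z/4)$, but that a (signed) symplectomorphism only lifts to a \emph{graded} symplectomorphism — a lift of $\gamma_*$ to the universal abelian cover $\wt{\cG}(X\setminus D)$ — up to the group $G_{ev}$ of even shifts; the action is a homomorphism to $\mathsf{StrAut}^\sigma_{\l\r}(\EuA)/G_{ev}$ (Lemma \ref{lem:AutsigmaFukr}). Second, you defer "the careful bookkeeping of Koszul signs" as a technical obstacle, but in this paper that bookkeeping \emph{is} the proof: the choice $c_\r^M=-\conju$, the sign $(-1)^{k+1}$ from reversing the trivializations of the domain moduli $\mathcal{R}$, the compatibility $\gamma\circ op=op\circ\gamma$, and the simultaneity with the twisted $\Gamma$-action on $\G$ and $R$ of Definition--Lemma \ref{deflem:gammagrad} occupy Appendices \ref{sec:signgrpact}--\ref{sec:signsrel}. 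As written, your argument establishes plausibility of the statement but not the statement itself, and in the one place where it commits to a formula (the $R$-action) that formula is incorrect.
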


\begin{main}[= Theorem \ref{thm:eqversality}]
\label{thm:2}
Consider the situation of Lemma \ref{lem:intsigngrpactrel}. 
Suppose furthermore that
\begin{itemize}
\item $\Nef$ is nice, or $D$ is smooth and $(X,D)$ semi-positive;
\item $\Gamma$ is finite;
\item $\HH^2\left(\EuA,\EuA \otimes \fmuncomp\right)^\Gamma$ is contained in the $\Runcomp_0$-span of the classes $\nov_p \defa_p$. 
\end{itemize}
Then $\EuA_R$ is an $R$-complete $(\Gamma,\sigma)$-equivariant deformation of $\EuA$.

If furthermore $\defa_p \neq 0$ for all $p$, then $\EuA_R$ is $R$-versal.
\end{main}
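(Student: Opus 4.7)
The plan is to adapt the order-by-order argument sketched for Theorem \ref{thm:1} to the $(\Gamma,\sigma)$-equivariant setting, using the stronger hypothesis that the $\Gamma$-invariant part $\HH^2(\EuA,\EuA \otimes \fmuncomp)^\Gamma$ is spanned by the first-order deformation classes $\nov_p \defa_p$. Set $R_n := R/\fm^{n+1}$ and, given $\EuB_R \in \mathfrak{A}'_R(\EuA)^\Gamma$, construct inductively a $(\Gamma,\sigma)$-equivariant automorphism $\psi^*_n \in \Aut(R_n)$ together with a $(\Gamma,\sigma)$-equivariant curved $A_\infty$ functor $F^*_n : \EuB_{R_n} \dashrightarrow \psi^*_n \EuA_{R_n}$ with $F^*_n \equiv \id \pmod{\fm}$. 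The base case $n=0$ is tautological.

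For the inductive step, suppose $(\psi^*_{n-1}, F^*_{n-1})$ have been constructed. The failure of $F^*_{n-1}$ to lift to order $n$ is measured by an obstruction 2-cocycle $\mathfrak{o}_n$ in the Hochschild complex of $\EuA$ with coefficients in $\EuA \otimes (\fmuncomp^n/\fmuncomp^{n+1})$; equivariance of all the data forces the class $[\mathfrak{o}_n]$ to live in the $\Gamma$-invariant subspace $\HH^2(\EuA,\EuA\otimes(\fmuncomp^n/\fmuncomp^{n+1}))^\Gamma$. By the third bullet of the hypothesis (multiplied by the appropriate degree-shifted elements of $\Runcomp_0$ to reach the graded piece $\fmuncomp^n/\fmuncomp^{n+1}$), this class can be written as a $\Gamma$-invariant combination of the classes $\nov_p \defa_p$. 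Modifying $\psi^*_{n-1}$ to $\psi^*_n$ by a suitable $\Gamma$-equivariant correction supported in the $n$-th graded piece then cancels $[\mathfrak{o}_n]$; once $[\mathfrak{o}_n]$ vanishes, a standard Hochschild cohomology argument produces a chain-level extension $F^*_n$ of $F^*_{n-1}$, which we average over the signed group $(\Gamma,\sigma)$ to make it equivariant. Finiteness of $\Gamma$ together with $\ch(\Bbbk)=0$ ensures that this averaging is well-defined and does not destroy the identity-mod-$\fm$ property. In the limit $n \to \infty$, the data $(\psi^*_n, F^*_n)$ assemble to the desired pair $(\psi^*, F^*)$, proving $R$-completeness.

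For the $R$-versality statement, assume $\defa_p \neq 0$ for all $p$. The equivariant hypothesis, together with the nonvanishing $\defa_p \neq 0$, implies that the classes $\nov_p \defa_p$ form a basis for $\HH^2(\EuA,\EuA\otimes \fmuncomp/\fmuncomp^2)^\Gamma$ (rather than merely a spanning set), so matching the first-order deformation classes of $\EuB_R$ with those of $\psi^*\EuA_R$ uniquely determines the induced map $\psi^*: \fm/\fm^2 \to \fm/\fm^2$. This is exactly the $R$-versal property of Definition \ref{defn:eqversainf}.

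The main obstacle will be bookkeeping of the signed-group action throughout the construction: when $\sigma(\gamma)=1$, the element $\gamma$ swaps $\EuA$ with $\EuA^{op}$, which induces a nonstandard action on the Hochschild complex (twisting by an antihomomorphism), and we must verify that the obstruction cocycle $\mathfrak{o}_n$ transforms correctly under this twisted action so that its cohomology class genuinely lies in the subspace singled out by the hypothesis. A related subtlety is that the equivariant splitting used to choose a preimage in $\fm^n/\fm^{n+1}$ of the obstruction class must respect the signed-group action on $R$; this is where the interaction between the grading-data morphism $p: \G \to \Z/4$ and the $(\Gamma,\sigma)$-action on $R$ (from Lemma \ref{lem:intsigngrpactrel}) will be essential. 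Once these compatibilities are established, the rest of the argument follows the non-equivariant template of Theorem \ref{thm:1}.
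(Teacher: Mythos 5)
Your strategy is essentially the paper's: Theorem \ref{thm:2} is reduced to an abstract equivariant versality lemma (Lemma \ref{lem:eqvers}), which runs exactly the order-by-order obstruction argument you describe, except phrased as gauge equivalence of Maurer--Cartan elements in the dgla $CC^\bullet(\EuA)[1]$ (the curved $A_\infty$ functor is extracted afterwards from the gauge transformation via the universal formulae of \S\ref{subsec:defainf}) rather than as a direct lifting of $A_\infty$ functors. The invariance of the obstruction, its absorption into a modification of $\psi^*$ using the spanning hypothesis, and the averaging over the finite $\Gamma$ in characteristic zero all occur in the same way; the compatibility you flag between the averaging and the action on $R$ is exactly where the paper uses that $\Gamma$ acts on $R$ in the special form $g\cdot\nov_p=\pm\nov_{g\cdot p}$, so that $g\cdot(\nov_p\defa_p)=\nov_{g\cdot p}\defa_{g\cdot p}$ and the averaged coefficients $\varphi_p$ still define an automorphism of the shape required by Definition \ref{defn:Rauts}.

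One step in your versality argument is justified incorrectly. The classes $\nov_p\defa_p$ cannot form a basis of $\HH^2(\EuA,\EuA\otimes\fmuncomp/\fmuncomp^2)^\Gamma$ when $\Gamma$ permutes the components of $D$ nontrivially: an individual $\nov_p\defa_p$ is then not $\Gamma$-invariant and does not even lie in that subspace (only orbit sums do). The uniqueness of $\psi^*$ on $\fm/\fm^2$ is instead obtained as in Lemma \ref{lem:vers}, by matching first-order classes in the full space $\HH^2(\EuA,\EuA\otimes\fmuncomp/\fmuncomp^2)$: niceness of $R$ forces the $\nov_p$ to have pairwise distinct degrees (Lemma \ref{lem:nicerpdif}), so the identity $\sum_p\nov_p\mathsf{b}_p=\sum_p\psi_p^1\cdot\nov_p\defa_p$ decouples into $\mathsf{b}_p=\psi_p^1\defa_p$ for each $p$, and $\defa_p\neq 0$ then determines each $\psi_p^1$ and hence $\psi^*$ on $\fm/\fm^2$.
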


In this situation, we have an analogue of Corollary \ref{cor:geomhypint}. 
The action of $(\Gamma,\sigma)$ on $(X,D,\omega)$ induces an action of $\Gamma$ on $H^\bullet(X \setminus D)$:
\begin{equation} \label{eqn:gammahact}
\gamma \cdot \alpha := (-1)^{\sigma(\gamma)} \cdot \gamma^* \alpha.\end{equation} 

\begin{propstar}[= Corollary \ref{cor:geomhyp2}]
\label{prop:geomhyp2int}
Consider the situation of Theorem \ref{thm:2}. 
Suppose furthermore that:
\begin{itemize}
\item Either $\Nef$ is nice and semi-positive, or $D$ is smooth and $(X,D)$ is semi-positive with $0 \le \kappa < 1$;
\item $H^2\left(X \setminus D;\fmuncomp\right)^\Gamma =0$;
\item The map
\begin{equation}
\label{eqn:cosurjagint}
\EuC \EuO \otimes \fmuncomp :SH^2\left(X \setminus D;\fmuncomp\right) \to \HH^2\left(\EuA,\EuA \otimes \fmuncomp \right)\end{equation}
is surjective (e.g., $\EuC\EuO$ is surjective).
\end{itemize}
Then $\EuA_R$ is an $R$-complete $(\Gamma,\sigma)$-equivariant deformation of $\EuA$.

If \eqref{eqn:cosurjagint} is furthermore an isomorphism, then $\EuA_R$ is $R$-versal. 
\end{propstar}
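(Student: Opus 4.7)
The plan is to reduce to Theorem \ref{thm:2} by verifying its key hypothesis, namely that $\HH^2(\EuA, \EuA \otimes \fmuncomp)^\Gamma$ is contained in the $\Runcomp_0$-span of the classes $\nov_p \defa_p$. The key ingredients are: (i) the classes $\mathsf{d}_p \in SH^\bullet(X \setminus D;\Bbbk)$ from Proposition \ref{prop:defgenint}, whose images form an $\Runcomp_0$-basis of $SH^2_+(X \setminus D; \fmuncomp)$ after multiplying by $\nov_p$; (ii) the identity $\EuC\EuO(\nov_p \mathsf{d}_p) = \nov_p \defa_p$; and (iii) the fact that the closed--open map, the long exact sequence relating $H^\bullet$, $SH^\bullet$, and $SH^\bullet_+$, and the $\Runcomp_0$-action, are all $\Gamma$-equivariant under the sign-twisted action \eqref{eqn:gammahact}. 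Because $\Gamma$ is finite and $\Bbbk$ has characteristic zero, the functor of $\Gamma$-invariants is exact, and any equivariant surjection of $\Gamma$-modules stays surjective on invariants via averaging.

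Given $c \in \HH^2(\EuA, \EuA \otimes \fmuncomp)^\Gamma$, I would first lift it through the surjective, equivariant map \eqref{eqn:cosurjagint} to some $\tilde c \in SH^2(X \setminus D; \fmuncomp)^\Gamma$. Using the $\Runcomp_0$-basis supplied by Proposition \ref{prop:defgenint}, the image of $\tilde c$ in $SH^2_+$ can be written uniquely as $\sum_p r_p \nov_p \bar{\mathsf{d}_p}$ for certain $r_p \in \Runcomp_0$, so that $\tilde c = \sum_p r_p \nov_p \mathsf{d}_p + i(\beta)$ for some $\beta \in H^2(X \setminus D; \fmuncomp)$. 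Averaging over $\Gamma$ and using the invariance of $\tilde c$, the equivariance of $i$, and the vanishing of $H^2(X \setminus D; \fmuncomp)^\Gamma$, the error term disappears and one obtains
\[ \tilde c = \frac{1}{|\Gamma|} \sum_{\gamma \in \Gamma} \gamma \cdot \sum_p r_p \nov_p \mathsf{d}_p. \]
Since the $\Gamma$-action permutes the generators $\nov_p \mathsf{d}_p$ up to $\Runcomp_0$-units coming from \eqref{eqn:gammahact} and the action on $R$, reindexing this sum exhibits $\tilde c$ in the $\Runcomp_0$-span of the $\nov_p \mathsf{d}_p$. Applying the $\Runcomp_0$-linear map $\EuC\EuO$ then realizes $c$ as an $\Runcomp_0$-combination of the $\nov_p \defa_p$, and Theorem \ref{thm:2} yields $R$-completeness.

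For the second statement, if \eqref{eqn:cosurjagint} is an isomorphism, I would deduce $\defa_p \neq 0$ for every $p$: since $\nov_p \bar{\mathsf{d}_p}$ is a basis element of $SH^2_+(X \setminus D; \fmuncomp)$ it is nonzero, hence $\nov_p \mathsf{d}_p$ is nonzero in $SH^2(X \setminus D; \fmuncomp)$, so injectivity of \eqref{eqn:cosurjagint} gives $\nov_p \defa_p = \EuC\EuO(\nov_p \mathsf{d}_p) \neq 0$; since $\nov_p$ is a nonzerodivisor on the free $R$-module $\HH^2(\EuA, \EuA \otimes \fmuncomp)$, this forces $\defa_p \neq 0$, and the final clause of Theorem \ref{thm:2} delivers $R$-versality.

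The hard part is checking the equivariance compatibilities that make the averaging step go through: that the $\Gamma$-actions on $SH^\bullet(X \setminus D)$, $\HH^\bullet(\EuA)$, and $R$ are compatibly sign-twisted by $\sigma$, that $\EuC\EuO$ intertwines them, and that the $\Gamma$-action on the generators $\nov_p$ agrees with the action on the classes $\mathsf{d}_p$ up to signs from \eqref{eqn:gammahact}. These are the equivariant shadows of identities implicit in the proof of Corollary \ref{cor:geomhypint}, and should follow from Lemma \ref{lem:intsigngrpactrel} together with the constructions of \S \ref{subsec:verseq} and \S \ref{subsec:SH}.
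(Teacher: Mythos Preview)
Your outline is sound in spirit and would lead to the same conclusion, but it demands more input than the paper actually establishes, and the paper's route is tidier in a way worth noting.

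The key difference is \emph{where} you average. You lift a $\Gamma$-invariant class $c$ to a $\Gamma$-invariant $\tilde c \in SH^2(X\setminus D;\fmuncomp)^\Gamma$ and then average inside symplectic cohomology. For this you need a $\Gamma$-action on $SH^\bullet(X\setminus D)$ and $\Gamma$-equivariance of $\EuC\EuO$ on all of $SH^\bullet$. The paper never defines a signed $\Gamma$-action on symplectic cohomology and never proves $\EuC\EuO$ is equivariant; the only equivariance statement it proves carefully (with the full sign analysis of Appendix~B) is Lemma~\ref{lem:coisignsnoapp}, which concerns $\EuC\EuO\circ i$ on the \emph{constant-loop} part $H^\bullet(X\setminus D)$ only. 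So your ``hard part'' paragraph is exactly right about what would remain, but the references you cite (Lemma~\ref{lem:intsigngrpactrel}, \S\ref{subsec:verseq}, \S\ref{subsec:SH}) do not supply it.

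The paper sidesteps this entirely by averaging in $\HH^\bullet(\EuA)$. Using only the \emph{surjectivity} (not equivariance) of $\EuC\EuO\otimes\fmuncomp$ together with Lemma~\ref{lem:defgen} and the long exact sequence, one writes any $c\in\HH^2(\EuA,\EuA\otimes\fmuncomp)$ as an $\Runcomp_0$-combination of the $\nov_p\co(z_p)$ plus a term in $\EuC\EuO\circ i\bigl(H^2(X\setminus D;\fmuncomp)\bigr)$. Now average over $\Gamma$ \emph{in Hochschild cohomology}: the first summand stays in the $\Runcomp_0$-span of the $\nov_p\co(z_p)$ because $\gamma\cdot(\nov_p\defa_p)=\nov_{\gamma\cdot p}\defa_{\gamma\cdot p}$ (this follows directly from $\Gamma$-equivariance of the deformation $\EuA_R$, cf.\ the proof of Lemma~\ref{lem:eqvers}), while the second summand vanishes by Lemma~\ref{lem:coisignsnoapp} and the hypothesis $H^2(X\setminus D;\fmuncomp)^\Gamma=0$. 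One then invokes Theorem~\ref{thm:2}. Your argument for the versality clause ($\defa_p\neq 0$ when $\EuC\EuO\otimes\fmuncomp$ is an isomorphism) matches the paper's; the aside about $\nov_p$ being a nonzerodivisor is unnecessary, since $\defa_p=0$ trivially forces $\nov_p\defa_p=0$.
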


\begin{rmk}
When $\Gamma$ acts purely by symplectomorphisms (i.e., $\sigma \equiv 0$), Proposition \ref{prop:geomhyp2int} is rather intuitive. 
However the welter of sign computations that go into the proof of Proposition \ref{prop:geomhyp2int} may leave the reader without an intuition for where the sign in \eqref{eqn:gammahact} `comes from'. 
It comes from the same sign in the equation 
\[ \gamma^* \omega = (-1)^{\sigma(\gamma)} \omega.\]
Since this is the behaviour of the symplectic form $\omega$ under the action of $\gamma$, it should also be the behaviour of the deformation classes of the relative Fukaya category.
\end{rmk}

\subsection{The case of a smooth anti-canonical divisor}

One situation of significant interest in mirror symmetry is when $D \subset X$ is a smooth anti-canonical divisor. 
This corresponds to the case that $D$ is smooth, $(X,D)$ is positive, and $\kappa = 1$: so it just fails to be covered by the results of the previous section. 
Indeed we do not have a versality result in this case, but one can prove a variation which we feel it is worthwhile to state here.

Let $\EuA_R \subset \fuk(X,D)$ be a full subcategory, and $\EuA \subset \fuk(X \setminus D)$ the corresponding full subcategory. 

\begin{thmstar}
\label{thm:anticanvers}
Suppose that:
\begin{itemize}
\item $D$ is smooth and $(X,D)$ is positive with $\kappa = 1$;
\item The map
\begin{equation}
\label{eqn:cosurjagi}
\EuC \EuO \otimes \fmuncomp :SH^2\left(X \setminus D;\fmuncomp\right) \to \HH^2\left(\EuA,\EuA \otimes \fmuncomp \right)\end{equation}
is surjective (e.g., $\EuC\EuO$ is surjective);
\item $\EuB_R \in \mathfrak{A}_R(\EuA)$ is another deformation of $\EuA$ over $R$ with the same first-order deformation classes as $\EuA_R$.
\end{itemize}
Then there is a (possibly curved) $A_\infty$ homomorphism $F^*: \EuB_R \dashrightarrow \EuA_R$ with $F^* \equiv \id\text{ (mod $\fm$)}$.
\end{thmstar}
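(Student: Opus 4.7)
The plan is to construct $F^*$ by induction on the $\fm$-adic filtration on $R$, paralleling the order-by-order strategy in the proof of Theorem~\ref{thm:1} but without any reparameterisation $\psi^*$ to absorb obstructions.  Set $F^1 \equiv \id$ and $F^{\geq 2} \equiv 0$ modulo $\fm$.  Since $D$ is smooth, $R = R(X,D)$ has a single generator $\nov$, and the hypothesis that $\EuA_R$ and $\EuB_R$ share first-order deformation classes shows that the first-order obstruction to extending $F^*$ vanishes, so we may lift $F^*$ to satisfy the $A_\infty$ functor relation modulo $\fm^2$.

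For the inductive step, suppose $F^*$ has been constructed modulo $\fm^{k+1}$ with $k \geq 1$.  The obstruction to extending $F^*$ one further order is a Hochschild cocycle whose class $\mathsf{obs}_{k+1}$ lies in $\HH^2(\EuA, \EuA \otimes \fmuncomp^{k+1}/\fmuncomp^{k+2})$.  By the surjectivity hypothesis on $\EuC\EuO \otimes \fmuncomp$, we may write $\mathsf{obs}_{k+1} = \EuC\EuO(\beta_{k+1})$ for some $\beta_{k+1} \in SH^2(X \setminus D; \fmuncomp^{k+1}/\fmuncomp^{k+2})$.  The idea is to use $\beta_{k+1}$ to build a corrective term for $F^*$: any class in symplectic cohomology acts on the Fukaya category via pseudoholomorphic discs with an interior marked point mapped to an orbit representative, and these operations extend to components of (possibly curved) $A_\infty$ functors --- this is the chain-level structure underlying the $L_\infty$ homomorphism $SC^\bullet(X \setminus D) \dashrightarrow CC^\bullet(\fuk(X \setminus D))$.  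Inserting $\beta_{k+1}$ in this way supplies an extra term to add to $F^*$ at order $k+1$ whose contribution to the $A_\infty$ functor relation is precisely $-\mathsf{obs}_{k+1}$, cancelling the obstruction and completing the inductive step.  The desired $A_\infty$ homomorphism is then obtained as the $\fm$-adic limit.

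The main obstacle is the existence and properties of the corrective term.  In Theorem~\ref{thm:1}, obstructions at each order are killed by adjusting $\psi^*$ using the spanning hypothesis on first-order classes; here, with no reparameterisation available, the mechanism must be internal to $F^*$, and it is forced to draw on genuinely geometric input via $SC^\bullet(X \setminus D)$.  Making this rigorous requires a chain-level refinement of the closed--open map --- or equivalently, the $L_\infty$ action of $SC^\bullet(X \setminus D)$ on the space of $A_\infty$ functors between Fukaya-type categories --- together with careful bookkeeping of possible curvature contributions $F^0$ produced by the bulk insertions.  This parallels but is technically more subtle than the argument for Theorem~\ref{thm:1}, and it is the reason that the conclusion here is only existence of $F^*$ (rather than $R$-versality): absent a reparameterisation, we cannot pin down the corrective terms uniquely.
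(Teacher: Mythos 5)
There is a genuine gap in your inductive step, and the mechanism you propose for closing it cannot work as described. You correctly identify that the obstruction to extending $F^*$ to order $k+1$ is a class $\mathsf{obs}_{k+1} \in \HH^2(\EuA,\EuA\otimes\fmuncomp^{k+1}/\fmuncomp^{k+2})$, but then you try to \emph{cancel} a possibly nonzero class by adding a ``corrective term'' to $F^*$ built from a preimage $\beta_{k+1}$ under $\EuC\EuO$. This is circular: any modification of the components of $F^*$ at order $k+1$ changes the obstruction cocycle only by a Hochschild coboundary --- that is precisely what makes $\mathsf{obs}_{k+1}$ well-defined as a cohomology class. No chain-level refinement of the closed--open map, bulk insertion, or $L_\infty$ action can remove a nonzero class in $\HH^2$; if $\mathsf{obs}_{k+1}\neq 0$, no $F^*$ exists at all. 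So either the obstruction class vanishes (and your machinery is unnecessary) or it does not (and your machinery cannot help). You never prove that it vanishes.

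The missing idea --- and the actual content of the hypotheses --- is that the higher-order obstruction groups are \emph{zero}. Since $D$ is smooth and $(X,D)$ is positive with $\kappa=1$, the generator $\nov$ has degree $2c_1(u)=2$, and the index computation in the proof of Lemma \ref{lem:defgen} (Case 2, with $\kappa=1$) gives, for a generator $\nov^v\cdot\gamma$ of $CF^2_+(H_k;\fmuncomp^2)$, the identity $2 = 2v + j$ with $v\ge 2$ and $j\ge 0$, a contradiction; the argument of Lemma \ref{lem:nodeg2} similarly kills $H^2(X\setminus D;\fmuncomp^2)$. Hence $SH^2(X\setminus D;\fmuncomp^2)=0$. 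The surjectivity of $\EuC\EuO\otimes\fmuncomp$ should then be used in the opposite direction to the one you use it: restricted to each graded piece $\nov^j$ with $j\ge 2$ it forces $\HH^2(\EuA,\EuA\otimes\fmuncomp^2)=0$, because a surjection from the zero module has zero target. With this in hand every $\mathsf{obs}_{k+1}$ for $k\ge 1$ vanishes automatically, and the order-by-order construction of $F^*$ (exactly as in Lemma \ref{lem:vers}, but with no modification of $\psi^*$ needed: one only chooses $c^k$ with $\partial c^k = \epsilon^k$) goes through. Your treatment of the first-order step, using equality of the first-order deformation classes, is correct.
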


We will not give the proof, since it requires making some (minor) modifications to the statement and proof of Lemma \ref{lem:vers}. 
The crucial observation is that $SH^2(X \setminus D;\fmuncomp^2) = 0$, which follows from the first assumption by similar arguments to those given in the proofs of Lemmas \ref{lem:defgen} and \ref{lem:nodeg2}.

\subsection{Fixing the mirror map}

Our versality results fit into a natural strategy for proving cases of homological mirror symmetry, which we outline in \S \ref{subsec:hms}. 
The versality results are well-adapted to proving the existence of a mirror map $\Psi: \mathcal{M}_{K\ddot{a}h}(X) \xrightarrow{\sim} \mathcal{M}_{cpx}(Y)$ such that $D^\pi\fuk(X,\omega_p) \simeq \dbdg Coh(Y_{\Psi(p)})$. 

However, our versality results do not allow us to compute the mirror map $\Psi$.
Without a formula for $\Psi$ we cannot say precisely which complex structure $\Psi(p)$ corresponds to the K\"ahler form $\omega_p$. 
The solution is to use the results of \cite{Ganatra2015}, which give a criterion for homological mirror symmetry to imply Hodge-theoretic mirror symmetry; the mirror map can then be characterized uniquely using the notion of `flat coordinates'. 
We prove the precise statements in Appendix \ref{sec:hodgems}. 

\paragraph{Acknowledgments:} It is a pleasure to thank the following people: Mohammed Abouzaid, Strom Borman, John Lesieutre and Helge Ruddat for helpful discussions; Paul Seidel for suggesting the crucial idea of systems of divisors; Ivan Smith for many helpful discussions and suggestions; and Siu-Cheong Lau for suggesting that anti-symplectic symmetries could help constrain the deformation theory of the Fukaya category.

The author was partially supported by a Sloan Research Fellowship, and also by the National Science Foundation through Grant number DMS-1310604 and under agreement number DMS-1128155.
Any opinions, findings and conclusions or recommendations expressed in this material are those of the authors and do not necessarily reflect the views of the National Science Foundation. 
The author would also like to acknowledge support from Princeton University and the Institute for Advanced Study, and hospitality from the Instituto Superior T\'ecnico and ETH Z\"urich.

\section{Versality}
\label{sec:vers}

In this section, we fix a field $\Bbbk$ of characteristic $0$, and a grading datum $\G = \{ \Z \to G \to \Z/2\}$.

\subsection{Coefficient ring}
\label{subsec:ringR}

Let $P$ be a finite set, and $f: \Z^P \to G$ be a homomorphism of abelian groups, landing in the \emph{even} part of $G$. 
Let $\Nef \subset \R^P$ be a strongly convex cone with vertex at the origin.
We introduce the following $\G$-graded $\Bbbk$-algebras:
\begin{align}
\Rpuncomp & := \Bbbk\left[\Z^P\right] \quad \text{ equipped with the $\G$-grading induced by $f$,} \\
\Runcomp & :=  \Bbbk\left[\Nef^\vee \cap \Z^P\right] \subset \Rpuncomp.
\end{align}
The ring $\Runcomp$ has a unique maximal toric ideal $\fmuncomp \subset \Runcomp$, corresponding to the vertex of the cone $\Nef^\vee$ at the origin. 
We introduce the following $\G$-graded $\Bbbk$-algebras:
\begin{align}
 R&:= \text{ $\fmuncomp$-adic completion of $\Runcomp$, in $\G$-graded sense, }\\
\Rpunc & := R \otimes_{\Runcomp} \Rpuncomp. \label{eqn:rpunc}
\end{align}
We also define the ideal $\fm \subset R$ to be the completion of $\fmuncomp$.
For $u \in \Z^P$, we will denote the corresponding ring elements by $\nov^u$.
We will denote the $p$th basis element of $\Z^P$ by $u_p$, and $\nov_p := \nov^{u_p}$.

We will denote by $\Rpuncomp_0, \Runcomp_0, \ldots$ the parts in degree $0 \in G$.
Observe that $(R_0,\fm_0)$ is a complete local ring. 
We denote the units of this local ring by
\begin{equation} R_0^* = R_0 \setminus \fm_0.\end{equation}

\begin{defn}
$\Aut(R)$ denotes the group of $\G$-graded $\Bbbk$-algebra maps $\psi^*:R \to R$, which respect the $\fm$-adic filtration.
\end{defn}

\begin{defn}
\label{defn:Rauts}
We define a map
\begin{align}(R_0^*)^P & \to \Aut(R),\\
(\psi_p)_{p \in P} & \mapsto \psi^*,
\end{align}
where
\begin{equation} 
\label{eqn:impsistar}
\psi^*\left(\prod_{p \in P} \nov_p^{a_p} \right) := \prod_{p \in P} \nov_p^{a_p} \cdot \prod_{p \in P} \psi_p^{a_p}.\end{equation}
Observe that, because the $\psi_p$ are units, the negative powers of $\psi_p$ make sense, and $\psi^*$ respects the $\fm$-adic filtration (hence converges).
\end{defn}

\begin{defn}
\label{defn:Rnice}
We say that $R$ is \emph{nice} if the following two conditions hold for all $p \in P$:
\begin{itemize}
\item $\nov_p$ lies in $\fm$, and generates the corresponding graded piece $\fmuncomp_{f(u_p)}$ as an $\Runcomp_0$-module;
\item $\nov_p \notin \fm^2$.
\end{itemize}
\end{defn}

\begin{lem}
\label{lem:typeAaut}
If $R$ is nice, then the map of Definition \ref{defn:Rauts} is a bijection.
\end{lem}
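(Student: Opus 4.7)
My plan is to build an explicit inverse to the map $\Phi \colon (R_0^*)^P \to \Aut(R)$ from Definition \ref{defn:Rauts}. Given $\phi^* \in \Aut(R)$, I first extract parameters $\psi_p \in R_0^*$, and then check that $\phi^* = \Phi((\psi_p))$.

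For the extraction, since $\phi^*$ is $\G$-graded and preserves $\fm$, $\phi^*(\nov_p) \in \fm_{f(u_p)}$. The first niceness condition gives $\fmuncomp_{f(u_p)} = \Runcomp_0 \cdot \nov_p$, and because $\Runcomp \subseteq \Bbbk[\Z^P]$ is an integral domain, multiplication by $\nov_p$ is an isomorphism of $\Runcomp_0$-modules $\Runcomp_0 \to \fmuncomp_{f(u_p)}$. Taking graded $\fmuncomp$-adic completions yields an isomorphism $R_0 \to \fm_{f(u_p)}$, so there is a unique $\psi_p \in R_0$ with $\phi^*(\nov_p) = \nov_p \psi_p$. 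The second niceness condition, $\nov_p \notin \fm^2$, combined with the fact that $\phi^* \in \Aut(R)$ induces an automorphism of $\fm / \fm^2$ (since both $\phi^*$ and its inverse respect the filtration), forces $\nov_p \psi_p \notin \fm^2$, so $\psi_p \notin \fm_0$ (otherwise $\nov_p \psi_p \in \fm \cdot \fm \subseteq \fm^2$). Hence $\psi_p \in R_0^*$.

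To verify $\phi^* = \Phi((\psi_p))$, I would pass to the localization $\Rpunc = R[\nov_p^{-1} : p \in P]$, which by \eqref{eqn:rpunc} agrees with the graded $\fmuncomp$-adic completion of $\Rpuncomp = \Bbbk[\Z^P]$. Both $\phi^*$ and $\Phi((\psi_p))$ extend to continuous $\Bbbk$-algebra endomorphisms of $\Rpunc$, since $\nov_p \psi_p$ is a unit in $\Rpunc$. In $\Rpunc$, every monomial $\nov^u$ for $u \in \Z^P$ factors as $\prod_p \nov_p^{u(p)}$, so the two extensions are determined by their common values on the $\nov_p$'s, and hence agree on all of $\Rpunc$. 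Restricting back gives $\phi^* = \Phi((\psi_p))$ on $R$, and injectivity of $\Phi$ is an immediate consequence: if $\nov_p(\psi_p - \psi_p') = 0$ for each $p$, then $\psi_p = \psi_p'$.

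The crux of the argument is verifying that the natural map $R \to \Rpunc$ is injective --- equivalently, that each $\nov_p$ is a non-zero-divisor on $R$. I plan to prove this graded piece by graded piece: for each $d \in G$, multiplication by $\nov_p$ is injective on $\Runcomp_d$ because $\Runcomp$ is a domain, and the Artin--Rees lemma (applicable once one reduces to the finitely generated, rational-polyhedral setting that niceness effectively delivers) then shows that this injectivity passes to the graded $\fmuncomp$-adic completions $R_d \to R_{d + f(u_p)}$. This completion step is the only place where care beyond the formal manipulations above is needed; once it is in hand, the structural arguments using niceness go through directly.
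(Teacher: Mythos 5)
Your proposal is correct and follows the same overall skeleton as the paper's proof (extract $\psi_p$ from the first niceness condition, show it is a unit, conclude that $\phi^*$ has the form \eqref{eqn:impsistar}), but it differs in two genuine ways. First, you deduce that $\psi_p$ is a unit from the second niceness condition $\nov_p \notin \fm^2$ together with the induced automorphism of $\fm/\fm^2$, whereas the paper instead uses surjectivity of $\psi^*|_{\fm_{f(u_p)}}$ onto the free rank-one module $R_0 \cdot \nov_p$; both work, and yours has the virtue of showing where the second niceness hypothesis earns its keep. Second, you make explicit two points the paper compresses into ``it is now a simple matter'': the passage from the generators $\nov_p$ to arbitrary monomials $\nov^u$ (via the localization $\Rpunc$ and continuity/density of $\Runcomp$ in $R$), and the fact that each $\nov_p$ is a non-zero-divisor on the completion $R$, which is silently used even in the paper's one-line injectivity argument. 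Two caveats on your treatment of the latter: your Artin--Rees plan requires $\Runcomp$ to be Noetherian, which holds when $\Nef$ is rational polyhedral (as in all the geometric applications, cf.\ Lemma \ref{lem:niceAAmp}) but is not part of Definition \ref{defn:Rnice}; a cleaner and fully general argument is that the graded $\fmuncomp$-adic completion of a monoid algebra by a toric ideal retains a topological monomial basis ($R_d$ is the product $\prod_v \Bbbk \cdot \nov^v$ over $v \in NE(\Nef)$ with $f(v)=d$), on which multiplication by $\nov_p$ acts by the injection $v \mapsto v + u_p$. Also, your parenthetical identification of $\Rpunc$ with ``the graded $\fmuncomp$-adic completion of $\Rpuncomp$'' is wrong --- $\fmuncomp \cdot \Rpuncomp = \Rpuncomp$, so that completion vanishes; $\Rpunc$ is simply the localization $R[\nov_p^{-1}]$ as defined in \eqref{eqn:rpunc} --- but nothing in your argument depends on that misstatement.
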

\begin{proof}
First we show that the map is injective. 
If $(\psi_p)$ and $(\psi'_p)$ map to the same automorphism $\psi^*$, then
\[ \psi^*(r_p) = r_p \cdot \psi_p = r_p \cdot \psi'_p\]
so $\psi_p = \psi'_p$ for all $p$. 

Next we show it is surjective. 
Observe that because $R$ is nice, $\nov_p$ generates $\fm_{f(u_p)}$ as an $R_0$-module for all $p$. 
Therefore for any $\psi^* \in \Aut(R)$, we have $\psi^*(r_p) = r_p \cdot \psi_p$ for some $\psi_p \in R_0$, because $\psi^*$ is $\G$-graded and respects the $\fm$-adic filtration.
Since $\psi^*$ is an isomorphism, $\psi^*|_{\fm_{f(u_p)}}$ is surjective. 
Because $\fm_{f(u_p)}$ is isomorphic to $R_0 \cdot \nov_p$, it follows that $\psi_p$ is a unit. 
It is now a simple matter to show that $\psi^*$ has the form \eqref{eqn:impsistar}: so $\psi^*$ is the image of $(\psi_p)$.
\end{proof}

\begin{lem}
\label{lem:nicerpdif}
If $R$ is nice, then the generators $\nov_p$ have distinct degrees.
\end{lem}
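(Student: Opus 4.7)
The plan is to argue by contradiction: suppose that for some distinct $p, q \in P$ we have $f(u_p) = f(u_q)$, and derive that $\nov_q$ must lie in $\fm^2$, contradicting the second niceness condition applied to $\nov_q$.

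First I would use the first niceness condition, which says that $\nov_p$ generates $\fmuncomp_{f(u_p)}$ as an $\Runcomp_0$-module. The element $\nov_q$ is a monomial in $\Runcomp$ belonging to the toric maximal ideal $\fmuncomp$ (its exponent $u_q$ is a nonzero element of $\Nef^\vee \cap \Z^P$), and it sits in the graded piece $\fmuncomp_{f(u_q)} = \fmuncomp_{f(u_p)}$. Thus we may write $\nov_q = c \cdot \nov_p$ for some $c \in \Runcomp_0$. The key step is to pin down $c$ explicitly: expanding $c = \sum_v a_v \nov^v$ as a sum of monomials in the group ring $\Rpuncomp = \Bbbk[\Z^P]$ and comparing the resulting identity $\sum_v a_v \nov^{u_p+v} = \nov^{u_q}$ using linear independence of monomials in $\Rpuncomp$, one finds that $c$ must equal the single monomial $\nov^{u_q - u_p}$. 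Membership of this monomial in $\Runcomp_0$ forces $u_q - u_p \in \Nef^\vee \cap \Z^P$ with $f(u_q - u_p) = 0$.

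To finish, since $u_p$ and $u_q$ are distinct standard basis vectors of $\Z^P$, the exponent $u_q - u_p$ is nonzero, hence $\nov^{u_q - u_p}$ lies in the toric maximal ideal $\fmuncomp$. Therefore $\nov_q = \nov^{u_q - u_p} \cdot \nov_p \in \fmuncomp \cdot \fmuncomp \subset \fm^2$, contradicting the second niceness condition. No step looks substantial here; the only point requiring a moment's care is the reduction of the $\Runcomp_0$-coefficient $c$ to a single monomial, which is immediate from the underlying $\Z^P$-grading on $\Rpuncomp$.
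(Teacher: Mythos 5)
Your proof is correct. The reduction of the coefficient $c$ to the single monomial $\nov^{u_q-u_p}$ via the $\Z^P$-grading on $\Rpuncomp$ is exactly right, and the contradiction with $\nov_q\notin\fm^2$ is valid since $\nov^{u_q-u_p}$ and $\nov_p$ both lie in $\fmuncomp$, so their product lies in $\fmuncomp^2\subset\fm^2$.

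The route differs mildly from the paper's. The paper applies the first niceness condition symmetrically in $p$ and $q$, concluding that both $u_p-u_q$ and $u_q-u_p$ lie in $\Nef^\vee$, and then invokes strong convexity of the dual cone (the cone $NE(\Nef)_\R=\Nef^\vee$ contains no line because $\Nef$ is full-dimensional) to force $u_p=u_q$. You instead apply the first condition only once and substitute the second niceness condition ($\nov_q\notin\fm^2$) for the convexity argument. Each approach isolates a different ingredient: the paper's shows the conclusion already follows from the ``generates its graded piece'' half of niceness together with strong convexity of $\Nef^\vee$, whereas yours shows it follows from the two halves of niceness without any appeal to convexity of the dual cone. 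Both are equally short and both are fine here, since all the hypotheses used are in force.
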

\begin{proof}
If $R$ is nice and $\deg(\nov_p) = \deg(\nov_q)$, then both $\nov_p$ and $\nov_q$ generate the same graded piece of $\fmuncomp$ as an $\Runcomp_0$-module. 
It follows that both $\nov_p \nov_q^{-1}$ and $\nov_q \nov_p^{-1}$ lie in $\Runcomp_0$, and hence that both $u_p - u_q$ and $u_q - u_p$ lie in $\Nef^\vee$. 
It follows by strong convexity of $\Nef$ that $u_p = u_q$, and hence $p=q$.
\end{proof}

\subsection{Deformation theory}

We recall the basics of deformation theory via differential graded Lie algebras, following \cite{Manetti1999}.
Let $(\frakg,\partial,[\cdot,\cdot])$ be a differential ($\G$-)graded Lie algebra over $\Bbbk$ (we'll call this a `d$\G$la'). 
Let $(R,\fm)$ be as in the previous section.

\begin{defn}
We define the set of solutions to the Maurer--Cartan equation over $R$:
\begin{equation} MC_\frakg(R) := \left\{ \alpha \in (\frakg \ctens \fm)_1: \partial \alpha + \frac{1}{2}[\alpha,\alpha] = 0\right\}.\end{equation}
Here, `$\frakg \ctens \fm$' denotes the $\fm$-adic completion of $\frakg \otimes \fm$, in the $\G$-graded sense. 
It is a filtered pronilpotent d$\G$la, and $(\frakg \ctens \fm)_1$ is the part in degree $1 \in G$.
\end{defn}

There is a Lie algebra homomorphism from $(\frakg \ctens \fm)_0$ to the space of affine vector fields on $(\frakg \ctens \fm)_1$, sending $\gamma$ to the vector field $\alpha \mapsto -\partial \gamma + [\gamma,\alpha]$. 
Such vector fields are tangent to the Maurer--Cartan set.
The Lie algebra $(\frakg \ctens \fm)_0$ is filtered pronilpotent, hence can be exponentiated to a prounipotent group, which acts on $(\frakg \ctens \fm)_1$, preserving the Maurer--Cartan set. 
Explicitly, $\alpha_t = e^{t\gamma} \cdot \alpha$ is the unique solution of the differential equation
\begin{equation}
\label{eqn:odemc} 
\frac{d}{dt} \alpha_t = -\partial \gamma + [\gamma,\alpha_t] 
\end{equation}
with initial condition $\alpha_0 =\alpha$. 
 
\begin{defn}
Given $\alpha, \beta \in MC_\frakg(R)$, we say $\alpha$ and $\beta$ are \emph{gauge equivalent} if $\beta = e^\gamma \cdot \alpha$ for some $\gamma \in (\frakg \ctens \fm)_0$. 
We write `$\alpha \sim \beta$'.
\end{defn}

\begin{defn}
For any $\psi^* \in \Aut(R)$, there is an induced isomorphism of filtered pronilpotent d$\G$la,
\begin{equation} \id \otimes \psi^*: \frakg \ctens \fm \to \frakg \ctens \fm.\end{equation}
In particular, this isomorphism preserves solutions to the Maurer--Cartan equation: we denote the corresponding action of $\Aut(R)$ on $MC_\frakg(R)$ by $\alpha \mapsto \psi^*\alpha$.
\end{defn}

\subsection{Versality}

Let $\alpha \in MC_\frakg(R)$. 
We can write
\begin{equation} \alpha = \sum_{u \in \Nef^\vee \cap \Z^P} \nov^u \cdot \alpha_u,\end{equation}
where $\alpha_u \in \frakg_{1-f(u)}$. 
The Maurer--Cartan equation says that $\partial \alpha \equiv 0 \mbox{ (mod $\fm^2$)}$. 
Because $\fm$ is a monomial ideal, it follows that $\partial \alpha_u = 0$ for all $r^u \in \fm \setminus \fm^2$. 
So we have classes $\nov^u \cdot [\alpha_u] \in H^1(\frakg \otimes \fmuncomp)$ for all such $u$.

If $R$ is nice, then $\nov_p \in \fm \setminus \fm^2$ so we have classes 
\begin{equation}\nov_p \defa_p := \nov^{u_p}\cdot [\alpha_{u_p}] \in H^{1}(\frakg \otimes \fmuncomp).\end{equation}
These are called the \emph{first-order deformation classes}. 

\begin{defn}
We denote by $MC'_\frakg(R) \subset MC_\frakg(R)$ the subset of Maurer--Cartan elements whose first-order deformation classes $\nov_p\defa_p$ span $H^1(\frakg \otimes \fmuncomp/\fmuncomp^2)$.
\end{defn}

\begin{defn}
\label{defn:MCRvers}
We say that $\alpha \in MC_\frakg(R)$ is:
\begin{itemize}
\item \emph{$R$-complete} if for any $\beta \in MC'_\frakg(R)$, there exists $\psi^* \in \Aut(R)$ such that $\beta \sim \psi^* \alpha$;
\item \emph{$R$-universal} if it is $R$-complete, and furthermore the automorphism $\psi^*$ is uniquely determined for each $\beta$;
\item \emph{$R$-versal} if it is $R$-complete, and furthermore the map $\psi^*: \fm/\fm^2 \to \fm/\fm^2$ is uniquely determined for each $\beta$.
\end{itemize}
\end{defn}

\begin{rmk}
The restriction to $\beta \in MC'_\frakg(R) \subset MC_\frakg(R)$ is not usually part of the definition of completeness/universality/versality, but we don't know how to prove our results without this restriction (and the more general version would be no more useful in the applications we envision). 
The difficulty is with constructing endomorphisms of the ring $R$: it is easier to construct automorphisms via Definition \ref{defn:Rauts}.
\end{rmk}

\begin{lem}
\label{lem:vers}
Let $R$ be nice, and $\alpha \in MC_\frakg(R)$ have first-order deformation classes $\nov_p\defa_p$. Then:
\begin{itemize}
\item If $\nov_p\defa_p$ span $H^1(\frakg \otimes \fmuncomp)$ as $\Runcomp_0$-module, then $\alpha$ is $R$-complete;
\item If furthermore $\defa_p \neq 0$ for all $p$, then $\alpha$ is $R$-versal.
\end{itemize}
\end{lem}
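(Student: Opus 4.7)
The plan is to prove completeness by order-by-order induction on the $\fm$-adic filtration, producing $\psi_n^* \in \Aut(R)$ and $\gamma_n \in (\frakg \ctens \fm)_0$ such that $\alpha_n' := e^{\gamma_n} \cdot \psi_n^* \alpha$ matches $\beta$ modulo $\fm^{n+1}$. The base case $n=0$ is vacuous since Maurer--Cartan elements vanish modulo $\fm$. For the inductive step I would set $\delta := \beta - \alpha_n' \in (\frakg \ctens \fm^{n+1})_1$; subtracting the Maurer--Cartan equations for $\beta$ and $\alpha_n'$ and using $[\beta + \alpha_n', \delta] \in \frakg \ctens \fm^{n+2}$ shows $\partial \delta \equiv 0$ modulo $\fm^{n+2}$, so $\delta$ defines a cohomology class in $H^1(\frakg \otimes \fmuncomp^{n+1}/\fmuncomp^{n+2})$. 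After lifting to a closed representative $\tilde{\delta} \in \frakg \ctens \fmuncomp^{n+1}$, the spanning hypothesis gives a decomposition $[\tilde{\delta}] = \sum_p c_p \cdot \nov_p \defa_p$ with $c_p \in \Runcomp_0$; matching $G$-grading and $\fm$-adic order forces $c_p$ into the appropriate piece of $\fmuncomp^n_0 \cap \Runcomp_0$ (or, for $n=0$, into $\Runcomp_0$).

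Next I would kill $[\tilde{\delta}]$ by combining two modifications. First, replace $\psi_n^*$ by $\chi^* \circ \psi_n^*$, where $\chi^* \in \Aut(R)$ is constructed via Definition \ref{defn:Rauts} from $\chi^*(\nov_p) = \nov_p(1 + c_p)$: the leading-order change in $\psi^* \alpha$ modulo $\fm^{n+2}$ is the cocycle $\sum_p c_p \nov_p \alpha_{u_p}$, which represents $\sum_p c_p \nov_p \defa_p = [\tilde{\delta}]$ up to an exact term. For $n \ge 1$ we automatically have $1 + c_p \in R_0^*$; for $n = 0$ the $MC'$ hypothesis on $\beta$ combined with Lemma \ref{lem:nicerpdif} ensures the relevant constant terms of $c_p$ avoid $-1$ (otherwise a $G$-graded piece would be missing from the span of $\nov_p \defa_p(\beta)$, contradicting $\beta \in MC'$). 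Second, replace $\gamma_n$ by $\gamma_n + \eta$ for a suitable $\eta \in (\frakg \ctens \fm^{n+1})_0$; by the ODE \eqref{eqn:odemc} this adds $-\partial \eta$ to $\alpha_n'$ modulo $\fm^{n+2}$ and cancels the residual coboundary discrepancy. Iterating produces sequences converging $\fm$-adically to the desired $\psi^*$ and $\gamma$.

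For $R$-versality under the additional assumption $\defa_p \neq 0$, I would argue as follows. If $\psi^*$ and $\psi'^*$ both realize $\beta \sim \psi^* \alpha$ and $\beta \sim \psi'^* \alpha$, write $\psi^*(\nov_p) = \nov_p \phi_p$ and $\psi'^*(\nov_p) = \nov_p \phi'_p$ by Lemma \ref{lem:typeAaut}, and compare first-order deformation classes modulo $\fm^2$ to get $\sum_p (\phi'_p(0) - \phi_p(0)) \cdot \nov_p \defa_p = 0$ in $H^1(\frakg \otimes \fmuncomp/\fmuncomp^2)$. By Lemma \ref{lem:nicerpdif} the generators $\nov_p$ sit in distinct $G$-graded pieces, so the classes $\nov_p \defa_p$ do too; since each $\defa_p \neq 0$ makes them nonzero, linear independence forces $\phi_p(0) = \phi'_p(0)$ for every $p$, i.e., $\psi^*$ and $\psi'^*$ induce the same map $\fm/\fm^2 \to \fm/\fm^2$.

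The main obstacle I expect is the book-keeping in the inductive step: verifying that the leading-order change in $\psi^* \alpha$ produced by $\chi^*$ is faithfully represented by $\sum_p c_p \nov_p \alpha_{u_p}$ modulo $\fm^{n+2}$, even though $\chi^*$ acts on \emph{all} monomials of $\fmuncomp$ (not just the $\nov_p$) and non-basis Hilbert generators of $\fmuncomp$ may also contribute to $\alpha$ at the same $\fm$-adic order. The resolution is that the spanning hypothesis forces any such extra contributions to lie in the $\Runcomp_0$-span of the $\nov_p \defa_p$, so they can be reabsorbed by readjusting the $c_p$; Lemma \ref{lem:typeAaut} is essential here because it identifies $\Aut(R)$ with $(R_0^*)^P$ so that these adjustments exhaust the available perturbations.
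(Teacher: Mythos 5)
Your proposal is correct and follows essentially the same order-by-order induction as the paper's proof: compute the obstruction class of the discrepancy at each stage of the $\fm$-adic filtration, kill it using the spanning hypothesis by perturbing the automorphism (your multiplicative composition of $\chi^*$'s is equivalent, via Lemma \ref{lem:typeAaut}, to the paper's additive update $\psi_p \mapsto \psi_p + \varphi_p^k$), then kill the remaining coboundary by adjusting $\gamma$, with the unit condition at first order and the versality statement handled exactly as you do via the $MC'$ hypothesis and Lemma \ref{lem:nicerpdif}. The ``main obstacle'' you flag (contributions from Hilbert-basis monomials of $\fmuncomp \setminus \fmuncomp^2$ other than the $\nov_p$) is genuine but resolves as you suggest: writing those classes in the $\Runcomp_0$-span of the $\nov_p\defa_p$ forces the coefficients into $\fmuncomp_0$ (by comparing $\nov^{u_p}$-components and using Lemma \ref{lem:nicerpdif}), so their effect lands one order deeper in the filtration and is absorbed by the next inductive step --- the paper's displayed formula for the change in $\epsilon^k$ silently relies on this.
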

\begin{proof}
Let $\beta \in MC'_\frakg(R)$. We construct:
\begin{itemize}
\item $\psi_p \in R_0$, $\psi_p \equiv 1 \mbox{ (mod $\fm$)}$ for all $p$, and 
\item $\gamma \in (\frakg \ctens \fm)_0$,
\end{itemize}
such that
\begin{equation}
\label{eqn:gauge}
\epsilon :=  e^\gamma \cdot \beta - \psi^* \alpha
\end{equation}
vanishes (where $\psi^* \in \Aut(R)$ corresponds to the $\psi_p$ in accordance with Definition \ref{defn:Rauts}). 

We denote 
\begin{align}
\psi_p^k &:= \psi_p \mbox{ (mod $\fm^{k}$)},\\ 
\gamma^k & := \gamma \mbox{ (mod $\fm^{k+1}$)}, \\
\epsilon^k & := \epsilon \mbox{ (mod $\fm^{k+1}$)}.
\end{align}
We prove, inductively in $k$, that it is possible to choose $\psi_p^k$ and $\gamma^k$ so as to make $\epsilon^k=0$.
The induction starts at $k=0$: setting $\psi_p^0 = 0$ and $\gamma^0 = 0$ obviously yields $\epsilon^0 = 0$.

Now suppose that $\epsilon^{k-1}=0$. 
Note that
\begin{eqnarray*}
[e^\gamma \cdot \beta - \psi^* \alpha,e^\gamma\cdot \beta + \psi^* \alpha] &=&[e^\gamma \cdot \beta,e^\gamma \cdot \beta] - [\psi^*\alpha,\psi^* \alpha ] \\
&=& -2 \partial (e^\gamma \cdot \beta - \psi^* \alpha) \\
&=& -2\partial (\epsilon)
\end{eqnarray*}
because $e^\gamma \cdot \beta$ and $\psi^*\alpha$ are solutions to the Maurer--Cartan equation. 
The left-hand side is a bracket of an element of order $\fm^{k}$ (by the inductive hypothesis) with one of order $\fm$; so the right-hand side is of order $\fm^{k+1}$. 
In particular, $\epsilon^k$ is closed: so it defines a cohomology class
\begin{equation}\label{eqn:defpsip} \left[\epsilon^k\right] \in H^1(\frakg \otimes \fm^k/\fm^{k+1}).\end{equation}

We have
\begin{equation}
\label{eqn:varphipk}
\left[\epsilon^k\right] = \sum_p \varphi_{p}^{k} \cdot \nov_p \defa_p \quad \text{in $H^1(\frakg \otimes \fm^k/\fm^{k+1})$}\end{equation}
for some $\varphi_{p}^{k} \in \fm^{k-1}/\fm^k$, by the hypothesis that the classes $\nov_p \defa_p$ span $H^1\left(\frakg \otimes \fmuncomp\right)$.
We then modify $\psi_{p}$:
\begin{equation} 
\label{eqn:psich}
\psi_p\mapsto \psi_p+ \varphi_{p}^{k}.\end{equation}
This modification has the effect of changing
\begin{equation} \epsilon^k \mapsto \epsilon^k - \sum_p \varphi_{p}^{k} \cdot \nov_p \beta_{u_p},\end{equation} 
so the new $\epsilon^k$ is nullhomologous.

We now have $\epsilon^k = \partial c^k$: modifying $\gamma^k \mapsto \gamma^k +c^k$ now has the effect of changing \begin{equation}\epsilon^k \mapsto \epsilon^k - \partial c^k = 0.\end{equation}
Therefore, with these choices of $\psi_p^k$ and $\gamma^k$, we have $\epsilon^k = 0$, and the inductive step is complete.

It is clear that the corrections we make to $\psi_p$ and $\gamma$ are of successively higher orders in the $\fm$-adic filtration, so the construction converges. 

It remains to show that we can choose $\psi^1_p \neq 0$, so that $\psi_p \in R_0^*$ for all $p$ (this is necessary for the automorphism $\psi^*$ to be well-defined in accordance with Definition \ref{defn:Rauts}). 
If the first-order deformation classes of $\beta$ are $\nov_p \mathsf{b}_p$, we require that
\[ \sum_p \nov_p \mathsf{b}_p = \sum_p \psi_p^1 \cdot \nov_p \defa_p.\]
in accordance with \eqref{eqn:defpsip}.
It follows by Lemma \ref{lem:nicerpdif} that $\mathsf{b}_p = \psi_p^1 \cdot \defa_p$ for all $p$. 

It also follows by Lemma \ref{lem:nicerpdif}, together with the assumption that $\beta \in MC'_\frakg(R)$, that $\mathsf{b}_p$ span their respective graded pieces of $H^\bullet(\frakg)$. 
If the graded piece has rank $1$, then $\psi_p^1$ is uniquely determined and non-zero; if the graded piece has rank $0$, we can choose $\psi_p^1$ to take any non-zero value. 
Thus we may choose $\psi_p \in R_0^*$ as required. 

Furthermore, if $\nov_p\defa_p \neq 0$ for all $p$, then these graded pieces have rank $1$ for all $p$, so $\psi_p^1$ is uniquely determined for all $p$. 
It is easy to check that the values of $\psi_p^1$ determine $\psi^*: \fm/\fm^2 \to \fm/\fm^2$ uniquely, as required.
\end{proof} 

\subsection{Versality with a group action}
\label{subsec:versgp}

We will also use a generalization of Lemma \ref{lem:vers}, in the presence of a group action. 
Let $\Gamma$ be a finite group, simultaneously acting on the grading datum $\G$ by morphisms of grading data, on the d$\G$la $\frakg$ by morphisms of d$\G$la relative to the action of $\Gamma$ on $\G$, on the set $P$, and on the ring $R$ by $\G$-graded automorphisms of $R$ relative to the action of $\Gamma$ on $\G$, of the special form
\[ g \cdot r_p = \pm r_{g \cdot p} \mbox{ for all $g, p$}.\]
 
When we say that the action of $\Gamma$ is `$\G$-graded relative to the action on $\G$', we mean that $g$ sends $\frakg_y  \mapsto \frakg_{g \cdot y}$, and respects the differential and bracket on $\frakg$; and similarly $g$ sends $R_y \mapsto R_{g \cdot y}$.
It follows that $\Gamma$ acts on $\frakg \otimes \fm$ and its completion $\frakg \ctens \fm$, preserving $MC_\frakg(R) \subset (\frakg \ctens \fm)_1$. 
We denote $MC_\frakg(R)^\Gamma \subset MC_\frakg(R)$ the subset of $\Gamma$-invariant Maurer--Cartan elements. 
We define $MC'_\frakg(R)^\Gamma \subset MC_\frakg(R)^\Gamma$ to be the subset of $\Gamma$-invariant Maurer--Cartan elements such that the span of the first-order deformation classes contains $H^1(\frakg \otimes \fmuncomp/\fmuncomp^2)^\Gamma$.

We define notions of $R$-completeness, $R$-universality and $R$-versality by analogy with Definition \ref{defn:MCRvers}, replacing `$MC'_\frakg(R)$' by `$MC'_\frakg(R)^\Gamma$'. 
We have the following version of Lemma \ref{lem:vers}:

\begin{lem}
\label{lem:eqvers}
Let $R$ be nice, and $\alpha \in MC_\frakg(R)^\Gamma$ have first-order deformation classes $\nov_p\defa_p$. Then:
\begin{itemize}
\item If $H^1(\frakg \otimes \fmuncomp)^\Gamma$ is contained in the $\Runcomp_0$-span of $\nov_p\defa_p$, then $\alpha$ is $R$-complete;
\item If furthermore $\defa_p \neq 0$ for all $p$, then $\alpha$ is $R$-versal.
\end{itemize}
\end{lem}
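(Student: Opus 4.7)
The plan is to rerun the order-by-order induction from the proof of Lemma~\ref{lem:vers}, maintaining $\Gamma$-equivariance throughout. Given $\beta \in MC'_\frakg(R)^\Gamma$, I seek units $\psi_p \in R_0^*$ whose induced $\psi^* \in \Aut(R)$ is $\Gamma$-equivariant, together with a $\Gamma$-invariant $\gamma \in (\frakg \ctens \fm)_0$, so that the discrepancy $e^\gamma \cdot \beta - \psi^*\alpha$ vanishes. These are built via successive truncations $\psi_p^k$, $\gamma^k$ in the $\fm$-adic filtration exactly as in the original proof, under the strengthened inductive hypothesis that $\psi^* \bmod \fm^{k+1}$ is $\Gamma$-equivariant and $\gamma^k$ is $\Gamma$-invariant, while still $\epsilon^k = 0$.

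The key new ingredient is that both corrections in the inductive step of Lemma~\ref{lem:vers} can be performed $\Gamma$-equivariantly by averaging. Because $\alpha$ and $\beta$ are $\Gamma$-invariant Maurer--Cartan elements and the preceding data are equivariant, the obstruction class $[\epsilon^k] \in H^1(\frakg \otimes \fm^k/\fm^{k+1})$ is $\Gamma$-invariant. Writing $g \cdot \nov_p = s_{g,p}\, \nov_{g \cdot p}$ with $s_{g,p} \in \{\pm 1\}$, the $\Gamma$-invariance of $\alpha = \sum_u \nov^u \alpha_u$ forces $g \cdot \defa_p = s_{g,p}\, \defa_{g \cdot p}$, and hence the map
\[ \bigoplus_p (\fm^{k-1}_0/\fm^k_0) \cdot \nov_p \to H^1(\frakg \otimes \fm^k/\fm^{k+1}), \qquad (\varphi_p) \mapsto \sum_p \varphi_p \nov_p \defa_p \]
is $\Gamma$-equivariant (the two signs $s_{g,p}$ cancel). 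By hypothesis this map hits $[\epsilon^k]$; averaging any preimage over $\Gamma$, which is valid since $|\Gamma| < \infty$ and $\mathrm{char}\,\Bbbk = 0$, produces a $\Gamma$-equivariant family $(\varphi_p^k)$. Adding these to $\psi_p^{k-1}$ as in \eqref{eqn:psich} renders the corrected $\epsilon^k$ exact while preserving equivariance. Writing the corrected $\epsilon^k = \partial c^k$, one replaces $c^k$ by its $\Gamma$-average to obtain a $\Gamma$-invariant primitive; adding this to $\gamma^{k-1}$ kills $\epsilon^k$ and maintains $\Gamma$-invariance of $\gamma^k$.

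Convergence of the construction, the choice of $\psi_p^1 \in R_0^*$, and, in the versal case, the uniqueness of $\psi^* \colon \fm/\fm^2 \to \fm/\fm^2$ when $\defa_p \neq 0$ for all $p$, transfer verbatim from Lemma~\ref{lem:vers}, with the minor additional point that when a graded piece allows a free choice of $\psi_p^1$ one averages to ensure equivariance. The main obstacle is essentially bookkeeping: tracking how the signs $s_{g,p}$ propagate through the $\G$-grading, the action on $R$, and the action on $\frakg$, and checking that averaging is compatible with all of them. These compatibilities follow from the assumption that $\Gamma$ acts on $\frakg$ and $R$ by $\G$-graded (anti-)morphisms relative to a common action on the grading datum $\G$, and it is the cancellation $s_{g,p}^2 = 1$ that ultimately makes the inductive step run.
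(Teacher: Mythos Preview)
Your proposal is correct and follows essentially the same approach as the paper's proof: rerun the induction of Lemma~\ref{lem:vers}, maintaining $\Gamma$-equivariance by averaging the coefficients $\varphi_p^k$ and the primitive $c^k$ at each step. Your sign bookkeeping with $s_{g,p}$ is slightly more explicit than the paper's, but the key identity $g\cdot(\nov_p\defa_p)=\nov_{g\cdot p}\defa_{g\cdot p}$ and the resulting equivariance of the correction step are exactly what the paper uses.
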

\begin{proof}
The proof is parallel to that of Lemma \ref{lem:vers}: we seek to inductively construct $\psi_p$ and $\gamma$ as there, now with the additional properties that $\psi_{g \cdot p} = g \cdot \psi_p$ ($\Gamma$-equivariance of $\psi^*$) and $g \cdot \gamma = \gamma$ for $g \in \Gamma$.
This is achieved by `averaging' at each inductive step. 

The main changes required to the argument are these: after obtaining \eqref{eqn:varphipk}, we average both sides to make them $\Gamma$-invariant. 
This doesn't change $[\epsilon^k]$, which is $\Gamma$-invariant by the inductive assumption; on the right-hand side, it follows from the special form of the action of $\Gamma$ on $R$ that 
\[ g \cdot (\nov_p \defa_p) = \nov_{g \cdot p} \defa_{g \cdot p},\]
which means that the effect of the averaging is to modify $\varphi_p^k$ so that $g \cdot \varphi_p^k = \varphi_{g \cdot p}^k$. 
So the substitution \eqref{eqn:psich} preserves $\Gamma$-equivariance of $\psi^*$.

The second part of the inductive step is more straightforward: we simply average $c^k$ to make it $\Gamma$-invariant.
\end{proof}

\subsection{Application to $A_\infty$ structures without a group action}
\label{subsec:defainf}

We review the deformation theory of $A_\infty$ algebras (compare \cite[\S 3]{Seidel:HMSquartic}). 

The Hochschild differential and the Gerstenhaber bracket equip the Hochschild cochain complex $\frakg := CC^\bullet(\EuA)[1]$ with the structure of a d$\G$la. 
Its cohomology is $H^\bullet(\frakg) \simeq \HH^\bullet(\EuA)[1]$, the Hochschild cohomology of $\EuA$.

An element $\alpha \in \left(\frakg \ctens \fm\right)_1$ is a solution to the Maurer--Cartan equation if and only if $\EuA_R = \left(A \ctens R,\mu^* + \alpha\right)$ defines a deformation of $\EuA$ over $R$. 
Thus we have an isomorphism $MC_\frakg(R) \simeq \mathfrak{A}_R(\EuA)$. 
This isomorphism identifies $\mathfrak{A}'_R(\EuA) \subset \mathfrak{A}_R(\EuA)$ with $MC'_\frakg(R) \subset MC_\frakg(R)$.

Now suppose the Maurer--Cartan elements $\alpha$ and $\beta$ correspond to deformations $\EuA_R$ and $\EuB_R$ of $\EuA$. 
If $\alpha$ and $\beta$ are gauge equivalent, then there is a (possibly curved) $A_\infty$ homomorphism $F^*: \EuB_R \dashrightarrow \EuA_R$ with $F^* \equiv \id\text{ (mod $\fm$)}$, defined by universal formulae in $\gamma$ (see e.g. \cite[Equation (3.11)]{Seidel:g2}). 

To be explicit about the formulae for $F^*$, suppose that $\alpha =  e^\gamma \cdot \beta$. 
Observe that $\mu^*_t := \mu^* +  e^{t\gamma} \cdot \beta$ is the unique solution to the differential equation
\[ \frac{d}{dt} \mu^*_t = [\gamma,\mu^*_t] \]
with initial condition $\mu^*_0 = \mu^* + \beta$ (see \eqref{eqn:odemc}). 
There is then an $A_\infty$ isomorphism $F_t^*:\left(A \ctens R,\mu^*_0\right) \dashrightarrow \left(A \ctens R,\mu^*_t\right)$ for all $t$: it is the unique solution to the differential equation
\[ \frac{d}{dt} F_t^*(\ldots) = \sum \gamma(F_t^*(\ldots),\ldots,F_t^*(\ldots)) \]
with initial condition $F_0^* = \id$. 
To be precise, one must prove that the solution $F_t^*$ exists and is unique, then that it defines an $A_\infty$ homomorphism. 
It is an immediate consequence of the construction that $F^*_t \equiv \id \mbox{ (mod $\fm$)}$, for all $t$. 
The desired $A_\infty$ homomorphism is then $F^* = F^*_1$.

\begin{lem}
\label{lem:versainf}
Suppose that $R$ is nice, and $\EuA_R \in \mathfrak{A}_R(\EuA)$ is a deformation whose first-order deformation classes $\nov_p \defa_p$ span $\HH^2\left(\EuA,\EuA \otimes \fmuncomp\right)$ as $\Runcomp_0$-module. 
Then $\EuA_R$ is $R$-complete. 

If furthermore $\defa_p \neq 0$ for all $p$, then $\EuA_R$ is $R$-versal.
\end{lem}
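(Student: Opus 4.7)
The plan is to reduce the statement directly to Lemma \ref{lem:vers}, applied to the shifted Hochschild dgla $\frakg := CC^\bullet(\EuA)[1]$ equipped with the Gerstenhaber bracket (so that $H^\bullet(\frakg) \simeq \HH^\bullet(\EuA)[1]$). The preceding discussion in \S\ref{subsec:defainf} already supplies the needed dictionary: $\G$-graded deformations $\EuA_R = (\EuA \ctens R, \mu^* + \alpha)$ correspond bijectively to Maurer--Cartan elements $\alpha \in MC_\frakg(R)$, and this identifies $\mathfrak{A}'_R(\EuA)$ with $MC'_\frakg(R)$ since, after the degree shift, the first-order deformation classes $\nov_p \defa_p \in \HH^2(\EuA, \EuA \otimes \fmuncomp)$ are the same objects as the first-order deformation classes in $H^1(\frakg \otimes \fmuncomp)$.

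With this dictionary in hand, the spanning hypothesis in Lemma \ref{lem:versainf} translates verbatim into the hypothesis of Lemma \ref{lem:vers}, and $R$ is nice by assumption. Applying Lemma \ref{lem:vers} therefore produces, for any $\EuB_R \in \mathfrak{A}'_R(\EuA)$ with underlying Maurer--Cartan element $\beta$, an automorphism $\psi^* \in \Aut(R)$ and a $\gamma \in (\frakg \ctens \fm)_0$ with $\beta = e^\gamma \cdot \psi^* \alpha$. I would then convert this gauge equivalence into a (possibly curved) $A_\infty$ homomorphism $F^*: \EuB_R \dashrightarrow \psi^* \EuA_R$ via the ODE recipe described just before the lemma: setting $\mu^*_t := \mu^* + e^{t\gamma}\cdot \beta$ and integrating
\begin{equation*}
\frac{d}{dt} F^*_t(\ldots) = \sum \gamma\bigl(F^*_t(\ldots),\ldots,F^*_t(\ldots)\bigr), \qquad F^*_0 = \id,
\end{equation*}
the desired homomorphism is $F^* := F^*_1$. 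Existence, uniqueness, and the $A_\infty$ relations for $F^*_t$ all follow by a standard $\fm$-adic induction, and $F^*_t \equiv \id \pmod{\fm}$ is automatic since $\gamma \in \frakg \ctens \fm$. This is exactly the $R$-completeness of $\EuA_R$.

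For the second clause, when $\defa_p \neq 0$ for every $p$, Lemma \ref{lem:vers} further asserts that the induced map $\psi^* : \fm/\fm^2 \to \fm/\fm^2$ is uniquely determined by $\beta$; by the definition of $R$-versality in Definition \ref{defn:vers1int}, this is precisely $R$-versality of $\EuA_R$. There is essentially no obstacle beyond bookkeeping; the only genuine piece of work — translating a dgla gauge equivalence into an $A_\infty$ homomorphism reducing to the identity modulo $\fm$ — is taken care of by the ODE construction recalled in \S\ref{subsec:defainf}, and all remaining statements pass straight through the deformation-theoretic dictionary.
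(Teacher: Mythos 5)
Your proposal is correct and follows exactly the paper's route: the paper's proof is simply ``Follows directly from Lemma \ref{lem:vers} and the previous discussion,'' where that discussion in \S\ref{subsec:defainf} supplies precisely the dictionary between deformations and Maurer--Cartan elements of $CC^\bullet(\EuA)[1]$ and the ODE construction converting gauge equivalence into a curved $A_\infty$ functor congruent to the identity modulo $\fm$. You have merely spelled out the details the paper leaves implicit.
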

\begin{proof}
Follows directly from Lemma \ref{lem:vers} and the previous discussion.
\end{proof}

\subsection{Application to $A_\infty$ structures with a group action}
\label{subsec:ainfversgp}

Lemma \ref{lem:versainf} can be generalized in the presence of a group action, and this generalization proves very useful in our intended applications. 
Let $\Gamma$ be a finite group simultaneously acting on the grading datum $\G$ and the $\G$-graded vector space $A$. 
Then $\Gamma$ acts on the vector space $CC^\bullet(A)$, by the formula
\begin{equation} (\gamma \cdot \eta)^k(a_1,\ldots,a_k) := \gamma^{-1} \cdot \eta^k(\gamma \cdot a_1,\ldots,\gamma \cdot a_k). \end{equation}
If this action preserves $\mu^* \in CC^2(A)$, then we say that $\Gamma$ acts on the $A_\infty$ algebra $(A,\mu^*)$. 
In this case $\Gamma$ acts on the d$\G$la $\frakg := CC^\bullet(\EuA)$.
If $\Gamma$ also acts simultaneously on $R$ and $\alpha \in MC_{\frakg}(R)^\Gamma$, we say that the corresponding deformation $\EuA_R$ is $\Gamma$-equivariant. 

In fact we will want to consider a generalization of a group action, which we call a \emph{signed group action}. 
It combines the usual notion of a group action with the notion of a duality (see \cite{Castano2010}). 
See \S \ref{subsec:sgp} for more details on what follows, including the generalization from algebras to categories.
 
First, we define the following $\Z/2$-action on the vector space $CC^\bullet(A)$:
\begin{align}
op: CC^\bullet(A) & \to CC^\bullet(A^{op})\\
\eta & \mapsto \eta_{op},\quad \mbox{ where}\\
\eta_{op}^k(a_1,\ldots,a_k) & := (-1)^\dagger \cdot \eta(a_k,\ldots,a_1)
\end{align}
with the sign given by
\begin{equation} \dagger := \sum_{1 \le i<j \le s} (|a_i| + 1) \cdot (|a_j|+1).\end{equation}
This map preserves the Gerstenhaber bracket.

Now let $(\Gamma,\sigma)$ be a finite signed group. 
If $\Gamma$ acts simultaneously on the grading datum $\G$ and the $\G$-graded vector space $A$, we define a graded action of $\Gamma$ on the vector space $CC^\bullet(A)$ by 
\begin{equation} \gamma \cdot \eta^k(a_1,\ldots,a_k) = \begin{cases}
									\gamma^{-1} \cdot \eta^k(\gamma \cdot a_1,\ldots,\gamma \cdot a_k) & \mbox{ if $\sigma(\gamma) = 0$} \\
									\gamma^{-1} \cdot \eta_{op}^k(\gamma \cdot a_1,\ldots,\gamma \cdot a_k) & \mbox{ if $\sigma(\gamma) = 1$.}
									\end{cases}\end{equation}
If this action preserves $\mu^* \in CC^2(A)$, then we say that $(\Gamma,\sigma)$ acts on $\EuA = (A,\mu^*)$. 
In this case $\Gamma$ acts on $\frakg := CC^\bullet(\EuA)$. 
If $\Gamma$ also acts simultaneously on $R$, and $\alpha \in MC_{\frakg}(R)^\Gamma$, we say that the corresponding deformation $\EuA_R$ is $(\Gamma,\sigma)$-equivariant. 
We denote the set of $(\Gamma,\sigma)$-equivariant deformations by $\mathfrak{A}_R(\EuA)^\Gamma \simeq MC_\frakg(R)^\Gamma$. 
We define the subset $\mathfrak{A}'_R(\EuA)^\Gamma \subset \mathfrak{A}_R(\EuA)^\Gamma$ corresponding to $MC'_\frakg(R)^\Gamma \subset MC_\frakg(R)^\Gamma$, as in \S \ref{subsec:versgpint}.

We recall the notions of $R$-complete/$R$-universal/$R$-versal equivariant deformations from Definition \ref{defn:eqversainf}.

\begin{lem}
\label{lem:eqversainf}
Suppose that $R$ is nice, and the action of $\Gamma$ on it is of the special type described in \S \ref{subsec:versgp}; and that $\EuA_R \in \mathfrak{A}_R(\EuA)^\Gamma$ is an equivariant deformation such that $\HH^2(\EuA,\EuA \otimes \fmuncomp)^\Gamma$ is contained in the $\Runcomp_0$-span of the first-order deformation classes $\nov_p \defa_p$. 
Then $\EuA_R$ is $R$-complete. 

If furthermore $\defa_p \neq 0$ for all $p$, then $\EuA_R$ is $R$-versal.
\end{lem}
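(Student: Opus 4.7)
The plan is to deduce this lemma from Lemma \ref{lem:eqvers} (the equivariant Maurer--Cartan statement at the level of d$\G$la) by the same translation between $A_\infty$ deformations and Maurer--Cartan elements that was used in \S\ref{subsec:defainf} to deduce Lemma \ref{lem:versainf} from Lemma \ref{lem:vers}. Concretely, set $\frakg := CC^\bullet(\EuA)[1]$ equipped with the Hochschild differential and Gerstenhaber bracket, so that $\mathfrak{A}_R(\EuA) \simeq MC_\frakg(R)$ under the correspondence $\EuA_R = (A \ctens R, \mu^* + \alpha)$, and this identification restricts to $\mathfrak{A}'_R(\EuA) \simeq MC'_\frakg(R)$ and carries first-order deformation classes to first-order Maurer--Cartan classes.

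The crucial step is to verify that the signed action of $(\Gamma,\sigma)$ on $\EuA$ recalled in \S\ref{subsec:ainfversgp} induces an honest action of the underlying group $\Gamma$ on the d$\G$la $\frakg$, in the sense required by \S\ref{subsec:versgp}. For $\gamma$ with $\sigma(\gamma)=0$ this is the usual functoriality of Hochschild cochains under $A_\infty$ isomorphism. For $\gamma$ with $\sigma(\gamma)=1$ it combines conjugation by $\gamma$ with the involution $op:CC^\bullet(A)\to CC^\bullet(A^{op})$, and the key point is the statement in \S\ref{subsec:ainfversgp} that $op$ preserves the Gerstenhaber bracket. Together with the analogous compatibility with the Hochschild differential (which is immediate from the $A_\infty$ relations and the definition of $op$), this shows that both flavours of element act by d$\G$la automorphisms relative to the induced $\Gamma$-action on $\G$, so their composition behaves correctly and we obtain a genuine $\Gamma$-action on $\frakg$. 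Under this action, $MC_\frakg(R)^\Gamma \simeq \mathfrak{A}_R(\EuA)^\Gamma$ and $MC'_\frakg(R)^\Gamma \simeq \mathfrak{A}'_R(\EuA)^\Gamma$.

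With this in hand, the hypothesis that $\HH^2(\EuA,\EuA\otimes \fmuncomp)^\Gamma$ is contained in the $\Runcomp_0$-span of the $\nov_p \defa_p$ translates directly into the hypothesis of Lemma \ref{lem:eqvers} on $H^1(\frakg \otimes \fmuncomp)^\Gamma$ (using the degree shift built into $\frakg$). The assumption on the action of $\Gamma$ on $R$ is exactly the one required in \S\ref{subsec:versgp}. Lemma \ref{lem:eqvers} then produces, for each $\beta \in MC'_\frakg(R)^\Gamma$, an automorphism $\psi^* \in \Aut(R)$ and a $\Gamma$-invariant gauge parameter $\gamma \in (\frakg \ctens \fm)_0$ with $\beta \sim \psi^* \alpha$. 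The explicit construction of the curved $A_\infty$ homomorphism $F^*:\EuB_R \dashrightarrow \psi^*\EuA_R$ from $\gamma$ reviewed in \S\ref{subsec:defainf} applies verbatim, and $\Gamma$-invariance of $\gamma$ together with the naturality of the universal formulae in $\gamma$ ensures $F^*$ is compatible with the signed group action. The versality statement under the extra hypothesis $\defa_p \neq 0$ is inherited from the corresponding clause of Lemma \ref{lem:eqvers}.

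The main obstacle is the verification that $op$ intertwines the Hochschild differential and the Gerstenhaber bracket with the correct Koszul signs, so that the signed action is really by d$\G$la automorphisms; this is the only place where the presence of orientation-reversing elements enters the argument, and once it is granted the rest of the proof is a formal reduction to Lemma \ref{lem:eqvers}.
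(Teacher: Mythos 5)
Your proposal is correct and follows essentially the same route as the paper: the paper's proof is the one-line reduction ``Follows directly from Lemma \ref{lem:eqvers}'', with all the supporting structure (the identification $\mathfrak{A}_R(\EuA)^\Gamma \simeq MC_\frakg(R)^\Gamma$, the signed $\Gamma$-action on $CC^\bullet(\EuA)$, and the fact that $op$ preserves the Gerstenhaber bracket) set up in \S\ref{subsec:ainfversgp} exactly as you describe. Your write-up simply makes explicit the verifications the paper delegates to that preceding discussion and to \S\ref{subsec:defainf}.
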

\begin{proof}
Follows directly from Lemma \ref{lem:eqvers}.
\end{proof}

\subsection{Bounding cochains}
\label{subsec:bc}

The fact that the $A_\infty$ homomorphisms constructed in Lemmas \ref{lem:versainf} and \ref{lem:eqversainf} are possibly curved may appear disconcerting (this issue was avoided in \cite{Seidel:HMSquartic} by working with \emph{truncated} Hochschild cohomology). 
However it has a very natural interpretation, which we now explain.

If $\EuA_R = (A \ctens R,\mu^*)$ is an $R$-linear (possibly curved) $A_\infty$ algebra, we recall that a \emph{bounding cochain} is an element $\mca \in (A \ctens \fm)_1$ satisfying a (different) Maurer--Cartan equation:
\begin{equation} \sum_{s \ge 0} \mu^s(\mca,\ldots,\mca) = 0.\end{equation}
The infinite sum makes sense because it converges in the $\fm$-adic topology. 
A bounding cochain allows us to deform the (possibly curved) $A_\infty$ algebra $\EuA_R = (A \ctens R,\mu^*)$ to a non-curved $A_\infty$ algebra $(\EuA_R,\mca) = (A \ctens R,\mu^*_{\mca})$, where
\begin{equation} 
\label{eqn:mudefbc}\mu^s_{\mca}(a_1,\ldots,a_s) := \sum_{i_0,\ldots,i_s} \mu^*(\underbrace{\mca,\ldots,\mca}_{i_0},a_1,\underbrace{\mca,\ldots,\mca}_{i_1},a_2,\ldots,a_s,\underbrace{\mca,\ldots,\mca}_{i_s}). \end{equation}
The deformed $A_\infty$ structure remains $R$-linear and $\G$-graded.

We have the following trivial observation:

\begin{lem}
\label{lem:curvmc}
Suppose that $\EuA_R$ and $\EuB_R$ are (possibly curved) $A_\infty$ algebras, deformations of $\EuA$ and $\EuB$ respectively over $R$, and suppose their curvature is of order $\fm$: i.e., $\mu^0 \in (A \otimes \fm)_2$, $\eta^0 \in (B \otimes \fm)_2$. 
Suppose that $F^*$ is a curved $A_\infty$ homomorphism from $\EuA$ to $\EuB$, whose curvature however is of order $\fm$, i.e., $F^0 \in (B \otimes \fm)_1$. 
If $\mca \in (A \otimes \fm)_1$ is a bounding cochain for $\EuA_R$, then
\begin{equation} \mcb:= \sum_{s \ge 0} F^s(\mca,\ldots,\mca)\end{equation}
is a bounding cochain for $\EuB_R$, and 
\begin{equation}
F^s_{\mca}(a_1,\ldots,a_s) := \sum_{i_0,\ldots,i_s} F^s(\underbrace{\mca,\ldots,\mca}_{i_0},a_1,\underbrace{\mca,\ldots,\mca}_{i_1},a_2,\ldots,a_s,\underbrace{\mca,\ldots,\mca}_{i_s})
\end{equation}
for $s \ge 1$ define a non-curved $A_\infty$ homomorphism from $(\EuA_R,\mca)$ to $(\EuB_R,\mcb)$. 
If $F^1$ is an isomorphism of $R$-modules, then so is $F^1_\mca$.
\end{lem}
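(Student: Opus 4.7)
The plan is to use the bar-construction formalism, which makes the lemma into a formal manipulation with group-like elements and coderivations. First, all the defining infinite sums converge $\fm$-adically: since $\mca \in (A \otimes \fm)_1$, the curvatures $\mu^0, \eta^0 \in \fm$, and $F^0 \in (B \otimes \fm)_1$, any term involving $k$ insertions of $\mca$ lies in $\fm^k$, and only finitely many contribute modulo any given power of $\fm$; convergence then follows from $\fm$-adic completeness of $R$.

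To show $\mcb$ is a bounding cochain, I would introduce the group-like element $e^\mca := \sum_{k \ge 0} \mca^{\otimes k}$ of the completed bar coalgebra $T((A \ctens R)[1])^\wedge$, and recall that the bounding cochain equation for $\mca$ is equivalent to $\hat\mu(e^\mca) = 0$, where $\hat\mu$ is the coderivation extending $\mu^*$. The canonical coalgebra morphism $\hat F : T((A \ctens R)[1])^\wedge \to T((B \ctens R)[1])^\wedge$ extending $F^*$ is multiplicative on group-likes, giving $\hat F(e^\mca) = e^\mcb$ with $\mcb$ as defined. Because $\hat F$ intertwines $\hat\mu$ and $\hat\eta$, one obtains $\hat\eta(e^\mcb) = \hat F(\hat\mu(e^\mca)) = 0$, which is exactly the bounding cochain equation for $\mcb$.

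The same formalism handles the homomorphism claim. Up to standard sign conventions, the twisted structures $\mu^*_\mca$ and $\eta^*_\mcb$ are the coderivations obtained from $\hat\mu$ and $\hat\eta$ by inserting $e^\mca$ and $e^\mcb$ around the inputs; correspondingly, $F^s_\mca$ for $s \ge 1$ is the component of $\hat F$ on inputs padded with $e^\mca$ and output reduced modulo $e^\mcb$. The identity $\hat F \circ \hat \mu = \hat\eta \circ \hat F$, combined with $\hat F(e^\mca) = e^\mcb$, then directly yields the $A_\infty$ homomorphism relations for $F^*_\mca$ with respect to $\mu^*_\mca$ and $\eta^*_\mcb$, and $F^0_\mca$ is absent by construction, so the twisted homomorphism is non-curved.

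Finally, $F^1_\mca - F^1$ is a sum of terms each inserting at least one copy of $\mca \in (A \otimes \fm)_1$, so it lies in $\fm \cdot \Hom_R(A \ctens R, B \ctens R)$. If $F^1$ is an $R$-linear isomorphism with inverse $G$, then $F^1_\mca \circ G = \id + \varepsilon$ with $\varepsilon \in \fm$, and the Neumann series $\id - \varepsilon + \varepsilon^2 - \cdots$ converges by $\fm$-adic completeness, giving a right inverse; the symmetric argument on the other side gives a left inverse, so $F^1_\mca$ is invertible. The main obstacle is no single conceptual point but the sign bookkeeping in translating between the algebraic and coalgebraic pictures; the cleanest way to finesse this is to adopt the bar-coalgebra viewpoint throughout and let the signs be dictated by the Koszul rule applied to coderivations and coalgebra morphisms.
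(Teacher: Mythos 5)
Your proposal is correct and fills in exactly the ``standard'' argument the paper invokes without detail: the bar-construction/group-like-element formalism is the usual proof that $\mcb$ is a bounding cochain and that $F^*_\mca$ is a non-curved $A_\infty$ homomorphism, and your convergence bookkeeping via the $\fm$-adic filtration is what makes it legitimate here. For the final claim the paper cites a spectral sequence comparison for the $\fm$-adic filtration, which is just a repackaging of your Neumann-series argument (both amount to: a filtered $R$-linear map between complete free modules that is an isomorphism modulo $\fm$ is an isomorphism), so the two proofs are essentially the same.
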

\begin{proof}
The fact that $F^s_{\mca}$ defines a non-curved $A_\infty$ homomorphism is standard. One proves that $F^1_{\mca}$ is an isomorphism by a spectral sequence comparison argument, for the spectral sequence induced by the $\fm$-adic filtration. 
\end{proof}

We mention two generalizations of Lemma \ref{lem:curvmc}, which are proved in exactly the same way.
First, if $\EuA_R$, $\EuB_R$ and $F^*$ are strictly unital and $\mca$ is a \emph{weak} bounding cochain, then $\mcb$ is also a weak bounding cochain with the same disc potential $\mathfrak{P}(\mcb) = \mathfrak{P}(\mca)$ (see \cite[Definition 3.8.40]{FO3}).  
Second, if $\EuA_R$ and $\EuB_R$ are categories and $F^*$ is a curved $A_\infty$ functor, then there is an induced non-curved $A_\infty$ functor $\EuA_R^{\bc} \dashrightarrow \EuB_R^\bc$. 

\section{Geometric setup}
\label{sec:geomset}

Now we give the geometric data on which our constructions will depend. 
First, let us clarify one convention: we will write $H_\bullet(X) := H_\bullet(X;\Z)$.

\subsection{$\snc$ pairs}
\label{subsec:logpair}

\begin{defn}
A \emph{$\snc$ pair} $(X,D)$ consists of:
\begin{itemize}
\item a smooth compact complex manifold $X$; and
\item a $\snc$ divisor $D \subset X$.
\end{itemize}
We will denote $D=\cup_{p \in P} D_p$, where $D_p$ are the irreducible components of $D$.
\end{defn}

\begin{defn}
\label{defn:relkahl}
A \emph{relative K\"{a}hler form} on $(X,D)$ consists of:
\begin{itemize}
\item A K\"{a}hler form $\omega$ on $X$; and
\item A plurisubharmonic function $h: X \setminus D \to \R$ such that $\omega|_{X \setminus D} = -dd^c h$.
\end{itemize}
We require that $h$ has a particular form near $D$: in local holomorphic coordinates $(z_1,\ldots,z_n)$ such that $D= \{z_1\ldots z_k = 0\}$, we require that
\begin{equation}
\label{eqn:hnearD}
 h = -\psi -\sum_{p=1}^k \ell_p \cdot \log |z_i|
\end{equation}
where $\psi$ is smooth, and each $\ell_p > 0$ is a positive real number called the \emph{linking number} with $D_p$.
\end{defn}

If a $\snc$ pair admits a relative K\"{a}hler form, we say that the $\snc$ pair is \emph{K\"{a}hler}. 
We will call the data $(X,D,\omega=-dd^ch)$ a \emph{relative K\"{a}hler manifold}. 
We will denote a relative K\"{a}hler manifold by $(X,D,\omega)$ where no confusion can arise. 
Observe that the space of relative K\"{a}hler forms is open and convex.

Because the linking numbers are positive, $h$ is proper and bounded below. 
It follows that any regular sublevel set $\{h \le c\}$ is a Liouville domain, with Liouville one-form $\alpha := -d^ch$.

The special form of $h$ near $D$ implies that the critical points of $h$ form a compact subset of $X \setminus D$ (see \cite[Lemma 4.3]{Seidel:biased}). 
Therefore we can choose $c$ to be larger than all critical values of $h$. 
This implies that $\{h \le c \} \subset X \setminus D$ is a deformation retract. 
We will denote a relative K\"{a}hler manifold $(X,D,\omega)$, together with a choice of such $c$, by $(X,D,\omega,c)$. 
We will still call this a relative K\"{a}hler manifold, to avoid excessive terminology.

For any relative K\"{a}hler manifold $(X,D,\omega,c)$, we define
\begin{equation} [\omega] \in H^2(X,X\setminus D;\R) \end{equation}
by 
\begin{equation} [\omega](u) := \omega(u) - \alpha(\partial u)\end{equation}
for any smooth representative $u$. 
By Stokes' theorem (compare \cite[Lemma 3.12]{Sheridan:CY}), we have
\begin{equation}
\label{eqn:links}
 [\omega] = \sum_p \ell_p \cdot PD(D_p),
\end{equation}
where 
\begin{equation} PD: H_{2n-2}(D) \xrightarrow{\simeq} H^2(X,X \setminus D;\R)\end{equation}
denotes Poincar\'{e} duality.

We now recall Definition \ref{defn:nefampxd}: we let
\begin{equation} Nef(X,D) \subset H^2(X,X \setminus D;\R) \simeq \R^P\end{equation} 
be the closed convex cone spanned by classes $\sum_p \ell_p \cdot PD(D_p)$ with $\ell_p \in \N_0$, such that $\sum_p \ell_p \cdot D_p$ is nef. 
In other words, `the cone of effective nef divisors supported on $D$'. 
We also let $Amp(X,D) \subset H^2(X,X \setminus D;\R)$ be the open convex cone spanned by classes $\sum_p \ell_p \cdot PD(D_p)$ with $\ell_p \in \N_{>0}$, such that $\sum_p \ell_p \cdot D_p$ is ample.  

We now recall the relationship between ample divisors and K\"{a}hler forms, following \cite[Chapter I.2]{Griffiths1978}.

\begin{lem}
\label{lem:ampkahl}
A class in $H^2(X,X \setminus D;\R)$ is the cohomology class of a relative K\"{a}hler form if and only if it lies in $Amp(X,D)$.
\end{lem}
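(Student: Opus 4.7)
The plan is to prove the two implications separately; both are standard exercises in Kähler geometry once set up correctly, and no deep obstacle arises.

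For the direction ``relative Kähler $\Rightarrow$ ample'', I would start with $(X,D,\omega=-dd^ch)$ a relative Kähler manifold with linking numbers $\ell_p>0$. Equation \eqref{eqn:links} identifies $[\omega]=\sum_p\ell_p\cdot PD(D_p)$ in $H^2(X,X\setminus D;\R)$, and the image of this class under the natural map $H^2(X,X\setminus D;\R)\to H^2(X;\R)$ is $\sum_p\ell_p\cdot c_1(\mathcal{O}(D_p))$. Since this image is represented by the honest Kähler form $\omega$ on $X$, it is a Kähler class. By the standard correspondence between Kähler and ample cones (Kodaira/Kleiman, extended by continuity to $\R$-divisors), the $\R$-divisor $\sum_p\ell_p\cdot D_p$ is therefore ample, so $[\omega]\in Amp(X,D)$.

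For the converse, I would construct a relative Kähler form with prescribed class $\sum_p\ell_p\cdot PD(D_p)\in Amp(X,D)$ directly. Choose defining sections $s_p$ of $\mathcal{O}(D_p)$ and arbitrary Hermitian metrics $\|\cdot\|_p$, and set
\[ h_0:=-\sum_p\ell_p\log\|s_p\|_p\;:\;X\setminus D\longrightarrow \R. \]
In local holomorphic coordinates near a stratum where $D=\{z_1\cdots z_k=0\}$, each $s_p$ restricts as $z_1\cdot(\text{nonvanishing holomorphic})$, hence $\log\|s_p\|_p=\log|z_1|+(\text{smooth})$. Thus $h_0$ already has exactly the form $-\psi_0-\sum_p\ell_p\log|z_i|$ demanded by Definition \ref{defn:relkahl}, with $\psi_0$ smooth. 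By the Poincaré--Lelong formula, $-dd^ch_0=\sum_p\ell_p\cdot c_1(\mathcal{O}(D_p),\|\cdot\|_p)$ on $X\setminus D$ extends smoothly across $D$ to a closed $(1,1)$-form $\eta$ on all of $X$, whose cohomology class in $H^2(X;\R)$ is $\sum_p\ell_p\cdot c_1(\mathcal{O}(D_p))$, a Kähler class by assumption.

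Finally, invoking the $dd^c$-lemma, I would choose a smooth function $\psi:X\to\R$ so that $\omega:=\eta+dd^c\psi$ is a genuine Kähler form on $X$, and set $h:=h_0-\psi$. Then $-dd^ch=\omega$ on $X\setminus D$, and near $D$ the function $h$ retains the form $-(\psi_0+\psi)-\sum_p\ell_p\log|z_i|$ with smooth regular part; so $(X,D,\omega=-dd^ch)$ is a relative Kähler manifold, and tracing through the definitions gives $[\omega]=\sum_p\ell_p\cdot PD(D_p)$ as desired. The only place requiring any care is matching sign conventions and normalizations between Poincaré--Lelong and Definition \ref{defn:relkahl}, and checking that perturbation by the smooth globally defined function $\psi$ does not spoil the prescribed asymptotics near $D$; neither presents a real obstacle.
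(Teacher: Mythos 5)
Your proof is correct and rests on the same two pillars as the paper's: Kodaira's equivalence between ampleness and the existence of a positively curved metric, and the observation that $-\sum_p \ell_p\log\|s_p\|$ has exactly the local form \eqref{eqn:hnearD} required of a relative K\"ahler potential. The one organizational difference is how the passage between integral and real coefficients is handled. The paper first observes that both the set of relative K\"ahler classes and $Amp(X,D)$ are open, convex and conical, so it suffices to match them on the integer lattice; it then works with a single honest line bundle $\mathcal{L}=\mathcal{O}(\sum_p\ell_p D_p)$ and its defining section $\chi$, choosing the metric so that the curvature is already a K\"ahler form (which packages your $dd^c$-lemma step into the choice of metric). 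You instead argue directly with real $\ell_p$, which forces you to invoke, in the forward direction, the identification of the real ample cone with the K\"ahler cone intersected with the N\'eron--Severi space. Your justification of that step ``by continuity'' is the only glib point: the correct argument is not continuity but exactly the open-convex-cone/rational-approximation reduction the paper makes explicit (both cones are open and convex and agree on rational classes by Kodaira, hence coincide). Since that is a standard fact and the rest of your construction --- including the sign bookkeeping $h=h_0-\psi$, $-dd^ch=\eta+dd^c\psi=\omega$ --- checks out, there is no genuine gap.
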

\begin{proof}
Both the set of cohomology classes of relative K\"{a}hler forms and $Amp(X,D)$ are open, convex and conical subsets of $H^2(X,X \setminus D;\R)$. 
Therefore it suffices to prove that their intersections with the integer lattice $H^2(X,X \setminus D)$ coincide.
In other words, it suffices to show that for any $\ell_p \in \N_{>0}$,
\[\sum_p \ell_p \cdot D_p \mbox{ is ample} \quad \iff \quad \sum_p \ell_p \cdot PD(D_p) \text{ is the cohomology class of a relative K\"ahler form.}\]

Let $\mathcal{L} := \mathcal{O}(\sum_p \ell_p \cdot D_p)$, and $\chi \in \Gamma(\mathcal{L})$ the section defining $\sum_p \ell_p \cdot D_p$.
If $\mathcal{L}$ is ample, it is positive, so we can equip it with a metric $\| \cdot \|$ whose curvature form is $\Theta = - 2\pi \iii \omega$, where $\omega$ is a K\"{a}hler form. 
We have 
\begin{equation}\Theta = 2 \pi \iii \cdot dd^c \log \left( \| \chi \|^2 \right) \end{equation}
on the region where the right-hand side is defined, i.e., on $X \setminus D$ (see \cite[p. 142]{Griffiths1978}). 
Therefore, $h:=\log \|\chi\|^2$ is a K\"{a}hler potential for $\omega$ on $X \setminus D$. 
One easily computes that $h$ has the prescribed form \eqref{eqn:hnearD} near $D$ (because $\chi$ vanishes to order $\ell_p$ along $D_p$). 
It follows that $\omega=-dd^ch$ is a relative K\"{a}hler form with linking numbers $\ell_p$, so its cohomology class is $\sum_p \ell_p \cdot PD(D_p)$ by \eqref{eqn:links}.

Conversely, suppose that $\sum_p \ell_p \cdot PD(D_p)$ is the cohomology class of a relative K\"{a}hler form.
Then $c_1(\mathcal{L}) = \sum_p \ell_p \cdot PD(D_p) \in H^2(X)$ is represented by a K\"{a}hler form, so $\mathcal{L}$ is a positive line bundle (see \cite[p. 148]{Griffiths1978}). 
Therefore $\mathcal{L}$ is ample by the Kodaira embedding theorem. 
\end{proof}

\begin{cor}
A $\snc$ pair $(X,D)$ is K\"{a}hler if and only if $Amp(X,D) \neq \emptyset$.
\end{cor}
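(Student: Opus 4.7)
The plan is to deduce this directly from Lemma \ref{lem:ampkahl}, which already identifies the set of cohomology classes realized by relative K\"ahler forms with the cone $Amp(X,D)$. Both implications are then essentially tautological, so the ``proof'' is really just unpacking definitions.

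For the forward direction, I would argue that if $(X,D)$ is K\"ahler, then by definition there exists a relative K\"ahler form $\omega$ on $(X,D)$. Its cohomology class $[\omega] \in H^2(X, X \setminus D; \R)$ is well-defined (as explained earlier in the section, using Stokes' theorem and the form \eqref{eqn:links}), and by Lemma \ref{lem:ampkahl} it lies in $Amp(X,D)$. In particular $Amp(X,D) \neq \emptyset$.

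For the reverse direction, suppose $Amp(X,D) \neq \emptyset$ and pick any class $\eta$ in it. By Lemma \ref{lem:ampkahl}, $\eta$ is the cohomology class of some relative K\"ahler form $\omega$ on $(X,D)$. Hence $(X,D)$ admits a relative K\"ahler form, i.e., is K\"ahler by definition.

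Since both implications are one-line consequences of the lemma, there is no real obstacle here; the only thing to be slightly careful about is that the ``only if'' direction uses the existence portion of Lemma \ref{lem:ampkahl}, while the ``if'' direction uses the fact that $[\omega]$ is always \emph{defined} and lies in the described cone. No new estimates or constructions are required.
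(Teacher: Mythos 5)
Your proof is correct and is exactly the argument the paper intends: the corollary is stated as an immediate consequence of Lemma \ref{lem:ampkahl}, with both directions following just as you describe. Nothing further is needed.
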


In particular, if $(X,D)$ is K\"ahler, then $Amp(X,D)$ is the interior of $Nef(X,D)$ and $Nef(X,D)$ is the closure of $Amp(X,D)$, by Kleiman's theorem \cite{Kleiman1966} (see \cite[Theorem 1.4.23]{Lazarsfeld2004}).

\subsection{Systems of divisors}
\label{subsec:sysdiv}

\begin{defn}
\label{defn:sysofdiv}
Let $(X,D)$ be an $n$-dimensional $\snc$ pair, and $U \subset X$ an open set such that $D \subset U$ and $X \setminus U \subset X \setminus D$ are deformation retracts. 
A \emph{system of divisors} in $U$ is a $\snc$  divisor $E \subset U$ such that, if $E = \cup_{q \in Q} E_q$ are the irreducible components of $E$, then:
\begin{itemize}
\item The classes $PD(E_q)$ span $H^2(X,X\setminus D;\Q)$, where `$PD$' denotes Poincar\'{e} duality:
\begin{equation} PD: H_{2n-2}(U) \to H^2(X,X \setminus U) \simeq H^2(X,X \setminus D).\end{equation}
\item For each $q \in Q$, there exist at least $n+1$ distinct $r \in Q$ so that $E_r$ is linearly equivalent to $E_q$.
\end{itemize}
We define $\Nef(E) \subset H^2(X,X \setminus D;\R)$ to be the convex cone spanned by the classes $PD(E_q)$. 
\end{defn}

\begin{defn}
We define $sAmp(X,D) \subset Nef(X,D)$, the cone generated by semi-ample effective divisors supported on $D$, i.e., those for which some positive tensor power of the corresponding line bundle is basepoint-free. 
\end{defn}

\begin{rmk}
\label{rmk:bpfree}
Observe that the second condition on a system of divisors implies that each $E_q$ is basepoint-free. 
It follows that $\Nef(E) \subset sAmp(X,D) \subset Nef(X,D)$.
\end{rmk}

\begin{lem}
\label{lem:sysexist}
Let $\Nef \subset sAmp(X,D)$ be a rational polyhedral cone of full dimension. 
Then for any $U$ as in Definition \ref{defn:sysofdiv}, there exists a system of divisors $E$ in $U$ with $\Nef(E) = \Nef$.
\end{lem}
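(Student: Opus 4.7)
The plan is to build $E$ by small generic perturbations of semi-ample divisors supported on $D$, so the perturbed divisors remain inside the open neighbourhood $U$. Since $\Nef$ is rational polyhedral of full dimension and contained in $sAmp(X,D)$, its extremal rays are each generated by a semi-ample class. Passing to sufficiently divisible multiples (which does not change $\Nef$), I may choose integer generators $v_1,\dots,v_k$ of the extremal rays whose corresponding line bundles $L_i := \mathcal{O}(F_i)$ are basepoint-free, where $F_i$ is a fixed effective divisor with $[F_i] = v_i$ supported on $D$. Let $s_i^{(0)} \in H^0(L_i)$ be the section cutting out $F_i$.

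For each $i$, fix $n_i \ge n+1$ further sections $t_{i,1},\dots,t_{i,n_i} \in H^0(L_i)$ and for $\epsilon > 0$ set $E_{i,j}(\epsilon) := \mathrm{div}(s_i^{(0)} + \epsilon t_{i,j})$. The affine sub-linear-system $\{s_i^{(0)} + \epsilon t : t \in H^0(L_i)\}$ is basepoint-free: at any $x$ with $s_i^{(0)}(x) \neq 0$ use basepoint-freeness of $L_i$ to find $t$ with evaluation $-\epsilon^{-1} s_i^{(0)}(x)$ at $x$, while at $x$ with $s_i^{(0)}(x) = 0$ use basepoint-freeness to find $t$ with $t(x) \neq 0$. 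Bertini's theorem applied inside each such sub-system, together with the standard fact that generic sections of basepoint-free line bundles cut out smooth, transversely intersecting divisors, ensures that for generic $t_{i,j}$ each $E_{i,j}(\epsilon)$ is smooth and every subcollection of the $E_{i,j}(\epsilon)$ meets transversally, so their union is snc. By upper semi-continuity of zero sets of a continuously varying family of sections on the compact manifold $X$, $E_{i,j}(\epsilon)$ converges in Hausdorff distance to $\mathrm{supp}(F_i) \subset D \subset U$ as $\epsilon \to 0$. Since $U$ is open, for sufficiently small $\epsilon > 0$ all these divisors lie in $U$, and $E := \bigcup_{i,j} E_{i,j}(\epsilon)$ is an snc divisor in $U$.

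It remains to verify the conditions of Definition \ref{defn:sysofdiv} and the equality $\Nef(E) = \Nef$. For each $(i,j)$, the divisors $E_{i,j}$ and $F_i$ are contained in $U$ and linearly equivalent in $X$, so by naturality of Poincar\'e duality for line bundles they define the same class in $H^2(X, X \setminus U) \simeq H^2(X, X \setminus D)$; explicitly, $PD(E_{i,j}) = v_i$. Hence $\Nef(E)$ is the cone spanned by $v_1, \dots, v_k$, which is $\Nef$ itself, and fullness of $\Nef$ in $H^2(X, X \setminus D; \R)$ gives the required spanning over $\Q$. For the second condition, the $n_i \ge n+1$ divisors $E_{i,j'}$, $j' = 1, \dots, n_i$, all lie in $|L_i|$, so each $q = (i,j)$ has at least $n+1$ distinct choices of $r$ (including $r = q$) with $E_r$ linearly equivalent to $E_q$.

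The main obstacle is balancing genericity, which is needed to guarantee the snc condition, against staying inside the neighbourhood $U$ of $D$: Bertini is a statement about generic members of a linear system and says nothing about a specific small perturbation of the non-generic starting point $F_i$. The resolution is to apply Bertini inside the smaller sub-linear system $\{s_i^{(0)} + \epsilon t\}$, which is itself basepoint-free; one first chooses the perturbation directions $t_{i,j}$ generically, and only afterwards shrinks $\epsilon$ so that the resulting divisors lie in $U$.
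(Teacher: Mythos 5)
Your proof is correct and follows essentially the same route as the paper's: choose basepoint-free integral generators of $\Nef$ supported on $D$, perturb their defining sections generically so the zero loci are smooth and mutually transverse by Bertini, shrink the perturbation so they stay in $U$, and take at least $n+1$ smoothings per generator. The only difference is that you spell out the order of quantifiers (generic direction first, then small $\epsilon$) needed to reconcile Bertini with staying inside $U$, a point the paper's proof passes over more quickly.
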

\begin{proof}
Let $\Nef$ be spanned by classes $a_q$. 
We may assume $a_q = \sum_p \ell_p^q \cdot D_p$ is basepoint-free, and that $\ell_p^q \in \N_{0}$.
Let $\chi_q \in \Gamma(\mathcal{L}_q)$ be a defining section of the corresponding divisor $a_q$. 
For a generic perturbation of $\chi_q$, the zero locus is smooth by Bertini's theorem; and for a sufficiently small perturbation, the zero locus remains inside $U$. 
Now if we take sufficiently many distinct such smoothings of each $E_q$, their union will be a system of divisors (again by Bertini, we can make all of them transverse). 
By construction we have $\Nef(E) = \Nef$.
\end{proof}

\subsection{Coefficient ring}
\label{subsec:coeffring}

Let $(X,D)$ be a $\snc$ pair. 
We have a canonical isomorphism
\begin{align} 
\label{eqn:h2zp} H_2(X,X \setminus D) & \xrightarrow{\simeq} \Z^P,\\
u & \mapsto (u \cdot D_1,\ldots,u \cdot D_p).
\end{align}
Similarly, there is an isomorphism $H_2(\cG X,\cG(X \setminus D)) \simeq \Z^P$. 
It follows that the map
\[ H_2(\cG X,\cG(X \setminus D)) \to H_2(X,X \setminus D)\]
is an isomorphism.

We use this observation to define a group homomorphism 
\begin{equation}
\label{eqn:h2g}
f:  H_2(X,X \setminus D) \to H_1(\cG(X \setminus D))\end{equation}
as the composition
\[ H_2(X,X \setminus D) \simeq H_2(\cG X,\cG(X \setminus D)) \xrightarrow{\partial} H_1(\cG(X \setminus D)) \xrightarrow{-\id} H_1(\cG(X \setminus D)).\]
Explicitly, given a map $u:(\Sigma,\partial \Sigma) \to (X,X \setminus D)$ representing a class in $H_2(X,X \setminus D)$, with $\partial \Sigma \neq \emptyset$, we can choose a lift $\tilde{u}: (\Sigma,\partial \Sigma) \to (\cG X,\cG(X \setminus D))$. 
Then $\tilde{u}|_{\partial \Sigma}$ defines a class in $H_1(\cG(X \setminus D))$, which is defined to be $-f([u])$.

We consider the grading datum $\G := \{\Z \to H_1(\cG (X \setminus D)) \to \Z/2\}$ associated to the symplectic manifold $X \setminus D$.
The homomorphism $f$ equips the $\Bbbk$-algebra $\Rpuncomp(X,D) := \Bbbk[H_2(X,X \setminus D)]$ with a $\G$-grading, as in \S \ref{subsec:ringR}.
The significance of this grading has to do with the relationship between the Maslov index and index theory of Cauchy--Riemann operators, which we briefly recall for the reader's convenience. 

Let $E\to \Sigma$ be a complex vector bundle over a compact Riemann surface $\Sigma$ with boundary $\partial \Sigma$, and $F \subset E|_{\partial \Sigma}$ a totally real subbundle over the boundary. 
Then the Riemann--Roch theorem \cite[Theorem 3.1.10]{mcduffsalamon} states that the associated Cauchy--Riemann operator $D_F$ has Fredholm index
\[ i(D_F) = n \chi(\Sigma) + \mu(E,F),\]
where $\mu(E,F)$ denotes the `boundary Maslov index'. 
For example, if $u:(\Sigma,\partial \Sigma) \to (X,L)$ is a surface with boundary on a Lagrangian $L \subset X$, then $\mu(u^*TX,(\partial u)^* TL)$ is the usual Maslov index.

\begin{lem}
\label{lem:masgrad}
Let $(\Sigma,\partial \Sigma)$ be a compact Riemann surface with boundary, and $u: (\Sigma,\partial \Sigma) \to (X,X \setminus D)$ a smooth map. 
Let $\rho: \partial \Sigma \to \cG(X \setminus D)$ be a lift of $u|_{\partial \Sigma}$ to the Lagrangian Grassmannian. 
Then we have 
\[ \mu(u^*TX, \rho) = [\rho] + \deg(\nov^u)\]
in $H_1(\cG(X \setminus D))$.
\end{lem}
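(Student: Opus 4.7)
The plan is to use a reference lift of $u$ to reduce the claimed identity to a standard computation about the boundary Maslov index and the fiber inclusion. Choose a lift $\tilde u : (\Sigma,\partial \Sigma) \to (\cG X, \cG(X\setminus D))$ of $u$, which exists by the very definition of $f$. By construction, $[\tilde u|_{\partial \Sigma}] = -f([u]) = -\deg(\nov^u)$ in $H_1(\cG(X\setminus D))$, so the desired identity becomes
\[
 \mu(u^*TX, \rho) \;=\; [\rho] - [\tilde u|_{\partial \Sigma}] \quad \text{in } H_1(\cG(X\setminus D)),
\]
where the integer on the left is viewed in $H_1(\cG(X\setminus D))$ through the fiber inclusion $\Z \simeq H_1(\cG_x(X\setminus D)) \to H_1(\cG(X\setminus D))$ that defines the $\G$-grading.

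Next I would show $\mu(u^*TX, \tilde u|_{\partial \Sigma}) = 0$. The point is that $\tilde u$ is precisely a Lagrangian subbundle of $u^*TX$ extending $\tilde u|_{\partial \Sigma}$ over all of $\Sigma$. Since $\partial\Sigma \neq \emptyset$, the complex bundle $u^*TX \to \Sigma$ is trivializable; in any such trivialization, $\tilde u|_{\partial \Sigma}$ becomes a map into the fiber $\cG_x(X\setminus D)$ whose $\pi_1$-class is the boundary Maslov index, and this class is zero because the map extends over $\Sigma$.

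The identity $\mu(u^*TX,\rho) = [\rho] - [\tilde u|_{\partial \Sigma}]$ is then proved using the same trivialization. It identifies the pullback bundle $(u|_{\partial \Sigma})^* \cG(X\setminus D)$ with $\partial \Sigma \times \cG_x(X\setminus D)$, and converts $\rho$ into a map $\partial \Sigma \to \cG_x(X\setminus D)$ whose $\pi_1$-winding is, by definition, $\mu(u^*TX,\rho)$. The two sections $\rho$ and $\tilde u|_{\partial \Sigma}$ agree in the $H_1(\partial \Sigma)$ summand of $H_1(\partial \Sigma \times \cG_x(X\setminus D))$ and differ by $\mu(u^*TX,\rho)$ in the $H_1(\cG_x(X\setminus D))$ summand; pushing forward along the trivialization, the difference $[\rho]-[\tilde u|_{\partial \Sigma}]$ in $H_1(\cG(X\setminus D))$ is the image of $\mu(u^*TX,\rho)$ under the fiber inclusion, exactly as required.

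I expect the main obstacle to be reconciling sign conventions: the minus sign in the definition of $f$ in \eqref{eqn:h2g}, the choice of generator for $\pi_1(\cG_x(X\setminus D))\simeq \Z$ used to define the grading datum $\G$, and the sign convention in the Riemann--Roch definition of $\mu(E,F)$ must all be consistent. A safe sanity check is a simple model case, for instance $\Sigma$ a small disc transverse to a single component $D_p$ at one point with $\rho$ a constant Lagrangian frame on $\partial \Sigma$; here one can compute both sides by hand and pin down every sign. The case of disconnected $\Sigma$ or several boundary circles reduces to the above by additivity of Maslov indices and of the classes $[\rho]$ and $[\tilde u|_{\partial \Sigma}]$ over connected components.
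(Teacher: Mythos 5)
Your proof is correct, and it supplies precisely the argument that the paper omits: the paper's ``proof'' of this lemma is a one-line citation to the argument of \cite[Lemma 3.19]{Sheridan:CY} (with a warning that the sign there is wrong), so there is no in-text proof to diverge from. Your route --- take a global lift $\tilde{u}$ of $u$ to $(\cG X,\cG(X\setminus D))$ so that $[\tilde{u}|_{\partial\Sigma}]=-\deg(\nov^u)$ by the definition of $f$, observe that a boundary condition extending to a Lagrangian subbundle over all of $\Sigma$ has vanishing boundary Maslov index, and then compare $\rho$ with $\tilde{u}|_{\partial\Sigma}$ in a complex trivialization of $u^*TX$ (which exists since $\partial\Sigma\neq\emptyset$) --- is the standard one, and the bookkeeping in $H_1(\partial\Sigma\times\cG(\C^n))$ correctly identifies the difference $[\rho]-[\tilde{u}|_{\partial\Sigma}]$ with the image of the integer $\mu(u^*TX,\rho)$ under the fibre inclusion $\Z\to H_1(\cG(X\setminus D))$. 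The only genuine point of care is the one you already flagged: the identification $H_1(\cG_x(X\setminus D))\simeq\Z$ in the grading datum must be the Maslov-index normalization, which is exactly the convention the minus sign in \eqref{eqn:h2g} is calibrated against; your proposed model computation (a small disc meeting one $D_p$ transversally once, with constant frame $\rho$, where both sides equal $\deg(\nov_p)$, the class of a Maslov-2 fibre loop) is the right way to pin this down.
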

\begin{proof}
This follows by the argument of \cite[Lemma 3.19]{Sheridan:CY} (which unfortunately had a sign error in it).
\end{proof}

\begin{defn}[compare Definition \ref{defn:NEXD}]
Let $(X,D)$ be a $\snc$ pair, and $\Nef \subset Nef(X,D) \subset H^2(X,X \setminus D;\R)$ a full-dimensional convex sub-cone.
We define the strongly convex cone $NE(\Nef)_\R := \Nef^\vee \subset H_2(X,X \setminus D;\R)$, and the sub-monoid 
\[ NE(\Nef) := \Nef^\vee \cap H_2(X,X \setminus D) \subset H_2(X,X \setminus D).\]
When $(X,D)$ is K\"ahler, $Nef(X,D)$ is full-dimensional, so we can define
\[ NE(X,D) := NE(Nef(X,D)).\]
\end{defn}

\begin{defn}
\label{defn:rxde}
We define $\G$-graded $\Bbbk$-algebras 
\begin{align*} 
 \Rpuncomp(X,D) & := \Bbbk[H_2(X,X \setminus D)], \\
\Runcomp(\Nef) & := \Bbbk[NE(\Nef)] \subset \Rpuncomp(X,D).
\end{align*}
Because $NE(\Nef)_\R$ is strongly convex, there is a unique toric maximal ideal $\fmuncomp \subset \Runcomp$,
and we define $R(\Nef)$ to be the $\fmuncomp$-adic completion of $\Runcomp(\Nef)$, in the $\G$-graded sense.  
We define $R(X,D) := R(Nef(X,D))$.
We abbreviate $R(X,D)$ or $R(\Nef)$ by $R$ when the meaning is clear from the context.
\end{defn}

\begin{rmk}
\label{rmk:pospow}
We observe that $\N_0^P \subset NE(X,D) \subset NE(\Nef)$, because each generator $u_p$ can be represented by a holomorphic disc, which meets any divisor $E$ supported on (or near) $D$ positively. 
Thus we have a subalgebra $\Bbbk \power{r_1,\ldots,r_p} \subset R(X,D) \subset R(\Nef)$. 
\end{rmk}

\begin{lem}
\label{lem:c1tors}
Suppose that $c_1(TX)|_{X \setminus D}$ is torsion. 
Then the map $\Z \to H_1(\cG(X \setminus D))$ is injective.
\end{lem}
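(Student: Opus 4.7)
The plan is to study the Lagrangian Grassmannian fibration
$$F := U(n)/O(n) \longrightarrow \cG(X \setminus D) \longrightarrow X \setminus D,$$
where $n = \dim_\C X$, and extract the map $\Z \simeq H_1(F) \to H_1(\cG(X \setminus D))$ from the associated five-term exact sequence in integer homology.

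First I would observe that the $\pi_1(X \setminus D)$-action on $H_1(F) \simeq \Z$ is trivial: the structure group of the bundle sits inside the path-connected group $\mathrm{Sp}(2n,\R)$, so its action on $\pi_1(F)$ factors through $\pi_0(\mathrm{Sp}(2n,\R)) = 0$. The Serre spectral sequence then yields a five-term exact sequence
$$H_2(X \setminus D) \xrightarrow{\tau} H_1(F) \longrightarrow H_1(\cG(X \setminus D)) \longrightarrow H_1(X \setminus D) \longrightarrow 0,$$
whose middle arrow is precisely the map under consideration. Its injectivity is therefore equivalent to the vanishing of the transgression $\tau$.

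The main computational step is to identify $\tau$ with $2 c_1(TX)|_{X \setminus D}$, viewed as a homomorphism $H_2(X \setminus D) \to \Z$. For this I would compare with the $U(n)$-frame bundle $P \to X \setminus D$ of $TX|_{X \setminus D}$ (after choosing a compatible almost-complex structure). The natural map of fibrations $P \to \cG(X \setminus D)$ over $X \setminus D$ restricts on fibres to the quotient $U(n) \to U(n)/O(n)$, which induces multiplication by $2$ on $H_1 \simeq \Z$: a generator of $\pi_1(U(n))$, realised by a loop of $\det$-winding one, descends to a loop of Maslov index two in $U(n)/O(n)$. For the unitary frame bundle the transgression is $c_1(TX)|_{X \setminus D}$ by the standard Chern class computation, so by naturality of transgression $\tau$ equals twice this class.

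Given this identification, if $c_1(TX)|_{X \setminus D}$ is torsion then $c_1(TX)(u) = 0$ for every $u \in H_2(X \setminus D)$ (because $c_1(TX)(u) \in \Z$ is annihilated by some positive integer), so $\tau = 0$ and the map $\Z \to H_1(\cG(X \setminus D))$ is injective, as required. I expect the main obstacle to be the careful identification $\tau = 2 c_1$, which hinges on accurately tracking the factor of two arising from the covering $U(n) \to U(n)/O(n)$; the rest of the argument is a routine application of the Serre spectral sequence and naturality.
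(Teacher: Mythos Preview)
Your proof is correct but follows a genuinely different route from the paper's. The paper argues constructively: since $kc_1(TX)|_{X \setminus D} = 0$, the line bundle $(\Omega^{n,0})^{\otimes k}$ trivializes over $X \setminus D$, and a non-vanishing section $\eta$ defines a squared phase map $\alpha_\eta: \cG(X \setminus D) \to S^1$ via $\Lambda \mapsto \arg\left(\eta((v_1 \wedge \cdots \wedge v_n)^{\otimes k})^2\right)$. On a fibre this restricts to $(\det^2)^k$, so the composition $\Z \to H_1(\cG(X \setminus D)) \to H_1(S^1) \simeq \Z$ is multiplication by $k$, forcing injectivity of the first map. Your argument instead reads off injectivity from the five-term exact sequence of the Lagrangian Grassmannian fibration, identifying the transgression with $\pm 2c_1$ by comparison with the unitary frame bundle; this is a clean and standard reduction. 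The paper's approach has the advantage that the phase map $\alpha_\eta$ and the associated morphism of grading data $p_\eta: \G \to \Z$ are reused later (e.g.\ in the alternative description of the grading on $\sh^\bullet(X,D)$ and in setting up signed group actions), so the explicit construction is not wasted effort. Your approach, on the other hand, makes the obstruction to injectivity transparent and would generalize more readily to other structure groups.
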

\begin{proof}
If $k c_1(TX)|_{X \setminus D} = 0$, then the complex line bundle $\left(\Omega^{n,0}(X)\right)^{\otimes k}|_{X \setminus D}$ is trivial. 
Hence it admits a non-vanishing section $\eta$, which defines a squared phase map 
\begin{align}
\cG(X \setminus D) & \to  S^1 \\
\label{eqn:sqph} \Lambda & \mapsto  \arg\left(\eta\left((v_1\wedge \ldots \wedge v_n)^{\otimes k} \right)^2\right)
\end{align}
(compare \cite[\S 11j]{Seidel:FCPLT}). 
The induced map on $H_1$ sends $H_1(\cG(X \setminus D)) \to H_1(S^1)$, so that the composition
\begin{equation} \Z \simeq H_1(\cG_x(X \setminus D)) \to H_1(\cG(X \setminus D)) \to H_1(S^1) \simeq \Z\end{equation}
is multiplication by $k$: in particular, the map $\Z \to H_1(\cG(X \setminus D))$ is injective.
\end{proof}

\begin{defn}[= Definition \ref{defn:Rcl}]
\label{defn:rcl}
We denote by $R_{cl} \subset R$ the subalgebra whose grading lies in the image of $\Z$.
We similarly define $\Rpuncomp_{cl}$, $\Runcomp_{cl}$, and let $NE(\Nef)_{cl} \subset NE(\Nef)$ be the corresponding sub-monoid.
These subalgebras retain a grading in the image of $\Z$, which of course is isomorphic to a quotient of $\Z$. 
The quotient is trivial if $c_1(TX)|_{X \setminus D}$ is torsion, by Lemma \ref{lem:c1tors}. 
\end{defn}

\begin{rmk}
\label{rmk:gradcl}
Observe that 
\[ \Z \to H_1(\cG(X \setminus D)) \to H_1(X \setminus D) \to 0\]
 is exact (since it is the abelianization of the exact sequence of homotopy groups of a fibration and abelianization is right-exact), so $R_{cl}$ is identified with the subalgebra of $R$ consisting of elements whose grading maps to $0 \in H_1(X \setminus D)$.
\end{rmk}

\begin{rmk}
\label{rmk:usualnov}
The image of the grading of $\nov^u$ in $H_1(X \setminus D)$ is easily seen to be $-\partial u$ for any $u \in H_2(X,X \setminus D)$. Therefore, by the long exact sequence in homology for the pair $(X,X \setminus D)$, we have an isomorphism
\[ \Rpuncomp_{cl}(X,D) \simeq \Bbbk[H_2(X)/H_2(X \setminus D)].\]
The degree of $\nov^u$ is $2c_1(u)$, by Lemma \ref{lem:masgrad}.
\end{rmk}

\begin{defn}
We denote by $R_0 \subset R_{cl}$ the degree-$0$ component. 
We define $\Rpuncomp_0$, $\Runcomp_0$, and $NE_0$ similarly.
\end{defn}

\subsection{Nice cones}
\label{subsec:nicecone}

Let $(X,D)$ be a K\"ahler $\snc$ pair with $D = \cup_{p \in P} D_p$.

\begin{defn}
For $K \subset P$, we denote $\bar{K} := P \setminus K$. 
We define
\begin{align*}
D_K &:= \bigcap_{p \in K} D_p  \\
D^K &:= \bigcup_{p \in \bar{K}} D_p. 
\end{align*}
We define $D_\emptyset := X$.
\end{defn}

\begin{defn}
Let $\Nef \subset Nef(X,D)$ be a full-dimensional convex sub-cone. 
We define 
\[ \Nef^K :=  \Nef \cap \R^{\bar{K}} \subset \R^{\bar{K}}.\]
\end{defn}

\begin{defn}
\label{defn:xdenice}
We say that $\Nef$ is \emph{nice} if the following holds for all $K$ such that $D_K \neq \emptyset$:
\begin{itemize}
\item $\Nef^K \subset \R^{\bar{K}}$ is full-dimensional;
\item the image of the map
\[ \Nef \hookrightarrow H^2(X,X \setminus D;\R) \to H^2(X;\R)\]
is equal to the image of the subset $\Nef^K \subset \Nef$.
\end{itemize}
\end{defn}

\begin{lem}
\label{lem:typea}
Let $\Nef$ be nice and $D_K \neq \emptyset$. 
If $u \in \Z^K$ and $v \in NE(\Nef)$ have the same image under the boundary map
\[ \partial: H_2(X,X \setminus D) \to H_1(X \setminus D),\] 
then $v-u \in NE(\Nef)_{cl}$.
\end{lem}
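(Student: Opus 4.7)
The plan is a three-step reduction. First, from the hypothesis $\partial v = \partial u$, we get $\partial(v-u) = 0$. Combining Remark \ref{rmk:gradcl} and Definition \ref{defn:rcl}, this boundary-vanishing is exactly the condition needed to upgrade membership in $NE(\Nef)$ to membership in $NE(\Nef)_{cl}$. So the lemma reduces to showing $v - u \in NE(\Nef) = \Nef^\vee \cap H_2(X, X \setminus D)$. Since $v, u \in H_2(X, X \setminus D)$, this in turn reduces to the cone condition $\langle v - u, [\omega] \rangle \geq 0$ for every $[\omega] \in \Nef$.

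Fix such an $[\omega]$. The main idea is to replace $[\omega]$ with a class $[\omega'] \in \Nef^K \subset \R^{\bar K}$ at no cost, by arranging that $v - u$ pairs trivially with the difference. The niceness hypothesis, applied to the (non-empty) stratum indexed by $K$, provides exactly such an $[\omega'] \in \Nef^K$ whose image in $H^2(X;\R)$ agrees with that of $[\omega]$. Then $[\omega] - [\omega']$ lies in the kernel of $j^*: H^2(X, X \setminus D;\R) \to H^2(X;\R)$, which by the long exact sequence of the pair $(X, X \setminus D)$ is the image of the connecting homomorphism $\delta: H^1(X \setminus D;\R) \to H^2(X, X \setminus D;\R)$. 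Writing $[\omega] - [\omega'] = \delta(\alpha)$ and invoking the standard adjunction $\langle w, \delta(\alpha) \rangle = \langle \partial w, \alpha \rangle$, the identity $\partial(v-u) = 0$ collapses the difference: $\langle v - u, [\omega] \rangle = \langle v - u, [\omega'] \rangle$.

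Finally I evaluate $\langle v - u, [\omega'] \rangle$. Under the identifications $H_2(X, X \setminus D) \simeq \Z^P$ of \eqref{eqn:h2zp} and $H^2(X, X \setminus D;\R) \simeq \R^P$ via the basis $\{PD(D_p)\}_p$, the Kronecker pairing becomes the standard pairing on $\R^P$. The hypothesis $u \in \Z^K$ says $u$ is supported on the $K$-coordinates, while $[\omega'] \in \R^{\bar K}$ is supported on the complementary coordinates, so $\langle u, [\omega'] \rangle = 0$. Therefore $\langle v - u, [\omega] \rangle = \langle v, [\omega'] \rangle \geq 0$, because $v \in \Nef^\vee$ and $[\omega'] \in \Nef$. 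The only remotely subtle step is the use of the adjunction between $\delta$ and $\partial$ together with the identification $\ker(j^*) = \mathrm{im}(\delta)$; note that only the ``equal images'' half of niceness is used, not the full-dimensionality of $\Nef^K$.
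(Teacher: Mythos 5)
Your proof is correct and follows essentially the same route as the paper's: both arguments use niceness to replace an arbitrary class of $\Nef$ by one in $\Nef^K$ with the same image in $H^2(X;\R)$, use $\partial(v-u)=0$ together with the long exact sequence of the pair to see that the pairing with $v-u$ is unaffected by this replacement, and observe that $u\in\Z^K$ pairs trivially with $\Nef^K\subset\R^{\bar K}$. The only (immaterial) difference is that you phrase the invariance step via the cohomology sequence and the adjunction $\langle v-u,\delta\alpha\rangle=\langle\partial(v-u),\alpha\rangle$, whereas the paper lifts $v-u$ to $H_2(X)$ via the homology sequence.
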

\begin{proof}
We have 
\begin{align*}
v \cdot a & \ge 0 \quad \text{for all $a \in \Nef$} \\
\implies (v-u) \cdot a & \ge 0 \quad \text{for all $a \in \Nef^K$.}
\end{align*}
By assumption, $\partial(v-u) = 0$ in $H_1(X \setminus D)$, so $v-u \in \im(H_2(X))$ by the long exact sequence in homology for the pair $(X,X \setminus D)$. 
It follows that $(v-u) \cdot a$ only depends on the image of $a$ in $H^2(X;\R)$. 
Since $\Nef$ is nice, it follows that $(v-u) \cdot a \ge 0$ for all $a \in \Nef$.
Therefore, $v-u \in NE(\Nef)$. 
Applying Remark \ref{rmk:gradcl}, we see that $v-u \in NE(\Nef)_{cl}$ as required.
\end{proof}

\begin{lem}
\label{lem:extremal}
Suppose that $\Nef$ is nice, and $D_K \neq \emptyset$. 
Then $\R^K$ intersects $NE(\Nef)_\R$ in an extremal face.
\end{lem}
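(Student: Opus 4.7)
The plan is to verify directly the defining property of an extremal face: show that if $x, y \in NE(\Nef)_\R$ satisfy $x+y \in F$, where $F := \R^K \cap NE(\Nef)_\R$, then both $x$ and $y$ already lie in $F$. Under the identification $H_2(X, X \setminus D;\R) \simeq \R^P$ given by $v \mapsto (v \cdot D_p)_{p \in P}$, membership of $v$ in $\R^K$ is equivalent to $v \cdot D_p = 0$ for all $p \in \bar{K}$. So, setting $c_p := x \cdot D_p = -(y \cdot D_p)$ for $p \in \bar K$, the goal reduces to proving $c_p = 0$ for every such $p$.

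To extract this vanishing, I would pair $x$ and $y$ against test elements $a = \sum_{p \in \bar K} a_p D_p \in \Nef^K \subset \Nef$. By definition of $NE(\Nef)_\R = \Nef^\vee$ we have $x \cdot a \ge 0$ and $y \cdot a \ge 0$, which translate respectively into $\sum_{p \in \bar K} a_p c_p \ge 0$ and $-\sum_{p \in \bar K} a_p c_p \ge 0$. Thus the linear form on $\R^{\bar K}$ given by $(a_p)_{p \in \bar K} \mapsto \sum_{p \in \bar K} a_p c_p$ vanishes identically on the set of coefficient tuples realized by elements of $\Nef^K$.

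This is where the hypotheses enter. The first clause of niceness in Definition \ref{defn:xdenice}, applied to our chosen $K$ (which is legal since $D_K \neq \emptyset$), says precisely that $\Nef^K$ is full-dimensional in $\R^{\bar K}$; hence its $\R$-linear span is all of $\R^{\bar K}$. A linear form on $\R^{\bar K}$ vanishing on a full-dimensional subset vanishes identically, forcing $c_p = 0$ for every $p \in \bar K$. This gives $x, y \in F$, so $F$ is an extremal face of $NE(\Nef)_\R$.

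I do not anticipate serious obstacles. The only subtle point — and the reason niceness has to be invoked rather than a more naive argument — is that the individual divisors $D_p$ for $p \in \bar K$ are not guaranteed to belong to $\Nef$, so one cannot simply test against each $D_p$ separately and conclude $c_p \ge 0$ coordinate by coordinate. The remedy is to test against a full-dimensional family of nef classes supported on $\bar K$, which is exactly the content of the first niceness condition; the second niceness condition and Lemma \ref{lem:typea} play no role here.
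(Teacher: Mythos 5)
Your proof is correct and rests on exactly the same mechanism as the paper's: pairing against $\Nef^K$ (which annihilates $\R^K$) and invoking the first niceness condition that $\Nef^K$ is full-dimensional in $\R^{\bar K}$, so that a linear form vanishing on it vanishes identically. The only difference is packaging — you verify the additive definition of an extremal face directly, whereas the paper phrases the same computation via the tangent cone $\mathsf{Cone}(NE(\Nef)_\R - u)$ and its strong convexity in the $\R^{\bar K}$ directions; your version is, if anything, slightly more direct.
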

\begin{proof}
Suppose $u \in \R^K \cap NE(\Nef)_\R$ and consider the convex cone spanned by vectors $u' - u$ with $u' \in NE(\Nef)_\R$. 
We denote it by $\mathsf{C} := \mathsf{Cone}(NE(\Nef)_\R - u)$, following \cite{Cox2011}. 
We have
\begin{align*}
v \in \mathsf{C} &\iff (u + \delta \cdot v) \cdot a \ge 0 \text{ for all $\delta>0$ sufficiently small, for all $a \in \Nef$} \\
& \implies v \cdot a \ge 0 \text{ for all $a \in \Nef^K$} \\
& \iff v \in \R^K \times \left(\text{convex cone in $\R^{\bar{K}}$ dual to $\Nef^K$}\right).
\end{align*}
Since $\Nef^K$ is full-dimensional, the cone on the final line is strongly convex. 
It follows that $\R^K \cap NE(\Nef)_\R$ is an extremal face of $NE(\Nef)_\R$ as claimed.
\end{proof}

\begin{lem}
\label{lem:Rnice}
If $\Nef$ is nice, then $R(\Nef)$ is nice in the sense of Definition \ref{defn:Rnice}. 
\end{lem}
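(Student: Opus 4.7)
The goal is to verify the two conditions of Definition \ref{defn:Rnice} for each $p \in P$, and the plan is to apply the two preparatory Lemmas \ref{lem:typea} and \ref{lem:extremal} with the singleton index set $K = \{p\}$, which is admissible since each irreducible component $D_p$ is non-empty. That $\nov_p \in \fm$ is immediate: $u_p$ is the class of a small meridian disc linking $D_p$, lying in $\N_0^P \subset NE(\Nef)$ by Remark \ref{rmk:pospow} and nonzero, so $\nov_p \in \fmuncomp \subset \fm$.

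For the generation condition, the graded piece $\fmuncomp_{f(u_p)}$ has $\Bbbk$-basis $\{\nov^v : v \in NE(\Nef) \setminus \{0\},\ f(v) = f(u_p)\}$, so it suffices to show that each such $v$ satisfies $v - u_p \in NE(\Nef)_0$. The equality $f(v) = f(u_p)$ in $H_1(\cG(X \setminus D))$ projects, via Remark \ref{rmk:gradcl}, to $\partial v = \partial u_p$ in $H_1(X \setminus D)$. Lemma \ref{lem:typea} (with $u = u_p \in \Z^{\{p\}}$) then gives $v - u_p \in NE(\Nef)_{cl}$, and since $f(v - u_p) = 0$ this actually lies in the degree-zero submonoid $NE(\Nef)_0$. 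Thus $\nov^v = \nov^{v-u_p} \cdot \nov_p$ with $\nov^{v-u_p} \in \Runcomp_0$, as required.

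For $\nov_p \notin \fm^2$, Lemma \ref{lem:extremal} tells us that $\R^{\{p\}} \cap NE(\Nef)_\R$ is an extremal face of $NE(\Nef)_\R$; since $u_p$ lies in this intersection, the face is the ray $\R_{\geq 0}\cdot u_p$. Any decomposition $u_p = v + w$ with $v,w \in NE(\Nef) \setminus \{0\}$ would, by extremality, force $v$ and $w$ to lie on this ray; primitivity of $u_p$ in $\Z^P$ would then force $v = \lambda u_p$, $w = \mu u_p$ for positive integers $\lambda,\mu$, contradicting $\lambda + \mu = 1$. Since $\fm^2$ is (the completion of) the span of monomials $\nov^{v+w}$ with $v,w \in NE(\Nef) \setminus \{0\}$, this indecomposability of $u_p$ gives $\nov_p \notin \fm^2$.

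The argument above is a short bookkeeping exercise in the affine monoid $NE(\Nef)$; the genuine content of Lemma \ref{lem:Rnice} lies upstream, in Lemmas \ref{lem:typea} and \ref{lem:extremal}, where the two conditions defining niceness of $\Nef$ (Definition \ref{defn:xdenice}) are used to control the joint image of $\Nef$ in $H^2(X;\R)$ and the extremality of coordinate subspaces, respectively. I expect no additional obstacle beyond carefully matching the grading conventions between $H_1(\cG(X\setminus D))$ and $H_1(X\setminus D)$ when passing through Remark \ref{rmk:gradcl}.
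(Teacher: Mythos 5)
Your proof is correct and follows essentially the same route as the paper's: $\nov_p \in \fmuncomp$ via Remark \ref{rmk:pospow}, the generation condition via Lemma \ref{lem:typea} with $K=\{p\}$ (after passing from equality of $\G$-degrees to equality of boundaries in $H_1(X\setminus D)$), and $\nov_p \notin \fm^2$ via the extremality of $\R^{\{p\}} \cap NE(\Nef)_\R$ from Lemma \ref{lem:extremal} together with primitivity/strong convexity. No substantive differences.
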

\begin{proof}
First we must show that $\nov_p$ generates its graded piece of $\fmuncomp$ over $\Runcomp_0$. 
We start by observing that $\nov_p \in \fmuncomp$ by Remark \ref{rmk:pospow}. 
If $\nov^v \in \fmuncomp$ has the same degree as $\nov_p$, then $v$ and $u_p$ have the same image in $H_1(X \setminus D)$. 
We apply Lemma \ref{lem:typea} with $K = \{p\}$, to conclude that $v-u_p \in NE_{cl}$. 
It follows that $\nov^{v-u_p} \in \Runcomp_{cl}$. 
In fact, since $\nov^v$ and $\nov_p$ have the same degree, we have $\nov^{v-u_p} \in \Runcomp_0$. 
So $\nov^v$ is contained in the $\Runcomp_0$-span of $\nov_p$ as required.

Second we must show that $\nov_p \notin \fm^2$. 
Suppose, to the contrary, that $u_p = v+w$ where $v, w \in NE(\Nef)$ are both non-zero. 
Because $D_p \neq \emptyset$, $u_p$ lies on an extremal ray of $NE(\Nef)_\R$ by Lemma \ref{lem:extremal}. 
It follows that $v$ and $w$ must lie in the $\Z$-span of $u_p$. 
Because $NE(\Nef)_\R$ is strongly convex and contains $u_p$, we must have $\Z\cdot u_p \cap NE(\Nef) = \Z_{\ge 0} \cdot u_p$, so in fact $v$ and $w$ are positive multiplies of $u_p$ (as they are assumed to be non-zero). 
But then their sum can not be equal to $u_p$, so we reach a contradiction: $\nov_p \notin \fm^2$.
\end{proof}

\begin{rmk}
\label{rmk:smoothb}
Observe that if $(X,D)$ is K\"ahler and $D$ is smooth, then $R(X,D) \simeq R(\Nef) \simeq \Bbbk\power{\nov}$ is obviously nice.
\end{rmk}

\begin{lem}
\label{lem:newtypec} 
Suppose that $\Nef$ is nice, and $D_K \neq \emptyset$.
If $u \in \Z^K$ can be written as $u = v+w$ where $v \in NE(\Nef)$ and $w \in NE(\Nef)_{cl}$, then $w = 0$.
\end{lem}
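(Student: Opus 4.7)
The plan is to combine Lemma \ref{lem:extremal}, the classical characterization of $NE(\Nef)_{cl}$ from Remark \ref{rmk:gradcl}, and the defining niceness property of $\Nef$ at $K$.

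First I would observe that $u = v+w$ lies in $\Z^K \cap NE(\Nef)_\R$. By Lemma \ref{lem:extremal} (using $D_K \ne \emptyset$), the set $F := \R^K \cap NE(\Nef)_\R$ is an extremal face of $NE(\Nef)_\R$. Since $v,w \in NE(\Nef)_\R$ and $v+w = u \in F$, the defining property of an extremal face forces $v,w \in F$. In particular, $w \in \R^K$, i.e.\ $w \cdot D_p = 0$ for all $p \notin K$.

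Next I would unpack the hypothesis $w \in NE(\Nef)_{cl}$. By Remark \ref{rmk:gradcl}, the grading of $\nov^w$ lying in the image of $\Z \to G$ is equivalent to $\partial w = 0$ in $H_1(X \setminus D)$, which by the long exact sequence of the pair $(X, X \setminus D)$ means $w \in \im(H_2(X) \to H_2(X, X\setminus D))$. Consequently, for any $a \in H^2(X, X \setminus D;\R)$, the pairing $w \cdot a$ factors through the restriction map $H^2(X, X \setminus D;\R) \to H^2(X;\R)$; that is, $w \cdot a$ depends only on the image of $a$ in $H^2(X;\R)$.

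Now fix an arbitrary $a \in \Nef$. Since $\Nef$ is nice and $D_K \ne \emptyset$, Definition \ref{defn:xdenice} provides an element $a' \in \Nef^K = \Nef \cap \R^{\bar K}$ with the same image as $a$ in $H^2(X;\R)$. By the previous paragraph, $w \cdot a = w \cdot a'$. But $w \in \R^K$ and $a' \in \R^{\bar K}$ are supported on disjoint index sets, so $w \cdot a' = 0$. Therefore $w \cdot a = 0$ for every $a \in \Nef$. Because $\Nef$ is full-dimensional in $H^2(X, X\setminus D;\R)$ and the pairing with $H_2(X,X\setminus D;\R) \simeq \R^P$ is perfect, this forces $w = 0$, as desired. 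The main (minor) obstacle is simply invoking the extremal face property correctly so that $w$ lands in $\R^K$; once that is in hand, the rest is bookkeeping with the niceness hypothesis.
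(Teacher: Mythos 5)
Your proof is correct and follows essentially the same route as the paper: Lemma \ref{lem:extremal} to place $w$ in $\R^K$, the long exact sequence to lift $w$ to $H_2(X)$, and the niceness condition at $K$ to force all pairings to vanish. The only cosmetic difference is that you pair $w$ against all of $\Nef$ and invoke full-dimensionality, while the paper pairs against the classes $D_p$ and argues that the image of $\R^{\bar K}$ spans the image of $H^2(X,X\setminus D;\R)$ in $H^2(X;\R)$ — the same underlying argument.
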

\begin{proof}
By Lemma \ref{lem:extremal}, both $v$ and $w$ must lie in $\Z^K$. 
It follows that $w \cdot D_p= 0$ for all $p \in \bar{K}$.  
Because $w \in NE_{cl}$, it is the image of a class $\tilde{w} \in H_2(X)$, by the long exact sequence in homology for the pair $(X,X \setminus D)$, and $\tilde{w} \cdot D_p = 0$ for all $p \in \bar{K}$. 

Now we claim that the classes $D_p$ with $p \in \bar{K}$ span the images of the other classes $D_p$ in $H^2(X;\R)$. 
To prove this, we first observe that $\Nef$ has non-empty interior, and therefore spans $H^2(X,X \setminus D;\R)$. 
Its image in $H^2(X;\R)$ is equal to the image of $\Nef^K$ because $\Nef$ is nice. 
It follows that the image of $\R^{\bar{K}}$ in $H^2(X;\R)$ spans the image of $H^2(X,X \setminus D;\R)$, as claimed.

It follows that $\tilde{w} \cdot D_p = 0$ for all $p$, not just for $p \in \bar{K}$, and hence that $w=0$.
\end{proof}

Now we address the question of which relative K\"ahler forms admit compatible systems of divisors which are nice. 
Observe that for all $K$ there exist maps
\[ Amp\left(X,D^K\right) \hookrightarrow H^2\left(X,X \setminus D^K;\R\right) \to H^2(X;\R).\]

\begin{defn}
We define an open convex cone $Amp'(X,D) \subset H^2(X;\R)$ by
\[ Amp'(X,D) := \bigcap_{K: D_K \neq \emptyset}\im \left(Amp\left(X,D^K\right)\right).\]
We define an open convex sub-cone $\AAmp(X,D) \subset Amp(X,D)$ to be the interior of the cone
\[ \left\{a \in Amp(X,D): \im(a)  \in Amp'(X,D) \right\}.\]
\end{defn}

\begin{lem}
\label{lem:niceAAmp}
Let $(X,D)$ be a $\snc$ pair, and $a \in \AAmp(X,D)$. 
Then there exists a nice cone $\Nef$ containing $a$ in its interior. 
In fact, $\Nef$ can be chosen to be rational polyhedral and contained in $sAmp(X,D)$.
\end{lem}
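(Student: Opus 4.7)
The plan is to construct $\Nef$ explicitly as a rational polyhedral cone whose generators lie in the various $Amp(X,D^K)$'s, chosen so that their images in $H^2(X;\R)$ agree and so that enough kernel directions are covered to make the slices $\Nef^K$ full-dimensional.

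By openness of $\AAmp(X,D)$, first pick finitely many rational classes $a_1,\dots,a_m \in \AAmp(X,D)\cap\Q^P$ with $a$ in the interior of $\mathrm{cone}(a_1,\dots,a_m)$ and with the $a_i$ linearly spanning $\R^P$. Let $S := \{K\subset P: D_K\neq\emptyset\}$ and let $\pi:H^2(X,X\setminus D;\R)\to H^2(X;\R)$ be the natural map. For each $K\in S$ and each $i$, the hypothesis $a_i\in\AAmp(X,D)$ forces $\pi(a_i)\in\pi(Amp(X,D^K))$, so the fiber
\[ F_i^K \; := \; (\pi|_{\R^{\bar K}})^{-1}(\pi(a_i))\cap Amp(X,D^K) \]
is a non-empty open subset of a rational affine subspace of $\R^{\bar K}$ and hence contains a rational point $a_i^K$ (take $a_i^\emptyset=a_i$). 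To ensure full-dimensionality of $\Nef^K$ later, additionally choose rational points $b_1^K,\dots,b_{t_K}^K\in F_1^K$ such that the displacements $\{b_j^K-a_1^K\}$ form a basis of $\ker(\pi|_{\R^{\bar K}})$; this is possible since $F_1^K$ is open in the affine subspace $a_1^K+\ker(\pi|_{\R^{\bar K}})$, so its rational points project densely onto $\ker(\pi|_{\R^{\bar K}})$. Define $\Nef$ to be the rational polyhedral cone generated by all the $a_i^K$'s and $b_j^K$'s.

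The verifications are then routine. Each generator is an ample (hence semi-ample) effective divisor supported on $D^K\subset D$, so $\Nef\subset sAmp(X,D)$ by convexity of $sAmp(X,D)$. The subcone $\mathrm{cone}(a_1,\dots,a_m)\subset\Nef$ is full-dimensional and contains $a$ in its interior, so $a$ lies in the interior of $\Nef$. For niceness in the sense of Definition \ref{defn:xdenice}, condition (2) is immediate, since every generator of $\Nef$ projects into $\mathrm{cone}(\pi(a_1),\dots,\pi(a_m))$ in $H^2(X;\R)$ and this cone is already attained inside $\Nef^K$ via the $a_i^K$. For condition (1), the key observation is that $\pi(a_i)\in\mathrm{im}(\pi|_{\R^{\bar K}})$ combined with the fact that the $a_i$ linearly span $\R^P$ forces $\mathrm{im}(\pi)=\mathrm{im}(\pi|_{\R^{\bar K}})$; hence a subset of the $a_i^K$'s has span which is a linear complement to $\ker(\pi|_{\R^{\bar K}})$ in $\R^{\bar K}$, and together with the kernel-spanning $b_j^K-a_1^K$ one gets a basis of $\R^{\bar K}$.

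The main technical point is the choice of the kernel-direction lifts $b_j^K$ inside the open set $Amp(X,D^K)$; once one observes that each $F_i^K$ is a non-empty open subset of a rational affine subspace, this reduces to an elementary density argument for rationals.
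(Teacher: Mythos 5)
Your proposal is correct and follows essentially the same route as the paper's proof: choose rational generators in $\AAmp(X,D)$ with $a$ in the interior of their cone, lift each into $Amp(X,D^K)$ along the fibers of $H^2(X,X\setminus D^K;\R)\to H^2(X;\R)$, and add enough rational points in a fiber to make each slice $\Nef^K$ full-dimensional. The only difference is bookkeeping — the paper takes, for each generator, a set of lifts whose convex hull is full-dimensional in the affine fiber, while you take single lifts plus separate kernel-spanning points — and this is immaterial.
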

\begin{proof}
Observe that $\AAmp(X,D)$ is open, so we can choose a finite set of rational classes $a_q \in \AAmp(X,D)$ such that $a$ is contained in the interior of their convex hull. 
It follows that $\im(a_q) \in Amp'(X,D)$ for all $q$. 
Therefore, for each $q$ and each $K \subset P$ such that $D_K \neq \emptyset$, the pre-image of $\im(a_q)$ in $H^2(X,X \setminus D^K ;\R)$ is an affine subspace whose intersection with the open convex cone $Amp(X,D^K)$ is non-empty. 
Therefore we can choose a finite set of rational classes $a_i(K,q)$ in this intersection, whose convex hull is full-dimensional in the affine subspace. 

We define $\Nef \subset \R^P$ to be the cone spanned by the union of the classes $a_q \in \Q^P$ with the classes $a_i(K,q) \in \Q^{\bar{K}} \subset \Q^P$.
By construction, $\Nef^K$ contains the cone spanned by the classes $a_i(K,q)$, which is full-dimensional.
Furthermore, the images of the classes $a_i(K,q) \in \Nef^K$ in $H^2(X;\R)$ are equal to the images of the classes $a_q$ by construction, so the image of $\Nef^K$ is equal to the image of $\Nef$. 
It follows that $\Nef$ is nice, it clearly contains $a$ in its interior by construction, and it is clearly rational polyhedral. 
Finally we observe that $a_i(K,q) \in Amp(X,D^K) \subset sAmp(X,D^K) \subset sAmp(X,D)$, so $\Nef \subset sAmp(X,D)$.
\end{proof}

In fact, we have a converse to Lemma \ref{lem:niceAAmp}:

\begin{lem}
\label{lem:nicemeansnice}
Let $\Nef \subset Nef(X,D)$ be a nice cone. 
Then the interior of $\Nef$ is contained in $\AAmp(X,D)$.
\end{lem}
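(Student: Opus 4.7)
The plan is to show that any point $a$ in the interior of $\Nef$ belongs to the set
\[ \{b \in Amp(X,D) : \im(b) \in Amp'(X,D)\}, \]
whose interior (by definition) is $\AAmp(X,D)$. Since the interior of $\Nef$ is already open in $H^2(X, X \setminus D;\R)$, any such containment automatically places it inside $\AAmp(X,D)$.

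For the first condition ($a \in Amp(X,D)$), I would simply use that $\Nef \subset Nef(X,D)$ together with the elementary fact that an open subset of a set lies in that set's interior; by Lemma~\ref{lem:ampkahl} and Kleiman's theorem, the interior of $Nef(X,D)$ is $Amp(X,D)$.

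The substantive part is the second condition. Fix $K \subset P$ with $D_K \neq \emptyset$ and let $\pi: H^2(X, X\setminus D;\R) \to H^2(X;\R)$ denote the natural map. I would first observe that any element of $\Nef^K = \Nef \cap \R^{\bar K}$ is a nef effective divisor supported on $D$ with zero coefficient on each $D_p$, $p \in K$, hence is supported on $D^K$; this gives $\Nef^K \subset Nef(X,D^K)$, and the openness argument above yields $\mathrm{int}(\Nef^K) \subset Amp(X,D^K)$. The key step is then to invoke the classical convex-analytic fact that a linear map sends the relative interior of a convex set to the relative interior of its image (Rockafellar, \emph{Convex Analysis}, Theorem 6.6). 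Niceness supplies two further ingredients: (a) $\pi(\Nef) = \pi(\Nef^K)$ (the second bullet of Definition~\ref{defn:xdenice}), and (b) full-dimensionality of $\Nef$ in $\R^P$ and of $\Nef^K$ in $\R^{\bar K}$ (the first bullet, applied to $K$ and to $K = \emptyset$), so that relative interiors coincide with ordinary interiors in each ambient lattice. Chaining everything together,
\[ \pi(\mathrm{int}(\Nef)) = \mathrm{relint}(\pi(\Nef)) = \mathrm{relint}(\pi(\Nef^K)) = \pi(\mathrm{int}(\Nef^K)) \subset \im(Amp(X,D^K)). \]
Intersecting over all $K$ with $D_K \neq \emptyset$ yields $\pi(\mathrm{int}(\Nef)) \subset Amp'(X,D)$ and completes the proof.

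The proof is essentially a two-line argument once one has the right framework; the only real obstacle is verifying that the Rockafellar relative-interior identity applies cleanly, and that niceness provides exactly the full-dimensionality needed for relative interiors to coincide with ordinary interiors in both $\R^P$ and $\R^{\bar K}$.
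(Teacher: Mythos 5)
Your proof is correct and follows essentially the same route as the paper's: both reduce the claim to showing $\pi(\mathrm{int}(\Nef)) = \pi(\mathrm{int}(\Nef^K)) \subset \im(Amp(X,D^K))$, using niceness ($\pi(\Nef)=\pi(\Nef^K)$ plus full-dimensionality) together with the fact that linear maps take (relative) interiors of full-dimensional convex sets onto relative interiors of their images — the paper states this as "$p(\bar{B})^\circ = p(B)$ for surjective $p$ and open convex $B$" and leaves it to the reader, whereas you cite Rockafellar's Theorem 6.6, which is the same content. The only point either argument leaves implicit is that $\mathrm{int}(Nef(X,D^K)) = Amp(X,D^K)$ requires $(X,D^K)$ to be Kähler, which does follow here since $\pi(\mathrm{int}(\Nef^K)) = \pi(\mathrm{int}(\Nef))$ consists of ample classes.
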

\begin{proof}
The interior of $\Nef$ gets mapped to the interior of the image of $\Nef$ in $\im(H^2(X,X \setminus D;\R) \to H^2(X;\R))$. 
This is equal to the interior of the image of $\Nef^K$ in the same space for all $D_K \neq \emptyset$, because $\Nef$ is nice.
We now use the fact that, if $p: V \to W$ is a surjective linear map between finite-dimensional real vector spaces and $B \subset V$ an open convex set, then $p(\bar{B})^\circ = p(B)$ (we leave the proof to the reader). 
Specifically, we apply this to the interior of $\Nef^K$ (whose closure is $\Nef^K$, by the assumption that $\Nef^K$ is full-dimensional). 
It follows that the interior of $\Nef$ gets mapped to the image of the interior of $\Nef^K$, which is contained in $Amp(X,D^K)$.
So the interior of $\Nef$ gets mapped to $Amp'(X,D)$. 
It is also contained in $Amp(X,D)$, so it is contained in $\AAmp(X,D)$.
\end{proof}

\begin{defn}
We say that $(X,D)$ is nice if $\AAmp(X,D) \neq \emptyset$. 
\end{defn}

\subsection{Toric varieties}
\label{subsec:toricsys}

Let $N \simeq \Z^n$ be a lattice and $\Delta \subset N_\R$ a complete, non-singular fan.
Denote by $X = X(\Delta)$ the corresponding smooth, compact toric variety.
It admits an action of the algebraic torus $\T := N \otimes \C^*$.
Let $P := \Delta(1)$ denote the set of one-dimensional rays in $\Delta$.
Each $p \in P$ corresponds to a toric divisor $D_p \subset X$. 
The toric boundary divisor $D$ is the union of the $D_p$. 
It is a $\snc$  divisor. 
We call such $(X,D)$ a \emph{toric $\snc$ pair}.

\begin{lem}
\label{lem:torica}
If $(X,D)$ is a toric $\snc$ pair, then
\[ \AAmp(X,D) = Amp(X,D).\]
\end{lem}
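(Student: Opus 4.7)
The plan is to prove $Amp(X,D) \subseteq \AAmp(X,D)$; the reverse inclusion is tautological. Since $Amp(X,D)$ is open, it suffices to show that every $a \in Amp(X,D)$ satisfies $\im(a) \in Amp'(X,D)$, i.e., that for each $K \subset P$ with $D_K \neq \emptyset$ there exists $a' \in Amp(X,D^K)$ with the same image in $H^2(X;\R)$ as $a$. For the toric variety $X$, two toric divisors are linearly equivalent iff they differ by $\mathrm{div}(\chi^m)$ for some $m \in M = N^*$, so the task reduces to producing an appropriate character $m$.

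First I would write $a = \sum_p \ell_p D_p$ with $\ell_p > 0$ and invoke standard toric facts: since $\Delta$ is nonsingular, $D_K \neq \emptyset$ means $\{v_p\}_{p \in K}$ spans a cone of $\Delta$ and is part of a $\Z$-basis of $N$; choose a maximal cone $\sigma \in \Delta$ containing this cone, and let $m_\sigma \in M$ be the unique element with $\langle m_\sigma, v_p\rangle = -\ell_p$ for $p \in \sigma(1)$. Ampleness of $a$ is equivalent to strict concavity of its support function, so $\ell_p + \langle m_\sigma, v_p\rangle > 0$ for all $p \notin \sigma(1)$.

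Next I would fix the remaining coefficients. Replacing $a$ by $a + \mathrm{div}(\chi^{m_\sigma})$ kills the coefficients along all of $\sigma(1)$, but we only want them killed along $K \subseteq \sigma(1)$: the coefficients on $\sigma(1) \setminus K$ must be made strictly positive. So I would perturb: choose $\delta \in M_\R$ with $\langle \delta, v_p\rangle = 0$ for $p \in K$ and $\langle \delta, v_p\rangle > 0$ for $p \in \sigma(1)\setminus K$, which is possible because $\{v_p\}_{p \in \sigma(1)}$ is a basis of $N_\R$ so the values $\langle \cdot, v_p\rangle$ can be prescribed freely there. Taking $\delta$ small enough preserves the strict inequalities at $p \notin \sigma(1)$. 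Then $a' := a + \mathrm{div}(\chi^{m_\sigma + \delta})$ is linearly equivalent to $a$ (hence ample with the same image in $H^2(X;\R)$), has zero coefficient on $D_p$ for $p \in K$, and strictly positive coefficient on $D_p$ for $p \in \bar K$, so $a' \in Amp(X,D^K)$.

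There is no serious obstacle: the argument uses only the standard characterization of toric ampleness via strict concavity of the support function, plus the observation that nonsingularity of the cone on $K$ gives the perturbation freedom needed to upgrade the zero coefficients on $\sigma(1)\setminus K$ to positive ones. The only mildly subtle point is recognizing that applying $m_\sigma$ alone overshoots and that one must then perturb within the annihilator of $\{v_p\}_{p\in K}$.
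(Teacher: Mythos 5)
Your proof is correct and is essentially the paper's argument: the paper deduces the lemma from its Lemma \ref{lem:examp}, which produces the same linearly equivalent $\T$-equivariant divisor supported on $D^K$ via a character $\chi^m$ with $m$ on the face of the moment polytope where the facets indexed by $K$ meet — dual to your support-function description with $m_\sigma + \delta$. Your explicit perturbation by $\delta$ in the annihilator of $\{v_p\}_{p\in K}$ is a welcome extra care, since membership in the \emph{open} cone $Amp(X,D^K)$ requires strictly positive coefficients on $\bar K$, a point Lemma \ref{lem:examp} (which only asserts effectivity) leaves implicit.
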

\begin{proof}
Follows from Lemma \ref{lem:examp} below.
\end{proof}

Before introducing the Lemma, we recall some basic facts about line bundles on toric varieties (see, e.g., \cite[\S 3.4]{Fulton1993}, whose notation we follow). 

Let $\underline{a} \in \Z^P$. 
We consider the $\T$-equivariant divisor $D_{\underline{a}} := \sum_p a_p D_p$. 
Recall that such $\T$-equivariant divisors are in one-to-one correspondence with piecewise-linear functions
\begin{equation} \psi_{\underline{a}}: N_\R \to \R,\end{equation}
linear with integer slope on each cone of $\Delta$. 
Namely, $D_{\underline{a}}$ corresponds to the unique such function such that
\begin{equation} \psi_{\underline{a}}(v_p) = -a_p,\end{equation}
where $v_p \in N$ is the primitive generator of the ray $p \in \Delta(1)$. 

Denote $M:=\Hom(N,\Z)$. 
Recall that $\underline{a}$ determines a rational convex polytope 
\begin{equation} P_{\underline{a}} := \{u: \langle u, v_p \rangle \ge -a_p \mbox{ for all $p \in \Delta(1)$}\} \subset M_\R,\end{equation}
and that there is a natural correspondence between global sections of the corresponding line bundle and lattice points of the polytope:
\begin{equation} \Gamma(\EuO(D_{\underline{a}})) \simeq \bigoplus_{u \in P_{\underline{a}} \cap M} \C \cdot \chi^u.\end{equation}
Furthermore, $D_{\underline{a}}$ is semi-ample if and only if it is nef, if and only if the function $\psi_{\underline{a}}$ is convex; and it is ample if and only if $\psi_{\underline{a}}$ is strictly convex.

\begin{lem}
\label{lem:examp}
Suppose that $D_{\underline{a}}$ is a semi-ample $\T$-equivariant divisor.
Then for any $K$ such that $D_K \neq \emptyset$, there exists an effective $\T$-equivariant divisor supported on $D^K$ which is linearly equivalent to $k \cdot D_{\underline{a}}$, for some $k \in \N_{>0}$.
\end{lem}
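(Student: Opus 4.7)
The plan is to show that $k=1$ already works: I will produce a single character $\chi^{u_\sigma}$ whose divisor, when added to $D_{\underline{a}}$, kills every coefficient at $D_p$ for $p \in K$ while keeping all other coefficients non-negative.

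Translating the conclusion, asking for an effective $\T$-equivariant divisor $D'$ linearly equivalent to $D_{\underline{a}}$ and supported on $D^K$ is the same as asking for $u \in M$ satisfying $\langle u, v_p\rangle = -a_p$ for $p \in K$ and $\langle u, v_p\rangle \ge -a_p$ for $p \in \bar{K}$; indeed, for such $u$ the divisor
\[ D' := D_{\underline{a}} + \operatorname{div}(\chi^u) = \sum_p (a_p + \langle u, v_p\rangle)\,D_p \]
is manifestly linearly equivalent to $D_{\underline{a}}$, supported on $D^K$, and effective.

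To produce $u$, I use the combinatorics of $\Delta$. Since $D_K \neq \emptyset$, the rays $\{v_p\}_{p \in K}$ span a cone $\sigma_K \in \Delta$; extend $\sigma_K$ to a maximal cone $\sigma \in \Delta$. Because $\Delta$ is non-singular, the primitive ray generators of $\sigma$ form a $\Z$-basis of $N$, so there is a unique $u_\sigma \in M$ with $\langle u_\sigma, v_p\rangle = \psi_{\underline{a}}(v_p) = -a_p$ for every ray $v_p \subset \sigma$ — crucially, $u_\sigma$ lies in the lattice $M$ and not merely in $M_\Q$, which is where non-singularity of $\Delta$ is used. This already gives the required equality on all of $K$.

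Finally I invoke semi-ampleness. Convexity of $\psi_{\underline{a}}$ (as recalled just before the Lemma) means precisely that for each maximal cone $\sigma$ the linear functional on $N_\R$ agreeing with $\psi_{\underline{a}}$ on $\sigma$ dominates $\psi_{\underline{a}}$ everywhere; equivalently, each such $u_\sigma$ is a lattice point (in fact a vertex) of $P_{\underline{a}}$. Specialising to $v = v_p$ for $p \in \bar{K}$ gives $\langle u_\sigma, v_p\rangle \ge -a_p$, and combining with the previous paragraph shows that $D' := D_{\underline{a}} + \operatorname{div}(\chi^{u_\sigma})$ does the job with $k=1$. The one point requiring care — really the only mild obstacle — is the sign convention for ``convex'' in the paper, so that semi-ampleness gives $u_\sigma \in P_{\underline{a}}$ rather than $u_\sigma \in -P_{\underline{a}}$; once this is pinned down (via consistency with the description $\Gamma(\mathcal{O}(D_{\underline{a}})) = \bigoplus_{u \in P_{\underline{a}} \cap M} \mathbb{C}\cdot \chi^u$ recalled in the excerpt), the argument is purely combinatorial.
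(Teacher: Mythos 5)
Your proof is correct and follows the same underlying idea as the paper's: use convexity of $\psi_{\underline{a}}$ (equivalently, semi-ampleness) to produce a lattice point $m$ of the polytope lying on every facet indexed by $K$, and take the divisor of the corresponding section $\chi^m$. The one difference is that the paper only locates a \emph{rational} point in the common face $\bigcap_{p\in K}\{u:\langle u,v_p\rangle=-a_p\}$ and clears denominators, which is why it settles for some $k\in\N_{>0}$; you instead invoke non-singularity of $\Delta$ (a standing assumption in this section) to get the integral point $u_\sigma$ directly and hence the sharper conclusion $k=1$. Both are valid; your parenthetical that $u_\sigma$ is a vertex of $P_{\underline{a}}$ is not always true in the merely semi-ample case (distinct maximal cones may give the same $u_\sigma$), but nothing in the argument uses it.
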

\begin{proof}
Because $D_{\underline{a}}$ is semi-ample, $\psi_{\underline{a}}$ is convex. 
It follows that the facets of $P_{\underline{a}}$ indexed by $K$ intersect in a common face. 
This face contains a rational point, hence there is a lattice point $m \in P_{k\cdot \underline{a}} \cap M$, for some $k \in \N_{>0}$. 
Let $\chi^{m} \in \Gamma(\EuO(D_{k\cdot \underline{a}}))$ be the corresponding section.

The corresponding effective $\T$-equivariant divisor $\left \{ \chi^{m} = 0 \right\}$ is linearly equivalent to $D_{k \cdot \underline{a}} = k\cdot D_{\underline{a}}$ by construction. 
It is equal to $\sum_p b_p D_p$, where $b_p$ is the affine distance from $m$ to the facet of $P_{k\cdot \underline{a}}$ corresponding to $p$. 
In particular,  $b_p = 0$ for $p \in K$ by construction, so the divisor is supported on $D^K$ as required.
\end{proof}

\subsection{Sub-$\snc$ pairs}
\label{subsec:restdiv}

\begin{defn}
Let $(Y,D')$ be a $\snc$ pair, and $\iota: X \hookrightarrow Y$ the inclusion of a closed complex submanifold, transverse to all strata of $D'$. 
Then $D := D' \cap X$ is a $\snc$ divisor of $X$, so $(X,D)$ is a $\snc$ pair. 
In this situation we say that $(X,D) \subset (Y,D')$ is a \emph{sub-$\snc$ pair}.
\end{defn}

Note that if $(X,D) \subset (Y,D')$ is a sub-$\snc$ pair, then any relative K\"{a}hler form on $(Y,D')$ restricts to one on $(X,D)$. 

If $D = \cup_{p \in P} D_p$ and $D' = \cup_{p \in P'} D'_p$, then we have a map of sets $i: P \to P'$, defined by the property that $D_p$ is a connected component of $D'_{i(p)} \cap X$. 
This is related to the map $\iota_*:  H_2(X,X \setminus D) \to H_2(Y,Y \setminus D') $ as follows: if $u_p$ is a basis element of $H_2(X,X \setminus D)$, then $\iota_*(u_p) = u_{i(p)}$. 
The dual map is
\begin{align}
\label{eqn:istar} \iota^*: H^2(Y,Y \setminus D') & \to H^2(X,X \setminus D) \\
\label{eqn:istarform} \iota^*PD(D'_p) & = \sum_{q \in i^{-1}(p)} PD(D_q).
\end{align}

\begin{lem}
\label{lem:subampprime}
Let $(X,D) \subset (Y,D')$ be a sub-$\snc$ pair, such that $D'_p \cap X$ is either connected or empty for all $p$. 
Then \eqref{eqn:istar} maps $\AAmp(Y,D')$ to $\AAmp(X,D)$.
\end{lem}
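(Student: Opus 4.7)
The plan is to exploit the connectedness/emptiness hypothesis to make the induced map $i: P \to P'$ injective, which is precisely what makes restriction of divisors preserve the structure needed to land in $\AAmp(X,D)$. First I would record the consequences of the hypothesis: since each non-empty $D'_p \cap X$ is a single irreducible component $D_q$ with $q = i^{-1}(p)$, $i$ is injective, and \eqref{eqn:istarform} collapses to $\iota^* PD(D'_p) = PD(D_{i^{-1}(p)})$ if $p \in i(P)$ and $0$ otherwise. In particular every generator $PD(D_q)$ of $H^2(X, X \setminus D;\R)$ is in the image of $\iota^*$, so $\iota^*$ is surjective between these finite-dimensional real vector spaces and hence is an open map.

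Next, let $a \in \AAmp(Y,D')$; since $\AAmp(Y,D')$ is contained in the cone whose interior it is, $a \in Amp(Y,D')$ and $\im(a) \in Amp'(Y,D')$. Writing $a = \sum_{p \in P'} \ell_p PD(D'_p)$ with $\ell_p > 0$ and $\sum_p \ell_p D'_p$ ample, the simplified formula above and injectivity of $i$ give $\iota^*(a) = \sum_{q \in P} \ell_{i(q)} PD(D_q)$: all coefficients are positive, and the underlying divisor is the restriction to the smooth subvariety $X$ of an ample divisor, hence ample. So $\iota^*(a) \in Amp(X,D)$. For the image condition, fix $K \subseteq P$ with $D_K \neq \emptyset$ and set $K' := i(K) \subseteq P'$; since $D_q \subseteq D'_{i(q)} \cap X$ for $q \in K$, we have $D_K \subseteq D'_{K'}$, so $D'_{K'} \neq \emptyset$. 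The hypothesis $\im(a) \in Amp'(Y,D')$ then furnishes $b \in Amp(Y, D'{}^{K'})$ with $\im(b) = \im(a)$ in $H^2(Y;\R)$. The same calculation as above, now using that (by injectivity of $i$) $q \in P \setminus K$ if and only if $i(q) \in P' \setminus K'$, shows $\iota^*(b) \in Amp(X, D^K)$. Naturality in the commutative square relating relative and absolute cohomology yields $\im(\iota^*(b)) = \iota^*(\im(b)) = \iota^*(\im(a)) = \im(\iota^*(a))$, so $\im(\iota^*(a)) \in \im(Amp(X, D^K))$. Intersecting over all such $K$ gives $\im(\iota^*(a)) \in Amp'(X,D)$.

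To promote this to membership in the interior $\AAmp(X,D)$, I would pick an open neighbourhood $V \subseteq \AAmp(Y,D')$ of $a$; openness of $\iota^*$ makes $\iota^*(V)$ an open neighbourhood of $\iota^*(a)$ in $H^2(X, X \setminus D;\R)$, and applying the previous paragraph to every point of $V$ shows that $\iota^*(V)$ is contained in the cone $\{a' \in Amp(X,D) : \im(a') \in Amp'(X,D)\}$. Hence $\iota^*(a)$ lies in its interior, which is $\AAmp(X,D)$. I do not foresee any serious obstacle: the argument is essentially bookkeeping with $i$ combined with the classical fact that ampleness restricts to a smooth subvariety. The connectedness hypothesis enters in exactly one essential way---it prevents the positivity-of-coefficients condition defining $Amp(X,D)$ from being destroyed by summing over the fibres of $i$ in \eqref{eqn:istarform}---and without it one could easily produce $a \in \AAmp(Y,D')$ for which some coefficient of $\iota^*(a)$ vanishes.
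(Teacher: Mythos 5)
Your proof is correct and follows essentially the same route as the paper's: the key points --- that $D_K \neq \emptyset$ forces $D'_{i(K)} \neq \emptyset$, that ampleness is preserved under restriction to $X$, and that the connectedness hypothesis gives $i^{-1}(i(K)) = K$ (equivalently, injectivity of $i$, which is what keeps all coefficients of $\iota^*(a)$ strictly positive) --- are exactly the ones the paper uses to show $\iota^*$ carries $Amp(Y,D')$ into $Amp(X,D)$ and $Amp'(Y,D')$ into $Amp'(X,D)$. Your closing step, using surjectivity (hence openness) of $\iota^*$ on relative $H^2$ to push the open set $\AAmp(Y,D')$ into the \emph{interior} $\AAmp(X,D)$ rather than merely into the ambient cone, makes explicit a point the paper compresses into ``the result follows,'' and is a worthwhile addition.
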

\begin{proof}
Note that if $D_K \neq \emptyset$, then $D'_{i(K)} = \iota(D_K) \neq \emptyset$. 
It follows that 
\[ Amp'(Y,D') \subset \bigcap_{K:D_K \neq \emptyset} \im\left(Amp\left(Y,(D')^{i(K)}\right)\right).\]

Now it is clear that ampleness is preserved by restriction. 
It follows that $\iota^*$ maps  $Amp\left(Y,(D')^J\right) \to Amp\left(X,D^K\right)$ for all $J$, where $K = i^{-1}(J)$. 
By our hypothesis that $D'_p \cap X$ is either connected or empty for all $p$, we have $i^{-1}(i(K)) = K$, so $\iota^*$ maps $Amp\left(Y,(D')^{i(K)}\right) \to Amp\left(X,D^K\right)$ for any $K$. 
Hence it maps $Amp'(Y,D') \to Amp'(X,D)$.

Finally, observe that  $\iota^*$ maps $Amp(Y,D') \to Amp(X,D)$. 
Combining these observations, the result follows.
\end{proof}

\begin{cor}
\label{cor:toricEabc}
Let $(Y,D')$ be a toric $\snc$ pair, and $(X,D) \subset (Y,D')$ a sub-$\snc$ pair such that $D'_p \cap X$ is connected or empty for all $p$. 
If $\omega$ is a relative K\"ahler form on $(X,D)$ which is the restriction of a relative K\"ahler form from $(Y,D')$, then there exists a  system of divisors $E$ such that $\Nef(E)$ is nice and contains $[\omega]$ in its interior.
\end{cor}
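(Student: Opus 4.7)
The plan is to chain together several of the Lemmas already established. First, since $\omega$ is the restriction of a relative K\"ahler form $\omega_Y$ on the toric $\snc$ pair $(Y,D')$, Lemma \ref{lem:ampkahl} gives $[\omega_Y] \in Amp(Y,D')$, and then Lemma \ref{lem:torica} says $[\omega_Y] \in \AAmp(Y,D') = Amp(Y,D')$.

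Next, I would apply Lemma \ref{lem:subampprime}, whose hypothesis that $D'_p \cap X$ is connected or empty for all $p$ is exactly what we have assumed. This gives $[\omega] = \iota^*[\omega_Y] \in \AAmp(X,D)$.

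Now I would invoke Lemma \ref{lem:niceAAmp}: since $[\omega] \in \AAmp(X,D)$, there exists a nice rational polyhedral cone $\Nef \subset sAmp(X,D)$ with non-empty interior containing $[\omega]$ in its interior. Finally, I would apply Lemma \ref{lem:sysexist} to the rational polyhedral cone $\Nef \subset sAmp(X,D)$ (which is full-dimensional since it has non-empty interior and $[\omega]$ is an interior point) to produce a system of divisors $E$ in some appropriate neighbourhood $U$ of $D$, with $\Nef(E) = \Nef$. The resulting $\Nef(E)$ is nice and contains $[\omega]$ in its interior, as required. The argument is essentially mechanical once the preceding lemmas are in place; there is no real obstacle, only the bookkeeping of verifying that each lemma's hypotheses are met.
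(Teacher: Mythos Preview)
Your proof is correct and follows essentially the same approach as the paper, which simply cites Lemmas \ref{lem:torica}, \ref{lem:subampprime}, \ref{lem:niceAAmp} and \ref{lem:sysexist} in that order. Your inclusion of Lemma \ref{lem:ampkahl} to justify $[\omega_Y] \in Amp(Y,D')$ is a harmless extra detail.
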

\begin{proof}
Follows by Lemmas \ref{lem:torica}, \ref{lem:subampprime}, \ref{lem:niceAAmp} and \ref{lem:sysexist}.
\end{proof}

\subsection{The ambient coefficient ring}
\label{subsec:Ramb}

In this section we continue to consider a sub-$\snc$ pair $(X,D) \subset (Y,D')$ as in the previous section, but we do not assume that $D'_p \cap X$ is connected or empty for all $p$. 
We prove analogues of the results from the previous section adapted to this case. 
This requires us to introduce an alternative coefficient ring, which we call the \emph{ambient ring}; but first we need to introduce the appropriate grading datum. 

Let $M$ be a manifold and $\EuV$ a complex vector bundle on $M$. 
We define $\EuR(\EuV) \to M$ to be the fibre bundle of totally real subspaces of $\EuV$.
We define $\G(\EuV)$ to be the grading datum $\{\Z \to H_1(\EuR(\EuV)) \to \Z/2\}$, where the map $\Z \to H_1(\EuR(\EuV))$ is induced by the inclusion of a fibre $\EuR(\EuV_m) \hookrightarrow \EuR(\EuV)$, and the map $H_1(\EuR(\EuV)) \to \Z/2$ is given by the first Stiefel--Whitney class of the tautological vector bundle over $\EuR(\EuV)$. 
In particular, we have $\G(X) \simeq \G(TX)$ for any K\"ahler manifold $X$ (compare \cite[Lemma 3.16]{Sheridan:CY}). 

Observe that there is a map $\EuR(\EuV) \to \EuR(\wedge^{top} \EuV)$. 
It sends a totally real subspace $V \subset \EuV$ to the subspace spanned by $v_1 \wedge \ldots \wedge v_n$, where $\{v_1,\ldots,v_n\}$ is a real basis for $V$. 
This map induces an isomorphism on $H_0$ and $H_1$ of the respective fibres, and therefore induces an isomorphism on $H_1$ of the total space, by a comparison argument for the Serre spectral sequences of the respective fibre bundles. 
So we have a natural isomorphism
\[ \G(\EuV) \simeq \G(\wedge^{top} \EuV).\]

We also observe that if $M \subset N$ is a submanifold and $\EuV$ a complex vector bundle on $N$, then we have $\EuR(\EuV|_M) \subset \EuR(\EuV)$. 
The inclusion induces a morphism of grading data $\G(\EuV|_M) \to \G(\EuV)$.

\begin{defn}
Let $(X,D) \subset (Y,D')$ be a sub-$\snc$ pair.
Assume that $\EuV \to Y \setminus D'$ is a holomorphic vector bundle, $s \in \Gamma(\EuV)$ a holomorphic section, and $X \setminus D = \{s=0\}$.
We define
\[ \G_{amb} := \G\left(\EuR\left(TY \oplus \EuV^\vee\right)\right).\]
\end{defn}

We claim that there is a natural morphism of grading data $\bff: \G(X \setminus D) \to \G_{amb}$. 
Indeed, there is a canonical isomorphism
\[ \wedge^{top}(TX)|_{X \setminus D} \simeq \wedge^{top}(TY \oplus \EuV^\vee)\]
induced by the normal bundle sequence 
\[ 0 \to TX \to TY \to \EuN_{X / Y} \to 0\]
restricted to $X \setminus D$, together with the isomorphism $\EuN_{X / Y}|_{X \setminus D} \simeq \EuV|_{X \setminus D}$.
Thus we obtain a morphism of grading data
\begin{align*}
\G(X \setminus D) & \simeq \G(T(X \setminus D)) \\
& \simeq \G((TY \oplus \EuV^\vee)|_{X \setminus D}) \\
& \to \G((TY \oplus \EuV^\vee)|_{Y \setminus D'}) \\ 
& =: \G_{amb}.
\end{align*}
We denote this morphism by $\bff$.

Now we define the appropriate coefficient ring.
We define $P_{amb} \subset P'$ to be the image of $i:P \to P'$. 
Note that we have a natural isomorphism
\[\im\left(\iota_*: H_2(X,X \setminus D) \to H_2(Y,Y \setminus D')\right) \simeq \Z^{P_{amb}}.\]
It is easy to verify that the map
\[ H_2(X,X \setminus D) \xrightarrow{\eqref{eqn:h2g}} \G(X \setminus D) \xrightarrow{\bff} \G_{amb} \]
factors through $\iota_*$. 
This allows us to equip
\[ \Rpuncomp_{amb} := \Bbbk\left[\Z^{P_{amb}} \right]\]
with a $\G_{amb}$-grading, so that we have a homomorphism of $\G_{amb}$-graded $\Bbbk$-algebras
\[ \wt{\Phi}^*: \bff_* \Rpuncomp(X,D) \to \Rpuncomp_{amb}\]
induced by $\iota_*$.

We also have a natural inclusion $\R^{P_{amb}} \subset H^2(Y,Y \setminus D;\R)$, and the composition
\[ \R^{P_{amb}}  \hookrightarrow H^2(Y,Y \setminus D';\R) \xrightarrow{\iota^*} H^2(X,X \setminus D;\R)\]
is injective.

Now let $\Nef \subset Nef(X,D)$ be a full-dimensional convex sub-cone. 
We define
\begin{align*}
\Nef_{amb}  &:= \Nef \cap \R^{P_{amb}}\\
 NE_{amb}(\Nef) &:= \Nef_{amb}^\vee \cap \Z^{P_{amb}}\\
 \Runcomp_{amb}(\Nef) &:= \Bbbk[NE_{amb}(\Nef)].
\end{align*}
We will assume that $\Nef_{amb}$ intersects the interior of $\Nef$. 
This implies that $\Nef_{amb}$ has non-empty interior, and therefore is full-dimensional. 
It follows that $\Runcomp_{amb}(\Nef)$ has a unique toric maximal ideal $\fmuncomp_{amb} \subset \Runcomp_{amb}$, so we can define $R_{amb}(\Nef)$ to be the completion of $\Runcomp_{amb}(\Nef)$ with respect to the $\fmuncomp_{amb}$-adic filtration, in the $\G_{amb}$-graded sense. 

\begin{lem}
\label{lem:Phidef}
Suppose that $\Nef_{amb}$ intersects the interior of $\Nef$. 
Then $\wt{\Phi}^*(\fmuncomp) \subset \fmuncomp_{amb}$.
\end{lem}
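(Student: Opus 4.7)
The plan is to unpack the definitions of the two ideals and reduce the claim to two assertions about the map $\iota_*$, one purely formal and one using the hypothesis on $\Nef_{amb}$. Since $\fmuncomp$ is the unique toric maximal ideal of $\Runcomp(\Nef) = \Bbbk[NE(\Nef)]$, it is the $\Bbbk$-span of the monomials $\nov^u$ with $u \in NE(\Nef)\setminus\{0\}$; analogously $\fmuncomp_{amb}$ is spanned by the $\nov^v$ with $v \in NE_{amb}(\Nef)\setminus\{0\}$. By construction $\wt{\Phi}^*$ sends $\nov^u \mapsto \nov^{\iota_*(u)}$, so the lemma reduces to showing that for every $u \in NE(\Nef)\setminus\{0\}$, the class $\iota_*(u) \in \Z^{P_{amb}}$ lies in $NE_{amb}(\Nef)$ and is nonzero.

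For the membership $\iota_*(u) \in NE_{amb}(\Nef)$, I would apply the adjunction $\iota_*(u) \cdot a = u \cdot \iota^*(a)$ between pushforward and pullback, restricted to any $a \in \Nef_{amb} \subset \R^{P_{amb}}$. By the very definition of $\Nef_{amb}$ as $\Nef \cap \R^{P_{amb}}$, the image $\iota^*(a)$ lies in $\Nef$, and since $u \in NE(\Nef) = \Nef^\vee$ we obtain $\iota_*(u) \cdot a \ge 0$. Hence $\iota_*(u) \in \Nef_{amb}^\vee \cap \Z^{P_{amb}} = NE_{amb}(\Nef)$. This step is purely formal and in fact proves the stronger statement that $\wt{\Phi}^*$ restricts to a ring homomorphism $\Runcomp(\Nef) \to \Runcomp_{amb}(\Nef)$.

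The step where the hypothesis actually enters — and what I would expect to be the only non-formal part — is ruling out $\iota_*(u) = 0$ when $u \neq 0$. By hypothesis, pick some $a_0 \in \Nef_{amb}$ whose image $\iota^*(a_0)$ lies in the interior of $\Nef$. Since $\Nef$ is full-dimensional, its dual cone $NE(\Nef)_\R$ is strongly convex, and a standard perturbation argument shows that any nonzero $u \in NE(\Nef)_\R$ pairs strictly positively with every interior point of $\Nef$: indeed $\iota^*(a_0) \pm \varepsilon c \in \Nef$ for all $c$ and all sufficiently small $\varepsilon > 0$, so $u \cdot \iota^*(a_0) = 0$ would force $u \cdot c = 0$ for all $c$ in a neighbourhood of the origin, hence $u = 0$. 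Applying adjointness once more gives $\iota_*(u) \cdot a_0 = u \cdot \iota^*(a_0) > 0$, so $\iota_*(u) \neq 0$, completing the proof. Beyond this standard convexity input, no serious obstacle is anticipated.
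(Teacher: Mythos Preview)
Your proof is correct and follows essentially the same approach as the paper's. Both arguments reduce to showing that $\iota_*$ kills no nonzero element of $NE(\Nef)$, and both establish this by exploiting the hypothesis to produce an interior point of $\Nef$ lying in the image of $\R^{P_{amb}}$; the paper packages this as showing that the image cone $\mathsf{M}$ of $\Nef$ in the quotient $H^2(X,X\setminus D;\R)/\R^{P_{amb}}$ is everything (so its dual is $\{0\}$), while you pair directly with the chosen point --- these are equivalent. You are also more explicit than the paper about the membership $\iota_*(u) \in NE_{amb}(\Nef)$, which the paper leaves implicit.
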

\begin{proof}
Observe that
\[ NE(\Nef)_\R \cap \ker(\iota_*) = \mathsf{M}^\vee,\]
where $\mathsf{M}$ is the image of $\Nef$ in $H^2(X,X \setminus D;\R)/\R^{P_{amb}}$. 
Let $a$ be a point in the intersection of the interior of $\Nef$ with $\Nef_{amb}$. 
Then a neighbourhood of $a$ is contained in $\Nef$, so a neighbourhood of the image of $a$ is contained in $\mathsf{M}$. 
The image of $a$ is the origin, so $\mathsf{M}$ contains a neighbourhood of the origin. 
But $\mathsf{M}$ is also conical, so it is all of $H^2(X,X \setminus D;\R)/\R^{P_{amb}}$. 
It follows that $\mathsf{M}^{\vee} = \{0\}$.

Now suppose that $\nov^u \in \fmuncomp$. 
We then have $u \in NE(\Nef) \setminus \{0\}$, so it follows from the above that $\iota_*(u) \neq 0$, hence $\wt{\Phi}^*(\nov^u) = \nov^{\iota_* (u)} \in \fmuncomp_{amb}$. 
\end{proof}

In particular, $\wt{\Phi}^*$ extends to the completions, so in this situation we have a map
\begin{equation}
\label{eqn:Phi}
 \Phi^*: \bff_* R(\Nef) \to R_{amb}(\Nef).
\end{equation}

Now we introduce the appropriate notions of positivity in this context. 
We consider the pre-image of $c_1(TY \oplus \cV^\vee) \in H^2(Y;\R)$ in $H^2(Y,Y \setminus D';\R)$, and the pre-image of $\R^{P_{amb}}$ in $H^2(Y,Y \setminus D';\R)$.

\begin{defn}
We say that $\Nef$ is \emph{amb-positive}/\emph{amb-Calabi--Yau}/\emph{amb-semi-positive} if the pre-image of $c_1(TY \oplus \cV^\vee)$ intersects the pre-image of the interior of $\Nef_{amb} \subset \R^{P_{amb}}$/the pre-image of $0 \in \R^{P_{amb}}$/the pre-image of $\Nef_{amb} \subset \R^{P_{amb}}$.
\end{defn}

If $\Nef$ is amb-positive, amb-Calabi--Yau or amb-semi-positive, then $c_1(TY \oplus \cV^\vee)|_{Y \setminus D'}$ is torsion, so the map $\Z \to \G_{amb}$ is injective (compare Lemma \ref{lem:c1tors}). 
We define $R_{amb}(\Nef)_{cl}$ to be the part of $R_{amb}(\Nef)$ whose grading lies in the image of $\Z$.
It is a $\Z$-graded algebra. 
If $\Nef$ is amb-positive/amb-Calabi--Yau/amb-semi-positive, then the $\Z$-graded part of the maximal ideal $\fm \cap R_{amb}(\Nef)_{cl}$ is graded in positive/zero/non-negative degrees.

\begin{defn}
Let $K \subset P_{amb}$. 
We define
\[\Nef_{amb}^K  := \Nef^{i^{-1}(K)} \cap \R^{P_{amb}} \subset \R^{\bar{K}}.\]
\end{defn}

\begin{defn}
\label{defn:ambnice}
We say that $\Nef$ is \emph{amb-nice} if the following holds for all $K \subset P_{amb}$ such that $D'_K \cap X \neq \emptyset$:
\begin{itemize}
\item $\Nef_{amb}^K$ has non-empty interior in $\R^{\bar{K}}$;
\item the image of the map
\[ \Nef_{amb} \hookrightarrow \R^{P_{amb}} \hookrightarrow H^2(Y,Y \setminus D';\R) \to H^2(Y;\R)\]
is equal to the image of the subset $\Nef_{amb}^K \subset \Nef_{amb}$.
\end{itemize}
\end{defn}

\begin{lem}
\label{lem:ambtypea}
Let $\Nef$ be amb-nice and $D'_K \cap X \neq \emptyset$. 
If $u \in \Z^K$ and $v \in NE_{amb}(\Nef)$ have the same image under the boundary map
\[ \partial: H_2(Y,Y \setminus D) \to H_1(Y \setminus D),\] 
then $v-u \in NE_{amb}(\Nef)_{cl}$.
\end{lem}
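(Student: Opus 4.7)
The plan is to imitate the proof of Lemma \ref{lem:typea} essentially verbatim, replacing the pair $(X, X \setminus D)$, the cone $\Nef$ and the grading datum $\G(X \setminus D)$ by their ambient analogues $(Y, Y \setminus D')$, $\Nef_{amb}$ and $\G_{amb}$, and using amb-niceness in place of niceness at the crucial step. (I read the boundary map in the statement as $\partial: H_2(Y, Y \setminus D') \to H_1(Y \setminus D')$.)

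First I would handle the pairing estimate. For any $a \in \Nef_{amb}^K \subset \R^{\bar K}$, we have $u \cdot a = 0$ because $u$ is supported on $K$ while $a$ is supported on $\bar K$. Since $v \in NE_{amb}(\Nef)$ and $\Nef_{amb}^K \subset \Nef_{amb}$, it follows that $(v - u) \cdot a = v \cdot a \ge 0$ for every $a \in \Nef_{amb}^K$.

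Next I would upgrade this to all of $\Nef_{amb}$. By hypothesis $\partial(v-u) = 0$ in $H_1(Y \setminus D')$, so the long exact sequence of the pair $(Y, Y \setminus D')$ shows that $v - u$ lies in the image of $H_2(Y) \to H_2(Y, Y \setminus D')$. Consequently the pairing $(v-u)\cdot a$ depends only on the image of $a$ under $H^2(Y, Y \setminus D';\R) \to H^2(Y;\R)$. By the definition of amb-niceness (Definition \ref{defn:ambnice}), the image of $\Nef_{amb}$ in $H^2(Y;\R)$ coincides with the image of $\Nef_{amb}^K$, so the inequality from the previous paragraph propagates to give $(v-u)\cdot a \ge 0$ for every $a \in \Nef_{amb}$. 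Hence $v-u \in NE_{amb}(\Nef)$.

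Finally I would show the class $v-u$ is ``closed'', i.e.\ lies in $NE_{amb}(\Nef)_{cl}$. This amounts to checking that its grading in $\G_{amb}$ lies in the image of $\Z \to \G_{amb}$, and follows from $\partial(v-u) = 0$ in $H_1(Y \setminus D')$ by the ambient analogue of Remark \ref{rmk:gradcl}: the map $\G_{amb} \to H_1(Y \setminus D')$ obtained from $H_1(\EuR(TY \oplus \EuV^\vee)) \to H_1(Y \setminus D')$ has kernel equal to the image of $\Z$, and the image of $[r^{v-u}] \in \G_{amb}$ in $H_1(Y \setminus D')$ is $-\partial(v-u) = 0$. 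The only genuinely new ingredient beyond translation is this grading statement, so that is the main (albeit minor) obstacle; everything else is a routine reprise of the argument for Lemma \ref{lem:typea}.
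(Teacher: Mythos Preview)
Your proposal is correct and is exactly the translation the paper intends: the paper's own proof simply reads ``Follows that of Lemma \ref{lem:typea},'' and you have carried out precisely that translation to the ambient setting, including the correct reading of the boundary map as $\partial: H_2(Y,Y\setminus D') \to H_1(Y\setminus D')$ and the use of amb-niceness (Definition \ref{defn:ambnice}) in place of niceness. The extra detail you supply for the ``closed'' step (the ambient analogue of Remark \ref{rmk:gradcl}) is fine and does not deviate from the paper's approach.
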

\begin{proof}
Follows that of Lemma \ref{lem:typea}. 
\end{proof}

\begin{lem}
\label{lem:ambextremal}
Suppose that $\Nef$ is amb-nice, and $D'_K \cap X \neq \emptyset$. 
Then $\R^K$ intersects $NE_{amb}(\Nef)_\R$ in an extremal face.
\end{lem}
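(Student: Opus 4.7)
The plan is to mimic the proof of Lemma \ref{lem:extremal} verbatim, replacing $\Nef$, $NE(\Nef)_\R$ and $\Nef^K$ by their ambient counterparts $\Nef_{amb}$, $NE_{amb}(\Nef)_\R$ and $\Nef_{amb}^K$, and using the amb-niceness hypothesis in place of niceness at the one step where it matters.

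First I would fix $u \in \R^K \cap NE_{amb}(\Nef)_\R$, where here $\R^K \subset \R^{P_{amb}}$ via the inclusion indexed by $K \subset P_{amb}$. Following \cite{Cox2011} I would form the convex cone $\mathsf{C} := \mathsf{Cone}\bigl(NE_{amb}(\Nef)_\R - u\bigr) \subset \R^{P_{amb}}$, so that $\R^K \cap NE_{amb}(\Nef)_\R$ is an extremal face precisely when $\mathsf{C}$ is strongly convex. A vector $v$ lies in $\mathsf{C}$ iff $(u + \delta v)\cdot a \ge 0$ for all $a \in \Nef_{amb}$ and all sufficiently small $\delta > 0$. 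Since $u \in \R^K$, we have $u \cdot a = 0$ whenever $a \in \Nef_{amb}^K \subset \R^{\bar K}$, so this condition forces $v \cdot a \ge 0$ for all $a \in \Nef_{amb}^K$; in other words
\[
\mathsf{C} \;\subset\; \R^K \times \bigl(\text{convex cone in $\R^{\bar K}$ dual to $\Nef_{amb}^K$}\bigr).
\]

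The amb-niceness hypothesis, together with the assumption $D'_K \cap X \neq \emptyset$, guarantees that $\Nef_{amb}^K$ has non-empty interior in $\R^{\bar K}$, i.e.\ is full-dimensional. Its dual cone is therefore strongly convex, and the right-hand side of the displayed inclusion is then a strongly convex cone in $\R^{P_{amb}}$. Hence $\mathsf{C}$ is strongly convex, which is the statement that $\R^K \cap NE_{amb}(\Nef)_\R$ is an extremal face of $NE_{amb}(\Nef)_\R$.

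There is no genuine obstacle here — the argument is essentially identical to that of Lemma \ref{lem:extremal}. The only point that warrants a moment's care is keeping the ambient and non-ambient indexing sets straight: one must make sure that the $K \subset P_{amb}$ in the hypothesis plays the role of the $K \subset P$ in Lemma \ref{lem:extremal}, and that full-dimensionality of $\Nef_{amb}^K$ in $\R^{\bar K}$ (with $\bar K = P_{amb} \setminus K$) comes from the first clause of Definition \ref{defn:ambnice} rather than the corresponding clause of Definition \ref{defn:xdenice}.
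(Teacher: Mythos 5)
Your proposal takes exactly the route the paper does: the published proof of Lemma \ref{lem:ambextremal} is literally ``Follows that of Lemma \ref{lem:extremal}'', and you have correctly identified that the only substantive input is the first clause of Definition \ref{defn:ambnice}, which makes $\Nef_{amb}^K$ full-dimensional in $\R^{\bar K}$ and hence its dual cone strongly convex. Your bookkeeping of the ambient index sets is also right.

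One correction to your last two sentences, though. The cone $\R^K \times \bigl(\Nef_{amb}^K\bigr)^\vee$ is \emph{not} strongly convex --- it contains the linear subspace $\R^K \times \{0\}$ --- and $\mathsf{C} = \mathsf{Cone}\bigl(NE_{amb}(\Nef)_\R - u\bigr)$ can never be strongly convex for $u \neq 0$, since it contains both $u$ and $-u$. Strong convexity of $\mathsf{C}$ is therefore not the statement you want. What the inclusion actually gives is that the lineality space $\mathsf{C} \cap (-\mathsf{C})$ is contained in $\R^K \times \{0\}$, because the second factor is strongly convex. Extremality then follows: if $x, y \in NE_{amb}(\Nef)_\R$ and $u := x + y \in \R^K$, then $y = (y+u) - u$ lies in $NE_{amb}(\Nef)_\R - u \subset \mathsf{C}$ and $-y = x - u$ lies in $\mathsf{C}$ as well, so $y \in \mathsf{C} \cap (-\mathsf{C}) \subset \R^K$, and likewise $x \in \R^K$. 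This is presumably also what the paper intends in Lemma \ref{lem:extremal} by ``the cone on the final line is strongly convex'', where ``the cone'' refers only to the dual-cone factor in $\R^{\bar K}$; with that reading your argument is the paper's, and the fix above is all that is missing.
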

\begin{proof}
Follows that of Lemma \ref{lem:extremal}.
\end{proof}

\begin{lem}
\label{lem:Rambnice}
If $\Nef$ is amb-nice, then $R_{amb}(\Nef)$ is nice in the sense of Definition \ref{defn:Rnice}. 
\end{lem}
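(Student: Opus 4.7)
The plan is to follow the proof of Lemma \ref{lem:Rnice} step by step, substituting ambient objects for their non-ambient counterparts and invoking Lemma \ref{lem:ambtypea} and Lemma \ref{lem:ambextremal} in place of Lemma \ref{lem:typea} and Lemma \ref{lem:extremal}. Concretely, the generators of $R_{amb}(\Nef)$ are the monomials $\nov_p$ indexed by $p \in P_{amb}$, and we must verify both defining conditions of Definition \ref{defn:Rnice} for them.

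First I would check that $\nov_p \in \fmuncomp_{amb}$ and that it generates the graded piece $(\fmuncomp_{amb})_{f_{amb}(u_p)}$ over $\Runcomp_{amb,0}$. Membership is immediate: each basis vector $u_p$ is of the form $\iota_*(u_q)$ for some $q \in i^{-1}(p)$, and hence pairs non-negatively with every element of $\Nef_{amb} \subset \Nef$ by Remark \ref{rmk:pospow}, so $u_p \in NE_{amb}(\Nef) \setminus \{0\}$. For generation, suppose $\nov^v \in \fmuncomp_{amb}$ has the same $\G_{amb}$-grading as $\nov_p$. Since the grading map $\Z^{P_{amb}} \to \G_{amb}$ factors through $H_1(Y \setminus D')$ (by the obvious ambient analogue of Remark \ref{rmk:gradcl}), $v$ and $u_p$ have equal image under $\partial: H_2(Y, Y \setminus D') \to H_1(Y \setminus D')$. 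Because $p \in P_{amb}$ means $D'_{\{p\}} \cap X \neq \emptyset$, Lemma \ref{lem:ambtypea} applied with $K = \{p\}$ gives $v - u_p \in NE_{amb}(\Nef)_{cl}$. Since $v - u_p$ has $\G_{amb}$-degree zero, we conclude $\nov^{v-u_p} \in \Runcomp_{amb,0}$, so $\nov^v$ lies in the $\Runcomp_{amb,0}$-span of $\nov_p$.

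Second, I would verify $\nov_p \notin \fm^2$ by contradiction. If $u_p = v + w$ with $v, w \in NE_{amb}(\Nef) \setminus \{0\}$, then Lemma \ref{lem:ambextremal} (again using $D'_{\{p\}} \cap X \neq \emptyset$) shows that $\R^{\{p\}} \cap NE_{amb}(\Nef)_\R$ is an extremal face of $NE_{amb}(\Nef)_\R$; since $u_p$ lies in this face, both $v$ and $w$ are forced to lie on the ray $\R \cdot u_p$. Strong convexity of $NE_{amb}(\Nef)_\R$ — which holds because $\Nef_{amb}$ is full-dimensional under amb-niceness — implies that this ray meets $NE_{amb}(\Nef)$ only in $\Z_{\ge 0} \cdot u_p$. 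Thus $v$ and $w$ are both positive multiples of $u_p$, contradicting $v + w = u_p$.

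The only real obstacle is bookkeeping: confirming that the $\G_{amb}$-grading on $\Rpuncomp_{amb}$ factors through $H_1(Y \setminus D')$ exactly as needed to reduce the hypothesis of Lemma \ref{lem:ambtypea} to equality of $\G_{amb}$-gradings, and that the image $\Z^{P_{amb}} \subset H_2(Y, Y \setminus D')$ interacts correctly with this boundary map. These verifications are routine consequences of the definitions in \S \ref{subsec:Ramb} and require no new ideas beyond those already present in the proof of Lemma \ref{lem:Rnice}.
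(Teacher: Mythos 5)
Your proposal is correct and is precisely the adaptation the paper intends: its proof of Lemma \ref{lem:Rambnice} is simply the one-line statement that it follows that of Lemma \ref{lem:Rnice}, and you have carried out that adaptation faithfully, substituting Lemmas \ref{lem:ambtypea} and \ref{lem:ambextremal} for Lemmas \ref{lem:typea} and \ref{lem:extremal} and checking the needed hypotheses (e.g.\ $D'_{\{p\}}\cap X\neq\emptyset$ for $p\in P_{amb}$, and strong convexity of $NE_{amb}(\Nef)_\R$ from full-dimensionality of $\Nef_{amb}$). No gaps.
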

\begin{proof}
Follows that of Lemma \ref{lem:Rnice}.
\end{proof}

\begin{lem}
\label{lem:ambnewtypec} 
Suppose that $\Nef$ is amb-nice, and $D'_K \cap X \neq \emptyset$.
If $u \in \Z^K$ can be written as $u = v+w$ where $v \in NE_{amb}(\Nef)$ and $w \in NE_{amb}(\Nef)_{cl}$, then $w = 0$.
\end{lem}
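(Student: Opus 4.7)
The plan is to follow the proof of Lemma \ref{lem:newtypec} essentially verbatim, but working in the ambient setting $(Y,D')$ instead of $(X,D)$, with $P$ replaced by $P_{amb}$ (while remembering that $\Z^{P_{amb}}$ sits inside $H_2(Y, Y \setminus D')$, so that coordinates outside $P_{amb}$ are automatically zero).

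First I would apply Lemma \ref{lem:ambextremal}: since $u \in \Z^K \cap NE_{amb}(\Nef)_\R$ lies on an extremal face of $NE_{amb}(\Nef)_\R$, and $u = v + w$ with $v, w \in NE_{amb}(\Nef)$, both $v$ and $w$ must lie in $\R^K \cap NE_{amb}(\Nef)_\R$; in particular, $w \in \Z^K$. Thus $w \cdot D'_p = 0$ for all $p \in \bar{K} := P_{amb} \setminus K$, and also $w \cdot D'_p = 0$ for all $p \in P' \setminus P_{amb}$ (since $w \in \Z^{P_{amb}}$ has vanishing coordinates outside $P_{amb}$).

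Next I would upgrade the condition $w \in NE_{amb}(\Nef)_{cl}$: this means the grading of $\nov^w$ in $\G_{amb}$ lies in the image of $\Z$. Exactly as in Remark \ref{rmk:gradcl}, the exact sequence $\Z \to H_1(\EuR(TY \oplus \EuV^\vee)) \to H_1(Y \setminus D') \to 0$ shows this is equivalent to $\partial w = 0$ in $H_1(Y \setminus D')$. Then by the long exact sequence of the pair $(Y, Y \setminus D')$, $w$ is the image of some $\tilde{w} \in H_2(Y)$, and $\tilde{w} \cdot D'_p = w \cdot D'_p$ for every $p \in P'$. Combined with the previous step, $\tilde{w} \cdot D'_p = 0$ for all $p \in \bar{K} \cup (P' \setminus P_{amb})$.

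The key step is then to show that $\{PD(D'_p)\}_{p \in \bar{K} \cup (P' \setminus P_{amb})}$ span the image of $H^2(Y, Y \setminus D';\R)$ in $H^2(Y;\R)$. Since $\Nef_{amb}$ has non-empty interior in $\R^{P_{amb}}$, it spans $\R^{P_{amb}}$, so the image of $\R^{P_{amb}}$ in $H^2(Y;\R)$ equals the $\R$-span of the image of $\Nef_{amb}$. By amb-niceness, the latter equals the image of $\Nef_{amb}^K \subset \R^{\bar{K}}$, hence is contained in the image of $\R^{\bar{K}}$. Together with the trivial fact that $\{PD(D'_p)\}_{p \in P' \setminus P_{amb}}$ take care of the remaining coordinates, this shows $\{PD(D'_p)\}_{p \in \bar{K} \cup (P' \setminus P_{amb})}$ do indeed span the image of $H^2(Y, Y \setminus D';\R)$ in $H^2(Y;\R)$.

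Since intersection with $\tilde{w} \in H_2(Y)$ factors through $H^2(Y;\R)$, the previous paragraph forces $\tilde{w} \cdot D'_p = 0$ for every $p \in P'$, and in particular $w_p = 0$ for every $p \in P_{amb}$. Hence $w = 0$. The only step requiring genuine care is the spanning argument, which is the direct analogue of the spanning claim in the proof of Lemma \ref{lem:newtypec} and is the sole place where amb-niceness is used.
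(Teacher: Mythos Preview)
Your proposal is correct and follows exactly the approach the paper intends: the paper's proof simply says ``Follows that of Lemma~\ref{lem:newtypec},'' and you have faithfully carried out that translation to the ambient setting, correctly handling the extra bookkeeping for the coordinates in $P' \setminus P_{amb}$.
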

\begin{proof}
Follows that of Lemma \ref{lem:newtypec}.
\end{proof}

\begin{lem}
\label{lem:ambniceAAmp}
Let $a \in \AAmp(Y,D')$. 
Then there exists an amb-nice cone $\Nef$ containing $\iota^*a$ in its interior. 
In fact, $\Nef$ can be chosen to be rational polyhedral and contained in $sAmp(X,D)$. 
Furthermore, $\Nef_{amb}$ intersects the interior of $\Nef$.
\end{lem}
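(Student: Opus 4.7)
The plan is to adapt the construction in Lemma~\ref{lem:niceAAmp}, carried out on the ambient pair $(Y,D')$ rather than on $(X,D)$, and then transport the resulting cone through $\iota^*$.

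Since $\AAmp(Y,D')$ is open, I choose finitely many rational classes $\tilde a_q \in \AAmp(Y,D') \cap \Q^{P'}$ such that $a$ lies in the interior of their convex hull; consequently $\iota^*a$ lies in the relative interior of the convex hull of the $\iota^*\tilde a_q$ inside $\iota^*(\R^{P_{amb}}) \subset H^2(X,X\setminus D;\R)$. For each $q$ and each $K \subset P_{amb}$ with $D'_K \cap X \neq \emptyset$, note that $D'_K \neq \emptyset$, so the defining property of $\AAmp(Y,D')$ yields rational lifts of $\pi_Y(\tilde a_q)$ to $Amp(Y, D'^K) \cap \Q^{P' \setminus K}$. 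I pick a finite family of such lifts $\tilde b_i(K,q)$ whose $\pi_{amb}$-projections span a full-dimensional cone in $\R^{P_{amb} \setminus K}$; crucially, I require additionally that $(\tilde b_i(K,q))_p = (\tilde a_q)_p$ for each $p \in P' \setminus P_{amb}$, an extra affine constraint on the lifts. Solvability of this coordinate-matching constraint within the open cone $Amp(Y, D'^K)$ is the delicate combinatorial point, to be handled by a perturbation argument that uses openness of $\AAmp(Y,D')$ to replace $\tilde a_q$ first by a nearby rational class for which the relevant affine system becomes non-degenerate.

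Let $\widetilde{\Nef} \subset \R^{P'}$ denote the rational polyhedral cone generated by the $\tilde a_q$ together with all the $\tilde b_i(K,q)$; by construction $\widetilde{\Nef} \subset sAmp(Y,D')$. I define $\Nef \subset H^2(X,X\setminus D;\R)$ to be the cone generated by $\iota^*(\widetilde{\Nef})$ together with finitely many rational classes in $Amp(X,D)$ transverse to $\iota^*(\R^{P_{amb}})$, chosen close enough to $\iota^*a$ that $\iota^*a$ lies in the interior of $\Nef$. Such transverse classes exist because $\iota^*a$ is itself ample (ampleness pulls back under closed embeddings) and $Amp(X,D)$ is open. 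By the transversality, $\Nef \cap \iota^*(\R^{P_{amb}}) = \iota^*(\widetilde{\Nef})$, identifying $\Nef_{amb}$ with $\pi_{amb}(\widetilde{\Nef})$; rational polyhedrality and containment in $sAmp(X,D)$ are clear, and $\Nef_{amb}$ meets the interior of $\Nef$ because $\iota^*a$ does.

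The remaining step is to verify amb-niceness. For $K \subset P_{amb}$ with $D'_K \cap X \neq \emptyset$, full-dimensionality of $\Nef_{amb}^K = \pi_{amb}(\widetilde{\Nef}^K) \subset \R^{P_{amb} \setminus K}$ follows because $\widetilde{\Nef}^K$ contains the full-dimensional cone spanned by the $\tilde b_i(K,q)$ and the projection $\pi_{amb}: \R^{P' \setminus K} \to \R^{P_{amb} \setminus K}$ is surjective. The image equality $\pi_Y(\Nef_{amb}) = \pi_Y(\Nef_{amb}^K)$ in $H^2(Y;\R)$ is a direct consequence of the coordinate-matching condition: from $\pi_Y(\tilde b_i(K,q)) = \pi_Y(\tilde a_q)$ and $(\tilde b_i(K,q))_p = (\tilde a_q)_p$ for $p \in P' \setminus P_{amb}$ one computes $\pi_Y(\pi_{amb}(\tilde b_i(K,q))) = \pi_Y(\pi_{amb}(\tilde a_q))$, so the images of $\Nef_{amb}$ and of $\Nef_{amb}^K$ in $H^2(Y;\R)$ are generated by the same classes. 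The principal obstacle of the whole argument is thus the solvability of the coordinate-matching system within $Amp(Y, D'^K)$; once that is in place, everything else unfolds in parallel with Lemma~\ref{lem:niceAAmp}.
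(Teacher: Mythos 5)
Your argument has a genuine gap at the step you yourself flag as its crux. The entire verification of the image-equality condition in amb-niceness rests on the existence of lifts $\tilde b_i(K,q) \in Amp(Y,(D')^K)$ satisfying both $\pi_Y(\tilde b_i(K,q)) = \pi_Y(\tilde a_q)$ \emph{and} the coordinate-matching condition $(\tilde b_i(K,q))_p = (\tilde a_q)_p$ for $p \in P'\setminus P_{amb}$. The definition of $\AAmp(Y,D')$ only guarantees that the affine subspace $\pi_Y^{-1}(\pi_Y(\tilde a_q)) \cap \R^{P'\setminus K}$ meets the open cone $Amp(Y,(D')^K)$; you are intersecting with a further affine constraint (fixing the coordinates in $P'\setminus P_{amb}$), and there is no reason the resulting smaller affine subspace still meets that open cone. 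You defer this to "a perturbation argument that uses openness of $\AAmp(Y,D')$," but openness of $\AAmp(Y,D')$ perturbs $\tilde a_q$, not the solvability of this over-determined system, and no argument is actually given. A proof cannot rest on an unproved claim that it identifies as "the principal obstacle of the whole argument." A secondary weakness: your claim that $\Nef \cap \iota^*(\R^{P_{amb}}) = \iota^*(\widetilde{\Nef})$ follows "by the transversality" of the added generators is not justified — adding generators off a subspace can enlarge the intersection of the generated cone with that subspace; one needs the added generators' convex hull to meet $\iota^*(\R^{P_{amb}})$ only inside the interior of $\iota^*(\widetilde{\Nef})$.

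The paper's route avoids your obstacle entirely: it applies Lemma \ref{lem:niceAAmp} to the ambient pair $(Y,D')$ as a black box to obtain a nice, rational polyhedral cone $\Nef' \subset sAmp(Y,D')$ containing $a$ in its interior, sets $\Nef_{amb} := \iota^*\Nef'$, and then enlarges to a full-dimensional rational polyhedral $\Nef \subset sAmp(X,D)$ by adjoining rational classes of $Amp(X,D)$ whose convex hull meets $\R^{P_{amb}}$ only inside the interior of $\iota^*\Nef'$ (so that $\Nef \cap \R^{P_{amb}} = \iota^*\Nef'$ genuinely holds); amb-niceness is then deduced from niceness of $\Nef'$. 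You should either follow that reduction, or, if you wish to keep your construction, actually prove the solvability of the coordinate-matching system rather than asserting it.
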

\begin{proof}
Observe that there exists a nice cone $\Nef' \subset Nef(Y,D')$ containing $a$ in its interior, by Lemma \ref{lem:niceAAmp}. 
Furthermore we may assume $\Nef'$ to be rational polyhedral and contained in $sAmp(Y,D')$. 
It follows that $\iota^*\Nef' \subset \R^{P_{amb}} \subset H^2(X,X \setminus D;\R)$ is rational polyhedral and contained in $sAmp(X,D)$. 
We now choose a finite set of rational elements $a_q \in Amp(X,D)$ whose convex hull has non-empty interior, which intersects $\R^{P_{amb}}$ in a subset of the interior of $\iota^*\Nef'$. 
By taking the cone over $\iota^*\Nef'$ together with these elements, we obtain a rational polyhedral cone $\Nef \subset sAmp(X,D)$ of full dimension, such that $\Nef_{amb} = \iota^*\Nef'$. 
It follows from the fact that $\Nef'$ is nice, that $\Nef$ is amb-nice.

Note that $\Nef_{amb}$ intersects the interior of $\Nef$, because $\iota^* a$ lies in both.
\end{proof}

\begin{defn}
We say that $(X,D) \subset (Y,D')$ is amb-nice if $(Y,D')$ is nice.
\end{defn}

\section{The affine Fukaya category and the map $\co$}\label{sec:affco}

Let $(X,D)$ be a K\"{a}hler $\snc$ pair.
In this section we sketch the construction of the affine Fukaya category $\fuk(X \setminus D)$, following \cite{Seidel:FCPLT}. 
We then define a $\G$-graded $\Bbbk$-vector space $\sh^\bullet(X,D)$, which should be thought of as a substitute for a certain subspace of the symplectic cohomology $SH^\bullet(X \setminus D)$ (similar ideas and constructions appear in \cite{Ganatra2016a}). 
Under the assumption that $(X,D)$ is nice (or amb-nice), we construct a map of $\G$-graded $\Bbbk$-vector spaces
\[ \co: \sh^\bullet(X,D) \to \HH^\bullet(\fuk(X \setminus D)),\]
which should be thought of as a substitute for the closed--open string map
\begin{equation}\EuC \EuO: SH^\bullet(X \setminus D) \to \HH^\bullet(\fuk(X \setminus D))\end{equation}
(see \cite{Seidel2002,Ganatra2013}). 

We prove that $\fuk(X \setminus D)$ and $\co$ are invariants of the $\snc$ pair $(X,D)$, i.e., that they do not depend on the choices of relative K\"{a}hler form and other data that are used in their construction (up to quasi-equivalence, in the case of the affine Fukaya category). 
Of course this is expected from the analogy with symplectic cohomology.

We then prove some properties of $\co$: we show that it respects algebra structures in a very restrictive sense (by analogy with \cite[Proposition 5.3]{Ganatra2013}), and explain how it behaves with respect to branched covers of $\snc$ pairs.

\subsection{Almost-complex structures}

Let $(X,D,\omega,c)$ be a relative K\"{a}hler manifold, equipped with a system of divisors $E \subset  \{h > c\}$ (note: by convention, $\{h > c\}$ includes $D = \{h = +\infty\}$). 

\begin{defn}
We say that an almost-complex structure $J$ on $X$ is \emph{adapted} to $E$ if 
\begin{itemize}
\item It is $\omega$-tame.
\item It makes each component $E_q$ of $E$ into an almost-complex submanifold.
\item $\alpha = -dh \circ J$ in a neighbourhood of $\{h=c\}$. 
\end{itemize}
\end{defn}

We denote the integrable complex structure on $X$ by $J_0$. 
It is clearly adapted to any system of divisors.

\begin{lem}
\label{lem:posint}
Suppose that $J$ is adapted to $E$.
Let $(\Sigma,\partial \Sigma)$ be a compact Riemann surface with boundary, and
\begin{equation} u: (\Sigma,\partial \Sigma) \to (X,\{h \le c\}) \end{equation}
a $J$-holomorphic map.
Then $[u] \in NE(\Nef(E))$.
\end{lem}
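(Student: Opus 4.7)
The goal is to show $[u] \cdot PD(E_q) \ge 0$ for every component $E_q$ of $E$, since these classes span $\Nef(E)$ whose dual cone contains $NE(\Nef(E))$. Because $\partial u$ lands in $\{h \le c\}$, which is disjoint from $E_q \subset \{h > c\}$, this pairing coincides with the geometric intersection number $u \cdot E_q$ (interpreted via a small perturbation of $E_q$ in the degenerate case $u(\Sigma) \subset E_q$). When $u(\Sigma) \not\subset E_q$ the conclusion is immediate from positivity of intersections: since $u$ is $J$-holomorphic and $E_q$ is a $J$-complex submanifold, every point of $u(\Sigma) \cap E_q$ carries a positive local intersection multiplicity, whence $u \cdot E_q \ge 0$.

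The remaining case $u(\Sigma) \subset E_q$ is handled using the redundancy built into Definition \ref{defn:sysofdiv}. By assumption there are $n+1$ distinct components $E_{r_0}, \ldots, E_{r_n}$ of $E$ linearly equivalent to $E_q$; since $E$ is $\snc$ and $\dim_\C X = n$, the common intersection $\bigcap_j E_{r_j}$ has complex codimension $\ge n+1$ and must be empty. Consequently $u(\Sigma)$ cannot lie inside every $E_{r_j}$, and for some index $j$ we have $u(\Sigma) \not\subset E_{r_j}$; positivity of intersections applied to this component then gives $u \cdot E_{r_j} \ge 0$.

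It remains to transfer this non-negativity back to $E_q$, i.e., to verify $u \cdot E_q = u \cdot E_{r_j}$. For a closed cycle this would follow at once from linear equivalence in $X$; for the relative cycle $u$ the content is the stronger assertion that $PD(E_q) = PD(E_{r_j})$ in $H^2(X, X \setminus D)$, which is equivalent to $E_q$ and $E_{r_j}$ being homologous in the neighborhood $\{h > c\}$ of $D$. This is the main technical point of the proof. It holds for the systems of divisors produced by Lemma \ref{lem:sysexist}: there the linearly equivalent components are sufficiently close generic smoothings of a common section of the same line bundle $\EuL_q$, and an interpolating linear family of sections provides a chain in $\{h > c\}$ cobounding $E_q - E_{r_j}$. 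With this identification of classes, $u \cdot E_q = u \cdot E_{r_j} \ge 0$, completing the proof.
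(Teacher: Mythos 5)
Your overall strategy matches the paper's: positivity of intersection handles the case $u(\Sigma) \not\subset E_q$, and the redundancy of $n+1$ linearly equivalent components handles the degenerate case (your observation that $n+1$ components of a $\snc$ divisor in an $n$-fold have empty common intersection is a clean packaging of the paper's iteration). However, your transfer step contains a genuine gap. You correctly identify that for a \emph{relative} cycle $[u] \in H_2(X, X\setminus D)$ one would need $PD(E_q) = PD(E_{r_j})$ in $H^2(X,X\setminus D)$, i.e., that the two components are homologous inside the neighbourhood $U$ of $D$. But this is \emph{not} part of Definition \ref{defn:sysofdiv}, which only demands linear equivalence in $X$, and it is false in general: two linearly equivalent divisors lying in $U$ can represent different classes in $H_{2n-2}(U)$, hence have different images under $PD: H_{2n-2}(U) \to H^2(X,X\setminus D)$ (already the distinct components $D_p$ of $D$ itself could be linearly equivalent in $X$ while their classes $PD(D_p)$ are independent in $H^2(X,X\setminus D)$). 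Your appeal to the particular construction in Lemma \ref{lem:sysexist} therefore proves the lemma only for those systems, not for an arbitrary system of divisors with an adapted $J$, which is what is claimed.

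The missing observation that repairs this is elementary: in the degenerate case $u(\Sigma) \subset E_q \subset \{h > c\}$, while $u(\partial\Sigma) \subset \{h \le c\}$, so $\partial\Sigma = \emptyset$ and $\Sigma$ is closed. Then $[u]$ lies in the image of $H_2(X) \to H_2(X,X\setminus D)$, so the pairing $\langle PD(E_q),[u]\rangle$ only depends on the image of $PD(E_q)$ in $H^2(X)$ — and there linear equivalence does give $u \cdot E_q = u \cdot E_{r_j}$, exactly as you note "would follow at once for a closed cycle". This is how the paper argues, and it removes the need for any statement about homology within $U$.
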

\begin{proof}
We must show that $u \cdot E_q \ge 0$ for all $q$. 
If $u$ is not contained in $E_q$, then $u \cdot E_q \ge 0$ by positivity of intersection (see \cite[Exercise 2.6.1]{mcduffsalamon}, \cite[Proposition 7.1]{Cieliebak2007}). 
If $u$ is contained in $E_q$, $\Sigma$ must have empty boundary, so $u$ defines a class in $H_2(X)$. 
Because $E$ is a system of divisors, there exists another component $E_r$ of $E$, transverse to $E_q$ and cohomologous to it in $X$. 
In particular, $[u] \cdot E_q = [u] \cdot E_r$; so if $u$ is not contained in $E_r$ then we are done, by the previous argument. 
If $u$ is contained in $E_q \cap E_r$, we can find another $E_s$ transverse to $E_q \cap E_r$, and so on; we ultimately reach the conclusion that $u$ must be contained in a $0$-dimensional set, hence represent the zero homology class, which indeed has vanishing (in particular, non-negative) intersection number with all $E_q$.
\end{proof}

\begin{lem}
\label{lem:maxprin}
Suppose that $J$ is adapted to $E$. 
Let $(\Sigma,\partial \Sigma)$ be a compact Riemann surface with boundary, and
\begin{equation} u:(\Sigma,\partial \Sigma) \to (X,\{h \le c\}) \end{equation}
a $J$-holomorphic map, such that $[u] = 0$ in $H_2(X,X \setminus D)$. 
Then $u$ is contained in $\{h \le c\}$.
\end{lem}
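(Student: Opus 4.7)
The proof is a relative maximum principle that combines positivity of intersection (as in Lemma 4.1) with a Stokes-theorem calculation at the boundary. I will split it into three steps: (i) show $u$ misses $D$ entirely, so that $h\circ u$ is a well-defined bounded function on $\Sigma$; (ii) assuming for contradiction that $u$ exits $\{h\le c\}$, isolate the offending region; (iii) derive a contradiction from a Stokes-plus-tameness computation at its boundary.

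\textbf{Step 1 (Avoiding $D$ and $E$).} The hypothesis $[u]=0\in H_2(X,X\setminus D)\simeq \Z^P$ means $u\cdot D_p=0$ for every $p\in P$. Because the classes $PD(E_q)$ span $H^2(X,X\setminus D;\Q)$ by the definition of a system of divisors, we also have $u\cdot E_q=0$ for every $q\in Q$. By the positivity-of-intersection argument of Lemma \ref{lem:posint} --- where the existence of at least $n+1$ linearly equivalent components of each $E_q$ rules out the possibility of $u$ being contained in some $E_q$, and the same trick, applied to any stratum in which $u$ might be contained, reduces to a zero-dimensional image --- we conclude that $u$ meets neither $D$ nor $E$. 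In particular, $u(\Sigma)\subset X\setminus D$ and $h\circ u\colon\Sigma\to\R$ is a well-defined smooth bounded function.

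\textbf{Step 2 (Isolating the bad region).} Suppose, for contradiction, that $h\circ u$ takes some value strictly greater than $c$. After replacing $c$ by a slightly larger regular value of $h\circ u$ (which stays below the critical values of $h$ and inside the collar on which $\alpha=-dh\circ J$ holds), we may assume $c$ is a regular value of $h\circ u$. Pick any connected component $W_0$ of the non-empty open set $u^{-1}(\{h>c\})\subset\Sigma$. Since $\partial\Sigma\subset u^{-1}(\{h\le c\})$, the set $W_0$ lies in the interior of $\Sigma$ and has smooth boundary $\partial W_0\subset u^{-1}(\{h=c\})$.

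\textbf{Step 3 (Stokes and contradiction).} On $X\setminus D$ we have $\omega=-dd^ch$ and $\alpha=-d^ch$, so Stokes' theorem applied on the compact surface $\overline{W_0}$ (whose image lies in $X\setminus D$ by Step 1) gives
\[ \int_{\overline{W_0}} u^*\omega \;=\; -\int_{\overline{W_0}} d(u^*d^ch) \;=\; \int_{\partial \overline{W_0}} u^*\alpha. \]
The left-hand side is $\ge 0$, with equality iff $u|_{\overline{W_0}}$ is constant, since $J$ is $\omega$-tame and $u$ is $J$-holomorphic. For the right-hand side, let $z\in\partial W_0$, let $n\in T_z\Sigma$ be the outward unit normal to $W_0$, and let $\xi=jn$ be the induced positively-oriented tangent vector along $\partial W_0$. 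Because $h\circ u=c$ on $\partial W_0$ and $h\circ u>c$ on $W_0$, the outward normal derivative satisfies $d(h\circ u)(n)\le 0$. On the other hand, in the collar where $\alpha=-dh\circ J$, the $J$-holomorphicity identity $du\circ j=J\circ du$ yields
\[ (u^*\alpha)(\xi)=\alpha(du(jn))=\alpha(J\,du(n))=-dh(J^2 du(n))=d(h\circ u)(n)\le 0. \]
Hence $\int_{\partial\overline{W_0}}u^*\alpha\le 0$, and the two inequalities force $\int_{\overline{W_0}}u^*\omega=0$, so $u|_{\overline{W_0}}$ is constant. But then $h\circ u$ is simultaneously equal to $c$ on $\partial W_0$ and strictly greater than $c$ on the non-empty open set $W_0$, a contradiction.

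\textbf{Main obstacle.} The only delicate point is the sign computation in Step 3, which is a standard but orientation-sensitive instance of Gromov's maximum principle for $J$-holomorphic curves with contact-type boundary. A secondary subtlety is ensuring in Step 1 that $u$ cannot be entirely contained in a stratum $D_K$ of $D$; this is handled by invoking the system-of-divisors argument of Lemma \ref{lem:posint} within the stratum itself, reducing each such case to a zero-dimensional image and hence contradicting non-triviality.
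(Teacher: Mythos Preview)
Your Steps 2 and 3 are exactly the paper's argument (the ``integrated maximum principle'' of Abouzaid--Seidel cited there), and your pointwise sign computation $(u^*\alpha)(\xi)=d(h\circ u)(n)\le 0$ is correct. The issue is Step 1, which is both unnecessary and not fully justified as written.

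The gap in Step 1: you invoke positivity of intersection to conclude that $u$ misses $D$, but positivity of intersection requires each $D_p$ to be $J$-holomorphic. The hypothesis ``$J$ adapted to $E$'' only makes the components $E_q$ into almost-complex submanifolds; it says nothing about the $D_p$. So from $u\cdot D_p=0$ you cannot conclude that $u$ avoids $D_p$ set-theoretically. (In later applications the almost-complex structures lie in the class $\EuJ$ of Definition~\ref{defn:perts}, which do agree with the integrable $J_0$ on $\{h\ge c\}$, and then your argument would go through; but the lemma is stated for general adapted $J$.) Showing that $u$ misses $E$ does not help, since $E$ and $D$ are different divisors.

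Why Step 1 is unnecessary: the paper bypasses it by using the \emph{relative} K\"ahler class rather than Stokes' theorem on $W_0$. Let $u'$ be the restriction of $u$ to $u^{-1}(\{h\ge c\})$. Then $[u']=[u]=0$ in $H_2(X,X\setminus D)$ (all intersections of $u$ with $D$ lie in $\{h>c\}$), so $[\omega](u')=0$. The defining formula $[\omega](u')=\omega(u')-\alpha(\partial u')$, together with \eqref{eqn:links}, gives $\omega(u')=\alpha(\partial u')$ directly --- and this is valid regardless of whether $u'$ meets $D$, since $\omega$ is defined on all of $X$ and $\alpha$ is only evaluated along $\partial u'\subset\{h=c\}\subset X\setminus D$. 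From there your tameness inequality and your boundary estimate finish the proof exactly as you wrote.
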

\begin{proof}
This follows from the `integrated maximum principle' of \cite[Lemma 7.2]{Abouzaid2007} (see also \cite[Lemma 7.5]{Seidel:FCPLT}). 
Namely, suppose $\Sigma$ intersects $\{h=c\}$ transversely (if it doesn't, we can perturb $c$ to make it so).
Let $u':(\Sigma',\partial \Sigma') \to (\{h>c\},\{h=c\})$ be the part of $\Sigma$ mapping to $\{h\ge c\}$. 
We have $[u'] = [u] = 0$ in $H_2(X,X \setminus D)$, by assumption; hence, $[\omega](u') = 0$. 
Then, by Equation \eqref{eqn:links},
\begin{equation} \omega(u') = \alpha(\partial u').\end{equation}
But now, $\omega(u') \ge 0$ because $J$ is $\omega$-tame, and $\alpha(\partial u') \le 0$ because $\alpha = -dh \circ J$ along $\{h=c\}$. 
So $\omega(u') = 0$, hence $u'$ is constant, so $u$ is contained in $\{h \le c \}$.
\end{proof}

\begin{lem}
\label{lem:nodisc}
Suppose that $J$ is adapted to $E$.
Let $L \subset \{h < c\}$ be a closed, exact Lagrangian submanifold (i.e., $\alpha|_L$ is exact), and $u:(\Sigma,\partial \Sigma) \to (X,L)$ a non-constant $J$-holomorphic curve with boundary on $L$.
Then $u$ intersects $E$ (and in particular, $u \cdot E_q > 0$ for some $q$, by positivity of intersection).
\end{lem}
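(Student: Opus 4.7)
The plan is to argue by contradiction: assume $u(\Sigma) \cap E = \emptyset$, and then combine positivity of intersection with the defining property of a system of divisors to force $[u] = 0$ in $H_2(X, X \setminus D)$. Once $[u]$ vanishes, Lemma 4.2 confines $u$ to the exact region $\{h \le c\}$, and the exactness of $L$ together with $\omega$-tameness of $J$ forces $u$ to be constant, contradicting the hypothesis.

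In more detail, I would first observe that since $u$ is non-constant and disjoint from $E$, it cannot be contained in any component $E_q$. Because $J$ is adapted to $E$, each $E_q$ is a $J$-holomorphic submanifold, so positivity of intersection (cf.\ \cite[Exercise 2.6.1]{mcduffsalamon}) implies that the topological intersection number $[u] \cdot [E_q]$, as a sum of strictly positive local contributions, must vanish for every $q$. By the first defining property of a system of divisors (Definition 3.11), the classes $PD(E_q)$ span $H^2(X, X \setminus D; \mathbb{Q})$. Using that $H_2(X, X\setminus D) \simeq \mathbb{Z}^P$ by \eqref{eqn:h2zp} is free, and the perfect evaluation pairing between $H^2(X,X\setminus D;\mathbb{Q})$ and $H_2(X,X\setminus D;\mathbb{Q})$, I deduce $[u] = 0$ in $H_2(X, X \setminus D)$.

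Next I would apply Lemma 4.2 (whose hypotheses are now verified: the boundary of $u$ lies in $L \subset \{h < c\} \subset \{h \le c\}$, and $[u]=0$) to conclude that the image of $u$ lies entirely in $\{h \le c\} \subset X \setminus D$. On $X \setminus D$ one has $\omega = -dd^c h = d\alpha$ for $\alpha = -d^c h$, and by assumption $\alpha|_L$ is exact, say $\alpha|_L = df_L$. Stokes' theorem then gives
\[ \int_\Sigma u^* \omega \;=\; \int_{\partial \Sigma} u^* \alpha \;=\; \int_{\partial \Sigma} d(f_L \circ u) \;=\; 0. \]
On the other hand, $J$ is $\omega$-tame and $u$ is non-constant $J$-holomorphic, so $\int_\Sigma u^* \omega > 0$, a contradiction. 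This rules out the assumption $u(\Sigma) \cap E = \emptyset$.

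I do not expect any serious obstacle in this proof: the three ingredients (positivity of intersection, the integrated maximum principle packaged as Lemma 4.2, and exactness together with $\omega$-tameness) are standard and have already been set up in the preceding lemmas. The only subtlety worth double-checking is that the assumption $u \cap E = \emptyset$, together with non-constancy, really does preclude $u$ from being contained in a component $E_q$, so that positivity of intersection is applicable; this is immediate, since $u(\Sigma)$ is non-empty and disjoint from $E_q$. The parenthetical stronger statement that $u \cdot E_q > 0$ for \emph{some} $q$ then follows by applying positivity of intersection to any component $E_q$ actually hit by $u$ (with the system-of-divisors trick from Lemma 4.1 invoked to replace $E_q$ by a linearly equivalent transverse component should $u$ happen to be contained in $E_q$, which can occur only if $\partial \Sigma = \emptyset$).
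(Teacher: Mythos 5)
Your proposal is correct and follows essentially the same route as the paper's proof: vanishing of all intersection numbers $u\cdot E_q$ forces $[u]=0$ because the classes $PD(E_q)$ span, then the integrated maximum principle (Lemma \ref{lem:maxprin}) confines $u$ to $\{h\le c\}$, where exactness of $L$ and $\omega$-tameness force zero energy and hence constancy. You merely spell out the Stokes computation and the duality argument that the paper leaves implicit.
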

\begin{proof}
We prove the contrapositive: suppose that $u$ does not intersect $E$, so $u \cdot E_q = 0$ for all $q$. 
Because $E$ is a system of divisors, the classes $E_q$ span $H_{2n-2}(D)$, so it follows that $[u] = 0$, and hence $u \subset \{h \le c\}$ by Lemma \ref{lem:maxprin}. 
Because $L$ is exact, this implies that $u$ has zero energy and hence is constant.
\end{proof}

\begin{lem} 
\label{lem:c1loss}
Suppose that $J$ is adapted to $E$, and $K \subset Q$ is some subset.
If $\Sigma$ is a closed Riemann surface, and $u: \Sigma \to E_K$ is $J$-holomorphic, then
\begin{equation} c_1(TE_K)(u) \le c_1(TX)(u).\end{equation}
\end{lem}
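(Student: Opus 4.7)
The natural approach is to analyze the difference $c_1(TX)(u) - c_1(TE_K)(u)$ via the normal bundle. The snc structure gives the short exact sequence
\[ 0 \to TE_K \to TX|_{E_K} \to N_{E_K/X} \to 0, \]
so the inequality is equivalent to $c_1(N_{E_K/X})(u) \ge 0$. Transversality of the components $\{E_q\}_{q \in K}$ gives a splitting $N_{E_K/X} \simeq \bigoplus_{q \in K} N_{E_q/X}|_{E_K}$, and it then suffices to show that $c_1(N_{E_q/X}|_{E_K})(u) \ge 0$ for each individual $q \in K$.

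For fixed $q \in K$, we have $c_1(N_{E_q/X}|_{E_K}) = [E_q]|_{E_K}$ as classes in $H^2(E_K)$, and by naturality this evaluates on $[u]$ to give the intersection number $[E_q] \cdot [u]_X$ in $X$. Since $u(\Sigma) \subset E_q$, this cannot be computed by positivity of intersection directly, so the idea is to replace $E_q$ by a linearly equivalent component $E_r$ which $u(\Sigma)$ does not lie in. This is exactly where the second condition in Definition \ref{defn:sysofdiv} enters: letting $R \subset Q$ be the set of $r$ with $E_r$ linearly equivalent to $E_q$, we have $|R| \ge n+1$.

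The key combinatorial claim is that there exists $r \in R \setminus K$ with $u(\Sigma) \not\subset E_r$. Suppose the claim fails (and that $u$ is non-constant, the constant case being trivial). Then $u(\Sigma) \subset \bigcap_{r \in (R \setminus K) \cup K} E_r = E_{K \cup (R\setminus K)}$. Since $E$ is snc and contains a non-constant image of $\Sigma$, the indexing set must satisfy $|K \cup (R\setminus K)| \le n-1$, i.e.\ $|K| + |R \setminus K| \le n-1$. But $|R \setminus K| \ge |R| - |K| \ge n+1 - |K|$, which forces $n+1 \le n-1$, a contradiction. Hence an $r$ of the required form exists. Then $c_1(N_{E_q/X}|_{E_K})(u) = [E_r] \cdot [u]_X = u \cdot E_r$, which is non-negative by positivity of intersection (both $u$ and $E_r$ are $J$-holomorphic, and $u \not\subset E_r$). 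Summing over $q \in K$ gives the lemma.

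The main obstacle is the counting step, which is precisely the reason the definition of a system of divisors insists on at least $n+1$ linearly equivalent copies: fewer copies would allow $u$ to sit inside the simultaneous zero locus of every proxy $E_r$ for $E_q$, and we would lose control of the self-intersection term. The rest of the argument is formal manipulation of Chern classes via the adjunction sequence together with the snc splitting of $N_{E_K/X}$.
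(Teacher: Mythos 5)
Your proof is correct and follows essentially the same route as the paper: adjunction (via the normal bundle sequence) reduces the inequality to $\sum_{q \in K} E_q \cdot u \ge 0$, and each term is handled by trading $E_q$ for a linearly equivalent component not containing $u(\Sigma)$, using the ``at least $n+1$ copies'' clause of Definition \ref{defn:sysofdiv}. The only cosmetic difference is that the paper outsources the positivity of $E_q \cdot u$ to Lemma \ref{lem:posint}, which runs an iterative replacement cascade ending in the conclusion $[u]=0$, whereas you make the dimension count explicit in one step and derive a contradiction instead.
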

\begin{proof}
By adjunction, we have
\[ c_1(TX)(u) = c_1(TE_K)(u) + \sum_{k \in K} E_k \cdot u.\]
The result now follows, as $E_k \cdot u \ge 0$ for each $k$ by Lemma \ref{lem:posint}.
 \end{proof}

\subsection{The affine Fukaya category}
\label{subsec:exfuk}

Let $(X,D,\omega,c)$ be a relative K\"{a}hler manifold. 
We recall the construction of the exact Fukaya category of the Liouville domain $\{h \le c\}$, following \cite{Seidel:FCPLT}, with modifications as in \cite{Sheridan:CY}. 
It is a $\Bbbk$-linear, $\G$-graded $A_\infty$ category, where $\G := \{\Z \to H_1(\cG(X \setminus D)) \to \Z/2\}$ is the grading datum associated to $X \setminus D$. 

The objects are closed, exact, anchored Lagrangian submanifolds $L \subset \{h < c\}$, equipped with a Pin structure. 
Recall that the `anchored' part of the terminology means that each Lagrangian comes with a lift
\begin{equation} \xymatrix{ & \wt{\cG}(X \setminus D) \ar[d] \\
L\ar@{-->}[ru] \ar[r] & \cG(X \setminus D) },\end{equation}
where $\wt{\cG}(X \setminus D)$ denotes the universal abelian cover of the (total space of the) Lagrangian Grassmannian $\cG(X \setminus D)$.

\begin{defn}
\label{defn:perts}
Throughout this section, we will choose perturbation data for our pseudoholomorphic curve equations coming from the set $\EuH \times \EuJ$, where
\begin{equation} \EuH := \{H \in C^{\infty}(X): H|_{\{h\ge c\}} = 0\} \end{equation}
is the set of Hamiltonian functions that vanish on $\{h\ge c\}$, and $\EuJ$ is the set of $\omega$-tame almost-complex structures which are equal to the integrable complex structure $J_0$ on $\{h \ge c\}$.
In particular, for any system of divisors $E \subset \{h>c\}$, these almost-complex structures are adapted to $E$.
\end{defn}

For each pair of objects $(L_0,L_1)$, we choose Floer data: 
\begin{equation} H_{01}: [0,1] \to \EuH \mbox{ and } J_{01}: [0,1] \to \EuJ.\end{equation} 
We choose $H_{01}$ so that the time-1 Hamiltonian flow of the corresponding Hamiltonian vector field $X_{H_{01}}$ makes $L_0$ transverse to $L_1$. 
Thus we have some finite number of Hamiltonian chords $y$ from $L_0$ to $L_1$; associated to each of these is a $\G$-graded orientation line $o_y$, to which we associate the normalization $|o_y|$, which is a $\G$-graded one-dimensional $\Bbbk$-vector space (the definition is given in \S \ref{subsec:ainfsign}). 
We define
\begin{equation} hom^\bullet(L_0,L_1) := \bigoplus_y |o_y| .\end{equation}
It is a $\G$-graded $\Bbbk$-vector space.

To define the $A_\infty$ structure maps 
\begin{equation} \mu^s:  hom^\bullet(L_0,L_1) \otimes \ldots \otimes hom^\bullet(L_{s-1},L_s) \to hom^\bullet(L_0,L_s)[2-s],\end{equation}
we consider the moduli spaces $\mathcal{R}(\mathbf{L})$ of holomorphic discs with boundary punctures $(\zeta_0,\ldots,\zeta_s)$ in order, modulo biholomorphism.
We label the boundary components by the tuple $\mathbf{L} := (L_0,\ldots,L_s)$.  
We make a universal choice of strip-like ends $\epsilon_\zeta$ for these moduli spaces, where $\zeta_0$ is outgoing and the rest of the $\zeta_i$ are incoming. 
We make a consistent universal choice of perturbation data, chosen from the set $\EuH \times \EuJ$ (see \cite[\S 9i]{Seidel:FCPLT}). 
We label the boundary punctures by a tuple $\mathbf{y} = (y_0,\ldots,y_s)$ of chords $y_i$ from $L_{i-1}$ to $L_i$, and $y_0$ from $L_s$ to $L_0$, and we consider the moduli space $\mathcal{M}(\mathbf{y})$ of solutions $(r,u)$ to the corresponding pseudoholomorphic curve equation
\begin{equation}
\label{eqn:pseudo}
\left\{ \begin{array}{l}
u: S_r  \to  X\\
{[}u{]} = 0 \mbox{ in $H_2(X,X \setminus D)$} \\
u(z) \in L_C \mbox{ for $z \in C \subset \partial S_r$} \\
u(\epsilon_\zeta(s,t)) \to y_\zeta(t) \mbox{ for $\zeta$ a boundary puncture of $S_r$}\\
(du - Y)^{0,1} = 0.
\end{array} \right.
\end{equation}
Here, $S_r$ is the Riemann surface corresponding to $r \in \mathcal{R}(\mathbf{L})$, and $Y \in \Hom_\R(TS_r,TX)$ is determined by the Hamiltonian part of the perturbation datum, and the complex structure on the bundle $\Hom_\R(TS_r,TX)$ (with respect to which the `$0,1$' component of $du - Y$ is taken) is determined by the complex structure on the Riemann surface $S_r$, together with the almost-complex structure part of the perturbation datum.

For generic choice of Floer and perturbation data, the moduli spaces $\mathcal{M}(\mathbf{y})$ are smooth manifolds of the expected dimension. 
Furthermore, they admit Gromov compactifications $\overline{\mathcal{M}}(\mathbf{y})$ by broken discs. 
Unstable disc and sphere bubbles are ruled out by exactness (compare Lemma \ref{lem:nodisc}), so the boundary strata are smooth manifolds of the expected dimension.
In particular, the zero-dimensional component $\mathcal{M}(\mathbf{y})_0$ is a finite set of points. 
Each determines an isomorphism
\begin{equation}
\label{eqn:oropeq}
o(y_1) \otimes \ldots \otimes o(y_s) \to o(y_0)[2-s]
\end{equation}
in accordance with \S \ref{subsec:ainfsign}.
The sum of the induced maps on normalizations $|o_{y_i}|$ is the corresponding matrix coefficient of the map $\mu^s \in CC^2(\fuk(\{h \le c\}))$. 

The one-dimensional component $\overline{\mathcal{M}}(\mathbf{y})_1$ has the structure of a compact topological one-manifold with boundary, by a gluing theorem. 
Its boundary points are in one-to-one correspondence with the terms in the Gerstenhaber product $(\mu \circ \mu)^*(y_1,\ldots,y_s)$, so the sum of these terms is zero: i.e., the maps $\mu^s$ define the structure of a $\Bbbk$-linear, $\G$-graded $A_\infty$ category on $\fuk(\{h \le c\})$ (the signs are worked out in \cite{Seidel:FCPLT}, we give an alternative formulation in \S \ref{subsec:ainfsign} which is adapted for our later purposes).
 
Because we only considered discs $u$ with $[u] = 0$ in $H_2(X,X \setminus D)$, Lemma \ref{lem:maxprin} implies that all of these discs are contained inside $\{h \le c\}$.
Therefore, the definition we have given coincides with the exact Fukaya category of the Liouville domain $\{h \le c\}$ as defined in \cite{Seidel:FCPLT}.

\subsection{The space $\sh^\bullet(X,D)$}
\label{subsec:shxd}

We define a homomorphism
\begin{align}
H_2(X,X \setminus D) & \to H_1(\cG(X \setminus D)), \\
u & \mapsto 2u \cdot D - f(u), 
\end{align}
where $f$ is the homomorphism from \eqref{eqn:h2g}. 
This equips $\Bbbk[H_2(X,X \setminus D)]$ with a (new) $\G$-grading.

We recall that $H_2(X,X \setminus D) \simeq \Z^P$, where $P$ indexes the irreducible components $D_p$ of $D$. 
The sub-monoid $\N_0^P \subset \Z^P$ corresponds to the classes $u \in H_2(X,X \setminus D)$ such that $u \cdot D_p \ge 0$ for all $p$. 
We define a sub-monoid $M \subset \N_0^P$ to be generated by all $u$ such that $D_{K(u)} = \emptyset$, where
\begin{equation} K(u) := \{p: u \cdot D_p \neq 0\} \subset P.\end{equation}

\begin{defn}
We define the $\G$-graded $\Bbbk$-vector space 
\[ \sh^\bullet(X,D) := \Bbbk\left[\N_0^P\right]/\Bbbk[M].\]
It has a $\Bbbk$-basis $\{z^u\}_{u \in \mathsf{pos}_1}$, where 
\[\mathsf{pos}_1 := \N_0^P \setminus M\]
 is the set of classes $[u] \in H_2(X,X \setminus D)$ that can be represented by a disc $u$ that is disjoint from $D$ except possibly for a single point in its interior, at which it intersects all components of $D$ non-negatively.
\end{defn}

\begin{rmk}
\label{rmk:degrpzp}
It follows easily from the definition that the generators $z_p:=z^{u_p} \in \mathsf{sh}^\bullet(X \setminus D)$ and $\nov_p :=\nov^{u_p} \in R(X,D)$ are graded so that $\nov_p z_p$ has degree $2$ for all $p$.
\end{rmk}

\subsection{The map $\mathsf{co}$}
\label{subsec:co}

Let $\mathbf{L} := (L_0,\ldots,L_s)$ be a tuple of objects of $\fuk(\{h \le c\})$. 
We consider the moduli space $\mathcal{R}_1(\mathbf{L})$ of holomorphic discs with boundary punctures $\zeta_0,\ldots,\zeta_s$ in order, together with an internal marked point $q$, modulo biholomorphism. 
We label the boundary components by the Lagrangians $L_i$. 
As in \S \ref{subsec:exfuk}, we make a universal choice of strip-like ends $\epsilon_\zeta$, outgoing at $\zeta_0$ and incoming at all other $\zeta_i$. 
We make a universal choice of perturbation data for these moduli spaces, chosen from $\EuH \times \EuJ$ as before. 
The Deligne--Mumford compactification of $\mathcal{R}_1(\mathbf{L})$ consists of trees of disc bubbles, each corresponding to an element of some $\mathcal{R}(\mathbf{L}')$ or $\mathcal{R}_1(\mathbf{L}')$. 
We require our choice of perturbation data to be consistent with respect to this compactification. 

Let $v \in \mathsf{pos}_1$.
We label the boundary punctures by a tuple $\mathbf{y} = (y_0,\ldots,y_s)$ of chords $y_i$ from $L_{i-1}$ to $L_i$, and $y_0$ from $L_s$ to $L_0$, and we consider the moduli space $\mathcal{M}(\mathbf{y},v)$ of solutions $(r,u)$ to the corresponding pseudoholomorphic curve equation
\begin{equation}
\label{eqn:defpseudo}
\left\{ \begin{array}{l}
u: S_r  \to  X\\
{[}u{]} = v \mbox{ in $H_2(X,X \setminus D)$} \\
u(z) \in L_C \mbox{ for $z \in C \subset \partial S_r$} \\
u(\epsilon_\zeta(s,t)) \to y_\zeta(t) \mbox{ for $\zeta$ a boundary puncture of $S_r$}\\
\mbox{$u$ is tangent at $q$ to $D_p$ to order $v\cdot D_p - 1$, for all $p$}\\
(du - Y)^{0,1} = 0.
\end{array} \right.
\end{equation}
Note: by convention, `$u$ is tangent at $q$ to $D_p$ to order $-1$' is an empty condition. 

The multiplicity of any intersection of $u$ with a component $D_{p'}$ of $D$ is positive. 
So the fact that $[u] = v$ in $H_2(X,X \setminus D)$ implies that $u$ does not meet $D$ anywhere other than at $q$, where it intersects $D_p$ with multiplicity $v \cdot D_p$ for all $p$.
The moduli space $\mathcal{M}(\mathbf{y},v)$ is a smooth manifold of the expected dimension for generic choice of perturbation data (compare \cite[Lemma 6.7]{Cieliebak2007}), and it admits a Gromov  compactification $\overline{\mathcal{M}}(\mathbf{y},v)$. 

This compactification could, a priori, include some holomorphic sphere bubbles, for which we have not considered questions of transversality (the sphere bubbles could even be contained inside the region $\{h \ge c\}$ on which the perturbation data are fixed, for example). 

\begin{lem}
\label{lem:nospheres}
Suppose that $(X,D)$ is nice or amb-nice. 
Then $\overline{\mathcal{M}}(\mathbf{y},v)$ does not include any sphere bubbles.
\end{lem}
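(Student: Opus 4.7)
The plan is to argue by contradiction via a class accounting, leveraging the niceness hypothesis exactly where Lemma \ref{lem:newtypec} (and its amb-analogue Lemma \ref{lem:ambnewtypec}) is designed to be used. First, I choose a nice (respectively amb-nice) rational polyhedral cone $\Nef$ containing $[\omega]$ in its interior and a compatible system of divisors $E \subset \{h > c\}$ with $\Nef(E) = \Nef$, via Lemmas \ref{lem:niceAAmp} and \ref{lem:sysexist} (respectively Lemma \ref{lem:ambniceAAmp}). Because every $J \in \EuJ$ is adapted to $E$, Lemma \ref{lem:posint} applies to all pseudoholomorphic components appearing in any limit configuration.

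Suppose for contradiction that some stable limit in $\overline{\mathcal{M}}(\mathbf{y},v)$ carries at least one non-constant sphere bubble. Let $B \in H_2(X, X \setminus D)$ be the sum of the homology classes of the sphere components, and let $v'$ be the sum of the classes of the disc components, so $v = v' + B$. Each disc component has boundary on exact Lagrangians contained in $\{h < c\}$, so $v' \in NE(\Nef(E))$ by Lemma \ref{lem:posint}. Each sphere also has class in $NE(\Nef(E))$ by Lemma \ref{lem:posint}, and since spheres have empty boundary each such class lies in the image of $H_2(X) \to H_2(X, X \setminus D)$, hence in $NE(\Nef(E))_{cl}$ by Remark \ref{rmk:gradcl}. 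Therefore $B \in NE(\Nef(E))_{cl}$.

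Because $v \in \mathsf{pos}_1$, we have $v \in \Z^{K(v)}$ where $K(v) = \{p : v \cdot D_p \neq 0\}$ satisfies $D_{K(v)} \neq \emptyset$. Applying Lemma \ref{lem:newtypec} with $K = K(v)$ to the decomposition $v = v' + B$ forces $B = 0$. Strong convexity of $NE(\Nef(E))_\R$ (automatic, since the classes $PD(E_q)$ span $H^2(X, X \setminus D;\Q)$ by the first axiom of a system of divisors) then forces each individual sphere class to vanish. Such a sphere has zero symplectic area and must be constant, contradicting the assumed stability of the bubble tree. The amb-nice case runs in exact parallel: push the decomposition forward along $\iota_*: H_2(X, X \setminus D) \to H_2(Y, Y \setminus D')$ into $\Z^{P_{amb}}$, observe that $\iota_*(v) \in \Z^{i(K(v))}$ and $D'_{i(K(v))} \cap X \supseteq D_{K(v)} \neq \emptyset$, and invoke Lemma \ref{lem:ambnewtypec} in place of Lemma \ref{lem:newtypec}.

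The main obstacle is purely organizational, namely ensuring the class decomposition is set up so Lemma \ref{lem:newtypec} applies verbatim. The one subtlety worth flagging is that the interior marked point $q$ may end up on a sphere component of the bubble tree rather than on a disc component; the prescribed tangency orders then constrain whichever component carries $q$, but this does not alter the class identity $v = v' + B$ and so does not disturb the argument.
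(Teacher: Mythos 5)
Your core argument is the paper's own: choose a system of divisors $E$ with $\Nef(E)$ nice via Lemmas \ref{lem:niceAAmp} and \ref{lem:sysexist}, use positivity of intersection (Lemma \ref{lem:posint}) to place all component classes in $NE(\Nef(E))$ and the sphere classes in $NE(\Nef(E))_{cl}$, and then kill the sphere contribution with Lemma \ref{lem:newtypec} using $v \in \mathsf{pos}_1$. That is exactly how the paper proceeds, and that part of your write-up is correct. Two loose ends remain, however.

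First, your contradiction hypothesis assumes a \emph{non-constant} sphere bubble, so what you actually prove is that no non-constant sphere bubbles occur; the lemma asserts that no sphere bubbles occur at all. The paper closes this off in one line: a constant sphere component must be stable to appear in the Gromov compactification, i.e.\ carry at least three special points, and since the domain has only the single interior marked point $q$, no tree of constant sphere bubbles can be stabilized. You need this sentence (your final paragraph, which notes that $q$ might sit on a sphere component, is adjacent to this point but does not make it). Second, in the amb-nice case, Lemma \ref{lem:ambnewtypec} applied to the pushed-forward decomposition only yields $\iota_*(B)=0$ in $H_2(Y,Y\setminus D')$, not $B=0$ in $H_2(X,X\setminus D)$; you still need to descend from the vanishing of the image to the vanishing of the class itself (or at least to the vanishing of the spheres' areas). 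The paper does this via Lemma \ref{lem:Phidef}, whose proof shows $NE(\Nef)_\R \cap \ker(\iota_*) = \{0\}$ when $\Nef_{amb}$ meets the interior of $\Nef$ — which is precisely why Lemma \ref{lem:ambniceAAmp} arranges that intersection condition. Neither repair is difficult, but as written the proposal does not quite establish the stated conclusion in either case.
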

\begin{proof}
First we consider the case that $(X,D)$ is nice.
By Lemmas \ref{lem:niceAAmp} and \ref{lem:sysexist}, we can choose a system of divisors $E \subset \{h > c\}$ such that $\Nef(E)$ is nice. 
The almost-complex structures in our perturbation data are adapted to $E$ by construction. 
Thus, all solutions to the pseudoholomorphic curve equation have non-negative intersection number with each component $E_q$ of $E$, by Lemma \ref{lem:posint}. 

Thus, if a sphere bubbled off, we would have $v = w + w'$ where $w \in NE(\Nef(E))$ and $w' \in NE(\Nef(E))_{cl}$ is the homology class of the sphere bubble. 
But $v \in \mathsf{pos}_1$, so $D_{K(v)} \neq \emptyset$, so $w' = 0$ by Lemma \ref{lem:newtypec}. 
So the sphere bubble is constant, and hence stable. 
But the disc only has a single internal marked point $q$, so no tree of stable sphere bubbles can bubble off from it.

In the case that $(X,D) \subset (Y,D')$ is amb-nice, Lemmas \ref{lem:ambniceAAmp} and \ref{lem:sysexist} show that we can choose a system of divisors $E$ such that $\Nef(E)$ is amb-nice and $\Nef(E)_{amb}$ intersects the interior of $\Nef(E)$. 
The argument above then shows that if $w' \in H_2(X,X \setminus D)$ is the homology class of a sphere bubble, then the image of $w'$ in $H_2(Y,Y \setminus D')$ vanishes. 
It follows that $w' = 0$ by Lemma \ref{lem:Phidef}.
\end{proof}

The compactification also does not include any unstable disc bubbles, by Lemma \ref{lem:nodisc}. 
It follows that the boundary strata of $\overline{\mathcal{M}}_1(\mathbf{y},v)$ are smooth manifolds of the expected dimension.
In particular, the zero-dimensional component $\mathcal{M}_1(\mathbf{y},v)_0$ is a finite set of points, each of which determines an isomorphism
\begin{equation} o(y_1) \otimes \ldots \otimes o(y_s) \to o(y_0)[|z^v|+1-s]\end{equation}
(the degree shift can be computed using Lemma \ref{lem:masgrad}, compare \cite[\S 4.4]{Sheridan:CY}).
The sum of these is the corresponding matrix coefficient of the class $\co(z^v) \in CC^{|z^v|}(\fuk(\{h \le c\}))$. 
Observe that $\co$ preserves $\G$-gradings.

The one-dimensional component $\overline{\mathcal{M}}_1(\mathbf{y},v)_1$ has the structure of a compact, oriented, topological one-manifold with boundary, by a gluing theorem; its boundary points are in one-to-one correspondence with terms in the Gerstenhaber bracket $[\mu^*,\co(z^v)](y_1,\ldots,y_s)$, so the sum of these terms is zero (the sign computation is parallel to \S \ref{subsec:ainfsign}). 
In particular, $\co(z^v)$ is a Hochschild cocycle, hence defines a class in Hochschild cohomology. 

Recall that the affine Fukaya category $\fuk(\{h \le c\})$ is independent of the choices of perturbation data made in its construction, up to quasi-equivalence; this can be proved using the `double category' trick (see \cite[\S 10a]{Seidel:FCPLT}). 
By the same trick, one can prove that the classes $\co(z^v) \in \HH^\bullet(\fuk(\{h \le c\}))$ are independent of the choices of perturbation data made in their construction. 

\begin{rmk}
\label{rmk:Dsmoothco}
When $D$ is smooth and $(X,D)$ is semi-positive, the classes $\co(z_p)$ can be defined in a similar way, but one needs to be a little bit more careful to rule out sphere bubbling. 
Namely, Lemma \ref{lem:nospheres} is no longer true, and one must use the techniques of \cite{Hofer1995} together with the assumption of semi-positivity.
Briefly, one works with almost-complex structures which are allowed to vary along $D$, but which still make $D$ an almost-complex manifold. 
Then it is true that for a generic choice of such almost-complex structure, no spheres will bubble off from a one-dimensional moduli space (we need to allow the almost-complex structure to vary along $D$ to ensure regularity of simple holomorphic spheres contained in $D$). 
\end{rmk}

\subsection{Independence of relative K\"{a}hler form}
\label{subsec:indsymp}

Because the space of relative K\"{a}hler forms is convex, the Liouville domain $\{h \le c\}$ is an invariant of $(X,D)$ up to Liouville deformation (see \cite[Lemma 4.4]{Seidel:biased}). 
Therefore $\fuk(\{h \le c\})$ is independent of the choice of relative K\"{a}hler form up to quasi-equivalence, by \cite[Corollary 10.10]{Seidel:FCPLT}. 
In other words, it is an invariant of the $\snc$ pair $(X,D)$ (in fact, of the affine variety $X \setminus D$). 
For that reason, we are justified in referring to it as the `affine Fukaya category', and denoting it  `$\fuk(X \setminus D)$'.

We will now prove the analogous result for the map $\co$: i.e., that it does not depend on the choice of relative K\"{a}hler form, and hence is an invariant of the $\snc$ pair $(X,D)$.

\begin{lem}
\label{lem:indepd+}
Let $(X,D,\omega,c)$ be a relative K\"{a}hler manifold. 
The exact Fukaya category $\fuk(\{h \le c\})$ does not depend on the choice of $c$ up to quasi-equivalence, and similarly for the classes $\co(z^v)$.
\end{lem}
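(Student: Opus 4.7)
The plan is to reduce the statement to a comparison between two specific choices $c_0 < c_1$ (both regular values of $h$ exceeding all critical values) by the connectedness of the space of admissible $c$. Observe first that the inclusions $\{h \ge c_1\} \subset \{h \ge c_0\}$ imply $\EuH(c_0) \times \EuJ(c_0) \subset \EuH(c_1) \times \EuJ(c_1)$, so perturbation data admissible for $c_0$ are automatically admissible for $c_1$. I would therefore build a universal, consistent perturbation datum for the $c_1$-category with the additional property that, whenever every Lagrangian label of the relevant moduli problem lies in $\{h < c_0\}$, the perturbation datum is drawn from $\EuH(c_0) \times \EuJ(c_0)$ and coincides with a previously chosen consistent datum for the $c_0$-category. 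This is a standard inductive extension from a sub-collection of objects, analogous to the constructions in \cite[\S 9i]{Seidel:FCPLT}.

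With such a choice, for any tuple $\mathbf{L}$ of Lagrangians lying in $\{h < c_0\}$ the moduli spaces $\mathcal{M}(\mathbf{y})$ (respectively $\mathcal{M}(\mathbf{y},v)$) computed in the two setups are defined by identical pseudoholomorphic curve equations; Lemma \ref{lem:maxprin} (with parameter $c_0$) guarantees that every solution has image in $\{h \le c_0\}$, so the two moduli spaces literally agree as oriented manifolds. Consequently there is a strict, fully faithful $A_\infty$ inclusion
\[ \iota: \fuk(\{h \le c_0\}) \hookrightarrow \fuk(\{h \le c_1\}) \]
which satisfies $\iota^* \co_{c_1}(z^v) = \co_{c_0}(z^v)$ as Hochschild cochains.

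The main remaining step, and the one that requires care, is to show that $\iota$ is essentially surjective, so that it is a quasi-equivalence and induces an isomorphism on Hochschild cohomology. Since $h$ has no critical points on $\{c_0 \le h \le c_1\}$, this region is foliated by regular level sets of $h$ and we can choose a compactly-supported Hamiltonian on $X \setminus D$ whose time-one flow maps $\{h \le c_1\}$ into $\{h \le c_0\}$ (for instance, one adapted to the gradient-like flow of $h$ in a collar of $\{h = c_1\}$). Applying this Hamiltonian isotopy to any object $L$ of $\fuk(\{h \le c_1\})$ produces a Hamiltonian-isotopic Lagrangian in $\{h < c_0\}$, which after upgrading to an exact anchored Pin Lagrangian brane represents an object of $\fuk(\{h \le c_0\})$ that is quasi-isomorphic to $L$ in $\fuk(\{h \le c_1\})$ by the standard invariance of the Fukaya category under Hamiltonian isotopy (\cite[\S 10]{Seidel:FCPLT}).

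Combining these, $\iota$ is a quasi-equivalence so $\iota^*$ is an isomorphism on $\HH^\bullet$, and the identity $\iota^*[\co_{c_1}(z^v)] = [\co_{c_0}(z^v)]$ from the second paragraph shows that the classes $\co(z^v)$ are intrinsically defined, independent of $c$. The main obstacle in executing the plan is the compatibility of perturbation data in the first paragraph: one must extend a chosen consistent datum for the $c_0$-category to a consistent datum for the $c_1$-category without disturbing it on moduli problems involving only objects of the smaller category, which is achieved by the usual inductive argument but requires some bookkeeping when the internal marked point of $\mathcal{R}_1(\mathbf{L})$ is present.
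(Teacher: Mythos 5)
Your argument follows the same route as the paper: compare $c_0<c_1$, observe that $\EuH(c_0)\times\EuJ(c_0)\subset\EuH(c_1)\times\EuJ(c_1)$, obtain a full strict embedding $\fuk(\{h\le c_0\})\hookrightarrow\fuk(\{h\le c_1\})$ under which the cochains $\co(z^v)$ literally restrict to one another, and then upgrade the embedding to a quasi-equivalence. The paper simply cites \cite[Corollary 10.6]{Seidel:FCPLT} for this last step, whereas you attempt to reprove it.

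In doing so you make one claim that is false as stated: there is no compactly supported Hamiltonian on $X\setminus D$ whose time-one flow maps $\{h\le c_1\}$ into $\{h\le c_0\}$, because Hamiltonian diffeomorphisms preserve the volume form $\omega^n$ and $\{h\le c_1\}$ has strictly larger volume. The correct mechanism is the \emph{negative Liouville flow} of $\alpha=-d^ch$ on the cobordism $\{c_0\le h\le c_1\}$: it is conformally symplectic rather than Hamiltonian, but it carries each individual compact exact Lagrangian $L\subset\{h<c_1\}$ into $\{h<c_0\}$ through exact Lagrangians, and an exact isotopy of a compact exact Lagrangian is realized by an ambient Hamiltonian isotopy. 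This object-by-object statement (not a single global shrinking map) is exactly what \cite[Corollary 10.6]{Seidel:FCPLT} packages, and with that substitution your proof is complete and agrees with the paper's.
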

\begin{proof}
Suppose $c_0 < c_1$. 
We have an inclusion $\{h \le c_0\} \subset \{h \le c_1\}$, so the perturbation data admitted in the definition of $\fuk(\{h \le c_0\})$ and its $\co(z^v)$ is a subset of that admitted in the definition of $\fuk(\{h \le c_1\})$ and its $\co(z^v)$. 
Therefore, the inclusion induces a full strict embedding $\fuk(\{h \le c_0\}) \hookrightarrow \fuk(\{h \le c_1\})$, and furthermore the classes $\co(z^v)$ for $\fuk(\{h \le c_0\})$ are the restrictions of those for $\fuk(\{h \le c_1\})$ on the cochain level. 
We now observe that this embedding is a quasi-equivalence by \cite[Corollary 10.6]{Seidel:FCPLT}.
\end{proof}

\begin{lem}
\label{lem:indepfam}
Let $(X,D)$ be a $\snc$ pair. 
Let $(X,D,\omega_0,c_0)$ and $(X,D,\omega_1,c_1)$ be two relative K\"{a}hler structures.
Then we have a quasi-equivalence
\begin{equation} \fuk(\{h_0 \le c_0\}) \simeq \fuk(\{h_1 \le c_1\}),\end{equation}
and the induced map of Hochschild cohomology groups identifies the classes $\co(z^v)$.
\end{lem}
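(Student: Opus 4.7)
The plan is to reduce to a one-parameter family and then apply the double-category trick of \cite[\S 10a]{Seidel:FCPLT}. Since the space of relative K\"ahler forms is convex (by the discussion after Definition \ref{defn:relkahl}), the linear interpolation $\omega_t := (1-t)\omega_0 + t\omega_1$ with potentials $h_t := (1-t)h_0 + t h_1$ is a smooth one-parameter family of relative K\"ahler forms. Because the critical points of each $h_t$ form a compact subset of $X \setminus D$ varying continuously in $t$, compactness of $[0,1]$ lets us choose a single $c$ exceeding all critical values of $h_t$ for every $t \in [0,1]$; by Lemma \ref{lem:indepd+} this is harmless on either end. Thus we obtain a smooth family of Liouville domains $\{h_t \le c\}$ with Liouville form $-d^c h_t$.

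For the quasi-equivalence, I would apply a parametrized Moser-type argument: the family $\{h_t \le c\}$ is a Liouville deformation, so by Seidel's invariance result \cite[Corollary 10.10]{Seidel:FCPLT} the exact Fukaya categories $\fuk(\{h_0 \le c\})$ and $\fuk(\{h_1 \le c\})$ are quasi-equivalent. Concretely, one forms a ``double'' Fukaya category whose perturbation data are allowed to vary in $t \in [0,1]$, equal to the chosen perturbation data at the two endpoints, and shows that the two strict embeddings of the endpoint categories into the double category are both quasi-equivalences (via parametrized continuation maps counting strips with $t$-dependent perturbation data).

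To identify the $\co$ classes under this quasi-equivalence, I would extend the double-category setup to include moduli spaces of discs with an interior marked point carrying the tangency conditions of \eqref{eqn:defpseudo}. Choose the system of divisors $E$ once and for all (using Lemma \ref{lem:niceAAmp} or \ref{lem:ambniceAAmp} applied to some ample class contained in $\AAmp(X,D)$) and perturbation data compatible with $E$ for every $t$; then Lemma \ref{lem:nospheres} still rules out sphere bubbling uniformly in $t$. The zero-dimensional component of the resulting $t$-parametrized moduli space $\mathcal{M}_1(\mathbf{y},v;[0,1])$ defines a Hochschild cochain whose coboundary, computed from the codimension-one boundary of the one-dimensional component, equals the difference of the endpoint cocycles $\co(z^v)|_{t=0}$ and $\co(z^v)|_{t=1}$ pulled back into the double category. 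This exhibits the two classes as equal in the Hochschild cohomology of the double category, and hence as corresponding under the quasi-equivalence above.

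The main obstacle is the parametric transversality and compactness package: one must verify that the relevant moduli spaces over $[0,1]$ are cut out transversely for a generic $t$-dependent choice of perturbation data and that no new bubbling phenomena appear in families (in particular, that the integrated maximum principle of Lemma \ref{lem:maxprin} and the sphere-bubbling exclusion of Lemma \ref{lem:nospheres} are robust under the interpolation). Since $E$ and the adapted $J$'s can be chosen independently of $t$, both arguments go through verbatim in the $t$-parametrized setting, and the remainder is a standard gluing-and-orientation bookkeeping exercise analogous to the construction of $\co$ itself.
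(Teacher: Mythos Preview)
Your outline is plausible but takes a different route from the paper, and you gloss over the point that actually requires work. The paper does not run a parametrized double-category argument for $\co$. Instead, after linearly interpolating the potentials $h_t$, it constructs (for each small $t$) a global diffeomorphism $\psi_t: X \to X$ by cutting off the Moser vector field with a bump function supported in a collar $\{c \le h_0 \le c+2\delta\}$; so $\psi_t$ agrees with the Liouville-domain Moser flow on $\{h_0 \le c\}$ and is the identity near $D$. One then \emph{pushes forward} the perturbation data $(H,J)$ via $\psi_t$. Because $H$ vanishes and $J = J_0$ on $\{h_0 \ge c\}$, the pushed-forward data are admissible for $(\omega_t,h_t)$ (for $t$ small enough that $(\psi_t)_*J$ remains $\omega_t$-tame), and $u \mapsto \psi_t \circ u$ is a bijection between the moduli spaces $\mathcal{M}_1(\mathbf{y},v)$ on the two sides. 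This yields a full strict embedding $\fuk(\{h_0 \le c\}) \hookrightarrow \fuk(\{h_t \le c+3\delta\})$ identifying $\co(z^v)$ on the cochain level, with no parametrized gluing; one then iterates in $t$ and invokes Lemma~\ref{lem:indepd+}.

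The gap in your approach is that the double-category trick of \cite[\S 10a]{Seidel:FCPLT} and the Liouville-deformation invariance of \cite[Corollary 10.10]{Seidel:FCPLT} take place inside the (completed) Liouville manifold, not in $X$. The discs defining $\co(z^v)$ pass through $D$ and live in $X$, where the Moser isotopy identifying the Liouville domains $\{h_t \le c\}$ does not a priori extend. Your parametrized construction would need the Lagrangians to move with $t$ (via the Liouville Moser flow), the region on which $H=0$ and $J=J_0$ to move with $t$, and the pseudoholomorphic equation to vary with $\omega_t$ on the portion of the disc outside the Liouville domain; none of this is addressed by saying ``$E$ and the adapted $J$'s can be chosen independently of $t$''. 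One can presumably make such a continuation argument work, but the extension-to-$X$ issue is exactly what the paper's cutoff construction resolves directly and with less analytic overhead.
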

\begin{proof}
The relative K\"{a}hler forms can be connected by a path $\omega_t = -dd^c h_t$ parametrized by $t \in [0,1]$, where
\begin{equation} h_t := t \cdot h_1 + (1-t) \cdot h_0.\end{equation}
We aim to prove that these relative K\"{a}hler manifolds have quasi-equivalent affine Fukaya categories and equal classes $\co(z^v)$, for all $t \in [0,1]$. 
Clearly, it suffices to prove the result for $t \in (-\epsilon,\epsilon)$, where $\epsilon$ can be arbitrarily small, then iterate.

By the Moser trick, there exists a vector field $\tilde{v}_t$ on $X \setminus D$ whose flow $\tilde{\psi}_t$ defines an exact symplectomorphism 
\begin{equation} \tilde{\psi}_t: (U, \omega = d\alpha_0) \to \left(\tilde{\psi}_t(U),\omega_t = d\alpha_t\right) \end{equation}
(where $\alpha_t := -d^ch_t$), for any open set $U$ such that the flow is defined. 
However the vector field $\tilde{v}_t$ is not complete, so the flow is not defined everywhere. 

To fix this, let $\delta>0$ and $\beta: \R \to \R$ be a smooth function such that
\begin{align}
\beta(h) &= \begin{cases}
			1 & \text{ for $h \le c_0+\delta$} \\
			0 & \text{ for $h \ge c_0+2\delta$}.
		\end{cases}
\end{align}
We define a new vector field $v_t := (\beta \circ h_0) \cdot \tilde{v}_t$, whose flow $\psi_t$ is well-defined for all times $t$. 
It has the following properties:
\begin{itemize}
\item $\psi_t|_{\{h_0 \ge c_0+2\delta\}} = \id$ for all $t$, and
\item $\psi_t(x) = \tilde{\psi}_t(x)$ for all $x$ such that $h_0(\tilde{\psi}_\tau(x)) \le c_0+\delta$ for all $\tau$ between $0$ and $t$.
\end{itemize}
We can choose $\epsilon>0$ such that $\tilde{\psi}_t(\{h_0 \le c_0\}) \subset \{h_0 \le c_0 + \delta\}$ for all $t \in (-\epsilon,\epsilon)$. 
Then $\psi_t|_{\{h_0 \le c_0\}} = \tilde{\psi}_t|_{\{h_0 \le c_0\}}$ is an exact symplectomorphism onto its image, for all $t \in (-\epsilon,\epsilon)$. 
Shrinking $\epsilon$ if necessary, we may also assume that $\{h_0 \le c_0 + 2\delta\} \subset \{h_t \le c_0 + 3\delta\}$ for $t \in (\epsilon,\epsilon)$. 
Then $\psi_t$ defines a map 
\begin{equation}
\label{eqn:psiobj}
 Ob(\fuk(\{h_0 \le c_0\})) \to Ob(\fuk(\{h_t \le c_0+3\delta\})),
\end{equation}
for all $t \in (-\epsilon,\epsilon)$.

In order to define $\fuk(\{h_0 \le c_0\})$ and the classes $\co(z^v)$, we must choose perturbation data $(H,J)$ for the various moduli spaces of pseudoholomorphic discs. 
Given this choice, we choose perturbation data for $\fuk(\{h_t \le c_0+3\delta\})$ by pushing forward by $\psi_t$. 
These pushed-forward perturbation data are valid for sufficiently small $t$, because
\begin{itemize}
\item $(\psi_t)_* J$ is equal to the integrable complex structure $J_0$ on $\{h_0 \ge c_0+2\delta\}\supset \{h_t \ge c_0 + 3\delta\}$, seeing as $J|_{\{h_0 \ge c_0\}} = J_0$ and $\psi_t|_{\{h_0 \ge c_0 + 2\delta\}} = \id$.
\item For sufficiently small $t$, $(\psi_t)_* J$ is $\omega_t$-tame, because tameness is an open condition.
\item $(\psi_t)_* H = 0$ on $\psi_t(\{h_0 \ge c_0\}) \supset \{h_t \ge c_0 + 3\delta\}$.
\end{itemize}
Furthermore, the obvious map $u \mapsto \psi_t \circ u$ gives a well-defined map between the moduli space of pseudoholomorphic curves defined using $(H,J)$ and that using the pushed-forward perturbation data. 
Recall that the symplectic form enters into the pseudoholomorphic curve equation, as it is used to turn the Hamiltonian component $H$ of the perturbation datum into a Hamiltonian vector field $X_H$; so the fact that $\psi_t$ does not respect the symplectic forms outside of $\{h_0 \le c_0\}$ may at first appear problematic. 
However, our assumption that $H|_{\{h_0 \ge c_0\}} = 0$ ensures that $X_H = 0$ regardless of the choice of symplectic form on this region. 
Hence the map $u \mapsto \psi_t \circ u$ respects solutions to the pseudoholomorphic curve equation. 
Because $\psi_t$ is invertible, in fact this map is an isomorphism of moduli spaces.

It follows that the map on objects \eqref{eqn:psiobj} extends to a full strict embedding $\fuk(\{h_0 \le c_0\}) \subset \fuk(\{h_t \le c_0+3\delta\})$, which respects the classes $\co(z^v)$ on the cochain level, because the two sides are defined by counting isomorphic moduli spaces. 
The fact that the embedding is a quasi-equivalence follows by \cite[Corollary 10.10]{Seidel:FCPLT}.
\end{proof}

\subsection{$\co$ is `almost an algebra homomorphism'}

We now prove a result which is not used elsewhere in this paper, but which is very useful for computing the map $\mathsf{co}$.

\begin{lem}
\label{lem:coalg}
Suppose that $(X,D)$ is nice or amb-nice, and let $\{v,w,v+w\} \subset \mathsf{pos}_1$. 
Then
\begin{equation} \mathsf{co}(z^v) \cup \mathsf{co}(z^w) = \mathsf{co}(z^{v+w}),\end{equation}
where $\cup$ denotes the Yoneda product on $\HH^\bullet(\fuk(X \setminus D))$.
\end{lem}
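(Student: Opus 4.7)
The plan is to prove the cochain-level identity $\mathsf{co}(z^v) \cup \mathsf{co}(z^w) - \mathsf{co}(z^{v+w}) = [\mu^*, H]$ via a standard one-parameter moduli space interpolation. For each tuple $\mathbf{L} = (L_0,\ldots,L_s)$ of objects and Hamiltonian chord asymptotics $\mathbf{y}$, I would consider the moduli space $\mathcal{R}_2(\mathbf{L})$ of holomorphic discs with boundary punctures $(\zeta_0,\ldots,\zeta_s)$ in order and two ordered interior marked points $(q_1, q_2)$, modulo biholomorphism, equipped with a universal, consistent choice of strip-like ends and perturbation data from $\EuH \times \EuJ$. Solutions $u$ would be required to have class $v+w \in H_2(X, X \setminus D)$ with tangency order $v \cdot D_p - 1$ to $D_p$ at $q_1$ and $w \cdot D_p - 1$ at $q_2$ for each $p$. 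I would parametrize the family by a modulus $\tau \in [0, \infty]$ measuring the conformal separation of $q_1$ from $q_2$ (e.g.\ a neck-length or cross-ratio parameter), arranging the perturbation data to depend on $\tau$ consistently with the two extreme degenerations.

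The rigid (zero-dimensional) count defines a Hochschild cochain $H \in CC^\bullet(\fuk(X\setminus D))$. The boundary of the one-dimensional component then decomposes into three types of strata. The $\tau \to 0$ limit, in which $q_1$ collides with $q_2$, produces a disc with a single interior marked point whose tangency orders are $(v+w) \cdot D_p - 1$; the count of such configurations is $\mathsf{co}(z^{v+w})$, using that $v+w \in \mathsf{pos}_1$ so the constraint is geometrically meaningful. The $\tau \to \infty$ limit produces a two-component configuration joined along a boundary node by a Floer strip (or by $A_\infty$ composition with an inserted output of $\mathsf{co}(z^w)$ between two consecutive boundary inputs of $\mathsf{co}(z^v)$); summed over the insertion position, this is precisely the Yoneda product $\mathsf{co}(z^v) \cup \mathsf{co}(z^w)$ at the cochain level. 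The remaining codimension-one strata are the usual disc-bubblings at the boundary punctures, giving the Hochschild differential $[\mu^*, H]$.

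The hard part will be ruling out sphere bubbling, particularly in the $\tau \to 0$ collision where intersections with $D$ accumulate at a single point and a sphere in $D$ could a priori absorb some of the tangency multiplicity. Since $\{v, w, v+w\} \subset \mathsf{pos}_1$, we have $D_{K(v)}, D_{K(w)}, D_{K(v+w)}$ all nonempty, so the argument of Lemma \ref{lem:nospheres} applies: a decomposition of $v$, $w$, or $v+w$ as disc class plus sphere class would violate Lemma \ref{lem:newtypec} (respectively Lemma \ref{lem:ambnewtypec} in the amb-nice case), forcing the sphere class to vanish. Unstable disc bubbles are ruled out by exactness via Lemma \ref{lem:nodisc}, and transversality for the principal stratum (including the tangency constraints at the two interior points) follows from the standard Cieliebak--Mohnke-type arguments already used in \S\ref{subsec:co}. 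Combining these, the one-parameter family yields the claimed cochain-level homotopy, and passing to cohomology gives $\mathsf{co}(z^v) \cup \mathsf{co}(z^w) = \mathsf{co}(z^{v+w})$ in $\HH^\bullet(\fuk(X\setminus D))$.
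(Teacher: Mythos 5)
Your proposal follows essentially the same argument as the paper: the same moduli space $\mathcal{M}_2(\mathbf{y},v,w)$ with two interior marked points carrying tangency constraints of orders $v\cdot D_p-1$ and $w\cdot D_p-1$, the same one-parameter interpolation whose collision end gives $\mathsf{co}(z^{v+w})$ (via the constant sphere bubble absorbing $q_1,q_2$), whose separated end gives the Yoneda product, and whose interior breakings give $[\mu^*,H]$, with sphere bubbling excluded exactly as in Lemma \ref{lem:nospheres} using $v+w\in\mathsf{pos}_1$ and Lemma \ref{lem:newtypec}. This matches the paper's proof, which follows \cite[\S 5.5]{Ganatra2013}, so no further comment is needed.
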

\begin{proof}
The proof follows that of \cite[Theorem 5.12]{Sheridan:CY} and \cite[Lemma 4.2]{Pomerleano2012}, compare also \cite[Proposition 5.3]{Ganatra2013}.
Given classes $v, w \in \mathsf{pos}_1$, we consider a new moduli space, $\mathcal{M}_2(\mathbf{y},v,w)$. 
The moduli space $\mathcal{R}_2(\mathbf{L})$ is defined to be the moduli space of holomorphic discs, equipped with boundary punctures $(\zeta_0,\ldots,\zeta_s)$ and internal marked points $q_1,q_2$, admitting a biholomorphism to the unit disc $\mathbb{D} \subset \C$ that sends 
\begin{eqnarray*}
\zeta_0 & \mapsto &-\iii,\\
q_1 & \mapsto & t, \\
q_2 & \mapsto & -t,
\end{eqnarray*}
for some $t \in (0,1) \subset \mathbb{D}$; these discs are considered modulo biholomorphism.

We label the boundary components by a tuple of objects $\mathbf{L}$, and the boundary punctures by a tuple of chords $\mathbf{y}$. 
We make a choice of perturbation data on $\mathcal{R}_2(\mathbf{L})$ from the set $\EuH \times \EuJ$ of Definition \ref{defn:perts}, consistent with respect to the Deligne--Mumford compactification.
We then define $\mathcal{M}_2(\mathbf{y},v,w)$ to be the moduli space of pairs $(r,u)$ where $r \in \mathcal{R}_2(\mathbf{L})$ and $u$ is a solution to
\begin{equation}
\left\{ \begin{array}{l}
u: S_r  \to  X\\
{[}u{]} = v+w \mbox{ in $H_2(X,X \setminus D)$} \\
u(z) \in L_C \mbox{ for $z \in C \subset \partial S_r$} \\
u(\epsilon_\zeta(s,t)) \to y_\zeta(t) \mbox{ for $\zeta$ a boundary puncture of $S_r$}\\
\mbox{$u$ is tangent at $q_1$ to $D_p$, to order $v\cdot D_p - 1$, for all $p$}\\
\mbox{$u$ is tangent at $q_2$ to $D_p$, to order $w\cdot D_p - 1$, for all $p$} \\
(du - Y)^{0,1} = 0.
\end{array} \right.
\end{equation}

As before, $\mathcal{M}_2(\mathbf{y},v,w)$ is a smooth moduli space of the expected dimension, for generic choice of perturbation data. 
It admits a Gromov compactification, $\overline{\mathcal{M}}_2(\mathbf{y},v,w)$; the same argument as that given for Lemma \ref{lem:nospheres} implies that the Gromov compactification does not contain any non-constant sphere bubbles, by the hypothesis that $v+w \in \mathsf{pos}_1$. 

A constant sphere bubble must be stable, by definition of the Gromov compactification. 
The only stable sphere that appears in the Deligne--Mumford compactification of $\mathcal{R}_2(\mathbf{L})$ is the boundary component at $t=0$, when $q_1$ and $q_2$ bubble off together in a sphere. 
These strata of $\overline{\mathcal{M}}_2(\mathbf{y},v,w)$ are isomorphic to strata of $\overline{\mathcal{M}}_1(\mathbf{y},v+w)$, via the map which forgets the constant sphere containing $q_1$ and $q_2$. 
To see why, observe that if $(r,u) \in \mathcal{M}_2(\mathbf{y},v,w)$, then $u$ intersects $D$ only at $q_1$ and $q_2$; therefore, if a constant sphere bubble containing $q_1$ and $q_2$ bubbles off in the Gromov topology, then the tree of discs that remains after deleting the sphere can only intersect $D$ at the point where the sphere was attached. 
Furthermore, the homology class of the tree of discs is $v+w$; so it must intersect $D_p$ with multiplicity $(v+w) \cdot D_p$ at that point, for all $p$.
The correspondence is clearly bijective. 

It follows that all strata of $\overline{\mathcal{M}}_2(\mathbf{y},v,w)$ are smooth manifolds of the expected dimension; in particular, the zero-dimensional component $\mathcal{M}_2(\mathbf{y},v,w)_0$ is a finite set of points. 
Summing the contributions of these points defines an element $H(v,w) \in CC^\bullet(\fuk(X \setminus D))$.

Now we consider the one-dimensional component of the moduli space, $\overline{\mathcal{M}}_2(\mathbf{y},v,w)$. 
It is a compact one-manifold with boundary, by standard gluing theorems.
The signed sum of its boundary points is $0$, and each boundary point corresponds to a term on the left-hand side of the equation
\begin{equation} \mathsf{co}\left(z^{v+w}\right) - \mathsf{co}\left(z^v\right) \cup \mathsf{co}\left(z^w\right) - [\mu^*,H(v,w)] = 0.\end{equation}
Boundary points at $t=0$ correspond to elements of $\mathcal{M}_1(\mathbf{y},v+w)$ by the above argument, and hence contribute the term $\mathsf{co}(z^{v+w})$. 
Boundary points at $t=1$ contribute the term $\mathsf{co}(z^v) \cup \mathsf{co}(z^w)$, and boundary points for $0<t<1$ contribute the term $[\mu^*,H(v,w)]$ (see \cite[\S 5.5]{Ganatra2013} for details in an analogous situation).

It follows that 
\begin{equation}\mathsf{co}(z^{v+w}) = \mathsf{co}(z^v) \cup \mathsf{co}(z^w)\end{equation}
on the level of cohomology, as required.
\end{proof}

\begin{rmk}
Lemma \ref{lem:coalg} should be thought of as an analogue of the fact that $\EuC \EuO$ is an algebra homomorphism. 
To make $\mathsf{co}$ into an honest algebra homomorphism, one would need to enlarge $\mathsf{sh}^\bullet(X \setminus D)$, and also deform the algebra structure by some relative Gromov-Witten invariants (compare \cite{Ganatra2016a}); however, for generators $z^v, z^w$ as in the Lemma, the only curves that contribute to the product are constant.
\end{rmk}

\subsection{Branched covers}\label{subsec:brcov}

\begin{defn}
Let $(X,D)$ and $\left(\wt{X},\wt{D}\right)$ be $\snc$ pairs. 
We say that a map $\phi: \wt{X} \to X$ is a \emph{branched cover of $\snc$ pairs} if:
\begin{itemize}
\item $\phi$ is holomorphic;
\item $\phi^{-1}(D) = \wt{D}$, and $\phi|_{\wt{X} \setminus \wt{D}}: \wt{X} \setminus \wt{D} \to X \setminus D$ is an unbranched cover.
\end{itemize}
\end{defn}

\begin{defn}
Let $(X,D,\omega,c)$ and $(\wt{X},\wt{D},\tilde{\omega},\tilde{c})$ be relative K\"{a}hler manifolds. 
We say that a map $\phi: \wt{X} \to X$ is a \emph{branched cover of relative K\"{a}hler manifolds} if:
\begin{itemize}
\item $\phi$ is a branched cover of $\snc$ pairs;
\item $c = \tilde{c}$, and $\phi^*h = \tilde{h}$ on $\{\tilde{h} \le \tilde{c}\}.$
\end{itemize}
In particular, $\phi|_{\{\tilde{h} \le c\}}$ is an unbranched cover of Liouville domains. 
\end{defn}

\begin{lem}
\label{lem:covex}
(compare \cite[Proposition 10]{Auroux2000}) 
Let:
\begin{itemize}
\item $(X,D,\omega,c)$ be a relative K\"{a}hler manifold;
\item $\left(\wt{X},\wt{D}\right)$ be a K\"ahler $\snc$ pair; and
\item $\phi: \wt{X} \to X$ be a branched cover of $\snc$ pairs.
\end{itemize}
Then we can equip $\left(\wt{X},\wt{D}\right)$ with a relative K\"{a}hler form so that $\phi$ becomes a branched cover of relative K\"{a}hler manifolds.
\end{lem}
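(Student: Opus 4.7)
The plan is to pull back $h$ via $\phi$ and then correct it by an arbitrarily small multiple of a background K\"ahler potential on $\wt{X}$, with the correction smoothly cut off so that the pulled-back potential is preserved on the relevant sublevel set. Since by hypothesis $\phi\vert_{\wt{X}\setminus\wt{D}}$ is an unramified cover, $\phi^{*}h$ is smooth and plurisubharmonic on $\wt{X}\setminus\wt{D}$, and
\[
-dd^{c}(\phi^{*}h)=\phi^{*}\omega
\]
is a K\"ahler form on $\wt{X}\setminus\wt{D}$. Moreover, in local holomorphic coordinates near $\wt{D}$ where $\phi$ has the form $z_{i}=\tilde{z}_{i}^{m_{i}}$ (with $m_{i}\ge 1$) and $D=\{z_{1}\cdots z_{k}=0\}$, the identity $\log|z_{i}|=m_{i}\log|\tilde{z}_{i}|$ together with \eqref{eqn:hnearD} shows that $\phi^{*}h$ has the form prescribed by Definition \ref{defn:relkahl}, with the new linking numbers $\tilde{\ell}_{i}=\ell_{\phi(i)}\,m_{i}>0$. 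The only defect is that $\phi^{*}\omega$ extends across $\wt{D}$ to a smooth closed $(1,1)$-form on $\wt{X}$ that may be merely semi-positive (and degenerate) at ramification points on $\wt{D}$.

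To repair this, fix any relative K\"ahler form $\omega_{0}=-dd^{c}h_{0}$ on $(\wt{X},\wt{D})$, which exists since $(\wt{X},\wt{D})$ is assumed K\"ahler; by adding a constant to $h_{0}$ (which does not change $\omega_{0}$) we may assume $h_{0}\ge 0$ on any prescribed compact subset of $\wt{X}\setminus\wt{D}$. After possibly enlarging $c$ (which we are free to do, as the Fukaya category does not depend on this choice by Lemma \ref{lem:indepd+}), fix a smooth cutoff $\chi\colon\R\to[0,1]$ with $\chi(t)=0$ for $t\le c$ and $\chi(t)=1$ for $t\ge c+1$, and define
\[
\tilde{h}:=\phi^{*}h+\varepsilon\,\chi(\phi^{*}h)\cdot h_{0}
\]
on $\wt{X}\setminus\wt{D}$, for some small $\varepsilon>0$ to be chosen. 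Arrange, by the normalization above, that $h_{0}\ge 0$ on the compact transition region $\{c\le\phi^{*}h\le c+1\}$.

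Next one checks that $\tilde{\omega}:=-dd^{c}\tilde{h}$ extends to a K\"ahler form on $\wt{X}$. On $\{\phi^{*}h\le c\}$ the cutoff vanishes, so $\tilde{h}=\phi^{*}h$ and $\tilde{\omega}=\phi^{*}\omega$, which is K\"ahler here because this region is a compact subset of $\wt{X}\setminus\wt{D}$ where $\phi$ is \'etale. In a neighbourhood of $\wt{D}$ (where $\chi\equiv 1$) we have $\tilde{h}=\phi^{*}h+\varepsilon h_{0}$, so $\tilde{\omega}=\phi^{*}\omega+\varepsilon\omega_{0}$ extends smoothly across $\wt{D}$ and is K\"ahler there because $\omega_{0}$ is K\"ahler and $\phi^{*}\omega$ is semi-positive. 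In the compact transition region, $\phi^{*}\omega$ is strictly positive (since again $\phi$ is \'etale there), and a direct Leibniz-rule computation shows that $\tilde{\omega}-\phi^{*}\omega$ is a bounded $(1,1)$-form whose coefficients are linear in $\varepsilon$; hence for $\varepsilon$ sufficiently small, $\tilde{\omega}$ remains positive in this region. Near $\wt{D}$, the formula $\tilde{h}=-(\phi^{*}\psi+\varepsilon\psi_{0})-\sum_{i}(\ell_{\phi(i)}m_{i}+\varepsilon\ell_{i}^{0})\log|\tilde{z}_{i}|$ verifies the local form \eqref{eqn:hnearD}, with positive linking numbers.

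Finally one checks the compatibility with the Liouville domain. Setting $\tilde{c}:=c$, the inclusion $\{\phi^{*}h\le c\}\subseteq\{\tilde{h}\le\tilde{c}\}$ is immediate since $\tilde{h}=\phi^{*}h$ there, while the opposite inclusion follows from the normalization $h_{0}\ge 0$ on the transition region, which forces $\tilde{h}\ge\phi^{*}h$ wherever $\chi(\phi^{*}h)>0$. Consequently $\{\tilde{h}\le\tilde{c}\}=\{\phi^{*}h\le c\}$ and $\tilde{h}=\phi^{*}h$ on this set, as required; and after a further enlargement of $c$ (made before constructing $\tilde{h}$) we may also ensure $\tilde{c}$ exceeds all critical values of $\tilde{h}$, using that the critical points of $\tilde{h}$ form a compact subset of $\wt{X}\setminus\wt{D}$ by virtue of its prescribed form near $\wt{D}$. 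The main subtlety throughout is the balancing of the cutoff against the semi-positivity failure of $\phi^{*}\omega$: the interpolation must begin only after the Liouville level $c$ (to protect the potential) yet must complete before reaching $\wt{D}$ (so that $\omega_{0}$ has a chance to restore positivity), and this is exactly what the choice of $\chi$ arranges.
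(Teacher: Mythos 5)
Your proposal is correct and follows essentially the same route as the paper: pull back $h$, observe that $\phi^*\omega$ is only semi-positive along $\wt{D}$, and repair it by adding $\varepsilon$ times a background relative K\"ahler potential cut off to vanish near $\{\phi^*h\le c\}$ and equal to $1$ near $\wt{D}$, choosing $\varepsilon$ small by compactness on the transition region. Your additional checks (the local form \eqref{eqn:hnearD} with linking numbers $\ell_{\phi(i)}m_i$, and the verification that $\{\tilde h\le c\}=\{\phi^*h\le c\}$ so that the potentials genuinely agree on the Liouville domain) are refinements the paper leaves implicit, and are welcome; the only cosmetic point is that you should normalize $h_0\ge 0$ globally (it is bounded below) rather than only on the transition region, so that $\tilde h\ge\phi^*h$ holds wherever the cutoff is positive.
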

\begin{proof}
Define $\tilde{h}' := \phi^* h$, and $\tilde{\omega}' := \phi^* \omega$. 
These would define a structure of relative  K\"{a}hler manifold on $\left(\wt{X},\wt{D}\right)$, except that $\tilde{\omega}'$ is not a K\"{a}hler form. 
Namely, we have $\tilde{\omega}'(v,Jv) \ge 0$ for all $v$ (by the corresponding property of $\omega$, because $\phi$ is holomorphic); however, we only have $\tilde{\omega}'(v,Jv) >0$ for $v \notin \ker(\phi_*)$, whereas we want it to be true for all $v \neq 0$. 
We know that $(\ker(\phi_*) \setminus 0) \subset T\wt{X}|_{\wt{D}}$, because $\phi|_{\wt{X} \setminus \wt{D}}$ is a covering map. 
Thus, $\tilde{\omega}'$ is positive on $\wt{X} \setminus \wt{D}$, but may only be non-negative along $\wt{D}$. 
So we need to alter $\tilde{\omega}'$ in a neighbourhood of $\wt{D}$. 

By assumption, $\left(\wt{X},\wt{D}\right)$ supports a relative K\"{a}hler form $\tilde{\omega}'' = -dd^c \tilde{h}''$. 
We set
\begin{equation} \tilde{h} := \tilde{h}' + \epsilon \cdot \rho \cdot \tilde{h}'',\end{equation}
where $\rho: \wt{X} \to \R$ is a smooth function such that $\rho = 0$ in a neighbourhood of $\{\tilde{h}' \le c\}$ and $\rho = 1$ in a neighbourhood of $\wt{D}$, and $\epsilon > 0$. 
It is clear that the resulting $\tilde{\omega} := -dd^c\tilde{h}$ extends smoothly across $\wt{D}$. 

Now $\tilde{\omega} = \tilde{\omega}' + \epsilon \cdot \tilde{\omega}''$ in a neighbourhood of $\wt{D}$, so it is positive there for any $\epsilon > 0$.
On the remaining compact region, $\tilde{\omega}' = -dd^c \wt{h}'$ is positive, but $-dd^c(\epsilon \cdot \rho \cdot \wt{h}'')$ is indefinite; however by compactness, we can choose $\epsilon > 0$ sufficiently small that the sum is positive. 

It follows that $\tilde{\omega} = -dd^c\tilde{h}$ equips $\left(\wt{X},\wt{D}\right)$ with the structure of a relative K\"{a}hler manifold, and $\phi$ is a branched cover of relative K\"{a}hler manifolds by construction.
\end{proof}

\subsection{Behaviour of $\co$ with respect to branched covers}\label{subsec:cobr}

Let $\left(\wt{X},\wt{D}\right)$ and $(X,D)$ be $\snc$ pairs, and $\phi: \wt{X} \to X$ a branched cover of $\snc$ pairs. 
Recall that the map $\phi|_{\wt{X} \setminus \wt{D}}$ induces a map of grading data,
\begin{equation} \mathbf{p}: H_1\left(\cG\left(\wt{X} \setminus \wt{D}\right)\right) \to H_1\left(\cG(X \setminus D)\right)\end{equation}
(see \cite[Lemma 3.29]{Sheridan:CY}). 

Consider the map
\begin{equation} \phi_*: H_2\left(\wt{X}, \wt{X} \setminus \wt{D}\right) \to H_2(X,X \setminus D).\end{equation}
Any $u \in \mathsf{pos}_1( \wt{X},\wt{D})$ can be represented by a small holomorphic disc $u: \mathbb{D} \to \wt{X}$, which only hits $\wt{D}$ at the origin. 
It follows that $\phi \circ u$ is holomorphic, and therefore intersects each component of $D$ non-negatively; and furthermore it intersects $D$ only at the origin, so $[\phi \circ u] \in \mathsf{pos}_1(X,D)$. 
Thus we have a map
\begin{equation} \phi_* : \mathsf{pos}_1\left(\wt{X},\wt{D}\right) \to \mathsf{pos}_1(X,D).\end{equation}

However, the resulting map
\begin{align}
\sh^\bullet\left(\wt{X},\wt{D}\right) & \to \sh^\bullet(X,D) \\
z^u & \mapsto z^{\phi_* u}
\end{align}
need \emph{not} respect $\G$-gradings. 

To see how the grading changes under this map, we start by giving an alternative description of the $\G$-grading of $\sh^\bullet(X,D)$. 
Consider the line bundle $\mathcal{O}(K_X + D)$ of `logarithmic volume forms', i.e., meromorphic $(n,0)$ forms which take the local form
\begin{equation} \eta = \psi \cdot d\log z_1 \wedge \ldots \wedge d\log z_k \wedge dz_{k+1} \wedge\ldots \wedge dz_n\end{equation}
in a local coordinate system in which $D = \{z_1\ldots z_k = 0\}$, where $\psi$ is holomorphic.

Let $\eta$ be a meromorphic section of $\mathcal{O}(K_X+D)$ whose divisor of zeroes $D_\eta$ is supported on $D$. 
There is an associated squared phase map
\[ \alpha_\eta:\cG (X \setminus D)  \to  S^1\]
defined as in \eqref{eqn:sqph}, and the induced map on $H_1$ defines a morphism of grading data
\begin{align}
p_\eta:H_1(\cG (X \setminus D)) &\to \Z.
\end{align}

\begin{lem}
\label{lem:altgrad}
Let $\eta$ be as above, and let $u \in \mathsf{pos}_1$ be represented by a smooth disc $u:\mathbb{D} \to X$. 
Let $\rho: S^1 \to \cG(X \setminus D)$ be a lift of $\partial u$ such that
\begin{equation}
\label{eqn:kxdrho}
 p_\eta([\rho]) = 2u \cdot D_\eta.
\end{equation}
Then the grading of $z^u \in \sh^\bullet(X,D)$ is equal to  $[\rho]$.
\end{lem}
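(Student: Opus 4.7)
The plan is to verify the equality in $H_1(\cG(X\setminus D))$ by comparing both sides under the two natural maps
\[ \pi\colon H_1(\cG(X\setminus D))\to H_1(X\setminus D) \quad\text{and}\quad p_\eta\colon H_1(\cG(X\setminus D))\to \Z,\]
which together separate points on any single fibre of $\pi$. The key input for this separation is the existence of $\eta$ itself: since $\eta$ exhibits a linear equivalence $K_X+D\sim D_\eta$ with $D_\eta$ supported on $D$, the class $c_1(K_X)=[D_\eta]-[D]$ restricts to zero on $X\setminus D$, so $c_1(TX)|_{X\setminus D}=0$. Lemma~\ref{lem:c1tors} then implies that $\iota\colon \Z\to H_1(\cG(X\setminus D))$ is injective; the same argument, applied to $\eta|_{X\setminus D}$ as a nowhere-vanishing section of $K_X|_{X\setminus D}$, identifies $p_\eta\circ\iota$ with the identity of $\Z$ (the tensor power $k$ in that lemma is $1$ here). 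Hence two preimages of a single class in $H_1(X\setminus D)$ differ by $\iota(m)$ for a unique $m\in\Z$, and that $m$ is detected by $p_\eta$.

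By definition $-f(u) = [\tilde u|_{\partial\Sigma}]$, where $\tilde u\colon(\mathbb D,\partial\mathbb D)\to(\cG X,\cG(X\setminus D))$ is a lift of $u$; both $[\rho]$ and $2u\cdot D-f(u) = \iota(2u\cdot D)+[\tilde u|_{\partial\Sigma}]$ then project under $\pi$ to $[\partial u]$. Using $p_\eta(\iota(2u\cdot D))=2u\cdot D$ and the hypothesis $p_\eta([\rho])=2u\cdot D_\eta$, the claim reduces to the identity
\[ p_\eta\bigl([\tilde u|_{\partial\Sigma}]\bigr) \;=\; 2(u\cdot D_\eta - u\cdot D).\]

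I would prove this identity by the argument principle. Extending $\tilde u$ over $\mathbb D$, the squared phase map $\alpha_\eta\circ\tilde u\colon \mathbb D\to S^1$ is defined away from the unique interior point $z_0\in\mathbb D$ at which $u$ meets $D$ (unique because $u\in\mathsf{pos}_1$). After choosing a holomorphic frame of $TX$ along $u$ and writing a real basis $v_1(z),\dots,v_n(z)$ for $\tilde u(z)$ relative to that frame, one computes
\[ \eta\bigl(v_1(z)\wedge\dots\wedge v_n(z)\bigr)^2 \;=\; \det(M(z))^2\cdot (u^*\eta)(z)^2,\]
where $M(z)$ is the real basis-change matrix and $u^*\eta$ is the corresponding meromorphic section of the trivial bundle $u^*K_X\cong K_{\mathbb D}$. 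The factor $\det(M)^2>0$ contributes no winding, so the winding number on $\partial\mathbb D$ equals twice the order of $u^*\eta$ at $z_0$. Since $\eta$, viewed as a meromorphic section of $K_X$, has divisor $D_\eta-D$ (simple pole along each $D_p$, zero of order $b_p$ where $D_\eta=\sum_p b_pD_p$), its pullback has order $u\cdot D_\eta - u\cdot D$ at $z_0$, giving the identity.

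The main technical nuisance will be making the basis-change calculation precise and confirming that the squaring in $\alpha_\eta$ really does kill the $GL(n,\R)$ ambiguity of the real basis $v_j(z)$, together with the factor-of-one convention matching that of Lemma~\ref{lem:c1tors}; modulo that bookkeeping, the proof is a direct combination of the argument principle with the injectivity input above.
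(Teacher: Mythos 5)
Your reduction is sound: both $[\rho]$ and the grading $2u\cdot D - f(u) = 2u\cdot D + [\tilde u|_{\partial\mathbb{D}}]$ project to $[\partial u]$ under $H_1(\cG(X\setminus D))\to H_1(X\setminus D)$; that projection together with $p_\eta$ separates points within a fibre (by the exact sequence of Remark \ref{rmk:gradcl} and $p_\eta\circ(\Z\to H_1(\cG(X\setminus D)))=\mathrm{id}$); and the lemma therefore does come down to the identity $p_\eta([\tilde u|_{\partial\mathbb{D}}])=2(u\cdot D_\eta-u\cdot D)$. But your proof of that identity has a genuine error at its crux. The matrix $M(z)$ expressing a real basis of the Lagrangian subspace $\tilde u(z)$ in a complex frame of $u^*TX$ is a \emph{complex} matrix, and $\det(M(z))^2$ is a general nonzero complex number, not a positive real one: already for $n=1$, the Lagrangian $i\R\subset\C$ with the standard frame gives $\det(M)^2=-1$, and as the Lagrangian rotates, $\det(M(z))^2$ winds around $S^1$ --- this winding is precisely the Maslov index relative to the frame, which is the very quantity the squared phase map is built to detect. (The squaring only kills the $GL(n,\R)$ ambiguity in the choice of real basis of a \emph{fixed} Lagrangian; it does not make the frame factor positive.) So ``$\det(M)^2>0$ contributes no winding'' is false as stated, and without control on this factor the argument does not close.

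The step is salvageable, and the correct justification is exactly the observation on which the paper's proof turns: by the definition of $f$ in \eqref{eqn:h2g}, $\tilde u$ is a lift of $u$ to $\cG X$ over the \emph{entire} disc, and the frame is also defined over the entire disc, so $z\mapsto\arg\bigl(\det(M(z))^2\bigr)$ is a continuous map $\mathbb{D}\to S^1$ whose boundary restriction is nullhomotopic and hence has winding number zero. With that substituted for the positivity claim, your computation goes through: the coefficient factor $h(z)$ of $\eta$ (viewed as a meromorphic $n$-form with divisor $D_\eta-D$) has local degree $u\cdot D_\eta-u\cdot D$ at $z_0$, giving the identity; note this is a topological local-degree count, so it survives the fact that the lemma only assumes $u$ smooth, whereas your appeal to the argument principle and to ``$u^*\eta$ as a meromorphic section of $K_{\mathbb{D}}$'' tacitly assumes $u$ holomorphic. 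For comparison, the paper avoids the direct winding computation altogether: it first checks that \eqref{eqn:kxdrho} is unchanged when $\eta$ is multiplied by a meromorphic function with zeros and poles along $D$, then replaces $\eta$ by a nonvanishing holomorphic form near the image of $u$, so that the whole squared phase map extends over the disc and both sides of your identity become $0$.
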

\begin{proof}
First we observe that \eqref{eqn:kxdrho} uniquely determines $[\rho]$. 
Then we check that the class $\rho$ does not depend on the choice of $\eta$: indeed, multiplying $\eta$ by a meromorphic function whose zeroes and poles lie along $D$ has the effect of adding the same constant to both sides of \eqref{eqn:kxdrho}. 

Thus we may choose $\eta$ to be a non-vanishing holomorphic $(n,0)$-form in a neighbourhood of $u$. 
By the definition of the homomorphism $f$ from \eqref{eqn:h2g}, $f(u)$ represents the boundary of a disc in $\cG X$: so the squared phase map $\alpha_\eta$ applied to this lift extends over the interior of $u$. 
It follows that $p_\eta(f(u)) = 0$, and hence that $p_\eta([\rho]) = 2u \cdot D$ where $[\rho]$ is the grading of $z^u$. 

Finally, because we regard $\eta$ as a section of $\mathcal{O}(K_X+D)$, its divisor of zeroes is $D_\eta = D$ when restricted to a neighbourhood of $u$. 
In particular $u \cdot D_\eta = u \cdot D$, so $p_\eta([\rho]) = 2u \cdot D_\eta$ as required.
\end{proof}

\begin{lem}
\label{lem:logram}
For any $u \in \mathsf{pos}_1\left(\wt{X},\wt{D}\right)$, we have
\begin{equation}
\label{eqn:logram}
 \mathbf{p}\left( \mathsf{deg}\left(z^u\right)\right) = \mathsf{deg}\left(z^{\phi_* u}\right) + 2 R_\phi \cdot u
\end{equation}
where $R_\phi$ is the \emph{logarithmic ramification divisor}  (see \cite[\S 11.4]{Iitaka1982}). 
\end{lem}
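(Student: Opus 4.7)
The plan is to derive both sides of \eqref{eqn:logram} using the alternative description of the grading given in Lemma \ref{lem:altgrad}, applied to a meromorphic section of $\mathcal{O}(K_X+D)$ and its pullback to $\wt X$. To set things up, I would fix a meromorphic section $\eta$ of $\mathcal{O}(K_X+D)$ with zero divisor $D_\eta$ supported on $D$, and set $\tilde\eta:=\phi^*\eta$. Viewed as a meromorphic section of $\mathcal{O}(K_{\wt X}+\wt D)$, the logarithmic ramification formula $K_{\wt X}+\wt D=\phi^*(K_X+D)+R_\phi$ (see \cite[\S 11.4]{Iitaka1982}) identifies the zero divisor of $\tilde\eta$ as $D_{\tilde\eta}=\phi^*D_\eta+R_\phi$, still supported on $\wt D$. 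Crucially, since $R_\phi$ is supported on $\wt D$, the meromorphic $(n,0)$-forms $\tilde\eta$ and $\phi^*\eta$ agree on $\wt X\setminus\wt D$, so their squared phase maps satisfy $\alpha_{\tilde\eta}=\alpha_\eta\circ\phi_{\cG}$ there, and passing to $H_1$ yields the key identity
\[ p_{\tilde\eta} \;=\; p_\eta\circ\mathbf{p}.\]

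Applying Lemma \ref{lem:altgrad} in $(\wt X,\wt D)$ to compute $\deg z^u$ via $\tilde\eta$, and in $(X,D)$ to compute $\deg z^{\phi_* u}$ via $\eta$, together with the projection formula $u\cdot\phi^*D_\eta=(\phi_* u)\cdot D_\eta$, gives
\[ p_{\tilde\eta}(\deg z^u) \;=\; 2u\cdot D_{\tilde\eta} \;=\; 2(\phi_* u)\cdot D_\eta + 2R_\phi\cdot u, \qquad p_\eta(\deg z^{\phi_* u}) \;=\; 2(\phi_* u)\cdot D_\eta.\]
Substituting $p_{\tilde\eta}=p_\eta\circ\mathbf{p}$ and subtracting yields
\[ p_\eta\bigl(\mathbf{p}(\deg z^u) - \deg z^{\phi_* u}\bigr) \;=\; 2R_\phi\cdot u.\]

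To conclude, I would observe that the images of $\mathbf{p}(\deg z^u)$ and $\deg z^{\phi_* u}$ in $H_1(X\setminus D)$ coincide, since modulo the fibre class both classes reduce to boundary data which are compatibly identified under $\phi$. By Remark \ref{rmk:gradcl}, the difference therefore lies in the image of $\Z\hookrightarrow H_1(\cG(X\setminus D))$; writing it as $k\cdot[\mathrm{fibre}]$ with $k\in\Z$ and using that $p_\eta$ is a morphism of grading data (so that $p_\eta([\mathrm{fibre}])=1$), one reads off $k=2R_\phi\cdot u$, which is precisely \eqref{eqn:logram}. The main technical point is the identification $D_{\tilde\eta}=\phi^*D_\eta+R_\phi$, which amounts to a local computation in logarithmic coordinates compatible with the sign convention of \cite{Iitaka1982}; once that is in hand, the rest of the argument is purely formal.
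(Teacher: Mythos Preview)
Your proof is correct and follows essentially the same approach as the paper: both use Lemma \ref{lem:altgrad} with a section $\eta$ of $\mathcal{O}(K_X+D)$ and its pullback, the identity $p_{\phi^*\eta}=p_\eta\circ\mathbf{p}$, and the observation that $\mathbf{p}(\deg z^u)-\deg z^{\phi_* u}$ lies in the image of $\Z$ so that $p_\eta$ detects it. The only cosmetic difference is that the paper chooses $\eta$ to be holomorphic and non-vanishing in a neighbourhood of $\phi\circ u$ (so $D_\eta=0$ and $D_{\phi^*\eta}=R_\phi$ locally, and the two applications of Lemma \ref{lem:altgrad} give $2R_\phi\cdot u$ and $0$ directly), whereas you keep a global $\eta$ with $D_\eta$ supported on $D$ and cancel the common $2(\phi_* u)\cdot D_\eta$ term via the projection formula; both routes arrive at the same place.
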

\begin{proof}
Let $\rho: S^1 \to \cG\left(\wt{X} \setminus \wt{D} \right)$ be a lift of $\partial u$ so that $\deg(z^u) = [\rho]$, and let $\rho': S^1 \to \cG(X \setminus D)$ be a lift of $\partial(\phi \circ u)$ so that $\deg(z^{\phi_* u}) = [\rho']$. 
Observe that $\phi_* \rho$ and $\rho'$ are lifts of the same loop $\partial (\phi \circ u)$ in $X \setminus D$, so we have
\[ i = \phi_* \rho - \rho' \]
in $\G$, for some $i \in \Z$. 

Let $\eta$ be a section of $\mathcal{O}(K_X + D)$ which has no zeroes or poles in a neighbourhood of $\phi \circ u$. 
Restricting to that neighbourhood, we have $D_\eta = 0$ and $D_{\phi^* \eta} = R_\phi$ (the latter is the definition of the logarithmic ramification divisor). 
We then have
\begin{align}
i &= p_\eta(\phi_* \rho) - p_\eta(\rho') \quad \text{ because $p_\eta$ is a splitting of the grading datum} \\
&= p_{\phi^* \eta}(\rho) - p_\eta(\rho') \\
&= 2R_\phi \cdot u - 0 \quad \text{ by applying Lemma \ref{lem:altgrad} to both terms},
\end{align}
which completes the proof.
\end{proof}

\begin{rmk}
Observe that the logarithmic ramification divisor is effective, so $R_\phi \cdot u \ge 0$ for any $u \in \mathsf{pos}_1$.
\end{rmk}

\begin{defn}
We define a map of $\Bbbk$-vector spaces,
\begin{eqnarray*} 
\phi^*: \mathbf{p}^* \mathsf{sh}^\bullet(X,D) & \to & \mathsf{sh}^\bullet\left(\wt{X},\wt{D}\right)\\
\phi^*(z^u) &:=& \sum_{\substack{v \in \mathsf{pos}_1\left(\wt{X},\wt{D}\right): \\ \phi_* v = u, \\
R_\phi \cdot v = 0}} z^v.
\end{eqnarray*}
It follows from Lemma \ref{lem:logram} that the map is $\G\left(\wt{X} \setminus \wt{D} \right)$-graded.
\end{defn}

Now suppose that $(X,D)$ and $\left(\wt{X},\wt{D}\right)$ are both nice, the covering group of $\phi$ is abelian, and the induced map $\phi_*: H_1\left(\wt{X} \setminus \wt{D} \right) \to H_1(X \setminus D)$ is injective. 
Equip $(X,D)$ and $\left(\wt{X},\wt{D}\right)$ with relative K\"{a}hler structures so that $\phi$ is a branched cover of relative K\"{a}hler manifolds. 
In this situation, one can make choices of perturbation data such that there is a full strict embedding of $A_\infty$ categories,
\begin{equation} 
\label{eqn:fukembeds}
\mathbf{p}^* \fuk(X \setminus D) \hookrightarrow \fuk\left(\wt{X} \setminus \wt{D} \right). \end{equation}
For example, by combining our assumptions that the covering group of $\phi$ is abelian and the induced map on $H_1$ is injective, we can show that the universal abelian covers of $\EuG(X \setminus D)$ and $\EuG(\wt{X} \setminus \wt{D})$ are isomorphic; a choice of isomorphism determinues the functor \eqref{eqn:fukembeds} on the level of objects. 
See \cite[Proposition 3.10]{Sheridan:CY} for details. 

It follows that we have a map
\[ \wt{\co}: \sh^\bullet\left( \wt{X},\wt{D} \right) \to \HH^\bullet\left(\mathbf{p}^* \fuk(X \setminus D)\right).\]
We also have an inclusion 
\begin{equation}
\label{eqn:eqinc}
 \mathbf{p}^* \HH^\bullet(\fuk(X \setminus D)) \simeq \HH^\bullet(\mathbf{p}^* \fuk(X \setminus D))^\Gamma \hookrightarrow \HH^\bullet(\mathbf{p}^* \fuk(X \setminus D))
 \end{equation}
where $\Gamma$ is the covering group (the first identification arises by \cite[Remark 2.66]{Sheridan:CY}).

\begin{lem} 
\label{lem:branchedfuks}
Let $\phi: \wt{X} \to X$ be a branched cover of $\snc$ pairs. Suppose that both $(\wt{X},\wt{D})$ and $(X,D)$ are nice, the covering group of $\phi$ is abelian, and the induced map $H_1(\wt{X} \setminus \wt{D}) \to H_1(X \setminus D)$ is injective: it follows that we have an embedding \eqref{eqn:fukembeds}, and the maps $\co$ and $\wt{\co}$ are defined. 
Then the diagram
\begin{equation} \xymatrix{ \mathsf{sh}^\bullet\left(\wt{X},\wt{D}\right) \ar[r]^-{\wt{\mathsf{co}}}  & \HH^\bullet(\mathbf{p}^* \fuk(X\setminus D))  \\
\mathbf{p}^* \mathsf{sh}^\bullet(X,D) \ar[u]_-{\phi^*} \ar[r]_-{\mathbf{p}^* \mathsf{co}} & \mathbf{p}^* \HH^\bullet(\fuk(X \setminus D)) \ar[u]^-{\eqref{eqn:eqinc}}}  \end{equation}
commutes. 
\end{lem}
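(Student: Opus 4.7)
The plan is to prove the diagram commutes on the cochain level (not merely on cohomology) by exhibiting a bijection between the moduli spaces whose signed counts compute the two sides. The key geometric input is that $\phi$ is a branched cover of relative K\"ahler manifolds (which can be arranged by Lemma \ref{lem:covex}), so the pseudoholomorphic-curve data on $X$ pulls back to that on $\wt{X}$.

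First, I would make simultaneous choices of auxiliary data. The covering group $\Gamma$ of $\phi|_{\wt{X}\setminus \wt{D}}$ acts on $\wt{X}$ by deck transformations, preserving the relative K\"ahler form by construction. I would choose $\Gamma$-equivariant Floer and perturbation data on $\wt{X}$, taken from $\EuH\times\EuJ$ of Definition \ref{defn:perts}, for moduli spaces defining $\wt{\co}$ on the subcategory $\mathbf{p}^*\fuk(X\setminus D)\subset \fuk(\wt{X}\setminus\wt{D})$; these data descend to data on $X$ defining $\co$. This is analogous to the setup for \eqref{eqn:fukembeds} in \cite[Proposition 3.10]{Sheridan:CY}.

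Second, fix a tuple $(\wt{y}_0,\ldots,\wt{y}_s)$ of chords between the specified lifted Lagrangians and let $(y_0,\ldots,y_s)$ denote its image tuple in $\fuk(X\setminus D)$. I would construct a bijection
\[ \bigsqcup_{\substack{v:\,\phi_*v=u\\ R_\phi\cdot v=0}} \mathcal{M}_1(\vec{\wt{y}},v)_0 \;\longleftrightarrow\; \mathcal{M}_1(\vec{y},u)_0 \]
via pushdown $\wt{u}\mapsto \phi\circ\wt{u}$. The pushdown is $J$-holomorphic because the perturbation data are $\phi$-related, and the hypothesis $R_\phi\cdot v=0$ forces $\wt{u}$ to hit $\wt{D}$ only at an unramified point, so $\phi\circ\wt{u}$ is a smooth disc in $\mathcal{M}_1(\vec{y},\phi_*v)_0$ with the correct multiplicities. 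For the inverse, given $u'\in\mathcal{M}_1(\vec{y},u)_0$, the asymptotic chord data $\wt{y}_i$ determine lifts of the boundary endpoints to the specified $\wt{L}_i$, and the boundary segments then lift uniquely along the covering $\phi|_{\wt{X}\setminus\wt{D}}$ to a loop on $\cup\wt{L}_i$ (the monodromy closes up exactly because the embedding \eqref{eqn:fukembeds} is set up using the grading datum morphism $\mathbf{p}$ and the injectivity hypothesis on $H_1$). The continuous lift of $u'|_{u'^{-1}(X\setminus D)}$ then extends across the puncture by removable singularities to a $\wt{J}$-holomorphic disc $\wt{u}':\mathbb{D}\to\wt{X}$, whose class $v=[\wt{u}']$ clearly satisfies $\phi_*v=u$.

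Third, I would verify that $\wt{u}'$ is smooth and hits $\wt{D}$ at an unramified point, so $R_\phi\cdot v=0$. This is the main obstacle: the argument is local in holomorphic coordinates at the interior marked point. If $\phi$ had ramification index $e>1$ at the candidate lift point $\tilde{q}$ along some $\wt{D}_{\tilde{p}}$, and $u'$ had intersection multiplicity $k$ with $D_p$ at $q=\phi(\tilde{q})$, the local equation would force $\wt{u}'(z)^e$ to vanish to order $k$ at the puncture; for $\wt{u}'$ to extend as a smooth map rather than a branched cover would require $e\mid k$ and would produce a disc of multiplicity $k/e$, whereas $R_\phi\cdot v>0$ in that case. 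Such points arise only after reparametrizing the domain by an $e$-fold cover, so are not in $\mathcal{M}_1(\vec{\wt{y}},v)_0$ for any $v$. Thus the image of the pushdown map is exactly the claimed index set of lifts. The grading shifts on the two sides are reconciled by Lemma \ref{lem:logram}, and the orientations/signs are preserved because the strip-like ends lie in $X\setminus D$ where $\phi$ is a local biholomorphism and the Cauchy--Riemann operators (hence the orientation lines) are identified by $\phi$ at each puncture. Summing matrix coefficients over all $v$ with $\phi_*v=u$ and $R_\phi\cdot v=0$ then yields the claimed cochain-level equality, which implies commutativity of the diagram on cohomology.
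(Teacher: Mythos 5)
Your overall strategy matches the paper's: arrange for $\phi$ to be a branched cover of relative K\"ahler manifolds via Lemma \ref{lem:covex}, use $\phi$-related perturbation data, and set up a lift/pushdown correspondence of moduli spaces restricted to classes with $R_\phi\cdot v=0$. However, your third step contains a genuine gap. When a rigid disc $u'\in\mathcal{M}_1(\vec{y},u)_0$ downstairs has intersection multiplicity $k$ with $D_p$ at $q$ and its lift approaches a point of $\wt{D}$ where $\phi$ has ramification index $e$, the relation $e\mid k$ is automatic (the boundary of a small loop around the puncture lifts to a closed loop, since the global lift of the punctured disc already exists), and the resulting $\wt{u}'$ is a perfectly good smooth holomorphic disc from the \emph{same} domain, meeting $\wt{D}_{\tilde p}$ with multiplicity $k/e$ and lying in $\mathcal{M}_1(\vec{\wt y},v)$ for the class $v$ with $R_\phi\cdot v>0$. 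No reparametrization of the domain by an $e$-fold cover is involved, so your stated reason for excluding these lifts is wrong. The correct exclusion is a dimension count, which is exactly how the paper argues: by Lemma \ref{lem:logram} the expected dimension of $\mathcal{M}_1(\vec{\wt y},v)$ is that of $\mathcal{M}_1(\vec{y},\phi_*v)$ minus $2R_\phi\cdot v$, so once both sides are made regular, rigidity downstairs forces every moduli space upstairs with $R_\phi\cdot v>0$ to have negative dimension and hence be empty. Without this index argument your bijection onto the $R_\phi\cdot v=0$ components is unjustified.

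A secondary point: you invoke $\Gamma$-equivariant perturbation data upstairs without explaining why such constrained data can be made regular; equivariant transversality is not automatic. The paper handles this by choosing generic data downstairs and pulling back, noting that regularity of the pulled-back data follows from \cite[Lemma 6.7]{Cieliebak2007} together with the fact that the covering group acts freely on the region $\{\tilde h<c\}$ where the perturbation is supported. You should include this, since both the downstairs and the upstairs moduli spaces must be regular for the dimension count above to apply.
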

\begin{proof}
The proof follows that of \cite[Lemma 4.55]{Sheridan:CY}. 
First we observe that all elements of the diagram are independent of the choice of relative K\"{a}hler structure. 
So we are free to choose relative K\"{a}hler structures so that $\phi$ is a branched cover of relative K\"{a}hler manifolds, by Lemma \ref{lem:covex}.

Consider the moduli spaces $\mathcal{M}_1(\mathbf{y},v)$ involved in the definition of $\wt{\mathsf{co}}(z^v)$, and the moduli spaces $\mathcal{M}_1(\phi(\mathbf{y}),u)$ involved in the definition of $\mathsf{co}(z^u)$. 
We can choose perturbation data for the former moduli spaces by pulling back the perturbation data from the latter via $\phi$: the map
\begin{align}
\phi^*: \EuH(X,D) \times \EuJ(X,D) & \to  \EuH\left(\wt{X},\wt{D}\right) \times \EuJ\left(\wt{X},\wt{D}\right) \\
(H,J) & \mapsto  (\phi^*H,\phi^*J)
\end{align}
is well-defined by our choice of perturbation data in Definition \ref{defn:perts}, together with the fact that $\phi$ is a holomorphic map.

We can achieve regularity of $\mathcal{M}_1(\phi(\mathbf{y}),u)$ by a generic choice of perturbation data; and we can also achieve regularity of $\mathcal{M}_1(\mathbf{y},v)$, with the pulled-back data, by a generic choice of perturbation data downstairs. 
This follows from \cite[Lemma 6.7]{Cieliebak2007} together with the fact that the covering group $\Gamma$ acts freely on the region $\{\tilde{h} < c\}$ where we need to perturb the data.
Hence, for generic choice of perturbation data, we can ensure that both moduli spaces are regular.

With this choice of pulled-back perturbation data, there is a well-defined map
\begin{align}
\label{eqn:liftingorb} \bigsqcup_{v:\phi_* v = u} \mathcal{M}_1(\mathbf{y},v) & \to  \mathcal{M}_1(\phi(\mathbf{y}),u),\text{ sending} \\
w & \mapsto  \phi \circ w.
\end{align}
The argument is similar to that in the proof of Lemma \ref{lem:indepfam}, see also \cite[Lemma 4.55]{Sheridan:CY}.
The map is injective, by the homotopy lifting criterion applied to the map from the disc with the internal marked point removed.

Now suppose that $z^u \in \mathbf{p}^*\mathsf{sh}^\bullet(X,D)$, or equivalently, $\partial u \in H_1(X \setminus D)$ lies in the image of the map $\phi_*:H_1\left(\wt{X} \setminus \wt{D} \right) \to H_1(X \setminus D)$.
Then we claim that the map \eqref{eqn:liftingorb} is surjective. 
To see this, suppose that $w$ is an element of the right-hand side. 
Puncture $w$ at the internal marked point: the remainder maps to $X \setminus D$, and admits a lift to $\wt{X} \setminus \wt{D}$ by the homotopy lifting criterion (using our hypothesis on $\partial u$). 
The puncture can then be filled in, by the removable singularity theorem.

The right-hand side of \eqref{eqn:liftingorb}, and each component of the left-hand side, are smooth manifolds of the expected dimension, for generic choice of perturbation data (as explained above).  
The left-hand side may have components of various dimensions. 
The real codimension of $\mathcal{M}_1(\mathbf{y},v)$ in $\mathcal{M}_1(\phi(\mathbf{y}),\phi_*v)$ is $2 R_\phi \cdot v \ge 0$, by \eqref{eqn:logram}. 
Therefore, if $\mathcal{M}_1(\phi(\mathbf{y}),u)$ is $0$-dimensional, then the only non-empty moduli spaces $\mathcal{M}_1(\mathbf{y},v)$ are those with $R_\phi \cdot v = 0$. 
In this case, \eqref{eqn:liftingorb} becomes a bijection
\begin{equation} \bigsqcup_{v:\phi_* v = u, R_\phi \cdot v = 0} \mathcal{M}_1(\mathbf{y},v)  \xrightarrow{\simeq}  \mathcal{M}_1(\phi(\mathbf{y}),u).\end{equation}

The Hochschild cochain $\mathbf{p}^* \mathsf{co}(z^u)$ is defined by counting the points of the right-hand moduli space, and $\wt{\mathsf{co}} (\phi^*(z^u))$ is defined by counting the points of the left-hand moduli space. 
The commutativity of the diagram now follows on the cochain level, because the maps count points in isomorphic moduli spaces.
\end{proof}

\subsection{Branched covers of sub-$\snc$ pairs}

Let $(X,D) \subset (Y,D')$ be a sub-$\snc$ pair, where $X \setminus D$ is defined by the vanishing of a section $s$ of the holomorphic vector bundle $\EuV$ as in \S \ref{subsec:Ramb}.
We define $\fuk_{amb}(X \setminus D)$ exactly as we defined $\fuk(X \setminus D)$, except that Lagrangians are anchored with respect to a different Maslov cover: they come equipped with a lift to the universal abelian cover of $\EuR(\wedge^{top}(TY \oplus \EuV^\vee))$ (which lives over $Y \setminus D'$), rather than the universal abelian cover of $\EuR(\wedge^{top}(TX))$ (which lives over $X \setminus D$). 
In particular there is a strict embedding
\[ \bff_* \fuk(X \setminus D) \hookrightarrow \fuk_{amb}(X \setminus D),\]
where $\bff: \G(X \setminus D) \to \G_{amb}$ is the morphism of grading data defined in \S \ref{subsec:Ramb}. 

Let $(\wt{X},\wt{D}) \subset (\wt{Y},\wt{D}')$ be another sub-$\snc$ pair, and $\phi_Y: \wt{Y} \to Y$ a branched cover of $\snc$ pairs inducing a branched cover of sub-$\snc$ pairs $\phi: \wt{X} \to X$. 
Then $\wt{X} \setminus \wt{D}$ is defined by the vanishing of a section $\tilde{s}$ of $\tilde{\EuV} := \phi_Y^* \EuV$.
There is an induced morphism of grading data 
\[\mathbf{p}_{amb}: \G_{amb}(\wt{X}) \to \G_{amb}(X).\]

\begin{lem}
If the covering group of $\phi_Y: \wt{Y} \setminus \wt{D}' \to Y \setminus D'$ is abelian, and the induced map $H_1(\wt{Y} \setminus \wt{D}') \to H_1(Y \setminus D')$ is injective, then there is a full strict embedding of $A_\infty$ categories
\begin{equation}
\label{eqn:eqincamb}
\mathbf{p}_{amb}^* \fuk_{amb}(X \setminus D) \hookrightarrow \fuk_{amb}(\wt{X} \setminus \wt{D}).
\end{equation}
\end{lem}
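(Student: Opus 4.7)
The plan is to adapt the proof of the non-ambient embedding \eqref{eqn:fukembeds} — i.e., that of \cite[Proposition 3.10]{Sheridan:CY} — to the ambient setting, using the hypotheses on $\phi_Y$ (rather than on $\phi$) to control the ambient anchoring. First, since $\phi_Y|_{\wt Y\setminus\wt D'}$ is a local biholomorphism and $\wt\EuV=\phi_Y^*\EuV$, there is a canonical identification of complex vector bundles
\[ T\wt Y\oplus\wt\EuV^\vee \;\cong\; \phi_Y^*(TY\oplus\EuV^\vee) \qquad \text{over $\wt Y\setminus\wt D'$.}\]
Consequently $\phi_Y$ lifts to a covering map of total spaces
\[ \tilde\phi_Y\colon \EuR(T\wt Y\oplus\wt\EuV^\vee)\big|_{\wt Y\setminus\wt D'} \longrightarrow \EuR(TY\oplus\EuV^\vee)\big|_{Y\setminus D'}, \]
whose deck transformation group coincides with the (abelian) deck group of $\phi_Y|_{\wt Y\setminus\wt D'}$. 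A Serre spectral sequence comparison, exploiting that $\tilde\phi_Y$ restricts to a homeomorphism on each fibre, will give that the induced map on $H_1$ of the total spaces is injective.

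Next, the argument of \cite[Proposition 3.10]{Sheridan:CY} shows that under these two conditions on $\tilde\phi_Y$, the pullback along $\tilde\phi_Y$ of the universal abelian cover of the downstairs total space coincides with the universal abelian cover of the upstairs total space. Given an object $L$ of $\fuk_{amb}(X\setminus D)$ equipped with its ambient anchoring, each choice of preimage under $\tilde\phi_Y$ of an anchoring basepoint of $L$ then determines a unique Lagrangian brane $\wt L\subset \wt X\setminus\wt D$ equipped with a compatible ambient anchoring. This defines \eqref{eqn:eqincamb} on objects, and by construction the induced shift on gradings is $\mathbf p_{amb}$. The crux of the argument lies in this step, and it is precisely here that the hypotheses must be imposed on $\phi_Y$ rather than on $\phi$: the ambient Grassmannian bundle lives over $Y\setminus D'$, so it is the topology of the covering $\wt Y\setminus\wt D'\to Y\setminus D'$, not of $\wt X\setminus\wt D\to X\setminus D$, that governs the ambient Maslov cover.

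Finally, for the $A_\infty$ structure maps I would endow $(\wt X,\wt D)$ with a relative K\"ahler form via Lemma \ref{lem:covex}, making $\phi$ a branched cover of relative K\"ahler manifolds, and choose all Floer and perturbation data for $\fuk_{amb}(\wt X\setminus\wt D)$ on tuples of lifted Lagrangians by pulling back via $\phi$ the data used for the corresponding tuples downstairs. This is permissible because $\phi$ is holomorphic, so $\phi^*$ preserves the class $\EuH\times\EuJ$ of Definition \ref{defn:perts} along with the strip-like end and consistency conditions. With these choices, the map $\tilde u\mapsto\phi\circ\tilde u$ is a bijection between the upstairs moduli spaces $\wt{\EuM}(\tilde{\mathbf y})$ and the downstairs moduli spaces $\EuM(\mathbf y)$ — the inverse is supplied, as in the proofs of Lemma \ref{lem:indepfam} and Lemma \ref{lem:branchedfuks}, by the homotopy lifting criterion applied to the once-punctured discs together with the removable singularity theorem (simpler here, as there are no interior marked points). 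The equality of the resulting structure coefficients then yields the desired full strict embedding of $A_\infty$ categories.
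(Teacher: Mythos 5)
Your proposal is correct and follows essentially the same route as the paper: the paper's proof is precisely the one-line reduction to checking that the hypotheses on $\phi_Y$ force the universal abelian covers of the ambient Grassmannian bundles over $Y\setminus D'$ and $\wt{Y}\setminus\wt{D}'$ to be isomorphic, following \cite[Proposition 3.10]{Sheridan:CY} (working with $\EuR(\wedge^{top}(TY\oplus\EuV^\vee))$ rather than $\EuR(TY\oplus\EuV^\vee)$, which is immaterial since the two have naturally isomorphic $H_1$). Your additional spelling-out of the object-level lifting and the pulled-back perturbation data is consistent with what the paper delegates to the cited reference and to the analogous branched-cover lemmas.
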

\begin{proof}
The proof follows that of \cite[Proposition 3.10]{Sheridan:CY}: the main point is to check that the hypotheses imply the universal abelian covers of $\EuR(\wedge^{top}(TY \oplus \EuV^\vee))$ and $\EuR(\wedge^{top}(T \wt{Y} \oplus \tilde{\EuV}^\vee))$ are isomorphic.
\end{proof}

Now we want to state the analogue of Lemma \ref{lem:branchedfuks} in this setting.
We define $\sh^\bullet_{amb}(X,D) := \bff_* \sh^\bullet(X,D)$. 

\begin{lem}
Suppose that $\phi_Y: \wt{Y} \to Y$ is a branched cover of $\snc$ pairs inducing a branched cover of sub-$\snc$ pairs $\phi: \wt{X} \to X$. 
Assume that both $(\wt{X},\wt{D})$ and $(X,D)$ are amb-nice, the covering group of $\phi_Y$ is abelian, and the induced map $H_1(\wt{Y} \setminus \wt{D}') \to H_1(Y \setminus D)$ is injective.
Then the diagram
\begin{equation} \xymatrix{ \mathsf{sh}_{amb}^\bullet\left(\wt{X},\wt{D}\right) \ar[r]^-{\wt{\mathsf{co}}}  & \HH^\bullet(\mathbf{p}_{amb}^* \fuk_{amb}(X\setminus D))  \\
\mathbf{p}_{amb}^* \mathsf{sh}_{amb}^\bullet(X,D) \ar[u]_-{\phi^*} \ar[r]_-{\mathbf{p}_{amb}^* \mathsf{co}} & \mathbf{p}_{amb}^* \HH^\bullet(\fuk_{amb}(X \setminus D)) \ar[u]^-{\eqref{eqn:eqincamb}}}  \end{equation}
commutes.
\end{lem}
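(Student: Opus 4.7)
The plan is to follow the proof of Lemma \ref{lem:branchedfuks} essentially verbatim, replacing the grading data $\G(X\setminus D)$ and $\G(\wt{X}\setminus\wt{D})$ by their ambient counterparts, and verifying that each step goes through. All objects in the diagram are invariants of the underlying $\snc$ data (by the obvious ambient analogues of Lemmas \ref{lem:indepd+} and \ref{lem:indepfam}), so I would begin by using Lemma \ref{lem:covex} to equip $(X,D)$ and $(\wt{X},\wt{D})$ with relative K\"ahler forms that make $\phi$ a branched cover of relative K\"ahler manifolds. Perturbation data on $\wt{X}$ would be chosen by pullback from $X$; since the covering group of $\phi$ acts freely on the relevant Liouville subdomain, a generic choice downstairs simultaneously achieves regularity of $\mathcal{M}_1(\phi(\mathbf{y}),u)$ and of each $\mathcal{M}_1(\mathbf{y},v)$ with $\phi_* v = u$.

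Next I would construct the lifting map
\[\bigsqcup_{v:\phi_* v = u}\mathcal{M}_1(\mathbf{y},v) \to \mathcal{M}_1(\phi(\mathbf{y}),u),\qquad w\mapsto \phi\circ w,\]
exactly as in Lemma \ref{lem:branchedfuks}. Injectivity is automatic by homotopy lifting after puncturing at the interior marked point. For surjectivity, I would use that $z^u \in \mathbf{p}_{amb}^*\sh_{amb}^\bullet(X,D)$ forces $\partial u$ (viewed now in $H_1(Y\setminus D')$ via the ambient anchoring) to lie in the image of $H_1(\wt{Y}\setminus\wt{D}')$, which lifts the punctured disc by the hypothesis that $\phi_Y$ has abelian covering group and induces an injection on $H_1$. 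The dimensional count uses the ambient analogue of Lemma \ref{lem:logram}: the real codimension of $\mathcal{M}_1(\mathbf{y},v)$ in $\mathcal{M}_1(\phi(\mathbf{y}),\phi_* v)$ equals $2R_{\phi_Y}\cdot v$, so only summands with $R_{\phi_Y}\cdot v = 0$ survive at the zero-dimensional level, and the lifting map becomes a bijection there. Cochain-level equality of $\wt{\co}\circ \phi^*$ with \eqref{eqn:eqincamb}$\circ\,\mathbf{p}_{amb}^*\co$ then follows because the two sides count points of isomorphic finite sets.

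The only step that genuinely requires the amb-nice hypothesis (rather than niceness) is ruling out non-constant sphere bubbles in the Gromov compactifications on both sides. For this I would prove an ambient analogue of Lemma \ref{lem:nospheres}: choose via Lemmas \ref{lem:ambniceAAmp} and \ref{lem:sysexist} a rational polyhedral system of divisors $E$ on $(X,D)$ whose cone $\Nef(E)$ is amb-nice and has $\Nef(E)_{amb}$ meeting its interior, and make the almost-complex structures adapted to $E$. Any sphere bubble of homology class $w'$ would give a decomposition $v=w+w'$ with $w\in NE(\Nef(E))$ and $w'\in NE(\Nef(E))_{cl}$; Lemma \ref{lem:ambnewtypec} applied to the image in $H_2(Y,Y\setminus D')$, combined with Lemma \ref{lem:Phidef}, forces the image of $w'$ to vanish, whence $w'=0$ and the bubble is constant (hence excluded by stability). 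The same argument applied to $(\wt{X},\wt{D})$, which is amb-nice by hypothesis, rules out bubbling upstairs. This is the step I expect to require the most care, because the sets $P_{amb}$ for $(X,D)\subset(Y,D')$ and for $(\wt{X},\wt{D})\subset(\wt{Y},\wt{D}')$ differ, so the two applications of the amb-nice argument must be carried out independently rather than by pulling a single system of divisors upstairs.
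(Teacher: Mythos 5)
Your proposal is correct and matches the paper's approach: the paper's proof is literally ``Identical to that of Lemma \ref{lem:branchedfuks}'', and your writeup is a faithful expansion of exactly that strategy. The sphere-bubbling step you flag as delicate is in fact already handled by the paper's Lemma \ref{lem:nospheres}, whose second paragraph treats the amb-nice case by the same combination of Lemmas \ref{lem:ambniceAAmp}, \ref{lem:sysexist} and \ref{lem:Phidef} that you propose.
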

\begin{proof}
Identical to that of Lemma \ref{lem:branchedfuks}.
\end{proof}

\subsection{Behaviour of $\co$ with respect to products}
\label{subsec:prods}

There is a natural notion of product of $\snc$ pairs: 
\[(X_1,D_1) \times (X_2,D_2) := (X_1 \times X_2, D_1 \times X_2 \cup X_1 \times D_2).\] 
It follows from work of Amorim \cite{Amorim2017a} that, if $(X,D) = (X_1,D_1) \times (X_2,D_2)$, then we have a fully faithful embedding
\[ \fuk(X_1 \setminus D_1) \otimes_\infty \fuk(X_2 \setminus D_2) \hookrightarrow \fuk(X \setminus D)\]
which, on the level of objects, sends the tensor product of Lagrangians $L_1 \otimes L_2$ to the product Lagrangian $L_1 \times L_2$. 
The tensor product of $A_\infty$ categories is a rather tricky notion: see \cite{Amorim2016} for the definition of `$\otimes_\infty$'.

An alternative approach has been suggested by Ma'u following \cite{Mau2015}, and implemented by Ganatra in \cite{Ganatra2013} using the quilt formalism of Ma'u--Wehrheim--Woodward. 
One observes that the tensor product of DG categories is a straightforward notion (see e.g. \cite[\S 2.3]{Keller2006}). 
We can replace any $A_\infty$ category $\EuC$ with a quasi-equivalent DG category $\EuC^{\mathsf{dg}}$, namely the image of the Yoneda embedding $\EuY:\EuC \dashrightarrow \EuC \lmod$ (cf. \cite[Corollary 2.13]{Seidel:FCPLT}). 
Thus, one could simply define the tensor product of $A_\infty$ categories $\EuC$ and $\EuD$ to be the DG category $\EuC^{\mathsf{dg}} \otimes \EuD^{\mathsf{dg}}$. 
For our purposes, it will be convenient to choose a slightly different model: we define $\EuC \tilde{\otimes} \EuD$ to be the image of the quasi-embedding
\[ \EuC^{\mathsf{dg}} \otimes \EuD^{\mathsf{dg}} \dashrightarrow \EuC \bimod \EuD^{op},\]
where the right-hand side is a category of $A_\infty$  bimodules (see \cite[Proposition 2.13]{Ganatra2013} for the proof that the natural $A_\infty$ functor is a quasi-embedding). We have:

\begin{prop}[\cite{Ganatra2013}, Propositions 9.2, 9.3, 9.4 and 9.5]\label{prop:kunn}
Let $(X,D)$ be the product of $\snc$ pairs $(X_1,D_1)$ and $(X_2,D_2)$, let us abbreviate $\fuk_i := \fuk(X_i \setminus D_i)$, and let $\fuk_{prod} \subset \fuk(X \setminus D)$ be the subcategory whose objects are product Lagrangians $L_1 \times L_2$. 
Then we have a quasi-equivalence
\[ \mathbf{M}: \fuk_{prod} \dashrightarrow \fuk_1 \tilde{\otimes} \fuk_2\]
which, on the level of objects, sends
\[ L_1 \times L_2 \mapsto \EuY(L_1) \otimes \EuY(L_2).\]
\end{prop}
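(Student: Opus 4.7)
The plan is to follow the quilt-theoretic strategy of Ganatra in \cite{Ganatra2013}, which itself builds on the Ma'u--Wehrheim--Woodward framework, adapted to the exact affine setting where pseudoholomorphic curve theory is technically straightforward (no disc or sphere bubbling, thanks to exactness of Lagrangians in $X_i \setminus D_i$ and Lemma \ref{lem:nodisc}).

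First I would construct, for each product Lagrangian $L_1 \times L_2 \in \fuk_{prod}$, an $A_\infty$ bimodule $\mathbf{M}(L_1 \times L_2) \in \fuk_1 \bimod \fuk_2^{op}$. Its underlying chain complex on a pair of test Lagrangians $(L_1', L_2') \in \fuk_1 \times \fuk_2^{op}$ is $CF^\bullet(L_1', L_1) \otimes CF^\bullet(L_2, L_2')$, and the bimodule structure maps are defined by counting quilted pseudoholomorphic maps: a quilted disc with two patches mapping to $X_1 \setminus D_1$ and $X_2 \setminus D_2$ respectively, joined along a seam carrying the boundary condition $L_1 \times L_2 \subset X_1 \times X_2$, with the remaining boundary components labelled by Lagrangians from $\fuk_1$ (resp.\ $\fuk_2$). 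Choose perturbation data from $\EuH \times \EuJ$ on each patch as in Definition \ref{defn:perts}; these extend the construction of \S \ref{subsec:exfuk} in a parameterized way, and the same arguments (exactness plus the integrated maximum principle of Lemma \ref{lem:maxprin}, applied separately on each factor) give compactness and a well-defined $A_\infty$ bimodule.

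Second, I would promote this to an $A_\infty$ functor $\mathbf{M}: \fuk_{prod} \to \fuk_1 \bimod \fuk_2^{op}$ by defining higher terms $\mathbf{M}^s$ via quilted surfaces with $s$ additional incoming product boundary punctures and one outgoing ``bimodule output'' formed by a double strip that is folded along the seam. The universal consistent choice of perturbation data and the gluing/codimension-one boundary analysis of these moduli spaces yield the $A_\infty$ functor equations. Since $\fuk_1 \tilde{\otimes} \fuk_2$ is defined as the essential image of the natural embedding $\fuk_1^{\mathsf{dg}} \otimes \fuk_2^{\mathsf{dg}} \dashrightarrow \fuk_1 \bimod \fuk_2^{op}$, I would check that the bimodule $\mathbf{M}(L_1 \times L_2)$ agrees, up to quasi-isomorphism, with the exterior tensor product $\EuY(L_1) \otimes \EuY(L_2)$: this identification arises from strip-shrinking applied to the seam, which degenerates a quilted strip into a pair of independent strips, one in each factor.

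Third, I would prove that $\mathbf{M}$ is a quasi-equivalence onto $\fuk_1 \tilde{\otimes} \fuk_2$. Essential surjectivity holds by construction, once the identification $\mathbf{M}(L_1 \times L_2) \simeq \EuY(L_1) \otimes \EuY(L_2)$ is established. Cohomological fullness and faithfulness amount to the Künneth quasi-isomorphism
\[
CF^\bullet(L_1 \times L_2, L_1' \times L_2') \xrightarrow{\simeq} CF^\bullet(L_1, L_1') \otimes CF^\bullet(L_2, L_2'),
\]
which in the exact setting is a cochain-level statement proved via a neck-stretching/strip-shrinking argument for Floer strips in $X_1 \times X_2$ with boundary on products, again following Ganatra.

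The main technical obstacle is the careful setup and regularity/compactness theory of the quilted moduli spaces, in particular the strip-shrinking degeneration used both to identify $\mathbf{M}(L_1 \times L_2)$ with a tensor product bimodule and to establish the Künneth isomorphism. Compactness is substantially simpler than in the general case because all Lagrangians involved are exact in the respective affine varieties (so energy is controlled by action differences and no disc or sphere bubbling occurs, by Lemma \ref{lem:nodisc} applied in each factor); regularity follows from generic choices of perturbation data on patches and seams. Since all four required statements are proved in detail in \cite[Propositions 9.2--9.5]{Ganatra2013}, once the perturbation framework of \S \ref{subsec:exfuk} is matched with theirs, the proposition follows.
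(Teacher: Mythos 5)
The paper gives no proof of this proposition --- it is imported directly from \cite{Ganatra2013} (Propositions 9.2--9.5) --- and your proposal is an accurate sketch of the quilt-theoretic argument in that reference (quilted bimodules over the seam $L_1\times L_2$, strip-shrinking to identify $\mathbf{M}(L_1\times L_2)$ with $\EuY(L_1)\otimes\EuY(L_2)$, and the exact Künneth quasi-isomorphism), which is precisely the approach the paper defers to. So your proposal is correct and takes essentially the same route as the paper.
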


Now we explain how the map $\co$ behaves with respect to products. 
First we must explain how Hochschild cohomology behaves under tensor product. 
For any two proper unital DG categories $\EuM$ and $\EuN$, the \emph{Alexander--Whitney map} defines an isomorphism
\begin{equation}
\label{eqn:aw}
AW: \HH^\bullet(\EuM) \otimes \HH^\bullet(\EuN) \xrightarrow{\sim} \HH^\bullet(\EuM \otimes \EuN)
\end{equation} 
whose inverse is the \emph{Eilenberg--Zilber map} (see, e.g., \cite[Proposition 9.4.1]{Weibel1994}). 
It follows immediately that there is a corresponding isomorphism when $\EuM$ and $\EuN$ are arbitrary proper $A_\infty$ categories. 

\begin{prop}
In the setting of Proposition \ref{prop:kunn}, we have a commutative diagram
\begin{equation}
\label{eqn:coprodsh}
\xymatrix{\sh^\bullet(X_1,D_1) \ar@{^{(}->}[rr]^-{\iota} \ar[d]^\co && \sh^\bullet(X,D) \ar[d]^-\co  \\
 \HH^\bullet(\fuk_1) \ar[r]_-{AW(-,e)} & \HH^\bullet(\fuk_1 \tilde{\otimes} \fuk_2)  & \HH^\bullet(\fuk_{prod}) \ar@{<->}[l]_-{\sim}^-{\mathbf{M}},}
 \end{equation}
where $\iota: \sh^\bullet(X_1,D_1) \hookrightarrow \sh^\bullet(X,D)$ is the map induced by $H_2(X_1,X_1 \setminus D_1) \to H_2(X,X \setminus D)$, and the `$e$' appearing in `$AW(-,e)$' is the unit $e \in \HH^0(\fuk_2)$.
\end{prop}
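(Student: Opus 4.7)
The plan is to compute $\co(\iota(z^{v}))$ on $\fuk_{prod}$ using \emph{split} perturbation data on $(X,D) = (X_1,D_1) \times (X_2,D_2)$, and show that the resulting Hochschild cocycle is literally $AW(\co(z^{v}), e)$ after translation through $\mathbf{M}$.

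First, I would choose perturbation data $(H_i,J_i) \in \EuH_i \times \EuJ_i$ on each factor defining $\fuk_i$ and the maps $\co$ there, and then on the product take split almost complex structures $J = J_1 \times J_2$ and split Hamiltonians $H = H_1 \boxplus H_2$, for each tuple of product Lagrangians $(L_0^{(1)} \times L_0^{(2)},\ldots,L_s^{(1)} \times L_s^{(2)})$. Time-$1$ chords of $H$ split as pairs $y = (y^{(1)},y^{(2)})$ of chords in the respective factors, and the orientation line $o_y$ splits accordingly. Transversality of each factor moduli space can be achieved by the usual domain-dependent perturbation inside $H_i$ and $J_i$; since the chord intersections are transverse in each factor, transversality for the split problem on the product follows from transversality on each factor by a standard product argument (see \cite{Amorim2017a,Ganatra2013}).

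Second, fix $v_1 \in \mathsf{pos}_1(X_1,D_1)$, let $v := \iota(v_1) = (v_1,0) \in \mathsf{pos}_1(X,D)$, and consider $\mathcal{M}_1(\mathbf{y},v)$. A split solution $u = (u_1,u_2): S_r \to X_1 \times X_2$ has $[u_i]$ matching the factors of $v$, and the tangency condition at the interior marked point $q$ becomes $u_1$ tangent to $D_{1,p}$ to order $v_1\cdot D_{1,p}-1$ (the condition on $u_2$ is vacuous since $0 \cdot D_{2,p} - 1 = -1$). So with split data we get a natural isomorphism
\[
\mathcal{M}_1(\mathbf{y},v) \cong \mathcal{M}_1(\mathbf{y}^{(1)},v_1) \times_{\mathcal{R}_1} \mathcal{M}_1(\mathbf{y}^{(2)},0),
\]
where the fibre product is over the domain moduli space carrying the marked point. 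The first factor is exactly the moduli space defining $\co(z^{v_1})$ in $\fuk_1$. In the second factor, $[u_2] = 0$ in $H_2(X_2,X_2 \setminus D_2)$, so by Lemma \ref{lem:maxprin} $u_2$ is contained in $\{h_2 \le c\}$ and (by exactness of the Lagrangians) all such $u_2$ contribute to $\co(z^0) \in \HH^0(\fuk_2)$, which equals the unit $e$ because the only constraint at $q$ is that it exist. Sphere bubbling is ruled out exactly as in Lemma \ref{lem:nospheres} (applied componentwise).

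Third, I would unpack what this moduli count gives as a Hochschild cochain on $\fuk_{prod}$ and match it with $AW(\co(z^{v_1}),e)$ under $\mathbf{M}$. Here $\mathbf{M}$ sends a product Lagrangian $L_1\times L_2$ to the outer tensor product $\EuY(L_1)\otimes\EuY(L_2)$, and on morphism spaces $hom_{\fuk_{prod}}((L_0^{(1)}\times L_0^{(2)}),(L_1^{(1)}\times L_1^{(2)}))$ splits as $hom_{\fuk_1}\otimes hom_{\fuk_2}$ via chord splitting. Under this identification and Amorim's quasi-equivalence between $\otimes_\infty$ and the DG tensor product $\tilde{\otimes}$, a split moduli count that pairs a $\fuk_1$-count (on all chord inputs) with a $\fuk_2$-count evaluating to $e$ is exactly the definition of the Alexander--Whitney cochain $AW(\co(z^{v_1}),e)$, whose only non-vanishing terms apply $\co(z^{v_1})$ to the $\fuk_1$-components and $e$ to the $\fuk_2$-components of the same set of inputs. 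Consistency of signs follows from the Koszul rule for the splitting of the orientation lines $o_y$ and the combinatorial sign convention in the Alexander--Whitney map as spelled out in \cite[\S 9]{Ganatra2013}.

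The main technical obstacle will be the sign comparison in the last step: converting the orientation of the split moduli space (which involves a product of chord orientation lines arranged by position along $\partial S_r$) into the Alexander--Whitney ordering, which groups $\fuk_1$-factors then $\fuk_2$-factors. A clean way to handle this, following Ganatra, is to pass through the DG bimodule model $\EuC\bimod\EuD^{op}$ where both sides of \eqref{eqn:coprodsh} can be represented by the same split-moduli cocycle, avoiding the need to manipulate the Alexander--Whitney formula directly.
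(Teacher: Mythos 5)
Your proposal takes a genuinely different route from the paper, but it has two gaps that I think are fatal as written. First, the claim that ``transversality for the split problem on the product follows from transversality on each factor by a standard product argument'' is not correct: with split data the moduli space is a fibre product $\mathcal{M}_1(\mathbf{y}^{(1)},v_1) \times_{\mathcal{R}_1} \mathcal{M}_1(\mathbf{y}^{(2)},0)$ over the \emph{domain} moduli space, and regularity of each factor does not make the fibre product transverse or of expected dimension. This is precisely the known obstruction to defining a product Fukaya category with split perturbation data, and it is the reason Ganatra works with quilts and Amorim with $\otimes_\infty$ rather than with split data. Moreover, $\fuk_{prod}$ and its classes $\co(\iota(z^{v_1}))$ are defined in the paper using generic data from $\EuH \times \EuJ$ on the product, so even a successful split-data computation would have to be compared back to those definitions --- and that comparison is essentially the content of Proposition \ref{prop:kunn} itself, which you cannot re-derive by assuming split data works.

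Second, the identification of the split count with $AW(\co(z^{v_1}),e)$ is not only a sign issue. At chain level $AW(\varphi,e)$ applied to inputs $a_1\otimes b_1,\ldots,a_n\otimes b_n$ involves $\varphi(a_1,\ldots,a_n)$ tensored with iterated Yoneda compositions of the $b_i$ in $\fuk_2^{\mathsf{dg}}$, whereas your second factor is a single parametrized disc with $n$ inputs and a free interior marked point; matching these two chain-level objects is the substantive step, not bookkeeping. (Also, $z^0 = 0$ in $\sh^\bullet(X_2,D_2)$ by definition, so ``$\co(z^0)=e$'' needs to be replaced by a genuine unitality argument for the marked-point count.) The paper avoids both problems by a different mechanism: it reads the Hochschild classes as first-order deformations, observes that the induced deformation of $\fuk_1\tilde{\otimes}\fuk_2$ realizes $AW(-,e)$ on cohomology (via the explicit chain-level $AW$ formula), and then extends Ganatra's quilted functor $\mathbf{M}$ to an $A_\infty$ functor between the first-order deformations of $\fuk_{prod}$ and of $\fuk_1\tilde{\otimes}\fuk_2$; the commutativity of \eqref{eqn:coprodsh} up to homotopy is exactly the statement that this extension satisfies the $A_\infty$ functor equations. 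If you want to salvage your approach, you would need to replace the split-data step by an extension of Ganatra's quilt analysis with the extra interior marked point, which is in effect what the paper's proof does.
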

\begin{proof}
The proof is a straightforward extension of that of Proposition \ref{prop:kunn}, using the relationship between Hochschild cochains and first-order deformations of $A_\infty$ categories. 
Observe that any first-order deformation of $\fuk_1$ induces a first-order deformation of $\fuk_1 \tilde{\otimes} \fuk_2$. 
This results in a chain map
\begin{equation}
\label{eqn:ch1}
CC^\bullet(\fuk_1) \to CC^\bullet(\fuk_1 \tilde{\otimes} \fuk_2)
\end{equation}
which induces the map $AW(-,e)$ on the level of cohomology (as one can easily verify using the chain-level formula for $AW$, which can be found in \cite[\S 3]{Le2014}). 

Thus, the class $\co(y^u)$ determines a first-order deformation of $\fuk_1\tilde{\otimes} \fuk_2$ via \eqref{eqn:ch1}.
Similarly, the class $\co(\iota(y^u))$ defines a first-order deformation of $\fuk_{prod}$. 
Ganatra's construction of the $A_\infty$ functor $ \mathbf{M}$ extends straightforwardly to an $A_\infty$ functor between these two first-order deformations. 
The fact that this extension is an $A_\infty$ functor is equivalent to the commutativity of the following diagram:
\[ \xymatrix{ \sh^\bullet(X_1,D_1) \ar@{^{(}->}[rrr]^-\iota \ar[d]^-\co &&& \sh^\bullet(X,D) \ar[d]^-\co \\
CC^\bullet(\fuk_1) \ar[r]^-{\eqref{eqn:ch1}} & CC^\bullet(\fuk_1 \tilde{\otimes} \fuk_2)\ar[r]^-{\sim}_-{\mathbf{M}_*} & CC^\bullet(\fuk_{prod},\mathbf{M}^*(\fuk_1 \tilde{\otimes} \fuk_2)) & CC^\bullet(\fuk_{prod}) \ar[l]_-{\sim}^-{\mathbf{M}^*},}\]
up to a homotopy given by the first-order term in the deformation of $\mathbf{M}$. 
This implies the commutativity of the diagram on the level of cohomology, as required.
\end{proof}

\section{The relative Fukaya category}
\label{sec:relfuk}

In this section, we obtain versality results for the relative Fukaya category. 
We start by listing our assumptions on the relative Fukaya category in \S \ref{subsec:relfuk}, and sketching their justifications. 
We recall from Remark \ref{rmk:assint} that a version of the relative Fukaya category satisfying these assumptions will be constructed in work-in-preparation \cite{Perutz2015a}.

Then in \S \ref{subsec:versnoeq} we give a criterion for the relative Fukaya category to be an $R$-versal deformation of the affine Fukaya category, and in \S \ref{subsec:verseq} we give a generalization in the presence of a group action.

Finally we state our assumption about the relationship between the relative and absolute Fukaya categories in \S \ref{subsec:relabsol}, and sketch its justification. 
We recall from Remark \ref{rmk:assint2} that versions of the relative  and absolute Fukaya categories satisfying all of the relevant assumptions can be constructed using classical pseudoholomorphic curve theory when $(X,\omega)$ is positively monotone, or Calabi--Yau and of complex dimension $\le 2$.

\subsection{The relative Fukaya category}
\label{subsec:relfuk}

Let $(X,D,\omega)$ be a relative K\"ahler manifold, and $\Nef \subset Nef(X,D)$ a convex sub-cone containing $[\omega]$ in its interior. 
Let $\G$ be the associated grading datum and $R := R(\Nef)$ the $\G$-graded ring of Definition \ref{defn:rxde}. 

\begin{ass}
\label{ass:relfuk1}
The relative Fukaya category $\fuk(X,D,\Nef)$ is a $\G$-graded deformation of $\fuk(X \setminus D)$ over $R(\Nef)$. 
\end{ass}

We define $\fuk(X,D) := \fuk(X,D,Nef(X,D))$.

Now suppose that $\Nef$ is nice, or that $D$ is smooth and $(X,D)$ is semi-positive.

\begin{ass}
\label{ass:relfukco}
The first-order deformation classes of $\fuk(X,D,\Nef)$ are $\co(z_p)$.
\end{ass}

See \S \ref{sec:vers} for the definition of the first-order deformation classes, which relies on the the fact that the coefficient ring is nice (see Lemma \ref{lem:Rnice} and Remark \ref{rmk:smoothb}). 
See \S \ref{subsec:co} for the definition of the classes $\co(z_p)$.

\begin{rmk}
It follows from Assumption \ref{ass:relfuk1} that $\fuk(X,D,\Nef)$ is a (possibly curved) $\G$-graded $R$-linear $A_\infty$ category; its objects are the same as those of the affine Fukaya category $\fuk(X \setminus D)$; its morphism spaces are the free $R$-modules with the same generators as in the affine Fukaya category; and its curvature  is of order $\fm$.
\end{rmk}

\begin{rmk}
If $\Nef = \Nef(E)$ for some system of divisors $E$ (or more generally, if $\Nef$ is contained in such a cone), then one possible construction of the relative Fukaya category $\fuk(X,D,\Nef)$ uses the intersections of discs with the components of $E$ as stabilizing marked points in the domain, as in \cite{Cieliebak2007}. 
This was carried out in \cite{Sheridan:CY}, under the restrictive hypothesis that each component of $D$ is ample; the more general version will be carried out in \cite{Perutz2015a}. 
This is technically the simplest definition that works in general, and suffices to compute Gromov--Witten invariants via the relative Fukaya category (see \cite{Ganatra2015}). 
\end{rmk}

\begin{rmk}
We do not address the question of the dependence of $\fuk(X,D,\Nef)$ on the choice of the relative K\"{a}hler form $\omega$, because our applications (e.g., \cite{SS}) do not require any invariance-type result. 
\end{rmk}

Now let us provide partial justification for Assumption \ref{ass:relfuk1}, in the case that $\Nef = \Nef(E)$ for some system of divisors $E$.
In order to define the pseudoholomorphic curves whose counts define the structure coefficients of the category, one must specify the class of perturbation data one uses. 
We must choose a larger class of perturbation data than in Definition \ref{defn:perts}, to avoid problems with transversality. 
Following \cite{Seidel:FCPLT}, perturbation data consist of two pieces: a Hamiltonian piece and an almost-complex structure piece.
We choose both of these to respect the system of divisors $E$:
\begin{itemize}
\item We only use Hamiltonians $H$ whose Hamiltonian flow $X_H$ preserves $E$.
\item We only use almost-complex structures $J$ that are $\omega$-tame and make each component $E_q$ of $E$ into an almost-complex submanifold. 
\end{itemize} 
By positivity of intersection (compare Lemma \ref{lem:posint}), these assumptions imply that any pseudoholomorphic curve $u:(\Sigma,\partial \Sigma) \to (X,\{h \le c\})$ satisfies $u \cdot E_q \ge 0$ for all $q$, hence $[u] \in NE(\Nef)$. 

The $A_\infty$ structure maps in the relative Fukaya category are defined by counting pseudoholomorphic curves $u$ in $X$, with boundary conditions on the Lagrangians, in the usual way. 
Each pseudoholomorphic curve $u$ is counted with a weight $\nov^{[u]}$, which lies in the coefficient ring $R(\Nef)$ by our assumptions on the perturbation data. 
The resulting structure maps define a (possibly curved) $\G$-graded, $R(\Nef)$-linear $A_\infty$ category, by the usual argument.

The fact that the $0$th order category coincides with the affine Fukaya category follows by definition. 

Assumption \ref{ass:relfukco} is closely related to \cite[Theorem 6.1]{Auroux2007}: it simply states that the first-order deformation classes and the classes $\nov_p \co(z_p)$ are both defined by counting pseudoholomorphic discs in homology class $u_p$.

More generally, $\fuk(X,D,\Nef)$ may be defined by counting stable $J$-holomorphic discs in some virtual sense, as in \cite{FO3}, where $J$ can be taken to be equal to the integrable complex structure in a neighbourhood of $D$. 
Such a stable disc has homology class $[u] \in NE(\Nef(E))$ for all systems of divisors $E$ by Lemma \ref{lem:posint}. 
It follows that $[u] \in NE(X,D) \subset NE(\Nef)$, so we can define $\fuk(X,D,\Nef)$ over $R(\Nef)$. 

Now we consider the situation of \S \ref{subsec:Ramb}: $(X,D) \subset (Y,D')$ is a sub-$\snc$ pair. 
We assume that $\omega$ is a relative K\"ahler form on $(X,D)$, restricted from $(Y,D')$, and that $\Nef \subset Nef(X,D)$ is a convex sub-cone containing $[\omega]$ in its interior. 
The analogue of Assumption \ref{ass:relfuk1} in this situation says that the ambient relative Fukaya category $\fuk_{amb}(X,D,\Nef)$ is a $\G_{amb}$-graded deformation of $\fuk_{amb}(X \setminus D)$ over $R_{amb}(\Nef)$. 

\begin{rmk}
In this situation, $\Nef_{amb}$ intersects the interior of $\Nef$, so we have the map
\[ \Phi^*: \bff_*R(\Nef) \to R_{amb}(\Nef)\]
of \eqref{eqn:Phi}. 
One expects there to be a strict full embedding
\[ \bff_*\fuk(X,D,\Nef) \otimes_{R(\Nef)} R_{amb}(\Nef) \hookrightarrow \fuk_{amb}(X,D,\Nef),\]
where $R_{amb}(\Nef)$ is regarded as an $R(\Nef)$-algebra via $\Phi^*$. 
\end{rmk}

Now suppose that $\Nef$ is amb-nice.
We have an analogue of Assumption \ref{ass:relfukco} in this case:

\begin{ass}
\label{ass:relfukambco}
The first-order deformation classes of $\fuk_{amb}(X,D,\Nef)$ are
\[ \sum_{q \in i^{-1}(p)} \co(z_q).\]
\end{ass}

The first-order deformation classes can be defined because $R_{amb}(\Nef)$ is nice by Lemma \ref{lem:Rambnice}, and $\co$ can be defined because $(X,D)$ is amb-nice. 
The justification for this assumption is the same as that for Assumption \ref{ass:relfukco}.

\subsection{Versality without equivariance}
\label{subsec:versnoeq}

Let $(X,D,\omega)$ be a relative K\"ahler manifold, and $\Nef \subset Nef(X,D)$ a convex sub-cone containing $[\omega]$ in its interior. 
We use the notation from \S \ref{subsec:relfuk}: $\G$ is the associated grading datum, $R:=R(\Nef)$ is the coefficient ring of the relative Fukaya category.

Let $\EuA_R \subset \fuk(X,D,\Nef)$ be a full subcategory, and $\EuA \subset \fuk(X \setminus D)$ the corresponding full subcategory: so $\EuA_R$ is a $\G$-graded (possibly curved) deformation of $\EuA$ over $R$.
The curvature of this deformation is of order $\fm$.

\begin{thm}[= Theorem \ref{thm:1}]
\label{thm:versality}
Suppose that:
\begin{itemize}
\item $\Nef$  is nice, or that $D$ is smooth and $(X,D)$ semi-positive;
\item The classes $\nov_p \co(z_p)$ span $\HH^2\left(\EuA,\EuA \otimes \fmuncomp\right)$ as $\Runcomp_0$-module.
\end{itemize}
Then $\EuA_R$ is an $R$-complete deformation of $\EuA$. 

If furthermore $\co(z_p) \neq 0$ for all $p$, then $\EuA_R$ is $R$-versal.
\end{thm}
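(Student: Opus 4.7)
The plan is to reduce this to the purely algebraic versality statement of Lemma \ref{lem:versainf}. To invoke that Lemma we need two things: (i) the coefficient ring $R$ is nice in the sense of Definition \ref{defn:Rnice}, and (ii) the first-order deformation classes $\nov_p \defa_p \in \HH^2(\EuA, \EuA \otimes \fmuncomp)$ of the $R$-linear deformation $\EuA_R$ can be identified with the classes $\nov_p \co(z_p)$ appearing in the hypothesis.

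For (i), I would dispatch the two cases separately. When $\Nef$ is nice, niceness of $R = R(\Nef)$ is exactly the content of Lemma \ref{lem:Rnice}. When $D$ is smooth, niceness of $R(X,D) \simeq \Bbbk \power{\nov}$ is trivial and was already noted in Remark \ref{rmk:smoothb}; semi-positivity of $(X,D)$ is not needed here for niceness of $R$, but is the hypothesis under which Assumption \ref{ass:relfukco} applies in the smooth case (cf.\ Remark \ref{rmk:Dsmoothco}, where $\co(z_p)$ is constructed). For (ii), Assumption \ref{ass:relfukco} directly asserts that the first-order deformation classes of $\fuk(X,D,\Nef)$ are $\co(z_p)$; passing to the full subcategory $\EuA_R \subset \fuk(X,D,\Nef)$, the first-order deformation classes of $\EuA_R$ are simply the restrictions of those of $\fuk(X,D,\Nef)$ to $\HH^\bullet(\EuA)$, so they equal the restrictions of $\co(z_p)$, which by abuse of notation we continue to denote $\co(z_p)$.

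With (i) and (ii) established, the hypothesis that $\nov_p \co(z_p) = \nov_p \defa_p$ span $\HH^2(\EuA, \EuA \otimes \fmuncomp)$ as an $\Runcomp_0$-module is precisely the hypothesis of the first bullet of Lemma \ref{lem:versainf}, so we obtain $R$-completeness. Adding the non-vanishing assumption $\co(z_p) \neq 0$ for all $p$ gives the hypothesis of the second bullet, yielding $R$-versality.

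There is essentially no obstacle: the theorem is a direct transcription of Lemma \ref{lem:versainf} via the geometric inputs supplied by Assumptions \ref{ass:relfuk1}--\ref{ass:relfukco} together with the niceness results for the coefficient ring. The only mild point to check is that the restriction of the first-order deformation classes of $\fuk(X,D,\Nef)$ to the subcategory $\EuA_R$ gives the first-order deformation classes of $\EuA_R$ (in the appropriate Hochschild complex), which is immediate from the definition of the first-order deformation classes as the $\nov_p$-components of $\mu^*_R - \mu^*_0$ (cf.\ \eqref{eqn:mustarr}).
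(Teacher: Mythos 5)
Your proposal is correct and follows exactly the paper's argument: the proof is the two-line reduction to Lemma \ref{lem:versainf}, citing Lemma \ref{lem:Rnice} and Remark \ref{rmk:smoothb} for niceness of $R$ and Assumptions \ref{ass:relfuk1}--\ref{ass:relfukco} for the identification of the first-order deformation classes with $\co(z_p)$. Your extra remark about restricting the deformation classes to the full subcategory $\EuA_R$ is a harmless elaboration of a point the paper leaves implicit.
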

\begin{proof}
First we note that the coefficient ring $R$ is nice (see Lemma \ref{lem:Rnice} and Remark \ref{rmk:smoothb}).
Combining with Assumptions \ref{ass:relfuk1} and \ref{ass:relfukco}, we see that the hypotheses of the versality criterion Lemma \ref{lem:versainf} are verified.
\end{proof}

Now we give an analogue in the case of the ambient relative Fukaya category. 
Let $(X,D) \subset (Y,D')$ be a sub-$\snc$ pair, $\omega$ a relative K\"ahler form on $(X,D)$ restricted from $(Y,D')$, and $\Nef \subset Nef(X,D)$ a convex sub-cone containing $[\omega]$ in its interior.
Let $R := R_{amb}(\Nef)$, and let $\EuA_R \subset \fuk_{amb}(X,D,\Nef)$ be a full subcategory, and $\EuA \subset \fuk_{amb}(X \setminus D)$ the corresponding full subcategory.

\begin{thm}
\label{thm:versalityamb}
Suppose that:
\begin{itemize}
\item $\Nef$  is amb-nice;
\item The classes $\nov_p \sum_{q \in i^{-1}(p)}\co(z_q)$ span $\HH^2\left(\EuA,\EuA \otimes \fmuncomp\right)$ as $\Runcomp_0$-module.
\end{itemize}
Then $\EuA_R$ is an $R$-complete deformation of $\EuA$.

If furthermore $\sum_{q \in i^{-1}(p)}\co(z_q) \neq 0$ for all $p$, then $\EuA_R$ is $R$-versal.
\end{thm}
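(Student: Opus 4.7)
The plan is to mirror the proof of Theorem \ref{thm:versality} line-by-line, substituting the ambient versions of each ingredient. The three inputs to that proof were: niceness of the coefficient ring (so that first-order deformation classes are defined and Lemma \ref{lem:typeAaut} applies), identification of the first-order deformation classes with geometric objects, and the general versality criterion Lemma \ref{lem:versainf}. All three have ambient analogues available.

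First I would observe that $R_{amb}(\Nef)$ is nice in the sense of Definition \ref{defn:Rnice}: this is precisely Lemma \ref{lem:Rambnice}, which applies since $\Nef$ is assumed amb-nice. Next, the ambient version of Assumption \ref{ass:relfuk1} states that $\fuk_{amb}(X,D,\Nef)$ is a $\G_{amb}$-graded deformation of $\fuk_{amb}(X \setminus D)$ over $R_{amb}(\Nef)$, so restricting to the full subcategory $\EuA_R$ gives a $\G_{amb}$-graded (possibly curved) deformation of $\EuA$ over $R$ with curvature of order $\fm$. By Assumption \ref{ass:relfukambco}, its first-order deformation classes are $\sum_{q \in i^{-1}(p)}\co(z_q)$ (scaled by $\nov_p$), which are precisely the classes appearing in the hypothesis.

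With these identifications in place, the hypothesis that the classes $\nov_p \sum_{q \in i^{-1}(p)}\co(z_q)$ span $\HH^2(\EuA, \EuA \otimes \fmuncomp)$ as an $\Runcomp_0$-module becomes exactly the spanning hypothesis of Lemma \ref{lem:versainf} (applied to the d$\G_{amb}$la $\frakg = CC^\bullet(\EuA)[1]$ and the nice ring $R = R_{amb}(\Nef)$). Applying Lemma \ref{lem:versainf} yields that $\EuA_R$ is $R$-complete, and if furthermore $\sum_{q \in i^{-1}(p)}\co(z_q) \neq 0$ for all $p$, then the second bullet of that Lemma gives $R$-versality.

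I do not anticipate a substantive obstacle: the proof is essentially a verification that each hypothesis in the abstract versality machinery (Lemma \ref{lem:versainf}) has been arranged by the combination of amb-niceness and the ambient analogues of the geometric assumptions. The only mild subtlety to watch is the grading convention --- one must use $\G_{amb}$-graded rather than $\G(X \setminus D)$-graded structures throughout, which is built into the definitions of $\fuk_{amb}$, $R_{amb}(\Nef)$, and the map $\bff$ of \S \ref{subsec:Ramb}, but is worth stating explicitly at the start of the proof to avoid confusion between the two grading conventions that circulate in this paper.
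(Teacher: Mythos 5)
Your proposal is correct and matches the paper's argument exactly: the paper proves Theorem \ref{thm:versalityamb} by noting that the proof of Theorem \ref{thm:versality} carries over, i.e., niceness of $R_{amb}(\Nef)$ from Lemma \ref{lem:Rambnice}, the ambient analogues of Assumptions \ref{ass:relfuk1} and \ref{ass:relfukambco} to identify the first-order deformation classes, and then Lemma \ref{lem:versainf}. No differences worth noting.
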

\begin{proof}
The proof follows that of Theorem \ref{thm:versality}.
\end{proof}

\begin{rmk}
\label{rmk:useless}
In practice, Theorem \ref{thm:versalityamb} is not very useful.
That is because the hypotheses will not often be satisfied: the classes $\nov_p \sum_{q \in i^{-1}(p)} \co(z_q)$ will typically not span the classes $\nov_p \co(z_q)$ for all $q \in i^{-1}(p)$.
The exception is when $|i^{-1}(p)| = 1$ for all $p$, but in that case the fact that $\Nef$ is amb-nice implies that it is nice, so we can apply Theorem \ref{thm:versality}. 
The ambient relative Fukaya category becomes more useful when we introduce equivariance into the story, see Theorem \ref{thm:eqversalityamb}.
\end{rmk}

\subsection{Versality with equivariance}
\label{subsec:verseq}

Let $(X,D)$ be a $\snc$ pair, $\G$ the associated grading datum, and $(\Gamma,\sigma)$ be a signed group.

\begin{defn}
\label{defn:eqrelKahl}
An action of $(\Gamma,\sigma)$ on $(X,D)$ is an action of $\Gamma$ on $X$ by diffeomorphisms, such that
\begin{itemize}
\item $\Gamma$ preserves $D$ as a set;
\item $\gamma \in \Gamma$ acts holomorphically if $\sigma(\gamma) = 0$ and anti-holomorphically if $\sigma(\gamma) = 1$.
\end{itemize}
We say that a relative K\"ahler form $\omega = -dd^ch$ is $(\Gamma,\sigma)$-invariant if $\Gamma$ preserves the K\"ahler potential. 
In this situation we say that $(\Gamma,\sigma)$ acts on the relative K\"ahler manifold $(X,D,\omega)$. 
\end{defn}

Observe that $\gamma^* \omega = (-1)^{\sigma(\gamma)} \omega$ and $\gamma$ preserves $X \setminus D$: it follows that $\Gamma$ acts on $\cG(X \setminus D)$, and hence on $H_1(\cG(X \setminus D))$. 
We modify this to obtain an action $\Gamma$ on the associated grading datum $\G$:
\begin{equation} \gamma \cdot y := (-1)^{\sigma(\gamma)}\cdot \gamma_* y.\end{equation}
Observe that the sign factor is necessary: the diagram
\begin{equation} \xymatrix{ \Z \ar[r] \ar@{=}[d] & H_1(\cG(X \setminus D)) \ar[d]^{\gamma_*} \\
\Z \ar[r] & H_1(\cG(X \setminus D))}\end{equation}
does not commute when $\gamma$ is anti-symplectic, so $\gamma_*$ does not define a map of grading data unless we modify it by multiplying by $(-1)^{\sigma(\gamma)}$.

The constructions that follow will depend further on the choice of a morphism $p: \G \to \Z/4$ of grading data (`$\Z/4$' is shorthand for the grading datum $\{\Z \to \Z/4 \to \Z/2\}$). 
We will require that $\Gamma$ respects $p$: explicitly, this means that
\begin{equation} 
\label{eqn:pgamma}
p(\gamma_* y) = (-1)^{\sigma(\gamma)}\cdot p(y).\end{equation}

\begin{rmk}
In all of our immediately intended applications, $\Gamma$ in fact acts trivially on $\G$. 
In this case \eqref{eqn:pgamma} is trivially satisfied. 
\end{rmk}

We define an action of $\Gamma$ on $H_2(X,X \setminus D)$ by setting
\begin{equation} \gamma \cdot u = (-1)^{\sigma(\gamma)}\cdot \gamma_* u;\end{equation}
this induces an action of $\Gamma$ on $R(X,D)$, sending $\nov^u \mapsto \nov^{\gamma \cdot u}$; and this action is $\G$-graded relative to the action of $\Gamma$ on $\G$ (compare the discussion preceding Lemma \ref{lem:eqvers}). 

However, we need to modify this action of $\Gamma$ on $R$ still further, using the homomorphism $p$. 
Because $R$ is $\G$-graded, it acquires a $\Z/4$-grading via $p$. 
Because $R$ is graded in even degree, this $\Z/4$-grading lies in $2\Z/4 \simeq \Z/2$. 
So if $r \in R$ is of pure degree $p(|r|) \in \Z/4$, there is a well-defined corresponding sign $p(|r|)/2 \in \Z/2$. 
We define an involution on $R$, 
\begin{align}
i_p: R &\to R,\\
i_p(r) &:= (-1)^{p(|r|)/2} \cdot r.
\end{align}
We observe that this involution commutes with the above-defined action of $\Gamma$ on $R$, because the action of $\Gamma$ respects the $\Z/4$-grading.
Thus we can modify the action of $\Gamma$ on $R$:
\begin{equation}
\label{eqn:gamR} \gamma \cdot \nov^u := i_p^{\sigma(\gamma)} \left(\nov^{\gamma \cdot u} \right).\end{equation}
This is the action of $\Gamma$ on $R$ that will concern us.

If the action of $(\Gamma,\sigma)$ preserves $\Nef \subset Nef(X,D)$, then we similarly obtain an action of $\Gamma$ on $R:=R(\Nef)$. 
This is the case, for example, when $\Nef = \Nef(E)$ for some system of divisors $E$ which is preserved as a set by the action of $\Gamma$.

We have proved:

\begin{deflem}
\label{deflem:gammagrad}
An action of $(\Gamma,\sigma)$ on $(X,D,\omega)$ induces an action of $\Gamma$ on $\G$, the grading datum associated to $X \setminus D$, which we denote by $y \mapsto \gamma \cdot y$. 
Let $p: \G \to \Z/4$ be a morphism of grading data, and suppose that $\Gamma$ respects $p$ and $\Nef$. 
Then there is an induced action of $\Gamma$ on $R(\Nef)$, which is $\G$-graded relative to the action of $\Gamma$ on $\G$, and which we denote by $r \mapsto \gamma \cdot r$.
\end{deflem}

\begin{example}
\label{eg:holvol}
Suppose that $c_1(TX)|_{X \setminus D} = 0$. 
As in Lemma \ref{lem:c1tors}, we can choose a holomorphic volume form $\eta$, whose squared phase map defines a morphism of grading data
\begin{align}
p_\eta:\G &\to  \Z.
\end{align}
If we assume that there exist constants $\beta_\gamma \in \C^*$ for all $\gamma \in \Gamma$ such that
\begin{equation} \gamma^* \eta = \begin{cases}
					\beta_\gamma \cdot \eta & \mbox{ if $\sigma(\gamma) = 0$,} \\
					\beta_\gamma \cdot \overline{\eta} & \mbox{ if $\sigma(\gamma) = 1$,}
				\end{cases}
\end{equation}
then we have 
\begin{equation}p_\eta(\gamma \cdot y) = p_\eta(y),\end{equation}
so the reduction of $p_\eta$ modulo $4$ defines a map $p_\eta: \G \to \Z/4$ with the required properties to form part of a $(\Gamma,\sigma)$-action. 
The map $p_\eta$ equips the generator $r_p$ of $R$ with degree $2d_p \in \Z/4$, where $d_p$ is the `order of pole of $\eta$ along $D_p$' (compare \cite[Lemma 3.22]{Sheridan:CY}). 
Therefore the action of $\Gamma$ on $R$ is determined by
\begin{equation} \gamma\cdot \nov_p = \begin{cases} 
					\nov_{\gamma(p)} & \mbox{ if $\sigma(\gamma) = 0$} \\
					(-1)^{d_p} \cdot \nov_{\gamma(p)} & \mbox{ if $\sigma(\gamma) = 1$,}
					\end{cases} \end{equation}
where we assume that $\gamma$ sends $D_p$ to $D_{\gamma(p)}$. 
\end{example}

Now suppose we are given an action of $(\Gamma,\sigma)$ on $(X,D,\omega)$, preserving the morphism $p: \G \to \Z/4$ and the convex sub-cone $\Nef \subset Nef(X,D)$ containing $[\omega]$ in its interior.
In Definition--Lemma \ref{deflem:gammagrad}, we defined a simultaneous action of $\Gamma$ on the associated grading datum $\G$ and coefficient ring $R$ of the relative Fukaya category $\fuk(X,D,\Nef)$.

Recall that the objects of $\fuk(X \setminus D)$ are \emph{anchored branes} $L^\# = (L,\iota,\tilde{\iota},P^\#)$, where $L$ is a smooth compact manifold equipped with a Pin structure $P^\#$, $\iota: L \to X \setminus D$ is an exact Lagrangian embedding, and $\tilde{\iota}: L \to \wt{\cG}(X \setminus D)$ is an anchoring. 
There is an action of $G \oplus \Z/2$ on these objects (where $\G = \{\Z \to G \to \Z/2\}$ is our grading datum): $g \oplus \sigma$ sends
\begin{equation} (L,\iota,\tilde{\iota},P^\#) \mapsto \left(L,\iota,g \circ \tilde{\iota}, P^\# \otimes \lambda(TL)^{\otimes \sigma}\right),\end{equation}
where $g$ acts on $\wt{\cG} X$ by the covering group action. 
We define an \emph{unanchored brane} to be an equivalence class of the quotient of the set of anchored branes by the action of $G \oplus \Z/2$: explicitly, it is given by data $(L,\iota,[P^\#])$ where $[P^\#]$ is a Pin structure on $L$, up to tensoring with $\lambda(TL)$, and $\iota: L \to X \setminus D$ admits an anchoring.

The following Lemma explains how $(\Gamma,\sigma)$ acts on the affine and relative Fukaya categories. 
It is proved in Appendix \ref{sec:signsrel}.

\begin{lem}
\label{lem:signgrpactrel}
Suppose that $(\Gamma,\sigma)$ acts on $(X,D,\omega)$, preserving the morphism of grading data $p: \G \to \Z/4$ and the convex sub-cone $\Nef \subset Nef(X,D)$ containing $[\omega]$ in its interior.
Suppose that $\EuA_R \subset \fuk(X,D,\Nef)$ is a full subcategory, closed under shifts, and $\EuA \subset \fuk(X \setminus D)$ is the corresponding full subcategory. 
Suppose that $\Gamma$ acts freely on the set of unanchored branes underlying $Ob(\EuA)$. 
Then, for appropriate choices of perturbation data in the definition of the relative Fukaya category,
\begin{itemize}
\item $(\Gamma,\sigma)$ acts on $\EuA$ up to shifts (Lemma \ref{lem:AutsigmaFukr}).
\item $\EuA_R$ is a $(\Gamma,\sigma)$-equivariant deformation of $\EuA$ over $R$ (Lemma \ref{lem:autsigrel}).
\end{itemize}
The actions are simultaneous with the action of $\Gamma$ on $\G$ and $R$ from Definition--Lemma \ref{deflem:gammagrad}. 
\end{lem}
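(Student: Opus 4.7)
The plan is to produce, for each $\gamma \in \Gamma$, a strict $\Bbbk$-linear functor $\gamma_*: \EuA \to \EuA$ when $\sigma(\gamma) = 0$ or $\gamma_*: \EuA \to \EuA^{op}$ when $\sigma(\gamma) = 1$, and similarly over $R$; and to verify that these assemble into a signed group action compatible with the action of $\Gamma$ on $R$ from Definition--Lemma \ref{deflem:gammagrad}. The construction proceeds in three stages: (1) average all auxiliary choices (Floer data, the consistent universal perturbation data on the moduli spaces $\mathcal{R}(\mathbf{L})$) to be $\Gamma$-equivariant---possible precisely because $\Gamma$ acts freely on the underlying unanchored branes, so that no Floer datum is ever forced into a self-symmetric position; (2) define $\gamma_*$ on objects and morphisms by pushforward; (3) identify the moduli spaces computing $\mu^s$ on the two sides and verify the sign matching.

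On the level of objects, $\gamma$ sends an anchored brane $L^\# = (L,\iota,\tilde\iota,P^\#)$ to the brane obtained by composing $\iota$ with $\gamma$, transporting $P^\#$, and composing $\tilde\iota$ with a chosen lift of the induced self-map of $\cG(X \setminus D)$ to the universal abelian cover $\wt{\cG}(X \setminus D)$. Different choices of lift differ by the deck-transformation action, i.e.\ by a shift in the grading group $G$, which is why the induced action is only defined ``up to shifts'' and we require $\EuA$ to be closed under shifts. On morphisms, $\gamma$ sends a Hamiltonian chord $y$ from $L_0$ to $L_1$ to $\gamma \circ y$, and transports the $\G$-graded orientation line accordingly to an isomorphism $|o_y| \to |o_{\gamma\cdot y}|$.

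For the $A_\infty$ structure, when $\sigma(\gamma)=0$ the map $u \mapsto \gamma\circ u$ is a bijection between $\mathcal{M}(\mathbf{y})$ and $\mathcal{M}(\gamma \cdot \mathbf{y})$, yielding a strict $A_\infty$ automorphism. When $\sigma(\gamma)=1$, however, $\gamma \circ u$ is $J$-antiholomorphic; to recover a $J$-holomorphic map, we precompose with complex conjugation $c: \mathbb{D} \to \mathbb{D}$, $z \mapsto \bar z$, and work with $u' := \gamma \circ u \circ c$. Since $c$ reverses the cyclic order of the boundary punctures, $u'$ contributes to the $\mu^s$-coefficient of the output chord $\gamma \cdot y_0$ coming from inputs $(\gamma\cdot y_s,\gamma\cdot y_{s-1},\ldots,\gamma\cdot y_1)$, which is precisely the structure map of $\EuA^{op}$ evaluated on $(\gamma\cdot y_1,\ldots,\gamma\cdot y_s)$. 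Furthermore $[u'] = -\gamma_*[u]$ in $H_2(X,X \setminus D)$ (because $c$ reverses the fundamental class of the disc), matching the definition $\gamma \cdot u = (-1)^{\sigma(\gamma)}\gamma_* u$, so the monomial weight $\nov^{[u']}$ is recorded correctly up to the sign involution $i_p$ addressed below.

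The main obstacle, and the reason the full proof is relegated to Appendix \ref{sec:signsrel}, is the bookkeeping of signs. Two distinct sign contributions must be reconciled: first, the Koszul sign $(-1)^\dagger = \sum_{i<j}(|a_i|+1)(|a_j|+1)$ defining $\mu^*_{op}$ must equal the signed discrepancy between the orientation-line isomorphism produced from $u'$ and that produced from $u$, after accounting for the reversal of the cyclic order of chords---this is a comparison of Pin-structure transport along a reversed boundary circle with the linearized Cauchy--Riemann index of an anti-holomorphically reflected disc. Second, the factor $(-1)^{p(|r|)/2}$ encoded in $i_p$ arises from comparing the squared phase of a complex volume form (which controls the $\Z/4$-reduction of the grading via $p$) with its anti-holomorphic pullback, and this is the precise extra twist needed to match $\nov^{[u']}$ with the ring action \eqref{eqn:gamR}. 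Once these matchings are verified, $\gamma_*$ is a strict signed $A_\infty$ isomorphism, and both conclusions follow: the passage $\gamma \mapsto \gamma_*$ defines an action of $(\Gamma,\sigma)$ on $\EuA$ up to shifts, and $\EuA_R$ becomes a $(\Gamma,\sigma)$-equivariant deformation of $\EuA$ over $R$ with the correct coefficient twist.
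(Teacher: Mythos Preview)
Your outline is correct in spirit and captures the essential geometric mechanism: anti-symplectic elements act via $u \mapsto \gamma \circ u \circ c$ with $c$ complex conjugation on the domain, the reversal of boundary order yields the opposite category, and the homology class transforms as $[u'] = -\gamma_*[u]$. However, the paper organizes the argument differently, and the structural difference is worth noting.

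Rather than constructing the signed action directly, the paper first develops three parallel versions of the Fukaya category, $\fuk_\r$, $\fuk_\l$, $\fuk_{\l\r}$, distinguished by whether orientation lines $o(y,k)$ are identified using rightward shifts $s_\r^{2i}$, leftward shifts $s_\l^{2i}$, or both (which requires working modulo $4$, hence the morphism $p$). The key standalone result (Lemma~\ref{lem:oppfuk}) is that conjugation gives a strict isomorphism $c^M_\r:\fuk_\r(M) \xrightarrow{\sim} \fuk_\l(\overline{M})^{op}$, swapping the $\r$ and $\l$ conventions; this is where the Koszul sign $(-1)^\dagger$ is verified once and for all. The signed action on $\EuA$ is then obtained by an elegant reduction: an anti-symplectic automorphism of $M$ is the same as a symplectomorphism of $M \coprod \overline{M}$ interchanging the components, so one applies the \emph{unsigned} result (Lemma~\ref{lem:AutFukr}) to $\EuA \coprod \overline{\EuA} \subset \fuk_{\l\r}(M \coprod \overline{M})$ and then transports back via $c_{\l\r}^{\overline{M}}$.

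This buys a clean separation of concerns: the delicate orientation comparison is isolated in one lemma about $M$ versus $\overline{M}$, and the group-action statement becomes a formal consequence. Your direct approach would work too, but the sign verification you sketch in your final paragraph is exactly the content of Lemma~\ref{lem:oppfuk}, and your description of the origin of $i_p$ is slightly off: the twist does not come from comparing a volume form with its anti-holomorphic pullback, but rather from the fact that gluing orientation operators in the relative category shifts the $\Z/4$-degree by $p(\deg r^u)$, forcing a choice between $s_\r^2$ and $s_\l^2$ to land in the correct $o_{\l\r}$ (Lemma~\ref{lem:oprelfuk}); these differ by $(-1)^{p(\deg r^u)/2}$.
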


\begin{thm}[= Theorem \ref{thm:2}]
\label{thm:eqversality}
Consider the situation of Lemma \ref{lem:signgrpactrel}. Suppose furthermore that
\begin{itemize}
\item $\Nef$ is nice, or $D$ is smooth and $(X,D)$ semi-positive;
\item $\Gamma$ is finite;
\item $\HH^2\left(\EuA,\EuA \otimes \fmuncomp\right)^\Gamma$ is contained in the $\Runcomp_0$-span of the classes $\nov_p \co(z_p)$. 
\end{itemize}
Then $\EuA_R$ is an $R$-complete $(\Gamma,\sigma)$-equivariant deformation of $\EuA$.

If furthermore $\co(z_p) \neq 0$ for all $p$, then $\EuA_R$ is $R$-versal.
\end{thm}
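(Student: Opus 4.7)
The plan is to reduce the statement directly to the algebraic equivariant versality criterion, Lemma \ref{lem:eqversainf}, by verifying each of its hypotheses from the geometric data. The structure is parallel to the non-equivariant Theorem \ref{thm:versality}, but with the extra equivariance bookkeeping supplied by Lemma \ref{lem:signgrpactrel} and Definition--Lemma \ref{deflem:gammagrad}.

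First I would check that the coefficient ring $R = R(\Nef)$ is nice in the sense of Definition \ref{defn:Rnice}. Under the first bullet this is exactly Lemma \ref{lem:Rnice}; in the alternative case that $D$ is smooth and $(X,D)$ is semi-positive, $R \simeq \Bbbk\power{\nov}$ is nice by Remark \ref{rmk:smoothb}. Next I would verify that the action of $\Gamma$ on $R$ described in Definition--Lemma \ref{deflem:gammagrad} is of the special type required in \S \ref{subsec:versgp}, i.e. that $g \cdot \nov_p = \pm \nov_{g \cdot p}$: this is immediate from the explicit formula \eqref{eqn:gamR} together with the fact that $\Gamma$ permutes the components of $D$ (so acts by permutation on the generators $u_p$ of $\Z^P$) and the involution $i_p$ only introduces a sign.

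Now I would assemble the equivariant deformation-theoretic input. By Assumption \ref{ass:relfuk1}, $\EuA_R$ is a $\G$-graded deformation of $\EuA$ over $R$; by Lemma \ref{lem:signgrpactrel}, this deformation is $(\Gamma,\sigma)$-equivariant (for appropriate choices of perturbation data), so $\EuA_R \in \mathfrak{A}_R(\EuA)^\Gamma$. By Assumption \ref{ass:relfukco}, the first-order deformation classes are $\nov_p \defa_p = \nov_p \co(z_p)$. The third bullet of the hypothesis then states precisely that $\HH^2(\EuA,\EuA \otimes \fmuncomp)^\Gamma$ is contained in the $\Runcomp_0$-span of these first-order deformation classes. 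All hypotheses of Lemma \ref{lem:eqversainf} are therefore in force, and the first conclusion (that $\EuA_R$ is an $R$-complete equivariant deformation) follows directly. For the second conclusion, if $\co(z_p) \neq 0$ for all $p$ then $\defa_p \neq 0$ for all $p$, and the second bullet of Lemma \ref{lem:eqversainf} yields $R$-versality.

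I do not anticipate a genuine obstacle here: the real work has already been done elsewhere. The algebraic machinery is Lemma \ref{lem:eqvers}, which in turn rests on the order-by-order averaging argument in \S \ref{subsec:versgp}; the geometric content, namely the construction of the equivariant $\Gamma$-action on $\EuA_R$ and the identification of its first-order classes with $\nov_p \co(z_p)$, is packaged in Lemma \ref{lem:signgrpactrel} and Assumption \ref{ass:relfukco}. The one subtlety worth flagging is that the $\Gamma$-action on $R$ must be exactly the one coming from Definition--Lemma \ref{deflem:gammagrad} (with the sign twist by $i_p^{\sigma(\gamma)}$), since this is what makes Lemma \ref{lem:signgrpactrel} produce a $(\Gamma,\sigma)$-equivariant structure compatible with the equivariance of the classes $\co(z_p)$; any other choice of $\Gamma$-action on $R$ would break the compatibility needed to invoke Lemma \ref{lem:eqversainf}. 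Once this compatibility is in hand, the theorem is a formal consequence.
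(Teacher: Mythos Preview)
Your proposal is correct and follows the same route as the paper's proof, which is a one-line citation of Lemmas \ref{lem:signgrpactrel} and \ref{lem:eqversainf} together with Assumptions \ref{ass:relfuk1} and \ref{ass:relfukco}. You have simply unpacked the implicit verifications (that $R$ is nice, and that the $\Gamma$-action on $R$ has the special form $g \cdot \nov_p = \pm \nov_{g\cdot p}$) which are needed to invoke Lemma \ref{lem:eqversainf} but which the paper leaves to the reader.
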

\begin{proof}
Follows from Lemmas \ref{lem:signgrpactrel} and \ref{lem:eqversainf}, together with Assumptions \ref{ass:relfuk1} and \ref{ass:relfukco}.
\end{proof}

\begin{rmk}
It follows by analogous arguments to those in \S \ref{subsec:indsymp} that the $\Gamma$-action on $\HH^\bullet(\EuA)$ is independent of the choice of $\Gamma$-invariant relative K\"ahler form. 
So if we verify the hypotheses of Theorem \ref{thm:eqversality} for one $\Gamma$-invariant relative K\"ahler form, then we verify them for all such.   
\end{rmk}

\subsection{Versality with equivariance for sub-$\snc$ pairs}

Now we consider the situation of \S \ref{subsec:Ramb}. 
Let $(Y,D')$ be a $\snc$ pair equipped with an action of $(\Gamma,\sigma)$, and let $\cV \to Y \setminus D'$ be an equivariant holomorphic vector bundle. 

\begin{rmk}
Let us clarify one potentially confusing point about what we mean by an `equivariant holomorphic vector bundle', when some group elements act anti-holomorphically. 
Recall that a $\Gamma$-equivariant structure on a real vector bundle $\cV$ is a choice of isomorphisms $\gamma^*\cV \simeq \cV$ for all $\gamma$, satisfying the cocycle condition. 
A $(\Gamma,\sigma)$-equivariant structure on a holomorphic vector bundle $\cV$ is a $\Gamma$-equivariant structure on the underlying real vector bundle $\cV$, with the property that the isomorphism $\gamma^*\cV \simeq \cV$ is holomorphic when $\sigma(\gamma)=0$, and anti-holomorphic when $\sigma(\gamma) = 1$. 
\end{rmk}

Let $s \in \Gamma(\cV)$ be an equivariant holomorphic section, and $X=\{s=0\}$. 
Then the action of $(\Gamma,\sigma)$ on $(Y,D')$ restricts to one on $(X,D)$. 
When combined with the action on $\cV$, it induces an action of $\Gamma$ on $\G_{amb}$. 
Suppose that the action of $\Gamma$ preserves a morphism of grading data $p:\G_{amb} \to \Z/4$ and a convex sub-cone $\Nef \subset Nef(X,D)$. 
Then there is a unique action of $\Gamma$ on $R_{amb}(\Nef)$ so that the morphism $\Phi^*: R(\Nef) \to R_{amb}(\Nef)$ of \eqref{eqn:Phi} is $\Gamma$-equivariant. 

If $\Gamma$ acts freely on the set of unanchored Lagrangian branes underlying $\EuA_R \subset \fuk_{amb}(X,D,\Nef)$, then it follows from Lemma \ref{lem:signgrpactrel} that $\EuA_R$ is a $(\Gamma,\sigma)$-equivariant deformation of $\EuA$ over $R_{amb}(\Nef)$, simultaneous with the actions of $\Gamma$ on $\G_{amb}$ and $R_{amb}(\Nef)$ defined above.

\begin{thm}
\label{thm:eqversalityamb}
In this situation, suppose furthermore that:
\begin{itemize}
\item $\Nef$ is amb-nice;
\item $\Gamma$ is finite;
\item $\HH^2\left(\EuA,\EuA \otimes \fmuncomp\right)^\Gamma$ is contained in the $\Runcomp_0$-span of the classes $\nov_p \sum_{q \in i^{-1}(p)}\co(z_q)$. 
\end{itemize}
Then $\EuA_R$ is an $R$-complete $(\Gamma,\sigma)$-equivariant deformation of $\EuA$.

If furthermore $\sum_{q \in i^{-1}(p)}\co(z_q) \neq 0$ for all $p$, then $\EuA_R$ is $R$-versal.
\end{thm}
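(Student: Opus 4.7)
The plan is to reduce the statement to the abstract equivariant versality criterion of Lemma \ref{lem:eqversainf}, exactly paralleling the proof sketch of Theorem \ref{thm:eqversality}, but invoking the \emph{ambient} versions of the relevant ingredients throughout. Concretely, I would unwind the hypothesis in three steps. First, the coefficient ring $R = R_{amb}(\Nef)$ is nice in the sense of Definition \ref{defn:Rnice}: this is precisely Lemma \ref{lem:Rambnice}, whose input is amb-niceness of $\Nef$. Second, the action of $\Gamma$ on $R_{amb}(\Nef)$ is of the special form $\gamma \cdot \nov_p = \pm \nov_{\gamma\cdot p}$ required by the discussion preceding Lemma \ref{lem:eqvers}: indeed $\Gamma$ acts on the index set $P_{amb} \subset P'$ by permutation and the remaining signs come from the morphism $p: \G_{amb} \to \Z/4$ exactly as in Definition--Lemma \ref{deflem:gammagrad}. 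Third, by the ambient analog of Lemma \ref{lem:signgrpactrel} (whose proof in Appendix \ref{sec:signsrel} carries over verbatim after replacing $\G$ by $\G_{amb}$ and $R(\Nef)$ by $R_{amb}(\Nef)$), the hypothesis that $\Gamma$ acts freely on underlying unanchored branes implies that $\EuA_R$ is a $(\Gamma,\sigma)$-equivariant deformation of $\EuA$ over $R$, simultaneous with the action of $\Gamma$ on $R$ fixed above.

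With these structural facts in place, Assumption \ref{ass:relfukambco} identifies the first-order deformation classes $\nov_p \defa_p$ of $\EuA_R$ with $\nov_p \sum_{q \in i^{-1}(p)} \co(z_q)$. The third bullet in the hypothesis then says exactly that $\HH^2(\EuA,\EuA \otimes \fmuncomp)^\Gamma$ is contained in the $\Runcomp_0$-span of these classes, which is precisely the input needed for the first conclusion of Lemma \ref{lem:eqversainf}. This yields $R$-completeness. For $R$-versality, the additional hypothesis $\sum_{q \in i^{-1}(p)} \co(z_q) \neq 0$ for all $p$ ensures $\defa_p \neq 0$, matching the second bullet of Lemma \ref{lem:eqversainf} and delivering uniqueness of $\psi^*: \fm/\fm^2 \to \fm/\fm^2$.

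There is no substantial obstacle: every ingredient has a direct ambient counterpart already established in the paper (Lemma \ref{lem:Rambnice} for niceness, Lemma \ref{lem:ambnewtypec} for the extremality arguments that underpin the special form of the $\Gamma$-action, Assumption \ref{ass:relfukambco} for identifying the first-order classes, and the appendix-level sign verification underlying Lemma \ref{lem:signgrpactrel}). The one point deserving a careful check is that the sign twist in \eqref{eqn:gamR}, defined via $p: \G \to \Z/4$, has a coherent ambient analog using a morphism $p: \G_{amb} \to \Z/4$; since $\bff: \G \to \G_{amb}$ intertwines the $\Gamma$-actions by construction and $R_{amb}(\Nef)$ is $\G_{amb}$-graded, the construction of \S \ref{subsec:verseq} goes through without modification, and the diagram showing that $\Phi^*: \bff_* R(\Nef) \to R_{amb}(\Nef)$ is $\Gamma$-equivariant is automatic.
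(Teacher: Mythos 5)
Your proposal is correct and follows essentially the same route as the paper: the paper's proof simply says it "follows that of Theorem \ref{thm:eqversality}", i.e.\ combine the equivariant deformation structure from (the ambient version of) Lemma \ref{lem:signgrpactrel}, niceness of $R_{amb}(\Nef)$ from Lemma \ref{lem:Rambnice}, the identification of first-order classes from Assumption \ref{ass:relfukambco}, and the abstract criterion Lemma \ref{lem:eqversainf}. Your write-up just makes explicit the steps the paper leaves implicit (notably the special form of the $\Gamma$-action on $R_{amb}(\Nef)$ and the $\Gamma$-equivariance of $\Phi^*$), which the paper already addresses in the paragraph preceding the theorem statement.
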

\begin{proof}
The proof follows that of Theorem \ref{thm:eqversality}.
\end{proof}

\begin{rmk}
Recall that the ambient relative Fukaya category does not give a useful versality result in the absence of equivariance (see Remark \ref{rmk:useless}). 
Observe that Theorem \ref{thm:eqversalityamb} can fix the problem if $\Gamma$ acts transitively on the set of components of $D'_p \cap X$ for all $p$: in this case, although the classes $\nov_p \co(z_q)$ may not be spanned by the first-order deformation classes of the ambient relative Fukaya category, the only linear combination of them which is invariant under $\Gamma$ is their sum, which is equal to the first-order deformation class of the ambient relative Fukaya category.
\end{rmk}

\subsection{Relationship between the absolute and relative Fukaya categories}
\label{subsec:relabsol}

We begin with some recollections about the absolute Fukaya category. 

\begin{defn}
Let $\Lambda$ be the universal Novikov field over the field $\Bbbk$: that is,
\begin{equation} \Lambda := \left\{ \sum_{j=0}^\infty c_j \cdot q^{\lambda_j}: c_j \in \Bbbk, \lambda_j \in \R, \lim_{j \to \infty} \lambda_j = +\infty\right\}.\end{equation}
It is a nonarchimedean complete valued field, where the valuation is given by
\begin{equation} val\left(\sum_{j=0}^\infty c_j \cdot q^{\lambda_j}\right) := \min_{c_j \neq 0}\{\lambda_j\}.
\end{equation}
The subring $\Lambda_{\ge 0} := val^{-1}(\R_{\ge 0})$ is called the universal Novikov ring; it is a local ring with maximal ideal $\Lambda_{>0} := val^{-1}(\R_{>0})$.
\end{defn}

Roughly, one expects there to be a $\Lambda_{\ge 0}$-linear `positive-energy absolute Fukaya category' $\fuk(X,\omega)_{\ge 0}$, whose base-change to $\Lambda$ is the absolute Fukaya category $\fuk(X,\omega)$. 
One can formally enlarge the positive-energy absolute Fukaya category with positive-energy bounding cochains, to obtain a new $\Lambda_{\ge 0}$-linear category $\fuk(X,\omega)^\bc_{\ge 0}$; we denote the base-change of this category to $\Lambda$ by $\fuk(X,\omega)^\bc$. We refer to \cite{FO3} for further details.

\begin{defn}
We define the $\Bbbk$-scheme
\begin{align*} 
\overline{\cM}_{K\ddot{a}h}(X,D,\Nef) &:= \spec(R(\Nef)), 
\end{align*}
and let $0 \in \overline{\cM}_{K\ddot{a}h}(X,D,\Nef)$ be the point corresponding to the maximal ideal $\fm$. 
We define the open subscheme
\begin{align*}
\cM_{K\ddot{a}h}(X,D,\Nef) &:= \spec\left(\Rpunc(\Nef)\right) \quad \text{ (see \eqref{eqn:rpunc}),}
\end{align*}
which is the complement of the divisor $\partial \overline{\cM}_{K\ddot{a}h}(X,D,\Nef) := \cup_p \{\nov_p = 0\}$. 
We similarly define 
\[ \cM_{K\ddot{a}h}(X,D) \subset \overline{\cM}_{K\ddot{a}h}(X,D) := \overline{\cM}_{K\ddot{a}h}(X,D,Nef(X,D)).\]
\end{defn}

\begin{lem}
\label{lem:RLamb}
Let $(X,D)$ be a $\snc$ pair, and $\Nef \subset Nef(X,D)$ a convex sub-cone.
Then any class $\beta$ in the interior of $\Nef$ determines a $\Lambda$-point $\dm(\beta)$ of $\overline{\cM}_{K\ddot{a}h}(X,D,\Nef)$ by the equation
\begin{equation}
\dm(\beta)^*\left(\sum_u c_u \cdot \nov^u\right) := \sum_u c_u \cdot q^{\beta(u)}.
\end{equation} 
\end{lem}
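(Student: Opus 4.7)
The plan is to check that the stated formula defines a $\Bbbk$-algebra homomorphism $R(\Nef) \to \Lambda$; since $\overline{\cM}_{K\ddot{a}h}(X,D,\Nef) = \spec(R(\Nef))$ by definition, such a homomorphism is the same as a $\Lambda$-point. There are two steps: first verify that the formula makes sense on the uncompleted algebra $\Runcomp(\Nef) = \Bbbk[NE(\Nef)]$, and second verify that it extends continuously to the $\fmuncomp$-adic completion.

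For the first step, since $\beta: H_2(X,X\setminus D;\R) \to \R$ is linear, the assignment $u \mapsto \beta(u)$ restricts to a homomorphism of additive monoids $NE(\Nef) \to (\R,+)$. Composing with $\R \to \Lambda^{\times}$, $\lambda \mapsto q^\lambda$, and extending $\Bbbk$-linearly on the monoid algebra $\Bbbk[NE(\Nef)]$ yields a $\Bbbk$-algebra homomorphism $\dm(\beta)^* : \Runcomp(\Nef) \to \Lambda$ sending $\nov^u \mapsto q^{\beta(u)}$.

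The heart of the argument is the second step, for which I will establish the following convergence lemma: since $\beta$ lies in the interior of $\Nef \subset H^2(X,X \setminus D;\R)$, there exists $c > 0$ such that
\begin{equation*}
\beta(v) \ge c \quad \text{for every } v \in NE(\Nef) \setminus \{0\}.
\end{equation*}
Indeed, $\beta$ lying in the interior of $\Nef$ means, by definition of the dual cone $NE(\Nef)_\R = \Nef^\vee$, that $\beta$ is strictly positive on $NE(\Nef)_\R \setminus \{0\}$. Since $NE(\Nef) \subset H_2(X,X\setminus D)$ is a subset of a finitely generated abelian group and $NE(\Nef)_\R$ is strongly convex (because $\Nef$ has non-empty interior), a compactness argument applied to a bounded cross-section of the cone, combined with the fact that the $\beta$-values of lattice points of large norm are forced to infinity by strict positivity on the unit sphere in $NE(\Nef)_\R$, yields the uniform lower bound.

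Granting this, if $\nov^u \in \fmuncomp^k$ then $u = v_1 + \cdots + v_k$ with each $v_i \in NE(\Nef) \setminus \{0\}$, so $\beta(u) \ge ck$. Hence $\dm(\beta)^*(\fmuncomp^k) \subset q^{ck} \cdot \Lambda_{\ge 0}$. Given any formal series $\sum_u c_u \nov^u \in R(\Nef)$, the defining property of the $\fmuncomp$-adic completion says that for every $N$ only finitely many of the $u$ with $c_u \neq 0$ lie outside $\fmuncomp^{\lceil N/c \rceil}$, so only finitely many contributing terms satisfy $\beta(u) \le N$. Thus the image $\sum_u c_u \, q^{\beta(u)}$ is a valid element of $\Lambda$. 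Continuity of the extension in the $\fmuncomp$-adic / $q$-adic topologies, together with the algebra homomorphism property established in step one, then forces $\dm(\beta)^*$ to be a $\Bbbk$-algebra homomorphism, completing the proof. The only point with any content is the uniform lower bound $c$; everything else is a formal check.
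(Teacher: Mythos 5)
Your proof is correct and follows the same route as the paper, whose entire argument is the one-line observation that the right-hand side converges because $\beta$ is strictly positive and bounded away from $0$ on the exponents appearing in $\fm$. You have simply supplied the details the paper leaves implicit (compactness of a cross-section of $NE(\Nef)_\R$ giving the uniform bound $c$, and the resulting estimate $\beta(u)\ge ck$ for $\nov^u\in\fmuncomp^k$ guaranteeing convergence in $\Lambda$).
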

\begin{proof}
The only thing to check is that the right-hand side converges; this follows because $\beta(\fm)$ is strictly positive and bounded away from $0$.
\end{proof}

\begin{defn}
\label{defn:fibainf}
If $\EuC$ is an $R$-linear $A_\infty$ category and $\dm$ is a $\Lambda$-point of $\spec(R)$, we define the \emph{fibre} 
\[ \EuC_\dm := \EuC \otimes_R \Lambda,\]
where $\Lambda$ is regarded as an $R$-algebra via $\dm^*$. 
It is a $\Lambda$-linear $A_\infty$ category.
\end{defn}

\begin{ass}
\label{ass:relabs}
There is an embedding of $\Lambda$-linear $A_\infty$ categories
\begin{equation}
\label{eqn:relemb}
\left(\fuk(X,D,\Nef)^\bc\right)_{\dm([\omega])} \hookrightarrow \fuk(X,\omega)^\bc,
\end{equation}
for any relative K\"ahler form $\omega$ whose cohomology class $[\omega]$ lies in the interior of $\Nef$.
\end{ass}

The embedding should take the form
\begin{eqnarray*}
hom^\bullet_{\fuk(X,D,\Nef)^\bc}(L_0,L_1) \otimes_R \Lambda &\to& hom^\bullet_{\fuk(X,\omega)^\bc}(L_0,L_1)\\
y \otimes 1 & \mapsto & q^{A(y)} \cdot y,
\end{eqnarray*} 
where $y$ is a Hamiltonian chord and $A(y)$ is its action. 
Modulo foundational issues, Assumption \ref{ass:relabs} simply expresses the different ways in which holomorphic curves are weighted in the relative and absolute Fukaya categories ($r^{u} \in R$ versus $q^{\omega(u)} \in \Lambda_{\ge 0}$), using Stokes' theorem (see \cite[\S 8.1]{Sheridan:CY})

\begin{rmk}
Recall that bounding cochains in $\fuk(X,D,\Nef)$ are required to be of order $\fm$, while those in $\fuk(X,\omega)_{\ge 0}$ are required to have positive energy. 
The image of a bounding cochain under the natural embedding \eqref{eqn:relemb} need not have positive energy, because the action $A(y)$ of some of the chords $y$ involved in the bounding cochain may be negative. 
For a fixed subcategory of the relative Fukaya category with finitely many objects, this can be fixed by applying the reverse Liouville flow, which acts by Hamiltonian isotopy on our Lagrangians (because they are exact) and exponentially suppresses the action (see \cite[Lemma 5.2]{Sheridan:Fano}). 
In particular, it suffices to apply the reverse Liouville flow for long enough that all chords have action $A(y)$ strictly greater than $-\ell_p$, for all linking numbers $\ell_p$.
\end{rmk}

In the case of the ambient relative Fukaya category, we define
\[ \overline{\cM}_{amb\text{-}K\ddot{a}h}(X,D,\Nef) := \spec(R_{amb}(\Nef))\]
and similarly $\cM_{amb\text{-}K\ddot{a}h}(X,D,\Nef)$. 
The map $\Phi^*$ of \eqref{eqn:Phi} defines a morphism 
\[ \Phi:\overline{\cM}_{amb\text{-}K\ddot{a}h}(X,D,\Nef) \to \overline{\cM}_{K\ddot{a}h}(X,D,\Nef).\]
The $\Lambda$-point $\dm(\omega)$ lies in the image of this morphism if and only if the cohomology class $[\omega]$ lies in the image of the restriction map $\iota^*: H^2(Y,Y \setminus D';\R) \to H^2(X,X \setminus D;\R)$.
If this is the case, we denote its pre-image by $\dm_{amb}([\omega])$. 
It follows from Assumption \ref{ass:relabs} that, in this case, there is an embedding
\[ \left(\fuk_{amb}(X,D,\Nef)^\bc\right)_{\dm_{amb}([\omega])} \hookrightarrow \fuk(X,\omega)^\bc.\]

\section{Motivation from symplectic cohomology}
\label{sec:SH}

In this section we explain geometric criteria (Corollaries \ref{cor:geomhyp} and \ref{cor:geomhyp2}) under which the hypotheses of our versality theorems (Theorems \ref{thm:1} and \ref{thm:2}) ought to hold.
In practice, it may be easier to establish the hypotheses directly, using motivation from this section to understand when this ought to be possible. 
For that reason, we only sketch the proofs of some of the results in this section (see Remark \ref{rmk:caution}).

\subsection{Symplectic cohomology} 
\label{subsec:SH}

Recall the definition of the symplectic cohomology of the Liouville domain $\{h \le c\}$: we denote it by $SH^\bullet(X \setminus D;\Bbbk)$. 
We briefly explain how to equip it with a $\G(X \setminus D)$-grading. 
Let $\gamma:S^1 \to X \setminus D$ be an orbit of the Hamiltonian used to define symplectic cohomology, and $\rho$ a section of $\gamma^*\EuG(X \setminus D)$. 
Let $Z_- := (-\infty,0] \times S^1$, and consider the map $u:Z_- \to X \setminus D$ given by $u(s,t) = \gamma(t)$. 
We consider the Cauchy--Riemann operator $D_\rho$ associated to the bundle $u^* TX$ over the Riemann surface $Z_-$, with boundary condition over $\partial Z_- = \{0\} \times S^1$ given by $\rho$, and denote its Fredholm index by $i(D_\rho)$. 
The section $\rho$ defines a loop in $\EuG(X \setminus D)$, whose homology class we denote by $[\rho]$; we define the grading of $\gamma$ to be 
\[ |\gamma| := \dim_\C(X) +i(D_\rho) - [\rho] \in H_1(\EuG(X \setminus D)). \]
Standard index theory arguments show that the usual Floer theoretic operations respect this $\G(X \setminus D)$-grading: e.g., the differential has degree $1$.

There is a long exact sequence
\begin{equation}
\label{eqn:sh+}
 \ldots \to H^\bullet(X \setminus D;\Bbbk) \xrightarrow{i} SH^\bullet(X \setminus D;\Bbbk) \to SH^\bullet_+(X \setminus D;\Bbbk) \to \ldots.
\end{equation}
The orbits in the image of $i$ correspond to constant orbits in the interior of $X \setminus D$, whereas the classes in $SH^\bullet_+$ correspond to orbits of the Reeb flow on the contact boundary $\{h=c\}$. 
We have the following control over the gradings of the generators:

\begin{lemstar}
\label{lem:BSinput}
$SH^\bullet_+(X \setminus D;\Bbbk)$ vanishes except in those $\G$-degrees which can be written as $\deg(z^u) + k$ where $u \in \mathsf{pos}_1(X,D)\setminus \{0\}$, $\deg(z^u)$ is the degree of the corresponding element of $\sh^\bullet(X,D)$, and $k$ is an integer with $0 \le k \le 2\dim_\C(X) - u \cdot D$.
\end{lemstar}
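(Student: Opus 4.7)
The plan is to set up a Morse--Bott model of symplectic cohomology adapted to the geometry of the relative K\"ahler manifold $(X, D, \omega, c)$, and then read off the $\G$-degrees of generators of $SH^\bullet_+$ from the Reeb dynamics near $D$. The key input is the special form \eqref{eqn:hnearD} of $h$ near $D$: in the associated cylindrical model, the Reeb vector field of the contact form $\alpha = -d^c h$ near $D$ rotates at constant speed in the fibres of each normal bundle $N_{D_p}$, and is Morse--Bott degenerate along the strata $D_K$.

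First I would choose a Hamiltonian which is $C^2$-small inside $\{h \le c - \delta\}$ (so that its constant orbits produce the subcomplex computing $H^\bullet(X \setminus D)$ identified in \eqref{eqn:sh+}) and which in the collar $\{h \ge c\}$ depends only on $e^{h-c}$, so that its non-constant $1$-periodic orbits correspond to closed Reeb orbits on $\{h=c\}$. Such Reeb orbits are organized into Morse--Bott families indexed by: a subset $K \subset P$ with $D_K \neq \emptyset$, a tuple of positive integer multiplicities $(m_p)_{p \in K}$, and a point of $D_K$ parametrizing the orbit. The underlying class in $H_2(X, X \setminus D)$ of such an orbit is $u := \sum_{p \in K} m_p u_p$, which by construction lies in $\mathsf{pos}_1 \setminus \{0\}$.

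Second I would perturb each Morse--Bott family to non-degenerate orbits using a Morse function $f$ on $D_K$, in the style of Bourgeois--Oancea. Each critical point $q$ of $f$ contributes two generators $\check q, \hat q$ to $SC^\bullet_+$, and I must identify their $\G$-degree. Using the definition of the $\G$-grading of $\gamma$ via Fredholm indices of Cauchy--Riemann operators on the half-cylinder $Z_-$ and splitting $u^*TX$ along the orbit as $TD_K \oplus \bigoplus_{p \in K} N_{D_p}$, a direct index computation together with Lemma \ref{lem:altgrad} (applied with a local logarithmic volume form whose divisor is $D$ near $u$) yields
\[ |\gamma_{m,q}| \;=\; \deg(z^u) \;+\; (2n - u \cdot D) \;-\; \mathrm{ind}_f(q) \;+\; \varepsilon, \qquad \varepsilon \in \{0,1\}, \]
with $\mathrm{ind}_f(q) \in [0, 2\dim_\C D_K]$. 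Since $u \cdot D = \sum_p m_p \ge |K|$, this $k := |\gamma_{m,q}| - \deg(z^u)$ lies in $[0, 2n - u\cdot D]$, as claimed.

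The technically delicate point, and the main obstacle, is the precise index bookkeeping in the Morse--Bott reduction. The transverse Robbin--Salamon contribution from winding $m_p$ times in each $N_{D_p}$ exceeds the naive $2|K|$ by exactly $2(u \cdot D - |K|)$, and it is this excess that trims the Morse--Bott range $[0, 2 \dim_\C D_K] = [0, 2n - 2|K|]$ down to the sharper $[0, 2n - u \cdot D]$ asserted in the lemma. Once this computation is in hand, the lemma follows immediately: every class in $SH^\bullet_+(X \setminus D;\Bbbk)$ is represented on the chain level by a finite combination of such $\gamma_{m,q}$, each of whose $\G$-degree has the stated form.
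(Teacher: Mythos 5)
Your overall strategy is the same as the paper's: both compute $SH^\bullet_+$ from a Hamiltonian whose non-constant orbits localize near $D$, organize them into Morse--Bott families indexed by a multiplicity vector with class $u\in\mathsf{pos}_1\setminus\{0\}$, and reduce the lemma to the statement that the degree of each such generator is $\deg(z^u)$ plus a Morse-theoretic contribution in a bounded window. The difference is that the paper (following McLean and Ganatra--Pomerleano) identifies the orbits of class $u$ with generators of the Morse \emph{cohomology of the normal $(S^1)^{K(u)}$-bundle over $D_{K(u)}$}, one generator per critical point, with degree $\deg(z^u)+j$ and $0\le j\le \dim$ of that bundle; you instead take families parametrized by $D_K$ alone and double the generators \`a la Bourgeois--Oancea. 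For $|K(u)|>1$ this is not the same count: the space of orbits is a torus bundle over the open stratum of $D_{K(u)}$, and your single $\check q/\hat q$ doubling only accounts for one circle factor, so your Morse index range $[0,2\dim_\C D_K]$ is attached to the wrong space.

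More seriously, the index formula you display does not imply the bound you then assert. With $\mathrm{ind}_f(q)\in[0,\,2n-2|K|]$ (where $n=\dim_\C X$) and $\varepsilon\in\{0,1\}$, your $k=(2n-u\cdot D)-\mathrm{ind}_f(q)+\varepsilon$ ranges over $[\,2|K|-u\cdot D,\;2n-u\cdot D+1\,]$, which violates the claimed lower bound $k\ge 0$ whenever $u\cdot D>2|K|$ (i.e.\ for large multiplicities) and overshoots the upper bound by $1$; the sentence ``since $u\cdot D\ge |K|$, this $k$ lies in $[0,2n-u\cdot D]$'' is a non sequitur. The remark that the transverse Robbin--Salamon contribution ``exceeds the naive $2|K|$ by $2(u\cdot D-|K|)$ and trims the range'' cannot rescue this: a uniform shift of the transverse index translates the entire window of degrees occupied by a Morse--Bott family, it cannot shorten it, and in the paper's bookkeeping the multiplicity dependence of the transverse index is already absorbed into $\deg(z^u)$ (which contains the term $2u\cdot D$), so no further multiplicity-dependent correction should appear alongside the Morse index. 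Since the entire content of the lemma is precisely this index computation, flagging it as ``the technically delicate point'' and then asserting the conclusion leaves the proof with a genuine gap: you need to exhibit the actual degree of a generator as $\deg(z^u)+(\text{Morse degree on the normal torus bundle})$ and verify that the Morse degree lies in the stated window, rather than postulate a formula from which the window does not follow.
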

\begin{proof}[Proof (sketch)]
Following \cite{Viterbo1999}, we can define symplectic cohomology as a direct limit:
\[SH^\bullet(X \setminus D;\Bbbk) := \varinjlim_k HF^\bullet(H_k;\Bbbk), \]
where $HF^\bullet(H_k;\Bbbk)$ denotes the Hamiltonian Floer cohomology of a sequence of Hamiltonians $\{H_k\}_{k \ge 0}$ with increasing slopes at infinity. 
A specific such sequence of Hamiltonians $\{H_k\}$ is constructed in \cite{Ganatra2016a} (following \cite{McLean2012}), whose orbits can be explicitly enumerated. 
To each orbit can be assigned a class $u \in \mathsf{pos}_1(X,D)$, which is the homology class of the `small cap' of the orbit. 

The orbits with small cap $u=0$ form a sub-complex, whose cohomology is $H^\bullet(X \setminus D;\Bbbk)$. 
We denote the quotient complex by $CF^\bullet_+(H_k;\Bbbk)$, and its cohomology by $HF^\bullet_+(H_k;\Bbbk)$. 
We have 
\[SH^\bullet_+(X \setminus D;\Bbbk) \simeq \varinjlim_k HF^\bullet_+(H_k;\Bbbk).\]

The orbits with class $u$ can be identified with generators of the Morse cohomology of the normal $(S^1)^{K(u)}$-bundle over $D_{K(u)}$ (compare \cite[Lemma 6.8]{McLean2012}), whose dimension is
\begin{align*}
\dim(D_{K(u)}) + |K(u)| &= 2\dim_\C(X) - 2u \cdot D + u \cdot D\\
&= 2\dim_\C(X) - u \cdot D.
\end{align*} 
The degree of such an orbit can be calculated to be equal to the degree of $z^u$ in $\sh^\bullet(X,D)$, plus the degree of the corresponding generator of Morse cohomology. 
The degree of a generator of Morse cohomology is bounded below by $0$ and above by the dimension of the manifold; this yields a bound on the degrees in which the cochain complex $CF^\bullet_+(H_k;\Bbbk)$ is non-zero, which implies the desired degree bound after taking cohomology and passing to the direct limit.
\end{proof}

Now, suppose that $(X,D)$ is nice, or that $D$ is smooth and $(X,D)$ is semi-positive. 
Ganatra and Pomerleano define the `logarithmic PSS map' \cite{Ganatra2016}, which is a homomorphism of $\G$-graded vector spaces
\begin{equation}
\label{eqn:shSH}
PSS_{log}: \sh^\bullet(X,D) \to SH^\bullet(X \setminus D;\Bbbk)
\end{equation}
defined by counting pseudoholomorphic `caps' with a constraint at an internal marked point as in the definition of $\co$, and an asymptotic condition on the cylindrical end given by one of the Hamiltonians $H_k$ (for sufficiently large $k$). 
To ensure that this is a chain map, one must rule out sphere bubbling and breaking along orbits of $H_k$ that lie on $D$. 
Ganatra and Pomerleano place certain restrictions on $D$ to achieve this, but the hypotheses `$(X,D)$ is nice' or `$D$ smooth and $(X,D)$ semi-positive' also suffice. 

A neck-stretching-type argument (similar to those appearing in \cite{Ganatra2016}) shows that the diagram
\begin{equation}
\label{eqn:psslogcomm}
\xymatrix{ \sh^\bullet(X,D) \ar[r]^-{PSS_{log}} \ar[dr]^-{\co} & SH^\bullet(X \setminus D;\Bbbk) \ar[d]^-{\EuC\EuO} \\
& \HH^\bullet(\fuk(X \setminus D))}
\end{equation}
commutes, where $\EuC\EuO$ is the usual closed--open string map (see, e.g., \cite[\S 5.4]{Ganatra2013}). 

\begin{rmk}
This gives an alternative proof of Lemma \ref{lem:indepfam}: the symplectic cohomology, affine Fukaya category and closed--open map are all invariants of the Liouville domain $\{h \le c\}$ up to deformation equivalence, and deforming the relative K\"{a}hler form induces a deformation equivalence of the Liouville domain.
\end{rmk}

\subsection{The closed--open map on constant loops}
\label{subsec:coi}

In this section we will be interested in the composition 
\[ H^\bullet(X \setminus D;\Bbbk) \xrightarrow{i} SH^\bullet(X \setminus D;\Bbbk) \xrightarrow{\EuC \EuO} \HH^\bullet(\fuk(X \setminus D)).\]
The map $\EuC\EuO \circ i$ is defined in an ad-hoc but convenient way in \S \ref{subsec:coisigns} (forgetting that $\EuC\EuO$ and $i$ exist independently).

Now suppose that we are in the situation of Lemma \ref{lem:signgrpactrel}: in particular, $(\Gamma,\sigma)$ acts on $(X,D,\omega)$, preserving the morphism of grading data $p: \G \to \Z/4$. 
We define an action of $\Gamma$ on $H^\bullet(X \setminus D)$, by
\begin{equation} \gamma \cdot \alpha := (-1)^{\sigma(\gamma)} \cdot \gamma^* \alpha.\end{equation}
The following result is proved in \S \ref{subsec:coisigns}:

\begin{lem}[= Lemma \ref{lem:coisigns}]
\label{lem:coisignsnoapp}
The map
\begin{equation} \EuC \EuO \circ i: H^\bullet(X \setminus D) \to \HH^\bullet(\EuA) \end{equation}
is $\Gamma$-equivariant.
\end{lem}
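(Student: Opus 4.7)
The plan is to prove equivariance by exhibiting an explicit bijection between the moduli spaces used to define $\EuC\EuO \circ i(\alpha)$ and those used to define $(\EuC\EuO \circ i)(\gamma\cdot\alpha)$, induced by the action of $\gamma$ on $X$, and then to compare the orientations of the moduli spaces and the signs of their point counts. I will use the ad-hoc geometric definition of $\EuC\EuO \circ i$ set up in \S \ref{subsec:coisigns}: a class $\alpha\in H^\bullet(X\setminus D)$ is represented by a cycle $C_\alpha\subset X\setminus D$ Poincar\'e dual to $\alpha$, and $\EuC\EuO\circ i(\alpha)$ is defined by counting pseudoholomorphic discs with boundary on Lagrangians and one interior marked point constrained to $C_\alpha$, with signs coming from orientation lines of the Hamiltonian chords and the orientation of $C_\alpha$.

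For $\gamma$ with $\sigma(\gamma)=0$, the action is by a holomorphic symplectomorphism that (up to the usual averaging of perturbation data) carries moduli spaces of pseudoholomorphic discs bijectively onto moduli spaces of pseudoholomorphic discs: $u\mapsto \gamma\circ u$ sends discs with boundary on $(L_0,\ldots,L_s)$ and interior constraint on $C_\alpha$ to discs with boundary on $(\gamma L_0,\ldots,\gamma L_s)$ and interior constraint on $\gamma(C_\alpha)=C_{\gamma_*\alpha}$. Since $\sigma(\gamma)=0$ we have $\gamma\cdot\alpha=\gamma_*\alpha$, so this bijection exactly implements the $\Gamma$-action on $\HH^\bullet(\EuA)$ (which here involves no op-ification). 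The cyclic order of boundary punctures is preserved, and all orientations agree, so signs match on the nose.

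For $\gamma$ with $\sigma(\gamma)=1$, $\gamma\circ u$ is anti-holomorphic. To obtain a pseudoholomorphic curve I would post-compose with complex conjugation $c$ of the disc domain: $\tilde u:=\gamma\circ u\circ c$ is then pseudoholomorphic for a pushed-forward perturbation datum. The key point is that $c$ reverses the cyclic order of the boundary punctures $(\zeta_0,\zeta_1,\ldots,\zeta_s)$ to $(\zeta_0,\zeta_s,\ldots,\zeta_1)$; this is precisely the combinatorial input that produces the op-structure in the definition of $\gamma\cdot\eta$ from \S \ref{subsec:ainfversgp} when $\sigma(\gamma)=1$. Thus, tautologically, the bijection of moduli spaces implements the correct ``opped'' action on Hochschild cochains; one must only check that this matches on the nose and not merely up to homotopy.

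The main obstacle will be the sign analysis, which is where the factor $(-1)^{\sigma(\gamma)}$ on the cohomology side earns its keep. One must compare: (i) the Koszul sign $\dagger$ introduced by the cyclic reversal in the definition of $\eta_{op}$; (ii) the effect of $(\gamma,c)$ on the orientations of the orientation lines $|o_y|$ at the chord punctures, following the conventions of \S \ref{subsec:ainfsign}; (iii) the effect on the orientation of the moduli space from simultaneously reversing the complex structure on the domain ($c$) and on the target ($\gamma$); and (iv) the effect on the intersection sign with $C_\alpha$ coming from Poincar\'e duality, where the crucial point is that $\gamma^*\omega=-\omega$ for $\sigma(\gamma)=1$, so the orientation of the chosen cycle representing $\gamma^*\alpha$ differs from that of $\gamma^{-1}(C_\alpha)$ by a sign dictated by the $\Z/4$-grading $p$. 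The sign $(-1)^{\sigma(\gamma)}$ built into the definition $\gamma\cdot\alpha:=(-1)^{\sigma(\gamma)}\gamma^*\alpha$ is exactly the correction needed to absorb this discrepancy. I expect the bookkeeping to run in close parallel to the sign analysis for $\co$ carried out in the proof of Lemma \ref{lem:signgrpactrel}, and to reduce ultimately to the same underlying identity $\gamma^*\omega=(-1)^{\sigma(\gamma)}\omega$ noted in the remark following Proposition \ref{prop:geomhyp2int}.
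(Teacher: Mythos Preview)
Your approach is essentially the same as the paper's: for even $\gamma$ use naturality directly, and for odd $\gamma$ use the bijection $u\mapsto\gamma\circ u\circ c$ with $c$ the domain conjugation, reducing to the sign analysis already carried out in establishing the signed group action on the Fukaya category (Lemma~\ref{lem:AutsigmaFukr}).

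However, your identification of the source of the sign $(-1)^{\sigma(\gamma)}$ in point (iv) is off. In the paper's construction of $\EuC\EuO\circ i$ (\S\ref{subsec:coisigns}), the orientation line $\lambda(T_r\mathcal{R}_1)$ is trivialized via the forgetful map $\mathcal{R}_1\to\mathcal{R}$, contributing a factor $\lambda(T_qS_r)\simeq\Ts_2$ identified using the \emph{complex orientation} of the tangent space at the interior marked point. Under conjugation of the domain this complex orientation reverses, introducing exactly one extra minus sign beyond what already appears in the proof that $c^M_\r$ is an $A_\infty$ isomorphism (Lemma~\ref{lem:oppfuk}). That single sign is what $(-1)^{\sigma(\gamma)}$ cancels. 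Your suggestion that it comes from Poincar\'e duality, $\gamma^*\omega=-\omega$, and the $\Z/4$-grading $p$ would lead you astray: the role of $p$ is to make leftwards and rightwards shifts compatible so that $\fuk_{\l\r}$ is defined and $c^M_{\l\r}$ exists, and that machinery is already fully absorbed into the proof of Lemma~\ref{lem:AutsigmaFukr}. The additional sign peculiar to $\EuC\EuO\circ i$ is purely the domain-complex-orientation flip at $q$, and has nothing to do with the target symplectic form or the grading datum.
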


\subsection{The semi-positive case}

In this section, we will assume that some convex sub-cone $\Nef \subset Nef(X,D)$ with non-empty interior has been chosen, and we will denote $\Runcomp := \Runcomp(\Nef)$ and $\fmuncomp \subset \Runcomp$ the toric maximal ideal. 

We will consider the $\G(X \setminus D)$-graded $\Runcomp$-module $SH^\bullet_+(X \setminus D;\fmuncomp)$. 
We will be particularly concerned with the image of the classes $z_p \in \sh^\bullet(X,D)$ under $PSS_{log}$, which we denote by $\mathsf{d}_p \in SH^\bullet(X \setminus D;\Bbbk)$. 
They correspond to Reeb orbits linking the corresponding divisor $D_p$ once.

\begin{lemstar}
\label{lem:defgen} 
Suppose that either:
\begin{itemize}
\item $\Nef$ is nice and semi-positive; or
\item $D$ is smooth, $(X,D)$ is semi-positive, and the constant $\kappa \in \Q$ from Example \ref{eg:smoothpos} satisfies $0 \le \kappa < 1$.
\end{itemize}
Then the images of the classes $\nov_p \mathsf{d}_p$ are a $\Runcomp_0$-basis for $SH^2_+\left(X \setminus D; \fmuncomp\right)$.
\end{lemstar}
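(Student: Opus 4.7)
The plan is to use the degree bound of Lemma~\ref{lem:BSinput} to restrict which Morse--Bott generators of $SH_+^\bullet(X \setminus D;\Bbbk)$ can contribute to an element of $SH_+^2(X \setminus D;\fmuncomp)$, and then to use niceness of $\Nef$ (respectively smoothness of $D$ with $\kappa < 1$) to pin down the $\Runcomp_0$-coefficients that can appear. Writing an arbitrary element as $\sum_v \nov^v \otimes \alpha_v$ with $v \in NE(\Nef) \setminus \{0\}$ and $\alpha_v \in SH_+^{2 - \deg \nov^v}(X\setminus D;\Bbbk)$, Lemma~\ref{lem:BSinput} implies that each $\alpha_v$ is a combination of generators of $\G$-degree $\deg(z^u) + k$ for some $u \in \mathsf{pos}_1\setminus\{0\}$ and integer $0 \le k \le 2\dim_\C X - u \cdot D$. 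The problem therefore reduces to solving
\[ \deg \nov^v + \deg z^u + k = 2 \qquad \text{in } G,\]
subject to these constraints.

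Semi-positivity provides a rational lift $a \in \Nef$ of $c_1(TX)$; pairing with $a$ gives a splitting of $\G$ (modulo torsion) under which $\deg\nov^v = 2\langle v,a\rangle \ge 0$ (since $v \in \Nef^\vee$), $\deg z^u = 2\bigl(u \cdot D - \langle u,a\rangle\bigr)$, and the degree equation becomes a numerical relation with each summand controllable. A case analysis (exploiting the freedom in choosing $a$ provided by niceness through the second clause of Definition~\ref{defn:xdenice}, or the bound $\kappa < 1$ in the smooth case) shows that the only possibility is $\langle v,a\rangle = 0$, $u = u_p$ for some $p$, and $k=0$, so that $\alpha_v$ is a scalar multiple of $\mathsf{d}_p$. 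Projecting the resulting identity $\deg\nov^v + \deg z_p = 2$ to $H_1(X \setminus D)$ then forces $\partial v = \partial u_p$, whereupon Lemma~\ref{lem:typea} with $K = \{p\}$ yields $v - u_p \in NE(\Nef)_{cl}$, and matching Chern classes puts $\nov^{v-u_p}$ in $\Runcomp_0$, placing $\nov^v \otimes \mathsf{d}_p$ in $\Runcomp_0 \cdot \nov_p \mathsf{d}_p$; the smooth $D$ case is handled directly since $NE(X,D) \simeq \Z_{\ge 0}$.

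For linear independence, the $\mathsf{d}_p$ are Morse minima of the distinct Morse--Bott families of orbits linking the components $D_p$ in the presentation of $SH_+^\bullet$ sketched for Lemma~\ref{lem:BSinput}, hence are non-zero and linearly independent in $SH_+^\bullet(X \setminus D;\Bbbk)$; by Lemma~\ref{lem:nicerpdif}, the $\nov_p$ lie in distinct graded pieces of $\fmuncomp$, so no non-trivial $\Runcomp_0$-relation can hold among the $\nov_p \mathsf{d}_p$. The main obstacle is the combinatorial degree-solving step: controlling the middle term $u \cdot D - \langle u,a\rangle$ uniformly requires exploiting the equality of the images of $\Nef^K$ and $\Nef$ in $H^2(X;\R)$ that comes from niceness (or the strict inequality $\kappa < 1$ in the smooth case) to rule out spurious solutions such as $u = 2 u_p$ or $u = u_p + u_q$ balanced by a non-zero Morse index $k$. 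Once this combinatorial input is in place, the remainder of the argument is standard bookkeeping.
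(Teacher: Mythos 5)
Your spanning argument follows the paper's own (sketched) proof closely: both reduce to the integer degree equation $2c_1(v-u) + 2u\cdot D + j = 2$ coming from the Morse--Bott presentation behind Lemma \ref{lem:BSinput}, use semi-positivity together with the second clause of niceness (packaged in the paper as Lemma \ref{lem:typea}, applied after projecting the degree identity to $H_1(X\setminus D)$ to get $v-u \in NE_{cl}$; in your version as the freedom to lift $c_1$ into $\Nef^{K(u)}$) to force $u\cdot D = 1$, $k=0$ and $c_1(v-u)=0$, and invoke Lemma \ref{lem:nicerpdif} both to pin down $u=u_p$ and to place $\nov^{v-u_p}$ in $\Runcomp_0$. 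That half is fine and is essentially the paper's route.

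The gap is in the independence half, i.e.\ the claim that $\mathsf{d}_p \neq 0$. The inference ``the $\mathsf{d}_p$ are Morse minima of distinct Morse--Bott families, hence non-zero and linearly independent in $SH^\bullet_+$'' is not valid: being a generator of the cochain complex $CF^\bullet_+(H_k;\Bbbk)$ does not prevent a class from dying in cohomology, either by being exact or by failing to be closed under the full Floer differential, which contains contributions between different Morse--Bott families and different action levels, not just the internal Morse differential of one family. The paper closes this by running the same degree bookkeeping in degrees $1$ and $3$ (via the action-filtration spectral sequence): equation \eqref{eqn:adddeg} shows there are no generators in degree $1$, so nothing can map onto $\mathsf{d}_p$; and any degree-$3$ generator that could receive a differential from $\mathsf{d}_p$ is forced to have $u\cdot D=1$ and Morse index $j=1$, hence small cap $u_p$ by Lemma \ref{lem:nicerpdif}, so the only possible differential out of $\mathsf{d}_p$ is the internal Morse differential, under which the unique local minimum is closed (in the smooth case one also needs that $j$ is odd in degree $3$ before running the inequality). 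You have already set up exactly the tools needed for this check, but as written the non-vanishing is asserted rather than argued, and since the lemma claims a basis rather than merely a spanning set, this is half the statement.
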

\begin{proof}[Proof (sketch)]
It suffices to prove the corresponding result for the Hamiltonian Floer cohomology groups $HF^\bullet_+(H_k;\Bbbk)$ (for $k$ sufficiently large that $\mathsf{d}_p$ are well-defined), then pass to the direct limit. 
Recall that the generators of the cochain complex $CF^\bullet_+(H_k;\Bbbk)$ are orbits $\gamma$ of $H_k$.
As in the sketch proof of Lemma \ref{lem:BSinput}, we denote the `small cap' of $\gamma$ by $u \in \mathsf{pos}_1$, and we denote the index of the corresponding Morse critical point by $j$.

Suppose that $\nov^v\cdot \gamma$ is a generator of $CF^i_+\left(H_k;\fmuncomp\right)$, for some $i \in \Z$.
We must have
\begin{align*}
\deg(\nov^v)+ \deg(z^u) + j &= i \\
\implies \deg(\nov^{v-u}) + 2u\cdot D + j &= i
\end{align*}
(using the fact that $\deg(\nov_p z_p) = 2$ for all $p$ -- see Remark \ref{rmk:degrpzp}). 
It follows that $\deg(\nov^{v-u})$ lies in the image of $\Z$, so $\nov^{v-u} \in \Rpuncomp_{cl}$ and we have
\begin{equation}
\label{eqn:adddeg}
 2c_1(v-u) + 2u \cdot D + j = i.
\end{equation}
We observe that if $u \cdot D = 0$ then $u=0$, which means the class vanishes in $CF^\bullet_+$ by definition, so we assume that $u \cdot D \ge 1$.

\paragraph{Case 1: $\Nef$ is nice and semi-positive.}
Let us first show that the classes $\nov_p \mathsf{d}_p$ span $HF^2_+\left(H_k;\fmuncomp\right)$ over $\Runcomp_0$. 
Let $\nov^v \cdot \gamma$ be a generator of $CF^2_+\left(H_k;\fmuncomp \right)$.
We observe that $\deg(\nov^v)$ and $\deg(\nov^u)$ have the same image in $H_1(X \setminus D)$. 
Furthermore, because $u \in \mathsf{pos}_1$, we have $u \in \Z^K$ where $D_K \neq \emptyset$.
Therefore, it follows from Lemma \ref{lem:typea} that $v-u \in NE_{cl}$.
It follows by our semi-positivity assumption that $2c_1(v-u) \ge 0$ (see Remark \ref{rmk:gradpositivity}). 
Considering \eqref{eqn:adddeg} with $i=2$, it is clear that we must have $c_1(v-u) = 0$, $u \cdot D = 1$, and $j = 0$.

These correspond to classes $\nov^v \cdot \gamma$, where $\gamma$ has small cap $u=u_p$ and Morse index $j=0$. 
These are the orbits corresponding to the minimum of the chosen Morse function on the normal circle bundle to $D_p$ in the sketch proof of Lemma \ref{lem:BSinput} (we may assume the Morse function has a unique local minimum). 
They have degree $\deg(\gamma) = \deg(\mathsf{d}_p)$, and in fact we claim that such an orbit $\gamma$ is the unique generator of $CF^\bullet_+(H_k;\Bbbk)$ with $\deg(\gamma) = \deg(\mathsf{d}_p)$. 

Indeed, the above argument already shows that if $\gamma'$ is a generator with $\deg(\gamma') = \deg(\gamma)$, it must have small cap $u'=u_q$ for some $q$. 
This implies that $\deg(\nov_p) = \deg(\nov_q)$. 
However $R$ is nice by Lemma \ref{lem:Rnice}, so it follows by Lemma \ref{lem:nicerpdif} that $p=q$, and therefore that $\gamma' = \gamma$. 

One can show that the coefficient of $\gamma$ in $\mathsf{d}_p$ is equal to $1$. 
It follows that $\mathsf{d}_p$ spans the corresponding graded piece of $HF^\bullet_+(H_k;\Bbbk)$.
Therefore we have $\nov^v \cdot \gamma = \nov^{v-u_p} \cdot \nov_p \mathsf{d}_p$ where $\nov^{v-u_p} \in \Runcomp_0$. 
So the class $\nov^v \cdot \gamma$ is spanned by $\nov_p \mathsf{d}_p$ over $\Runcomp_0$, as required.

It remains to prove that the classes $\nov_p \mathsf{d}_p$ are independent over $\Runcomp_0$. 
Since the classes $\mathsf{d}_p$ all have different degrees by Lemma \ref{lem:nicerpdif}, it suffices to show that $\mathsf{d}_p \neq 0$ for all $p$. 
This can be shown using the spectral sequence induced by the action filtration on $SH^\bullet_+(X \setminus D)$: we need to show that the generator $\mathsf{d}_p$ cannot have a non-trivial differential mapping to or from it in the spectral sequence. 
If $\partial(\gamma) = \mathsf{d}_p$, then $\deg(\nov_p \cdot \gamma) = 1$: we see from \eqref{eqn:adddeg} that there can be no such classes. 
If $\partial(\mathsf{d}_p) = \gamma$, then $\deg(\nov_p \cdot \gamma) = 3$: a similar argument to above shows that we must have $u \cdot D = 1$ and $j=1$. 
It follows from Lemma \ref{lem:nicerpdif} that the small cap for $\gamma$ must be $u_p$. 
Thus the only differential connecting $\mathsf{d}_p$ and $\gamma$ is the Morse differential: since we assume $\mathsf{d}_p$ corresponds to the unique local minimum of the Morse function, it is closed under the Morse differential. 
Thus $\mathsf{d}_p \neq 0$, as required.

\paragraph{Case 2: $D$ is smooth, $(X,D)$ is semi-positive, and $0 \le \kappa < 1$.}
Let $\nov^v \cdot \gamma$ be a generator of $CF^2_+\left(H_k;\fmuncomp \right)$.
$D$ is smooth, so both $u$ and $v$ live in $H_2(X, X \setminus D) \simeq \Z$. 
In fact, since $\nov^v \in \fmuncomp$, we have $v \ge 1$. 
Since $u \in \mathsf{pos}_1$ is non-zero, we have $u \ge 1$. 
We have $c_1(v-u) = \kappa(v-u)$, so \eqref{eqn:adddeg} yields
\begin{align}
\label{eqn:degdsm}
2 &=  2\kappa(v-u) + 2u + j\\
\nonumber& = 2\kappa v + (2-2\kappa) u + j\\
\nonumber& \ge 2\kappa + 2 - 2\kappa + j \\
\nonumber& \ge 2.
\end{align}
It follows that we must have equality at each stage, which implies that $j=0$ and $u=1$.  
The rest of the proof that $\nov_p \mathsf{d}_p$ spans $SH^2_+(X \setminus D;\Bbbk)$ is as in Case 1. 

It remains to show that $\mathsf{d}_p \neq 0$. 
As in Case 1, we easily rule out the possibility that $\partial(\gamma) = \mathsf{d}_p$. 
If $\partial(\mathsf{d}_p) = \gamma$, we would have $\deg(\nov \gamma) = 3$. 
Returning to \eqref{eqn:adddeg}, we observe that $j$ must be odd and hence $\ge 1$.
Then an inequality similar to \eqref{eqn:degdsm} then shows that we must have $u \cdot D = 1$ and $j=1$. 
The rest of the proof that $\mathsf{d}_p \neq 0$ is as in Case 1.
\end{proof}

\begin{corstar}[= Corollary \ref{cor:geomhypint}]
\label{cor:geomhyp}
Consider the situation of Theorem \ref{thm:versality}. 
Suppose furthermore that:
\begin{itemize}
\item Either $\Nef$ is nice and  semi-positive, or $D$ is smooth and $(X,D)$ is semi-positive with $0 \le \kappa < 1$;
\item $H^2\left(X \setminus D;\fmuncomp\right) =0$;
\item The map
\begin{equation}
\label{eqn:cohopesurj}
\EuC \EuO \otimes \fmuncomp :SH^2\left(X \setminus D;\fmuncomp\right) \to \HH^2\left(\EuA,\EuA \otimes \fmuncomp \right)\end{equation}
is surjective (e.g., $\EuC\EuO$ is surjective).
\end{itemize}
Then $\EuA_R$ is an $R$-complete deformation of $\EuA$. 

If furthermore \eqref{eqn:cohopesurj} is an isomorphism (e.g., if $\EuC\EuO$ is an isomorphism), then $\EuA_R$ is $R$-versal.
\end{corstar}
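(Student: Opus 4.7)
The plan is to verify the hypotheses of Theorem \ref{thm:versality}, the only non-trivial one being that the classes $\nov_p \co(z_p)$ span $\HH^2(\EuA, \EuA \otimes \fmuncomp)$ as an $\Runcomp_0$-module; and, for the versality assertion, that $\co(z_p) \neq 0$ for all $p$. The strategy is to import the basis information for symplectic cohomology provided by Lemma \ref{lem:defgen}, to transport it into ordinary symplectic cohomology via the long exact sequence \eqref{eqn:sh+} (using the vanishing hypothesis), and then to push forward by $\EuC\EuO$ using the commutative diagram \eqref{eqn:psslogcomm}.

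First, tensoring \eqref{eqn:sh+} with $\fmuncomp$ (flat over $\Bbbk$) produces the exact sequence
\[ H^2(X \setminus D;\fmuncomp) \to SH^2(X \setminus D;\fmuncomp) \xrightarrow{\pi} SH^2_+(X \setminus D;\fmuncomp), \]
in which the leftmost term vanishes by hypothesis. Hence $\pi$ is injective. By Lemma \ref{lem:defgen}, the images $\pi(\nov_p \mathsf{d}_p)$ form an $\Runcomp_0$-basis of $SH^2_+(X \setminus D;\fmuncomp)$; combined with the injectivity of $\pi$, this implies that the classes $\nov_p \mathsf{d}_p$ themselves are an $\Runcomp_0$-linearly independent spanning set for $SH^2(X \setminus D;\fmuncomp)$. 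Next, the commutative diagram \eqref{eqn:psslogcomm} gives $\co(z_p) = \EuC\EuO(PSS_{log}(z_p)) = \EuC\EuO(\mathsf{d}_p)$, so that $\EuC\EuO \otimes \fmuncomp$ sends $\nov_p \mathsf{d}_p$ to $\nov_p \co(z_p)$.

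Now, by surjectivity of $\EuC\EuO \otimes \fmuncomp$, the classes $\nov_p \co(z_p)$ span $\HH^2(\EuA, \EuA \otimes \fmuncomp)$ as an $\Runcomp_0$-module. This verifies the hypothesis of Theorem \ref{thm:versality} and yields $R$-completeness of $\EuA_R$. For the versality statement, assume further that $\EuC\EuO \otimes \fmuncomp$ is an isomorphism. Since $\nov_p \mathsf{d}_p \neq 0$ in $SH^2(X \setminus D;\fmuncomp)$ (as the images $\pi(\nov_p \mathsf{d}_p)$ form a basis of $SH^2_+$, a fortiori are non-zero, and $\pi$ is injective), the isomorphism carries them to non-zero classes, so $\co(z_p) \neq 0$ for all $p$. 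Theorem \ref{thm:versality} then delivers $R$-versality.

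I expect no serious obstacle in this argument, as the substantive geometric content has been absorbed into Lemma \ref{lem:defgen}; what remains is a diagram chase through the Viterbo long exact sequence and the triangle relating $PSS_{log}$, $\co$, and $\EuC\EuO$. The only point requiring mild care is confirming that tensoring the long exact sequence with $\fmuncomp$ does not introduce unexpected terms, but this is immediate because $\Bbbk$ is a field so $\fmuncomp$ is flat.
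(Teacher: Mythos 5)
Your proposal is correct and follows essentially the same route as the paper's proof: Lemma \ref{lem:defgen} plus the vanishing of $H^2(X\setminus D;\fmuncomp)$ in the long exact sequence \eqref{eqn:sh+} gives that the $\nov_p\mathsf{d}_p$ span $SH^2(X\setminus D;\fmuncomp)$, and the commutative diagram \eqref{eqn:psslogcomm} together with surjectivity (resp. injectivity) of $\EuC\EuO\otimes\fmuncomp$ transfers spanning (resp. non-vanishing) to the classes $\nov_p\co(z_p)$, so Theorem \ref{thm:versality} applies. The extra detail you supply (injectivity of the map to $SH^2_+$ and flatness of $\fmuncomp$ over $\Bbbk$) is harmless elaboration of the same argument.
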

\begin{proof}
By Lemma \ref{lem:defgen} combined with the second hypothesis and the long exact sequence \eqref{eqn:sh+}, $SH^2(X \setminus D;\fmuncomp)$ is spanned by the classes $\nov_p\mathsf{d}_p$.
By surjectivity of $\EuC\EuO \otimes \fmuncomp$, and the commutativity of the diagram \eqref{eqn:psslogcomm}, it follows that $\HH^2(\EuA,\EuA \otimes \fmuncomp)$ is spanned by the classes $\nov_p \co(z_p)$. 
It follows that $\EuA_R$ satisfies the hypotheses of Theorem \ref{thm:versality}, so it is $R$-complete. 
If $\EuC\EuO \otimes \fmuncomp$ is in fact an isomorphism, then $\co(z_p) \neq 0$, so $\EuA_R$ is $R$-versal, again by Theorem \ref{thm:versality}.
\end{proof}

\begin{lem}[= Lemma \ref{lem:nodeg2int}]
\label{lem:nodeg2}
Suppose that one of the following holds:
\begin{itemize}
\item $\Nef$ is Calabi--Yau, and $H^2(X\setminus D;\Q) = 0$;
\item $\Nef$ is positive, and has minimal Chern number $\ge 2$.
\end{itemize}
Then $H^2\left(X \setminus D;\fmuncomp\right) = 0$.
\end{lem}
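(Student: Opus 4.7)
The plan is a bookkeeping argument using the $\G$-grading. Since we are working over a field, we have a $\G$-graded decomposition
\[ H^2(X \setminus D; \fmuncomp) \;\simeq\; \bigoplus_{k \in \Z,\, u} H^k(X \setminus D;\Bbbk) \otimes_\Bbbk \Bbbk \cdot \nov^u,\]
where the direct sum runs over pairs such that $\nov^u \in \fmuncomp$ and $k + \deg(\nov^u) = 2$ in $G$. I will show that in each of the two cases, no such pair $(k,u)$ contributes.

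First I would reduce to the case $\nov^u \in R_{cl}$. Both hypotheses imply $(X,D)$ is semi-positive, so by Remark \ref{rmk:gradpositivity}, $c_1(TX)|_{X \setminus D}$ is torsion, and hence by Lemma \ref{lem:c1tors} the map $\Z \to H_1(\cG(X \setminus D)) = G$ is injective. The cohomological degree of $\alpha \in H^k(X\setminus D)$ is the image of $k$ in $G$, which lies in the image of $\Z$, so the total-degree-$2$ condition forces $\deg(\nov^u)$ to lie in the image of $\Z$ as well; by Remark \ref{rmk:gradcl} this is exactly the condition $\nov^u \in R_{cl}$. Under this reduction, Remark \ref{rmk:usualnov} identifies $\deg(\nov^u) = 2c_1(u)$, so the degree-matching equation becomes simply $k + 2c_1(u) = 2$ in $\Z$.

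In the Calabi--Yau case, $c_1(TX)$ is torsion so $c_1(u) = 0$ for every $u$, hence $\fmuncomp \cap R_{cl}$ sits in degree zero by Remark \ref{rmk:gradpositivity}. The degree-matching equation forces $k=2$, so the only potentially non-trivial contribution is $H^2(X \setminus D;\Bbbk) \otimes (\fmuncomp \cap R_{cl,0})$. The hypothesis $H^2(X\setminus D;\Q) = 0$, together with $\Bbbk$ being of characteristic zero, gives $H^2(X \setminus D;\Bbbk) = H^2(X \setminus D;\Q) \otimes_\Q \Bbbk = 0$, finishing this case.

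In the positive case, any non-zero $u \in NE(\Nef)_{cl}$ has $c_1(u) > 0$: this is exactly the positivity of $\Nef$ (Definition \ref{defn:Npos} and Remark \ref{rmk:gradpositivity}, $\fm_{cl}$ is in strictly positive degrees). The minimal Chern number hypothesis then upgrades this to $c_1(u) \ge 2$, so $\deg(\nov^u) \ge 4$ and the degree-matching equation forces $k \le -2$, for which $H^k(X \setminus D;\Bbbk) = 0$. The whole sum is zero.

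The only subtle point, and the step I would check most carefully, is the reduction to $R_{cl}$: it is crucial that $\Z \to G$ be injective, which is why the semi-positivity observation from Remark \ref{rmk:gradpositivity} combined with Lemma \ref{lem:c1tors} is needed at the start. The remainder is a direct degree count.
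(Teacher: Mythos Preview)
Your proof is correct and follows essentially the same approach as the paper's: a degree count on generators $\nov^u \cdot \alpha$. The paper's proof is terser (three sentences), leaving implicit the reduction to $\nov^u \in R_{cl}$ that you spell out explicitly via Lemma~\ref{lem:c1tors} and Remark~\ref{rmk:gradpositivity}; your added care here is justified and makes the argument cleaner.
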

\begin{proof}
Let $\nov^v \cdot \alpha$ be a generator of this group. 
If $\Nef$ is Calabi--Yau, then $\nov^v$ has degree $0$, so $\alpha$ has degree $2$ and therefore vanishes (recalling that $\mathrm{char}(\Bbbk) = 0$). 
If $\Nef$ is positive with minimal Chern number $\ge 2$, then $\nov^v \in \fmuncomp_{cl}$ has degree $2c_1(v) \ge 4$ and $\alpha$ has degree $\ge 0$, so the total degree cannot be $2$.
\end{proof}

Similarly, we have:

\begin{corstar}[= Proposition \ref{prop:geomhyp2int}]
\label{cor:geomhyp2}
Consider the situation of Theorem \ref{thm:eqversality}. 
Suppose furthermore that:
\begin{itemize}
\item Either $\Nef$ is nice and semi-positive, or $D$ is smooth and $(X,D)$ is semi-positive with $0 \le \kappa < 1$;
\item $H^2\left(X \setminus D;\fmuncomp\right)^\Gamma =0$;
\item The map
\begin{equation}
\label{eqn:cosurjag}
\EuC \EuO \otimes \fmuncomp :SH^2\left(X \setminus D;\fmuncomp\right) \to \HH^2\left(\EuA,\EuA \otimes \fmuncomp \right)\end{equation}
is surjective (e.g., $\EuC\EuO$ is surjective).
\end{itemize}
Then $\EuA_R$ is an $R$-complete $(\Gamma,\sigma)$-equivariant deformation of $\EuA$.

If \eqref{eqn:cosurjag} is furthermore an isomorphism, then $\EuA_R$ is $R$-versal.
\end{corstar}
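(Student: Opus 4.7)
The plan is to follow the proof of the non-equivariant Corollary \ref{cor:geomhyp}, taking $\Gamma$-invariants at every stage. The key technical input will be the exactness of $\Gamma$-invariants (since $\Gamma$ is finite and $\mathrm{char}(\Bbbk) = 0$), together with the $\Gamma$-equivariance of all the natural maps involved, so that averaging over $\Gamma$ is compatible with $\EuC\EuO$, $PSS_{log}$, and the long exact sequence \eqref{eqn:sh+}.

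First I would observe that the $\Gamma$-action on $SH^\bullet(X \setminus D; \Bbbk)$ is such that both $PSS_{log}$ and $\EuC\EuO$ are $\Gamma$-equivariant: for $\EuC\EuO$ restricted to constant orbits this is Lemma \ref{lem:coisignsnoapp}, and the argument extends to the full maps by the same sign analysis used in \S\ref{subsec:coisigns} (compare the proof of Lemma \ref{lem:signgrpactrel}). In particular, $g \cdot \mathsf{d}_p = \pm \mathsf{d}_{g \cdot p}$, with signs matching the $\Gamma$-action on $\nov_p \in R$ from Definition--Lemma \ref{deflem:gammagrad}, so that each class $\nov_p \mathsf{d}_p \in SH^2(X \setminus D; \fmuncomp)$ satisfies $g \cdot (\nov_p \mathsf{d}_p) = \nov_{g \cdot p} \mathsf{d}_{g \cdot p}$. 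Consequently the $\Runcomp_0$-span of $\{\nov_p \mathsf{d}_p\}$ is $\Gamma$-stable.

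Next, apply Lemma \ref{lem:defgen}: under the first hypothesis, the classes $\nov_p \mathsf{d}_p$ form an $\Runcomp_0$-basis for $SH^2_+(X \setminus D; \fmuncomp)$. Take $\Gamma$-invariants of the long exact sequence \eqref{eqn:sh+} tensored with $\fmuncomp$; by our second hypothesis $H^2(X \setminus D; \fmuncomp)^\Gamma = 0$, so the map $SH^2(X \setminus D; \fmuncomp)^\Gamma \to SH^2_+(X \setminus D; \fmuncomp)^\Gamma$ is injective. Given any $\alpha \in SH^2(X \setminus D; \fmuncomp)^\Gamma$, lift its image in $SH^2_+$ to an expression $\beta := \sum_p c_p \nov_p \mathsf{d}_p$ in $SH^2$, so that $\alpha - \beta \in i(H^2(X \setminus D; \fmuncomp))$. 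Averaging $\beta$ over $\Gamma$ keeps it in the $\Runcomp_0$-span by the previous paragraph; averaging the correction term $\alpha - \beta$ lands it in $i(H^2(X \setminus D; \fmuncomp)^\Gamma) = 0$. Hence $\alpha$ lies in the $\Runcomp_0$-span of the classes $\nov_p \mathsf{d}_p$, i.e.\ $SH^2(X \setminus D; \fmuncomp)^\Gamma$ is contained in this span.

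Finally, since $\EuC\EuO \otimes \fmuncomp$ is surjective and $\Gamma$-equivariant, averaging shows that its restriction to $\Gamma$-invariants is also surjective. Combining this with the commutativity of \eqref{eqn:psslogcomm} and the identity $\EuC\EuO(\mathsf{d}_p) = \co(z_p)$ gives that $\HH^2(\EuA, \EuA \otimes \fmuncomp)^\Gamma$ is contained in the $\Runcomp_0$-span of the first-order deformation classes $\nov_p \co(z_p)$, which is exactly the hypothesis of Theorem \ref{thm:eqversality}, yielding $R$-completeness. For the versality statement, if $\EuC\EuO \otimes \fmuncomp$ is an isomorphism, then since $\mathsf{d}_p \neq 0$ (also from Lemma \ref{lem:defgen}) we conclude $\co(z_p) = \EuC\EuO(\mathsf{d}_p) \neq 0$ for all $p$, and Theorem \ref{thm:eqversality} gives $R$-versality.

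I expect the main obstacle to be packaging the $\Gamma$-equivariance of all the maps cleanly. The $\Gamma$-equivariance of $\EuC\EuO$ on constant orbits is provided by Lemma \ref{lem:coisignsnoapp}, but one has to check that the sign conventions propagate consistently through $PSS_{log}$ and the long exact sequence, and also that the $\Gamma$-action on $\nov_p \mathsf{d}_p$ matches the $\Gamma$-action on $\nov_p \co(z_p)$ so that averaging preserves the $\Runcomp_0$-span. Modulo this sign bookkeeping, the argument is a direct equivariant analogue of Corollary \ref{cor:geomhyp}.
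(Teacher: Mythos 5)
Your argument reaches the right conclusion and has the same overall shape as the paper's (which is itself modelled on Corollary \ref{cor:geomhyp}), but the place where you perform the $\Gamma$-averaging is genuinely different, and your choice is more expensive. You do the equivariant analysis inside symplectic cohomology: you need a $\Gamma$-action on $SH^\bullet(X\setminus D;\Bbbk)$, equivariance of $PSS_{log}$ and of the \emph{full} closed--open map, the transformation law $g\cdot \mathsf{d}_p = \pm\mathsf{d}_{g\cdot p}$ with signs matching those on $\nov_p$, and equivariance of the long exact sequence \eqref{eqn:sh+}, in order to conclude that $SH^2(X\setminus D;\fmuncomp)^\Gamma$ lies in the $\Runcomp_0$-span of the $\nov_p\mathsf{d}_p$ and that $\EuC\EuO\otimes\fmuncomp$ remains surjective on invariants. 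None of these equivariance statements is established in the paper: Lemma \ref{lem:coisignsnoapp} only treats $\EuC\EuO\circ i$ on ordinary cohomology classes (defined via pseudocycles, with no Reeb orbits), and the paper never even defines a $\Gamma$-action on $SH^\bullet$. The paper's proof is arranged precisely to avoid all of this: it first uses the \emph{non-equivariant} Lemma \ref{lem:defgen}, the long exact sequence, surjectivity of \eqref{eqn:cosurjag}, and the commutativity of \eqref{eqn:psslogcomm} to decompose the full group $\HH^2(\EuA,\EuA\otimes\fmuncomp)$ as the span of the $\nov_p\co(z_p)$ plus $\EuC\EuO\circ i\bigl(H^2(X\setminus D;\fmuncomp)\bigr)$, and only then takes $\Gamma$-invariants. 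At that stage the only equivariance needed is that of $\EuC\EuO\circ i$ (exactly Lemma \ref{lem:coisignsnoapp}) to kill the second summand via $H^2(X\setminus D;\fmuncomp)^\Gamma=0$, together with $\Gamma$-stability of the span of the first-order deformation classes, which is already known from the deformation being $(\Gamma,\sigma)$-equivariant.

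So the substantive issue is not the logic but the inputs: your first paragraph asserts equivariance of $PSS_{log}$, of the full $\EuC\EuO$, and of the classes $\mathsf{d}_p$ as if they follow from the ``same sign analysis,'' whereas they would require additional constructions (an equivariant Floer-theoretic setup on the orbit side) that the paper does not carry out. Since this whole section is explicitly at sketch level, this is not a fatal error, but you should either supply those equivariance statements or, more economically, rearrange the argument as the paper does so that only Lemma \ref{lem:coisignsnoapp} is needed.
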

\begin{proof}
By Lemma \ref{lem:defgen} combined with the long exact sequence \eqref{eqn:sh+}, $SH^2(X \setminus D;\fmuncomp)/\im(H^2(X \setminus D;\fmuncomp))$ is spanned by the classes $\nov_p\mathsf{d}_p$.
By surjectivity of $\EuC\EuO \otimes \fmuncomp$, and the commutativity of the diagram \eqref{eqn:psslogcomm}, it follows that $\HH^2(\EuA,\EuA \otimes \fmuncomp)$ is spanned by the classes $\nov_p \co(z_p)$ together with $\EuC\EuO \circ i (H^2(X \setminus D;\fmuncomp))$. 
 
By Lemma \ref{lem:coisignsnoapp} together with the second hypothesis, it follows that $\HH^2(\EuA,\EuA \otimes \fmuncomp)^\Gamma$ is contained in the span of the classes $\nov_p \co(z_p)$. 
It follows that $\EuA_R$ satisfies the hypotheses of Theorem \ref{thm:eqversality}, so it is $R$-complete. 
If $\EuC\EuO \otimes \fmuncomp$ is in fact an isomorphism, then $\co(z_p) \neq 0$, so $\EuA_R$ is $R$-versal, again by Theorem \ref{thm:eqversality}.
\end{proof}

We also have an analogue for the ambient relative Fukaya category:

\begin{corstar}
\label{cor:geomhyp2amb}
Consider the situation of Theorem \ref{thm:eqversalityamb}.
Suppose furthermore that:
\begin{itemize}
\item $\Nef$ is amb-nice and amb-semi-positive;
\item $H^2\left(X \setminus D;\fmuncomp\right)^\Gamma =0$;
\item $\Gamma$ acts transitively on the components of $D'_p \cap X$, for all $p$;
\item The map
\begin{equation}
\label{eqn:cosurjag2}
\EuC \EuO \otimes \fmuncomp :SH^2\left(X \setminus D;\fmuncomp\right) \to \HH^2\left(\EuA,\EuA \otimes \fmuncomp \right)\end{equation}
is surjective (e.g., $\EuC\EuO$ is surjective).
\end{itemize}
Then $\EuA_R$ is an $R$-complete $(\Gamma,\sigma)$-equivariant deformation of $\EuA$.

If \eqref{eqn:cosurjag2} is furthermore an isomorphism, then $\EuA_R$ is $R$-versal.
\end{corstar}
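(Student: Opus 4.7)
The plan is to adapt the proof of Corollary \ref{cor:geomhyp2} to the ambient setting, using the transitivity hypothesis to handle the several generators $\co(z_q)$ for $q \in i^{-1}(p)$ that coalesce into each first-order deformation class of $\fuk_{amb}(X,D,\Nef)$.

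First I would establish an amb-nice, amb-semi-positive analogue of Lemma \ref{lem:defgen}: under the first hypothesis, the classes $\{\nov_p \mathsf{d}_q : p \in P_{amb},\ q \in i^{-1}(p)\}$ span $SH^2_+(X \setminus D;\fmuncomp_{amb})$ as a $\Runcomp_{amb,0}$-module. The sketched proof of Lemma \ref{lem:defgen} transplants almost verbatim, with Lemma \ref{lem:ambtypea} and Lemma \ref{lem:Rambnice} replacing Lemma \ref{lem:typea} and Lemma \ref{lem:Rnice}, and with the Chern-class inequality $2c_1(v-u) \ge 0$ replaced by its ambient counterpart for $c_1(TY \oplus \cV^\vee)$ coming from the amb-semi-positive hypothesis. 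The upshot is that any degree-$2$ generator of $SH^\bullet_+$ in $\fmuncomp_{amb}$ is, modulo $\Runcomp_{amb,0}$-multiples, of the form $\nov_p \mathsf{d}_q$ with $i(q)=p$.

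Combining this with the long exact sequence \eqref{eqn:sh+}, the surjectivity of $\EuC\EuO \otimes \fmuncomp_{amb}$, and the commutativity of \eqref{eqn:psslogcomm}, I would deduce that $\HH^2(\EuA, \EuA \otimes \fmuncomp_{amb})$ is generated over $\Runcomp_{amb,0}$ by the classes $\nov_p \co(z_q)$ together with the image $\EuC\EuO \circ i\bigl(H^2(X \setminus D; \fmuncomp_{amb})\bigr)$. Passing to $\Gamma$-invariants, the second summand is killed by the hypothesis $H^2(X \setminus D; \fmuncomp_{amb})^\Gamma = 0$ combined with $\Gamma$-equivariance of $\EuC\EuO \circ i$ (Lemma \ref{lem:coisignsnoapp}). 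For the first summand, transitivity of $\Gamma$ on $i^{-1}(p)$ together with an averaging argument shows that the $\Gamma$-invariants in the $\Runcomp_{amb,0}$-span of $\{\nov_p \co(z_q)\}$ coincide with the $\Runcomp_{amb,0}$-span of the averaged classes $\nov_p \sum_{q \in i^{-1}(p)}\co(z_q)$, which by Assumption \ref{ass:relfukambco} are the first-order deformation classes of $\fuk_{amb}(X,D,\Nef)$. Applying Theorem \ref{thm:eqversalityamb} then gives $R$-completeness; if $\EuC\EuO \otimes \fmuncomp_{amb}$ is an isomorphism, each averaged class is nonzero, so Theorem \ref{thm:eqversalityamb} produces $R$-versality as well.

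The main obstacle will be the sign bookkeeping in the averaging step: the action of $\Gamma$ on $\nov_p$ can carry signs via Example \ref{eg:holvol}, and on $\co(z_q)$ via the duality conventions of \S \ref{subsec:verseq} implementing the signed-group action. One must trace through the morphism $p: \G_{amb} \to \Z/4$ carefully enough to verify that the signed $\Gamma$-average of $\nov_p \co(z_q)$ genuinely produces $\nov_p \sum_{q \in i^{-1}(p)} \co(z_q)$, rather than an alternating combination that might vanish or fail to account for every $\Gamma$-invariant element of the span.
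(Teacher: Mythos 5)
Your proposal is correct and follows essentially the same route as the paper's proof: both rest on the ambient analogue of Lemma \ref{lem:defgen}, the long exact sequence \eqref{eqn:sh+} with the hypothesis $H^2(X\setminus D;\fmuncomp)^\Gamma=0$, transitivity of $\Gamma$ on $i^{-1}(p)$ to reduce to the summed classes, and the surjectivity and equivariance of $\EuC\EuO\otimes\fmuncomp$, before invoking Theorem \ref{thm:eqversalityamb}. The only (immaterial) difference is that you take $\Gamma$-invariants after pushing forward to $\HH^2$, whereas the paper takes them already at the level of $SH^2_+$ and then applies the equivariant surjection; since $\Gamma$ is finite and $\mathrm{char}(\Bbbk)=0$ these are interchangeable.
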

\begin{proof}
It follows by the argument of the proof of Lemma \ref{lem:defgen} that $SH^2_+(X \setminus D;\fmuncomp)$ is spanned by the classes $\nov_p \mathsf{d}_q$ for $q \in i^{-1}(p)$. 
Since $\Gamma$ acts transitively on $i^{-1}(p)$ by hypothesis, it follows that $SH^2_+(X \setminus D;\fmuncomp)^\Gamma$ is contained in the span of the images of the classes $\nov_p \sum_{q \in i^{-1}(p)}\mathsf{d}_q$. 
By the second hypothesis combined with the long exact sequence \eqref{eqn:sh+}, it follows that $SH^2(X \setminus D;\fmuncomp)^\Gamma$ is contained in the span of these classes. 
Finally by the surjectivity and $\Gamma$-equivariance of $\EuC\EuO \otimes \fmuncomp$, it follows that $\HH^2(\EuA,\EuA \otimes \fmuncomp)^\Gamma$ is contained in the span of the classes $\Runcomp_0 \cdot \nov_p \sum_{q \in i^{-1}(p)}\co(z_q)$, so the hypotheses of Theorem \ref{thm:eqversalityamb} are satisfied.
\end{proof}
\appendix

\section{Signed group actions on $A_\infty$ categories}
\label{sec:signgrpact}

In this Appendix we fix a grading datum $\G=\{\Z \to G \xrightarrow{\sigma} \Z/2\}$ and a field $\Bbbk$. 
The category of $\G$-graded $\Bbbk$-vector spaces is equipped with the symmetric monoidal structure given by the Koszul sign rule (a $\G$-graded object becomes a $\Z/2$-graded object via $\sigma$).
If $a$ is a homogeneous element of a $\G$-graded $\Bbbk$-vector space of degree $g$, we define $|a| := \sigma(g) \in \Z/2$ and $|a|' := |a| + 1$. 
We abbreviate `$\otimes$' by `$|$', and denote $V' := V[1]$.
If $V$ is a $\G$-graded $\Bbbk$-vector space, $V^* := \Hom^\bullet(V,\Bbbk)$ is its dual.

It will also be convenient, for the purposes of keeping track of Koszul signs, to introduce `trivialized lines', which are $\G$-graded $\Bbbk$-vector spaces $\Ts_g$ equipped with an isomorphism $\Ts \simeq \Bbbk[-g]$ for some $g \in \G$ (so $\Ts_g$ is concentrated in degree $g$). 

\subsection{$A_\infty$ categories}

We follow the conventions for $A_\infty$ categories of \cite{Seidel2008c,Sheridan2015a}. 
We recall them, to fix notation.

A $\G$-graded, $\Bbbk$-linear pre-category $\EuC$ consists of:
\begin{itemize}
\item A set of objects $Ob(\EuC)$;
\item A $\G$-graded $\Bbbk$-vector space $hom^\bullet_\EuC(L_0,L_1)$ for each pair of objects, called a morphism space. We will abbreviate it by $\EuC(L_0,L_1)$ where convenient.
\end{itemize}
We define the shifted Hochschild cochain complex, which is a $\G$-graded vector space
\[ CC^\bullet(\EuC)' := \prod_{L_0,\ldots,L_k} \Hom^\bullet \left(\EuC'(L_0,L_1) | \ldots | \EuC'(L_{k-1},L_k), \EuC'(L_0,L_k)\right)\]
(we are abusing terminology since at this stage it is just a graded vector space, not a complex). 
To keep track of Koszul sign rules, it is convenient to regard a Hochschild cochain $\varphi$ as defining a map 
\begin{equation}
\label{eqn:hochtriv}
 \EuC'(L_0,L_1) | \ldots | \EuC'(L_{k-1},L_k) \to \Ts_\varphi| \EuC'(L_0,L_k),
\end{equation}
where $\Ts_\varphi \simeq \Bbbk[|\varphi|']$.

We define the Gerstenhaber product of $\varphi,\psi \in CC^\bullet(\EuC)'$:
\[(\varphi \circ \psi)^k (a_1,\ldots,a_k) := \sum (-1)^\dagger \varphi^*(a_1,\ldots,\psi^*(a_{i+1},\ldots),a_{j+1},\ldots,a_k)\]
where $\dagger$ is the Koszul sign associated to commuting $\psi$ (with its grading as an element of the shifted Hochschild cochain complex) through $a_1,\ldots,a_i$ (with their gradings as elements of the shifted morphism spaces). 
Equivalently, it is the sum of all compositions of isomorphisms 
\begin{align*} 
\EuC'(L_0,L_1) | \ldots | \EuC'(L_{k-1},L_k) & \xrightarrow{\psi} \EuC'(L_0,L_1)|\ldots|\Ts_\psi|\EuC'(L_i,L_j)|\ldots| \EuC'(L_{k-1},L_k) \\
& \xrightarrow{\varphi} \Ts_\psi|\Ts_\varphi|\EuC'(L_0,L_k) \\
& \to \Ts_{\varphi \circ \psi}| \EuC'(L_0,L_k)
\end{align*}
where the identity isomorphisms and Koszul isomorphisms are implicit, and the final line is the isomorphism $\Ts_\varphi| \Ts_\psi \to \Ts_{\varphi \circ \psi} $ sending $1 | 1 \mapsto 1$.

We define the Gerstenhaber bracket
\[ [\varphi,\psi] := \varphi \circ \psi - (-1)^{|\varphi| \cdot |\psi|} \psi \circ \varphi.\]
It is a graded Lie bracket on $CC^\bullet(\EuC)'$.

A ($\G$-graded, $\Bbbk$-linear) $A_\infty$ category is a pre-category $\EuC$ equipped with an element $\mu \in CC^1(\EuC)'$ satisfying $\mu \circ \mu = 0$, such that $\mu^0 = 0$.
Since $[\mu,\mu] = 0$, the Hochschild differential $\partial := [\mu,-]$ squares to $0$ and $(CC^\bullet(\EuC)',\partial,[-,-])$ becomes a d$\G$la.
 
If $\EuC$ is a pre-category, we define the \emph{opposite} pre-category $\EuC^{op}$. 
It has the same objects as $\EuC$, or more precisely, the objects are in canonical bijection with those of $\EuC$: we denote the object of $\EuC^{op}$ corresponding to $X \in Ob(\EuC)$ by $X^{op}$.
The morphism spaces are
\begin{equation} \EuC^{op}(X^{op},Y^{op}) := \EuC(Y,X);\end{equation}
we denote the isomorphism between the two by $op: \EuC^{op}(X^{op},Y^{op}) \to \EuC(Y,X)$.

We have an identification of Hochschild cochain complexes
\begin{align}
\label{eqn:opcc} op: CC^\bullet(\EuC)' & \to CC^\bullet(\EuC^{op})' 
\end{align}
given by the isomorphisms on morphism spaces and a Koszul isomorphism; explicitly,
\begin{align}
\eta_{op}^k(a_1,\ldots,a_k) & := (-1)^\dagger \eta^k(a_k,\ldots,a_1)
\end{align}
where $\dagger$ is the Koszul sign associated to the reordering of the inputs $a_i$.
This map respects the Gerstenhaber product and bracket. 

It follows that, if $\mu$ is an $A_\infty$ structure on $\EuC$, then $\mu_{op} := op(\mu)$ is an $A_\infty$ structure on $\EuC^{op}$. 
It is immediate that \eqref{eqn:opcc} becomes an isomorphism of d$\G$la.

\begin{rmk}
One can define a category whose objects are $A_\infty$ categories, and whose morphisms are strict $A_\infty$ isomorphisms (of course this is a very restrictive notion, but it suffices for what we want to describe here). 
Then one can define an endofunctor $op$ of this category, which on the level of objects sends $\EuC$ to $\EuC^{op}$. 
There is an obvious natural isomorphism of functors $op \circ op \simeq \id$.
The induced isomorphism 
\[ CC^\bullet(\EuC) \xrightarrow{op} CC^\bullet(\EuC^{op}) \xrightarrow{op} CC^\bullet((\EuC^{op})^{op}) \simeq CC^\bullet(\EuC)\]
is the identity.
\end{rmk}

\subsection{Shifts in $A_\infty$ categories}
\label{subsec:shifts}

We have isomorphisms
\begin{align*}
c_{g_0,g_1}: \Ts_{g_0}|\Ts_{g_1} & \xrightarrow{\sim} \Ts_{g_0+g_1} \\
c_{g_0,g_1}(1|1) & := 1.
\end{align*}
We consider the full subcategory of the category of $\G$-graded $\Bbbk$-vector spaces with objects $\Ts_g$ for $g \in \G$. 
Identifying objects via the isomorphisms $c_{g_0,g_1}$, this subcategory inherits a \emph{strict} symmetric monoidal structure (recall that a symmetric monoidal category is called strict if the associator is an identity morphism). 
We denote this strict symmetric monoidal category by $\mathbf{G}$.

If $L_0$ and $L_1$ are objects of a pre-category $\EuC$, then an isomorphism $L_0 \simeq L_1$ is a choice of isomorphisms $\EuC(L_0,K) \simeq \EuC(L_1,K)$ and $\EuC(K,L_0) \simeq \EuC(K,L_1)$ for all objects $K$, which are compatible in the sense that the isomorphisms between $\EuC(L_0,L_0), \EuC(L_0,L_1), \EuC(L_1,L_0),$ and $\EuC(L_1,L_1)$ agree.

Using these two notions one can define the notion of a \emph{set of rightwards shift functors} for a pre-category $\EuC$ to be a fully faithful pre-functor
\begin{equation}
\label{eqn:rightshiftsfunc}
 \EuC \otimes \mathbf{G} \to \EuC
\end{equation}
which satisfies certain natural compatibilities with the symmetric monoidal structure.
Rather than spell the definition out in these terms, we give an equivalent but more explicit formulation. 

There should be given an action of the abelian group $G$ on the objects of $\EuC$: we denote $g \cdot L$ by $L[g]$ (it should be thought of as the image of the object $L \otimes \Ts_g$ under the functor \eqref{eqn:rightshiftsfunc}). 
There should be given isomorphisms
\[ s_\r^{g_0,g_1}: \EuC(L_0[g_0],L_1[g_1]) \xrightarrow{\sim} \Hom^\bullet(\Ts_{g_0},\Ts_{g_1}) | \EuC(L_0,L_1)\]
which are compatible in the sense that the composition
\begin{align}
\label{eqn:sghcomps} \EuC(L_0[g_0+h_0],L_1[g_1+h_1]) & \xrightarrow{s_\r^{h_0,h_1}} \Hom^\bullet(\Ts_{h_0},\Ts_{h_1})|  \EuC(L_0[g_0],L_1[g_1])\\
 & \xrightarrow{s_\r^{g_0,g_1}}  \Hom^\bullet(\Ts_{h_0},\Ts_{h_1})|\Hom^\bullet(\Ts_{g_0},\Ts_{g_1})| \EuC(L_0,L_1) \nonumber\\
 & \to \Hom^\bullet(\Ts_{g_0}|\Ts_{h_0},\Ts_{g_1}|\Ts_{h_1})| \EuC(L_0,L_1) \quad \text{Koszul isomorphism} \nonumber\\
 & \xrightarrow{c_{g_0,h_0},c_{g_1,h_1}} \Hom^\bullet(\Ts_{g_0+h_0},\Ts_{g_1+h_1})|\EuC(L_0,L_1) \nonumber
 \end{align}
 coincides with $s_\r^{g_0+h_0,g_1+h_1}$. 

Equivalently but even more explicitly, we could `trivialize the lines' so that $s_\r^{g_0,g_1}$ becomes an isomorphism $\EuC(L_0[g_0],L_1[g_1]) \simeq \EuC(L_0,L_1)[g_1-g_0]$ satisfying 
\[ s_\r^{g_0,g_1} \circ s_\r^{h_0,h_1} = (-1)^{|g_1|\cdot|h_0|} s_\r^{g_0+h_0,g_1+h_1}.\] 
However we prefer not to trivialize the lines so that we do not have to write out the Koszul signs explicitly.

The following lemma will be useful when we come to define shift functors on the Fukaya category:

\begin{lem}
\label{lem:shiftsrgsg}
In order to define a set of rightwards shift functors on a pre-category $\EuC$, it suffices to define
\begin{align}
 s_\r^g: \EuC(L_0,L_1[g]) &\to \EuC(L_0,L_1)|\Ts^g \quad \text{and}\\
 s^g: \EuC(L_0[g],L_1[g]) & \to \EuC(L_0,L_1)
 \end{align}
 such that: 
 \begin{itemize}
 \item $s_\r^{g_0} \circ s_\r^{g_1} = s_\r^{g_0+g_1}$, in the sense that the diagram
\begin{equation}
\label{eqn:srsrissr}
\xymatrix{
\EuC(L_0,L_1[g_0+g_1]) \ar[r]^-{s_\r^{g_1}}  \ar[d]^-{s_\r^{g_0+g_1}} & \EuC(L_0,L_1[g_0])|\Ts_{g_1} \ar[d]^-{s_\r^{g_0}} \\
\EuC(L_0,L_1)|\Ts_{g_0+g_1} & \EuC(L_0,L_1)|\Ts_{g_0}|\Ts_{g_1}  \ar[l]^-{c_{g_0,g_1}}}
\end{equation}
commutes;
\item $s^{g_0} \circ s^{g_1} = s^{g_0+g_1}$, in the only meaningful sense;
 \item $s_\r^{g_1} \circ s^{g_0} = (-1)^{|g_0|\cdot|g_1|} \cdot s^{g_0} \circ s_\r^{g_1}$, in the sense that the diagram
 \begin{equation}
 \label{eqn:srcomms}
  \xymatrix{ \EuC(L_0[g_0],L_1[g_0+g_1]) \ar[r]^-{s^{g_0}} \ar[d]^-{s_\r^{g_1}} & \EuC(L_0,L_1[g_1]) \ar[d]^-{s_\r^{g_1}} \\
 \EuC(L_0[g_0],L_1[g_0])|\Ts_{g_1} \ar[r]^-{s^{g_0}} & \EuC(L_0,L_1)|\Ts_{g_1}} 
 \end{equation}
 commutes up to the sign $(-1)^{|g_0|\cdot |g_1|}$.
 \end{itemize}
 \end{lem}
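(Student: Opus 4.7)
The plan is to build the general two-parameter shift $s_\r^{g_0,g_1}$ out of the two given one-parameter pieces of data, and then to verify the associativity-type compatibility \eqref{eqn:sghcomps} by reducing both sides to a common normal form.

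First I would define
\[
 s_\r^{g_0,g_1} \,:\, \EuC(L_0[g_0],L_1[g_1]) \xrightarrow{s^{g_0}} \EuC(L_0,L_1[g_1-g_0]) \xrightarrow{s_\r^{g_1-g_0}} \EuC(L_0,L_1)\,|\,\Ts_{g_1-g_0} \xrightarrow{\sim} \Hom^\bullet(\Ts_{g_0},\Ts_{g_1})\,|\,\EuC(L_0,L_1),
\]
where $s^{g_0}$ makes sense because $L_1[g_1] = (L_1[g_1-g_0])[g_0]$, and the last arrow is the canonical identification $\Ts_{g_1-g_0} \simeq \Ts_{g_0}^{*}\,|\,\Ts_{g_1} \simeq \Hom^\bullet(\Ts_{g_0},\Ts_{g_1})$ followed by the Koszul swap that moves this factor to the left. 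I would first check the degenerate cases: taking $g_0=g_1=g$ gives $s_\r^{g,g}=s_\r^{0}\circ s^{g}$, and the hypothesis $s_\r^0\circ s_\r^0 = s_\r^0$ together with invertibility forces $s_\r^0=\id$; similarly $s^0=\id$. Consequently $s_\r^{g_0,g_1}$ is an isomorphism, being a composition of isomorphisms.

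Next I would verify \eqref{eqn:sghcomps}. Unpacking $s_\r^{g_0+h_0,\,g_1+h_1}$ via the definition gives $s_\r^{g_1+h_1-g_0-h_0}\circ s^{g_0+h_0}$ followed by the Koszul identification into $\Hom^\bullet(\Ts_{g_0+h_0},\Ts_{g_1+h_1})\,|\,\EuC(L_0,L_1)$. Expanding the right-hand side of \eqref{eqn:sghcomps} similarly gives $s_\r^{g_1-g_0}\circ s^{g_0}\circ s_\r^{h_1-h_0}\circ s^{h_0}$. The second hypothesis merges $s^{g_0}\circ s^{h_0} = s^{g_0+h_0}$ after commuting $s^{g_0}$ past $s_\r^{h_1-h_0}$; that commutation is controlled by the third hypothesis, which produces exactly the Koszul sign $(-1)^{|g_0|\cdot|h_1-h_0|}$. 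The first hypothesis then merges $s_\r^{g_1-g_0}\circ s_\r^{h_1-h_0} = s_\r^{g_1+h_1-g_0-h_0}$, up to the Koszul associator $c_{g_1-g_0,\,h_1-h_0}$ encoded in \eqref{eqn:srsrissr}. The remaining task is to show that the accumulated Koszul signs and $c$-maps assemble into the prescribed composition in \eqref{eqn:sghcomps}, which is a purely symmetric-monoidal identity in $\mathbf{G}$: one verifies that the two ways of reaching $\Hom^\bullet(\Ts_{g_0+h_0},\Ts_{g_1+h_1})$ from $\Hom^\bullet(\Ts_{h_0},\Ts_{h_1})\,|\,\Hom^\bullet(\Ts_{g_0},\Ts_{g_1})$, either directly via the composition in \eqref{eqn:sghcomps} or through the intermediate $\Ts_{g_1+h_1-g_0-h_0}$, agree by coherence of the symmetric monoidal structure on $\mathbf{G}$.

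The main obstacle is bookkeeping of Koszul signs. In particular, the sign $(-1)^{|g_0|\cdot|h_1-h_0|}$ produced by the commutation hypothesis must cancel precisely against the Koszul sign generated by rearranging the $\Ts_{g_i}^{*}|\Ts_{g_i}$ and $\Ts_{h_i}^{*}|\Ts_{h_i}$ factors into the order demanded by \eqref{eqn:sghcomps}. A uniform convention, for example always pushing $s^{\ast}$'s to the left of $s_\r^{\ast}$'s before comparing and always identifying $\Ts_{a-b}\simeq \Ts_b^{*}|\Ts_a$ with the sign given by the Koszul rule, makes this book-keeping mechanical; once set up, every other compatibility is a direct consequence of symmetric-monoidal coherence in $\mathbf{G}$ and the three hypotheses.
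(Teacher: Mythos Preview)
Your proposal is correct and takes the same approach as the paper. The paper's proof is extremely terse: it sets $s_\r^g := s_\r^{0,g}$ and $s^g|\id := s_\r^{g,g}$, explains the origin of the sign in \eqref{eqn:srcomms} via the two trivializations of $\Ts_{g_0+g_1}$, and then simply asserts that ``there is clearly a unique extension to maps $s_\r^{g_0,g_1}$ which are compatible in the sense of \eqref{eqn:sghcomps}''; you have spelled out that extension explicitly as $s_\r^{g_1-g_0}\circ s^{g_0}$ and carried out the Koszul bookkeeping that the paper leaves to the reader.
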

 \begin{proof}
Given a set of shift functors, we set $s_\r^g:= s_\r^{0,g}$ and $s^g|\id := s_\r^{g,g}$. 
The sign in \eqref{eqn:srcomms} arises from the different trivializations of $\Ts_{g_0+g_1}$ induced by $\Ts_{g_0}|\Ts_{g_1} \to \Ts_{g_0+g_1}$ and $\Ts_{g_1}|\Ts_{g_0} \to \Ts_{g_0+g_1}$.

Conversely, given $s_\r^{0,g}$ and $s_\r^{g,g}$ satisfying the correct commutation relations, there is clearly a unique extension to maps $s_\r^{g_0,g_1}$ which are compatible in the sense of \eqref{eqn:sghcomps}.
\end{proof}

We say that a Hochschild cochain $\eta \in CC^\bullet(\EuC)'$ \emph{respects rightwards shifts} if 
\[ (comp | \eta)\circ(s_\r^{g_0,g_1}|\ldots|s_\r^{g_{k-1},g_k}) = \eta,\]
as maps
\[ \EuC'(L_0[g_0],L_1[g_1]) | \ldots |\EuC'(L_{k-1}[g_{k-1}],L_k[g_k]) \to \Ts_\eta|\EuC'(L_0[g_0],L_k[g_k]),\]
where 
\[ comp: \Hom^\bullet(\Ts_{g_0},\Ts_{g_1}) | \ldots | \Hom^\bullet(\Ts_{g_{k-1}},\Ts_{g_k}) \to \Hom^\bullet(\Ts_{g_0},\Ts_{g_k})\]
denotes the composition map, and the Koszul isomorphism is implicit (compare \cite[\S 3k]{Seidel:FCPLT}).

We denote the subspace of Hochschild cochains which respect shifts by $CC^\bullet_\r(\EuC)' \subset CC^\bullet(\EuC)'$. 
We observe that it is closed under the Gerstenhaber product and bracket. 

Now let $\EuC = (\EuC,\mu)$ be an $A_\infty$ category. 
A set of rightwards shift functors for $\EuC$ is defined to be a set of rightwards shift functors for the underlying pre-category, such that $\mu \in CC^1_\r(\EuC)'$. 
The Hochschild differential preserves the Hochschild cochains which respect shifts, so $CC^\bullet_\r(\EuC)' \subset CC^\bullet(\EuC)'$ becomes a sub-d$\G$la. 
In fact it is a quasi-isomorphic sub-d$\G$la: if $\EuD \subset \EuC$ is a full subcategory with the property that every object of $\EuC$ is equal to a shift of precisely one object of $\EuD$, then $\EuD$ is Morita-equivalent to $\EuC$ so the restriction map $CC^\bullet(\EuC) \to CC^\bullet(\EuD)$ is a quasi-isomorphism. 
But the composition $CC^\bullet_\r(\EuC) \hookrightarrow CC^\bullet(\EuC) \to CC^\bullet(\EuD)$ is an isomorphism: hence the inclusion is a quasi-isomorphism.

The following lemma will be useful when we come to prove that the $A_\infty$ structure maps on the Fukaya category respect shifts:

\begin{lem}
\label{lem:mucompatshifts}
Given a set of rightwards shift functors specified by $s_\r^g$ and $s^g$ as in Lemma \ref{lem:shiftsrgsg}, a Hochschild cochain $\eta$ is compatible with rightwards shifts if and only if
\begin{itemize}
\item It satisfies $s^g(\mu^k(a_1,\ldots,a_k)) = \mu^k(s^g(a_1),\ldots,s^g(a_k))$, in the only meaningful sense;
\item It satisfies $s_\r^g(\eta^k(a_1,\ldots,a_k)) = \eta^k(a_1,\ldots,a_{i-1},s_\r^g(a_i),s^g(a_{i+1}),\ldots,s^g(a_k))$ in the sense that the diagram
\[ \xymatrix{ \EuC'(L_0,L_1)| \ldots| \EuC'(L_{i-1},L_i[g])|\ldots|\EuC'(L_{k-1}[g],L_k[g])  \ar[r]^-{\eta} \ar[d]_-{\id|\ldots|\id|s_\r^g|s^g|\ldots|s^g} & \Ts_\eta|\EuC'(L_0,L_k[g]) \ar[d]^-{s_\r^g} \\
\EuC'(L_0,L_1)|\ldots|\EuC'(L_{k-1},L_k)|\Ts_g \ar[r]^-{\eta} & \Ts_\eta|\EuC'(L_0,L_k)|\Ts_g}\]
commutes (with a Koszul isomorphism implicit, on the left vertical arrow).
\end{itemize}
\end{lem}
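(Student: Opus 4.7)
The approach uses Lemma \ref{lem:shiftsrgsg}, which expresses the general shift isomorphism $s_\r^{g_0,g_1}\colon \EuC(L_0[g_0], L_1[g_1]) \to \Hom^\bullet(\Ts_{g_0}, \Ts_{g_1}) \,|\, \EuC(L_0, L_1)$ as built from the two elementary shifts $s_\r^g$ and $s^g$, together with the commutation relation \eqref{eqn:srcomms} and associativity \eqref{eqn:srsrissr}. Since the ``respects rightwards shifts'' condition bundles together all the $s_\r^{g_{i-1},g_i}$ applied simultaneously, the plan is to show that compatibility with this whole family reduces to compatibility with $s^g$ (diagonal) and $s_\r^g$ (rightward-only) separately, which are exactly the two conditions in the statement.

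For the forward direction, I would specialize the general compatibility
\[ (comp | \eta) \circ (s_\r^{g_0,g_1} |\cdots| s_\r^{g_{k-1},g_k}) = \eta \]
in two ways. Setting $g_0=g_1=\cdots=g_k=g$, each factor becomes $s_\r^{g,g} = s^g|\id$, the line factors $\Hom^\bullet(\Ts_g,\Ts_g)$ compose trivially to the identity line, and what remains is precisely the diagonal-shift condition (1). Setting instead $g_0 = \cdots = g_{i-1} = 0$ and $g_i = \cdots = g_k = g$, the first $i-1$ shifts are the identity, the $i$th shift is $s_\r^{0,g} = s_\r^g$, and the remaining shifts are $s_\r^{g,g} = s^g|\id$; composing the line factors (all of which are trivial except the one at position $i$) yields exactly condition (2).

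For the converse, given a general shifted configuration $(L_0[g_0],\ldots,L_k[g_k])$, I would reduce to $(L_0,\ldots,L_k)$ by an iterative procedure. First apply condition (1) with parameter $-g_0$ to replace the configuration with $(L_0, L_1[g_1-g_0], \ldots, L_k[g_k-g_0])$. Then, for $i=1,2,\ldots,k$ in order, apply condition (2) at position $i$ with parameter $g_i - g_{i-1}$, which extracts one line factor $\Ts_{g_i - g_{i-1}}$ and diagonally shifts all subsequent objects by $-(g_i-g_{i-1})$. After $k+1$ steps the underlying configuration is $(L_0,\ldots,L_k)$ and the accumulated line factors $\Ts_{g_1-g_0}|\cdots|\Ts_{g_k-g_{k-1}}$ compose via the morphisms $c_{g,h}$ of \S\ref{subsec:shifts} to $\Ts_{g_k-g_0} \simeq \Hom^\bullet(\Ts_{g_0},\Ts_{g_k})$, which is exactly the output specified by the full compatibility condition.

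The main obstacle is bookkeeping: I need to ensure that the Koszul signs accumulated through the iteration match those built into the compatibility diagram \eqref{eqn:sghcomps}. The only non-trivial sign arising is the commutation sign $(-1)^{|g_0|\cdot|g_1|}$ of \eqref{eqn:srcomms}, which is triggered each time a line factor extracted by a previous $s_\r$ step is commuted past a subsequent diagonal $s^g$. Using the associativity relations in Lemma \ref{lem:shiftsrgsg} one verifies that these compounded signs agree with the Koszul signs produced by the composite $c_{g_0,h_0}, c_{g_1,h_1}$ and the reshuffling of trivialized lines in \eqref{eqn:sghcomps}; this also confirms that the reduction is independent of the order in which the steps are taken, so the derived identity coincides with the full shift compatibility $(comp|\eta)\circ (\otimes_i s_\r^{g_{i-1},g_i}) = \eta$.
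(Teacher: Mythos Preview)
The paper states this lemma without proof (it is immediately followed by a Remark, with no intervening argument), so there is nothing to compare your approach against. Your proposal is a correct and natural way to supply the omitted argument: the forward direction is exactly right, obtained by specializing $(g_0,\ldots,g_k)$ to the constant tuple and to the step tuple, and the converse follows because Lemma~\ref{lem:shiftsrgsg} guarantees that every $s_\r^{g_{i-1},g_i}$ factors through the elementary maps $s^g$ and $s_\r^g$ with the stated commutation relations, so compatibility with those two building blocks propagates to the general case. One small note: the first bullet of the lemma reads $\mu^k$ where it should read $\eta^k$; your proof implicitly (and correctly) treats it as a condition on $\eta$.
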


\begin{rmk}
Note that there is a strict auto-equivalence of $\EuC$ which acts on objects by $L \mapsto L[g]$, and on morphism spaces by $s^{-g}$: by abuse of notation we will denote this functor by $[g]$.
\end{rmk}

Now we define the notion of a set of \emph{leftwards shift functors}: on the pre-category level, these are given by a fully faithful pre-functor
\[ \EuC \otimes \mathbf{G}^{op} \to \EuC,\]
with properties as before. 
Explicitly, we have isomorphisms
\[ s_\l^{g_0,g_1}: \EuC(L_0[-g_0],L_1[-g_1]) \xrightarrow{\sim} \Hom^\bullet(\Ts_{g_1},\Ts_{g_0}) | \EuC(L_0,L_1)\]
satisfying properties analogous to those of $s_\r^{g_0,g_1}$; and this is equivalent to the existence of
\begin{align}
 s_\l^g: \EuC(L_0[-g],L_1) &\to \EuC(L_0,L_1)|\Ts_g \quad \text{and}\\
 s^g: \EuC(L_0[g],L_1[g]) & \to \EuC(L_0,L_1)
 \end{align}
 satisfying appropriate properties. 
 This allows us to define the subcomplex $CC^\bullet_\l(\EuC)' \subset CC^\bullet(\EuC)'$ of Hochschild cochains that respect the leftwards shift functors, and a set of leftwards shift functors on an $A_\infty$ category $\EuC$ is a set of leftwards shift functors on the underlying pre-category such that $\mu \in CC^1_\l(\EuC)'$. 
 
 Now suppose that $p: \G \to \Z/4$ is morphism of grading data. 
We consider the isomorphisms
\begin{align*}
i_g: \Ts_{-g} & \to \Ts_g^* \\
i_g(1) & := (-1)^{\dagger_{p(g)}}\cdot 1^*
\end{align*}
where
\begin{equation}
\label{eqn:dagsgn}
 \dagger_k := \frac{k(k-1)}{2} \quad \text{ for $k \in \Z/4$}
\end{equation}
and $1^*(1) = 1$.
The crucial property of this map is that it makes the diagram
\[ \xymatrix{\Ts_{-g_0}|\Ts_{-g_1} \ar[r]^-{i_{g_0}|i_{g_1}} \ar[dd]^{c_{-g_0,-g_1}} & \Ts_{g_0}^*| \Ts_{g_1}^* \ar[d] \\
& (\Ts_{g_0}|\Ts_{g_1})^*  \\
\Ts_{-g_0-g_1} \ar[r]^-{i_{g_0+g_1}}& \Ts_{g_0+g_1}^* \ar[u]^-{c_{g_0,g_1}^*}}\]
commute.

As a result, there is a $\G$-graded strict isomorphism of $\G$-graded symmetric monoidal categories $\mathbf{G} \simeq \mathbf{G}^{op}$ which sends $\Ts_g \mapsto \Ts_{-g}$ on the level of objects, and on the level of morphisms
sends
\begin{align}
\label{eqn:gopfun}
\Hom^\bullet(\Ts_{g_0},\Ts_{g_1}) & \xrightarrow{\alpha \mapsto \alpha^*} \Hom^{\bullet}(\Ts_{g_1}^*,\Ts_{g_0}^*) \\
& \xrightarrow{i_{g_0},i_{g_1}} \Hom^\bullet(\Ts_{-g_1},\Ts_{-g_0}).\nonumber
\end{align}

Given this isomorphism, we say that a set of rightwards and leftwards shift functors are \emph{compatible} if the functors $\EuC \otimes \mathbf{G} \to \EuC$ and $\EuC \otimes \mathbf{G}^{op} \to \EuC$ defining the rightwards and leftwards shift functors are identified by the isomorphism $\mathbf{G} \simeq \mathbf{G}^{op}$. 
Equivalently, $s_\l^{g_0,g_1}$ is equal to the composition
\begin{align*}
\EuC(L_0[-g_0],L_1[-g_1]) & \xrightarrow{s_\r^{-g_0,-g_1}} \Hom^\bullet(\Ts_{-g_0},\Ts_{-g_1})|\EuC(L_0,L_1) \\
& \xrightarrow{\eqref{eqn:gopfun}} \Hom^\bullet(\Ts_{g_1},\Ts_{g_0})| \EuC(L_0,L_1).
\end{align*}
Given $p$ and a set of rightwards shift functors, there is a unique compatible set of leftwards shift functors. 
If the rightwards shift functors are determined by maps $s_\r^g$ and $s^g$ as in Lemma \ref{lem:shiftsrgsg}, then the leftwards shift functors are determined by the same maps $s^g$, together with the maps
\begin{equation}
\label{eqn:slgfromsrg}
 s_\l^g = (-1)^{\dagger_{p(-g)}} \cdot s^{-g} \circ s_\r^g.
\end{equation}

In this situation, a Hochschild cochain respects rightwards shift functors if and only if it respects the induced leftwards shift functors. 
We denote $CC^\bullet_{\l \r}(\EuC)' := CC^\bullet_\r(\EuC)'  = CC^\bullet_\l(\EuC)' \subset CC^\bullet(\EuC)'$.

\subsection{Group actions}
\label{subsec:gract}

Let $\EuC$ be an $A_\infty$ category. 
We denote by $\mathsf{StrAut}(\EuC)$ the group of strict autoequivalences of $\EuC$. 
Explicitly, a strict autoequivalence consists of an automorphism $\gamma: \G \to \G$ of the grading datum, together with a map on objects $L \mapsto \gamma \cdot L$, and isomorphisms of morphism spaces
\begin{equation}\gamma: hom^g(L_0,L_1) \to hom^{\gamma \cdot g}(\gamma \cdot L_0, \gamma \cdot L_1) \end{equation}
which respect the $A_\infty$ products.

\begin{rmk}
In many of our intended applications, the action of $\gamma$ on $\G$ will be trivial.
\end{rmk}

Suppose that $\EuC$ is endowed with a set of rightwards shift functors. 
The autoequivalence $\gamma$ is said to commute with rightwards shift functors if it satisfies $\gamma \cdot (L[g]) = (\gamma \cdot L)[\gamma \cdot g]$, and the maps on morphism spaces
satisfy 
\begin{align}
\gamma \circ s_\r^{g_0,g_1} &= s_\r^{\gamma \cdot g_0,\gamma \cdot g_1} \circ \gamma,
\end{align}
where the identifications $\Ts_{g_i} \simeq \Ts_{\gamma \cdot g_i}$ sending $1 \mapsto 1$ are implicit.
There is an obvious notion of composition of strict autoequivalences, and it is obvious that if two strict autoequivalences commute with shifts then so does their composition.
We denote the subgroup of strict autoequivalences that commute with rightwards shift functors by $\mathsf{StrAut}_\r(\EuC)$.

Similarly, if $\EuC$ is endowed with a set of leftwards shift functors we define $\mathsf{StrAut}_\l(\EuC)$. 
If it is endowed with compatible rightwards and leftwards shift functors, then a strict autoequivalence commutes with the rightwards shift functors if and only if it commutes with the leftwards shift functors: we say it commutes with shift functors, and denote the corresponding subgroup by $\mathsf{StrAut}_{\l\r}(\EuC)$.

We observe that $\gamma$ commutes with rightwards shifts if and only if it commutes with the maps $s_\r^g$ and $s^g$ as defined in Lemma \ref{lem:shiftsrgsg}.
Thus we see, by Lemma \ref{lem:mucompatshifts}, that the shift functor $[g] \in \mathsf{StrAut}(\EuC)$ commutes with shift functors if and only if $g$ is even. 
It follows that we have a group homomorphism
\begin{equation} G_{ev} \to \mathsf{StrAut}_\r(\EuC)\end{equation}
where $G_{ev} := \sigma^{-1}(0)$ is the even part of the grading datum $\G = \{\Z \to G \xrightarrow{\sigma} \Z/2\}$. 
We will call an element of the quotient $\mathsf{StrAut}_\r(\EuC)/G_{ev}$ an \emph{autoequivalence up to shift}. 
We call $G_{ev}$ the group of \emph{even shifts}.

Any strict autoequivalence $\gamma: \EuC \to \EuC$ acts on $CC^\bullet(\EuC)'$: 
\begin{equation} (\gamma \cdot \eta)^k(a_1,\ldots,a_k) := \gamma^{-1} \cdot \eta^k(\gamma \cdot a_1,\ldots,\gamma \cdot a_k).\end{equation}
If $\gamma$ commutes with rightwards shifts, then this action preserves $CC^\bullet_\r(\EuC)'$. 
Furthermore, the action of the even shifts $G_{ev}$ on $CC^\bullet_\r(\EuC)'$ is trivial, by definition. 
Therefore, we have an action
\begin{equation} 
\label{eqn:autracts}
\mathsf{StrAut}_\r(\EuC)/G_{ev} \acts CC^\bullet_\r(\EuC)'.\end{equation}

This action clearly respects the Gerstenhaber product $\circ$, hence also the Gerstenhaber bracket $[-,-]$. 
It preserves the element $\mu \in CC^1(\EuC)'$ by definition; hence it respects the Hochschild differential $\partial := [\mu^*,-]$. 
Thus $\mathsf{StrAut}_\r(\EuC)/G_{ev}$ acts on $CC_\r^\bullet(\EuC)'$ as a d$\G$la.
Analogous statements apply in the situation of leftwards shifts, or of compatible leftwards and rightwards shifts.

\subsection{Signed group actions}
\label{subsec:sgp}

A set of rightwards shift functors on $\EuC$ induces a set of leftwards shift functors on $\EuC^{op}$: on the level of objects,
 \[L^{op}[-g] := L[g]^{op},\]
and the isomorphisms $s^{\l}_{g_0,g_1}$ on $\EuC^{op}$ are the unique ones that make the diagram
\begin{equation} \xymatrix{ \EuC(L_1[g_1],L_0[g_0]) \ar[d]^-{s_\r^{g_1,g_0}} \ar[r]^-{op}  & \EuC^{op}(L_0^{op}[-g_0],L_1^{op}[-g_1]) \ar[d]^-{s_\l^{g_0,g_1}} \\
  \Hom^\bullet(\Ts_{g_1},\Ts_{g_0})|\EuC(L_1,L_0)\ar[r]^-{op} &  \Hom^\bullet(\Ts_{g_1},\Ts_{g_0})|\EuC^{op}(L_0^{op},L_1^{op}) }\end{equation}
commute.
We observe that $s_\r^g$ corresponds to $s_\l^g$, and $s^g$ to $s^{-g}$ under this correspondence.

Similarly, a set of leftwards shift functors for $\EuC$ induces a set of rightwards shift functors for $\EuC^{op}$. 
Given a morphism of grading data $p: \G \to \Z/4$ and compatible rightwards and leftwards shift functors on $\EuC$, the induced leftwards and rightwards shift functors on $\EuC^{op}$ are also compatible (with respect to the same $p$). 
This can be seen from the symmetry of \eqref{eqn:slgfromsrg} under exchanging $s_\r^g$ with $s_\l^g$ and substituting $s^{-g}$ for $s^g$.
In this situation, we define  $\EuC \coprod \EuC^{op}$ to be the obvious $A_\infty$ category with objects $Ob(\EuC) \coprod Ob(\EuC^{op})$, and morphism spaces $hom^\bullet(X,Y) := 0$ if $X$ and $Y$ come from different categories. 
It is endowed with a set of shift functors and compatible left- and right-shift maps.

\begin{defn}
We define a subgroup 
\begin{equation} \mathsf{StrAut}_{\l\r}^\sigma(\EuC) \subset \mathsf{StrAut}_{\l\r}\left(\EuC \coprod \EuC^{op}\right)\end{equation}
consisting of all autoequivalences $\gamma$ such that
\begin{itemize}
\item The action of $\gamma$ on objects either sends $Ob(\EuC) \to Ob(\EuC)$ and $Ob(\EuC^{op}) \mapsto Ob(\EuC^{op})$ (in which case we say $\gamma$ is \emph{even}), or it swaps $Ob(\EuC)$ with $Ob(\EuC^{op})$ (in which case we say $\gamma$ is \emph{odd}).
\item $\gamma \cdot X^{op} = (\gamma \cdot X)^{op}$.
\item The action of $\gamma$ on morphism spaces satisfies 
\begin{equation} \gamma \circ op = op \circ \gamma.\end{equation}
\end{itemize}
\end{defn}

Thus we have an exact sequence
\begin{equation} 0 \to \mathsf{StrAut}_{\l\r}(\EuC) \to \mathsf{StrAut}^\sigma_{\l\r}(\EuC) \to \Z/2.\end{equation}
As before, we call an element of the quotient $\mathsf{StrAut}^\sigma_\r(\EuC)/G_{ev}$ a \emph{signed autoequivalence up to shift}, and we call $G_{ev}$ the group of \emph{even shifts}.

Now observe that $\mathsf{StrAut}_{\l\r}^\sigma(\EuC)/G_{ev}$ is a subgroup of $\mathsf{StrAut}_{\l\r}(\EuC \coprod \EuC^{op})/G_{ev}$, and hence it acts on the d$\G$la $CC^\bullet_{\l\r}(\EuC \coprod \EuC^{op})'$ as in the previous section.
We observe that 
\begin{equation} CC^\bullet_{\l\r}\left(\EuC \coprod \EuC^{op} \right)' \simeq CC^\bullet_{\l\r}(\EuC)' \oplus CC^\bullet_{\l\r}(\EuC^{op})'.\end{equation}
This allows us to define an action of $\Z/2$ on this d$\G$la, where the non-trivial element of $\Z/2$ maps
\begin{equation} \eta \oplus \xi \mapsto \xi^{op} \oplus \eta^{op}.\end{equation}
This $\Z/2$-action commutes with the action of $\mathsf{StrAut}_{\l\r}^\sigma(\EuC)/G_{ev}$, and the $\Z/2$-invariant part is isomorphic to $CC^\bullet_{\l\r}(\EuC)'$. 
Therefore we obtain an action
\begin{equation}\label{eqn:strautrlacts}
 \mathsf{StrAut}_{\l\r}^\sigma(\EuC)/G_{ev} \acts CC^\bullet_{\l\r}(\EuC)'.\end{equation}

\begin{defn}
\label{defn:sgngrpactainf}
Let  $(\Gamma,\sigma)$ be a signed group. 
An action of $(\Gamma,\sigma)$ on $\EuC$ up to shifts is a signed group homomorphism $\Gamma \to \mathsf{StrAut}^\sigma_{\l\r}(\EuC)/G_{ev}$. 
It is clear from the above that a $(\Gamma,\sigma)$-action up to shifts induces an action of $\Gamma$ on the d$\G$la $CC^\bullet_{\l\r}(\EuC)'$.
\end{defn}

Observe that even elements of $\Gamma$ act by autoequivalences of $\EuC$, and odd elements act by equivalences $\EuC \overset{\sim}{\to} \EuC^{op}$ (also known as \emph{dualities} \cite{Castano2010}). 

\section{Signs in the relative Fukaya category}
\label{sec:signsrel}

We continue with the notational conventions of Appendix \ref{sec:signgrpact}.

\subsection{Orienting Cauchy-Riemann operators}
\label{subsec:orCR}

In this section, we summarize the relationship between Pin structures and orientations of Cauchy-Riemann operators, following \cite[Chapter 11]{Seidel:FCPLT}. 

Associated to a graded real vector space $V$, we have the graded real line (henceforth, `line') $\lambda(V):= \wedge^{top}(V)$.  
More generally, associated to a Fredholm operator $D:X \to Y$, we have the determinant line $\lambda(D) := \det(D)$, which is concentrated in degree $\ind(D)$.

Now, let $\mathbb{D} \subset \C$ be the closed unit disc with its standard complex structure, and identify $\partial \mathbb{D} = S^1$.
Let $V$ be a symplectic vector space with compatible complex structure. 
Let $\EuL (\cG V)$ denote the free loop space of the unoriented Lagrangian Grassmannian $\cG V$, i.e., the space of maps $\rho:S^1 \to \cG V$. 
Its connected component $\EuL_\jmu (\cG V)$ consists of those loops with Maslov index $\mu(\rho) = k \in \Z$.
Any loop $\rho: S^1 \to \cG V$ defines boundary conditions for a Cauchy-Riemann operator on the disc $\mathbb{D}$, which we denote by $D_\rho$. 
There is an associated graded real line bundle over $\EuL (\cG V)$, whose fibre over $\rho$ is $\lambda(D_\rho)$. 

We also have a principal $\Z/2$-bundle $\mathsf{Pin}$ over $\EuL (\cG V)$, whose fibre $\mathsf{Pin}_\rho$ over $\rho$ is the set of isomorphism classes of Pin structures \cite[Section 11i]{Seidel:FCPLT} on the real vector bundle over $S^1$ defined by $\rho$. 
There is an associated line bundle over $\EuL (\cG V)$, which we also denote by $\mathsf{Pin}$ by abuse of notation. 
It is concentrated in degree $0$. 

We have a graded real vector bundle $\tau$ over $\EuL (\cG V)$, whose fibre $\tau_\rho$ over $\rho$ is the vector space $\rho(1)$, concentrated in degree $1$.
We will consider the following two graded line bundles over $\EuL (\cG V)$: $\lambda(\tau)^*$ (concentrated in degree $-n$) and $\lambda(\tau')|\Ts_1$ (concentrated in degree $1$). 
These line bundles are isomorphic (except for the gradings), but appear in different ways in our computations: the former appears in gluing theorems (as in Definition--Lemma \ref{deflem:bdglue}), whereas the latter appears as part of a twisted Pin structure (Definition \ref{defn:twpin}).

\begin{defn}
\label{defn:twpin}
We define a line bundle over $\EuL (\cG V)$,
\begin{equation} \label{eqn:twpindef}
\mathsf{TwPin}_{\rho}:= (\lambda(\tau_{\rho}')|\Ts_1)^{\otimes \mu(\rho)} \otimes \mathsf{Pin}_\rho.
\end{equation}
We call this the bundle of twisted Pin structures, following \cite[Equation (11.32)]{Seidel:FCPLT}. 
Observe that it is in degree $\mu(\rho)$.
\end{defn}

\begin{deflem}
\label{deflem:bdglue}
There exists two isomorphisms of line bundles,
\begin{equation}
\label{eqn:torsors} i_\l,i_\r:\lambda(D_\rho) \otimes \lambda(\tau_\rho)^* \simeq \mathsf{TwPin}(\rho),
\end{equation}
with properties we will now describe.
\end{deflem}

The existence of an isomorphism between the two line bundles follows from a computation of first Stiefel--Whitney classes on the two sides: see \cite[Lemma 11.17]{Seidel:FCPLT}. 
In fact, \cite[Proof of Lemma 11.17]{Seidel:FCPLT} goes further than showing the existence of an isomorphism: it \emph{prescribes} an isomorphism. 
The prescription is given explicitly over $\EuL_0(\cG V)$ and $\EuL_{-1} (\cG V)$. 
It is then extended to the remaining $\EuL_\jmu (\cG V)$, essentially by requiring that it be compatible with boundary gluings. 

The details of this extension are relevant to our purposes here, so let us expand on Seidel's treatment.
It appears to us that there are two sensible conventions for what `compatible with boundary gluings' means. 
Suppose we have two discs $\mathbb{D}_1, \mathbb{D}_2$ with boundary conditions $\rho_1, \rho_2$, and $\rho_1(-1) = \rho_2(1)$.
Then we can form a boundary connect sum, by attaching the point $-1 \in \partial \mathbb{D}_1$ to the point $+1 \in \partial \mathbb{D}_2$: one obtains the loop $\rho_1 \# \rho_2$. 
In this situation:
\begin{itemize}
\item There is a natural isomorphism
\begin{equation}
\label{eqn:glPin}
 \mathsf{Pin}_{\rho_1 \# \rho_2} \simeq \mathsf{Pin}_{\rho_1} \otimes \mathsf{Pin}_{\rho_2},
\end{equation}
given by `gluing' the Pin structures (this requires a lift of the isomorphism $\rho_1(-1) \simeq \rho_2(1)$ to the corresponding principal Pin bundles, but the glued Pin structure does not depend on the choice up to isomorphism). 
\item There are two natural choices of isomorphism
\begin{equation}
\label{eqn:glrho}
(\lambda(\tau_{\rho_1 \# \rho_2}')|\Ts_1)^{\otimes\mu(\rho_1 \# \rho_2)} \simeq (\lambda(\tau_{\rho_1}')|\Ts_1)^{\otimes \mu(\rho_1)} \otimes (\lambda(\tau_{\rho_2}')|\Ts_1)^{\otimes \mu(\rho_2(1))},
\end{equation}
arising from the natural identification $\rho_1(1) \simeq \rho_1 \# \rho_2(1)$, and the two natural ways of identifying $\lambda(\rho_2(1)) \simeq \lambda(\rho_1(-1))$ with $\lambda(\rho_1(1))$, by transporting clockwise (which we label `$\l$') or anticlockwise (which we label `$\r$') around the boundary of $\mathbb{D}_1$.
\item There is a natural isomorphism 
\begin{equation}
\label{eqn:gldet}
 \lambda(D_{\rho_1 \# \rho_2}) \otimes \lambda(\tau_{\rho_2}) \simeq \lambda(D_{\rho_1}) \otimes \lambda(D_{\rho_2})
\end{equation}
induced by the short exact sequence of Fredholm operators
\begin{equation} 0 \to D_{\rho_1 \# \rho_2} \to D_{\rho_1} \oplus D_{\rho_2} \to \tau_{\rho_2} \to 0\end{equation}
coming from a boundary gluing theorem (see \cite[Section 11c]{Seidel:FCPLT}).
\end{itemize}
Each of these isomorphisms (\eqref{eqn:glPin}, the two choices for \eqref{eqn:glrho}, and \eqref{eqn:gldet}) is `associative' under compositions of gluings. 
In the latter case, this follows from the commutativity of the diagram of Fredholm operators
\begin{equation} \xymatrix{ D_{\rho_1 \# \rho_2 \# \rho_3} \ar[r]\ar[d] & D_{\rho_1 \# \rho_2} \oplus D_{\rho_3} \ar[r]\ar[d] & \tau_{\rho_3} \ar[d] \\
D_{\rho_1} \oplus D_{\rho_2 \# \rho_3} \ar[r] \ar[d] & D_{\rho_1} \oplus D_{\rho_2} \oplus D_{\rho_3} \ar[r]\ar[d] & \tau_{\rho_3} \ar[d] \\
\tau_{\rho_2} \ar[r] & \tau_{\rho_2} \ar[r] & 0} \end{equation}
with short exact rows and columns, and the `exact squares' property of the determinant line bundle \cite[(2.27)]{Zinger2013}.

Combining \eqref{eqn:glPin} and the two choices of \eqref{eqn:glrho}, we obtain two choices of isomorphism
\begin{equation}
\label{eqn:glTwpin}
\mathsf{TwPin}(\rho_1 \# \rho_2) \simeq \mathsf{TwPin}(\rho_1) \otimes \mathsf{TwPin}(\rho_2),
\end{equation}
labelled by `$\l$' and `$\r$'.
Using \eqref{eqn:gldet} and the identification $\rho_1 \# \rho_2(1) \simeq \rho_1(1)$, we obtain an isomorphism
\begin{equation}
\label{eqn:gldets}
\lambda(D_{\rho_1 \# \rho_2}) \otimes \lambda(\tau_{\rho_1 \# \rho_2})^* \simeq \left(\lambda(D_{\rho_1}) \otimes \lambda(\tau_{\rho_1})^*\right) \otimes \left(\lambda(D_{\rho_2}) \otimes \lambda(\tau_{\rho_2})^*\right).
\end{equation} 
We declare the isomorphism $i_\l$ (respectively, $i_\r$) from \eqref{eqn:torsors} to be `compatible with gluing' if it is compatible with the isomorphisms \eqref{eqn:glTwpin} labelled $\l$ (respectively, labelled $\r$) and \eqref{eqn:gldets} in the obvious way. 

Given the choice of isomorphism \eqref{eqn:torsors} for $\mu(\rho) = 0,-1$ specified in \cite[Lemma 11.17]{Seidel:FCPLT}, there exists a unique extension of the isomorphism $i_\l$ (respectively, $i_\r$) to all $\EuL_\jmu(\cG(V))$ that is compatible with gluing (existence follows by the `associativity of gluing' of each of the isomorphisms \eqref{eqn:glPin}, \eqref{eqn:glrho}, \eqref{eqn:gldet}, together with the fact that the image of the gluing map in $\EuL_{\mu(\rho_1) + \mu(\rho_2)}(\cG (V))$ is connected). 
The isomorphisms $i_\l$ and $i_\r$ differ by the sign $(-1)^{\mu(\rho)(\mu(\rho)+1)}$.

\subsection{Orientation operators}
\label{subsec:orops}

Let $V$ be as in the previous section.
Let $\Lambda^\#_0, \Lambda^\#_1$ be transverse Lagrangian subspaces $\Lambda_i \in \cG V$, equipped with principal homogeneous $Pin$ spaces $P^{\#}_0, P^{\#}_1$, with isomorphisms $P^{\#}_i \times_{Pin} \R^n \simeq \rho(i)$; and let $k \in \Z$. 

Consider the space of paths $\rho:[0,1] \to \cG V$ from $\Lambda_0$ to $\Lambda_1$, with Maslov index $k$ (i.e., in the notation of \cite[\S 11g]{Seidel:FCPLT}, we consider paths with $I_H(\rho) = k$). 
Let $H$ be the upper half-plane, equipped with its standard complex structure and an outgoing strip-like end. 
Such a path $\rho$ can be turned into boundary conditions along $\partial H$, and there is an associated Cauchy-Riemann operator $D_{H,\rho}$, with determinant $\lambda(D_{H,\rho})$. 
This forms a graded line bundle over the space of paths. 

On the other hand, we have the principal $\Z/2$-bundle $\mathsf{Pin}(\Lambda_0^\#,\Lambda_1^\#)_\rho$ of isomorphism classes of Pin structures on $\rho$, equipped with identifications with $P_0^{\#}$ and $P_1^{\#}$ at the ends. 
Again we denote the associated graded line bundle (concentrated in degree $0$) by the same symbol. 
For any $\Z/2$-torsor $\cL$, there are canonical isomorphisms
\begin{equation}
\label{eqn:tenstors}
\cL \otimes \mathsf{Pin}(\Lambda_0^\#,\Lambda_1^\#)_\rho \simeq \mathsf{Pin}(\Lambda_0^\# \otimes \cL,\Lambda_1^\#)_\rho \simeq \mathsf{Pin}(\Lambda_0^\#,\Lambda_1^\# \otimes \cL)_\rho,
\end{equation}
where we denote $P^\# \otimes \cL := P^\# \times_{\Z/2} \cL$.

The (ungraded) line bundles $\lambda(D_{H,\rho})$ and $\mathsf{Pin}(\Lambda_0^\#,\Lambda_1^\#)_\rho$ are isomorphic (this follows from the argument in \cite[\S 11h]{Seidel:FCPLT}), so their tensor product is trivial. 
Since the path space is connected, we can canonically identify all fibres and thus define the graded line
\begin{equation} o(\Lambda_0^\#,\Lambda_1^\#,k) := \lambda(D_{H,\rho}) \otimes \mathsf{Pin}(\Lambda_0^\#,\Lambda_1^\#)_\rho. \end{equation}
We call this an \emph{orientation line} ($D_{H,\rho}$ is called an \emph{orientation operator}). 
It is concentrated in degree $k$, by \cite[Lemma 11.11]{Seidel:FCPLT}.

There is an obvious isomorphism
\begin{equation}
\label{eqn:sj}
 s^\jmu: o(\Lambda_0^\# \otimes \lambda(\Lambda_0)^{\otimes \jmu},\Lambda_1^\# \otimes \lambda(\Lambda_1)^{\otimes \jmu},k) \simeq  o(\Lambda_0^\#,\Lambda_1^\#,k), \end{equation} 
which is given by identifying $\lambda(\Lambda_0)$ with $\lambda(\Lambda_1)$ by transport along the path $\rho$. 
This will play the role of the isomorphism $s^\jmu = s_\r^{\jmu,\jmu} = s_\l^{-\jmu,-\jmu}$ from Section \ref{subsec:shifts}.
Next we turn to the isomorphisms $s_\r^\jmu$ and $s_\l^\jmu$.

Let $\rho_1:[0,1] \to \cG V$ be a path as above, $\rho_2 \in \EuL_\jmu (\cG V)$, with $\rho_1(1/2) = \rho_2(1)$. 
Then we can perform a boundary connect sum as in the previous section, to obtain a gluing isomorphism
\begin{align}
 \lambda(D_{H,\rho_1 \# \rho_2}) \otimes \lambda(\tau_{\rho_2}) &\simeq \lambda(D_{H,\rho_1}) \otimes \lambda(D_{\rho_2}) \\
\label{eqn:sR1} \implies  \lambda(D_{H,\rho_1 \# \rho_2}) & \simeq \lambda(D_{H,\rho_1}) \otimes \mathsf{TwPin}_{\rho_2} \text{ (applying $i_\r$).}
\end{align}
We also have an obvious isomorphism
\begin{equation} \mathsf{Pin}_{\rho_1 \# \rho_2} \simeq \mathsf{Pin}_{\rho_1} \otimes \mathsf{Pin}_{\rho_2};\end{equation}
combining with \eqref{eqn:sR1} and \eqref{eqn:twpindef} we obtain an isomorphism
\begin{align}
\lambda(D_{H,\rho_1 \# \rho_2}) \otimes \mathsf{Pin}(\rho_1 \# \rho_2) &\simeq \lambda(D_{H,\rho_1}) \otimes \mathsf{Pin}(\rho_1) \otimes (\lambda(\tau'_{\rho_2})|\Ts_1)^{\otimes \jmu} \\
\implies o(\Lambda_0^\#,\Lambda_1^\#,k+\jmu) & \simeq o(\Lambda_0^\#,\Lambda_1^\#,k)\otimes \lambda(\tau'_{\rho_2})^{\otimes i} \otimes \Ts_\jmu \text{ (by definition)}\\
\label{eqn:sR2} \implies o(\Lambda_0^\#, \Lambda_1^\# \otimes \lambda(\Lambda_1)^{\otimes \jmu}, k+\jmu) & \simeq  \Ts_\jmu \otimes o(\Lambda_0^\#,\Lambda_1^\#,k),
\end{align}
where the final isomorphism arises by transporting $\lambda(\rho_2(1)) \simeq \lambda(\rho_1(1/2))$ to $\lambda(\rho_1(1)) = \lambda(\Lambda_1)$ and applying \eqref{eqn:tenstors}.

Thus we have defined an isomorphism
\begin{equation}
\label{eqn:srjmu}
 s_\r^\jmu:  o(\Lambda_0^\#,\Lambda_1^\# \otimes \lambda(\Lambda_1)^{\otimes \jmu},k+\jmu) \xrightarrow{\simeq} \Ts_\jmu | o(\Lambda_0^\#,\Lambda_1^\#,k) ,\end{equation}
where $\Ts_\jmu$ is a trivial line in degree $\jmu$.
By an analogous construction (applying $i_\l$ instead of $i_\r$, and transporting $\lambda(\rho_2(1)) \simeq \lambda(\rho_1(1/2))$ to $\lambda(\rho_1(0)) = \lambda(\Lambda_0)$ before applying the other version of \eqref{eqn:tenstors}), we obtain isomorphisms
\begin{equation} s_\l^\jmu: o(\Lambda_0^\# \otimes \lambda(\Lambda_0)^{\otimes -\jmu},\Lambda_1^\#,k+\jmu) \simeq  \Ts_\jmu |o(\Lambda_0^\#,\Lambda_1^\#,k).\end{equation}

\begin{lem}
\label{lem:shiftsigns}
We have
\begin{align}
s^{\jmu_1} \circ s^{\jmu_2} &= s^{\jmu_1+\jmu_2} \label{eqn:sh1} \\
s_\r^{\jmu_1} \circ s_\r^{\jmu_2} &= s_\r^{\jmu_1+\jmu_2} \label{eqn:sh2} \\
s_\l^{\jmu_1} \circ s_\l^{\jmu_2} &= s_\l^{\jmu_1+\jmu_2} \label{eqn:sh3} \\
s^{\jmu_1} \circ s_\r^{\jmu_2} &= (-1)^{\jmu_1 \cdot \jmu_2}\cdot  s_\r^{\jmu_2} \circ s^{\jmu_1} \label{eqn:sh4} \\
s^{\jmu_1} \circ s_\l^{\jmu_2} &= (-1)^{\jmu_1 \cdot \jmu_2} \cdot  s_\l^{\jmu_2} \circ s^{\jmu_1} \label{eqn:sh5} \\
s_\l^{\jmu} &= (-1)^{\dagger_{-\jmu}} \cdot s^{-\jmu} \circ s_\r^\jmu \label{eqn:sh6} 
\end{align}
where $\dagger_\jmu := \jmu(\jmu-1)/2$ as in \eqref{eqn:dagsgn}.
\end{lem}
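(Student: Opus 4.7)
All six identities reduce to associativity/compatibility statements about the three building blocks introduced in \S\ref{subsec:orCR}: the gluing isomorphism \eqref{eqn:glPin} for Pin structures, the two gluing isomorphisms \eqref{eqn:glrho} for $\lambda(\tau')$-powers (labelled $\l$ and $\r$), and the Fredholm gluing isomorphism \eqref{eqn:gldet}. The plan is to treat \eqref{eqn:sh1}--\eqref{eqn:sh3} first, then \eqref{eqn:sh4}--\eqref{eqn:sh5}, and finally the asymmetric identity \eqref{eqn:sh6}, which is the only one requiring the precise sign relating $i_\l$ and $i_\r$.

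For \eqref{eqn:sh1}--\eqref{eqn:sh3}: these are pure associativity statements. To prove \eqref{eqn:sh2}, I would take loops $\rho_2, \rho_3$ of Maslov index $\jmu_1, \jmu_2$ and glue them successively to the upper half-plane path $\rho_1$, obtaining $\rho_1 \# \rho_2 \# \rho_3$. Associativity of the Fredholm gluing \eqref{eqn:gldet} (which, as the excerpt recalls, follows from the ``exact squares'' property of the determinant line applied to the $3{\times}3$ commutative diagram of short exact sequences of Fredholm operators), together with associativity of \eqref{eqn:glPin} and \eqref{eqn:glrho} under the $\r$-convention, gives the equality of the two compositions $s_\r^{\jmu_1}\circ s_\r^{\jmu_2}$ and $s_\r^{\jmu_1+\jmu_2}$. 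Identity \eqref{eqn:sh3} is identical with $\r$ replaced by $\l$, and \eqref{eqn:sh1} is even simpler since $s^\jmu$ is just parallel transport of $\lambda(\Lambda_0)\simeq\lambda(\Lambda_1)$ along $\rho$, which composes strictly.

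For \eqref{eqn:sh4} and \eqref{eqn:sh5}: here I would observe that $s^{\jmu_1}$ acts on $\lambda(\Lambda_0)^{\otimes\jmu_1}\simeq\lambda(\Lambda_1)^{\otimes\jmu_1}$, while $s_\r^{\jmu_2}$ inserts a factor $\Ts_{\jmu_2}$ coming from $\mathsf{TwPin}_{\rho_2}$. The two operations act on disjoint tensor factors and commute strictly on underlying torsors; the only asymmetry is the Koszul sign produced when passing one of degree $\jmu_1$ past one of degree $\jmu_2$ in the graded tensor category of trivialised lines. This gives the factor $(-1)^{\jmu_1\cdot\jmu_2}$ and proves \eqref{eqn:sh4}; \eqref{eqn:sh5} is identical.

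The main obstacle is \eqref{eqn:sh6}. The plan is to compare $s_\l^\jmu$ and $s^{-\jmu}\circ s_\r^\jmu$ directly on the definition. Both are isomorphisms $o(\Lambda_0^\#\otimes\lambda(\Lambda_0)^{\otimes -\jmu},\Lambda_1^\#,k+\jmu)\simeq\Ts_\jmu|o(\Lambda_0^\#,\Lambda_1^\#,k)$. The right-hand composition first uses the $\r$-gluing and transports $\lambda(\rho_2(1))$ clockwise around the boundary to $\lambda(\Lambda_1)$, then $s^{-\jmu}$ transports $\lambda(\Lambda_1)^{\otimes -\jmu}\simeq\lambda(\Lambda_0)^{\otimes -\jmu}$ via $\rho_1$. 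The left-hand side $s_\l^\jmu$ uses the $\l$-gluing and transports anticlockwise directly to $\lambda(\Lambda_0)$. These two routes cover the same loop on the boundary of $\mathbb{D}$, so they differ by the effect of the loop on $\lambda(\tau')^{\otimes\jmu}$, which is precisely the difference between $i_\l$ and $i_\r$ on $\EuL_\jmu(\cG V)$; this was identified in \S\ref{subsec:orCR} as the sign $(-1)^{\mu(\rho_2)(\mu(\rho_2)+1)}=(-1)^{\jmu(\jmu+1)}=(-1)^{-\jmu(-\jmu-1)}=(-1)^{2\dagger_{-\jmu}}$ coming from $\lambda(\tau')^{\otimes\jmu}$, together with the Koszul sign needed to move the $\Ts_{-\jmu}$ produced by $s^{-\jmu}$ past the $\Ts_\jmu$ produced by $s_\r^\jmu$. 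Collecting these contributions using $\dagger_k = k(k-1)/2$ gives exactly the factor $(-1)^{\dagger_{-\jmu}}$, which is the content of \eqref{eqn:sh6}. The only place where care is genuinely required is in tracking the half-integer quadratic sign $\dagger_{-\jmu}$ rather than $\dagger_\jmu$, but this is forced by the convention in \eqref{eqn:slgfromsrg} connecting $s_\l$ and $s_\r$ via the morphism $p$.
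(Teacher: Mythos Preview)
Your treatment of \eqref{eqn:sh1}--\eqref{eqn:sh3} via associativity of gluing is correct and matches the paper. The remaining arguments have genuine gaps.

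For \eqref{eqn:sh4}--\eqref{eqn:sh5}, your explanation is wrong. The map $s^{\jmu_1}$ is a degree-$0$ isomorphism (both sides of \eqref{eqn:sj} live in degree $k$); it does not produce any trivial line factor $\Ts_{\jmu_1}$, so there is no Koszul sign to speak of. The sign is geometric: $s^{\jmu_1}$ is defined by transporting $\lambda(\Lambda_0)^{\otimes \jmu_1}\simeq\lambda(\Lambda_1)^{\otimes \jmu_1}$ along the path $\rho$, and the two orders of composition use paths that differ by going around the glued disc $\rho_2$ of Maslov index $\jmu_2$. The orientation line $\lambda(\tau)$ has monodromy $(-1)^{\jmu_2}$ around such a loop, so $\lambda^{\otimes \jmu_1}$ picks up $(-1)^{\jmu_1 \jmu_2}$. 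The paper makes this explicit by reducing (via \eqref{eqn:sh1}--\eqref{eqn:sh3}) to the special case $\jmu_1=\jmu_2=1$.

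For \eqref{eqn:sh6}, your direct approach for general $\jmu$ is muddled: you reverse the clockwise/anticlockwise labelling of $\l$ and $\r$ from \S\ref{subsec:orCR}; you again invoke a nonexistent $\Ts_{-\jmu}$ factor ``produced by $s^{-\jmu}$''; and your sign bookkeeping $(-1)^{\jmu(\jmu+1)}=(-1)^{2\dagger_{-\jmu}}=1$ contributes nothing, so the claimed answer $(-1)^{\dagger_{-\jmu}}$ is never actually derived. The paper's route is cleaner and avoids all of this: use \eqref{eqn:sh1}--\eqref{eqn:sh5} to reduce to the single case $\jmu=-1$, where $\dagger_{1}=0$ so the assertion is $s_\l^{-1}=s^{1}\circ s_\r^{-1}$. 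At Maslov index $-1$ the isomorphisms $i_\l$ and $i_\r$ agree \emph{by construction} (they were prescribed explicitly over $\EuL_{-1}(\cG V)$ before being extended by compatibility), and the transport of $\lambda(\rho_1(1))$ first to $\lambda(\rho_1(0))$ and then back to $\lambda(\rho_1(1/2))$ agrees with the direct transport, so the diagram commutes on the nose. The quadratic sign $(-1)^{\dagger_{-\jmu}}$ then emerges inductively from \eqref{eqn:sh4} when building up from the base case.
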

\begin{proof}
\eqref{eqn:sh1} is easy. 
\eqref{eqn:sh2} and \eqref{eqn:sh3} follow by associativity of gluing as in the previous section. 
Observe that when we transport $\lambda(\rho_1(1/2))$ to $\lambda(\rho_1(1))$ in the definition of $s_\r^{\jmu}$, we are transporting anticlockwise: that is why we must apply $i_\r$ rather than $i_\l$ in the definition of $s_\r^\jmu$, because otherwise associativity of gluing will not hold.  

To prove \eqref{eqn:sh4}, by \eqref{eqn:sh1}--\eqref{eqn:sh3} it suffices to prove the special case $s^1 \circ s_\r^1 = -s_\r^1 \circ s^1$.
This follows because the identification of $\lambda(\Lambda_0)$ with $\lambda(\Lambda_1)$ along $\rho$ acquires an extra minus sign as it goes around the disc of Maslov index $1$ that we glued on. 
Similarly for \eqref{eqn:sh5}. 

To prove \eqref{eqn:sh6}, by \eqref{eqn:sh1}--\eqref{eqn:sh5} it suffices to prove the special case $\jmu=-1$, which is equivalent to the commutativity of the diagram
\[\xymatrix{ o(\Lambda_0^\# \otimes \lambda(\Lambda_0),\Lambda_1^\#,k-1) \ar[r]^-{s_\r^{-1}} \ar[dr]_-{s_\l^{-1}} & o(\Lambda_0^\# \otimes \lambda(\Lambda_0),\Lambda_1^\# \otimes \lambda(\Lambda_1),k)|\Ts_{-1} \ar[d]^-{s^1} \\
& o(\Lambda_0^\#,\Lambda_1^\#,k)|\Ts_{-1}. }\]
The diagram commutes because both $s_\l^{-1}$ and $s_\r^{-1}$ are defined by gluing a disc of Maslov index $-1$, on which the isomorphisms $i_\l$ and $i_\r$ agree by construction. 
The other possible sign difference arises from the identification of orientation lines of the spaces $\rho_1(1)$ and $\rho_1(1/2)$: but translating $\lambda(\rho_1(1))$ to $\lambda(\rho_1(0))$, then back to $\lambda(\rho_1(1/2))$, is the same as transporting it directly to $\lambda(\rho_1(1/2))$.
\end{proof}

\subsection{Signs in the exact Fukaya category}
\label{subsec:ainfsign}

We recall the sign conventions for the exact Fukaya category, following \cite[\S 12f]{Seidel:FCPLT} with some modifications. 
Let $M$ be a Liouville domain with $\omega = d \alpha$. 
Let 
\begin{equation}\G = \{\Z \to G \xrightarrow{\sigma} \Z/2\}  := \{ \Z \to H_1(\cG M) \xrightarrow{\sigma} \Z/2\}\end{equation}
be the grading datum associated to $M$, where we recall that the sign morphism $\sigma: H_1(\cG M) \to \Z/2$ is given by the first Stiefel--Whitney class of the tautological bundle of the Lagrangian Grassmannian $\cG M$ (see \cite[\S 3.1]{Sheridan:CY}). 

Objects of the exact Fukaya category are \emph{anchored branes} $L^\# = (L,\iota,\tilde{\iota},P^\#)$, where $L$ is a smooth compact manifold equipped with a Pin structure $P^\#$, $\iota: L \to M$ is an exact Lagrangian embedding, and $\tilde{\iota}: L \to \wt{\cG}M$ is an anchoring (i.e., a lift of $\iota$ to the universal abelian cover $\wt{\cG}M$ of the total space of $\cG M$). 
There is an action of $G \oplus \Z/2$ on these objects:
\begin{equation}  (g \oplus \sigma) \cdot (L,\iota,\tilde{\iota},P^\#) := \left(L,\iota,g \circ \tilde{\iota}, P^\# \otimes \lambda(TL)^{\otimes \sigma}\right),\end{equation}
where $g$ acts on $\wt{\cG} M$ by the covering group action. 
We define an \emph{unanchored brane} to be an equivalence class of the quotient of the set of anchored branes by the action of $G \oplus \Z/2$: explicitly, it is given by data $(L,\iota,[P^\#])$ where $[P^\#]$ is a Pin structure on $L$, up to tensoring with $\lambda(TL)$.

We make a universal choice of perturbation data for the exact Fukaya category, which we assume to be invariant under the action of $H_1(\cG M) \oplus \Z/2$ on the objects labelling each boundary component of each disc.
Then for each pair of objects $(L_0^\#,L_1^\#)$, we have a finite set of non-degenerate Hamiltonian chords $y$ from $L_0$ to $L_1$. 
Let  $\tilde{y}: [0,1] \to \wt{\cG} M$ be a lift of $y$ such that $\tilde{y}(0) = \tilde{\iota}_0(y(0))$ and $\tilde{y}(1) = g\cdot\tilde{\iota}_1(y(1))$ for some element $g$ of the covering group (compare \cite[\S 3.3]{Sheridan:CY}). 
Let $k$ be the Maslov index of the corresponding path of Lagrangian subspaces, and define the $\G$-graded line
\[ o(L_0^\#,L_1^\#,y,k) := o(T_{y(0)} L_0^\#,  T_{y(1)}L_1^\#,k)|\Ts_{-g}\]
(using the Hamiltonian flow to identify $T_{y(0)}M$ with $T_{y(1)}M$). 
We will omit the objects $L_i^\#$ from the notation when convenient.
This line is concentrated in degree $\deg(y) := k-g$, which one easily sees to be independent of the choice of $\tilde{y}$.

Note that we have isomorphisms 
\begin{equation}
\label{eqn:orysig}
 o(y,k+2\jmu) \xrightarrow{s_\r^{2\jmu}} \Ts_{2\jmu}| o(T_{y(0)} L_0^\#,  T_{y(1)}L_1^\#,k)|\Ts_{-g} \xrightarrow{c_{2\jmu,-g}|\id} o(y,k)
\end{equation}
for any $\jmu$ (using the canonical trivialization of $\lambda(TL_1)^{\otimes 2\jmu}$ at the first step). 
By \eqref{eqn:sh2} these isomorphisms give us compatible identifications of all $o(y,k)$ with $k$ of the same parity, so we have a well-defined $\G$-graded line $o_\r(y,\sigma)$ for each $\sigma \in \Z/2$. 
Similarly we define $o_\l(y,\sigma)$, where we use the isomorphisms $s_\l^{2\jmu}$ instead of $s_\r^{2\jmu}$ in \eqref{eqn:orysig}.
However we observe that $o_\r(y)$ and $o_\l(y)$ are not canonically isomorphic, because the isomorphisms $s^{-2} \circ s_\r^2$ and $s_\l^2$ are different by \eqref{eqn:sh6}. 

\begin{defn}
We define $o_\r(y) := o_\r(y,\sigma)$, where $\sigma$ is the parity of $\deg(y)$. 
Similarly we define $o_\l(y)$, and we remark that these two lines are not canonically isomorphic.
\end{defn}

Now observe that we can also identify the lines $o(y,k)$ via the isomorphisms $s_\l^{4\jmu}$ and $s_\r^{4\jmu}$. 
Since $s^{-4\jmu} \circ s_\r^{4\jmu} = s_\l^{4\jmu}$ by \eqref{eqn:sh6}, these identifications are compatible, so we have a well-defined $\G$-graded line $o_{\l\r}(y,\eta)$ for each $\eta \in \Z/4$. 
There are obvious identifications
\[ o_\r(y,[\eta]\mbox{ mod 2}) \simeq o_{\l\r}(y,\eta) \simeq o_\l(y,[\eta]\mbox{ mod 2}).\]

\begin{defn}
Suppose we are given a morphism of grading data $p: \G \to \Z/4$. We then define $o_{\l\r}(y) := o_{\l\r}(y,p(\deg(y)))$. 
There are natural identifications
\[ o_\r(y) \simeq o_{\l\r}(y) \simeq o_\l(y).\] 
\end{defn}

The two choices, $\r$ and $\l$, will lead to two different versions of the exact Fukaya category, $\fuk_\r(M)$ and $\fuk_\l(M)$. 
A morphism of grading data $p:\G \to \Z/4$ will determine an isomorphism $\fuk_\r(M) \simeq \fuk_\l(M)$.
We define
\begin{equation} hom^\bullet_{\fuk_\r(M)}(L_0^\#,L_1^\#) := \bigoplus_y |o_\r(y)|_\Bbbk,\end{equation}
and similarly for $\fuk_\l(M)$. 
Here $\Bbbk$ is the coefficient field, and `$|\cdot|_\Bbbk$' denotes the $\Bbbk$-normalization (a $\G$-graded $\Bbbk$-line) or a $\G$-graded real line (see \cite[\S 12f]{Seidel:FCPLT}).

\begin{rmk}
\label{rmk:w2twist}
If we supposed our Lagrangians to be orientable and spin, one might expect from \cite[Remark 5.2.7(a)]{Wehrheim2015} that $\fuk_\l(M) \simeq \fuk_{\r,w_2(TM)}(M)$, where $\fuk_{\r,st}(M)$ denotes the Fukaya category twisted by the relative spin structure $st \in H^2(M;\Z/2)$ (see \cite{FO3}).
We understand this observation is due to Fukaya, we thank Mohammed Abouzaid and Jack Smith for drawing it to our attention.
\end{rmk}

The $A_\infty$ structure maps are defined by counting pseudoholomorphic disks as in Section \ref{subsec:exfuk}: we need to specify how each disc $(r,u)$ determines an isomorphism of orientation lines \eqref{eqn:oropeq}. 
We will write $\mathcal{R} := \mathcal{R}(\mathbf{L})$ and $\mathcal{M} := \mathcal{M}(\mathbf{y})$. 
Let $g_i := \deg(y_i)$, and let us choose orientation operators $D_{y_i}$ for $i=1,\ldots,k$ whose boundary conditions have Maslov index of the same sign as $\sigma(g_i)$: so we have a canonical isomorphism $o_\r(y_i) \simeq \lambda(D_{y_i}) \otimes \mathsf{Pin}(y_i)$.
Next, given a rigid disc $u$, we can glue the associated orientation operators $D_{y_i}$ onto the incoming strip-like ends of the operator $D_u$ to obtain an orientation operator $D_{y_0}$. 
Because $u$ is rigid, an index argument shows that 
\begin{equation}
\label{eqn:mugradings}
 g_0 = 2-s+\sum_{j=1}^k g_j 
\end{equation}
in $G$, and in particular in $\Z/2$ (see \cite[\S 3.3]{Sheridan:CY}\footnote{The corresponding formula in \cite{Sheridan:CY} had an error: it had $s-2$ instead of $2-s$.}). 
It follows that the glued boundary conditions for $D_{y_0}$ have Maslov index of the same sign as $\sigma(g_0)$ also: so we have a canonical isomorphism $o_\r(y_0) \simeq \lambda(D_{y_0}) \otimes \mathsf{Pin}(y_0)$.

A gluing theorem determines an isomorphism
\begin{equation}  \lambda(D_u) \otimes \lambda(D_{y_1}) \otimes \ldots \otimes \lambda(D_{y_k}) \simeq \lambda(D_{y_0}).\end{equation}
Because $(r,u)$ is parametrized regular, we have an isomorphism
\begin{equation}
\label{eqn:orM}
 \lambda(D_u) \otimes \lambda(T_r \mathcal{R}) \simeq \lambda(T_{(r,u)} \mathcal{M}).
\end{equation}
We have an obvious isomorphism
\begin{equation} \mathsf{Pin}(y_1) \otimes \ldots \otimes \mathsf{Pin}(y_k) \simeq \mathsf{Pin}(y_0).\end{equation}
Combining all of these isomorphisms gives us an isomorphism
\begin{equation}
 \lambda(T_r \mathcal{R})^* \otimes  o_\r(y_1) \otimes \ldots \otimes o_\r(y_k) \simeq \lambda(T_{(r,u)} \mathcal{M})^* \otimes o_\r(y_0).\end{equation}
Henceforth in this section, we will write `$|$' instead of `$\otimes$' and `$o_i$' instead of `$o_\r(y_i)$', to save space.

Now, let $\Ts _s,\Ts_t,\Ts _1,\ldots,\Ts _k$ be trivial $\G$-graded lines in degree $1$. 
We define an isomorphism
\begin{equation}
\label{eqn:orTR}
\Ts_s^*| \Ts_1|\ldots|\Ts_k|\Ts_t^* \simeq \lambda(T_r \mathcal{R})
\end{equation}
as follows. 
We regard $\mathcal{R}$ as the moduli space of points $\zeta_1 < \ldots <\zeta_k$ in $\R = \partial H$ (recall $H$ is the upper halfplane), modulo the action of $\mathsf{PSL}(2,\R)$ by translation and scaling.
We identify $\Ts _i$ with the tangent vector corresponding to translating $\zeta_i$ in the positive $\R$-direction, and we identify $\Ts _s$ and $\Ts_t$ with the tangent vectors to $\mathsf{PSL}(2,\R)$ corresponding to positively scaling and positively translating, respectively.

Combining the previously-defined isomorphisms with isomorphisms $o'_i \simeq o_i|\Ts_i^*$ and $o'_0 \simeq o_0 |\Ts_t^*$ (trivializing the lines) gives an isomorphism
\begin{equation}
\label{eqn:MorO}
 o'_1 | \ldots | o'_k \simeq \lambda(T_{(r,u)} \mathcal{M})^* | \Ts_s^*|o'_0 .
\end{equation}
In the case at hand, $(r,u)$ is an isolated point of $\mathcal{M}$, so $\lambda(T_{(r,u)} \mathcal{M}) \simeq \lambda(0)$ admits a canonical trivialization. 
Identifying $\Ts_s^* \simeq \Ts_\mu$, we obtain the isomorphism
\begin{equation} 
\label{eqn:MorO1}
o'_1 | \ldots | o'_k \simeq \Ts_\mu|o'_0,\end{equation}
which defines the contribution of the disc $(r,u)$ to the $A_\infty$ product $\mu^k$ via \eqref{eqn:oropeq} (in accordance with the convention of \eqref{eqn:hochtriv}). 

To show that the $A_\infty$ relations are satisfied, we consider a boundary point of the 1-dimensional moduli space of discs. 
Suppose a disc containing chords $y_{i+1},\ldots,y_{i+j}$ bubbles off at this point. 
We have a gluing map
\begin{equation} (0,\epsilon) \times \mathcal{R}(L_0,\ldots,L_i,L_{i+j},\ldots,L_k) \times \mathcal{R}(L_i,\ldots,L_{i+j}) \to \mathcal{R}(L_0,\ldots,L_k),\end{equation}
which we abbreviate
\begin{equation} (0,\epsilon) \times \mathcal{R}_2 \times \mathcal{R}_1 \to \mathcal{R}_{12}.\end{equation}
The gluing map is locally an isomorphism, so induces an isomorphism of orientation lines
\begin{equation}
\label{eqn:glueR}
 \Ts_n|\lambda(T \mathcal{R}_2) | \lambda(T \mathcal{R}_1) \simeq \lambda(T\mathcal{R}_{12}),
\end{equation}
where $\Ts _n$ is the tangent space to $(0,\epsilon)$, the space of gluing parameters. 
It is trivialized by choosing $1$ to correspond to the direction away from the boundary of $\mathcal{R}_{12}$ (i.e., in the positive direction on the interval).

Recall that we think of $\mathcal{R}$ as parametrizing order tuples of real points, modulo scaling and translation; then the gluing map shrinks the configuration $\mathcal{R}_2$ and inserts it into $\mathcal{R}_1$ at the $(i+1)$st position, which we will denote by $g$ (for `gluing') in what follows. 
Positively translating the points in $\mathcal{R}_2$ is equivalent to positively translating the gluing position; and positively scaling the points in $\mathcal{R}_2$ corresponds to moving away from the boundary of $\mathcal{R}_{12}$. 
Therefore, under the isomorphisms \eqref{eqn:orTR}, the gluing isomorphism gives an isomorphism
\begin{multline}
\label{eqn:gluecoords}
\Ts _n |\left(\Ts _{s_1}^* | \Ts _1 | \ldots | \Ts _i | \Ts _g | \Ts _{i+j+1} | \ldots | \Ts _k | \Ts^* _{t_1} \right) | \left(\Ts^* _{s_2} | \Ts _{i+1} | \ldots | \Ts _{i+j} | \Ts^* _{t_2} \right) \\
\simeq  (\Ts^* _{s} | \Ts _1 | \ldots | \Ts _i | \Ts _{i+1} | \ldots | \Ts _{i+j}  | \Ts _{i+j+1} | \ldots | \Ts _k | \Ts^* _t),
\end{multline} 
via the map which identifies
\begin{eqnarray*}
\Ts _\ell & \simeq & \Ts _{\ell} \text{ for all $\ell \neq g$} \\
\Ts _{t_2}^* | \Ts _g & \simeq & \R[0], \\
\Ts _{s_1} & \simeq & \Ts _s, \\
\Ts^* _{s_2}|\Ts_n & \simeq & \R[0], \\
\Ts^* _{t_1} & \simeq & \Ts^* _t.
\end{eqnarray*}

Now suppose that 
\begin{eqnarray*}
(r_1,u_1) \in \mathcal{M}_1 & :=& \mathcal{M}(y_{i+1},\ldots,y_j), \\
(r_2,u_2) \in \mathcal{M}_2 & := & \mathcal{M}(y_1,\ldots,y_i,y_g,y_{i+j+1},\ldots,y_k)
\end{eqnarray*}
 are pseudoholomorphic discs constituting a boundary point of the one-dimensional component of the moduli space $\mathcal{M}_{12} := \mathcal{M}(y_1,\ldots,y_k)$. 
Near such a boundary point of $\mathcal{M}_{12}$ we have an isomorphism $\Ts _n \simeq \mathcal{M}_{12}$, where $\Ts _n$ is as before (the tangent space to the space of choices of gluing parameter). 
Via \eqref{eqn:MorO}, this gives us an identification
\begin{equation}
\label{eqn:boundor}
 o'_1 | \ldots | o'_k \simeq \lambda(T \mathcal{M}_{12})^*|\Ts_s^* | o'_0  \simeq  \Ts^* _n|\Ts_s^* | o'_0 .
\end{equation}
Each connected component of $\mathcal{M}_{12}$ contributes two isomorphisms \eqref{eqn:boundor} with opposite sign, because the oriented sum of boundary points of a compact 1-manifold is $0$. 

On the other hand, the corresponding term in the composition of $A_\infty$ maps defines an isomorphism
\begin{equation}
\label{eqn:mumu}
 o'_1 | \ldots | o'_k \simeq \Ts_{\mu_2}|\Ts_{\mu_1}|o'_0.
\end{equation}
We claim that the isomorphisms \eqref{eqn:mumu} coincide with the corresponding isomorphisms \eqref{eqn:boundor} after identifying $\Ts^*_n \simeq \Ts_{\mu_1}$ and $\Ts^*_s \simeq \Ts_{\mu_2}$. 
It follows that all terms in the sum of the isomorphisms \eqref{eqn:mumu} cancel, so the $A_\infty$ relations are satisfied.

\begin{rmk}
More precisely, the isomorphisms \eqref{eqn:mumu} and \eqref{eqn:boundor} coincide up to a constant overall sign which depends on the orders in which we apply certain trivializations of trivial lines (in fact this was already true in \eqref{eqn:gluecoords}: there is an overall sign involved in our choice of identification $\Ts^*_{t_2}|\Ts_g \simeq \R[0]$, for example).
Since multiplication by a constant overall sign does not change the fact that the sum is zero, we will be lax about specifying such signs.
\end{rmk}

We now prove the claim. 
By gluing $(r_1,u_1)$ to $(r_2,u_2)$ we get $(r_1 \#_\rho r_2,u_1 \#_\rho u_2) \in \mathcal{M}_{12}$ for $\rho \in (0,\epsilon)$, which gives an isomorphism 
\begin{equation}
\label{eqn:glueM} \Ts_n|\lambda(T\mathcal{M}_1) | \lambda(T\mathcal{M}_2) \to \lambda(T\mathcal{M}_{12}).
\end{equation}
When $\mathcal{M}_1$ and $\mathcal{M}_2$ are isolated points, this gives the identification $\Ts _n \simeq \lambda(T\mathcal{M}_{12})$ mentioned above.

This isomorphism \eqref{eqn:glueM} fits into a commutative diagram with the isomorphisms \eqref{eqn:orM}:
\begin{equation}
\label{eqn:glueMR}
\xymatrix{ \Ts_n|\lambda(T\mathcal{R}_1) | \lambda(D_{u_1}) | \lambda(T\mathcal{R}_2) | \lambda(D_{u_2}) \ar[r] \ar[d] &  \Ts _n |\lambda(T \mathcal{M}_1) | \lambda(T\mathcal{M}_2) \ar[d]  \\
\lambda(T\mathcal{R}_{12}) | \lambda(D_{u_1 \# u_2}) \ar[r]& \lambda(T\mathcal{M}_{12}) ,}
\end{equation}
where the left vertical map is the tensor product of the gluing isomorphism 
\begin{equation} \lambda(D_{u_1}) | \lambda(D_{u_2}) \simeq \lambda(D_{u_1 \# u_2})\end{equation}
with the gluing isomorphism \eqref{eqn:glueR}.

We have a commutative diagram of gluing maps:
\begin{equation}
\label{eqn:glueo} \xymatrix{ \lambda(D_{u_1}) | \lambda(D_{u_2}) | o_1 | \ldots | o_k  \ar[r]\ar[d] & \lambda(D_{u_2}) | o_1 | \ldots | o_g | \ldots | o_k  \ar[r] & o_0 \ar[d] \\
\lambda(D_{u_1 \# u_2}) | o_1| \ldots | o_k  \ar[rr] && o_0 .}
\end{equation}

Tensoring \eqref{eqn:glueMR} with \eqref{eqn:glueR} then combining with \eqref{eqn:glueo}, and using the canonical trivializations of $\lambda(T\mathcal{M}_1)$ and $\lambda(T\mathcal{M}_2)$, we obtain a commutative diagram
\begin{equation}
\label{eqn:gluei}
\xymatrix{\lambda(T\mathcal{R}_1)^* | \lambda(T\mathcal{R}_2)^* | o_1 | \ldots | o_k  \ar[r]\ar[d] &   o_0\ar[d] \\
 \lambda(T \mathcal{M}_{12})|\lambda(T\mathcal{R}_{12})^* | o_1 | \ldots | o_k  \ar[r] &   o_0  .
}
\end{equation}

Now we need to trivialize the $\lambda(T\mathcal{R}_i)$. 
Let us abbreviate $o\Ts^*_i := o_i|\Ts^*_i$ in what follows.
We have a commutative diagram
\begin{equation}
\xymatrix{ \Ts_n|o'_1 | \ldots | o'_k \ar[r] \ar[d] & \Ts_{s_1} | \Ts_{s_2} | o\Ts^*_1 | \ldots | o\Ts^*_{i+j} | \Ts^*_g | o\Ts^*_{i+j+1} | \ldots | o\Ts^*_k | \Ts_{t_2} \ar[d] \\
\Ts_n|o'_1 | \ldots | o'_k \ar[r] & \Ts_n | \Ts_s| o \Ts^*_1| \ldots|o \Ts^*_k | \Ts_t,}
\end{equation}
such that the isomorphism on the right fits into the commutative diagram
\begin{equation}
\xymatrix{ \Ts_{s_1} | \Ts_{s_2} | o \Ts^*_1 | \ldots | o\Ts^*_k | \Ts_{t_2} \ar[r] \ar[d] &  \lambda(T\mathcal{R}_1)^* | \lambda(T\mathcal{R}_2)^* | o_1 | \ldots | o_k  \ar[d]\\
\Ts_n | \Ts_s| o\Ts^*_1| \ldots|o\Ts^*_k | \Ts_t \ar[r] &  \lambda(T \mathcal{M}_{12})|\lambda(T\mathcal{R}_{12})^*  | o_1 | \ldots | o_k }
\end{equation}
with the isomorphism on the left of \eqref{eqn:gluei}, by \eqref{eqn:gluecoords}.

Composing our commutative diagrams horizontally, and tensoring with $\Ts_n^*$, we obtain a commutative diagram
\begin{equation} \xymatrixcolsep{5pc}\xymatrix{o'_1|\ldots|o'_k \ar[r]\ar[d] & \Ts_n^*|o_0 \ar[d] \ar[r] & \Ts_{\mu_1}|\Ts_{\mu_2}|o'_0 \ar[d] \\
o'_1|\ldots|o'_k \ar[r] & \Ts_n^*|o_0  \ar[r] & \Ts_n^*|\Ts_s^*|o'_0,} \end{equation}
in which the top isomorphism coincides with \eqref{eqn:mumu} and the bottom isomorphism coincides with \eqref{eqn:boundor}. 
This completes the proof of the claim.

\subsection{Shifts in the exact Fukaya category}
\label{subsec:fukshifts}

We define a set of shift functors for the two versions of the Fukaya category $\fuk_\r(M)$, $\fuk_\l(M)$ that we have defined. 
Recall that the group $G \oplus \Z/2$ acts on the set of objects of the Fukaya category: in fact it is easy to see that the group acts strictly on (either version of) the Fukaya category. 
On the level of objects, we define the shift of an object to be 
\begin{equation}
\label{eqn:Lshift}
L^\#[g] := (g \oplus \sigma(g)) \cdot L^\#.
\end{equation} 

Observe that, if a chord $y$ lifts to a path of Maslov index $k$ from $\tilde{\iota}(y(0))$ to $g' \cdot \tilde{\iota}(y(1))$, then it also lifts to a path of Maslov index $k$ from $g \cdot \tilde{\iota}(y(0))$ to $g' \cdot g \cdot \tilde{\iota}(y(1))$. 
Thus we obtain an isomorphism
\[o((g \oplus 0)\cdot L_0^\#,(g \oplus 0) \cdot L_1^\#,y,k) \simeq o(L_0^\#,L_1^\#,y,k).\]
Combining with the isomorphism $s^{\sigma(g)}$ of \eqref{eqn:sj} (trivializing $\lambda(\Lambda_i)^{\otimes 2\jmu}$), we obtain an isomorphism
\begin{equation}
o(L_0^\#[g],L_1^\#[g],y,k) \simeq o(L_0^\#,L_1^\#,y,k).
\end{equation} 
It follows by \eqref{eqn:sh4} that these isomorphisms commute with $s_\r^2$, hence give well-defined isomorphisms
\begin{equation}
o_\r(L_0^\#[g],L_1^\#[g],y) \simeq o_\r(L_0^\#,L_1^\#,y),
\end{equation}
whose direct sum is the shift map
\begin{equation}
s^g: hom^\bullet_{\fuk_\r(M)}(L^\#_0[g],L^\#_1[g]) \to hom^\bullet_{\fuk_\r(M)}(L^\#_0,L^\#_1).
\end{equation}

Next we define a set of right-shift maps on $\fuk_\r(M)$. 
For each Hamiltonian chord $y$ we have isomorphisms
\begin{equation}
\label{eqn:srshiftsL}
 s_\r^\jmu: o(T_{y(0)} L_0^\#,  T_{y(1)}L_1^\# \otimes \lambda(TL_1)^{\otimes i},k+i) \to \Ts_i|o(T_{y(0)} L_0^\#,  T_{y(1)}L_1^\#,k).
\end{equation}
If $y$ lifts to a path of Maslov index $k$ from $\tilde{\iota}(y(0)$ to $g \cdot \tilde{\iota}(y(1))$, then it also lifts to a path of Maslov index $k+i$ from $\tilde{\iota}(y(0))$ to $(g+i) \cdot \tilde{\iota}(y(1))$. 
It follows that \eqref{eqn:srshiftsL} defines an isomorphism
\begin{equation}
o(L_0^\#,L_1^\#[i],y,k+i) \simeq \Ts_i|o(L_0^\#,L_1^\#,y,k).
\end{equation}   
Similarly, since we can identify $g' \cdot \tilde{\iota}(y(1)) = (g'-g)\cdot g \cdot \tilde{\iota}(1)$, the isomorphism $c_{g,-g'}$ gives an isomorphism
\begin{equation}
o(L_0^\#,L_1^\#[g],y,k) \simeq \Ts_g|o(L_0^\#,L_1^\#,y,k).
\end{equation}
It follows by \eqref{eqn:sh2} that these isomorphisms commute with $s_\r^2$, hence give well-defined isomorphisms
\[ o_\r(L_0^\#,L_1^\#[g],y) \simeq \Ts_g|o_\r(L_0^\#,L_1^\#,y),\]
whose direct sum is the right-shift map
\begin{equation} s_\r^g: hom^\bullet_{\fuk_\r(M)}(L^\#_0,L^\#_1[g]) \to \Ts_g|hom^\bullet_{\fuk_\r(M)}(L^\#_0,L^\#_1).\end{equation}
Note that, because $s_\l^\jmu$ does not commute with $s_\r^2$, it is \emph{not} possible to define left-shift maps by the same methods. 

To prove that $s^g$ and $s_\r^g$ define a set of rightwards shift functors on the pre-category level, it suffices to check they satisfy the hypotheses of Lemma \ref{lem:shiftsrgsg}. 
These follow immediately from Lemma \ref{lem:shiftsigns}. 
To prove that they define a set of rightwards shift functors on $\fuk_\r(M)$, we must check that $\mu \in CC^1_\r(\fuk_\r(M))'$. 
This is a consequence of associativity of gluing, as in the proof of the $A_\infty$ relations in the previous section. 

Similarly, we define a set of left-shift maps on $\fuk_\l(M)$.

Now, suppose that $M$ comes endowed with a morphism of grading data, $p: \G \to \Z/4$. 
Then we define yet another version of the Fukaya category, $\fuk_{\l\r}(M)$, whose morphism spaces are the direct sum of the orientation lines $o_{\l\r}(y)$.
We observe that \eqref{eqn:mugradings} holds modulo $4$: so when we glue orientation operators $D_{y_1},\ldots,D_{y_k}$ onto $D_u$, we obtain an orientation operator $D_{y_0}$ of the correct index modulo $4$ to define an element of $o_{\l\r}(y_0)$.

$\fuk_{\l\r}(M)$ is isomorphic to $\fuk_\r(M)$, and thereby comes equipped with a set of rightwards shift functors; similarly it is isomorphic to $\fuk_\l(M)$, and thereby comes equipped with a set of leftwards shift functors. 
These leftwards and rightwards shift functors are compatible with respect to $p$, by Lemma \ref{lem:shiftsigns}.

\subsection{Autoequivalences of the exact Fukaya category}

We define $\Aut(M)$ to be the group of symplectomorphisms $\phi: M \to M$. 
$\Aut(M)$ also acts on the grading datum $\G$: namely, $\phi$ acts by $\phi_*: H_1(\cG M) \to H_1(\cG M)$. 
$\Aut(M)$ acts on unanchored Lagrangian branes in the obvious way.
 
We also have the group of \emph{graded symplectomorphisms} \cite{Seidel1999}. 
A graded symplectomorphism is a pair $(\phi,\tilde{\phi})$, where $\phi \in \Aut(M)$ and $\tilde{\phi}: \wt{\cG}M \to \wt{\cG}M$ is a lift of the map $\phi_*: \cG M \to \cG M$. 
The group of graded symplectomorphisms fits into a short exact sequence
\begin{equation} 0 \to G \to \Aut^{gr}(M) \to \Aut(M) \to 0.\end{equation}
Note that this sequence is exact on the right because any symplectomorphism $\phi$ admits a lift $\tilde{\phi}$ to the universal abelian cover of its Lagrangian Grassmannian: this is in contrast to the case for other Maslov covers, where the analogous sequence is not exact on the right, see e.g. \cite[Lemma 2.4]{Seidel1999}.

$\Aut^{gr}(M)$ acts on the objects of $\fuk(M)$: $(\phi,\tilde{\phi})$ sends
\begin{equation} (L,\iota,\tilde{\iota},P^\#) \mapsto (L,\phi \circ \iota,\tilde{\phi} \circ \tilde{\iota},P^\#).\end{equation}
The action commutes with the action of $G \oplus \Z/2$ defined earlier.

There is a natural map $\Aut^{gr}(M) \to \Z/2$, defined as follows: the tautological vector bundle over $\wt{\cG}M$ is orientable because its first Stiefel-Whitney class vanishes, so choose an orientation for it. 
Any graded symplectomorphism induces an isomorphism of the tautological vector bundle covering $\tilde{\phi}$, via $\phi_*$: if this isomorphism respects orientation, we call $(\phi,\tilde{\phi})$ \emph{even}, otherwise we call it \emph{odd}. 
We denote the group of even graded symplectomorphisms by $\Aut^{gr}_{ev}(M)$. 
It fits into a short exact sequence
\begin{equation} 0 \to G_{ev} \to \Aut^{gr}_{ev}(M) \to \Aut(M) \to 0.\end{equation}

\begin{lem}
\label{lem:AutFukr}
Let $\EuA \subset \fuk_\r(M)$ be a full subcategory which is closed under shifts. 
Let $\Gamma$ be a group and $\Gamma \to \Aut(M)$ a group homomorphism, so that the action of $\Gamma$ on the set of unanchored branes underlying $Ob(\EuA)$ is free. 
Then for appropriate choice of perturbation data in $\fuk_\r(M)$, there is an induced action of $\Gamma$ on $\EuA$ up to shifts:
\begin{equation} \Gamma \to \mathsf{StrAut}_\r(\EuA)/G_{ev}.\end{equation}
\end{lem}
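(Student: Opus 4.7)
The plan is to construct, for each $\gamma \in \Gamma$, a strict autoequivalence of $\EuA$ commuting with rightwards shifts, well-defined modulo the even-shifts subgroup $G_{ev}$. The starting point is that the short exact sequence $0 \to G_{ev} \to \Aut^{gr}_{ev}(M) \to \Aut(M) \to 0$ is surjective on the right, so each $\gamma$ admits a lift $\tilde{\gamma} \in \Aut^{gr}_{ev}(M)$. The composition $\tilde{\gamma_1} \tilde{\gamma_2}$ need not equal $\widetilde{\gamma_1 \gamma_2}$, but it differs by an element of $G_{ev}$; thus the assignment $\gamma \mapsto \tilde{\gamma}$ descends to a well-defined group homomorphism into $\Aut^{gr}_{ev}(M)/G_{ev}$.

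Next I would choose $\Gamma$-equivariant perturbation data. Using the freeness hypothesis, choose one representative in each $\Gamma$-orbit of unanchored branes in $Ob(\EuA)$, and make arbitrary choices of Floer and perturbation data for tuples drawn from these representatives; then propagate via pushforward by the action of $\Gamma$. Freeness ensures that this prescription is unambiguous (there are no non-trivial stabilizers that would force self-consistency conditions and obstruct transversality). Given this equivariant choice, $\tilde{\gamma}$ acts on objects of $\EuA$ by $(L,\iota,\tilde{\iota},P^\#) \mapsto (L,\phi \circ \iota, \tilde{\phi} \circ \tilde{\iota}, P^\#)$ (composing with an orbit-representative identification where needed) and, since $\phi$ sends Hamiltonian chords of $(H_{01},J_{01})$ bijectively to Hamiltonian chords of $(\phi_* H_{01}, \phi_* J_{01})$, it induces isomorphisms between morphism spaces via the naturality of the orientation lines $o_\r(y)$. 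The evenness of $\tilde{\gamma}$ is what guarantees these orientation-line isomorphisms are well-defined on the $o_\r$-normalization (as opposed to only after identifying $o_\r$ with $o_\l$): the odd part of $\Aut^{gr}(M)$ acts by a sign that is absorbed precisely into $G_{ev}$. The pushforward $u \mapsto \phi \circ u$ of pseudoholomorphic discs is then a bijection of moduli spaces intertwining the $A_\infty$ structure maps, so we obtain a strict $A_\infty$ functor.

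Finally, I would verify compatibility with rightwards shifts. By construction $\tilde{\gamma}$ intertwines the action of $G_{ev}$ on objects with the action of $G_{ev}$ on $\EuA$ by $\tilde{\gamma}_*$-translated shifts, and the resulting isomorphisms on morphism spaces commute with the maps $s^g$ and $s_\r^g$ of \S\ref{subsec:fukshifts}. This is checked by invoking the construction of $s_\r^g$ via gluing of orientation operators, together with the fact that $\tilde{\gamma}$ preserves the identifications used in Definition--Lemma \ref{deflem:bdglue}. The resulting element of $\mathsf{StrAut}_\r(\EuA)/G_{ev}$ depends only on $\gamma$, and the composition law proved in the first step gives the desired group homomorphism. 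The main obstacle will be the equivariant choice of perturbation data: freeness of $\Gamma$ on unanchored branes (rather than anchored ones) is the correct hypothesis because the shifts by $G_{ev}$ are already absorbed into the target group $\mathsf{StrAut}_\r(\EuA)/G_{ev}$, so only the underlying geometric Lagrangians need to be distinguished by $\Gamma$.
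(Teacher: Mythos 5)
Your proposal follows essentially the same route as the paper: lift $\gamma$ to $\tilde{\gamma} \in \Aut^{gr}_{ev}(M)$ using surjectivity of $\Aut^{gr}_{ev}(M) \to \Aut(M)$, note the lift is ambiguous only up to $G_{ev}$, invoke naturality of the construction of $\fuk_\r(M)$ and its shift/right-shift maps under graded symplectomorphisms, and use freeness on unanchored branes to sidestep equivariant transversality. One small correction: the reason evenness of the lift matters is not the $o_\r$ versus $o_\l$ distinction, but rather that the action of $g \in G \subset \Aut^{gr}(M)$ (which only shifts the anchoring) agrees with the shift functor $[g]$ (which also twists the Pin structure by $\lambda(TL)^{\otimes \sigma(g)}$) precisely when $\sigma(g)=0$; moreover only even shifts lie in $\mathsf{StrAut}_\r(\EuA)$ at all, so restricting the ambiguity to $G_{ev}$ is exactly what lets it be absorbed by the quotient.
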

\begin{proof}
Any $\gamma \in \Gamma$ admits a lift $\tilde{\gamma} \in \Aut^{gr}_{ev}(M)$, well-defined up to shift by $G_{ev}$.  
The lift $\tilde{\gamma}$ acts strictly on $\fuk_\r(M)$, respecting shifts and right-shift maps: this simply expresses the naturality of the construction of $\fuk_\r(M)$, the shifts and right-shift maps. 
The action of $G_{ev} \subset \Aut^{gr}_{ev}(M)$ coincides with the action of $G_{ev} \subset G$ by shifts (note that the same is not true of the action of $G \subset \Aut^{gr}(M)$: it differs from the action of shifts by the twisting of the Pin structure, $P^\# \mapsto P^\# \otimes \lambda(TL)^{\otimes \sigma(g)}$). 
Therefore $\gamma$ induces a well-defined element of $\mathsf{StrAut}_\r(\fuk_\r(M))/G_{ev}$.

The assumption that $\Gamma$ acts freely on unanchored branes allows us to bypass issues with equivariant transversality (see \cite[\S 8b]{Seidel:HMSquartic}). 
\end{proof}

We also have the following analogue of Lemma \ref{lem:AutFukr}.

\begin{lem}
Let $p: \G \to \Z/4$ be a morphism of grading data.
Let $\EuA \subset \fuk_{\l\r}(M)$ be a full subcategory which is closed under shifts. 
Let $\Gamma$ be a group and $\Gamma \to \Aut(M)$ a group homomorphism, so that the corresponding action of $\Gamma$ on $\G$ respects $p$, and the action of $\Gamma$ on the set of unanchored branes underlying $Ob(\EuA)$ is free. 
Then for appropriate choice of perturbation data in $\fuk_{\l\r}(M)$, there is an induced action of $\Gamma$ on $\EuA$ up to shifts:
\begin{equation} \Gamma \to \mathsf{StrAut}_{\l\r}(\EuA)/G_{ev}.\end{equation}
\end{lem}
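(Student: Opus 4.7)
The plan is to reduce the statement to Lemma \ref{lem:AutFukr} together with a compatibility check involving the morphism $p$. Recall that $\fuk_{\l\r}(M)$ was constructed to be isomorphic to $\fuk_\r(M)$ as $A_\infty$ categories, with the extra structure being a compatible set of leftwards shift functors; the compatibility is determined by the isomorphism $\mathbf{G} \simeq \mathbf{G}^{op}$ induced by $p$ in \eqref{eqn:gopfun}. So the first step is to apply Lemma \ref{lem:AutFukr} verbatim to $\EuA \subset \fuk_\r(M)$: any $\gamma \in \Gamma$ lifts to an even graded symplectomorphism $\tilde{\gamma} \in \Aut^{gr}_{ev}(M)$, well-defined up to $G_{ev}$, which acts strictly on $\fuk_\r(M)$ preserving shifts and right-shift maps. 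For perturbation data chosen freely $\Gamma$-equivariantly (which is possible because $\Gamma$ acts freely on the underlying set of unanchored branes, cf.\ \cite[\S 8b]{Seidel:HMSquartic}), this gives a homomorphism $\Gamma \to \mathsf{StrAut}_\r(\EuA)/G_{ev}$.

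The second step is to check that, transported to $\fuk_{\l\r}(M)$, the resulting strict autoequivalences also commute with the leftwards shift functors; equivalently, with the maps $s_\l^g$ of Section \ref{subsec:shifts}. By \eqref{eqn:slgfromsrg} we have $s_\l^g = (-1)^{\dagger_{p(-g)}}\cdot s^{-g}\circ s_\r^g$, so it suffices to show that the sign $(-1)^{\dagger_{p(-g)}}$ is preserved by $\tilde{\gamma}$. This is where the hypothesis that the action of $\Gamma$ on $\G$ respects $p$ enters: if $\tilde{\gamma}$ sends an object $L[g]$ to $(\tilde{\gamma}\cdot L)[\gamma\cdot g]$ (which follows from naturality of the shift $L\mapsto L[g]$ defined in \eqref{eqn:Lshift}), then $p(-g) = p(-\gamma\cdot g)$ by hypothesis, so the sign is unchanged and $s_\l^g$ commutes with the action of $\tilde{\gamma}$ in the same way that $s_\r^g$ does.

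The third step is to confirm that $G_{ev}\subset \Aut^{gr}_{ev}(M)$, acting as even shifts, lands in the even shifts $G_{ev}\subset \mathsf{StrAut}_{\l\r}(\EuA)$; this was already verified in the proof of Lemma \ref{lem:AutFukr}, and the leftwards shifts do not change the computation because they are uniquely determined by the rightwards shifts and $p$. Combining the three steps yields the desired homomorphism $\Gamma \to \mathsf{StrAut}_{\l\r}(\EuA)/G_{ev}$.

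The main obstacle is the sign bookkeeping in the second step, specifically verifying that $\tilde{\gamma}$ commutes with the isomorphisms $i_g: \Ts_{-g}\to \Ts_g^*$ used to set up $\mathbf{G}\simeq \mathbf{G}^{op}$. Since $i_g$ is defined with the sign $(-1)^{\dagger_{p(g)}}$, the commutation diagram for $\tilde{\gamma}$ reduces to the invariance $p(\gamma\cdot g)=p(g)$, which is exactly our hypothesis. Everything else is formal naturality of the constructions of Sections \ref{subsec:orCR}--\ref{subsec:fukshifts} under symplectomorphisms, as already used in Lemma \ref{lem:AutFukr}.
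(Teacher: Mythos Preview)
Your proposal is correct. The paper states this lemma without proof, as an immediate analogue of Lemma~\ref{lem:AutFukr}; your argument supplies the details, correctly identifying that the only new ingredient beyond Lemma~\ref{lem:AutFukr} is checking compatibility with the leftwards shift functors, which via \eqref{eqn:slgfromsrg} reduces to the invariance of the sign $(-1)^{\dagger_{p(-g)}}$ under $g\mapsto \gamma\cdot g$ --- precisely the hypothesis that $\Gamma$ respects $p$.
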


\subsection{The closed--open map}
\label{subsec:coisigns}

In this section we recall the definition of the map
\begin{equation} \EuC \EuO \circ i: H^\bullet(M) \to \HH^\bullet(\fuk_\r(M))\end{equation}
from Section \ref{subsec:coi} (more precisely, we give a definition of this map that is well-adapted to our purposes).

Let $\mathbf{L} = (L_0,\ldots,L_k)$ be a tuple of Lagrangians, with an associated set of generators $\mathbf{y} = (y_0,\ldots,y_k)$. 
We consider the moduli space $\mathcal{M}_1(\mathbf{y})$ of pairs $(r,u)$, $r \in \mathcal{R}_1(\mathbf{L})$ is an element of the moduli space of discs with boundary components labelled by $\mathbf{L}$ and an internal marked point $q$ (as in Section \ref{subsec:co}), and $u: S_r \to M$ satisfies the corresponding pseudoholomorphic curve equation, which coincides with \eqref{eqn:defpseudo}, except that we don't impose a condition at the internal marked point $q$.

As in the previous section, there is a natural isomorphism
\begin{equation} \lambda(T_r\mathcal{R}_1)^*  | o_1|\ldots|o_k\simeq \lambda(T_{(r,u)}\mathcal{M})^* | o_0\end{equation}
obtained by gluing. 
We now trivialize $\lambda(T_r\mathcal{R}_1)$: there is a natural map $\ff:\mathcal{R}_1 \to \mathcal{R}$ forgetting the internal marked point $q$, and with fibre $S_r$ over $r$. 
In particular, we have a natural isomorphism $\lambda(T_r\mathcal{R}_1) \simeq \lambda(T_{\ff(r)} \mathcal{R}) \otimes \lambda(T_q S_r)$. 
Observe that $T_qS_r$ has a natural complex structure, so we have a canonical trivialization $\lambda(T_q S_r) \simeq \Ts_2$ given by the complex orientation. 

By analogy with \eqref{eqn:MorO1}, we can trivialize $\lambda(T_{\ff(r)} \mathcal{R})$ and identify $\Ts_2|\Ts_s^* \simeq \Ts_{\EuC\EuO}$ to obtain an isomorphism
\begin{equation}
\label{eqn:COcont}
o_1'|\ldots|o'_k  \simeq \Ts_{\EuC\EuO}| \lambda(T_{(r,u)}\mathcal{M}_1(\mathbf{y}))|o'_0
\end{equation}

Observe that we have the evaluation map $\mathsf{ev}_q: \mathcal{M}_1(\mathbf{y}) \to M$.
Now let $\alpha \in H^\bullet(M)$, and let us represent it by a locally finite pseudocycle $f:V \to M$, together with an orientation of its normal bundle. 
We define $\EuC \EuO \circ i(\alpha)$ by counting (rigid) pairs $(w,v)$ where $w = (r,u) \in \mathcal{M}_1(\mathbf{y})$ and $v \in V$, such that $\mathsf{ev}_q(w) = f(v)$. 
At each such point, $(\mathsf{ev}_q)_*$ defines an isomorphism between $T_{(r,u)}\mathcal{M}_1(\mathbf{y})$ and the normal bundle to the pseudocycle $f$, which is equipped with an orientation. 
Thus we have an isomorphism 
\[\lambda(T_{(r,u)}\mathcal{M}_1(\mathbf{y})) \simeq \Ts_{|\alpha|},\]
which when combined with \eqref{eqn:COcont} and an isomorphism $\Ts_{\EuC\EuO}|\Ts_{|\alpha|} \simeq \Ts_{\EuC\EuO(\alpha)}$ gives the isomorphism
\begin{equation} o'_1|\ldots|o'_k \simeq \Ts_{\EuC\EuO(\alpha)}|o'_0,\end{equation}
which we define to be the contribution of $(r,u,v)$ to $\EuC \EuO \circ i(\alpha)$.

The Hochschild cochain thus defined has degree $|\alpha|$: since $\Ts_{\EuC\EuO}$ is in degree $1$, $\EuC\EuO \circ i(\alpha) \in CC^{|\alpha|-1}(\fuk_\r(M))' = CC^{|\alpha|}(\fuk_\r(M))$. It is in fact a Hochschild cocycle: the argument up to sign is standard, and the sign computation is parallel to the proof of the $A_\infty$ relations in \S \ref{subsec:ainfsign}. 
Furthermore, $\EuC\EuO \circ i(\alpha)$ respects the shift and right-shift maps: the sign computation is parallel to the proof that the $A_\infty$ structure maps respect shifts and right-shifts.
So in fact we have a Hochschild cocycle
\begin{equation} \EuC \EuO \circ i(\alpha) \in CC^\bullet_\r(\fuk_\r(M)).\end{equation} 
The Hochschild cohomology class is in fact independent of the choice of cocycle up to cobordism, by another standard argument: so we have defined a map
\begin{equation}\EuC\EuO \circ i: H^\bullet(M) \to \HH^\bullet(\fuk_\r(M)).\end{equation}

Now let us consider the situation of Lemma \ref{lem:AutFukr}: we have a map $\Gamma \to \mathsf{StrAut}_\r(\EuA)/G_{ev}$, and hence an action of $\Gamma$ on $CC^\bullet_\r(\EuA)$ from \eqref{eqn:autracts}. 
We also have an action of $\Gamma$ on $H^\bullet(M)$: $\gamma$ acts by $(\gamma^{-1})^*$.
By an analogous argument to that given in the proof of Lemma \ref{lem:AutFukr}, we have
\begin{equation} \EuC\EuO \circ i(\gamma \cdot \alpha) = \gamma \cdot \EuC\EuO \circ i(\alpha)\end{equation}
for appropriate choice of perturbation data: so $\EuC \EuO \circ i$ is strictly $\Gamma$-equivariant.

We can make analogous constructions in the `$\l$' and `$\l\r$' cases.

\subsection{The opposite symplectic manifold and the opposite Fukaya category}
\label{subsec:oppfuk}

In this section we recall how the Fukaya category changes when one reverses the sign of the symplectic form. 
Let $M:=(M,\omega = d\alpha)$ be a Liouville domain, and $\overline{M} := (M,-\omega = d(-\alpha))$ the `opposite' Liouville domain. 
Observe that we have a natural isomorphism $\cG M \simeq \cG \overline{M} $; this induces an identification of grading data, $\G(M) \simeq \G(\overline{M})$, via the isomorphism
\begin{equation} H_1(\cG M) \xrightarrow{-\id} H_1(\cG M) \xrightarrow{\simeq} H_1(\cG\overline{M}).\end{equation}
We will be implicitly using this identification in the rest of this section.

\begin{lem}
\label{lem:oppfuk}
For appropriate choice of perturbation data, there exist strict isomorphisms 
\begin{align} 
c^M_\r: \fuk_\r(M) &\to \fuk_\l(\overline{M})^{op} \text{ commuting with rightwards shift functors}\\
c^M_\l: \fuk_\l(M) & \to \fuk_\r(\overline{M})^{op}  \text{ commuting with leftwards shift functors.}
\end{align}
We have 
\begin{equation}\left(c_\l^{\overline{M}}\right)^{op} \circ c_\r^M = \id.\end{equation}

If $M$ comes equipped with a morphism of grading data $p: \G \to \Z/4$, then we have a strict isomorphism
\begin{align}
c^M_{\l\r}: \fuk_{\l\r}(M) &\to \fuk_{\l\r}(\overline{M})^{op} \text{ commuting with rightwards and leftwards shift functors,}
\end{align}
and 
\begin{equation} \left(c_{\l\r}^{\overline{M}}\right)^{op} \circ c_{\l\r}^M = \id.\end{equation}
\end{lem}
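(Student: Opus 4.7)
The plan is to construct $c_\r^M$ and $c_\l^M$ tautologically on underlying data, and then check that the sign conventions in \S\ref{subsec:orops}--\S\ref{subsec:fukshifts} force the pairing with ``op'' and the swap of subscripts. On objects, send the anchored brane $L^\# = (L,\iota,\tilde{\iota},P^\#)$ of $\fuk(M)$ to $(L^\#)^{op}$ in $\fuk(\overline{M})^{op}$, using the canonical identification $\cG M = \cG \overline{M}$ and the induced identification of universal abelian covers (together with the identification $\G(M) \simeq \G(\overline{M})$ via $-\id$ on $H_1(\cG M)$, which is forced by the change of sign of Maslov indices under orientation reversal of boundary loops). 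On morphism spaces, observe that the Hamiltonian vector field of $H$ for $-\omega$ is $-X_H$, so a chord $y:[0,1] \to M$ from $L_0$ to $L_1$ in $(M,\omega)$ corresponds bijectively to its time reversal $y^{\mathrm{rev}}(t) := y(1-t)$, a chord from $L_1$ to $L_0$ in $(\overline{M},-\omega)$, which is a generator of $\fuk(\overline{M})^{op}(L_0^{op},L_1^{op}) = \fuk(\overline{M})(L_1,L_0)$. The $A_\infty$ structure coefficients match because pseudoholomorphic discs in $\overline{M}$ are anti-holomorphic discs in $M$; precomposing with $z \mapsto \bar{z}$ on the domain turns them into pseudoholomorphic discs in $M$ with the cyclic order of boundary punctures reversed, which is precisely the change effected by passing to $\mu_{op}$ in accordance with \eqref{eqn:opcc}. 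Choosing perturbation data for $\fuk(\overline{M})$ to be complex-conjugates of those for $\fuk(M)$ realizes this bijection on the nose.

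The subscripts $\r$ versus $\l$ are dictated by Definition--Lemma \ref{deflem:bdglue}: reversing the boundary orientation of a disc exchanges clockwise with anticlockwise transport around $\partial \mathbb{D}$, which exchanges the isomorphisms $i_\l$ and $i_\r$ in \eqref{eqn:torsors}. Chasing this through the construction of the orientation line $o_\r(y)$ in \S\ref{subsec:orops}--\S\ref{subsec:ainfsign}, one finds that the natural orientation-line identification sends $o_\r(y)$ for the $M$-chord $y$ to $o_\l(y^{\mathrm{rev}})$ for the $\overline{M}$-chord $y^{\mathrm{rev}}$, so $c_\r^M$ necessarily lands in $\fuk_\l(\overline{M})^{op}$, and symmetrically for $c_\l^M$. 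Compatibility with rightwards shift functors for $c_\r^M$ is then automatic: the maps $s^g$ and $s_\r^g$ from \S\ref{subsec:fukshifts} are built from boundary-connect-sum gluings of the same orientation operators, and under the $\r \leftrightarrow \l$ swap the rightwards shift functors on $\fuk_\r(M)$ go to the leftwards shift functors on $\fuk_\l(\overline{M})$, which via the strict isomorphism $\mathbf{G} \simeq \mathbf{G}^{op}$ in \S\ref{subsec:shifts} are rightwards shift functors on $\fuk_\l(\overline{M})^{op}$.

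The compositional identity $(c_\l^{\overline{M}})^{op} \circ c_\r^M = \id$ is automatic from the tautological nature of the construction: on objects and chords the composition is manifestly the identity (we reverse time twice), and the induced involution $\G(M) \to \G(\overline{M}) \to \G(M)$ is $(-\id)^2 = \id$; together with the fact that applying $i_\l$ and then $i_\r$ (in either order) is the identity, all sign factors cancel. For the $\l\r$ variant, the morphism $p:\G \to \Z/4$ gives canonical identifications $\fuk_\r(M) \simeq \fuk_{\l\r}(M) \simeq \fuk_\l(M)$ (likewise for $\overline{M}$), and conjugating $c_\r^M$ (equivalently $c_\l^M$) by these produces $c_{\l\r}^M$, which then commutes with both rightwards and leftwards shift functors and satisfies $(c_{\l\r}^{\overline{M}})^{op} \circ c_{\l\r}^M = \id$.

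The main obstacle is the careful sign bookkeeping in the second step: verifying that the $A_\infty$ structure coefficients on $\fuk_\r(M)$ really do match those of $\mu_{op}$ on $\fuk_\l(\overline{M})^{op}$ under the bijection of moduli spaces. This requires tracking how the orientation of $T_r\mathcal{R}$ in \eqref{eqn:orTR}, the gluing isomorphism \eqref{eqn:gldets}, and the swap $i_\r \leftrightarrow i_\l$ interact with the reversal of the cyclic order of boundary punctures that converts $\mu$ to $\mu_{op}$, and then checking that the combination produces precisely the Koszul sign $\dagger$ in the definition of $\eta_{op}$ given in \S\ref{subsec:gract}. The existence of a consistent choice of perturbation data on $\fuk(\overline{M})$ making the bijection strict is a routine check using the complex-conjugate perturbation data, but the sign verification is the substantive content.
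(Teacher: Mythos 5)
Your proposal follows essentially the same route as the paper: tautological identification on objects via $\cG M = \cG\overline{M}$, time-reversal of chords $y \mapsto \overline{y}(s) = y(1-s)$, conjugation of the domain $u \mapsto \overline{u}(s+it) = u(-s+it)$ to match moduli spaces (with perturbation data on $\overline{M}$ chosen by keeping the Hamiltonian part and reversing the almost-complex part), and the observation that domain conjugation exchanges clockwise and anticlockwise boundary transport, hence exchanges $i_\l$ and $i_\r$ and forces the $\r \leftrightarrow \l$ swap of subscripts. The shift-functor compatibility, the composition identity via $\conju \circ \conju = \id$, and the passage to the $\l\r$ variant via $p:\G\to\Z/4$ are all as in the paper.

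The one substantive point you defer is exactly where the paper has to do real work, and the outcome is not what you implicitly assume. You write that the naive orientation-line identification intertwines $\mu^k$ with $\mu^k_{op}$ once the Koszul sign in $\eta_{op}$ is accounted for; in the paper's conventions this is false as stated. Tracking the trivialization \eqref{eqn:orTR} of $\lambda(T_r\mathcal{R})$ through domain conjugation, one finds $\Ts_s \mapsto \Ts_s$ but $\Ts_t \mapsto -\Ts_t$ and $\Ts_i \mapsto -\Ts_i$ for each of the $k$ incoming punctures, so the conjugation isomorphism $\conju$ intertwines $\mu^k$ with $\mu^k_{op}$ only up to the residual sign $(-1)^{k+1}$. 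This is absorbed by defining $c_\r^M := -\conju$ on every morphism space (the global minus sign contributes $(-1)^k$ from the inputs and $(-1)$ from the output, cancelling $(-1)^{k+1}$). Without this correction the map you describe is not a strict $A_\infty$ isomorphism. Since you explicitly flag this computation as ``the substantive content'' and then do not carry it out, your write-up is incomplete precisely at the point where the lemma's proof is nontrivial; everything else is correct and matches the paper.
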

\begin{proof}
Let $V=(V,\Omega,J)$ be a symplectic vector space equipped with a compatible complex structure, and $\overline{V}:=(V,-\Omega,-J)$. 
Recall that, given transverse Lagrangian subspaces equipped with Pin structures $\Lambda_0^\#, \Lambda_1^\#$, we defined the orientation operators
\begin{equation} o(\Lambda_0^\#,\Lambda_1^\#,k) := \lambda(D_{H,\rho}) \otimes \mathsf{Pin}(\rho)\end{equation}
for all $k \in \Z$, by choosing a path $\rho: [0,1] \to \cG V$ from $\Lambda_0$ to $\Lambda_1$ with Maslov index $k$, and using it as boundary conditions for a Cauchy-Riemann operator $D_{H,\rho}$ on the upper half-plane $H$. 

We denote by $\overline{\Lambda}_0^\#,\overline{\Lambda}_1^\#$ the same subspaces of $V$, with the same Pin structures, but now regarded as Lagrangian subspaces of $\overline{V}$. 
For any path $\rho$, we define the opposite path $\overline{\rho}(s) := \rho(1-s)$, which has the same Maslov index.
We can obviously arrange for there to be an isomorphism of Cauchy--Riemann operators
\begin{eqnarray*}
D_{H,\rho} & \to & D_{H,\overline{\rho}} \\
u & \mapsto & \overline{u},
\end{eqnarray*}
where $\overline{u}$ denotes conjugation of the domain:
\begin{equation} \overline{u}(s+it) := u(-s+it).\end{equation}
There is also an obvious isomorphism,
\begin{equation} \mathsf{Pin}(\rho) \simeq \mathsf{Pin}(\overline{\rho}).\end{equation}
Combining these, we obtain an isomorphism
\begin{equation}
\label{eqn:oppors}
\conju:  o(\Lambda_0^\#,\Lambda_1^\#,k) \to o(\overline{\Lambda}_1^\#,\overline{\Lambda}_0^\#,k).
\end{equation}

Because conjugation of the domain reverses the boundary orientation, we see that
\begin{eqnarray*}
 \conju \circ s_\r &=& s_\l \circ \conju, \\
 \conju \circ s_\l &=& s_\r \circ \conju, \quad \text{and} \\
 \conju \circ s &=& s \circ \conju . 
\end{eqnarray*}
As a consequence, we have a natural isomorphism
\begin{equation}
\label{eqn:opporssign}
o_\r(\Lambda_0^\#,\Lambda_1^\#,\sigma) \simeq o_\l(\overline{\Lambda}_1^\#,\overline{\Lambda}_0^\#,\sigma),
\end{equation}
and vice-versa.

Now we define the isomorphism $c_\r^M:\fuk_\r(M) \to \fuk_\l(\overline{M})^{op}$. 
On the level of objects, the identification is almost tautological, as $\cG M \simeq \cG \overline{M}$. 
On the level of morphisms, we have the identification
\begin{eqnarray*}
c_\r^M:hom^\bullet_{\fuk_\r(M)}(L_0^\#,L_1^\#) & \to & hom^\bullet_{\fuk_\l(\overline{M})}(L_1^\#,L_0^\#), \text{ sending}\\
o_\r(y) & \overset{- \conju}{\longrightarrow} & o_\l(\overline{y}),
\end{eqnarray*} 
where $\overline{y}(s) :=y(1-s)$ is the reverse Hamiltonian chord to $y$. 
It is already clear that $c_\r^M$ respects shifts and right-shift maps, and that if we define $c_\l^{\overline{M}}$ in the analogous way, then
\begin{equation} \left(c_\l^{\overline{M}}\right)^{op} \circ c_\r^M = \id\end{equation}
because $\conju \circ \conju = \id$.

Now let $\mathcal{M}(\mathbf{y})$ be the moduli space of pseudoholomorphic discs contributing to an $A_\infty$ structure map in $\fuk_\r(M)$. 
By choosing the perturbation data in $\overline{M}$ to preserve the Hamiltonian part and reverse the almost-complex structure part, we ensure that there is an isomorphism
\begin{eqnarray*}
\mathcal{M}(y_0,y_1\ldots,y_k) & \to & \mathcal{M}(\overline{y}_0,\overline{y}_k,\ldots,\overline{y}_1), \\
(r,u) & \mapsto & (\overline{r},\overline{u}).
\end{eqnarray*}

It is clear that the following diagram commutes:
\begin{equation} \xymatrix{ \lambda(T_r\mathcal{R})^*|o_{\r,1}|\ldots|o_{\r,k} \ar[r] \ar[d] & \lambda(T_{\overline{r}}\mathcal{R})^*  | \overline{o}_{\l,k}|\ldots|\overline{o}_{\l,1} \ar[d] \\
\lambda(T_{(r,u)}\mathcal{M}(\mathbf{y}))^* | o_{\r,0} \ar[r] & \lambda(T_{(\overline{r},\overline{u})}\mathcal{M}(\overline{\mathbf{y}}))^* | \overline{o}_{\l,0}.}\end{equation}
Furthermore, the following diagram commutes:
\begin{equation} \xymatrix{\Ts_s^*|\Ts_1|\ldots|\Ts_k|\Ts^*_t \ar[r] \ar[d] & \Ts_s^*|\Ts_k|\ldots|\Ts_1|\Ts_t^* \ar[d]\\
\lambda(T_r\mathcal{R}(L_0,\ldots,L_k)) \ar[r] & \lambda(T_{\overline{r}}\mathcal{R}(L_k,\ldots,L_0)),} \end{equation}
where the top isomorphism sends
\begin{eqnarray}
\label{eqn:Rswap}
\Ts_s & \overset{\id}{\to} & \Ts_s, \\
\label{eqn:Rswap2}\Ts_t & \overset{-\id}{\to} & \Ts_t, \\
\label{eqn:Rswap3}\Ts_i & \overset{-\id}{\to} & \Ts_i \text{ for all $i=1,\ldots,k$.}
\end{eqnarray}
This follows easily from examining the action of complex conjugation $s+it \mapsto -s+it$ on the moduli space $\mathcal{R}$.

As a consequence, the diagram
\begin{equation} \xymatrix{ o'_{\r,1}|\ldots|o'_{\r,k} \ar[r]^{\conju^{\otimes k}} \ar[d]^{\mu^k} & \overline{o}'_{\l,k}|\ldots|\overline{o}_{\l,1} \ar[d]^{\mu^k} \\
\Ts_\mu|o'_{\r,0} \ar[r]^\conju & \Ts_\mu|\overline{o}'_{\l,0}} 
\end{equation}
commutes up to the sign $(-1)^{k+1}$ arising from the difference in trivializations in $\Ts_t$ from \eqref{eqn:Rswap2} and the $\Ts_i$ from \eqref{eqn:Rswap3}. 
In particular, recalling that $c_\r^M = -\conju$, we have
\[ c_\r^M(\mu^k(a_1,\ldots,a_k))  =  \mu_{op}^k(c_\r^M(a_1),\ldots,c_\r^M(a_k))\]
so indeed $c_\r^M$ is a strict isomorphism of $A_\infty$ categories.
\end{proof}

\begin{cor}
Suppose $M$ is equipped with a morphism of grading data $\G \to \Z/4$.
Then Lemma \ref{lem:oppfuk} gives a strict isomorphism
\begin{equation} c^M_{\l\r}: \fuk_{\l\r}(M) \to \fuk_{\l\r}(\overline{M})^{op}\end{equation}
which respects shifts and left- and right-shift maps.
\end{cor}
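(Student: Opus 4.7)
The plan is to transport the construction of $c^M_\r$ from Lemma~\ref{lem:oppfuk} through the identifications $\fuk_\r(M) \simeq \fuk_{\l\r}(M) \simeq \fuk_\l(M)$ that are available once a morphism of grading data $p:\G \to \Z/4$ has been fixed (these were set up at the end of \S\ref{subsec:orops} and used to build $\fuk_{\l\r}(M)$ in \S\ref{subsec:fukshifts}), and then to check that the two resulting descriptions of $c^M_{\l\r}$ actually agree. Concretely, I would first define $c^M_{\l\r}$ on the level of objects as in Lemma~\ref{lem:oppfuk} (using $\cG M \simeq \cG\overline{M}$ and the corresponding identification of Lagrangian branes), and on the level of morphisms by sending a generator of $o_{\l\r}(y)$ for $\fuk_{\l\r}(M)$ to minus its conjugate in $o_{\l\r}(\overline{y})$ for $\fuk_{\l\r}(\overline{M})$, using the conjugation isomorphism $\conju$ from \eqref{eqn:oppors}.

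Next, I would check that this is well-defined by showing that under the canonical identifications $o_\r(y)\simeq o_{\l\r}(y) \simeq o_\l(y)$ (and analogously on $\overline{M}$) the two candidate definitions, namely $c^M_\r$ passed through the $\r \leftrightarrow \l\r$ identifications and $c^M_\l$ passed through the $\l \leftrightarrow \l\r$ identifications, coincide. The content here is the compatibility of $\conju$ with the shift identifications $s_\r^{2\jmu}$ and $s_\l^{2\jmu}$ that are used to glue the individual lines $o(y,k)$ into $o_{\l\r}(y,\eta)$: this follows from the relations $\conju \circ s_\r = s_\l \circ \conju$ and $\conju \circ s_\l = s_\r \circ \conju$ established in the proof of Lemma~\ref{lem:oppfuk}, combined with \eqref{eqn:sh6} to identify things modulo $4$. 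The strict compatibility with the $A_\infty$ structure is then inherited directly from Lemma~\ref{lem:oppfuk}: the same moduli space isomorphism $(r,u)\mapsto (\overline{r},\overline{u})$, together with the same sign bookkeeping for the trivializations \eqref{eqn:Rswap}--\eqref{eqn:Rswap3} of $T\mathcal{R}$, shows $c^M_{\l\r}(\mu^k(a_1,\ldots,a_k)) = \mu^k_{op}(c^M_{\l\r}(a_1),\ldots,c^M_{\l\r}(a_k))$. The involutivity statement $(c_{\l\r}^{\overline{M}})^{op}\circ c_{\l\r}^M = \id$ also transports automatically from the $\r$-version since $\conju\circ\conju = \id$.

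Finally, I would verify the compatibility with both sets of shift functors. Compatibility with rightwards shifts follows by noting that the diagrams defining $s^g$ and $s^g_\r$ in \S\ref{subsec:fukshifts} are built out of the same basic shift identifications that $\conju$ intertwines (right on $M$ with left on $\overline{M}$), so on $\fuk_{\l\r}$--where these two sets of shift data are canonically identified via $p$--the map $c^M_{\l\r}$ respects $s^g$ and $s^g_\r$ simultaneously, and by symmetry it also respects $s^g_\l$. Alternatively, once one knows $c^M_{\l\r}$ equals $c^M_\r$ under $\fuk_\r(M)\simeq \fuk_{\l\r}(M)$, the first claim is immediate from Lemma~\ref{lem:oppfuk}; using $c^M_\l$ under $\fuk_\l(M)\simeq \fuk_{\l\r}(M)$ gives the second.

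The main obstacle is the sign check that the two candidate definitions agree: it amounts to verifying that the identification $i_\l \leftrightarrow i_\r$ which differs by $(-1)^{\mu(\mu+1)}$ (from \S\ref{subsec:orCR}) interacts with $\conju$ in the precise way dictated by the involution used to define $o_{\l\r}$, and this ultimately reduces, via \eqref{eqn:sh6} and the definition $\dagger_k = k(k-1)/2$, to a purely combinatorial identity in $\Z/4$ that holds because $p$ provides a consistent lift of the mod-$2$ grading data.
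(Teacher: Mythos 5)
Your proposal is correct and follows essentially the same route as the paper, whose entire proof is the one-line observation that $c^M_\r$ respects $\Z/4$-gradings (since $\conju$ preserves the Maslov index $k$ and interchanges $s_\r$ with $s_\l$, it descends to the lines $o_{\l\r}$); your well-definedness check via the relations $\conju\circ s_\r = s_\l\circ\conju$ and \eqref{eqn:sh6} is exactly this point spelled out, and the remaining compatibilities transport from Lemma \ref{lem:oppfuk} as you say.
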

\begin{proof}
Follows from the fact that the isomorphism 
\begin{equation}c^M_\r:hom^\bullet_{\fuk_\r(M)}(L_0,L_1) \to hom^\bullet_{\fuk_\l(\overline{M})}(L_1,L_0)\end{equation}
respects $\Z/4$-gradings.
\end{proof}

\subsection{Signed group actions on symplectic manifolds and the Fukaya category}

Recall the notion of a \emph{signed group} $(\Gamma,\sigma)$ from Definition \ref{defn:sgngrp}: it is a group $\Gamma$ together with a homomorphism $\sigma: \Gamma \to \Z/2$. 
A morphism of signed groups is a group homomorphism respecting the map to $\Z/2$.

We define $\Aut^\sigma(M)$ to be the signed group of `signed symplectomorphisms': this consists of the even elements, which are symplectic automorphisms of $M$ (elements of $\Aut(M)$), and odd elements, which are anti-symplectic automorphisms of $M$ (symplectomorphisms $\phi: M \to \overline{M}$). 
There is an exact sequence
\begin{equation} 0 \to \Aut(M) \to \Aut^\sigma(M) \to \Z/2.\end{equation}
As before, we have the group of `graded signed symplectomorphisms' which fits into a short exact sequence
\begin{equation} 0 \to G \to \Aut^{\sigma,gr}(M) \to \Aut^\sigma(M) \to 0,\end{equation}
and we also have a subgroup fitting into the short exact sequence
\begin{equation} 0 \to G_{ev} \to \Aut^{\sigma,gr}_{ev}(M) \to \Aut^\sigma(M) \to 0\end{equation}
like before.
 
We define an action of a signed group $(\Gamma,\sigma)$ on $M$ to be a signed group homomorphism $(\Gamma,\sigma) \to \Aut^\sigma(M)$.
Now we give the analogue of Lemma \ref{lem:AutFukr}:

\begin{lem}
\label{lem:AutsigmaFukr}
Suppose $M$ is equipped with a morphism $p: \G \to \Z/4$. 
Let $\EuA \subset \fuk_{\l\r}(M)$ be a full subcategory which is closed under shifts. 
Let $(\Gamma,\sigma)$ act on $M$, so that the corresponding action of $\Gamma$ on $\G$ respects $p$, and the action of $\Gamma$ on the set of unanchored branes underlying $Ob(\EuA)$ is free. 
Then for appropriate choice of perturbation data in $\fuk_{\l\r}(M)$, there is an induced signed action of $(\Gamma,\sigma)$ on $\EuA$ up to shifts.
\end{lem}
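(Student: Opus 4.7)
The plan is to extend Lemma \ref{lem:AutFukr} by combining it with the duality isomorphism $c^M_{\l\r}: \fuk_{\l\r}(M) \to \fuk_{\l\r}(\overline{M})^{op}$ of Lemma \ref{lem:oppfuk}. For each $\gamma \in \Gamma$, I would first choose a lift $\tilde{\gamma} \in \Aut^{\sigma,gr}_{ev}(M)$, which exists and is well-defined modulo $G_{ev}$ because $\gamma$ respects $p$ (so the grading-compatibility condition can be satisfied) and the sequence $0 \to G_{ev} \to \Aut^{\sigma,gr}_{ev}(M) \to \Aut^\sigma(M) \to 0$ is exact on the right. If $\sigma(\gamma) = 0$, then $\tilde{\gamma} \in \Aut^{gr}_{ev}(M)$ acts strictly on $\fuk_{\l\r}(M)$ as in Lemma \ref{lem:AutFukr}, respecting shifts and both shift-maps, and I define the image in $\mathsf{StrAut}^\sigma_{\l\r}(\EuA)/G_{ev}$ to be this strict autoequivalence. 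If $\sigma(\gamma) = 1$, then $\tilde{\gamma}$ defines a graded symplectomorphism $M \to \overline{M}$, which induces a strict isomorphism $\tilde{\gamma}_*: \fuk_{\l\r}(M) \to \fuk_{\l\r}(\overline{M})$ respecting all shifts; I then compose with $c^M_{\l\r}$ to obtain a strict isomorphism
\[ \gamma_{\EuA}:= c^M_{\l\r} \circ \tilde{\gamma}_*: \fuk_{\l\r}(M) \to \fuk_{\l\r}(M)^{op} \]
which is an odd element of $\mathsf{StrAut}^\sigma_{\l\r}(\fuk_{\l\r}(M))$.

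Next I would verify that this assignment descends to a signed group homomorphism $\Gamma \to \mathsf{StrAut}^\sigma_{\l\r}(\EuA)/G_{ev}$. Well-definedness modulo $G_{ev}$ is immediate since different lifts of $\gamma$ differ by an element of $G_{ev}$, which acts by the even shift functor. The key check is multiplicativity $(\gamma_1 \gamma_2)_\EuA = (\gamma_1)_\EuA \circ (\gamma_2)_\EuA$ in each of the four sign combinations. The even-even case is immediate from Lemma \ref{lem:AutFukr}. The even-odd and odd-even cases follow from functoriality of $c^M_{\l\r}$ with respect to symplectomorphisms. The odd-odd case is the subtle one: here we need
\[ c^{\overline{M}}_{\l\r} \circ (\tilde{\gamma}_1)_* \circ c^M_{\l\r} \circ (\tilde{\gamma}_2)_* = (\widetilde{\gamma_1 \gamma_2})_*, \]
where the left-hand side involves the composition of two dualities. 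This reduces, via naturality, to the identity $(c^{\overline{M}}_{\l\r})^{op} \circ c^M_{\l\r} = \id$ from the final assertion of Lemma \ref{lem:oppfuk}.

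As in Lemma \ref{lem:AutFukr}, I would arrange the needed coherence of perturbation data by equivariance. Choose a set of orbit representatives for the $\Gamma$-action on the set of unanchored branes underlying $Ob(\EuA)$, pick arbitrary Floer and perturbation data for tuples involving only these representatives, and then transport by the action to define the data on all other tuples. Freeness of the $\Gamma$-action on unanchored branes guarantees that this prescription is unambiguous and that transversality is not compromised (no $\Gamma$-orbit collapses, so equivariant regularity is simply ordinary regularity at the representatives). The only place where the orientation conventions could interfere is in the odd case, where the count of curves in $M$ and $\overline{M}$ must be matched consistently with $c^M_{\l\r}$; but this is precisely the content of the computation already carried out in the proof of Lemma \ref{lem:oppfuk}, applied fibrewise to each $\gamma$-orbit.

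The main obstacle I anticipate is bookkeeping of signs in the odd-odd composition: specifically, ensuring that the two applications of conjugation $\conju$ coming from two odd elements compose to the identity on orientation lines with the same sign conventions used to identify $o_\r$ and $o_\l$ via $o_{\l\r}$. This is where the hypothesis that $\Gamma$ respects the $\Z/4$-grading morphism $p$ is essential: without it, the identifications $o_\r(y) \simeq o_{\l\r}(y) \simeq o_\l(y)$ used to define $\fuk_{\l\r}(M)$ would not be preserved by the $\Gamma$-action, and one could not form the composite $c^M_{\l\r} \circ \tilde{\gamma}_*$ as a strict functor. Once this compatibility is granted, the verification reduces to the identities collected in Lemma \ref{lem:shiftsigns} together with the involutive nature of $c^M_{\l\r}$.
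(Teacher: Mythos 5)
Your proposal is correct and rests on exactly the same ingredients as the paper's proof (Lemma \ref{lem:AutFukr}, the duality isomorphisms of Lemma \ref{lem:oppfuk} together with their involutivity, naturality of all constructions under graded symplectomorphisms, and freeness of the action on unanchored branes to sidestep equivariant transversality), but the bookkeeping is organized differently. The paper views a signed symplectomorphism as an ordinary even graded symplectomorphism of the disjoint union $M \coprod \overline{M}$ (odd elements swap the two components), applies the unsigned Lemma \ref{lem:AutFukr} to $\EuA \coprod \overline{\EuA}$ — so that the group homomorphism property comes for free — and then conjugates by $c_{\l\r}^{\overline{M}}: \overline{\EuA} \to \EuA^{op}$, leaving only the compatibility $\gamma \circ op = op \circ \gamma$ to check by a diagram-chase. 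You instead define the action element-by-element according to the sign of $\gamma$ and verify multiplicativity in the four sign combinations by hand, with $op$-compatibility then automatic from the construction; your odd--odd case is precisely where the identity $(c_{\l\r}^{\overline{M}})^{op} \circ c_{\l\r}^{M} = \id$ enters, which is also the content hidden in the paper's conjugation step. The two arguments are logically interchangeable; the paper's packaging avoids the case analysis, while yours makes explicit where each sign identity of Lemma \ref{lem:shiftsigns} and Lemma \ref{lem:oppfuk} is used. One small notational slip: for odd $\gamma$, viewing $\tilde{\gamma}$ as a graded symplectomorphism $M \to \overline{M}$, the composite landing in $\fuk_{\l\r}(M)^{op}$ should be $c_{\l\r}^{\overline{M}} \circ \tilde{\gamma}_*$ rather than $c_{\l\r}^{M} \circ \tilde{\gamma}_*$; this does not affect the argument.
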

\begin{proof}
Let
\begin{equation} \EuA \coprod \overline{\EuA} \subset \fuk_{\l\r}(M) \coprod \fuk_{\l\r}(\overline{M}) \simeq \fuk_{\l\r}\left(M \coprod \overline{M}\right)\end{equation}
be the subcategory whose objects are the union of all anchored Lagrangian branes in $M$ or $\overline{M}$ which correspond to objects of $\EuA$. 

There is an obvious inclusion
\begin{equation} \Aut^{\sigma,gr}_{ev}(M) \hookrightarrow \Aut^{gr}_{ev}\left(M \coprod \overline{M}\right).\end{equation}
Therefore, we have a map
\begin{equation} \Gamma \to \mathsf{StrAut}_{\l\r}\left( \EuA \coprod \overline{\EuA} \right)/G_{ev},\end{equation}
by Lemma \ref{lem:AutFukr} (as the image of $\Gamma$ acts freely on the objects of $\EuA \coprod \overline{\EuA}$).
We also have the isomorphism
\begin{align}
c_{\l\r}^{\overline{M}}: \overline{\EuA} &\xrightarrow{\simeq} \EuA^{op},
\end{align}
commuting with leftwards and rightwards shift functors, from Lemma \ref{lem:oppfuk}.
Thus we obtain a homomorphism
\begin{equation} \Gamma \to \mathsf{StrAut}_{\l\r}\left(\EuA \coprod \EuA^{op} \right)/G_{ev}.\end{equation}
In order to prove that this homomorphism gives a signed group action, it remains to check that
\begin{equation} \gamma \circ op = op \circ \gamma.\end{equation}
This follows easily from a diagram-chase, using the naturality of the construction of the Fukaya category, and of the isomorphisms $c_{\l\r}^M$ and $c_{\l\r}^{\overline{M}}$, with respect to the action of graded symplectomorphisms. 
\end{proof}

\begin{rmk}
Lemma \ref{lem:AutsigmaFukr} is closely related to the results of \cite{Fukaya2017a}, which consider the case of $\Z/2$ acting on $M$ by an anti-symplectic involution, and $\EuA$ the subcategory consisting of the fixed locus of the involution.
\end{rmk}

Let us consider the closed--open map $\EuC \EuO \circ i$ in the situation of Lemma \ref{lem:AutsigmaFukr}. 
We define an action of $\Gamma$ on $H^\bullet(M)$, by setting
\begin{equation} \gamma \cdot \alpha := (-1)^{\sigma(\gamma)} \cdot \left(\gamma^{-1}\right)^* \alpha.\end{equation} 
By Lemma \ref{lem:AutsigmaFukr} together with \eqref{eqn:strautrlacts}, we have an action of $\Gamma$ on $CC^\bullet_{\l\r}(\EuA)$.

\begin{lem}
\label{lem:coisigns}
We have
\begin{equation}\EuC \EuO \circ i (\gamma \cdot \alpha) = \gamma \cdot \EuC\EuO \circ i(\alpha).\end{equation}
In other words, $\EuC\EuO \circ i$ is strictly $\Gamma$-equivariant.
\end{lem}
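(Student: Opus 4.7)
The plan is to split into the two cases according to $\sigma(\gamma) \in \{0,1\}$, so it suffices to verify the identity on generators of $\Gamma$. In the even case, $\gamma \in \Gamma$ acts on $M$ by a symplectomorphism and hence lifts (up to even shifts, which act trivially on Hochschild cohomology of $\EuA$) to a graded symplectomorphism $\tilde\gamma \in \Aut^{gr}_{ev}(M)$ acting strictly on $\fuk_{\l\r}(M)$. For suitable $\Gamma$-invariant perturbation data and a pseudocycle representative $f: V \to M$ of $\alpha$, the diffeomorphism $\tilde\gamma$ induces a bijection $(r,u,v) \mapsto (r, \tilde\gamma \circ u, v)$ between the rigid configurations defining $\EuC\EuO \circ i(\alpha)$ and those defining $\EuC\EuO \circ i(\tilde\gamma_* \alpha)$, which respects all orientation lines by naturality. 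Since $(-1)^{\sigma(\gamma)} = 1$, this is exactly the equivariance statement.

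In the odd case, $\gamma$ acts on $M$ by an anti-symplectomorphism, which I factor (up to even shift) as the composition of a symplectomorphism $\bar\gamma: M \to \overline M$ and the identity $\overline M = M$ of underlying sets. The induced action on $\EuA \subset \fuk_{\l\r}(M)$ is then, by the construction of Lemma \ref{lem:AutsigmaFukr}, the composition of $\bar\gamma_*: \fuk_{\l\r}(M) \to \fuk_{\l\r}(\overline M)$ with the strict isomorphism $c_{\l\r}^{\overline M}: \fuk_{\l\r}(\overline M) \to \fuk_{\l\r}(M)^{op}$ from Lemma \ref{lem:oppfuk}. Unwinding the induced action on $CC^\bullet_{\l\r}(\EuA)$ via \eqref{eqn:strautrlacts}, the desired identity becomes a matrix-coefficient comparison, on tuples of chords, between the Hochschild cochain $\EuC\EuO_M \circ i((-1)^{\sigma(\gamma)} \bar\gamma^* \alpha)$ and the image of $\EuC\EuO_{\overline M} \circ i(\bar\gamma^* \alpha)$ under the $op$-conjugation by $c_{\l\r}^{\overline M}$.

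The main step will be to exhibit, for each tuple $\mathbf{y}$ in $M$ and each pseudocycle representative $f$ of $\alpha$, a sign-preserving bijection between the rigid constrained moduli space $\{(r,u,v) : \mathsf{ev}_q(u)=f(v)\} \subset \mathcal{M}_1(\mathbf{y}) \times V$ used to compute $\EuC\EuO_M \circ i(\alpha)$, and the corresponding rigid moduli space in $\overline M$ for the tuple $\overline{\mathbf{y}}=(\bar y_0,\bar y_k,\dots,\bar y_1)$ with pseudocycle $\bar\gamma \circ f$. The underlying bijection is the conjugation $(r,u) \mapsto (\bar r, \bar u)$ from the proof of Lemma \ref{lem:oppfuk}, with the constraint on the internal marked point preserved because $\bar\gamma \circ \mathsf{ev}_q(u) = \mathsf{ev}_q(\bar u)$. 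The sign analysis then pulls together three contributions already computed in the excerpt: (i) the reorientation of $\lambda(T_r\mathcal R)$ by $(-1)^{k+1}$ together with the Koszul sign from reversing the order of the inputs $o_i$ to match the $op$-structure, exactly as in the proof that $c_{\l\r}^M$ is a strict $A_\infty$ isomorphism; (ii) the conjugation isomorphisms $\conju$ on each $o(y_i)$, absorbed into the matrix coefficients of $c_{\l\r}^{\overline M}$; and (iii) one \emph{new} sign coming from the factor $\lambda(T_qS_r) \simeq \Ts_2$ in \eqref{eqn:COcont}, whose complex orientation is reversed when the almost-complex structure on the domain is conjugated.

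The hard part will be the final sign tally. Contributions (i) and (ii) are designed to match the $A_\infty$-structure sign check of Lemma \ref{lem:oppfuk} and so cancel automatically against the $op$-contribution; the whole content of the odd case therefore reduces to the single extra sign $(-1)$ coming from (iii), which precisely reproduces the factor $(-1)^{\sigma(\gamma)}$ in the definition of the $\Gamma$-action on $H^\bullet(M)$. In other words, the geometric origin of the sign $(-1)^{\sigma(\gamma)}$ is the reversal of the complex orientation on $T_qS_r$ under $J \mapsto -J$. I expect no conceptual obstacle beyond this identification, but the careful orientation-line bookkeeping, in the style of \S\ref{subsec:ainfsign} and Lemma \ref{lem:oppfuk} but applied now to $\mathcal{M}_1(\mathbf{y})$, is the one delicate point and the main place where an error could occur.
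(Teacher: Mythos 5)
Your proposal is correct and follows essentially the same route as the paper: the even case is naturality of the construction under graded symplectomorphisms, and the odd case reduces, via the conjugation isomorphism with $\fuk_{\l\r}(\overline{M})^{op}$ as in Lemma \ref{lem:oppfuk}, to the single extra sign from the reversal of the complex orientation of $\lambda(T_qS_r)$ under conjugation of the domain, which cancels against the $(-1)^{\sigma(\gamma)}$ in the definition of the $\Gamma$-action on $H^\bullet(M)$. This is precisely the paper's argument, stated there more tersely.
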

\begin{proof}
We established this result for even $\gamma$ (i.e., symplectomorphisms) in \S \ref{subsec:coisigns}. 
The proof when $\gamma$ is odd is analogous to the proof of Lemma \ref{lem:AutsigmaFukr}, with one exception: the orientation of $\lambda(T_qS_r)$ changes sign under complex conjugation of the domain, hence the identification $\lambda(T_q S_r) \simeq \Ts_2$ changes sign. 
This is cancelled by the addition of the sign $(-1)^{\sigma(\gamma)}$ in the definition of the action of $\Gamma$ on $H^\bullet(M)$.
\end{proof}

\subsection{Signed group actions on the relative Fukaya category}

We now consider signs in the relative Fukaya category $\fuk(X,D)$ (or $\fuk(\Nef)$), which is a $\G$-graded deformation of $\fuk(X \setminus D)$ over $R$. 
The sign arguments we gave in the construction of the affine Fukaya category apply word-for-word to the construction of the relative Fukaya category, with one exception: \eqref{eqn:mugradings} is replaced by
\begin{equation}
\label{eqn:mugradingsrel}
 g_0 + \deg(\nov^u) = 2-s+\sum_{j=1}^k g_j 
\end{equation}
where $u \in H_2(X,X \setminus D)$ is the homology class of the pseudoholomorphic disc, and $\nov^u \in R$ is the corresponding monomial in the coefficient ring. 
This follows from Lemma \ref{lem:masgrad}.

Now the coefficient ring is concentrated in even degree, so the addition of the term $\deg(\nov^u)$ does not change the equation modulo $2$: it follows that the definition of $\fuk_\r(X,D)$ and $\fuk_\l(X,D)$ can proceed as before. 
However, the extension of the definition of $\fuk_{\l\r}(X\setminus D)$ (see \S \ref{subsec:fukshifts}) to the relative Fukaya category acquires a new wrinkle. 
Recall that the definition of $\fuk_{\l\r}(X \setminus D)$ depended on a choice of morphism of grading data $p: \G \to \Z/4$: and the definition of $\fuk_{\l\r}(X \setminus D)$ relied on \eqref{eqn:mugradings} holding modulo $4$. 
In the relative Fukaya category, the two sides of \eqref{eqn:mugradings} differ by $p(\deg(\nov^u))$, by \eqref{eqn:mugradingsrel}. 
It follows that when we glue $o_{y_1},\ldots,o_{y_k}$ onto $D_u$, we get an orientation operator that lives in
\begin{equation} 
o_{\l\r}(T_{y_0(0)}L_0^\#,T_{y_0(1)}L_1^\#,p(y_0)+p(\deg(\nov^u))).
\end{equation}
We must identify this with the orientation operator in degree $p(y_0)$ modulo $4$. 
If $p(\deg(\nov^u)) \equiv 2 \text{ (mod 4)}$, then there are two ways to do this: via $s_\r^2$ or via $s_\l^2$, and these two identifications differ by a sign.

We define an involution of $R$ by
\begin{align}
i_p: R &\to R,\\
i_p(r) &:= (-1)^{p(\deg(r))/2} \cdot r.
\end{align}

Then the preceding arguments yield:

\begin{lem}
\label{lem:oprelfuk}
Given a morphism $p: \G \to \Z/4$, there is an induced $R$-linear isomorphism
\begin{equation} \fuk_\r(X,D) \simeq i_p^*\fuk_\l(X,D)\end{equation}
which extends the isomorphism
\begin{equation} \fuk_\r(X \setminus D) \simeq \fuk_{\l\r}(X \setminus D) \simeq \fuk_\l(X \setminus D)\end{equation}
from \S \ref{subsec:fukshifts}.
\end{lem}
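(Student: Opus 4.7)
The plan is to define the isomorphism to be the identity on objects, and on morphism spaces to extend the canonical identification $o_\r(y) \simeq o_{\l\r}(y) \simeq o_\l(y)$ from \S \ref{subsec:fukshifts} $R$-linearly. What needs checking is that, with this choice, the structure map $\mu^s_{\fuk_\r(X,D)}$ pulls back under $i_p$ to $\mu^s_{\fuk_\l(X,D)}$, i.e.\ that for each pseudoholomorphic disc $u$ of class $[u]\in H_2(X,X\setminus D)$ contributing to the matrix coefficient $y_1,\ldots,y_s \to y_0$, the two orientation isomorphisms produced by the ``$\r$-recipe'' and the ``$\l$-recipe'' on $\bigl(o_\r(y_1)\otimes\cdots\otimes o_\r(y_s),o_\r(y_0)\bigr)$ differ by precisely $(-1)^{p(\deg(\nov^{[u]}))/2}$.

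The argument proceeds in three steps. \emph{Step 1 (setup):} For the affine category ($u=0$), the identification $o_\r(y)\simeq o_{\l\r}(y)\simeq o_\l(y)$ already matches $\fuk_\r(X\setminus D)\simeq\fuk_\l(X\setminus D)$ via $\fuk_{\l\r}(X\setminus D)$, as discussed in \S\ref{subsec:fukshifts}; and on this constant-order piece $i_p$ acts trivially because $\nov^{[u]}=1$. So the proposed map reduces to the correct isomorphism modulo $\fm$. \emph{Step 2 (key index shift):} For a general disc $u$, the relative Riemann--Roch formula \eqref{eqn:mugradingsrel} shows that when we glue the chosen orientation operators for $y_1,\ldots,y_s$ onto $D_u$, the resulting orientation operator for $y_0$ lives in $o_{\l\r}(T_{y_0(0)}L_0^\#,T_{y_0(1)}L_1^\#,p(y_0)+p(\deg(\nov^{[u]})))$ rather than in degree $p(y_0)$ modulo $4$. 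When $p(\deg(\nov^{[u]}))\equiv 0\pmod 4$, the identifications via iterating $s_\r^2$ and via iterating $s_\l^2$ agree, and the corresponding contribution to the $A_\infty$ operation in $\fuk_\r(X,D)$ and $\fuk_\l(X,D)$ match on the nose. When $p(\deg(\nov^{[u]}))\equiv 2\pmod 4$, Lemma~\ref{lem:shiftsigns}, equation \eqref{eqn:sh6}, tells us that $s_\l^2 = (-1)^{\dagger_{-2}}\, s^{-2}\circ s_\r^2 = -\, s^{-2}\circ s_\r^2$; since the $s^{-2}$ factor just translates between Pin structures and cancels on both sides, the two conventions differ by a factor of $-1$. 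Packaging these two cases uniformly, the net discrepancy between the $\r$- and $\l$-conventions at the disc $u$ is exactly the scalar $(-1)^{p(\deg(\nov^{[u]}))/2}$ which defines $i_p$ on the monomial $\nov^{[u]}$.

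\emph{Step 3 (assembling the $A_\infty$ functor):} Summing over discs and using the definition
\[
\mu^s_{\fuk(X,D)}(a_1,\ldots,a_s) = \sum_{[u]}\, \nov^{[u]}\cdot \mu^s_{[u]}(a_1,\ldots,a_s),
\]
Step 2 gives $\mu^s_{\fuk_\r(X,D)} = \sum_{[u]}\nov^{[u]}\cdot \mu^s_{[u],\r}$ and $\mu^s_{\fuk_\l(X,D)} = \sum_{[u]}\nov^{[u]}\cdot(-1)^{p(\deg(\nov^{[u]}))/2}\mu^s_{[u],\r}$, whence under the identification of morphism spaces the two structure maps are intertwined by pulling back scalars along $i_p$. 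That the ``identity on morphism generators'' assembly respects rightwards (resp.\ leftwards) shift functors on each side follows as in \S\ref{subsec:fukshifts}, using Lemma~\ref{lem:shiftsigns} and the fact that $R$ lives in even degree, so $\nov^{[u]}$ commutes through shifts without additional Koszul signs.

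The main obstacle is Step 2: the bookkeeping of Koszul signs as the boundary orientations of glued discs of various Maslov indices are transported to the outgoing end of the half-plane $H$, to verify that the sign discrepancy between the $\r$- and $\l$-gluing conventions is \emph{exactly} the $\Z/4$ parity of $p(\deg(\nov^{[u]}))/2$ and not, say, affected by extra signs coming from the strip-like end insertions or from the identifications \eqref{eqn:sh4}--\eqref{eqn:sh6}. Once this one sign identity is checked for a single disc, the compatibility with the Hochschild differential, the $A_\infty$ relations and the shift functors is formal, since both sides are obtained from the same combinatorics of gluing applied to the two conventions of Definition--Lemma~\ref{deflem:bdglue}.
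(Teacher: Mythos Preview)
Your proposal is correct and follows essentially the same route as the paper. In the paper, the lemma is stated immediately after the discussion preceding it with the phrase ``Then the preceding arguments yield:'', so there is no separate proof; that preceding discussion is precisely your Step~2 (the glued orientation operator lands in degree $p(y_0)+p(\deg(\nov^{[u]}))$ modulo $4$, and the $s_\r^2$ versus $s_\l^2$ identifications differ by the sign $(-1)^{p(\deg(\nov^{[u]}))/2}$), which you have spelled out in more detail, including the explicit $\dagger_{-2}=3$ computation from \eqref{eqn:sh6}.
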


\begin{rmk}
It is explained in \cite[Remark 5.2.4]{Wehrheim2015} that (translating into our language) there is an isomorphism of cochain complexes
\[ hom^*_{\fuk_\r(X,D)}(L_0,L_1) \simeq i_p^* hom^*_{\fuk_{\r,w_2(TX)}(\overline{X},\overline{D})^{op}}(L_1,L_0).\]
This is related to Lemma \ref{lem:oprelfuk} by the observation that
\[ hom^*_{\fuk_{\r,w_2(TX)}(\overline{X},\overline{D})^{op}}(L_1,L_0) \simeq hom^*_{\fuk_{\l,w_2(TX)}(X,D)}(L_0,L_1).\]
Note that our assumption that the morphism $p: \G \to \Z/4$ exists implies that $w_2(TX) = 0$ (compare Remark \ref{rmk:w2twist}).
\end{rmk}

We can now give the analogue of Lemma \ref{lem:AutsigmaFukr}, using the terminology from \S \ref{subsec:ainfversgp}:

\begin{lem}
\label{lem:autsigrel}
Suppose that $(X,D)$ is equipped with a morphism $p: \G \to \Z/4$. Let $\EuA \subset \fuk_{\l\r}(X \setminus D)$ be a full subcategory which is closed under shifts. 
Let $(\Gamma,\sigma)$ act on $X$, so that:
\begin{itemize}
\item $\Gamma$ preserves $D$ as a set.
\item The induced action of $\Gamma$ on $\G$ respects $p$.
\item The action of $\Gamma$ on the set of unanchored branes underlying $Ob(\EuA)$ is free.
\end{itemize} 
We have an action of $\Gamma$ on $\G$, and a simultaneous action of $\Gamma$ on $R$ by
\[ \gamma \cdot r^u := i_p^{\sigma(\gamma)} \left(r^{\gamma \cdot u}\right)\]
where 
\[ \gamma \cdot u := (-1)^{\sigma(\gamma)} \gamma_* u.\]
Then for appropriate choice of perturbation data in $\fuk_\r(X,D)$, the corresponding full subcategory $\EuA_R \subset \fuk_\r(X,D)$ is a $(\Gamma,\sigma)$-equivariant deformation of $\EuA \subset \fuk_{\l\r}(X \setminus D) \simeq \fuk_\r(X \setminus D)$ over $R$, relative to the simultaneous action of $\Gamma$ on $\G$ and $R$.
\end{lem}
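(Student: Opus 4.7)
The plan is to extend the signed group action on $\EuA \subset \fuk_{\l\r}(X \setminus D)$ supplied by Lemma \ref{lem:AutsigmaFukr} to the $R$-linear deformation $\EuA_R \subset \fuk_\r(X,D)$, carrying out the domain-conjugation construction of Lemmas \ref{lem:oppfuk} and \ref{lem:AutsigmaFukr} while keeping track of the monomial weights $\nov^u \in R$ of pseudoholomorphic discs. First I would choose $\Gamma$-invariant perturbation data for the moduli spaces defining $\fuk_\r(X,D)$: for even $\gamma$ one takes $\gamma$-equivariant Hamiltonians together with $\gamma$-equivariant almost-complex structures preserving $D$ (and, where a system of divisors $E$ is used, preserving $E$); for odd $\gamma$ one imposes the conjugation conditions $\gamma^*J = -J$ and $\gamma^*H = H \circ \tau$ from \S\ref{subsec:oppfuk}. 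The hypothesis that $\Gamma$ acts freely on the unanchored branes underlying $Ob(\EuA)$ allows equivariant transversality by the standard device of \cite[\S 8b]{Seidel:HMSquartic}.

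With such perturbation data in hand I would then define the action of each $\gamma \in \Gamma$ on $\EuA_R$ in two cases. For even $\gamma$ the construction is by naturality: $\gamma$ induces a strict $R$-linear autoequivalence of $\fuk_\r(X,D)$ sending a $J$-holomorphic disc $u$ of class $[u]$ (with weight $\nov^{[u]}$) to the $J$-holomorphic disc $\gamma \circ u$ of class $\gamma_*[u]$ (with weight $\nov^{\gamma_*[u]}$), consistent with the declared action $\gamma \cdot \nov^u = \nov^{\gamma_*u}$ on $R$ (noting $i_p^0 = \mathrm{id}$). For odd $\gamma$, the domain-conjugation argument of Lemma \ref{lem:oppfuk} applied in the relative setting produces a strict $R$-linear isomorphism $\fuk_\r(X,D) \to \fuk_\l(X,D)^{op}$; composing with the identification $\fuk_\l(X,D) \simeq i_p^*\fuk_\r(X,D)$ from Lemma \ref{lem:oprelfuk} yields a strict signed autoequivalence $\fuk_\r(X,D) \to i_p^*\fuk_\r(X,D)^{op}$. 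Concretely, this sends a $J$-holomorphic disc $u$ to $\gamma \circ u \circ \tau$, whose class is $-\gamma_*[u] = \gamma \cdot [u]$ (the sign arising from the boundary-orientation reversal of $\tau$), and an $A_\infty$ structure coefficient weighted by $\nov^u$ is transported to the corresponding coefficient weighted by $i_p(\nov^{\gamma\cdot u})$, which is precisely the prescribed action of odd $\gamma$ on $R$.

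That this data assembles into a $(\Gamma,\sigma)$-action on $\EuA_R$ reducing modulo $\fm$ to the action of Lemma \ref{lem:AutsigmaFukr} then follows by the same sort of diagram chase as in the proof of Lemma \ref{lem:AutsigmaFukr}, applied to the naturality of the relative Fukaya construction and of the isomorphism of Lemma \ref{lem:oprelfuk} under symplectomorphisms of $(X,D)$. The main technical obstacle is the sign bookkeeping, and in particular justifying the twist $i_p^{\sigma(\gamma)}$ in the action on $R$. This factor is forced: for a monomial $\nov^u$ with $p(\deg(\nov^u)) \equiv 2 \pmod{4}$, the identifications $s_\r^2$ and $s_\l^2$ of Lemma \ref{lem:shiftsigns} differ by a sign (equation \eqref{eqn:sh6} with $\jmu = 2$), and this sign — together with the extra index-theoretic contribution of the monomial weight recorded in \eqref{eqn:mugradingsrel} — must be absorbed into the action on $R$ in order for the $A_\infty$ structure maps to remain strictly equivariant. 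Verifying that no further sign corrections arise reduces to a finite check using the sign formulas \eqref{eqn:sh1}--\eqref{eqn:sh6}, paralleling the computation already performed for Lemma \ref{lem:oppfuk}.
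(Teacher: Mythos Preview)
Your proposal is correct and follows the same line of argument that the paper sets up (the paper states Lemma \ref{lem:autsigrel} without a separate proof, treating it as a consequence of the preceding discussion culminating in Lemma \ref{lem:oprelfuk}). You have correctly identified the three ingredients --- Lemma \ref{lem:AutsigmaFukr} for the affine action, the relative analogue of Lemma \ref{lem:oppfuk} for the domain-conjugation isomorphism $\fuk_\r(X,D) \to \fuk_\l(X,D)^{op}$, and Lemma \ref{lem:oprelfuk} for the passage $\fuk_\l(X,D) \simeq i_p^*\fuk_\r(X,D)$ --- and you have correctly traced the origin of the $i_p^{\sigma(\gamma)}$ twist to the $\bmod\ 4$ discrepancy in \eqref{eqn:mugradingsrel} and the sign difference between $s_\r^2$ and $s_\l^2$ from \eqref{eqn:sh6}.
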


\section{Fixing the mirror map}
\label{sec:hodgems}

The main theme of this paper is to establish conditions under which there exists a mirror map $\Psi$, for which homological mirror symmetry holds. 
In this section we address the question of the uniqueness of the mirror map $\Psi$. 
The basic idea is to use the Hodge-theoretic characterization of the mirror map (for which see, e.g., \cite{coxkatz}): but some subtleties are introduced by the fact that we consider relative K\"{a}hler forms rather than K\"{a}hler forms.

For the purposes of this section, we will use $\Bbbk = \C$.

\subsection{Gradings and coefficients}

Let $(X,D,\omega)$ be a relative K\"{a}hler manifold,  $\G$ the associated grading datum, and $\Nef \subset Nef(X,D)$ a nice cone. 
Assume that $c_1(TX) = 0$, so $X$ admits a holomorphic volume form $\eta$, whose restriction to $X \setminus D$ determines a squared phase map which splits the grading datum (see Lemma \ref{lem:c1tors}). 
Thus we have a canonical identification of the grading datum $\{\Z \to G \to \Z/2\}$ with
\[
\begin{array}{ccccc}
\Z & \to & \Z \oplus H_1(X\setminus D) & \to & \Z/2 \\
j & \mapsto & j \oplus 0 & & \\
&& j \oplus h & \mapsto & [j].
\end{array}\]

We will abbreviate $R := R(\Nef)$. 
We denote the point in $\overline{\cM}_{K\ddot{a}h}(X,D,\Nef) := \spec(R)$ corresponding to the toric maximal ideal $\fm$ by $0$.

We now consider a unital commutative $\G$-graded $R$-algebra $B_R$: by the morphism of grading data $\G \to \Z$ induced by $\Omega$, we can regard $B_R$ as a $\Z$-graded $R$-algebra. 
Note that $R$ is in degree $0$, as follows from the fact that the holomorphic volume form $\eta$ does not have a pole along $D_p$. 
We assume that $B_R$ is concentrated in non-negative degrees and connected, i.e., the degree-$0$ part is spanned by $R \cdot 1$.
We will consider the scheme $Y_R := \proj(B_R)$. 
It is of course a scheme over $\overline{\cM}_{K\ddot{a}h}(X,D,\Nef)$.

\subsection{Hodge-theoretic mirror symmetry}

We denote the formal disc by $\Delta := \spec(\C\power{q})$, and the formal punctured disc by $\Delta^* := \spec(\C\laurent{q})$. 

\begin{defn}
A \emph{disc map} is a morphism of $\C$-schemes
\begin{align*}
\dm: \Delta & \to \overline{\cM}_{K\ddot{a}h}(X,D,\Nef),
\end{align*}
sending $0\mapsto 0$, such that $\dm|_{\Delta^*}$ factors through $\cM_{K\ddot{a}h}(X,D,\Nef)$, and such that $\dm^*$ is adically continuous.
It is equivalent to an adically continuous $\C$-algebra homomorphism $\dm^*: R \to \C\power{q}$, such that $(d^*)^{-1}(q \cdot \C\power{q}) = \fm$, and such that its composition with the inclusion $\C\power{q} \hookrightarrow \C\laurent{q}$ factorizes through $\Rpunc$:
\[ \xymatrix{R \ar[r]^{\dm^*} \hookar[d] & \C\power{q} \hookar[d] \\
	\Rpunc \ar@{-->}[r] & \C\laurent{q}.}\]
\end{defn}

We will also denote $\dm|_{\Delta^*}$ by $\dm$, and the corresponding map $\Rpunc \to \C\laurent{q}$ by $\dm^*$; we hope no confusion will result.

\begin{defn}
Let $\val: \C\laurent{q}^\times \to \Z$ denote the valuation. 
If $\dm$ is a disc map, we define $\val(\dm) \in H^2(X,X \setminus D)$ by
\[ \val(\dm)(u) := \val(\dm^*(\nov^u)) \quad\text{ for all $u \in H_2(X,X \setminus D)$}.\]
Observe that this is well-defined (i.e., $\dm^*(\nov^u) \in \C\laurent{q}^\times$ for all $u$), because each $\nov^u$ is invertible in $\Rpunc$. 
Observe furthermore that $\val(\dm)$ lies in the interior of $\Nef$, because $\dm^*(\fm) \subset q \cdot \C\power{q}$.
\end{defn}

\begin{rmk}
Geometrically, $\val(\dm)(u_p)$ is equal to the order of tangency of the disc map $\dm$ with the divisor $\{\nov_p = 0\}$.
\end{rmk}

\begin{lem}
\label{lem:leadingflat}
If $\Psi^* \in \Aut(R)$ then $\val(\Psi (\dm)) = \val(\dm)$.
\end{lem}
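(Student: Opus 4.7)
The plan is to reduce the statement to a direct calculation via the classification of automorphisms of $R$ given by Lemma~\ref{lem:typeAaut}. Since $\Nef$ is assumed nice in this section, $R=R(\Nef)$ is nice by Lemma~\ref{lem:Rnice}, so Lemma~\ref{lem:typeAaut} tells us that $\Psi^* \in \Aut(R)$ is of the form $\psi^*$ associated to some $(\psi_p)_{p \in P} \in (R_0^*)^P$ via Definition~\ref{defn:Rauts}. Explicitly, for any $u = \sum_p a_p u_p \in H_2(X,X\setminus D)$ we have
\[ \psi^*(\nov^u) \;=\; \nov^u \cdot \prod_{p \in P} \psi_p^{a_p}, \]
and this formula makes sense in $\Rpunc$ because each $\psi_p$ is a unit in $R_0$, so negative powers are defined.

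The main step is then to compute the valuation of $\dm^*(\psi_p)$ for each $p$. Since $\psi_p \in R_0^* = R_0 \setminus \fm_0$, we can write $\psi_p = c_p + m_p$ with $c_p \in \C^*$ and $m_p \in \fm_0$. Applying the adically continuous homomorphism $\dm^*$ and using $\dm^*(\fm)\subset q\cdot\C\power{q}$, we get $\dm^*(\psi_p) = c_p + \dm^*(m_p)$ where $\dm^*(m_p) \in q\cdot \C\power{q}$. Hence $\dm^*(\psi_p)$ is a unit in $\C\power{q}$ with nonzero constant term, and in particular $\val(\dm^*(\psi_p)) = 0$.

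Combining these two steps, for any $u = \sum_p a_p u_p$ we compute
\[ \val\bigl(\Psi(\dm)\bigr)(u) \;=\; \val\bigl(\dm^*(\psi^*(\nov^u))\bigr) \;=\; \val(\dm^*(\nov^u)) + \sum_{p} a_p \cdot \val(\dm^*(\psi_p)) \;=\; \val(\dm^*(\nov^u)), \]
which is $\val(\dm)(u)$, as desired.

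The only subtle point is justifying that $\psi_p \in R_0^*$ indeed implies the constant term of $\dm^*(\psi_p)$ is nonzero; this uses that the pullback $\dm^*$ sends $\fm$ into $q\C\power{q}$ and respects the $\C$-algebra structure, so it sends the residue field $R/\fm \cong \C$ isomorphically to $\C\power{q}/q\C\power{q} \cong \C$. Everything else is a routine unwinding of definitions, so the proof is short.
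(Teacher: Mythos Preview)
Your proof is correct and follows essentially the same approach as the paper's: both invoke Lemma~\ref{lem:typeAaut} (via niceness of $R$) to write $\Psi^*(\nov_p) = \nov_p \cdot \psi_p$ with $\psi_p \in R_0^*$, then observe that $\dm^*(\psi_p)$ is a unit in $\C\power{q}$ and hence has valuation zero. The only cosmetic difference is that the paper checks the identity on the generators $u_p$ and then uses that they span $H_2(X,X\setminus D)$, whereas you compute directly for arbitrary $u$.
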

\begin{proof}
Because $R$ is nice, we can apply Lemma \ref{lem:typeAaut}: so 
\[ \dm^*(\Psi^*(\nov_p)) = \dm^*(\nov_p \cdot \psi_p) = \dm^*(\nov_p) \cdot \dm^*(\psi_p).\]
Now $\psi_p \in R_0^\times \implies \dm^*(\psi_p) \in \C\power{q}^\times$, so $\val(\dm^*(\psi_p)) = 0$. 
Thus $\val(\Psi (\dm))(u_p) = \val(\dm)(u_p)$ for all $p$: since the classes $u_p$ span $H_2(X,X \setminus D)$, this proves the result.
\end{proof}

\begin{defn}
We consider the $A$-model VSHS over $\Rpunc$: $\EuH^A(X,D,\Nef):= H^\bullet(X;\Rpunc)\power{u} [n]$, equipped with the connection
\[ \nabla_{\frac{\partial}{\partial \nov_p}}(\alpha) := \frac{\partial \alpha}{\partial \nov_p} - \nov_p^{-1}u^{-1} PD(D_p) \star \alpha.\]
If $\dm$ is a disc map then we have the pullback $\dm^*\EuH^A(X,D,\Nef)$, which is a VSHS over $\C\laurent{q}$. 
Explicitly,
\[ \dm^*\EuH^A(X,D,\Nef) := H^\bullet(X;\C\laurent{q}) \power{u}[n] \qquad \text{(here, `$[n]$' denotes a shift),}\]
equipped with the pullback connection
\[ \nabla_{ \frac{\partial}{\partial q}}(\alpha) := \frac{\partial \alpha}{\partial q} - u^{-1}\sum_p \frac{\partial \phi_p/\partial q}{\phi_p} PD(D_p) \star \alpha,\]
where $\phi_p := \dm^*(\nov_p)$.
\end{defn}

\begin{rmk}
Note that the connection $\nabla$ is singular along the divisors $\{\nov_p = 0\}$. 
This explains why we require a disc map to send the formal punctured disc to the complement of these divisors: if we didn't, the pullback connection would not be defined.
\end{rmk}

\begin{defn}
If $\dm$ is a disc map, we will denote by $Y_{\dm}$ the fibre of $Y_R$ over $\dm$. 
It is clearly a projective and connected $\C \laurent{q}$-scheme, by construction. 
If it is furthermore smooth, we can define the $B$-model VSHS $\EuH^B(Y_{\dm})$ over $\C\laurent{q}$, in the sense of \cite[\S 3.2]{Ganatra2015}.
\end{defn}

We suppose that $Y_\dm$ is smooth for all disc maps $\dm$ such that $\val(\dm)$ lies in the interior of $\Nef$.

\begin{defn}
\label{defn:hodgemirrmap}
In this situation, we say that $\Psi^* \in \Aut(R)$ is a \emph{Hodge-theoretic mirror map} if 
\begin{itemize}
\item it induces the identity map $\fm/\fm^2 \to \fm/\fm^2$ (i.e., $d\Psi(0) = \id$), and
\item for all disc maps $\dm$, we have $\dm^*\EuH^A(X,D,\Nef) \simeq \EuH^B(Y_{ \Psi(\dm)})$.
\end{itemize}
We define the \emph{closed part} to be the restriction $\Psi^*_{cl}: R_{cl} \to R_{cl}$.
\end{defn}

\subsection{Flat coordinates}

\begin{defn}
We say that a disc map $\dm$ is \emph{flat} for $(X,D,\Nef)$ if $\dm^*\EuH^A(X,D,\Nef)$ is a Hodge--Tate VSHS in the sense of \cite[Definition 2.13]{Ganatra2015}, and the uniformizer $q \in \C\laurent{q}$ is a canonical coordinate in the sense of \cite[Definition 2.18]{Ganatra2015}. 
Similarly, we say that a disc map $\dm$ is \emph{flat} for $Y_R$ if $Y_\dm$ is smooth, $\EuH^B(Y_{\dm})$ is Hodge--Tate, and $q$ is a canonical coordinate for it. 
\end{defn}

Observe that if $\Psi$ is a Hodge-theoretic mirror map, then a disc map $\dm$ is flat for $(X,D,\Nef)$ if and only if $\Psi( \dm)$ is flat for $Y_R$. 

\begin{lem}
\label{lem:flatun}
A disc map $\dm$ is flat for $(X,D,\Nef)$ if and only if 
\[ \dm^*(\nov^u) = c(u) \cdot q^{\val(\dm)(u)} \]
for all $\nov^u \in R_{cl}$, for some group homomorphism $c: H_2(X) \to \C^*$.
\end{lem}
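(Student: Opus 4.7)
The plan is to translate the flatness of $\dm$ for $(X,D,\Nef)$---which by definition requires $\dm^*\EuH^A(X,D,\Nef)$ to be a Hodge--Tate VSHS for which $q$ is a canonical coordinate---into an explicit algebraic condition on the power series $\phi_p := \dm^*(\nov_p) \in \C\power{q}$. First I would write $\phi_p = c_p q^{\ell_p}\bigl(1 + q\,\psi_p(q)\bigr)$ with $\ell_p := \val(\dm)(u_p)$, $c_p \in \C^\times$ and $\psi_p \in \C\power{q}$, and introduce the connection matrix
\[ \xi(q) \;:=\; \sum_{p} \frac{q\,\partial\phi_p/\partial q}{\phi_p} \cdot PD(D_p) \;=\; \xi_0 + \xi_r(q) \;\in\; H^2(X,X\setminus D) \otimes \C\power{q}, \]
where $\xi_0 := \sum_p \ell_p \cdot PD(D_p)$ and $\xi_r \in H^2(X,X \setminus D) \otimes q\C\power{q}$. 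The pullback connection then reads $q\nabla_{\partial/\partial q} = q\partial_q - u^{-1}\, \xi(q) \star$.

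The main step is to show that, in this setting, the characterization of canonical coordinates for Hodge--Tate VSHS from \cite[\S 2.5]{Ganatra2015} reduces to the vanishing of the image $\bar\xi_r(q) \in H^2(X;\C\power{q})$ of $\xi_r(q)$ under the natural map $H^2(X,X \setminus D) \to H^2(X)$. The idea is that, on the associated graded of the Hodge filtration of $\dm^*\EuH^A$, the quantum product by $\xi(q)$ collapses to ordinary cup product by $\bar\xi(q)$, so the canonical coordinate condition---that $q\nabla_{\partial/\partial q}$ act on the graded quotient as $-u^{-1}$ times a $q$-independent class---is exactly $\bar\xi_r(q) = 0$. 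This translation of the abstract semi-infinite Hodge-theoretic condition into an explicit cohomological statement is the main technical hurdle; the rest is formal.

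To conclude, let $\nov^u \in R_{cl}$. By Remark \ref{rmk:usualnov}, $u$ lies in the image of $H_2(X) \to H_2(X,X\setminus D)$, and $\dm^*(\nov^u) = \prod_p \phi_p^{u \cdot D_p}$, so
\[ q\,\partial_q \log \dm^*(\nov^u) \;=\; \xi(q)(u) \;=\; \val(\dm)(u) + \xi_r(q)(u). \]
For such $u$ the pairing $\xi_r(q)(u)$ depends only on $\bar\xi_r(q)$ by functoriality of the cap product, so $\bar\xi_r(q) \equiv 0$ is equivalent to $q\,\partial_q \log \dm^*(\nov^u) = \val(\dm)(u)$ for every $\nov^u \in R_{cl}$; integrating this ODE yields $\dm^*(\nov^u) = c(u) \cdot q^{\val(\dm)(u)}$ for some $c(u) \in \C^\times$. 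Multiplicativity of the ring homomorphism $\dm^*$ forces $c(u + v) = c(u)\,c(v)$ on the image of $H_2(X) \to H_2(X,X\setminus D)$, and pulling back along the quotient $H_2(X) \twoheadrightarrow H_2(X)/H_2(X\setminus D)$ (which by Remark \ref{rmk:usualnov} is the index set of the monomials of $R_{cl}$) produces the desired group homomorphism $c: H_2(X) \to \C^\times$.
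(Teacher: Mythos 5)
Your proposal is correct and follows essentially the same route as the paper's proof: both reduce the canonical-coordinate condition of \cite{Ganatra2015} to the statement that $\sum_p q\,(\partial\phi_p/\partial q)\phi_p^{-1}\,PD(D_p)$ is $q$-independent in $H^2(X)$ (your $\bar\xi_r(q)=0$ is equivalent to the paper's $q\partial_q$ of that class vanishing), then evaluate on $u\in H_2(X)$, integrate the resulting first-order ODE to get $\dm^*(\nov^u)=c(u)q^{k(u)}$, and deduce the homomorphism property of $c$ from multiplicativity of $\dm^*$. The decomposition $\xi=\xi_0+\xi_r$ is a minor repackaging rather than a genuinely different argument.
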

\begin{proof}
By a minor extension of the argument given in \cite{Ganatra2015}, $q$ is a canonical coordinate for $\dm^*\EuH^A(X,D,\Nef)$ if and only if $\nabla_{q\partial_q}\nabla_{q \partial_q}(e)$ is of degree $\ge 4$, which is equivalent to
\[ q\frac{\partial}{\partial q} \left(\sum_p q\frac{\partial \phi_p/\partial q}{\phi_p} PD(D_p) \right) = 0.\]
If we evaluate this cohomology class on $u \in H_2(X)$ we obtain 
\begin{align}
q \frac{\partial}{\partial q} \left( \sum_p q\frac{\partial \phi_p/\partial q}{\phi_p} (u \cdot D_p) \right) &= 0 \\
\implies   \sum_p q\frac{\partial \phi_p/\partial q}{\phi_p} (u \cdot D_p)  &= k(u), \text{ a constant dependent on $u$} \\
\implies q\frac{\partial \left(\dm^*\left(\nov^u\right)\right)}{\partial q} &= k(u) \cdot \dm^*\left(\nov^u\right) \\
\implies \dm^*(\nov^u) &= c(u) \cdot q^{k(u)}.\label{eqn:flatdet}
\end{align}
It follows that \eqref{eqn:flatdet} holds for all $\nov^u \in R_{cl}$, by Remark \ref{rmk:usualnov}. 
It is clear that $c$ defines a group homomorphism, since $\dm^*$ is an algebra homomorphism, and that $k = \val(\dm)$.  
The converse follows similarly.
\end{proof}

\begin{thm}
\label{thm:mirrcan}
If $\Psi_1$ and $\Psi_2$ are Hodge-theoretic mirror maps, we have
\[ \left(\Psi_1^*\right)_{cl} = \left(\Psi_2^*\right)_{cl}.\]
Thus, if there exists at least one Hodge-theoretic mirror map, then there is a unique map $\Psi^*_{cl} \in \Aut(R_{cl})$ which is the restriction of any Hodge-theoretic mirror map to the closed part.
\end{thm}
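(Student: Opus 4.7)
The plan is to replace $\Psi_1$ and $\Psi_2$ by the single automorphism $\Phi := \Psi_2^{-1} \circ \Psi_1 \in \Aut(R)$, and to prove instead that $\Phi^*|_{R_{cl}} = \id$, which is equivalent to the theorem. By the chain rule and $d\Psi_i(0) = \id$, we have $d\Phi(0) = \id$. The crucial observation is that $\Phi$ preserves the class of flat $A$-disc maps: being Hodge--Tate with $q$ as a canonical coordinate is intrinsic to a VSHS, so the defining isomorphism $\dm^*\EuH^A \simeq \EuH^B(Y_{\Psi_i(\dm)})$ transfers flatness in both directions, and composing for $i=1,2$ shows that $\dm$ is flat for $(X,D,\Nef)$ if and only if $\Phi(\dm)$ is.

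First I would establish the pointwise identity $\dm^*\Phi^*(\nov^u) = \dm^*(\nov^u)$ for each $\nov^u \in R_{cl}$ and each sufficiently generic flat $A$-disc map $\dm$. Applying Lemma~\ref{lem:flatun} to both $\dm$ and $\Phi(\dm)$, and using $\val(\Phi(\dm)) = \val(\dm)$ from Lemma~\ref{lem:leadingflat}, both $\dm^*(\nov^u)$ and $\Phi(\dm)^*(\nov^u) = \dm^*\Phi^*(\nov^u)$ are forced to be single monomials $c(u)\,q^{\val(\dm)(u)}$ and $c'(u)\,q^{\val(\dm)(u)}$ respectively. The condition $d\Phi(0) = \id$ implies $\Phi^*(\nov^u) - \nov^u \in \fm^{k+1}$, where $k$ is the $\fm$-adic order of $\nov^u$; for $\val(\dm)$ chosen so that $(k+1)\min_p \val(\dm)(u_p) > \val(\dm)(u)$, the image $\dm^*(\Phi^*(\nov^u) - \nov^u)$ has strictly greater $q$-order than the leading monomial. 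Comparing the two forms of $\dm^*\Phi^*(\nov^u)$ then forces both $c'(u) = c(u)$ and the correction to vanish.

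Next I would globalize by varying the flat disc map. Writing $\Phi^*(\nov^u) - \nov^u = \sum_{w \in NE_{cl}} b_w \nov^w$, the pointwise identity yields $\sum_w b_w\,c(w)\,q^{\val(\dm)(w)} = 0$ in $\C\laurent{q}$ for every sufficiently generic flat $\dm$. A rich supply of flat $A$-disc maps is furnished by the monomial choices $\dm^*(\nov_p) := c_p\,q^{a_p}$, which are flat for any $(a_p) \in \Z^P$ in the interior of $\Nef$ and any $(c_p) \in (\C^*)^P$; by divisibility of $\C^*$, every character $c : H_2(X)/H_2(X \setminus D) \to \C^*$ arises this way. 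Proceeding order-by-order in the $\fm$-adic filtration (only finitely many $w$ contribute at each level), I would pick $\val(\dm)$ to separate the $q$-exponents of the contributing $w$ and then vary $c$, concluding $b_w = 0$ for each $w$ by linear independence of characters of the abelian group $H_2(X)/H_2(X \setminus D)$. Cumulatively this gives $\Phi^*|_{R_{cl}} = \id$.

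The hard part will be arranging the two simultaneous genericity conditions by actual flat disc maps: for each $u$ one needs $(a_p) \in \mathrm{int}(\Nef) \cap \Z^P$ satisfying the $q$-order inequality of Step~1, and at each level of the $\fm$-adic filtration one needs enough integral $(a_p)$ to separate the finitely many relevant $q$-exponents. Monomial flat disc maps are flexible enough for both, but the verification takes care, especially outside the positive setting where $\Nef$ may be a narrow cone and the ratio $\min_p a_p / \max_p a_p$ cannot be made arbitrarily close to~$1$.
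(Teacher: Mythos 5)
Your proof is correct and is essentially the paper's argument in different packaging: composing the two mirror maps into $\Phi$ and showing $\dm^*\Phi^*=\dm^*$ on $R_{cl}$ for every flat disc map (via Lemmas \ref{lem:flatun} and \ref{lem:leadingflat}, preservation of flatness, and the normalization $d\Psi_i(0)=\id$) is precisely the commutativity of the paper's diagram $\iota_A=\iota_B\circ(\Psi_i^*)_{cl}$, and your final separation-of-exponents step is the paper's Lemma \ref{lem:iotainj}. The difficulty you flag at the end is not a real one: for each fixed $w$ it suffices to find a single integral $\beta$ in the interior of $\Nef$ whose affine level set $\{\beta=\beta(w)\}$ meets $NE(\Nef)$ only in $w$, which exists for an arbitrary (even very narrow) cone because that level set meets $NE(\Nef)_\R$ in a compact set; in particular there is no need to vary $c$, invoke linear independence of characters, or control the ratio $\min_p a_p/\max_p a_p$.
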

\begin{proof}
Let $\Omega_A$ denote the set of maps $\beta: H_2(X) \to \Z$ which are equal to the composition
\[ H_2(X) \to H_2(X,X \setminus D) \xrightarrow{\val(\dm)} \Z\]
for some disc map $\dm$ which is flat for $(X,D,\Nef)$. 
We have a map
\begin{align}
\label{eqn:iotaar} \iota_A: R_{cl} & \to \C\laurent{q}^{\oplus \Omega_A} \\
\iota_A & := \bigoplus_{\beta \in \Omega_A} \dm^*_{\beta}|_{R_{cl}},
\end{align}
where $\dm_{\beta}$ is a flat disc map with $\val(\dm_{\beta}) = \beta$, and $c(u) \equiv 1$ in accordance with Lemma \ref{lem:flatun} (which shows that these properties characterize $\dm^*_{\beta}|_{R_{cl}}$ uniquely). 

We make a similar construction on the $B$-side (observe that the analogue of Lemma \ref{lem:flatun} holds on the $B$-model also, by our assumption that Hodge-theoretic mirror symmetry holds). 
Because $\Psi_i$ are Hodge-theoretic mirror maps, we have
\[ \dm \mbox{ is flat for $(X,D,\Nef)$ } \iff \Psi_i(\dm) \mbox{ is flat for $Y_R$}\]
for $i=1,2$. 
Furthermore, because $\Psi_i$ induce the identity map on $\fm/\fm^2$, $c \equiv 1$ for $\dm$ is equivalent to $c \equiv 1$ for $\Psi_i (\dm)$. 
Combining this with Lemma \ref{lem:leadingflat}, we have a canonical identification $\Omega_A \simeq \Omega_B$, and a commutative diagram
\[ \xymatrix{ R_{cl} \dar_{\iota_A} & R_{cl} \dar^{\iota_B} \ar[l]^{\left(\Psi^*_i\right)_{cl}}  \\
\C\laurent{q}^{\oplus \Omega_A} \ar@{=}[r] & \C\laurent{q}^{\oplus \Omega_B}.}\]
for $i=1,2$. The result now follows from the fact that $\iota_A$ (and hence also $\iota_B$) is injective, by the following Lemma \ref{lem:iotainj}.
\end{proof}

\begin{lem}
\label{lem:iotainj}
The map 
\[ \iota_A: R_{cl}  \to \C\laurent{q}^{\oplus \Omega_A} \]
of \eqref{eqn:iotaar} is injective.
\end{lem}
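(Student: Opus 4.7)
The plan is to show injectivity by producing, for each nonzero $r \in R_{cl}$, some $\beta \in \Omega_A$ with $\dm_\beta^*(r) \neq 0$. First I would build a large family of admissible $\beta$'s: for any integer-valued $\gamma \in \Nef^\circ \cap \Hom(H_2(X,X\setminus D),\Z)$, the assignment $\nov^u \mapsto q^{\gamma(u)}$ extends to an adically continuous $\C$-algebra homomorphism $\dm_\gamma^* \colon R \to \C\power{q}$ (continuity follows because $\gamma$ is strictly positive on $NE(\Nef)\setminus\{0\}$), and this is flat by Lemma \ref{lem:flatun} applied with $c \equiv 1$. The restriction of $\gamma$ to $H_2(X)$ then lies in $\Omega_A$, and $\dm_\beta^*|_{R_{cl}}$ is determined by $\nov^u \mapsto q^{\gamma(u)}$.

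Next, writing $r = \sum_{u \in T} c_u \nov^u$ with support $T \subset NE(\Nef)_{cl}$, I would exploit the following finiteness fact: for any $\gamma \in \Nef^\circ$, the slice $\{u \in NE(\Nef)_\R : \gamma(u) \leq N\}$ is bounded. Indeed, a rescaling argument using strong convexity of $NE(\Nef)_\R$ together with strict positivity of $\gamma$ on $NE(\Nef)_\R \setminus \{0\}$ rules out any unbounded sequence in this set. Intersecting with the lattice $H_2(X,X\setminus D)$ yields that $T \cap \{\gamma \leq N\}$ is finite for every $N$. Consequently the minimum $m := \min \gamma|_T$ is attained on a finite set $S_m \subset T$, and the series $\dm_\gamma^*(r) = \sum_{u \in T} c_u q^{\gamma(u)}$ converges in $\C\power{q}$.

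To isolate a unique leading term, I would then pick an auxiliary integer-valued $\gamma_1 \in \Nef^\circ$ chosen generically so that $\gamma_1|_{S_m}$ attains its minimum at a single point $u_0 \in S_m$; this is possible because $S_m$ is finite and $\Nef^\circ$ is a full-dimensional open cone, so integer points of $\Nef^\circ$ avoiding the finitely many hyperplanes $\{\gamma_1(u-u')=0 : u,u' \in S_m, u \neq u'\}$ are plentiful. For any integer $N > \gamma_1(u_0)$, the functional $\gamma' := N\gamma + \gamma_1$ again lies in $\Nef^\circ \cap \Hom(H_2(X,X\setminus D),\Z)$, and attains a unique minimum on $T$ at $u_0$: for $u \in S_m \setminus \{u_0\}$ this is immediate from the choice of $\gamma_1$, while for $u \in T \setminus S_m$ integrality forces $\gamma(u) \geq m+1$ and one has $\gamma_1(u) \geq 0$ (since $\gamma_1 \in \Nef^\circ$ and $u \in NE(\Nef)$), giving $\gamma'(u) \geq N(m+1) > Nm + \gamma_1(u_0) = \gamma'(u_0)$. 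Taking $\beta := \gamma'|_{H_2(X)} \in \Omega_A$, the convergent series $\dm_\beta^*(r)$ has unique lowest-order term $c_{u_0} q^{\gamma'(u_0)} \neq 0$, establishing injectivity.

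The principal obstacle is controlling the possibly infinite tail $T \setminus S_m$ in a single perturbation step. If one allowed $\gamma_1$ to be an arbitrary integer functional on $H_2(X,X\setminus D)$ rather than insisting $\gamma_1 \in \Nef^\circ$, the values $\gamma_1(u)$ for $u \in T \setminus S_m$ could be arbitrarily negative, and no finite choice of $N$ would push $\gamma'(u)$ above $\gamma'(u_0)$ uniformly over the tail. Restricting the perturbation to lie inside $\Nef^\circ$ supplies the uniform one-sided bound $\gamma_1 \geq 0$ on $NE(\Nef)$ that dominates the tail in one stroke; this is the step where the strong convexity of $NE(\Nef)_\R$ does the real work.
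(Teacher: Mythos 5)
Your proof is correct. It rests on the same core mechanism as the paper's: evaluate a putative kernel element at flat disc maps of the form $\nov^u \mapsto q^{\beta(u)}$ for integral $\beta \in \Nef^\circ$ (which is exactly what $c\equiv 1$ flat maps look like on $R_{cl}$ by Lemma \ref{lem:flatun}), and choose $\beta$ so that a single monomial of the support contributes to a given power of $q$. Where you differ is in how the separating functional is produced. The paper fixes an arbitrary $u$ in the support and finds, by a compactness-plus-genericity argument, an integral $\beta_u \in \Nef^\circ$ whose level set through $u$ meets the \emph{entire} monoid $NE(\Nef)$ only at $u$; the coefficient of $q^{\beta_u(u)}$ is then literally $a_u$, and one kills all coefficients one at a time. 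You instead isolate only a leading term of the support $T$ by a two-step lexicographic perturbation $\gamma' = N\gamma + \gamma_1$, which shows the element is nonzero without identifying which coefficient survives. Your route requires the extra bookkeeping about finiteness of sublevel sets and the uniform bound $\gamma_1 \ge 0$ on the tail (both of which you handle correctly, and which are indeed where strong convexity of $NE(\Nef)_\R$ enters), whereas the paper's version pushes all of that into the single observation that $\{v : \beta(v)=\beta(u)\} \cap NE(\Nef)_\R$ is compact for $\beta \in \Nef^\circ$. Either argument is acceptable.
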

\begin{proof}
Suppose that
\[ f := \sum_{u \in NE(\Nef)_{cl}} a_u \nov^u\]
is in the kernel of the map $\iota_A$. 
We will show that $a_u = 0$ for each $u \in NE(\Nef)_{cl}$. 
We start by observing that for each such $u$, there exists some $\beta_u$ in $\Nef^\circ \cap H^2(X,X \setminus D)$, so that 
\[ \{v \in NE(\Nef): \beta_u(v) = \beta_u(u)\} = \{u\}.\]
To prove this, first observe that the corresponding statement is true for an open dense set of $\beta \in \Nef^\circ$, because the hyperplane $\{v: \beta(v) = \beta(u)\}$ intersects $NE(\Nef)_\R$ in a compact set; hence it is true for some $\beta_u \in \Nef^\circ \cap H^2(X,X \setminus D;\Q)$, because $\Nef^\circ$ is open; hence it is true for some $\beta_u \in \Nef^\circ \cap H^2(X,X \setminus D)$ by rescaling by a positive integer.

There is a corresponding flat disc map $\dm_u$ with $c \equiv 1$, given by
\[ \dm^*_u\left(\nov^{u'}\right) := q^{\beta_u(u')}\]
for all $u'$.
Since $f$ is in the kernel of $\iota_A$, we have $\dm^*_u(f) = 0$. 
It follows that the $q^{\beta_u(u)}$ coefficient of $\dm_{\beta_u}(f)$ vanishes: but this coefficient is precisely $a_u$ by construction, so $a_u = 0$. 
The same argument applies to all $u \in NE(\Nef)_{cl}$, so $f=0$.
\end{proof}

\subsection{Base changing to the Novikov field}

Theorem \ref{thm:mirrcan} gives a partial characterization of the Hodge-theoretic mirror map $\Psi^*$.
In this section we would like to explain why this partial characterization is satisfactory as long as we are willing to work over the Novikov field.

Let $\BbK$ be a $\C$-algebra, and
\[ \dm^*: \Rpunc \to \BbK\]
a unital $\C$-algebra homomorphism. 
So if $\EuA$ is an $\Rpunc$-linear $A_\infty$ algebra, we can define the fibre $\BbK$-linear $A_\infty$ algebra $\EuA_\dm$ as in Definition \ref{defn:fibainf}.
 
Recall that $\Rpunc_{cl} \subset \Rpunc$ is the part of degree $0 \in H_1(X \setminus D)$ (see Definition \ref{defn:rcl}).
If $\dm^*: \Rpunc \to \BbK$ is a $\C$-algebra homomorphism, we denote $\dm^*_{cl} := \dm^*|_{\Rpunc_{cl}}: \Rpunc_{cl} \to \BbK$. 

\begin{lem}
\label{lem:divK}
Suppose that $\EuA$ is a $\G$-graded $\Rpunc$-linear $A_\infty$ algebra, we have unital $\C$-algebra homomorphisms
\[ \dm^*,\dmm^*: \Rpunc \to \BbK\]
such that $\dm^*_{cl} = \dmm^*_{cl}$, and the abelian group of units $\BbK^{\times}$ is divisible. 
Then we have an isomorphism of $\BbK$-linear $A_\infty$ algebras,
\[ \EuA_\dm  \simeq \EuA_\dmm.\]
\end{lem}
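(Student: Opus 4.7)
The plan is to use the hypothesis $\dm^*_{cl}=\dmm^*_{cl}$ together with the divisibility of $\BbK^{\times}$ to produce a character of $H_1(X \setminus D)$, which, via the $H_1(X \setminus D)$-grading induced on $\EuA$ by its $\G$-grading, yields a strict $A_\infty$ isomorphism $\EuA_\dm \simeq \EuA_\dmm$ given by a diagonal rescaling.

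First I would introduce the character $\chi: H_2(X,X\setminus D) \to \BbK^{\times}$ defined by $\chi(u) := \dm^*(\nov^u)/\dmm^*(\nov^u)$, which is well-defined and multiplicative because each $\nov^u$ is a unit in $\Rpunc$. Using the long exact sequence of the pair $(X,X\setminus D)$ together with Remark \ref{rmk:usualnov}, one has $\Rpunc_{cl} = \C[\ker \partial]$ where $\partial: H_2(X,X\setminus D) \to H_1(X \setminus D)$. The hypothesis $\dm^*_{cl}=\dmm^*_{cl}$ therefore forces $\chi$ to be trivial on $\ker \partial$, so $\chi$ descends to a character $\bar\chi: \partial H_2(X,X\setminus D) \to \BbK^{\times}$. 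Since $\BbK^{\times}$ is divisible, it is injective as a $\Z$-module, so $\bar\chi$ extends to a character $\psi: H_1(X \setminus D) \to \BbK^{\times}$ satisfying
\[ \dm^*(\nov^u) = \dmm^*(\nov^u) \cdot \psi(\partial u) \qquad \text{for all } u \in H_2(X,X\setminus D). \]

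Next I would use the projection $G \to H_1(X \setminus D)$ coming from $\G$ to equip every homogeneous element $a\in \EuA$ with an $H_1(X\setminus D)$-degree, which I denote $|a|_H$; by Remark \ref{rmk:usualnov} one has $|\nov^u|_H = -\partial u$. Define $F: \EuA_\dm \to \EuA_\dmm$ on elementary tensors with $a$ homogeneous by
\[ F(a \otimes b) := \psi(|a|_H)^{-1} \cdot (a \otimes b). \]
To check this descends from $\EuA \otimes_{\C} \BbK$ to the coequalizer $\EuA \otimes_{\Rpunc, \dm^*}\BbK$, I would verify that the defining relation $\nov^u a \otimes b = a \otimes \dm^*(\nov^u)b$ in $\EuA_\dm$ is sent to the analogous relation in $\EuA_\dmm$; this is a direct computation using $|\nov^u a|_H = |a|_H - \partial u$ together with $\dm^*(\nov^u) = \dmm^*(\nov^u)\psi(\partial u)$. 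The inverse is given by the same formula with $\psi$ replaced by $\psi^{-1}$.

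Finally I would verify that $F$ intertwines the $A_\infty$ structures on $\EuA_\dm$ and $\EuA_\dmm$. The key observation is that $\mu^s$ has $\G$-degree $2-s$, whose image in $H_1(X \setminus D)$ vanishes (since the composition $\Z \to G \to H_1(X \setminus D)$ factors through the inclusion of a fibre of the Lagrangian Grassmannian, which is null-homotopic in $X \setminus D$). Thus $|\mu^s(a_1,\ldots,a_s)|_H = \sum_k |a_k|_H$, so both $F(\mu^s_\dm(a_1,\ldots,a_s))$ and $\mu^s_\dmm(F(a_1),\ldots,F(a_s))$ acquire the same overall scalar $\psi(\sum_k |a_k|_H)^{-1}$ and agree. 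The argument is essentially formal once $\psi$ has been constructed; the only substantive step is the extension of $\bar\chi$ from $\partial H_2(X,X \setminus D)$ to $H_1(X \setminus D)$, which is precisely what the divisibility hypothesis on $\BbK^{\times}$ supplies.
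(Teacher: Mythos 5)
Your proof is correct and follows essentially the same route as the paper's: both construct the character $\dm^*(\nov^u)/\dmm^*(\nov^u)$ on $H_2(X,X\setminus D)$, use $\dm^*_{cl}=\dmm^*_{cl}$ to see it kills $\ker\partial$, extend the resulting character of $\partial H_2$ to all of $H_1(X\setminus D)$ via divisibility (injectivity of divisible abelian groups), and then define the isomorphism as a diagonal rescaling by this character in the $H_1(X\setminus D)$-degree, which intertwines the $A_\infty$ structures because the products preserve that degree. The only differences are cosmetic (your $\psi$ is the inverse of the paper's $\dmmm$ up to the sign $|\nov^u|_H=-\partial u$).
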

\begin{proof}
Observe that $\dm$ and $\dmm$ together define a homomorphism of abelian groups,
\begin{align}
\dmm^{-1}\dm: H_2(X,X \setminus D) & \to \BbK^\times,\\
\dmm^{-1}\dm(u) & := \dmm(\nov^{-u})\cdot \dm(\nov^u).
\end{align}
Furthermore, $\dmm^{-1}\dm$ vanishes on the kernel of the map $H_2(X,X \setminus D) \to H_1(X \setminus D)$, by the hypothesis that $\dm_{cl} = \dmm_{cl}$. 
Therefore, by the condition of divisibility, we have a factorization
\[ \xymatrix{H_2(X,X \setminus D) \rar \drar_{\dmm^{-1}\dm} & H_1(X \setminus D) \ar@{-->}[d]_{\dmmm} \\
		& \BbK^\times.}\]
As a result, we have
\begin{equation}
\label{eqn:Xiups}
 \dmm(r) \dmmm(|r|) = \dm(r) 
\end{equation}
for all $r \in \Rpunc$ pure of degree $|r| \in H_1(X \setminus D)$.

We use this to define an isomorphism 
\begin{align}
\EuA_\dm& \xrightarrow{\simeq} \EuA_\dmm \\
a \otimes k & \mapsto a \otimes \dmmm(|a|) \cdot k,
\end{align}
for $a$ pure of degree $|a| \in H_1(X \setminus D)$. 
One easily verifies that this map is well-defined using \eqref{eqn:Xiups}; it is also an isomorphism of $\BbK$-linear $A_\infty$ algebras, using the fact that $\EuA$ is $\G$-graded, so the $A_\infty$ products on $\EuA$ respect the $H_1(X \setminus D)$-grading.
\end{proof}

\begin{example}
The example we have in mind (in light of \S \ref{subsec:relabsol}) is $\BbK = \Lambda$, the Novikov field. 
$\Lambda$ is algebraically closed (see \cite[Lemma A.1]{Fukaya2010c}), so $\Lambda^\times$ is divisible.
\end{example}

\begin{example}
Suppose $\dm,\dmm$ are disc maps with $\dm_{cl} = \dmm_{cl}$. 
Then it need not be true that $Y_{\dm} \simeq Y_{\dmm}$: however after a further base change via $\C\laurent{q} \hookrightarrow \Lambda$, the schemes become isomorphic, as their homogeneous coordinate rings are isomorphic by Lemma \ref{lem:divK}.
\end{example}

\begin{rmk}
Lemma \ref{lem:divK} can not be applied directly to $\BbK = \C \laurent{q}$, since its group of units is not divisible. 
However it is easy to prove a variation in which the conclusion $\EuA_\dm \simeq \EuA_\dmm$ holds after passing to some finite cover $\C\laurent{q^{1/j}}$.
\end{rmk}

\subsection{Homological mirror symmetry}
\label{subsec:hms}

In this section we outline a general approach to proving instances of homological mirror symmetry, using the results of this paper. 
The approach assumes that the absolute and relative Fukaya categories exist and have certain properties. 
We will draw attention to the properties we are using as we go along. 
After sketching the approach in the abstract, we will explain how it was implemented in the case of Greene--Plesser mirror pairs \cite{SS}.

We first introduce the notion of a \emph{Novikov disc map}, which is the same as a disc map except $\C\laurents{q}$ is replaced with the Novikov field $\Lambda$ in the definition. We consider the following situation:

\begin{sit}
\label{sit:hms}
We have
\begin{enumerate}
\item \label{it:1} $\Nef$ is nice and Calabi--Yau, and $[\omega]$ is contained in its interior.
\item \label{it:2} $Y_R$ is as before, and $Y_{\dm}$ is a projective, connected $\Lambda$-scheme  which is smooth and has trivial relative canonical sheaf for all Novikov disc maps $\dm$.
\item \label{it:3} $\EuB$ is a $\G$-graded $R$-linear $A_\infty$ category, such that there is a $\Lambda$-linear $A_\infty$ quasi-equivalence
\[ D^\pi\left(\EuB_\dm\right) \simeq \dbdg Coh\left(Y_{\dm}\right) \]
for all Novikov disc maps $\dm$. `$D^\pi$' denotes the split-closed derived $A_\infty$ category.
\item \label{it:4} $\EuA \subset \fuk(X,D,\Nef)$ is a full subcategory of the relative Fukaya category, and there is a $\G$-graded $R$-linear (possibly curved) $A_\infty$ functor
\begin{equation}
\label{eqn:hmsiso}
 \Psi^* \EuB \dashrightarrow \EuA
 \end{equation}
for some $\Psi^* \in \Aut(R)$, such that $\Psi^*: \fm/\fm^2 \to \fm/\fm^2$ is the identity. 
The functor has curvature of order $\fm$, it is bijective on objects, and its linear term is an isomorphism (not just a quasi-isomorphism) on each morphism space.
\end{enumerate}
\end{sit}

Our main results, Theorems \ref{thm:1} and \ref{thm:2}, give criteria for verifying the main hypothesis \eqref{it:4}. 
In particular, the $A_\infty$ functors produced by our deformation theory arguments are of the kind appearing in \eqref{eqn:hmsiso}, cf. Definition \ref{defn:vers1int}. 

\begin{rmk}
Note that, by Lemma \ref{lem:curvmc}, the curved functor \eqref{eqn:hmsiso} induces an uncurved functor
\[ \Psi^* \EuB^\bc \dashrightarrow \EuA^\bc.\]
Since $\EuB$ is uncurved, we have a strict embedding $\EuB \hookrightarrow \EuB^\bc$ by equipping each object with the zero bounding cochain. Composing, we obtain an uncurved $A_\infty$ functor
\[ \Psi^* \EuB \dashrightarrow \EuA^\bc.\]
Our assumption that the linear term of \eqref{eqn:hmsiso} is an isomorphism ensures that this functor is a quasi-embedding (i.e., cohomologically full and faithful), even after base-change. 
In particular, combining with Hypothesis \eqref{it:3} we obtain an uncurved quasi-embedding
\begin{equation}
\label{eqn:corhms} 
\dbdg Coh(Y_{\Psi(d)}) \hookrightarrow D^\pi \fuk(X,D,\Nef)_d^\bc
\end{equation}
for any Novikov disc map $d$. 
These uncurved quasi-embeddings are the main inputs for Theorems \ref{thm:sitgiveshms} and \ref{thm:hmshodge} below.
\end{rmk}

Let us assume that $\fuk(X,\omega)^\bc$ is a $\Lambda$-linear, $\Z$-graded $A_\infty$ category (see \S \ref{subsec:relabsol}). 
We denote its split-closed derived $A_\infty$ category by $D^\pi \fuk(X,\omega)^\bc$.

\begin{defn}
Let $(X,\omega)$ be a Calabi--Yau symplectic manifold.
Let $Y_\omega$ be a scheme over $\Lambda$, and $\dbdg Coh(Y_\omega)$ a DG enhancement of its bounded derived category of coherent sheaves.
We say that $(X,\omega)$ and $Y_\omega$ are \emph{homologically mirror} if there is a quasi-equivalence of $\Lambda$-linear, $\Z$-graded $A_\infty$ categories
\[ D^\pi\fuk(X,\omega)^\bc \simeq \dbdg Coh(Y_\omega).\]
\end{defn}

\begin{thm}\label{thm:sitgiveshms}
In Situation \ref{sit:hms}, let $\dm := \dm([\omega])$ be the $\Lambda$-point of $\overline{\cM}_{K\ddot{a}h}(X,D,\Nef)$ from Lemma \ref{lem:RLamb}. 
Then $(X,\omega)$ and $Y_{\Psi(\dm)}$ are homologically mirror.
\end{thm}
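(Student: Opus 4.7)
The plan is to specialize the $R$-linear mirror functor at the Novikov point $\dm=\dm([\omega])$ of Lemma \ref{lem:RLamb}, convert the curvature into bounding cochains via Lemma \ref{lem:curvmc}, and then pass to the absolute Fukaya category using Assumption \ref{ass:relabs}.

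First I would check that $\dm^*\circ\Psi^*\colon R\to\Lambda$ is again a Novikov disc map: adic continuity is inherited from both factors, and $\val(\Psi(\dm))=\val(\dm)=[\omega]$ lies in the interior of $\Nef$ by Lemma \ref{lem:leadingflat}. Consequently $(\Psi^*\EuB)_\dm=\EuB_{\Psi(\dm)}$, and base-changing the curved functor $\Psi^*\EuB\dashrightarrow \EuA$ over $\dm$ yields a curved $\Lambda$-linear $A_\infty$ functor
\begin{equation*}
F_\dm\colon \EuB_{\Psi(\dm)}\dashrightarrow \EuA_\dm
\end{equation*}
whose curvature has strictly positive $q$-valuation (it comes from an element of $\fm$) and whose linear term is a $\Lambda$-module isomorphism on each morphism space. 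Since $\EuB_{\Psi(\dm)}$ is uncurved, Lemma \ref{lem:curvmc} applied with the zero bounding cochain on the source produces an uncurved $A_\infty$ functor
\begin{equation*}
G\colon \EuB_{\Psi(\dm)}\longrightarrow \EuA_\dm^{\bc},\qquad L\mapsto \Bigl(L,\ \sum_{s\ge 0}F_\dm^s(0,\ldots,0)\Bigr),
\end{equation*}
whose linear term remains a $\Lambda$-module isomorphism.

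Next I would splice together the known equivalences. Passing to split-closed derived $A_\infty$ categories, $G$ is a quasi-embedding. Hypothesis \eqref{it:3} identifies $D^\pi\EuB_{\Psi(\dm)}\simeq \dbdg Coh(Y_{\Psi(\dm)})$, which applies because $\Psi(\dm)$ is a Novikov disc map and $Y_{\Psi(\dm)}$ is smooth by hypothesis \eqref{it:2}. Assumption \ref{ass:relabs} gives a fully faithful embedding of the fibre of $\fuk(X,D,\Nef)^{\bc}$ at $\dm$ into $\fuk(X,\omega)^{\bc}$; restricting to the subcategory $\EuA$ and passing to $D^\pi$ produces $D^\pi\EuA_\dm^{\bc}\hookrightarrow D^\pi\fuk(X,\omega)^{\bc}$. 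Composing the three steps yields a $\Z$-graded $\Lambda$-linear $A_\infty$ quasi-embedding
\begin{equation*}
\dbdg Coh(Y_{\Psi(\dm)})\ \hookrightarrow\ D^\pi\fuk(X,\omega)^{\bc}.
\end{equation*}

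The main obstacle, and the only step not already packaged in the formal machinery, is upgrading this quasi-embedding to a quasi-equivalence, i.e.\ checking that its image split-generates $D^\pi\fuk(X,\omega)^{\bc}$. The natural tool is Abouzaid's split-generation criterion: it suffices to verify that the open--closed map $\mathsf{OC}\colon \HH_\bullet(\EuA_\dm^{\bc})\to SH^\bullet(X\setminus D;\Lambda)$ (or its closed analogue landing in quantum cohomology of $X$) hits the unit. Under the equivalence $\EuA_\dm^{\bc}\simeq \dbdg Coh(Y_{\Psi(\dm)})$, the source is computed by Hochschild--Kostant--Rosenberg, and the Calabi--Yau hypothesis singles out a canonical top-degree class; the claim to verify is that $\mathsf{OC}$ sends it to the unit. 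I expect this is where the genuinely geometric work lies in any concrete application (e.g.\ in \cite{SS}), and depending on the desired level of generality one either carries out this verification or absorbs split-generation as an additional clause in Situation \ref{sit:hms}.
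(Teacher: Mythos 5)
Your reduction steps are exactly the ones the paper performs in the remark following Situation \ref{sit:hms}: specialize the curved $R$-linear functor at the Novikov point, use Lemma \ref{lem:curvmc} with the zero bounding cochain to produce an uncurved quasi-embedding $\dbdg Coh(Y_{\Psi(\dm)}) \hookrightarrow D^\pi\fuk(X,D,\Nef)^\bc_\dm$, and then feed this into Assumption \ref{ass:relabs} to land in $D^\pi\fuk(X,\omega)^\bc$. Up to that point you are reproducing the intended argument.

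The divergence is in the final step, and as written your proof does not establish the theorem as stated. You leave split-generation either as a verification to be carried out in each application or as an extra clause to be added to Situation \ref{sit:hms}; but the theorem has no such hypothesis, so something must force split-generation from the hypotheses already present. That is precisely what the paper's proof supplies by citing the main result of \cite{Perutz2015} (``automatic split-generation''): for a smooth, connected, projective Calabi--Yau $Y_{\Psi(\dm)}$ over $\Lambda$ --- which is guaranteed by hypotheses \eqref{it:2} and \eqref{it:3} of Situation \ref{sit:hms} --- a quasi-embedding of $\dbdg Coh(Y_{\Psi(\dm)})$ into $D^\pi\fuk(X,\omega)^\bc$ is automatically a quasi-equivalence. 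The mechanism is essentially the one you gesture at, but run once and for all rather than case by case: the nondegeneracy of the pairing on Hochschild homology of the smooth proper Calabi--Yau $B$-side, together with the compatibility of the open--closed and closed--open maps with these structures, forces $\mathsf{OC}$ to hit the unit, so Abouzaid's criterion is satisfied without any further geometric input. (The cited result originally also assumed maximal unipotence of $Y_\dm$, which the paper notes can be removed using \cite{Ganatra2016}.) So the fix for your argument is not to add a hypothesis or do a computation in each example, but to invoke this automatic-generation theorem, whose applicability is exactly what hypotheses \eqref{it:2}--\eqref{it:4} are designed to ensure.
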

\begin{proof}
This is the main result of \cite{Perutz2015}. 
The result relies on the relative Fukaya category having certain properties laid out in \cite[\S 2]{Perutz2015}, and being related to the absolute Fukaya category as in \cite[Conjecture 1.8]{Perutz2015}.
The proof also uses an additional geometric condition on $Y_\dm$ called \emph{maximal unipotence}, but it is explained in \cite{Ganatra2016} that this condition can be avoided. 
The requisite properties of the relative Fukaya category will be proved for a particular version of the relative Fukaya category in \cite{Perutz2015a}.
\end{proof}

\begin{thm}
\label{thm:hmshodge}
In Situation \ref{sit:hms}, $\Psi$ is a Hodge-theoretic mirror map for $(X,D,\Nef)$ and $Y_R$, in the sense of Definition \ref{defn:hodgemirrmap}.
\end{thm}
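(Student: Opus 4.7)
The first condition in Definition \ref{defn:hodgemirrmap}---that $d\Psi(0) = \id$ on $\fm/\fm^2$---is built into item \eqref{it:4} of Situation \ref{sit:hms}, so the task reduces to establishing the VSHS isomorphism $\dm^* \EuH^A(X,D,\Nef) \simeq \EuH^B(Y_{\Psi(\dm)})$ for every disc map $\dm$. The plan is to produce this isomorphism by converting both sides into categorical invariants and then transporting the equivalence across \eqref{eqn:hmsiso}.

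First I would handle disc maps of the special form $\dm = \dm([\omega])$ arising from a relative K\"ahler form $\omega$ on $(X,D)$ with $[\omega]$ in the interior of $\Nef$. Theorem \ref{thm:sitgiveshms} applies in Situation \ref{sit:hms} to produce a $\Lambda$-linear quasi-equivalence $D^\pi \fuk(X,\omega)^\bc \simeq \dbdg Coh(Y_{\Psi(\dm)})$. The main result of \cite{Ganatra2015} then converts such a quasi-equivalence of proper $\Z$-graded Calabi--Yau $A_\infty$ categories into an isomorphism of geometric VSHS's $\EuH^A(X,\omega) \simeq \EuH^B(Y_{\Psi(\dm)})$; combined with the natural identification $\EuH^A(X,\omega) \simeq \dm([\omega])^* \EuH^A(X,D,\Nef)$, arising because the absolute A-model VSHS is the specialization of the relative one at the chosen K\"ahler class, this establishes the theorem for this special class of disc maps.

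To promote the statement from these special disc maps to arbitrary ones, I would work in families. Hypothesis \eqref{it:4} of Situation \ref{sit:hms} provides an $R$-linear curved $A_\infty$ equivalence $\Psi^* \EuB \simeq \EuA$, which pulls back along any disc map $\dm^*: R \to \C\power{q}$ to a curved $A_\infty$ equivalence of $\C\power{q}$-linear categories; after restricting to $\C\laurents{q}$, Lemma \ref{lem:curvmc} converts this into an honest $A_\infty$ quasi-equivalence of bounding-cochain completions. Coupling this with a family version of the comparison theorem of \cite{Ganatra2015}, one realizes both $\dm^* \EuH^A(X,D,\Nef)$ and $\EuH^B(Y_{\Psi(\dm)})$ as the categorical VSHS's attached to $\dm^* \EuA^\bc$ and $\Psi(\dm)^* \EuB^\bc$ respectively, and transports the isomorphism between them.

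The main obstacle is precisely this promotion of the comparison between the geometric and categorical A-model VSHS's from a single Novikov fibre (as provided by \cite{Ganatra2015}) to a morphism of $\Rpunc$-linear VSHS's over all of $\cM_{K\ddot{a}h}(X,D,\Nef)$. This demands constructing the cyclic open--closed map and the logarithmic PSS map of \S\ref{subsec:SH} as morphisms of $\Rpunc$-linear pro-modules, and verifying that they remain quasi-isomorphisms after specialization to every disc map. Once this family-level comparison is available, the remainder of the argument is a direct transport of structure along \eqref{eqn:hmsiso}.
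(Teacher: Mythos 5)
Your overall strategy---transport the VSHS isomorphism across the homological mirror equivalence using the comparison theorem of \cite{Ganatra2015}---is the same as the paper's, but you have misplaced where the work lies, and in doing so manufactured an obstacle that the actual argument does not need to overcome. Two points. First, the ``special case'' in your second paragraph concerns the $\Lambda$-points $\dm([\omega])$, which are \emph{Novikov} disc maps; Definition \ref{defn:hodgemirrmap} quantifies only over disc maps valued in $\C\power{q}$ (restricting to $\C\laurents{q}$ on the punctured disc), so that case is not logically a special case of the theorem and cannot serve as a base for a ``promotion'' argument. Second, and more importantly, no family version of the comparison theorem over $\Rpunc$ is required: the definition of a Hodge-theoretic mirror map asks only for an isomorphism of VSHS's \emph{fibrewise} at each disc map $\dm$, and each such fibre lives over the field $\C\laurents{q}$. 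The paper's proof observes that the argument establishing Theorem \ref{thm:sitgiveshms} (base-changing the $R$-linear functor \eqref{eqn:hmsiso} along $\dm^*$, passing to bounding cochains via Lemma \ref{lem:curvmc}, and split-generating) goes through verbatim over $\C\laurents{q}$ to give a quasi-equivalence $\dbdg Coh(Y_{\Psi(\dm)}) \simeq D^\pi\bigl(\fuk(X,D,\Nef)^\bc_{\dm}\bigr)$ for \emph{every} disc map $\dm$, and then applies \cite[Theorem A]{Ganatra2015} directly at that fibre --- exactly as one does over $\Lambda$ --- to conclude $\dm^*\EuH^A(X,D,\Nef) \simeq \EuH^B(Y_{\Psi(\dm)})$. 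Your third paragraph already contains the correct base-change step; you should follow it with the fibrewise application of \cite[Theorem A]{Ganatra2015} rather than deferring to an unconstructed $\Rpunc$-linear cyclic open--closed map. The global-in-families statement you describe would be strictly stronger than what is claimed and is not needed here.
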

\begin{proof}
The proof of Theorem \ref{thm:sitgiveshms} shows that there is a quasi-equivalence
\[ \dbdg Coh(Y_{\Psi(d)}) \simeq D^\pi\left(\fuk(X,D,\Nef)^\bc_{\dm}\right)\]
for any disc map $\dm$. 
This implies the result, by \cite[Theorem A]{Ganatra2015}. 
The result relies on the relative Fukaya category having certain properties laid out in \cite[\S 4]{Ganatra2015}, which will be proved for a particular version of the relative Fukaya category in \cite{Perutz2015a,Ganatra2015a}. 
It also relies on a certain conjecture on the $B$-side, saying that the cyclic formality map respects Hodge structures.
The proof again uses maximal unipotence of $Y_\dm$, but again it can be avoided by \cite{Ganatra2016}.
\end{proof}

\begin{rmk}
Observe that the $\Lambda$-scheme $Y_{\Psi(\dm)}$ only depends (up to isomorphism) on $(\Psi(\dm))_{cl} = \Psi_{cl}(\dm_{cl})$. 
$\Psi_{cl}$ is uniquely determined and equal to the Hodge-theoretic mirror map, by Theorems \ref{thm:hmshodge} and \ref{thm:mirrcan}, and $\dm_{cl}$ is uniquely determined by the cohomology class $[\omega] \in H^2(X;\R)$.
\end{rmk}

We now explain roughly how Situation \ref{sit:hms} was established for Greene--Plesser mirror pairs in \cite{SS}, leading to a proof of homological mirror symmetry in that case via Theorem \ref{thm:sitgiveshms}.
\begin{itemize}
\item Hypothesis \eqref{it:1} was verified using the results of Section \ref{sec:geomset}. 
\item Hypothesis \eqref{it:2} was verified in [\emph{op. cit.}, Proposition 4.4]: it was shown that when $val(d)$ lies in the interior of an appropriate cone $\Nef$, the tropicalization of $Y_d$ satisfies a condition similar to `tropical smoothness', which guarantees smoothness of $Y_d$.
\item The category $\EuB$ was constructed in [\emph{op. cit.}, Section 4] by using the homological perturbation lemma to construct a minimal model for a subcategory of a certain $R$-linear category of graded matrix factorizations. After base-changing to the field $\Lambda$, this quasi-embeds in a $\Lambda$-linear category of graded matrix factorizations, which is quasi-equivalent to $\dbdg (Y_d)$ by Orlov's theorem \cite{Orlov2009}. 
One verifies that the image of the base-change $\EuB_d$ split-generates, yielding Hypothesis \eqref{it:3}.
\item The Lagrangian objects of $\EuA$ are constructed as lifts of a single immersed Lagrangian under a branched cover of the kind considered in \S \ref{subsec:brcov} (see [\emph{op. cit.}, Section 3.2]). 
The endomorphism algebra of this immersed Lagrangian in the affine Fukaya category downstairs was computed in \cite{Sheridan:pants}, and its first-order deformation classes were computed in \cite{Sheridan:CY}. 
The results of \S \ref{subsec:cobr} allow one to deduce the endomorphism algebra and first-order deformation classes upstairs, and hence to verify the hypotheses of the versality criterion Theorem \ref{thm:2} (with an appropriate signed group action). 
It then suffices to identify $\EuA$ and $\EuB$ modulo $\fm$, and identify their first-order deformation classes, in order to verify Hypothesis \eqref{it:4}.
\end{itemize}

\bibliographystyle{amsalpha}
\bibliography{mybib}

\providecommand{\bysame}{\leavevmode\hbox to3em{\hrulefill}\thinspace}
\providecommand{\MR}{\relax\ifhmode\unskip\space\fi MR }
\providecommand{\MRhref}[2]{%
  \href{http://www.ams.org/mathscinet-getitem?mr=#1}{#2}
}
\providecommand{\href}[2]{#2}
\begin{thebibliography}{CnBMS10}

\bibitem[Amo16]{Amorim2016}
Lino Amorim, \emph{Tensor product of filtered {$A_\infty$}-algebras}, J. Pure
  Appl. Algebra \textbf{220} (2016), no.~12, 3984--4016. \MR{3517566}

\bibitem[Amo17]{Amorim2017a}
\bysame, \emph{The {K}\"{u}nneth theorem for the {F}ukaya algebra of a product
  of {L}agrangians}, Internat. J. Math. \textbf{28} (2017), no.~4, 1750026, 38.
  \MR{3639044}

\bibitem[AS10]{Abouzaid2007}
Mohammed Abouzaid and Paul Seidel, \emph{An open string analogue of {V}iterbo
  functoriality}, Geom. Topol. \textbf{14} (2010), no.~2, 627--718.
  \MR{2602848}

\bibitem[Aur00]{Auroux2000}
Denis Auroux, \emph{Symplectic 4-manifolds as branched coverings of {$\bold
  C\bold P^2$}}, Invent. Math. \textbf{139} (2000), no.~3, 551--602.
  \MR{1738061}

\bibitem[Aur07]{Auroux2007}
\bysame, \emph{Mirror symmetry and {$T$}-duality in the complement of an
  anticanonical divisor}, J. G\"{o}kova Geom. Topol. GGT \textbf{1} (2007),
  51--91. \MR{2386535}

\bibitem[BEE12]{BEE}
Fr\'{e}d\'{e}ric Bourgeois, Tobias Ekholm, and Yasha Eliashberg, \emph{Effect
  of {L}egendrian surgery}, Geom. Topol. \textbf{16} (2012), no.~1, 301--389,
  With an appendix by Sheel Ganatra and Maksim Maydanskiy. \MR{2916289}

\bibitem[BK98]{Barannikov1998}
Sergey Barannikov and Maxim Kontsevich, \emph{Frobenius manifolds and formality
  of {L}ie algebras of polyvector fields}, Internat. Math. Res. Notices (1998),
  no.~4, 201--215. \MR{1609624}

\bibitem[BSa]{Borman2016}
Matthew~Strom Borman and Nick Sheridan, \emph{Quantum cohomology as a
  deformation of symplectic cohomology}, In preparation.

\bibitem[BSb]{Borman2015}
\bysame, \emph{{The $L_\infty$ structure on symplectic cohomology}}, In
  preparation.

\bibitem[CK99]{coxkatz}
David Cox and Sheldon Katz, \emph{Mirror symmetry and algebraic geometry},
  Mathematical Surveys and Monographs, vol.~68, American Mathematical Society,
  Providence, RI, 1999. \MR{1677117}

\bibitem[CLS11]{Cox2011}
David Cox, John Little, and Henry Schenck, \emph{Toric varieties}, Graduate
  Studies in Mathematics, vol. 124, American Mathematical Society, Providence,
  RI, 2011. \MR{2810322}

\bibitem[CM07]{Cieliebak2007}
Kai Cieliebak and Klaus Mohnke, \emph{Symplectic hypersurfaces and
  transversality in {G}romov-{W}itten theory}, J. Symplectic Geom. \textbf{5}
  (2007), no.~3, 281--356. \MR{2399678}

\bibitem[CnBMS10]{Castano2010}
Ricardo Casta\~{n}o Bernard, Diego Matessi, and Jake Solomon, \emph{Symmetries
  of {L}agrangian fibrations}, Adv. Math. \textbf{225} (2010), no.~3,
  1341--1386. \MR{2673733}

\bibitem[Fab]{Fabert2013}
Oliver Fabert, \emph{{Higher algebraic structures in Hamiltonian Floer theory
  I}}, Preprint, available at arXiv:1310.6014.

\bibitem[FOOO09]{FO3}
Kenji Fukaya, Yong-Geun Oh, Hiroshi Ohta, and Kaoru Ono, \emph{Lagrangian
  intersection {F}loer theory: anomaly and obstruction. {P}art {I}}, AMS/IP
  Studies in Advanced Mathematics, vol.~46, American Mathematical Society,
  Providence, RI; International Press, Somerville, MA, 2009. \MR{2553465
  (2011c:53217)}

\bibitem[FOOO10]{Fukaya2010c}
\bysame, \emph{Lagrangian {F}loer theory on compact toric manifolds. {I}}, Duke
  Math. J. \textbf{151} (2010), no.~1, 23--174. \MR{2573826}

\bibitem[FOOO17]{Fukaya2017a}
\bysame, \emph{Antisymplectic involution and {F}loer cohomology}, Geom. Topol.
  \textbf{21} (2017), no.~1, 1--106. \MR{3608709}

\bibitem[Fri83]{Friedman1983}
Robert Friedman, \emph{Global smoothings of varieties with normal crossings},
  Ann. of Math. (2) \textbf{118} (1983), no.~1, 75--114. \MR{707162}

\bibitem[FS]{Fabert2019}
Oliver Fabert and Jan-David Salchow, \emph{{The $L_\infty$-structure on
  symplectic cohomology}}, Preprint, available at arXiv:1903.12143.

\bibitem[Ful93]{Fulton1993}
William Fulton, \emph{Introduction to toric varieties}, Annals of Mathematics
  Studies, vol. 131, Princeton University Press, Princeton, NJ, 1993, The
  William H. Roever Lectures in Geometry. \MR{1234037}

\bibitem[Gan]{Ganatra2016}
Sheel Ganatra, \emph{{Automatically generating Fukaya categories and computing
  quantum cohomology}}, Preprint, available at arXiv:1605.07702.

\bibitem[Gan13]{Ganatra2013}
\bysame, \emph{{Symplectic Cohomology and Duality for the Wrapped Fukaya
  Category}}, {Preprint, available at arXiv:1304.7312}, 2013.

\bibitem[GH78]{Griffiths1978}
Phillip Griffiths and Joseph Harris, \emph{Principles of algebraic geometry},
  Wiley-Interscience [John Wiley \& Sons], New York, 1978, Pure and Applied
  Mathematics. \MR{507725}

\bibitem[GHK15]{Gross2015}
Mark Gross, Paul Hacking, and Sean Keel, \emph{Mirror symmetry for log
  {C}alabi-{Y}au surfaces {I}}, Publ. Math. Inst. Hautes \'{E}tudes Sci.
  \textbf{122} (2015), 65--168. \MR{3415066}

\bibitem[GP]{Ganatra2016a}
Sheel Ganatra and Daniel Pomerleano, \emph{{A Log PSS morphism with
  applications to Lagrangian embeddings}}, Preprint, available at
  arXiv:1611.06849.

\bibitem[GPS]{Ganatra2015a}
Sheel Ganatra, Timothy Perutz, and Nick Sheridan, \emph{The cyclic open--closed
  map}, In preparation.

\bibitem[GPS15]{Ganatra2015}
\bysame, \emph{{Mirror symmetry: from categories to curve counts}}, Preprint,
  available at arXiv:1510.03839, 2015.

\bibitem[HS95]{Hofer1995}
Helmut Hofer and Dietmar Salamon, \emph{Floer homology and {N}ovikov rings},
  The {F}loer memorial volume, Progr. Math., vol. 133, Birkh\"{a}user, Basel,
  1995, pp.~483--524. \MR{1362838}

\bibitem[Iit82]{Iitaka1982}
Shigeru Iitaka, \emph{Algebraic geometry}, Graduate Texts in Mathematics,
  vol.~76, Springer-Verlag, New York-Berlin, 1982, An introduction to
  birational geometry of algebraic varieties, North-Holland Mathematical
  Library, 24. \MR{637060}

\bibitem[Kel06]{Keller2006}
Bernhard Keller, \emph{On differential graded categories}, International
  {C}ongress of {M}athematicians. {V}ol. {II}, Eur. Math. Soc., Z\"{u}rich,
  2006, pp.~151--190. \MR{2275593}

\bibitem[KKOY09]{Kapustin2009}
Anton Kapustin, Ludmil Katzarkov, Dmitri Orlov, and Mirroslav Yotov,
  \emph{Homological mirror symmetry for manifolds of general type}, Cent. Eur.
  J. Math. \textbf{7} (2009), no.~4, 571--605. \MR{2563433}

\bibitem[Kle66]{Kleiman1966}
Steven Kleiman, \emph{Toward a numerical theory of ampleness}, Ann. of Math.
  (2) \textbf{84} (1966), 293--344. \MR{0206009}

\bibitem[Kon03]{Kontsevich2003}
Maxim Kontsevich, \emph{Deformation quantization of {P}oisson manifolds}, Lett.
  Math. Phys. \textbf{66} (2003), no.~3, 157--216. \MR{2062626}

\bibitem[Laz04]{Lazarsfeld2004}
Robert Lazarsfeld, \emph{Positivity in algebraic geometry. {I}}, Ergebnisse der
  Mathematik und ihrer Grenzgebiete. 3. Folge. A Series of Modern Surveys in
  Mathematics [Results in Mathematics and Related Areas. 3rd Series. A Series
  of Modern Surveys in Mathematics], vol.~48, Springer-Verlag, Berlin, 2004,
  Classical setting: line bundles and linear series. \MR{2095471}

\bibitem[LZ14]{Le2014}
Jue Le and Guodong Zhou, \emph{On the {H}ochschild cohomology ring of tensor
  products of algebras}, J. Pure Appl. Algebra \textbf{218} (2014), no.~8,
  1463--1477. \MR{3175033}

\bibitem[Ma'15]{Mau2015}
Sikimeti Ma'u, \emph{Quilted strips, graph associahedra, and {$A_\infty$}
  {$n$}-modules}, Algebr. Geom. Topol. \textbf{15} (2015), no.~2, 783--799.
  \MR{3342676}

\bibitem[Man99]{Manetti1999}
Marco Manetti, \emph{Deformation theory via differential graded {L}ie
  algebras}, Algebraic {G}eometry {S}eminars, 1998--1999 ({I}talian) ({P}isa),
  Scuola Norm. Sup., Pisa, 1999, pp.~21--48. \MR{1754793}

\bibitem[McL12]{McLean2012}
Mark McLean, \emph{The growth rate of symplectic homology and affine
  varieties}, Geom. Funct. Anal. \textbf{22} (2012), no.~2, 369--442.
  \MR{2929069}

\bibitem[MS04]{mcduffsalamon}
Dusa McDuff and Dietmar Salamon, \emph{{$J$}-holomorphic curves and symplectic
  topology}, American Mathematical Society Colloquium Publications, vol.~52,
  American Mathematical Society, Providence, RI, 2004. \MR{2045629}

\bibitem[Orl09]{Orlov2009}
Dmitri Orlov, \emph{Derived categories of coherent sheaves and triangulated
  categories of singularities}, Algebra, arithmetic, and geometry: in honor of
  {Y}u. {I}. {M}anin. {V}ol. {II}, Progr. Math., vol. 270, Birkh\"{a}user
  Boston, Inc., Boston, MA, 2009, pp.~503--531. \MR{2641200}

\bibitem[Pom12]{Pomerleano2012}
Daniel Pomerleano, \emph{Curved string topology and tangential {F}ukaya
  categories}, String-{M}ath 2011, Proc. Sympos. Pure Math., vol.~85, Amer.
  Math. Soc., Providence, RI, 2012, pp.~409--422. \MR{2985346}

\bibitem[PSa]{Perutz2015}
Timothy Perutz and Nick Sheridan, \emph{{Automatic split-generation for the
  Fukaya category}}, Preprint, available at arXiv:1510.03848.

\bibitem[PSb]{Perutz2015a}
\bysame, \emph{{Foundations of the relative Fukaya category}}, In preparation.

\bibitem[Sei00]{Seidel1999}
Paul Seidel, \emph{Graded {L}agrangian submanifolds}, Bull. Soc. Math. France
  \textbf{128} (2000), no.~1, 103--149. \MR{1765826}

\bibitem[Sei02]{Seidel2002}
\bysame, \emph{Fukaya categories and deformations}, Proceedings of the
  {I}nternational {C}ongress of {M}athematicians, {V}ol. {II} ({B}eijing,
  2002), Higher Ed. Press, Beijing, 2002, pp.~351--360. \MR{1957046}

\bibitem[Sei08a]{Seidel2008c}
\bysame, \emph{{$A_\infty$}-subalgebras and natural transformations}, Homology
  Homotopy Appl. \textbf{10} (2008), no.~2, 83--114. \MR{2426130}

\bibitem[Sei08b]{Seidel:biased}
\bysame, \emph{A biased view of symplectic cohomology}, Current developments in
  mathematics, 2006, Int. Press, Somerville, MA, 2008, pp.~211--253.
  \MR{2459307}

\bibitem[Sei08c]{Seidel:FCPLT}
\bysame, \emph{Fukaya categories and {P}icard--{L}efschetz theory}, Zurich
  Lectures in Advanced Mathematics, European Mathematical Society, Z\"urich,
  2008. \MR{2441780}

\bibitem[Sei11]{Seidel:g2}
\bysame, \emph{Homological mirror symmetry for the genus two curve}, J.
  Algebraic Geom. \textbf{20} (2011), no.~4, 727--769. \MR{2819674}

\bibitem[Sei14]{Seidel:Flux}
\bysame, \emph{Abstract analogues of flux as symplectic invariants}, M\'em.
  Soc. Math. Fr. (N.S.) (2014), no.~137, 135. \MR{3289398}

\bibitem[Sei15]{Seidel:HMSquartic}
\bysame, \emph{Homological mirror symmetry for the quartic surface}, Mem. Amer.
  Math. Soc. \textbf{236} (2015), no.~1116, vi+129. \MR{3364859}

\bibitem[Sei16]{Seidel2016}
\bysame, \emph{Fukaya {$A_\infty$}-structures associated to {L}efschetz
  fibrations. {II} 1/2}, Adv. Theor. Math. Phys. \textbf{20} (2016), no.~4,
  883--944. \MR{3573028}

\bibitem[She11]{Sheridan:pants}
Nick Sheridan, \emph{On the homological mirror symmetry conjecture for pairs of
  pants}, J. Differential Geom. \textbf{89} (2011), no.~2, 271--367.
  \MR{2863919}

\bibitem[She15a]{Sheridan2015a}
\bysame, \emph{{Formulae in noncommutative Hodge theory}}, Preprint, available
  at arXiv:1510.03795, 2015.

\bibitem[She15b]{Sheridan:CY}
\bysame, \emph{Homological mirror symmetry for {C}alabi--{Y}au hypersurfaces in
  projective space}, Invent. Math. \textbf{199} (2015), no.~1, 1--186.
  \MR{3294958}

\bibitem[She16]{Sheridan:Fano}
\bysame, \emph{On the {F}ukaya category of a {F}ano hypersurface in projective
  space}, Publ. Math. Inst. Hautes \'Etudes Sci. \textbf{124} (2016), 165--317.
  \MR{3578916}

\bibitem[SS17]{SS}
Nick Sheridan and Ivan Smith, \emph{Homological mirror symmetry for generalized
  {G}reene--{P}lesser mirrors}, Preprint, available at arXiv:1709.08937, 2017.

\bibitem[Vit99]{Viterbo1999}
Claude Viterbo, \emph{Functors and computations in {F}loer homology with
  applications. {I}}, Geom. Funct. Anal. \textbf{9} (1999), no.~5, 985--1033.
  \MR{1726235}

\bibitem[Wei94]{Weibel1994}
Charles Weibel, \emph{An introduction to homological algebra}, Cambridge
  Studies in Advanced Mathematics, vol.~38, Cambridge University Press,
  Cambridge, 1994. \MR{1269324}

\bibitem[WW]{Wehrheim2015}
Katrin Wehrheim and Chris Woodward, \emph{Orientations for pseudoholomorphic
  quilts}, Preprint, available at arXiv:1503.07803.

\bibitem[Zin16]{Zinger2013}
Aleksey Zinger, \emph{The determinant line bundle for {F}redholm operators:
  construction, properties, and classification}, Math. Scand. \textbf{118}
  (2016), no.~2, 203--268. \MR{3515189}

\end{thebibliography}

\end{document}